\newcommand{\cof}{\mathrm{cof}}
\newcommand{\SW}{\mathcal{S}\mathrm{W}}
\newcommand{\triv}{\mathrm{triv}}
\newcommand{\Inrt}{\mathrm{Inrt}}
\newcommand{\fib}{\mathrm{fib}}
\newcommand{\weq}{\mathrm{weq}}
\newcommand{\Hom}{\mathrm{Hom}}
\newcommand{\Lan}{\mathrm{Lan}}
\newcommand{\Ran}{\mathrm{Ran}}
\newcommand{\sk}{\mathrm{sk}}
\newcommand{\cosk}{\mathrm{cosk}}
\newcommand{\Ho}{\mathrm{Ho}}
\newcommand{\hofib}{\mathrm{hofib}}
\newcommand{\Top}{\mathcal{T}\mathrm{op}}
\newcommand{\Sp}{\mathcal{S}\mathrm{p}}
\newcommand{\hoLan}{\mathrm{hoLan}}
\newcommand{\Emb}{\mathrm{Emb}}
\newcommand{\Conf}{\mathrm{Conf}}
\newcommand{\Sym}{\mathrm{Sym}}
\newcommand{\pt}{\mathrm{pt}}
\newcommand{\Fun}{\mathrm{Fun}}
\DeclareMathOperator*{\holim}{\mathrm{holim}}
\newcommand{\Map}{\mathrm{Map}}
\DeclareMathOperator*{\hocolim}{\mathrm{hocolim}}
\DeclareMathOperator*{\colim}{\mathrm{colim}}
\newcommand \U{\mathcal{U}}
\newcommand\op{\mathcal}
\newcommand\adjunct[4]{\xymatrix{#1\ar @<1.25ex>[rr]^{#3}&\perp&#2\ar @<1.25ex>[ll]^{#4}}}
\newcommand\id{\mathrm{id}}
\newcommand\B{\op B}
\newcommand\D{\mathcal D}
\def\treeof(#1;#2){[#1;#2]}
\newcommand{\comp}{\relax}
\def\comp(#1;#2){#1\circ(#2)}
\newcommand\doubleadjunct[7]{\xymatrix{#1\ar @<1.25ex>[rr]^{#4}&\perp&#2\ar @<1.25ex>[ll]^{#5}\ar @<1.25ex>[rr]^{#6}&\perp&#3\ar @<1.25ex>[ll]^{#7}}}
\theoremstyle{definition}\newtheorem*{blankdefinition}{Definition}
\theoremstyle{definition}\newtheorem{definition}{Definition}[subsection]
\theoremstyle{definition}\newtheorem{warning}[definition]{Warning}
\theoremstyle{theorem}\newtheorem{lemma}[definition]{Lemma}
\theoremstyle{definition}\newtheorem{perspective}[definition]{Perspective}
\theoremstyle{remark}\newtheorem{exercise}[definition]{Exercise}
\theoremstyle{definition}\newtheorem{question}[definition]{Question}
\theoremstyle{theorem}\newtheorem*{vanishingtheorem}{Vanishing Theorem}
\theoremstyle{theorem}\newtheorem*{periodicitytheorem}{Periodicity Theorem}
\theoremstyle{theorem}\newtheorem*{invariantstheorem}{Invariants Theorem}
\theoremstyle{remark}\newtheorem{remark}[definition]{Remark}
\theoremstyle{definition}
\theoremstyle{definition}
\theoremstyle{definition}\newtheorem{recollection}[definition]{Recollection}
\theoremstyle{definition}
\theoremstyle{definition}\newtheorem{example}[definition]{Example}
\theoremstyle{theorem}\newtheorem{proposition}[definition]{Proposition}
\theoremstyle{theorem}\newtheorem{corollary}[definition]{Corollary}
\theoremstyle{theorem}\newtheorem{theorem}[definition]{Theorem}
\theoremstyle{theorem}\newtheorem{hypothesis}[definition]{Hypothesis}
\theoremstyle{definition}\newtheorem{construction}[definition]{Construction}
\title{Configuration spaces in algebraic topology}
\author{Ben Knudsen}
\date{} 
\begin{document}

\maketitle

\begin{abstract}
These expository notes are dedicated to the study of the topology of configuration spaces of manifolds. We give detailed computations of many invariants, including the fundamental group of the configuration spaces of $\mathbb{R}^2$, the integral cohomology of the ordered---and the mod $p$ cohomology of the unordered---configuration spaces of $\mathbb{R}^n$, and the rational cohomology of the unordered configuration spaces of an arbitrary manifold of odd dimension. We also discuss models for mapping spaces in terms of labeled configuration spaces, and we show that these models split stably. Some classical results are given modern proofs premised on hypercover techniques, which we discuss in detail. 
\end{abstract}

\tableofcontents
\newpage
\section{Introduction}

These notes are concerned with the study of the following spaces.

\begin{blankdefinition}
The \emph{configuration space} of $k$ ordered points in the topological space $X$ is \[\Conf_k(X):=\{(x_1,\ldots, x_k)\in X^k: x_i\neq x_j\text{ if } i\neq j\},\] endowed with the subspace topology.\footnote{The reader is warned that, in the literature on configuration spaces, there are almost as many traditions of notation as there are references.} The \emph{unordered} configuration space is the quotient \[B_k(X):=\Conf_k(X)/\Sigma_k.\]
\end{blankdefinition}

Our goal is to attempt to understand the topology of these spaces, in the case of $X$ a manifold, through the lens of homotopy groups and (co)homology. Before beginning our study in earnest, we first mention a few reasons that configuration spaces are particularly interesting objects of study. 

\subsection{Invariants}
The homotopy type of a fixed configuration space is a homeomorphism invariant of the background manifold, and these invariants tend to remember a rather large amount of information. A simple-minded example is provided by Euclidean spaces of different dimension; indeed, as we will see, there is a homotopy equivalence $B_2(\mathbb{R}^m)\simeq B_2(\mathbb{R}^n)$ if and only if $m=n$. In other words, configuration spaces are sensitive to the dimension of a manifold. 

A somewhat more sophisticated example is provided by the fact that $B_2(T^2\setminus \mathrm{pt})\not\simeq B_2(\mathbb{R}^2\setminus S^0)$, which can be shown by a homology calculation. Note that $T^2\setminus \mathrm{pt}$ and $\mathbb{R}^2\setminus S^0$ have the same dimension and homotopy type, having $S^1\vee S^1$ as a common deformation retract. On the other hand, $(T^2\setminus\mathrm{pt})^+\cong T^2\not\simeq S^1\vee S^1\vee S^2\simeq (\mathbb{R}^2\setminus S^0)^+$, so we might conclude from this example that configuration spaces are sensitive to the \emph{proper} homotopy type of a manifold.

In order to discuss the most striking illustration of the sensitivity of configuration spaces, we recall that the \emph{Lens spaces} are a family of compact $3$-manifolds given by \[L(p,q):=S^3/C_p,\] where the cyclic group $C_p$ acts on $S^3\subseteq \mathbb{C}^2$ by multiplication by $(e^{2\pi i/p}, e^{2\pi iq/p})$. It is a classical theorem of Reidemeister that \begin{align*}
L(p,q_1)\simeq L(p, q_2)&\iff q_1q_2\equiv \pm n^2\mod p\\
L(p,q_1)\cong L(p,q_2)&\iff q_1\equiv \pm q_2^{\pm 1}\mod p.
\end{align*} In particular, $L(7,1)$ and $L(7,2)$ are homotopy equivalent but not homeomorphic, and, according to a theorem of Longoni--Salvatore \cite{LongoniSalvatore:CSNHI}, their configuration spaces distinguish them. Thus, configuration spaces are sensitive at least to the \emph{simple} homotopy type of a manifold.

\subsection{Braids}
A point moving in $B_k(\mathbb{R}^2)$ traces out $k$ different paths that weave among one another but can never overlap. For this reason, we think of the fundamental group $\pi_1(B_k(\mathbb{R}^2))$ as the group of geometric \emph{braids} on $k$ strands, with composition given by concatenation of braids. As we shall see, this braid group admits the remarkably simple presentation \[\pi_1(B_k(\mathbb{R}^2))\cong \langle \sigma_1,\ldots, \sigma_{k-1}\mid\sigma_i\sigma_{i+1}\sigma_i=\sigma_{i+1}\sigma_i\sigma_{i+1},\, \sigma_i\sigma_j=\sigma_j\sigma_i \text{ if } |i-j|>1\rangle,\] due originally to Artin \cite{Artin:TB}. The combinatorial, algebraic, and geometric properties of these and related braid groups are of fundamental importance to a vast swath of mathematics that encompasses knot theory, mapping class groups \cite{Birman:BLMCG}, quantum groups \cite{Kassel:QG}, category theory \cite{JoyalStreet:BMC}, and motion planning \cite{Farber:CSRMPA}.

\subsection{Embeddings}
In the company of manifolds with trivialized tangent bundles, it is possible to speak of a \emph{framed embedding}, which is to say an embedding respecting the fixed trivialization, possibly up to a homotopy through bundle maps. The tangent bundle of any Euclidean space is canonically trivialized, and evaluation at the origin determines a homotopy equivalence \[\mathrm{Emb}^\mathrm{fr}(\amalg_k\mathbb{R}^n, \mathbb{R}^n)\xrightarrow{\sim} \Conf_k(\mathbb{R}^n).\] As a consequence of this observation and the fact that framed embeddings compose, we see that the collection $\{\Conf_k(\mathbb{R}^n)\}_{k\geq0}$ of homotopy types is equipped with hidden algebraic structure. This algebraic structure is that of an \emph{operad}, which goes by the name of $E_n$. This perspective has many important descendents, three of which we name.

\subsection{Iterated loop spaces} For any $X$, there is a collection of maps \[\Emb^\mathrm{fr}(\amalg_k\mathbb{R}^n,\mathbb{R}^n)\times (\Omega^nX)^k\to \Omega^n X,\] which arise from a variant of the Pontrjagin--Thom collapse construction. The compatibilities among these maps are summarized by saying that $\Omega^n X$ is an $E_n$-\emph{algebra}; in particular, the homology groups of the configuration spaces of $\mathbb{R}^n$ encode algebraic operations on $H_*(\Omega^nX)$. Remarkably, there is also a partial converse, due to May \cite{May:GILS}, which amounts to an algebraic classification of $n$-fold loop spaces.

\subsection{Factorization homology} The operad $E_n$ is defined in terms of embeddings among disjoint unions of Euclidean spaces, and so, after choosing a coordinate chart, we think of the structure of an $E_n$-algebra $A$ as being ``defined locally'' on a manifold $M$. Patching this local structure together across the elements of an atlas produces a manifold invariant, called the factorization homology of $M$ with coefficients in $A$ and denoted $\int_MA$, which can be thought of as a kind of space of configurations in $M$ labeled by elements of the algebra \cite{AyalaFrancis:FHTM}.

In the example of an $n$-fold loop space discussed above, the \emph{non-Abelian Poincar\'{e} duality} of Salvatore \cite{Salvatore:CSSl} and Lurie \cite{Lurie:HA} supplies the identification \[\int_M\Omega^nX\xrightarrow{\sim} \mathrm{Map}_c(M, X),\] as long as $X$ is $(n-1)$-connected. Later, we will encounter a precursor to this result in the configuration space models for mapping spaces introduced by McDuff \cite{McDuff:CSPNP,Boedigheimer:SSMS}.

In a sense, factorization homology is a method for using configuration spaces to probe manifolds, but it can also be used to study the configuration spaces themselves, reversing this flow of information \cite{Knudsen:BNSCSVFH}. In particular, a theorem of Ayala--Francis \cite{AyalaFrancis:FHTM} shows that configuration spaces enjoy a kind of Mayer--Vietoris property in the existence of a quasi-isomorphism \[C_*(B(M))\simeq C_*(B(M_1))\bigotimes^\mathbb{L}_{C_*(B(N\times\mathbb{R}))}C_*(B(M_2)),\] where $B(X):=\coprod_{k\geq0} B_k(X)$. The fundamental fact underlying this quasi-isomorphism is the contractibility of the unordered configuration spaces of $\mathbb{R}$.

\subsection{Embedding calculus} The embedding calculus of Goodwillie and Weiss \cite{Weiss:EPVIT,GoodwillieWeiss:EPVIT} produces a tower of approximations \[\xymatrix{
&\vdots\ar[d]\\
&T_2\Emb(M,N)\ar[d]\\
\Emb(M,N)\ar[uur]\ar[ur]\ar[r]&T_1\Emb(M,N),
}\] which can be thought of as \emph{algebraic} approximations, where algebra is construed in the operadic sense. Often these approximations become arbitrarily good---in particular, according to a theorem of Goodwillie--Klein \cite{GoodwillieKlein:MDSSE}, this occurs in codimension at least 3---so that one obtains a cofiltration of the space of embeddings. The layers of this cofiltration are described as spaces of sections of certain bundles over configuration spaces, so hard questions about embeddings can sometimes be translated, at least in principle, into softer questions about the ordinary topology of configuration spaces.

\subsection{What is in these notes?} Our guiding mathematical impulse is the basic imperative of the algebraic topologist. Configuration spaces are interesting; therefore, it is worthwhile to compute their homotopy groups and (co)homology. 

From a more pedagogical standpoint, we have attempted to design a set of notes that might help to fulfill three goals of a graduate student hoping to enter the field: first, to become familiar with some of what is already known; second, to acquire a set of modern tools; and, third, to see how these tools might be applied in practice. For this reason, we have opted to explore a range of classical topics with a mix of classical and non-classical techniques, developing the latter along the way, with some old results being given new proofs. In particular, emphasis is placed on the use of hypercover techniques, which the author has found indispensable in his own work.

We now give a linear outline of the contents of these notes, including primary references. 

\begin{enumerate}
\item[\S\ref{section:configuration spaces}] \emph{Configuration spaces and braids}. After exploring examples and developing basic properties, we exploit the Fadell--Neuwirth fibrations \cite{FadellNeuwirth:CS} to deduce a number of easy computations of homotopy groups. Afterward, we turn to the identification of $\pi_1(B_k(\mathbb{R}^2))$ with the braid group \cite{Artin:TB, FoxNeuwirth:BG}. We  follow one of the approaches outlined in \cite{Birman:BLMCG}, reviewing the Reidemeister--Schreier method along the way from the topological viewpoint \cite{ZieschangVogtColdeway:SPDG}. We defer the proof of Fadell--Neuwirth, which will eventually be our first use of hypercover techniques.
\item[\S\ref{section:cohomology}] \emph{(Co)homology of $\Conf_k(\mathbb{R}^n)$}. The main result of this section is the computation of the integral cohomology ring of $\Conf_k(\mathbb{R}^n)$ \cite{Arnold:CRCBG, CohenLadaMay:HILS}. Our argument is essentially that of Cohen. As it turns out, the natural basis in homology is more convenient for some purposes, and we develop this viewpoint in detail following \cite{Sinha:HLDO}. We close by using these computations to dedue the rational homology of $B_k(\mathbb{R}^n)$.
\item[\S\ref{section:covering theorems}] \emph{Covering theorems}. We give a detailed development of the machinery of \v{C}ech covers, hypercovers, and complete covers following \cite{DuggerIsaksen:THAR}. The main result is that the weak homotopy type of a space may be recovered as the homotopy colimit over a complete cover---a topological basis, for example.
\item[\S\ref{section:deferred proofs}] \emph{Deferred proofs}. We use the covering techniques of the previous section, together with Quillen's Theorem B, to prove Fadell--Neuwirth, which was the main tool in \S\ref{section:configuration spaces}. We then use the same techniques to construct the Serre spectral sequence following \cite{Segal:CSSS}, from which we deduce the Leray--Hirsch theorem, which was used in \S\ref{section:cohomology}.
\item[\S\ref{section:mapping space models}] \emph{Mapping space models}. We introduce configuration spaces with labels in a pointed space \cite{McDuff:CSPNP, Boedigheimer:SSMS}, which we motivate as a type of homology theory for manifolds following \cite{AyalaFrancis:FHTM}. We give a new proof of the analogue of the exactness axiom for homology, namely the existence of a certain homotopy pullback square, using hypercovers and Quillen's Theorem B. Using scanning techniques following \cite{Boedigheimer:SSMS}, we show that labeled configuration spaces model mapping spaces, an analogue of Poincar\'{e} duality.
\item[\S\ref{section:homology from splittings}] \emph{Homology calculations from stable splittings}. The main result is the computation of the rational homology of $B_k(M)$ for odd-dimensional $M$ following \cite{BoedigheimerCohenTaylor:OHCS}. The key ingredient is to show that labeled configuration split stably \cite{Boedigheimer:SSMS, CohenMayTaylor:SCSC}. 
\item[\S\ref{section:mod p cohomology}] \emph{Mod $p$ cohomology}. The main result is the computation of the mod $p$ cohomology of $B_p(\mathbb{R}^n)$. We follow the argument of \cite[III]{CohenLadaMay:HILS} in the large, giving simplified and corrected proofs of several key steps using the homology basis developed in \S\ref{section:cohomology}.
\item[\S\ref{section:lie algebra methods}] \emph{Postscript: Lie algebra methods}. We give an informal discussion of the main result of \cite{Knudsen:BNSCSVFH}, which expresses the rational homology of $B_k(M)$ for arbitrary $M$ as Lie algebra homology. We use this expression to give a proof of homological stability for configuration spaces. 
\item[\ref{appendix:split simplicial spaces}] \emph{Split simplicial spaces}. In this appendix, following \cite{DuggerIsaksen:THAR}, we develop a criterion guaranteeing that degreewise weak equivalence between simplicial spaces induces a weak equivalence after geometric realization. We then verify that this criterion is satisfied in the cases of interest.
\item[\ref{section:homotopy colimits}] \emph{Homotopy colimits}. We motivate the bar construction model for the homotopy colimit and discuss its justification through the theory of derived functors and model categories \cite{Quillen:HA, Hirschhorn:MCL, Dugger:PHC}. We give a detailed proof that the bar construction is cofibrant in the projective model structure on functors. We discuss homotopy left Kan extensions and prove Quillen's Theorem B following \cite{GoerssJardine:SHT}.
\item[\ref{appendix:Spanier--Whitehead}] \emph{The Spanier--Whitehead category}. We discuss a very rudimentary form of stable homotopy theory, which is necessary for the theorem on stable splittings \cite{SpanierWhitehead:FAHT, DellAmbrogio:SWCAT}. We avoid the need to introduce the full machinery of spectra with the notion of a filtered stable weak equivalence.
\item[\ref{appendix:periodicity}] \emph{Tate cohomology and periodicity}. We review some facts about Tate cohomology \cite{CartanEilenberg:HA} and give a proof of the periodicity theorem used in \S\ref{section:mod p cohomology} following \cite{Swan:PPFG}.
\end{enumerate}

\subsection{What is not in these notes?} The title of these notes could---and someday, perhaps, will---be the title of a book, or of a book series. That being the case, there are vast and important swaths of the literature that we do not touch. What follows is a non-exhaustive list of omissions, in no particular order. 
\begin{itemize}
\item Iterated loop space theory \cite{May:GILS, Segal:CSILS} and the computation of the mod $p$ homology of $B_k(\mathbb{R}^n)$ with all of the induced structure \cite[III]{CohenLadaMay:HILS}.
\item Factorization homology \cite{AyalaFrancis:FHTM} and embedding calculus \cite{Weiss:EPVIT}.
\item The Cohen--Taylor--Totaro spectral sequence \cite{Totaro:CSAV} and representation stability \cite{ChurchEllenbergFarb:FIMSRSG}.
\item The Fulton--MacPherson compactification \cite{FultonMacPherson:CCS,Sinha:MTCCS} and formality \cite{Kontsevich:OMDQ, LambrechtsVolic:FLNDO}.
\item The failure of homotopy invariance \cite{LongoniSalvatore:CSNHI}.
\item Configuration spaces of graphs \cite{Abrams:CSBGG, Ghrist:CSBGGR, AnDrummond-ColeKnudsen:SSGBG}.
\item Topological combinatorics \cite{Farber:TBPI, BlagojevichLueckZiegler:ETCS}.
\end{itemize}

The author hopes to remedy some of these omissions---as well as the unforgivable absence of pictures---in future versions of these notes.

\subsection{Prerequisites} Sections \ref{section:configuration spaces}-\ref{section:cohomology} of these notes should be accessible to anyone with a first course in algebraic topology \cite{Hatcher:AT}. In particular, facts about homology, cohomology, homotopy groups, covering spaces, and the like are used freely and without comment. In the remainder of the notes, particularly in \S\ref{section:covering theorems}, it will be helpful to have a working knowledge of basic category theory \cite{MacLane:CWM} and simplicial methods \cite{GoerssJardine:SHT}. We make some use of the Serre spectral sequence, which we review at the appropriate time, albeit briefly, and increasingly heavy use of homotopy colimits, which are discussed in some detail in Appendix \ref{section:homotopy colimits}. We make do with a very primitive form of stable homotopy theory, which is reviewed in Appendix \ref{appendix:Spanier--Whitehead}. No knowledge of spectra is assumed or used (although it never hurts).

\subsection{Conventions} For convenience, we take manifolds to be smooth of finite type. In dealing with graded vector spaces, we make use of the usual Koszul sign convention. For $V$ a graded vector space, $\Sym(V)=\bigoplus_{k\geq0}\Sym^k(V)$ is the tensor product of a polynomial algebra on the even part of $V$ with an exterior algebra on the odd part of $V$. We grade homologically and write $V[r]$ for the homological suspension of $V$---that is, the degree $n$ part of $V[r]$ is the degree $n-r$ part of $V$.

\subsection{Acknowledgments} These notes grew out of a course of the same name that I taught during the fall semester of 2017 at Harvard University. I am grateful to the members of the class for their engagement and to Harvard for the opportunity to design a course with complete freedom. Extra thanks are due to Sander Kupers for catching innumerable errors while they were still on the blackboard (if he missed any, we can blame him).

I wish to acknowledge a few key influences on the development of the course and notes. First and most important of these is the mathematics and metamathematics that I learned from my advisor, John Francis, without whom, I am certain, these notes would not exist. Second, the theory of hypercovers and complete covers developed by Dugger--Isaksen \cite{DuggerIsaksen:THAR} plays a central role here, as it has in most of what I have done as a mathematician. Third, the geometric description of the homology of configuration spaces in terms of planetary systems given by Sinha \cite{Sinha:HLDO} is not only very beautiful but also very useful, as it allows for greatly simplified arguments in some cases. Fourth, I would like to thank Haynes Miller for his sustained encouragement in the teaching of the course and the writing of these notes.

Finally, I thank everyone whose enthusiastic conversation has helped to foster my love for the ideas discussed here. In alphabetical order, with many omissions, this group includes Byung Hee An, Lauren Bandklayder, Mark Behrens, Lukas Brantner, Tom Church, Gabriel Drummond-Cole, Jordan Ellenberg, Elden Elmanto, Benson Farb, Ezra Getzler, Paul Goerss, Owen Gwilliam, Kathryn Hess, Sander Kupers, Pascal Lambrechts, Daniel L\"{u}tgehetmann, Jeremy Miller, Sam Nariman, Martin Palmer, Dan Petersen, Dev Sinha, Joel Specter, Alex Suciu, Hiro Tanaka, Nathalie Wahl, Brian Williams, Dylan Wilson, and Jesse Wolfson.

\section{Configuration spaces and braids}\label{section:configuration spaces}

\subsection{Basic examples} In order to begin to develop a feel for how configuration spaces behave, we begin with a few examples.

\begin{example}[$\varnothing$, $\mathbb{R}^0$]
It is usually best to begin with the trivial cases. In the case of the empty manifold, we have \[\Conf_k(\varnothing)=\begin{cases}
\mathrm{pt}&\quad k=0\\
\varnothing&\quad\text{ else,}
\end{cases}\] while \[\Conf_k(\mathbb{R}^0)=\begin{cases}
\mathrm{pt}&\quad k=0,1\\
\varnothing&\quad \text{else.}
\end{cases}\] Notice that $\Conf_0(X)$ is a singleton for any space $X$.
\end{example}

\begin{example}[$\mathbb{R}\cong (0,1)$]
From the definition $\Conf_2((0,1))=(0,1)^2\setminus\{(x,y):x=y\}$, it is clear that the ordered configuration space two points in $(0,1)$ is a disjoint union of two open 2-simplices, and that $\Sigma_2$ acts by permuting these components. In particular, $B_2((0,1))\cong \mathring{\Delta}^2\simeq \mathrm{pt}$. This description generalizes readily to higher $k$. Note that the natural orientation of $(0,1)$ induces a second ordering on the coordinates of any configuration, which is to say a permutation of $\{1,\ldots, k\}$, that any such permutation is possible, and that the assignment of configuration to permutation is locally constant by the intermediate value theorem. Thus, we have a $\Sigma_k$-equivariant bijection $\pi_0(\Conf_k((0,1)))\cong\Sigma_k$, and the unordered configuration space is naturally identified with the path component of the identity permutation. For this space, we define a map \begin{align*}
B_k((0,1))&\to\mathring{\Delta}^k:=\left\{(t_0,\ldots, t_k)\in \mathbb{R}^{k+1}: t_i>0,\, \sum_{i=0}^kt_i=1\right\}\\
\{x_1,\ldots, x_k\}&\mapsto (x_1, x_2-x_1, \ldots, 1-x_k),
\end{align*} where the set $\{x_1,\ldots, x_k\}$ is ordered so that $x_1<\cdots< x_k$. This map is a homeomorphism; in particular, the unordered configuration spaces of $\mathbb{R}$ are all contractible. 
\end{example}

\begin{example}[$\mathbb{R}^n$]
In the configuration space of two points in $\mathbb{R}^n$, there are exactly three pieces of data---the direction, the center of mass, and the distance---and only one of these is not a contractible choice. Precisely, the map \begin{align*}\Conf_2(\mathbb{R}^n)&\to S^{n-1}\times\mathbb{R}_{>0}\times\mathbb{R}^n\\
(x_1,x_2)&\mapsto \left(\frac{x_2-x_1}{\|x_2-x_1\|}, \|x_2-x_1\|,\frac{x_1+x_2}{2}\right)
\end{align*} is a homeomorphism, and, in particular, the \emph{Gauss map} \[\Conf_2(\mathbb{R}^n)\to S^{n-1}\] given by the first coordinate of this homeomorphism is a homotopy equivalence. Since this map is also $\Sigma_2$-equivariant for the antipodal action on the target, we conclude that $B_2(\mathbb{R}^n)\simeq \mathbb{RP}^{n-1}$. The Gauss map will play a fundamental role in our later study; in particular, by pulling back a standard volume form, this map furnishes us with our first example of a nonzero higher dimensional class in the cohomology of configuration spaces.

We cannot be as explicit about $\Conf_k(\mathbb{R}^n)$ for higher $k$, but it should be noted that this space is an example of the complement of a \emph{hyperplane arrangement}, since \[\Conf_k(\mathbb{R}^n)=\mathbb{R}^{nk}\setminus \bigcup_{1\leq i<j\leq k}\{(x_1,\ldots, x_k)\in\mathbb{R}^{nk}: x_i=x_j\},\] which is an interesting and classical type of mathematical object in its own right with its own literature \cite{OrlikTerao:AH}.
\end{example}

\begin{example}[$S^1$] By definition $\Conf_2(S^1)$ is the $2$-torus with its diagonal removed, which is homeomorphic to a cylinder, and, by cutting and pasting, one can see directly that the corresponding unordered configuration space is the open M\"{o}bius band \cite{Tuffley:FSSS}. In the general case \cite[p. 292]{CrabbJames:FHT}, make the identification $S^1=\mathbb{R}/\mathbb{Z}$, and consider first the subspace $A\subseteq \Conf_k(S^1)$ of configurations $(x_1,\ldots, x_k)$ whose ordering coincides with the cyclic ordering induced by the standard orientation of $S^1$. For $1\leq i\leq k$, the difference $t_i=x_{i+1}-x_i\in (0,1)$ is well-defined, where we set $t_{k+1}=t_1+1$, and $\sum_{i=1}^kt_i=1$. Recording the normalized first coordinate and the $t_i$ defines a $C_k$-equivariant homeomorphism $A\cong S^1\times\mathring{\Delta}^{k-1}$, which induces a $\Sigma_k$-equivariant homeomorphism \[(S^1\times\mathring{\Delta}^{k-1})\times_{C_k}\Sigma_k\xrightarrow{\simeq} \Conf_k(S^1).\]  In particular, $B_k(S^1)\simeq S^1/C_k\cong S^1$ for $k>0$.
\end{example}

\begin{example}[$S^n$]
In the case of two points in the $n$-sphere, as with Euclidean space, the choice of the direction in $S^n$ between the two points is the fundamental parameter. Precisely, the projection onto the first coordinate defines a map $\Conf_2(S^n)\to \Conf_1(S^n)$, and recording the unit tangent vector in the direction of the second coordinate produces a commuting diagram \[\xymatrix{
\Conf_2(S^n)\ar[d]\ar[r]&\mathrm{Sph}(TS^n)\ar[d]\\
\Conf_1(S^n)\ar@{=}[r]&S^n
}\] in which the top map is a homotopy equivalence. The projection map is a special case of a family of maps, the \emph{Fadell--Neuwirth fibrations} \cite{FadellNeuwirth:CS}, which relate different cardinalities of configuration space in any background manifold. The existence of this family of maps, which will be one of our most important tools in what follows, places very strong constraints on configuration spaces; for example, it ultimately implies that the $i$th Betti number of $B_k(M)$ is constant for large $k$ \cite{Church:HSCSM,RandalWilliams:HSUCS}. This phenomenon of \emph{homological stability}, and the corresponding \emph{representation stability} in the ordered case, has since become an active area of study in its own right \cite{Farb:RS}.
\end{example}

\subsection{First properties}
We begin our formal study of configuration spaces by cataloguing a few of their basic features. First, we note that, if $f:X\to Y$ is an injective continuous map, we have the $\Sigma_k$-equivariant factorization \[\xymatrix{
X^k\ar[rr]^-{f^k}&&Y^k\\
\Conf_k(X)\ar[u]\ar@{-->}[rr]^-{\Conf_k(f)}&&\Conf_k(Y)\ar[u]
}\] and thus an induced map $B_k(f):B_k(X)\to B_k(Y)$. This construction respects composition and identities by inspection, so we have functors \begin{align*}
\Conf_k:\Top^{\mathrm{inj}}&\to \Top^{\Sigma_k}\\
B_k:\Top^{\mathrm{inj}}&\to \Top,
\end{align*} where $\Top^\mathrm{inj}$ denotes the category of topological spaces and injective continuous maps, $\Top^{\Sigma_k}$ the category of $\Sigma_k$-spaces and equivariant maps, and $\Top$ the category of topological spaces.

\begin{proposition}
If $f:X\to Y$ is an open embedding, then $\Conf_k(f):\Conf_k(X)\to \Conf_k(Y)$ and $B_k(f):B_k(X)\to B_k(Y)$ are also open embeddings.
\end{proposition}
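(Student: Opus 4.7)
The plan is to reduce both statements to the simple fact that $f^k : X^k \to Y^k$ is an open embedding, since products of open embeddings are open embeddings, with image $U^k$ where $U := f(X) \subseteq Y$. For $\Conf_k(f)$, observe that by construction it is the restriction of $f^k$ to the open subspace $\Conf_k(X) \subseteq X^k$. Its image is exactly $\Conf_k(U)$, because injectivity of $f$ ensures $f(x_i) \neq f(x_j)$ iff $x_i \neq x_j$, and $\Conf_k(U) = U^k \cap \Conf_k(Y)$ is open in $\Conf_k(Y)$ by definition of the subspace topology. Restricting the homeomorphism $X^k \xrightarrow{\sim} U^k$ produces a homeomorphism $\Conf_k(X) \xrightarrow{\sim} \Conf_k(U)$, so $\Conf_k(f)$ is an open embedding.

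For $B_k(f)$, I would descend the above statement along the orbit projections $q_X : \Conf_k(X) \to B_k(X)$ and $q_Y : \Conf_k(Y) \to B_k(Y)$. The first key observation is that for any continuous group action the orbit projection is an open map, since $q_Y^{-1}(q_Y(V)) = \bigcup_{\sigma \in \Sigma_k} \sigma \cdot V$ is open whenever $V$ is. Consequently the image $B_k(U) := q_Y(\Conf_k(U))$ is open in $B_k(Y)$. The map $\Conf_k(f)$ is visibly $\Sigma_k$-equivariant and a homeomorphism onto $\Conf_k(U)$, so by the universal property of the orbit projections the horizontal maps in the commutative square with vertices $\Conf_k(X), \Conf_k(U), B_k(X), B_k(U)$ descend to a continuous bijection $B_k(f) : B_k(X) \to B_k(U)$ with continuous inverse induced by the equivariant inverse $\Conf_k(f)^{-1}$. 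Injectivity of the bottom map uses injectivity of $f$ together with the fact that $\Sigma_k$ acts by coordinate permutation on both sides; surjectivity onto $B_k(U)$ is immediate from that of $\Conf_k(f)$ onto $\Conf_k(U)$.

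I do not anticipate any real obstacle. The only subtlety worth naming is the elementary openness of the orbit projection, which is what allows the descent of the open embedding property from the ordered to the unordered setting; everything else is formal.
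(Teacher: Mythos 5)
Your proof is correct and takes essentially the same approach as the paper, which reduces the first claim to the fact that $f^k:X^k\to Y^k$ is an open embedding and deduces the second from the first together with the quotient structure of $\pi:\Conf_k(X)\to B_k(X)$. Your version is more explicit about the descent argument: you invoke the openness of the orbit projection directly, whereas the paper appeals to $\pi$ being a quotient map (which would require the small extra observation that $\Conf_k(U)$ is $\Sigma_k$-saturated before one could conclude $B_k(U)$ is open); in practice these are the same elementary fact about quotients by group actions, and your phrasing is arguably the cleaner way to fill in the details.
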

\begin{proof}
From the definition of the product topology, $f^k:X^k\to Y^k$ is an open embedding, and the first claim follows. The second claim follows from the first and the fact that $\pi:\Conf_k(X)\to B_k(X)$ is a quotient map.
\end{proof}

This functor also respects a certain class of weak equivalences, for which we now introduce some (very nonstandard) terminology.

\begin{definition}
Let $f,g:X\to Y$ be injective continuous maps. 
\begin{enumerate}
\item We say that a map $H:X\times[0,1]\to Y$ is a \emph{monotopy} between $f$ and $g$ if it is a homotopy between $f$ and $g$ and if $H_t$ is injective for every $t\in [0,1]$. 
\item We say that $f$ and $g$ are \emph{monotopic} if there is such an injectopy. 
\item We say that an injective continuous map $f:X\to Y$ is an \emph{monotopy equivalence} if there is an injective continuous map $g:Y\to X$ such that $g\circ f$ is monotopic to $\id_X$ and $f\circ g$ is monotopic to $\id_Y$.
\end{enumerate}
\end{definition}

\begin{proposition}\label{prop:monotopy}
If $f$ and $g$ are monotopic, then $\Conf_k(f)$ and $\Conf_k(g)$ are homotopic (in fact monotopic).
\end{proposition}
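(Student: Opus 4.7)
The plan is to build the required monotopy by applying the given monotopy $H$ coordinatewise. Concretely, given a monotopy $H \colon X \times [0,1] \to Y$ between $f$ and $g$, I would define
\[
\tilde H \colon \Conf_k(X) \times [0,1] \longrightarrow \Conf_k(Y), \qquad \tilde H\bigl((x_1,\ldots,x_k),t\bigr) := \bigl(H(x_1,t),\ldots,H(x_k,t)\bigr).
\]
The first thing to check is that this formula indeed lands in $\Conf_k(Y)$ rather than merely in $Y^k$. This is exactly where the hypothesis that $H_t$ be injective for every $t$ is used: if $(x_1,\ldots,x_k) \in \Conf_k(X)$ and $i \neq j$, then $x_i \neq x_j$, and injectivity of $H_t$ yields $H(x_i,t) \neq H(x_j,t)$. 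Continuity of $\tilde H$ is immediate from continuity of $H$ together with the fact that $\Conf_k(X)$ and $\Conf_k(Y)$ carry subspace topologies from $X^k$ and $Y^k$.

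Next I would read off that $\tilde H_0 = \Conf_k(f)$ and $\tilde H_1 = \Conf_k(g)$ directly from the definitions, so $\tilde H$ is a homotopy between the two maps in question. To upgrade this to a monotopy, one needs each slice $\tilde H_t$ to be an injective continuous map. But by construction $\tilde H_t = \Conf_k(H_t)$, and since $H_t \colon X \to Y$ is an injective continuous map, functoriality of $\Conf_k$ on $\Top^{\mathrm{inj}}$ applies; injectivity of the resulting map on $k$-tuples is the trivial observation that $(H_t(x_1),\ldots,H_t(x_k)) = (H_t(x_1'),\ldots,H_t(x_k'))$ forces $x_i = x_i'$ for every $i$.

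There is no real obstacle here; the content of the statement is simply that the coordinatewise formula defining $\Conf_k$ on morphisms also defines it on monotopies, and the injectivity clause in the definition of monotopy is precisely what is needed for the output to stay inside the configuration space at every time $t$. The parenthetical ``in fact monotopic'' is automatic from the construction, since the witnessing homotopy $\tilde H$ is built to consist of injective slices.
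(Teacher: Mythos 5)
Your argument is essentially identical to the paper's: the paper constructs the same coordinatewise homotopy $H^\Delta_t(x_1,\ldots,x_k) = (H_t(x_1),\ldots,H_t(x_k))$ via a commuting diagram and invokes injectivity of each $H_t$ to see that it restricts to a map $\Conf_k(X)\times[0,1]\to\Conf_k(Y)$, with the same boundary conditions and the same observation that each slice is injective. Your version just writes the formula directly instead of exhibiting it as a dashed filler in a diagram; the content is the same.
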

\begin{proof}
In the solid commuting diagram
\[\xymatrix{
X^k\times[0,1]^k\ar@{=}[r]^-\sim& (X\times[0,1])^k\ar[r]^-{H^k}&Y^k\\\\
X^k\times[0,1]\ar[uurr]^-{H^\Delta}\ar[uu]^-{\id_{X^k}\times\Delta_k}\\
\Conf_k(X)\times[0,1]\ar@{-->}[rr]\ar[u]&&\Conf_k(Y),\ar[uuu]
}\] the diagonal composite is given by the formula \[H_t^\Delta(x_1,\ldots, x_k)=(H_t(x_1), \ldots, H_t(x_k)).\] By assumption, $H_t$ is injective for each $t\in[0,1]$, so the dashed filler exists. By construction, this map is injective for each $t$ and restricts to $\Conf_k(f)$ at $t=0$ and to $\Conf_k(g)$ at $t=1$.
\end{proof}

\begin{corollary}
If $f$ is a monotopy equivalence, then both $\Conf_k(f)$ and $B_k(f)$ are homotopy (in fact monotopy) equivalences.
\end{corollary}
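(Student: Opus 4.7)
The plan is to deduce the corollary directly from Proposition \ref{prop:monotopy} together with the functoriality of $\Conf_k$ and $B_k$ on the category $\Top^{\mathrm{inj}}$. Concretely, let $g\colon Y\to X$ be the inverse promised by the hypothesis that $f$ is a monotopy equivalence, so that $g\circ f$ is monotopic to $\id_X$ and $f\circ g$ is monotopic to $\id_Y$.

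Applying $\Conf_k$, functoriality gives $\Conf_k(g\circ f)=\Conf_k(g)\circ\Conf_k(f)$ and $\Conf_k(f\circ g)=\Conf_k(f)\circ\Conf_k(g)$, together with $\Conf_k(\id_X)=\id_{\Conf_k(X)}$ and the analogous identity for $Y$. Proposition \ref{prop:monotopy} then provides monotopies from $\Conf_k(g)\circ\Conf_k(f)$ to $\id_{\Conf_k(X)}$ and from $\Conf_k(f)\circ\Conf_k(g)$ to $\id_{\Conf_k(Y)}$, exhibiting $\Conf_k(f)$ as a monotopy equivalence, and hence as a homotopy equivalence.

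For the unordered case, the key observation is that the monotopy constructed in the proof of Proposition \ref{prop:monotopy} is $\Sigma_k$-equivariant: each time-slice has the form $H_t^\Delta(x_1,\dots,x_k)=(H_t(x_1),\dots,H_t(x_k))$, which manifestly commutes with permutation of coordinates. Consequently, these monotopies descend along the quotient maps $\pi\colon\Conf_k(-)\to B_k(-)$ to homotopies between $B_k(g)\circ B_k(f)$ and $\id_{B_k(X)}$, and between $B_k(f)\circ B_k(g)$ and $\id_{B_k(Y)}$, so that $B_k(f)$ is a homotopy equivalence as well.

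I do not expect any genuine obstacle here; the only point to verify carefully is the equivariance of the monotopy produced in Proposition \ref{prop:monotopy}, which is immediate from its explicit formula. Everything else is formal manipulation of the relation ``monotopic'' under functoriality.
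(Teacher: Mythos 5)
Your proof is correct and takes the same approach as the paper: deduce the $\Conf_k$ case from Proposition \ref{prop:monotopy} via functoriality, then observe that the monotopy $H_t^\Delta$ is $\Sigma_k$-equivariant and so descends to $B_k$. You spell out the functoriality identities more explicitly, but the underlying argument is identical.
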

\begin{proof}
The claim for $\Conf_k(f)$ is immediate from Proposition \ref{prop:monotopy}. For $B_k(f)$, we note that the homotopies constructed in the proof of that result are homotopies through $\Sigma_k$-equivariant maps, so they descend to the unordered configuration space.
\end{proof}

\begin{remark}
Another point of view on the previous two results is provided by the fact (which we will not prove here) that $\Conf_k$ and $B_k$ are \emph{enriched} functors, where the space of injective continuous maps from $X$ to $Y$ is given the subspace topology induced by the compact-open topology on $\mathrm{Map}(X,Y)$. Taking this fact for granted, these results follows immediately, since a monotopy is simply a path in this mapping space.
\end{remark}

The most important examples of monotopy equivalences will be isotopy equivalences of manifolds, as in the following example.

\begin{example}
If $M$ is a manifold with boundary, then $\partial M$ admits a collar neighborhood $U\cong \partial M\times (0,1]$. We define a map $r:M\to M$ by setting $r|_{\partial M\times(0,1]}(x,t)=(x,\frac{t}{2})$ and extending by the identity. This map is injective, and dilation defines monotopies from $r\circ i:\mathring{M}\to \mathring{M}$ and $i\circ r:M\to M$ to the respective identity maps. It follows that the induced map $\Conf_k(\mathring{M})\to \Conf_k(M)$ is a homotopy equivalence.
\end{example}

These functors also interact well with the operation of disjoint union.

\begin{proposition}
Let $X$ and $Y$ be topological spaces. The natural map \[\coprod_{i+j=k}\Conf_i(X)\times\Conf_j(Y)\times_{\Sigma_i\times\Sigma_j}\Sigma_k\to \Conf_k(X\amalg Y)\] is a $\Sigma_k$-equivariant homeomorphism. In particular, the natural map \[\coprod_{i+j=k}B_i(X)\times B_j(Y)\to B_k(X\times Y)\] is a homeomorphism.
\end{proposition}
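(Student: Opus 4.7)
The strategy is to use the fact that the coproduct topology makes $(X\amalg Y)^k$ decompose canonically as a disjoint union indexed by functions $\varphi\colon\{1,\ldots,k\}\to\{X,Y\}$, then intersect with $\Conf_k$ and re-index to match the balanced product.

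The first step is to observe that the universal property of the coproduct topology, combined with the fact that binary products distribute over coproducts in $\Top$, gives a natural homeomorphism
\[(X\amalg Y)^k\;\cong\;\coprod_{\varphi\colon\{1,\ldots,k\}\to\{X,Y\}}\;\prod_{\ell=1}^k\varphi(\ell).\]
Since the fat diagonal is compatible with this decomposition (two coordinates from different factors can never collide), intersecting with its complement yields
\[\Conf_k(X\amalg Y)\;\cong\;\coprod_{\varphi}\Conf^{\varphi}(X,Y),\]
where $\Conf^{\varphi}(X,Y)$ is the open subspace of the $\varphi$-summand consisting of tuples with pairwise distinct entries.

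The second step is to identify each summand. If $|\varphi^{-1}(X)|=i$ and $|\varphi^{-1}(Y)|=j$, then any pair of order-preserving bijections $\varphi^{-1}(X)\cong\{1,\ldots,i\}$ and $\varphi^{-1}(Y)\cong\{1,\ldots,j\}$ yields a homeomorphism $\Conf^{\varphi}(X,Y)\cong\Conf_i(X)\times\Conf_j(Y)$. The third step is to re-index: the set of $\varphi$ with $|\varphi^{-1}(X)|=i$ is a transitive $\Sigma_k$-set with stabilizer $\Sigma_i\times\Sigma_j$ at the standard $\varphi_0$, hence canonically identified with $\Sigma_k/(\Sigma_i\times\Sigma_j)$. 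Because $\Sigma_k$ is discrete and $\Sigma_i\times\Sigma_j$ acts freely on $\Sigma_k$ by left multiplication, the balanced product $\Conf_i(X)\times\Conf_j(Y)\times_{\Sigma_i\times\Sigma_j}\Sigma_k$ is just the disjoint union of $|\Sigma_k/(\Sigma_i\times\Sigma_j)|$ copies of $\Conf_i(X)\times\Conf_j(Y)$, one per coset, and this matches the subsum of Step 1 over those $\varphi$ with $|\varphi^{-1}(X)|=i$. Grouping by $i$ assembles these into the coproduct over $i+j=k$.

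The fourth step is to check that the composite homeomorphism really is the natural map described in the statement, and that it is $\Sigma_k$-equivariant: the residual right $\Sigma_k$-action on the balanced product corresponds, under the identifications above, to relabeling $\varphi$ and the auxiliary bijections, which is exactly the permutation action on coordinates of configurations in $X\amalg Y$. The unordered statement (which, up to the evident typo, should read $B_k(X\amalg Y)$) then follows by passing to $\Sigma_k$-orbits: using the general identity $(A\times_H G)/G\cong A/H$ applied to $H=\Sigma_i\times\Sigma_j$ and $G=\Sigma_k$, together with the observation that the $\Sigma_i\times\Sigma_j$-action is factorwise on $\Conf_i(X)\times\Conf_j(Y)$, one obtains $B_i(X)\times B_j(Y)$ on each summand.

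The only genuinely non-formal point is the verification that these identifications are homeomorphisms rather than merely bijections of sets. This is a matter of tracking universal properties: the coproduct decomposition of $(X\amalg Y)^k$ is a homeomorphism, intersection with an open subset preserves the coproduct decomposition, and the free $\Sigma_i\times\Sigma_j$-action on the discrete factor $\Sigma_k$ ensures that the balanced product's quotient topology coincides with the disjoint-union topology on the cosetwise description. Once these checks are in hand, the proof is essentially an exercise in bookkeeping; I do not anticipate any substantive obstacle.
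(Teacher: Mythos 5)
Your proposal is correct and takes essentially the same route as the paper: both rest on the homeomorphism $(X\amalg Y)^k\cong\coprod_{i+j=k}X^i\times Y^j\times_{\Sigma_i\times\Sigma_j}\Sigma_k$, restrict it to the configuration subspaces (using the fact that both vertical inclusions are inclusions of subspaces), and pass to $\Sigma_k$-quotients for the unordered statement. Your write-up just unwinds the balanced product more explicitly via the intermediate indexing by functions $\varphi\colon\{1,\ldots,k\}\to\{X,Y\}$, and you correctly note the typo in the statement (the target of the second map should be $B_k(X\amalg Y)$, not $B_k(X\times Y)$).
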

\begin{proof}
From the definitions, the dashed filler exists in the commuting diagram
\[\xymatrix{\displaystyle \coprod_{i+j=k}X^i\times Y^j\times_{\Sigma_i\times\Sigma_j}\Sigma_k\ar[r]^-\simeq&(X\amalg Y)^k\\
\displaystyle\coprod_{i+j=k}\Conf_i(X)\times\Conf_j(Y)\times_{\Sigma_i\times\Sigma_j}\Sigma_k\ar@{-->}[r]\ar[u]& \Conf_k(X\amalg Y)\ar[u]}\] and is easily seen to be a bijection, which implies the first claim, since the vertical arrows are inclusions of subspaces. The second claim follows from the first after taking the quotient by the action of $\Sigma_k$.
\end{proof}

Thus, we may restrict attention to connected background spaces whenever it is convenient to do so.

Our next goal is to come to grips with the local structure of configuration spaces. We assume from now on that $X$ is locally path connected, and we fix a basis $\B$ for the topology of the space $X$ consisting of connected subsets. We define two partially ordered sets as follows.
\begin{enumerate}
\item We write $\B_k=\{U\subseteq X: U\cong \amalg_{i=1}^k U_i,\, U_i\in\B\}$, and we impose the order relation \[U\leq V\iff U\subseteq V \text{ and } \pi_0(U\subseteq V) \text{ is surjective}.\]
\item We write $\B_k^\Sigma=\{(U,\sigma): U\in \B_k, \,\sigma:\{1,\ldots, k\}\xrightarrow{\simeq}\pi_0(U)\}$, and we impose the order relation \[(U,\sigma)\leq (V,\tau)\iff U\leq V\text{ and } \tau=\sigma\circ\pi_0(U\subseteq V).\]
\end{enumerate} Denoting the poset of open subsets of a space $Y$ by $\op O(Y)$, there is an inclusion $\B_k^\Sigma\to \op O(\Conf_k(X))$ of posets defined by \[U\mapsto \Conf_k^0(U,\sigma):=\left\{(x_1\ldots, x_k)\in \Conf_k(U): x_i\in U_{\sigma(i)}\right\}\subseteq\Conf_k(X)\] and similarly an inclusion $\B_k\to \op O(B_k(X))$ defined by \[ U\mapsto B_k^0(U):=\left\{\{x_1,\ldots, x_k\}\in B_k(U): \{x_1,\ldots, x_k\}\cap U_i\neq \varnothing,\, 1\leq i\leq k\right\}\subseteq B_k(X).\] Note that these subsets are in fact open, since $U\subseteq X$ is open and configuration spaces respect open embeddings.

\begin{lemma}
For any $U\in \B_k$ and $\sigma:\{1,\ldots, k\}\xrightarrow{\simeq} \pi_0(U)$, there are canonical homeomorphisms \[B_k^0(U)\cong\Conf_k^0(U,\sigma)\cong\prod_{i=1}^kU_{\sigma(i)}.\]
\end{lemma}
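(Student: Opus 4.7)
The plan is to produce the second homeomorphism by a direct inspection of the definitions, and then obtain the first via a free-action-on-clopen-pieces argument. The whole statement is essentially bookkeeping around the fact that $U$ splits into the $k$ disjoint connected components $U_i$, and the components of a $k$-point configuration hitting all $U_i$ must be permutations of these components.

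First I would verify $\Conf_k^0(U,\sigma)\cong\prod_{i=1}^kU_{\sigma(i)}$. Since $U=\coprod_{i=1}^k U_i$ with the $U_i$ pairwise disjoint, any tuple $(x_1,\ldots,x_k)$ with $x_i\in U_{\sigma(i)}$ automatically has pairwise distinct entries, so the set-theoretic identity $\Conf_k^0(U,\sigma)=\prod_{i=1}^kU_{\sigma(i)}$ holds inside $X^k$. The subspace topology on this product agrees with the product topology because each $U_{\sigma(i)}$ is open in $X$, so the identity is a homeomorphism.

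Next I would restrict the quotient map $\pi\colon\Conf_k(X)\to B_k(X)$ to $\Conf_k^0(U,\sigma)$ and show the resulting continuous map lands in $B_k^0(U)$ bijectively. Bijectivity follows from the pigeonhole principle: a subset $\{x_1,\ldots,x_k\}\subseteq U$ that meets each of the $k$ components must meet each exactly once, so there is a unique ordering with $x_i\in U_{\sigma(i)}$.

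To upgrade bijectivity to a homeomorphism, I would compute
\[\pi^{-1}(B_k^0(U))=\coprod_{\tau\in\Sigma_k}\Conf_k^0(U,\sigma\circ\tau),\]
and observe that this is a decomposition into clopen subsets of $\Conf_k(U)$: each piece is a product of disjoint open sets, and the complement of any one piece is the finite union of the others. The group $\Sigma_k$ permutes the summands freely and transitively, so the quotient is canonically identified with any one of them; taking $\tau=\id$ yields the desired homeomorphism $\Conf_k^0(U,\sigma)\xrightarrow{\sim}B_k^0(U)$. The only subtlety is this last clopen-decomposition check, which guarantees that the restricted open quotient map is actually a homeomorphism rather than merely a continuous bijection.
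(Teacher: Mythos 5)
Your proof is correct and follows essentially the same route as the paper: identify the set-theoretic bijections from the definitions, then upgrade to homeomorphisms using the subspace topology on one side and the quotient map $\pi\colon\Conf_k(X)\to B_k(X)$ on the other. You spell out the clopen decomposition $\pi^{-1}(B_k^0(U))=\coprod_{\tau}\Conf_k^0(U,\sigma\circ\tau)$ and the free transitive $\Sigma_k$-action on the summands, which is exactly the detail the paper leaves implicit in the phrase ``the righthand map is a quotient map.''
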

\begin{proof}
It is easy to see from the definitions that the dashed fillers in the commuting diagram \[\xymatrix{X^k&\Conf_k(X)\ar[l]\ar[r]& B_k(X)\\
\displaystyle\prod_{i=1}^kU_{\sigma(i)}\ar[u]&\Conf_k^0(U,\sigma)\ar@{-->}[l]\ar@{-->}[r]\ar[u]&B_k^0(U)\ar[u]
}\] exist and are bijections. Since the lefthand map is the inclusion of a subspace and the righthand map is a quotient map, the claim follows.
\end{proof}

\begin{proposition}\label{prop:conf basis} Let $X$ be a locally path connected Hausdorff space and $\B$ a topological basis for $X$ consisting of connected subsets.
\begin{enumerate}
\item The collection $\{\Conf_k^0(U,\sigma): (U,\sigma)\in \B_k^\Sigma\}\subseteq \op O(\Conf_k(X))$ is a topological basis.
\item The collection $\{B_k^0(U): U\in \B_k\}\subseteq \op O(B_k(X))$ is a topological basis.
\end{enumerate}
\end{proposition}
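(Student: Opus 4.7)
The plan is to verify the defining property of a topological basis directly: for every open set and every point in it, produce an element of the proposed collection that contains the point and is contained in the open set. Both parts will follow the same two-step pattern, separating the coordinates of a configuration by disjoint neighborhoods (using Hausdorffness) and then shrinking those neighborhoods to basis elements inside an auxiliary product-topology neighborhood. Part (2) will then be deduced from part (1) by lifting across the quotient map $\pi:\Conf_k(X)\to B_k(X)$ and exploiting $\Sigma_k$-invariance.

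For part (1), fix an open $W\subseteq\Conf_k(X)$ and a point $x=(x_1,\ldots,x_k)\in W$. First, since $X$ is Hausdorff, one can choose pairwise disjoint open sets $V_i\ni x_i$. Next, since $\Conf_k(X)$ has the subspace topology from $X^k$, the product-topology basis of $X^k$ supplies open sets $N_i\ni x_i$ with $\bigl(\prod_i N_i\bigr)\cap \Conf_k(X)\subseteq W$. Using that $\B$ is a basis of connected subsets, choose $U_i\in\B$ with $x_i\in U_i\subseteq V_i\cap N_i$; the $U_i$ are automatically pairwise disjoint. Setting $U:=\coprod_i U_i\in \B_k$ and $\sigma:\{1,\ldots,k\}\xrightarrow{\simeq}\pi_0(U)$ by $\sigma(i)=U_i$ gives $(U,\sigma)\in\B_k^\Sigma$. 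By the preceding lemma, $\Conf_k^0(U,\sigma)\cong\prod_i U_{\sigma(i)}$, which visibly contains $x$ and sits inside $W$.

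For part (2), given an open $W'\subseteq B_k(X)$ containing a configuration $\{x_1,\ldots,x_k\}$, let $W:=\pi^{-1}(W')$, which is open (so that $B_k^0(U)$ is indeed open, as noted) and $\Sigma_k$-invariant. Fix any ordering realizing $\{x_1,\ldots,x_k\}$ as a point $x\in W\subseteq\Conf_k(X)$, and apply part (1) to obtain $(U,\sigma)$ with $x\in\Conf_k^0(U,\sigma)\subseteq W$. Then $\pi^{-1}(B_k^0(U))=\bigcup_{\tau}\Conf_k^0(U,\tau)$, and $\Sigma_k$-invariance of $W$ ensures each $\Conf_k^0(U,\tau)$ still lies in $W$; hence $B_k^0(U)\subseteq W'$, and it contains $\{x_1,\ldots,x_k\}$ by construction.

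There is no serious obstacle here: Hausdorffness supplies the disjoint neighborhoods, the assumption that $\B$ consists of connected sets makes the resulting $U$ land in $\B_k$, and the quotient-map property of $\pi$ together with $\Sigma_k$-invariance automates the descent to the unordered case. The only subtle point is to arrange, in a single shrink, that the chosen basis elements be simultaneously disjoint and small enough to lie in the product neighborhood supplied by the subspace topology, which is achieved by shrinking into the intersection $V_i\cap N_i$.
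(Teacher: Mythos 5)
Your argument is correct and matches the paper's approach: shrink a product-topology neighborhood of the configuration inside $\Conf_k(X)$ using Hausdorffness to obtain disjoint basis elements $U_i\in\B$, then pass to the unordered case via the quotient map. The only cosmetic difference is in part (2), where you descend by showing $\pi^{-1}(B_k^0(U))=\bigcup_\tau\Conf_k^0(U,\tau)\subseteq W$ via $\Sigma_k$-invariance, while the paper goes the other way by noting $\pi(\Conf_k^0(U,\sigma))=B_k^0(U)$ directly, which is one step shorter but equivalent.
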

\begin{proof}
By the definition of the product and subspace topologies, it will suffice for the first claim to show that, given $(x_1,\ldots, x_k)\in V\subseteq X^k$ such that \begin{itemize}
\item $V\cong \prod_{i=1}^k V_i$ for open subsets $x_i\in V_i\subseteq X$, and
\item $V\subseteq \Conf_k(X)$,
\end{itemize} there exists $(U,\sigma)\in \B_k^\Sigma$ with $(x_1,\ldots, x_k)\in \Conf_k^0(U,\sigma)\subseteq V$. Now, since $\B$ is a topological basis, we may find $U_i\in \B$ with $x_i\in U_i\subseteq V_i$. The second condition and the assumption that $X$ is Hausdorff imply that the $V_i$ are pairwise disjoint, so we may set $U=\coprod_{i=1}^kU_i$ and take $\sigma(i)=[U_i]$. With these choices \[(x_1,\ldots, x_k)\in \Conf_k^0(U,\sigma)\cong\prod_{i=1}^k U_i\subseteq \prod_{i=1}^k V_i=V,\] as desired.

The second claim follows from first, the fact that $\pi:\Conf_k(X)\to B_k(X)$ is a quotient map, and the fact that $\pi(\Conf_k^0(U,\sigma))=B_k^0(U)$ for every $\sigma$.
\end{proof}

These and related bases will be important for our later study, when we come to hypercover methods. For now, we draw the following consequences.

\begin{corollary}
Let $X$ be a locally path connected Hausdorff space. The projection $\pi:\Conf_k(X)\to B_k(X)$ is a covering space.
\end{corollary}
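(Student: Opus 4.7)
The plan is to produce an explicit evenly-covered neighborhood around each point of $B_k(X)$, using the topological basis assembled in Proposition \ref{prop:conf basis}.

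Fix a basis $\B$ for $X$ consisting of connected subsets, and let $\{x_1,\ldots,x_k\}\in B_k(X)$. The first step is to produce a basic neighborhood: since $X$ is Hausdorff, one can choose pairwise disjoint $U_i\in \B$ with $x_i\in U_i$, so that $U:=\coprod_{i=1}^k U_i$ defines an element of $\B_k$, and $\{x_1,\ldots,x_k\}\in B_k^0(U)$. The neighborhood $B_k^0(U)$ will be the evenly-covered open set.

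Next, I would analyze the preimage $\pi^{-1}(B_k^0(U))$. By definition, $\pi^{-1}(B_k^0(U))$ consists of tuples $(y_1,\ldots,y_k)\in \Conf_k(X)$ such that $\{y_1,\ldots,y_k\}\subseteq U$ and exactly one $y_j$ lies in each $U_i$. The assignment $(y_1,\ldots,y_k)\mapsto \sigma$, where $\sigma(j):=[U_{i}]$ is the component of $U$ containing $y_j$, is locally constant, and therefore partitions the preimage into clopen pieces indexed by bijections $\sigma\colon\{1,\ldots,k\}\xrightarrow{\simeq}\pi_0(U)$; these pieces are precisely the sets $\Conf_k^0(U,\sigma)$. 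Thus
\[
\pi^{-1}(B_k^0(U))=\coprod_{\sigma}\Conf_k^0(U,\sigma),
\]
a disjoint union of $k!$ open subsets of $\Conf_k(X)$.

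Finally, I would verify that $\pi$ restricts to a homeomorphism on each slice. By the lemma immediately preceding Proposition \ref{prop:conf basis}, both $\Conf_k^0(U,\sigma)$ and $B_k^0(U)$ are canonically homeomorphic to $\prod_{i=1}^{k} U_i$, and a direct check shows that the restriction of $\pi$ intertwines these identifications. Hence each $\pi|_{\Conf_k^0(U,\sigma)}$ is a homeomorphism onto $B_k^0(U)$, which establishes the covering property.

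The only genuine subtlety is ensuring that the preimage really is a \emph{disjoint} union rather than merely a union; this rests on the fact that the chosen $U_i$ are pairwise disjoint, for which the Hausdorff hypothesis is essential. Everything else is a routine unwinding of the definitions and of the basis constructed in Proposition \ref{prop:conf basis}.
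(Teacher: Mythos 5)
Your proposal is correct and follows essentially the same approach as the paper, which decomposes $\pi^{-1}(B_k^0(U))$ as a disjoint union of the open slices $\Conf_k^0(U,\sigma)$ indexed by bijections $\sigma\colon\{1,\ldots,k\}\xrightarrow{\simeq}\pi_0(U)$, each homeomorphic to $B_k^0(U)$ via the lemma preceding Proposition \ref{prop:conf basis}. The paper expresses the same decomposition more compactly as a $\Sigma_k$-equivariant identification $\pi^{-1}(B_k^0(U))\cong B_k^0(U)\times\Sigma_k$; you simply spell out the steps (construction of the pairwise-disjoint $U_i$, clopenness of the slices, the fiberwise homeomorphism) that the paper leaves implicit.
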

\begin{proof}
For $U\in \B_k$, we have $\Sigma_k$-equivariant identifications \[\pi^{-1}(B_k^0(U))=\bigcup_{\sigma:\{1,\ldots, k\}\cong \pi_0(U)} \Conf_k^0(U,\sigma)\cong B^0_k(U)\times\Sigma_k,\] where the second is induced by a choice of ordering of $\pi_0(U)$.
\end{proof}

As the example of the line with two origins shows, one cannot in general remove the hypothesis that $X$ be Hausdorff (we learned this example from Sander Kupers).

\begin{corollary}
If $M$ is an $n$-manifold, then $\Conf_k(M)$ and $B_k(M)$ are $nk$-manifolds.
\end{corollary}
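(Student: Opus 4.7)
The plan is to verify the two defining conditions of an $nk$-manifold—Hausdorff and locally Euclidean of dimension $nk$—both of which will follow quickly by specializing the preceding lemma and Proposition \ref{prop:conf basis} to a well-chosen basis for $M$. I do not anticipate any serious obstacle, as the structural content has already been established.

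First, I would choose a topological basis $\B$ for $M$ consisting of connected open subsets each homeomorphic to $\mathbb{R}^n$—for instance, the domains of coordinate charts, which form a basis precisely because $M$ is an $n$-manifold (and which we may take to be countable, so that second countability is automatic downstream). Applying Proposition \ref{prop:conf basis} to this basis produces a topological basis of $\Conf_k(M)$ consisting of the sets $\Conf_k^0(U,\sigma)$ for $(U,\sigma)\in\B_k^\Sigma$, and a topological basis of $B_k(M)$ consisting of the sets $B_k^0(U)$ for $U\in\B_k$. By the lemma preceding that proposition, each such basis element is canonically homeomorphic to
\[
\prod_{i=1}^k U_i \;\cong\; (\mathbb{R}^n)^k \;=\; \mathbb{R}^{nk}.
\]
Consequently every point of $\Conf_k(M)$ and every point of $B_k(M)$ admits a Euclidean neighborhood of dimension $nk$, establishing the local Euclidean property.

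It remains to address the Hausdorff condition. For $\Conf_k(M)$, this is immediate, since it is a subspace of the Hausdorff product $M^k$. For $B_k(M)$ I would argue as follows: the preceding corollary shows that $\pi\colon \Conf_k(M)\to B_k(M)$ is a covering space, so in particular $\pi$ is an open map, and the $\Sigma_k$-action on the Hausdorff space $\Conf_k(M)$ is free and by homeomorphisms. The equivalence relation
\[
R \;=\; \bigcup_{\sigma\in\Sigma_k}\Gamma_\sigma \;\subseteq\; \Conf_k(M)\times\Conf_k(M)
\]
is a finite union of graphs of continuous maps into a Hausdorff space, hence closed. Given distinct classes $[x]\neq[y]$ in $B_k(M)$, the point $(x,y)$ lies in the open complement of $R$, so we may find a basic product neighborhood $U\times V$ of $(x,y)$ disjoint from $R$; then $\pi(U)$ and $\pi(V)$ are open (since $\pi$ is open) and disjoint (any point of $\pi(U)\cap\pi(V)$ would lift to a pair in $(U\times V)\cap R$). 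This gives Hausdorffness of $B_k(M)$ and completes the proof.
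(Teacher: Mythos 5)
Your proof is correct, and for the locally Euclidean part it is exactly the paper's argument: choose $\B$ to be the Euclidean neighborhoods in $M$ and observe that $\Conf_k^0(U,\sigma)\cong B_k^0(U)\cong\prod_{i=1}^kU_{\sigma(i)}\cong\mathbb{R}^{nk}$. The paper's proof stops there, leaving the Hausdorff and second-countability requirements implicit (consistent with its terse style and its convention that ``manifold'' includes these as standing hypotheses). You go further and verify them explicitly: Hausdorffness of $\Conf_k(M)$ as a subspace of $M^k$, and Hausdorffness of $B_k(M)$ via the standard criterion that the quotient of a Hausdorff space by an open map with closed relation $R\subseteq\Conf_k(M)\times\Conf_k(M)$ is Hausdorff, with $R$ closed because it is a finite union of graphs of homeomorphisms. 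This is a genuine (and worthwhile) strengthening of the write-up rather than a different route. One small remark: you invoke freeness of the $\Sigma_k$-action, but freeness plays no role in the Hausdorff argument---closedness of $R$ only needs the $\sigma$'s to be continuous self-maps of a Hausdorff space. Freeness (together with the local triviality you get from the earlier corollary) is what makes $\pi$ a covering map, which you do use to get openness of $\pi$, so the ingredients are all legitimate; just the causal chain is slightly misstated.
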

\begin{proof}
We take $\B$ to be the set of Euclidean neighborhoods in $M$, in which case \[B_k^0(U)\cong \Conf_k^0\cong \mathbb{R}^{nk}\] for any $U\in \B_k$ and $\sigma:\{1,\ldots, k\}\xrightarrow{\simeq}\pi_0(U)$.
\end{proof}

\begin{exercise}
When is $B_k(M)$ orientable?
\end{exercise}

\subsection{Forgetting points}

From now on, unless otherwise specified, we take our background space to be a manifold $M$. In this case, we have access to a poweful tool relating configuration spaces of different cardinalities. 

The starting point is the observation that the natural projections from the product factor through the configuration spaces as in the following commuting diagram: \[\xymatrix{\Conf_\ell(M)\ar@{-->}[d]\ar[r]&M^\ell\ar[d]&(x_1,\ldots, x_\ell)\ar@{|->}[d]\\
\Conf_k(M)\ar[r]&M^k&(x_1,\ldots, x_k)
}\] (we take the projection to be on the last $\ell-k$ coordinates for simplicity, but it is not necessary to make this restriction). Clearly, the fiber over a configuration $(x_1,\ldots, x_k)$ in the base is the configuration space $\Conf_{\ell-k}(M\setminus\{x_1,\ldots, x_k\})$. Our first theorem asserts that the situation is in fact much better than this.

\begin{recollection}
Recall that, if $f:X\to Y$ is a continuous map, the \emph{mapping path space} of $f$ is the space of paths in $Y$ out of the image of $f$. In other words, it is the pullback in the diagram \[\xymatrix{E_f\ar[r]\ar[d]&Y^{[0,1]}\ar[d]&p\ar@{|->}[d]\\
X\ar[r]^-f&Y&p(0).
}\] The inclusion $X\to E_f$ given by the constant paths is a homotopy equivalence, and evaluation at $1$ defines a map $\pi_f:E_f\to Y$, which is a fibration \cite[7.3]{May:CCAT}. The \emph{homotopy fiber} of $f$ is the fiber \[\hofib(f):=\pi_f^{-1}(y)\] of this fibration, where $y\in Y$ is some basepoint. The construction of $E_f$, and hence the homotopy fiber, is functorial, and we say that a diagram \[\xymatrix{
X\ar[d]_-f\ar[r]&X'\ar[d]^-{f'}\\
Y\ar[r]&Y'
}\] is \emph{homotopy Cartesian}, or a \emph{homotopy pullback square}, if the induced map $\hofib(f)\to \hofib(f')$ is a weak equivalence. The primary benefit of knowing that a square is homotopy Cartesian is the induced Mayer--Vietoris long exact sequence in homotopy groups.
\end{recollection}

\begin{theorem}[Fadell--Neuwirth \cite{FadellNeuwirth:CS}]\label{thm:Fadell--Neuwirth}
Let $M$ be a manifold and $0\leq k\leq \ell<\infty$. The diagram \[\xymatrix{
\Conf_{\ell-k}(M\setminus\{x_1,\ldots, x_k\})\ar[d]\ar[r]&\Conf_\ell(M)\ar[d]\\
(x_1,\ldots, x_k)\ar[r]&\Conf_k(M)
}\] is homotopy Cartesian.
\end{theorem}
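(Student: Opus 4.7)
The plan is to show that the forgetful map $\pi:\Conf_\ell(M)\to\Conf_k(M)$ is a quasi-fibration with fiber $\Conf_{\ell-k}(M\setminus\{x_1,\ldots,x_k\})$, which is equivalent to the square being homotopy Cartesian. Since the author has committed to a hypercover and Quillen Theorem B attack (deferred to \S\ref{section:deferred proofs}), I would organize the argument around the topological basis $\{\Conf_k^0(U,\sigma)\}$ from Proposition \ref{prop:conf basis}, in which $U=\coprod_{i=1}^kU_i$ is a disjoint union of Euclidean neighborhoods in $M$.

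The geometric heart of the argument is a local trivialization over each basic open. For $(U,\sigma)\in\mathcal{B}_k^\Sigma$ with $U$ a disjoint union of Euclidean balls and $c_i$ the center of $U_i$, I would construct a fiber-preserving homeomorphism
\[\Phi:\Conf_k^0(U,\sigma)\times\Conf_{\ell-k}(M\setminus\{c_1,\ldots,c_k\})\xrightarrow{\;\cong\;}\pi^{-1}(\Conf_k^0(U,\sigma))\]
as follows. Using a bump-function construction, pick a continuous family of compactly supported self-diffeomorphisms $\phi_y$ of $M$, parameterized by $y=(y_1,\ldots,y_k)\in\Conf_k^0(U,\sigma)$, supported in $\coprod U_i$, and satisfying $\phi_y(c_{\sigma(i)})=y_i$; such a family exists because each $U_i$ is Euclidean. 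Then set $\Phi(y,z):=(y_1,\ldots,y_k,\phi_y(z_1),\ldots,\phi_y(z_{\ell-k}))$. Using that $\{c_{\sigma(i)}\}=\{c_1,\ldots,c_k\}$ and that each $\phi_y$ is a diffeomorphism, one checks that $\Phi$ is a fiberwise bijection, hence a homeomorphism over the base.

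The final step is to globalize via the hypercover machinery: writing both $\Conf_k(M)$ and $\Conf_\ell(M)$ as homotopy colimits over $\mathcal{B}_k^\Sigma$ using the complete-cover apparatus of \S\ref{section:covering theorems}, the local trivializations $\Phi$ assemble into a natural transformation of diagrams whose fiber is constant up to homotopy, and Quillen's Theorem B identifies the homotopy fiber of the induced map on homotopy colimits with the common local fiber. The principal obstacle is the coherence required to promote the individually chosen families $\phi_y$ into a genuine (or coherently homotopy-commutative) natural transformation on the poset $\mathcal{B}_k^\Sigma$: the choices of centers $c_i$ and of the smoothing data are non-canonical, so for nested opens $(U',\sigma')\leq (U,\sigma)$ the local trivializations need not agree on the nose. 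I expect the deferred proof in \S\ref{section:deferred proofs} either to pin down these data functorially (say, via a fixed Riemannian structure) or, more likely given the paper's orientation, to exploit the flexibility of the model-categorical framework to enforce the compatibilities only up to coherent homotopy.
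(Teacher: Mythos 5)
Your high-level plan---write $\Conf_\ell(M)$ and $\Conf_k(M)$ as homotopy colimits over the posets of basic opens and apply Quillen's Theorem B---is the paper's strategy, but the geometric step you interpose is not what the paper does and it introduces exactly the coherence problem you flag at the end. The paper never constructs any local trivialization $\Phi$ or any family of diffeomorphisms $\phi_y$. The key observation it makes, which your proposal misses, is that once the basic opens $\Conf_\ell^0(U,\sigma)$ are known to be contractible, the homotopy colimit is just the classifying space of the indexing poset, so the entire map $\Conf_\ell(M)\to\Conf_k(M)$ is modeled by the map of nerves $B(\op B(M)_\ell^\Sigma)\to B(\op B(M)_k^\Sigma)$ induced by the poset map $\pi(U,\tau)=(\coprod_{i\le k}U_{\tau(i)},\,\tau|_{\{1,\dots,k\}})$. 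Theorem B is then applied directly to the functor $\pi$ between posets, so the hypothesis to verify is purely combinatorial: the overcategory $((U,\sigma)\downarrow\pi)$ must change only by a weak equivalence as $(U,\sigma)$ varies. This is checked by observing that $((U,\sigma)\downarrow\pi)$ contains the homotopy-initial subposet $\op B(M\setminus U)_{\ell-k}^\Sigma$, whose nerve is $\Conf_{\ell-k}(M\setminus U)$ up to weak equivalence, and that the inclusions $M\setminus U'\subseteq M\setminus U$ for $(U,\sigma)\le(U',\sigma')$ are monotopy equivalences. No choice of centers, bump functions, or diffeomorphisms enters anywhere.

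The concrete gap in your route is therefore the very one you name: the families $\phi_y$ depend on arbitrary choices of centers $c_i$ and smoothing data attached to each basic open, and for nested opens $(U',\sigma')\le(U,\sigma)$ there is no reason the trivializations should be compatible, even up to coherent homotopy, without substantial additional work. Your two suggested repairs also don't match what happens: pinning the data down with a Riemannian metric would essentially reconstruct Fadell and Neuwirth's original fiber-bundle proof (a stronger statement the paper deliberately avoids), and the paper does not use ``model-categorical flexibility'' to smuggle in up-to-homotopy coherence. It simply reformulates the problem so that no trivialization is needed. If you want to salvage your trivialization approach, the cleanest fix is to abandon Theorem B and prove the fiber bundle statement directly with a fixed metric; but as a proof of the homotopy-Cartesian statement via the hypercover framework, the right move is to discard the trivializations entirely and identify the overcategories.
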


\begin{remark}
In fact, Fadell--Neuwirth prove that this map is a locally trivial fiber bundle, and they give an identification of its structure group. We will not need this full statement, and our alternate proof of this weaker form will allow us to illustrate the efficacy of hypercover methods at a later point.
\end{remark}

The proof is a debt that we will return to pay after having developed a few more advanced homotopy theoretic techniques. For the time being, we concentrate on exploiting this result.

\begin{corollary}
If $M$ is a simply connected $n$-manifold with $n\geq3$, then $\Conf_k(M)$ is simply connected for every $k\geq0$. In particular, $\pi_1(B_k(M))\cong\Sigma_k$.
\end{corollary}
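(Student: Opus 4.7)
The plan is to proceed by induction on $k$, using the Fadell--Neuwirth theorem (Theorem \ref{thm:Fadell--Neuwirth}) to reduce to a claim about fundamental groups of manifolds with finitely many points removed. The base cases $k=0,1$ are immediate: $\Conf_0(M)=\mathrm{pt}$, and $\Conf_1(M)=M$ is simply connected by hypothesis.

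For the inductive step, assume $\Conf_k(M)$ is simply connected and apply Theorem \ref{thm:Fadell--Neuwirth} with $\ell=k+1$, which supplies a homotopy fiber sequence
\[
M\setminus\{x_1,\ldots,x_k\} \longrightarrow \Conf_{k+1}(M) \longrightarrow \Conf_k(M),
\]
after identifying the fiber $\Conf_1(M\setminus\{x_1,\ldots,x_k\})$ with the punctured manifold itself. The associated long exact sequence in homotopy reads
\[
\pi_1(M\setminus\{x_1,\ldots,x_k\}) \to \pi_1(\Conf_{k+1}(M)) \to \pi_1(\Conf_k(M)),
\]
and the right-hand group is trivial by inductive hypothesis. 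So it suffices to show the left-hand group is trivial as well.

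The main obstacle, and the step which genuinely uses the codimension hypothesis $n\geq 3$, is to verify that removing finitely many points from a simply connected $n$-manifold leaves a simply connected space. I would argue this via general position: any loop $\gamma:S^1\to M$ can be perturbed by a small homotopy to avoid the finite set $\{x_1,\ldots,x_k\}$, since a generic map from a $1$-manifold to an $n$-manifold with $n\geq 2$ misses any prescribed $0$-dimensional submanifold. Similarly, a nullhomotopy $H:D^2\to M$ of such a loop can be perturbed rel boundary to miss $\{x_1,\ldots,x_k\}$, since a generic map from a $2$-manifold to an $n$-manifold with $n\geq 3$ again misses any $0$-dimensional submanifold. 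Thus any loop in $M\setminus\{x_1,\ldots,x_k\}$ which is nullhomotopic in $M$ is already nullhomotopic in the puncture, so the inclusion induces a surjection on $\pi_1$ and in fact $\pi_1(M\setminus\{x_1,\ldots,x_k\})=1$. The long exact sequence then gives $\pi_1(\Conf_{k+1}(M))=1$, completing the induction.

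For the final assertion, recall from the corollary above that $\pi:\Conf_k(M)\to B_k(M)$ is a covering map with deck group $\Sigma_k$ acting freely (since points in a configuration are distinct, the action of $\Sigma_k$ on $\Conf_k(M)$ has no fixed points). Since $\Conf_k(M)$ has been shown to be simply connected, it is the universal cover of $B_k(M)$, and standard covering space theory identifies $\pi_1(B_k(M))$ with the deck transformation group $\Sigma_k$.
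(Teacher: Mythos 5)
Your proof is correct and follows the same overall inductive structure as the paper: the Fadell--Neuwirth fiber sequence $M\setminus\{x_1,\ldots,x_k\}\to\Conf_{k+1}(M)\to\Conf_k(M)$, the key intermediate fact that a simply connected $n$-manifold with $n\geq 3$ remains simply connected after removing finitely many points, and the identification of $\Conf_k(M)\to B_k(M)$ as a universal $\Sigma_k$-cover. The one place you diverge is in establishing that intermediate fact: the paper invokes Van Kampen, covering $M$ by $M\setminus\{x\}$ and a small ball $B\ni x$, so that $\pi_1(M)\cong\pi_1(M\setminus\{x\})*_{\pi_1(B\setminus\{x\})}\pi_1(B)\cong\pi_1(M\setminus\{x\})$ once one notes $B\setminus\{x\}\simeq S^{n-1}$ is simply connected for $n\geq 3$. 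You instead argue by general position that maps from $S^1$ and $D^2$ can generically be pushed off a finite set of points in a manifold of dimension $\geq 3$. Both are standard; the Van Kampen route is purely homotopy-theoretic and works for topological manifolds, while yours uses smooth transversality, which is fine under the paper's standing convention that manifolds are smooth. One expository hiccup worth fixing: after describing the disk perturbation you write that it shows ``the inclusion induces a surjection on $\pi_1$'' --- what you have actually shown at that point is \emph{injectivity} of $\pi_1(M\setminus\{x_1,\ldots,x_k\})\to\pi_1(M)$; the loop perturbation gives surjectivity, and in fact neither direction needs to be called out separately here, since $\pi_1(M)=1$ and injectivity alone forces the source to be trivial. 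The conclusion is correct; only the labelling of which half of the argument does what needs tidying.
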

\begin{proof}
The case $k=0$ is trivial and the case $k=1$ is our assumption. The Van Kampen theorem and our assumption on $n$ imply that $M\setminus\{\pt\}$ is simply connected, so the first claim follows by induction using the exact sequence \[\xymatrix{\pi_1(M\setminus \{\pt\})\ar[r]&\pi_1(\Conf_{k}(M))\ar[r]&\pi_1(\Conf_{k-1}(M)).}\] The second claim follows from the observation that $\Conf_k(M)\to B_k(M)$ is a $\Sigma_k$-cover with simply connected total space and hence the universal cover.
\end{proof}

\begin{corollary}\label{cor:surface aspherical}
If $M$ is a connected surface different from $S^2$ or $\mathbb{RP}^2$, then $\Conf_k(M)$ is aspherical for every $k\geq0$. In particular, $B_k(M)$ is aspherical.
\end{corollary}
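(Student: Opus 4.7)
The plan is to induct on $k$, leveraging the Fadell--Neuwirth fibration to push information about configuration spaces of fewer points (on a punctured surface) up to configuration spaces of more points. The base cases are $k=0$ (a point) and $k=1$: indeed, a connected surface is aspherical if and only if it is neither $S^2$ nor $\mathbb{RP}^2$, which follows from the classification of surfaces, since any other closed surface has universal cover $\mathbb{R}^2$, and any non-compact surface deformation retracts onto a $1$-dimensional CW complex (a graph) and is therefore aspherical.

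For the inductive step, fix $k \geq 2$ and a connected surface $M$ other than $S^2$ or $\mathbb{RP}^2$. Fix $x \in M$ and apply Theorem \ref{thm:Fadell--Neuwirth} with $\ell = k$ to obtain the homotopy fiber sequence
\[
\Conf_{k-1}(M \setminus \{x\}) \longrightarrow \Conf_k(M) \longrightarrow M.
\]
The base $M$ is aspherical by hypothesis. The surface $M \setminus \{x\}$ is connected (since removing a point from a connected manifold of dimension $\geq 2$ preserves connectedness) and non-compact, so by the remarks on the base case it is also aspherical, and the inductive hypothesis applies to show $\Conf_{k-1}(M \setminus \{x\})$ is aspherical. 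The long exact sequence of the fiber sequence then forces $\pi_i(\Conf_k(M)) = 0$ for all $i \geq 2$, completing the induction.

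For the claim about $B_k(M)$, recall from Corollary above that $\pi \colon \Conf_k(M) \to B_k(M)$ is a covering map, so $\pi_i(B_k(M)) \cong \pi_i(\Conf_k(M)) = 0$ for all $i \geq 2$.

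The main obstacle is the need to broaden the inductive statement: applying Fadell--Neuwirth once reduces to a \emph{punctured} surface, not just a closed one, so the induction has to be carried out over the class of all connected aspherical surfaces (closed or punctured). This is not really an obstacle so much as an observation that has to be made up front. Once one notices that removing a point from \emph{any} surface produces an aspherical one (in particular, even $S^2 \setminus \{\pt\} \simeq \mathbb{R}^2$ and $\mathbb{RP}^2 \setminus \{\pt\} \simeq S^1$ are aspherical), the induction closes cleanly and all subtlety with the exceptional surfaces $S^2$ and $\mathbb{RP}^2$ is confined to the base case $k = 1$.
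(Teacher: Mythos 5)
Your proof is correct. The paper's proof also inducts on $k$ via Fadell--Neuwirth and finishes the unordered case with the covering map, but it projects onto a different factor: it uses the map $\Conf_k(M)\to\Conf_{k-1}(M)$, whose homotopy fiber is $M\setminus\{x_1,\ldots,x_{k-1}\}$, so the induction is on $k$ with $M$ held fixed, and the only auxiliary fact needed is that any punctured $M$ is aspherical (a direct observation, not part of the induction). You instead project onto $\Conf_1(M)=M$, with homotopy fiber $\Conf_{k-1}(M\setminus\{x\})$, which forces the inductive statement to range over \emph{all} connected aspherical surfaces, as you correctly flag at the end. Both routes are sound and rely on the same two ingredients — the fibration and the fact that puncturing a surface produces an aspherical one — but the paper's choice of projection keeps the induction confined to a single surface and is marginally tidier for that reason.
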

\begin{proof}
The case $k=0$ is obvious and the case $k=1$ follows from our assumption on $M$. This assumption further guarantees that $M\setminus\{\pt\}$ is also aspherical, so the first claim follows by induction using the exact sequence \[\xymatrix{\pi_i(M\setminus \{\pt\})\ar[r]&\pi_i(\Conf_{k}(M))\ar[r]&\pi_i(\Conf_{k-1}(M))}\] with $i\geq2$. The second claim follows from the first and the fact that $\pi:\Conf_k(M)\to B_k(M)$ is a covering space.
\end{proof}

In order to proceed further, it will be useful to have a criterion for splitting these exact sequences.

\begin{proposition}
If $M$ is the interior of a manifold with non-empty boundary, then the map $\pi_{k,\ell}:\Conf_\ell(M)\to \Conf_k(M)$ admits a section up to homotopy.
\end{proposition}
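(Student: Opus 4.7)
The plan is to use the collar neighborhood of $\partial\bar M$ (where $M$ is the interior of $\bar M$) to simultaneously ``shrink'' $M$ away from a collar region near the boundary and drop the extra $\ell-k$ points into the region that has been vacated. Monotopy invariance (Proposition~\ref{prop:monotopy}) then produces the desired homotopy $\pi_{k,\ell}\circ s \simeq \id$.

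In more detail, I would first fix a collar neighborhood $V \cong \partial\bar M \times [0,1)$ of $\partial\bar M$ in $\bar M$, so that $V\cap M \cong \partial\bar M \times (0,1)$. Using a smooth cutoff function $\rho\colon[0,1)\to[0,1]$ that is $1$ on $[0,1/2]$ and $0$ on $[3/4,1)$, I would define a monotopy $h\colon M\times[0,1]\to M$ by $h_s(x,t)=(x,(1-s\rho(t)/2)\,t)$ on the collar and $h_s=\id$ elsewhere. For each $s$ the map $h_s$ is an injective embedding (since $t\mapsto(1-s\rho(t)/2)t$ is strictly increasing for the chosen $\rho$), and $h_0=\id_M$. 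Set $r:=h_1\colon M\to M$; by construction $r(M)\subseteq M \setminus \bigl(\partial\bar M\times(1/2,1)\bigr)$, and $r$ is monotopic to $\id_M$.

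Next, choose $\ell-k$ distinct points $p_{k+1},\ldots,p_\ell$ lying in the vacated slab $\partial\bar M\times(1/2,3/4)\subseteq M$. Define
\[
s\colon\Conf_k(M)\longrightarrow \Conf_\ell(M),\qquad s(x_1,\ldots,x_k) = \bigl(r(x_1),\ldots,r(x_k),p_{k+1},\ldots,p_\ell\bigr).
\]
This is a well-defined continuous map: injectivity of $r$ ensures $r(x_1),\ldots,r(x_k)$ remain distinct, while disjointness from the $p_j$ is automatic since the $p_j$ lie in a region disjoint from $r(M)$. Composing with the projection gives $\pi_{k,\ell}\circ s=\Conf_k(r)$, and Proposition~\ref{prop:monotopy} applied to the monotopy $h$ shows $\Conf_k(r)\simeq \Conf_k(\id_M)=\id$, completing the argument.

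The only genuinely non-formal step is the construction of the monotopy $h$; this is essentially the same collar-shrinking trick used in the Example preceding the proposition, so I do not expect any substantial obstacle. Everything else is formal manipulation once the shrinking map $r$ and its monotopy to the identity are in hand. Note that non-emptiness of $\partial\bar M$ is used precisely to guarantee the existence of a collar along which we can retract while still leaving room to plant the fixed auxiliary points.
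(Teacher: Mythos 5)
Your proof is correct and follows essentially the same approach as the paper: both constructions retract $M$ off a collar neighborhood of the boundary via an embedding isotopic (monotopic) to the identity, insert the $\ell-k$ auxiliary points in the vacated region, and observe that the composite $\pi_{k,\ell}\circ s$ is $\Conf_k$ applied to that embedding, hence homotopic to the identity. You simply spell out the collar-shrinking monotopy $h$ more explicitly than the paper does, which leaves it as a consequence of the collar neighborhood theorem.
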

\begin{proof}
Write $M=\mathring{N}$, and fix a collar neighborhood $\partial N\subseteq U$ and an ordered set $\{x_{k+1},\cdots, x_\ell\}$ of distinct points in $U$. By retracting along the collar, we obtain an embedding $\varphi:M\to M$ that is isotopic to the identity and misses our chosen points. The assignment $(x_1,\ldots, x_k)\mapsto (\varphi(x_1),\ldots, \varphi(x_k), x_{k+1},\cdots, x_\ell)$ defines a continuous map $s:\Conf_k(M)\to \Conf_\ell(M)$ such that $\pi_{k,\ell}\circ s=\Conf_k(\varphi)\simeq \id_{\Conf_k(M)}$, since $\varphi$ is isotopic to the identity.
\end{proof}

\begin{corollary}
For $n\geq 3$, $k\geq0$, and $i\geq0$, there is an isomorphism \[\pi_i(\Conf_k(\mathbb{R}^n))\cong \prod_{j=1}^{k-1} \pi_i\left(\bigvee_jS^{n-1}\right).\]
\end{corollary}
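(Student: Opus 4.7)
The plan is to prove this by induction on $k$, with the Fadell--Neuwirth fibration providing the inductive step. The base cases $k=0,1$ are trivial: $\Conf_0(\mathbb{R}^n)$ is a point and $\Conf_1(\mathbb{R}^n)=\mathbb{R}^n$ is contractible, while the empty product on the right side also vanishes. So assume $k \geq 2$ and that the claim holds for $k-1$.

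For the inductive step, I would apply Theorem \ref{thm:Fadell--Neuwirth} (Fadell--Neuwirth) with $\ell=k$ and base cardinality $k-1$, yielding a homotopy Cartesian square whose associated homotopy fiber sequence is
\[\Conf_1(\mathbb{R}^n\setminus\{x_1,\ldots,x_{k-1}\})\longrightarrow \Conf_k(\mathbb{R}^n)\xrightarrow{\ \pi_{k-1,k}\ }\Conf_{k-1}(\mathbb{R}^n).\]
The fiber is simply $\mathbb{R}^n$ with $k-1$ punctures, which is well known to be homotopy equivalent to $\bigvee_{k-1}S^{n-1}$. Since $\mathbb{R}^n$ is homeomorphic to the interior of the closed disk $D^n$, a manifold with nonempty boundary, the preceding proposition produces a homotopy section $s:\Conf_{k-1}(\mathbb{R}^n)\to\Conf_k(\mathbb{R}^n)$ of $\pi_{k-1,k}$.

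Now I would feed this into the long exact sequence of homotopy groups associated to the homotopy fiber sequence. The existence of the homotopy section forces every connecting map to vanish, so the long exact sequence breaks up into short exact sequences
\[1\longrightarrow \pi_i\!\left(\textstyle\bigvee_{k-1}S^{n-1}\right)\longrightarrow \pi_i(\Conf_k(\mathbb{R}^n))\longrightarrow \pi_i(\Conf_{k-1}(\mathbb{R}^n))\longrightarrow 1\]
for each $i\geq 1$, and the induced map from $s$ splits each of them. To conclude that this splitting gives a direct (not semidirect) product in the case $i=1$, I would invoke the earlier corollary: since $n\geq 3$ and $\mathbb{R}^n$ is simply connected, $\Conf_{k-1}(\mathbb{R}^n)$ is simply connected, so there is no nontrivial action of $\pi_1$ of the base on $\pi_i$ of the fiber. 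Therefore
\[\pi_i(\Conf_k(\mathbb{R}^n))\;\cong\;\pi_i\!\left(\textstyle\bigvee_{k-1}S^{n-1}\right)\times\pi_i(\Conf_{k-1}(\mathbb{R}^n)).\]
Combining this with the inductive hypothesis yields the claimed product decomposition, and the cases $i=0$ are absorbed by the convention that all these spaces are path connected.

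The only potentially delicate point is the $i=1$ case, where a fibration with section generally yields a semidirect product of fundamental groups; but the hypothesis $n\geq 3$ kills $\pi_1$ of the base via the earlier simply connected corollary, making the extension automatically trivial. Apart from that, the argument is bookkeeping, and the essential content is the Fadell--Neuwirth fibration together with the existence of a section coming from the collar of $D^n$.
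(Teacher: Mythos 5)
Your proof is correct and follows the same route as the paper: Fadell--Neuwirth applied to the map $\Conf_k(\mathbb{R}^n)\to\Conf_{k-1}(\mathbb{R}^n)$, the homotopy section from the preceding proposition splitting the resulting long exact sequence into short exact sequences, and the identification of the fiber with $\bigvee_{k-1}S^{n-1}$. The paper dispatches $\pi_0$ and $\pi_1$ up front by citing the earlier simply-connected corollary and then invokes abelianness for $i\geq 2$; you instead fold $\pi_1$ into the induction by observing the base is simply connected so the extension is forced trivial — a cosmetic reorganization of the same argument.
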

\begin{proof}
For $k\in \{0,1\}$ the claim is obvious, as is the claim for $\pi_0$, and the claim for $\pi_1$ has already been established. In the generic case, we proceed by induction using the exact sequence \[\xymatrix{
\pi_{i+1}(\Conf_{k-1}(\mathbb{R}^n))\ar[r]& \pi_i(\mathbb{R}^n\setminus\{x_1,\ldots, x_{k-1}\})\ar[r]&\pi_i(\Conf_k(\mathbb{R}^n))\ar[r]&\pi_i(\Conf_{k-1}(\mathbb{R}^n)).
}\] The section up to homotopy constructed above induces a section at the level of homotopy groups, so the lefthand map is trivial and the righthand map is surjective. The result now follows from the homotopy equivalence $\mathbb{R}^n\setminus\{x_1,\ldots, x_{k-1}\}\simeq \bigvee_{k-1}S^{n-1}$ and the fact that all groups in sight are Abelian.
\end{proof}

The higher homotopy groups of bouquets of spheres being very complicated objects \cite{Hilton:HGUS}, this result is a striking contrast to the situation in dimension 2 as characterized in Corollary \ref{cor:surface aspherical}.

\begin{remark}
It should be noted that the product decomposition of the previous corollary is additive only. Viewed as shifted Lie algebra via the Whitehead bracket, $\pi_*(\Conf_k(\mathbb{R}^n))$ has a very rich structure---see \cite[II]{FadellHusseini:GTCS}.
\end{remark}

The following result is proved by essentially the same argument.

\begin{corollary}
The fundamental group of $\Conf_k(\mathbb{R}^2)$ is an iterated extension of free groups.
\end{corollary}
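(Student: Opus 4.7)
The plan is to mirror the argument given for the odd-dimensional case, again exploiting the Fadell--Neuwirth fibration of Theorem \ref{thm:Fadell--Neuwirth}, but this time being careful to handle dimension $n=2$. From the homotopy Cartesian square with $M = \mathbb{R}^2$, $\ell = k$ and forgetting the last point, we extract the fragment
\[
\pi_2(\Conf_{k-1}(\mathbb{R}^2)) \to \pi_1(\mathbb{R}^2 \setminus \{x_1,\ldots,x_{k-1}\}) \to \pi_1(\Conf_k(\mathbb{R}^2)) \to \pi_1(\Conf_{k-1}(\mathbb{R}^2)) \to \pi_0(\mathbb{R}^2 \setminus \{x_1,\ldots,x_{k-1}\}).
\]
I would induct on $k$, with $k\in\{0,1\}$ giving the trivial group as the base case.

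For the inductive step, first note that $\mathbb{R}^2\setminus\{x_1,\ldots,x_{k-1}\}\simeq\bigvee_{k-1}S^1$ is connected, so the rightmost term vanishes and the penultimate map is surjective; its $\pi_1$ is the free group $F_{k-1}$. Next, since $\mathbb{R}^2$ is a connected surface distinct from $S^2$ and $\mathbb{RP}^2$, Corollary \ref{cor:surface aspherical} gives $\pi_2(\Conf_{k-1}(\mathbb{R}^2)) = 0$, which kills the leftmost term and forces the map $F_{k-1}\to\pi_1(\Conf_k(\mathbb{R}^2))$ to be injective. Consequently we obtain a short exact sequence
\[
1 \to F_{k-1} \to \pi_1(\Conf_k(\mathbb{R}^2)) \to \pi_1(\Conf_{k-1}(\mathbb{R}^2)) \to 1.
\]
By the inductive hypothesis, the quotient is already an iterated extension of free groups, so $\pi_1(\Conf_k(\mathbb{R}^2))$ extends this tower by one more free group, completing the induction.

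The main obstacle—or rather the one step requiring genuine input beyond the long exact sequence—is the vanishing $\pi_2(\Conf_{k-1}(\mathbb{R}^2)) = 0$, which in turn rests on the asphericity statement of Corollary \ref{cor:surface aspherical}. Unlike the $n\geq 3$ case, one cannot invoke a splitting to force injectivity for free (the section-up-to-homotopy argument does split the sequence, but that is only relevant for the \emph{form} of the extension, not for its exactness); asphericity of the preceding configuration space is what makes the sequence short exact. Everything else is formal bookkeeping with the Fadell--Neuwirth fibration.
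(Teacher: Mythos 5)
Your proof is correct, but it diverges from the route the paper has in mind. The paper's phrase ``proved by essentially the same argument'' points back to the corollary immediately preceding it, whose proof used the section-up-to-homotopy $s:\Conf_{k-1}(\mathbb{R}^n)\to \Conf_k(\mathbb{R}^n)$ (guaranteed by the proposition for interiors of manifolds with boundary) together with the Fadell--Neuwirth long exact sequence. In that argument, the section induces a splitting on all homotopy groups, so $\pi_2(\Conf_k(\mathbb{R}^2))\to\pi_2(\Conf_{k-1}(\mathbb{R}^2))$ is surjective, which forces the connecting map $\pi_2(\Conf_{k-1}(\mathbb{R}^2))\to\pi_1(\mathbb{R}^2\setminus\{x_1,\ldots,x_{k-1}\})$ to vanish. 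That is what gives injectivity of $F_{k-1}\hookrightarrow\pi_1(\Conf_k(\mathbb{R}^2))$ in the paper's approach, and it requires no asphericity input whatsoever. You instead kill $\pi_2(\Conf_{k-1}(\mathbb{R}^2))$ outright via Corollary \ref{cor:surface aspherical}, which is a perfectly valid alternative and arguably more memorable, since it makes visible \emph{why} the low-dimensional case behaves so differently.

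However, your closing paragraph contains a misstatement: you claim that ``one cannot invoke a splitting to force injectivity for free'' and that the section is ``only relevant for the form of the extension, not for its exactness.'' This is backwards. A section splits the map on every homotopy group, in particular $\pi_2$, and surjectivity of $\pi_2(\Conf_k(\mathbb{R}^2))\to\pi_2(\Conf_{k-1}(\mathbb{R}^2))$ already forces the boundary map into $\pi_1$ of the fiber to be zero, exactly as in the $n\geq 3$ case. So the splitting argument does give short exactness for free, and it is the paper's intended mechanism here; $\mathbb{R}^2$ is the interior of $D^2$, so the section exists. Your asphericity route is a second, independent proof of the same vanishing, not a replacement for a failed argument. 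You should either drop the final paragraph or rephrase it to say that asphericity furnishes an \emph{alternative} to the section argument, rather than a necessity.
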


\subsection{Braid groups}

We turn our attention now to the fundamental group of the unordered configuration space $B_k(\mathbb{R}^2)$. We fix the basepoints $\{(2i,0)\}_{i=1}^k\in B_k(\mathbb{R}^2)$ and $(2,\ldots, 2k)\in \Conf_k(\mathbb{R}^2)$ and employ them implicitly throughout. 

An element in $\pi_1(B_k(\mathbb{R}^2))$ is determined by the data of a permutation $\tau\in\Sigma_k$ and a path $p:[0,\pi]\to (\mathbb{R}^2)^k$ such that
\begin{enumerate}
\item $p(0)_r=(2r,0)$ for $1\leq r\leq k$,
\item $p(\pi)_r=(2\tau(r),0)$ for $1\leq r\leq k$, and
\item $p(t)_r\neq p(t)_{r'}$ for $1\leq r\neq r'\leq k$ and $t\in[0,\pi]$.
\end{enumerate} Composition is determined by composition of permutations and concatenation of paths; in particular, there is a canonical group homomorphism \[\pi_1(B_k(\mathbb{R}^2))\to \Sigma_k,\] which we will shortly see to be surjective. We have the following simple observation regarding this homomorphism.

\begin{lemma}
The subgroup $\pi_1(\Conf_k(\mathbb{R}^2))\leq \pi_1(B_k(\mathbb{R}^2))$ coincides with the kernel of the homomorphism $B_k\to \Sigma_k$.
\end{lemma}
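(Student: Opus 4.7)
The plan is to identify the homomorphism $\pi_1(B_k(\mathbb{R}^2)) \to \Sigma_k$ with the monodromy of the covering space $\pi:\Conf_k(\mathbb{R}^2)\to B_k(\mathbb{R}^2)$ established in the previous corollary, and then invoke the standard fact that the image of $\pi_\ast$ on fundamental groups equals the stabilizer of the chosen basepoint in the fiber, which in turn coincides with the kernel of the monodromy action on that fiber.

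First I would note that since $\pi$ is a covering map onto a locally path connected Hausdorff space, $\pi_\ast$ is injective on $\pi_1$, which legitimizes the lemma's notation ``subgroup.'' Next, I would unpack the definition of the homomorphism $\pi_1(B_k(\mathbb{R}^2))\to \Sigma_k$ spelled out in the paragraph preceding the lemma: a based loop $\gamma$ in $B_k(\mathbb{R}^2)$ admits a unique lift $\tilde\gamma$ to $\Conf_k(\mathbb{R}^2)$ starting at $(2,4,\ldots,2k)$ (by the path lifting property of the covering), and the permutation $\tau\in\Sigma_k$ associated to $[\gamma]$ is precisely the one recorded by $\tilde\gamma(1)=(2\tau(1),\ldots,2\tau(k))$. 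Finally, I would apply the basic covering-space criterion: $[\gamma]\in \pi_\ast\pi_1(\Conf_k(\mathbb{R}^2))$ if and only if $\tilde\gamma$ is itself a loop, which happens exactly when $\tilde\gamma(1)=(2,4,\ldots,2k)$, i.e., when $\tau=\id$. Both inclusions between the subgroup and the kernel follow immediately.

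The main conceptual step, such as it is, is simply reconciling the two descriptions of the map to $\Sigma_k$: the coordinate-wise description (in which one thinks of $k$ non-colliding paths in $\mathbb{R}^2$) and the monodromy description (in which $\Sigma_k$ acts as the deck group of the cover). Both associate to $[\gamma]$ the permutation induced by coordinate-wise lifting on the basepoint, so they agree tautologically. Beyond this identification, the argument is a direct appeal to elementary covering space theory; no computation or non-trivial input is required.
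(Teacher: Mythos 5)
Your argument is correct and complete. It takes a slightly different route from the paper's one-line proof: the paper notes the easy inclusion $\pi_1(\Conf_k(\mathbb{R}^2))\leq\ker$ and then finishes by comparing indices, observing that both subgroups have index $k!$ in $\pi_1(B_k(\mathbb{R}^2))$ (the left-hand side because the cover is $k!$-sheeted, the kernel because the map hits all of $\Sigma_k$). You instead identify the homomorphism to $\Sigma_k$ with the monodromy of the covering and invoke the stabilizer characterization $\pi_*\pi_1(\Conf_k) = \{[\gamma] : \tilde\gamma \text{ closes up}\}$, which is exactly the condition $\tau = \id$. The two arguments carry the same covering-space content, but yours avoids needing to know in advance that the map to $\Sigma_k$ is surjective (which the paper has only promised, not yet proved, at this point in the text), and instead establishes both inclusions directly from path lifting. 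One small caution: you phrase the conclusion as "the stabilizer of the basepoint coincides with the kernel of the monodromy action"; in general a stabilizer is larger than the kernel of the action, and the equality here is specific to the fact that $\Conf_k\to B_k$ is a \emph{regular} cover (equivalently, that $\pi_*\pi_1(\Conf_k)$ is normal). Since the homomorphism defined in the text really is "lift and read off the endpoint," its kernel is the stabilizer by definition, so this subtlety does not affect the proof, but it is worth not conflating the two in general.
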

\begin{proof}
It is obvious that $\pi_1(\Conf_k(\mathbb{R}^2))\leq \ker(B_k\to \Sigma_k)$, and both subgroups have the same index in $B_k$, since $\Conf_k(\mathbb{R}^2)\to B_k(\mathbb{R}^2)$ is $k!$-fold cover.
\end{proof}

To see verify surjectivity, we will exhibit elements $\sigma_i\in\pi_1(B_k(\mathbb{R}^2))$ lifting the respective transpositions $\tau_i=(i,i+1)$.

\begin{construction}
For $1\leq i\leq k$, define a path $p_i:[0,\pi]\to (\mathbb{R}^2)^k$ by the formula \[p_i(t)_r=\begin{cases}
(2r,0)&\quad r\notin\{i,i+1\}\\
c_{2i+1,1}(t+\pi)&\quad r=i\\ 
c_{2i+1,1}(t)&\quad r=i+1,
\end{cases}\] where $c_{a,b}(t)=(a+b\cos t,b\sin t)$ is the standard parametrization of the circle of radius $b$ centered at $(a,0)$.

The dashed factorization exists in the commuting diagram \[\xymatrix{[0,\pi]\ar[d]_-{p_i}\ar@{-->}[dr]\ar[r]&B_k(\mathbb{R}^2)\\
(\mathbb{R}^2)^k&\Conf_k(\mathbb{R}^2)\ar[l]\ar[u]_-\pi.
}\] Indeed, $\sin t=-\sin t$ if and only if $t\in \{0,\pi\}$, and in both of these cases we have $\cos t\neq -\cos t$. We write $\sigma_i$ for the homotopy class of the top horizontal map relative to $\{0,\pi\}$. Since $p_i(0)=((2,0),\ldots, (2i,0), (2i+2,0), \ldots, (2k,0))$ and $p_i(\pi)=((2,0),\ldots, (2i+2,0), (2i,0),\ldots, (2k,0))$, the class $\sigma_i$ defines an element of $\pi_1(B_k(\mathbb{R}^2))$.
\end{construction}

These elements satisfy some easy relations.

\begin{lemma}
If $|i-j|>1$, then $\sigma_i\sigma_j=\sigma_j\sigma_i$.
\end{lemma}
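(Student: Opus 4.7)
The plan is to witness the commutation by constructing an explicit two-parameter family performing both braid motions at once, and then recognize this family as a homotopy between the two concatenations.

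Concretely, I would define a map $H \colon [0,\pi]\times[0,\pi] \to B_k(\mathbb{R}^2)$ by the formula
\[
H(s,t)_r = \begin{cases}
c_{2i+1,1}(s+\pi) & r = i \\
c_{2i+1,1}(s) & r = i+1 \\
c_{2j+1,1}(t+\pi) & r = j \\
c_{2j+1,1}(t) & r = j+1 \\
(2r,0) & r \notin \{i,i+1,j,j+1\},
\end{cases}
\]
viewed as a point of $B_k(\mathbb{R}^2)$ by forgetting the ordering. The first step is to verify that this really lands in $B_k(\mathbb{R}^2)$, i.e.\ that the $k$ points listed above are distinct for every $(s,t)$. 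Since $|i-j|>1$, the pairs $\{i,i+1\}$ and $\{j,j+1\}$ are disjoint, and the two unit circles centered at $(2i+1,0)$ and $(2j+1,0)$ have centers separated by at least $4$, hence are disjoint; the other stationary points $(2r,0)$ with $r\notin\{i,i+1,j,j+1\}$ lie at distance $\geq 3$ from both centers and so avoid both circles. This is the only place where the hypothesis $|i-j|>1$ enters, and it is the main thing that needs to be checked.

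With $H$ in hand, the proof reduces to reading off what $H$ does on the boundary of the square. At the four corners, each of the two braids is either undone or completed, and in either case the underlying unordered configuration equals the basepoint $\{(2,0),\ldots,(2k,0)\}$. Restricting to the bottom edge $t=0$ followed by the right edge $s=\pi$ reproduces the concatenation $p_i * p_j$ representing $\sigma_i\sigma_j$; restricting to the left edge $s=0$ followed by the top edge $t=\pi$ reproduces $p_j * p_i$ representing $\sigma_j\sigma_i$. Since $[0,\pi]^2$ is contractible, $H$ provides a homotopy (rel corners) between these two composite boundary paths, so $\sigma_i\sigma_j = \sigma_j\sigma_i$ in $\pi_1(B_k(\mathbb{R}^2))$.

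I do not expect any serious obstacle beyond the disjointness check described above; the rest is a direct unwinding of the definition of $\sigma_i$ together with the standard fact that two sides of a square, regarded as paths with common endpoints, are homotopic rel endpoints inside the square.
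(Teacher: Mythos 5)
Your proof is correct and is essentially the paper's own argument: you construct the same two-parameter map $H$ on $[0,\pi]^2$, verify that the unit circles about $(2i+1,0)$ and $(2j+1,0)$ are disjoint from each other and from the stationary points precisely because $|i-j|>1$, and conclude by comparing the two boundary paths of the square. The paper writes $H(s,t)_r$ as $p_i(s)_r$, $p_j(t)_r$, or $(2r,0)$ rather than expanding $c_{a,b}$, but that is the only cosmetic difference.
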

\begin{proof}
Without loss of generality, we may assume that $j>i$. Define $H:[0,\pi]^2\to (\mathbb{R}^2)^k$ by the formula \[H(s,t)_r=\begin{cases}
(2r,0)&\quad r\notin\{i,i+1,j,j+1\}\\
p_i(s)_r&\quad r\in\{i,i+1\}\\
p_j(t)_r&\quad r\in \{j,j+1\}.
\end{cases}\] For $r\in\{i,i+1\}$, the image of $H(s,t)_{r}$ is contained in a closed disk of radius $1$ around $(2i+1,0)$, and, for $r\in\{j,j+1\}$, in a closed disk of radius $1$ around $(2j+1,0)$. Since $|j-i|>1$, these sets are disjoint, and we conclude that $H$ factors through $\Conf_k(\mathbb{R}^2)$. The lemma now follows from the observation that $\sigma_j\sigma_i$ (resp. $\sigma_i\sigma_j$) is represented by the path obtained by composing $H$ with the counterclockwise (resp. clockwise) path around the boundary of $[0,\pi]^2$.
\end{proof}

\begin{lemma}
For $1\leq i\leq k$, $\sigma_i\sigma_{i+1}\sigma_i=\sigma_{i+1}\sigma_i\sigma_{i+1}$.
\end{lemma}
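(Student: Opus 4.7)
The plan is to reduce to $k=3$ and then show that both sides represent a single ``half-turn'' of three points around their center. First, in both composite paths $\sigma_i\sigma_{i+1}\sigma_i$ and $\sigma_{i+1}\sigma_i\sigma_{i+1}$, every strand other than those indexed by $i,i+1,i+2$ is constant at its basepoint $(2r,0)$ throughout. Consequently, any monotopy between the restrictions to $\Conf_3(\mathbb{R}^2)$ extends coordinatewise to a monotopy in $\Conf_k(\mathbb{R}^2)$ by the constant path on the stationary strands. So without loss of generality we assume $k=3$ and $i=1$, and work with basepoint $\bigl((2,0),(4,0),(6,0)\bigr)$.

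The key geometric idea is that both concatenated paths are homotopic to a single rigid rotation. Define $\rho\colon [0,3\pi]\to \Conf_3(\mathbb{R}^2)$ by rotating the outer points about $(4,0)$ through angle $t/3$ while fixing the middle point:
\[
\rho(t)=\bigl((4-2\cos(t/3),\,-2\sin(t/3)),\;(4,0),\;(4+2\cos(t/3),\,2\sin(t/3))\bigr).
\]
The outer points stay at distance $2$ from $(4,0)$ and the middle point at distance $0$, so no collision occurs; moreover $\rho(3\pi)$ is obtained from $\rho(0)$ by interchanging the outer points, so $\rho$ descends to a loop in $B_3(\mathbb{R}^2)$ based at $\{(2,0),(4,0),(6,0)\}$.

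I would then construct a monotopy $H^{(+)}\colon [0,1]\times[0,3\pi]\to \Conf_3(\mathbb{R}^2)$ from a suitable reparametrization of $\sigma_1\sigma_2\sigma_1$ to $\rho$, and an analogous monotopy $H^{(-)}$ from $\sigma_2\sigma_1\sigma_2$ to $\rho$. The interpolation is obtained by continuously enlarging the pivot radius and merging the three local semicircular arcs (each of radius $1$, rotating a neighboring pair of points about their midpoint) into the single global half-rotation of radius $2$ about $(4,0)$; symbolically, one takes a convex combination of the piecewise-circular trajectory of each strand and its trajectory under $\rho$. Concatenating the inverse of one monotopy with the other establishes the braid relation.

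The main obstacle is the verification of injectivity at every time slice of the interpolations: one must ensure that during the deformation the three strands never coincide. A direct but tedious route is to check pairwise inequalities on the explicit trigonometric formulas. A cleaner route is to arrange the interpolation so that at every fixed homotopy parameter $s\in[0,1]$, the three strands remain the vertices of a nondegenerate triangle with continuously varying orientation; since the space of oriented triangles deformation retracts onto $\Conf_3(\mathbb{R}^2)$, nondegeneracy at each time slice is automatic, and the monotopy requirement reduces to a combinatorial bookkeeping of the ``crossing schedule'' that distinguishes $\sigma_1\sigma_2\sigma_1$ from $\sigma_2\sigma_1\sigma_2$.
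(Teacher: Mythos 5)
Your high-level strategy is genuinely different from the paper's and is a reasonable one: rather than showing that the word $\sigma_{i+1}^{-1}\sigma_i^{-1}\sigma_{i+1}^{-1}\sigma_i\sigma_{i+1}\sigma_i$ is trivial by explicit manipulation of circular arcs (as the paper does), you aim to show that both $\sigma_1\sigma_2\sigma_1$ and $\sigma_2\sigma_1\sigma_2$ are homotopic to the same ``half-twist'' $\rho$. The reduction to $k=3$ is fine, provided the eventual interpolation keeps the moving strands within (say) the disk of radius $2$ about $(2i+2,0)$, which misses the fixed strands.

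However, the proof as written has a genuine gap at its center: the monotopies $H^{(+)}$ and $H^{(-)}$ are only described in words, and these are precisely where all the content of the lemma lies. A naive convex combination between the piecewise-circular trajectories of $\sigma_1\sigma_2\sigma_1$ and the rigid rotation $\rho$ is \emph{not} automatically injective at each time slice; the straight-line segments joining the two positions of distinct strands at a fixed time can cross at matching convex-combination parameters, producing a collision. You acknowledge this but defer it, and the deferral is not repaired by the ``cleaner route.'' That route is internally inconsistent: the basepoint $\bigl((2,0),(4,0),(6,0)\bigr)$ is a \emph{degenerate} (collinear) triple, as are the configurations at every crossing instant, so the loops in question do not live in the space of nondegenerate triangles at all. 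Moreover, there is no deformation retraction relating $\Conf_3(\mathbb{R}^2)$ and the space of nondegenerate ordered triangles — the latter is a proper open subset obtained by deleting a codimension-one hypersurface, and distinctness of three points is a strictly weaker condition than noncollinearity. The combinatorial ``crossing schedule'' bookkeeping you gesture at is exactly the hard part, and it is not carried out. To make this approach work you would need to exhibit the interpolation explicitly — for instance, by first homotoping the three local half-circle arcs so that all pivots migrate to $(4,0)$ while the radii grow to $2$, checking pairwise distinctness from the resulting trigonometric formulas — which is comparable in effort to the paper's direct argument.
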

\begin{proof}
Define maps $q_1,q_2:[0,\pi]\to (\mathbb{R}^2)^k$ by the formulas \[
q_1(t)_r=\begin{cases}
(2r,0)&\quad r\notin\{i,i+1, i+2\}\\
c_{2i+2,2}(t+\pi)&\quad r=i\\
c_{2i+1,1}(2t)&\quad r=i+1\\
c_{2i+2,2}(t)&\quad r=i+2
\end{cases}
\]
\[
q_2(t)_r=\begin{cases}
(2r,0)&\quad r\notin\{i,i+1,i+2\}\\
\bar c_{2i+2,2}(t)&\quad r=i\\
\bar c_{2i+3,1}(2t+2\pi)&\quad r=i+1\\
\bar c_{2i+2,2}(t+\pi)&\quad r=i+2,
\end{cases}\] where a bar indicates reversal of parametrization. We claim that the concatenation of $q_1$ followed by $q_2$ represents $\sigma_{i+1}^{-1}\sigma_i^{-1}\sigma_{i+1}^{-1}\sigma_i\sigma_{i+1}\sigma_i$. Assuming this claim, the lemma folllows, for the three nonconstant coordinate functions of this concatenation are nullhomotopic via homotopies with pairwise disjoint images.

To see why the claim is true, consider the representative $\tilde q$ of $\sigma_{i+1}^{-1}\sigma_i^{-1}\sigma_{i+1}^{-1}\sigma_i\sigma_{i+1}\sigma_i$ that is given by concatenating the representatives $p_i$ and $p_{i+1}$ and their inverses. The path $\tilde q(t)_i$ first follows the lower half-circle of radius $1$ centered at $(2i+1,0)$, then follows the lower half-circle of radius $1$ centered at $(2i+3,0)$, then retraces these steps exactly. Thus, $\tilde q(t)_i$ is homotopic by a straight-line homotopy to a path that first follows the lower half-circle of radius 2 centered at $2i+2$ and then retraces this path, and the image of this homotopy away from $\tilde q(t)_i$ is disjoint from the images of $\tilde q(t)_r$ for $r\neq i$. In this way, we obtain a homotopy in the configuration space, and similar considerations apply to the $(i+2)$nd coordinate and the respective upper half-circles. Thus, $\sigma_{i+1}^{-1}\sigma_i^{-1}\sigma_{i+1}^{-1}\sigma_i\sigma_{i+1}\sigma_i$ is represented by a path in which the $i$th and $(i+2)$nd coordinate trace and then retrace these larger half-circles while the $(i+1)$st coordinate traverses the circle of radius 1 centered at $2i+1$ followed by the circle of radius 1 centered at $2i+3$ in reverse. Since the concatenation of $q_1$ followed by $q_2$ is such a path, this establishes the claim.
\end{proof}

\begin{definition}[Artin \cite{Artin:TB}]
The \emph{braid group} on $k$ strands is the group $B_k$ defined by the presentation \[B_k=\left\langle \sigma_1,\ldots, \sigma_{k-1}\mid\sigma_i\sigma_{i+1}\sigma_i=\sigma_{i+1}\sigma_i\sigma_{i+1},\, \sigma_i\sigma_j=\sigma_j\sigma_i \text{ if } |i-j|>1\right\rangle.\]
\end{definition}

The previous two lemmas imply the existence of a canonical group homomorphism from the abstract group $B_k$ to $\pi_1(B_k(\mathbb{R}^2))$.

\begin{theorem}[Fox--Neuwirth \cite{FoxNeuwirth:BG}]\label{thm:fox-neuwirth}
The homomorphism $B_k\to \pi_1(B_k(\mathbb{R}^2))$ is an isomorphism.
\end{theorem}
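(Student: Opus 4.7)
The plan is to convert the theorem into a comparison of kernels via the diagram of short exact sequences
$$\xymatrix{
1 \ar[r] & P_k \ar[r] \ar[d]^-{\varphi_0} & B_k \ar[r] \ar[d]^-{\varphi} & \Sigma_k \ar@{=}[d] \ar[r] & 1 \\
1 \ar[r] & \pi_1(\Conf_k(\mathbb{R}^2)) \ar[r] & \pi_1(B_k(\mathbb{R}^2)) \ar[r] & \Sigma_k \ar[r] & 1,
}$$
in which $P_k := \ker(B_k \to \Sigma_k)$ is the (algebraic) pure braid group, $\varphi$ is the map of the theorem, the bottom row is the preceding lemma, and the right-hand identification holds because $\varphi(\sigma_i)$ covers the transposition $(i,i+1)$. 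By the five-lemma, it is enough to show that the induced map $\varphi_0$ is an isomorphism.

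For the target of $\varphi_0$, I would iterate the Fadell--Neuwirth fibration $\Conf_k(\mathbb{R}^2) \to \Conf_{k-1}(\mathbb{R}^2)$. Since $\mathbb{R}^2$ is the interior of $D^2$, each stage admits a section up to homotopy, so the induction realizes $\pi_1(\Conf_k(\mathbb{R}^2))$ as an iterated semidirect product of free groups $F_1 \rtimes F_2 \rtimes \cdots \rtimes F_{k-1}$ with explicit monodromy. A natural generating set is the collection of pure braids $\{A_{i,j} : 1 \le i < j \le k\}$, where $A_{i,j}$ is the loop along which the $j$-th strand traces a small circle enclosing the $i$-th strand while the others remain constant. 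A direct homotopy in $\Conf_k(\mathbb{R}^2)$ shows that $A_{i,j} = \varphi(\sigma_{j-1} \cdots \sigma_{i+1} \sigma_i^{2} \sigma_{i+1}^{-1} \cdots \sigma_{j-1}^{-1})$, so $\varphi_0$ is surjective.

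For injectivity, I would apply the Reidemeister--Schreier method, as reviewed earlier in the section, to the top row of the diagram. After choosing a Schreier transversal $T \subseteq B_k$ for $\Sigma_k$ adapted to the iterated fibration tower (for instance, a system of reduced expressions of minimal length), the rewriting procedure yields a presentation of $P_k$ whose generators are the Schreier symbols associated to the $\sigma_i$ and whose relators are the conjugates of Artin's two families of relations by the elements of $T$. After standard simplifications, the generators can be identified with the $A_{i,j}$, and the relators encode exactly the commutation rules expressing how conjugation by $A_{p,q}$ acts on each $A_{i,j}$. Since the same commutation rules hold in $\pi_1(\Conf_k(\mathbb{R}^2))$ by the monodromy of the iterated Fadell--Neuwirth fibration, the surjection $\varphi_0$ preserves the defining relations and is therefore an isomorphism.

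The principal obstacle is combinatorial. Executing Reidemeister--Schreier cleanly enough that the simplified pure braid relations visibly match the monodromy of the iterated fibration requires substantial bookkeeping; the right choice of transversal is what makes the comparison tractable, since a transversal adapted to the filtration by cardinality decomposes the presentation block by block, and the $k$-th block reproduces the free factor $F_{k-1}$ together with its prescribed conjugation action by the lower factors. Once this identification is in hand, the five-lemma closes out the argument.
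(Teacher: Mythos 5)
Your skeleton — the five-lemma ladder reducing to $\varphi_0\colon P_k\to\pi_1(\Conf_k(\mathbb{R}^2))$, the inductive identification of $\pi_1(\Conf_k(\mathbb{R}^2))$ as an iterated extension of free groups via Fadell--Neuwirth and the section coming from a collar of the boundary, and the use of Reidemeister--Schreier to control the braid side — is the same as the paper's. The substantive divergence is where Reidemeister--Schreier is applied. You apply it to $P_k\le B_k$, a subgroup of index $k!$, which is essentially Artin's original route and requires a large transversal and heavy bookkeeping of conjugated relators. The paper instead inserts the intermediate subgroup $D_k\le B_k$ of braids fixing the last strand, which has index only $k$; the Schreier set $\{g_\ell=\sigma_{k-1}\cdots\sigma_\ell\}_{\ell=1}^k$ makes the rewriting small enough to carry out in full, yielding $D_k\cong U_k\rtimes B_{k-1}$ (Proposition~\ref{prop:D semidirect}), and intersecting with the pure subgroup then gives $P_k\cong U_k\rtimes P_{k-1}$ with no further computation. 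The ``block-by-block'' decomposition you mention is exactly what this buys, but one cardinality at a time rather than all at once at index $k!$.

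There is also a genuine gap in your closing step. Having produced a Reidemeister--Schreier presentation of $P_k$ with generators the $A_{i,j}$, you argue that because the same commutation relations hold in $\pi_1(\Conf_k(\mathbb{R}^2))$, the surjection $\varphi_0$ ``preserves the defining relations and is therefore an isomorphism.'' That inference is not valid: a homomorphism out of a group given by generators and relations automatically respects the relations (otherwise it would not be well defined), so this observation gives nothing toward injectivity. To conclude, you must rule out additional relations on the target. The fix is the inductive comparison your block-by-block remark gestures at but does not carry out: exhibit $P_k$ as an iterated semidirect product of free groups; show that $\varphi_0$ maps the sequence $1\to U_k\to P_k\to P_{k-1}\to 1$ to the Fadell--Neuwirth sequence $1\to\pi_1(\mathbb{R}^2\setminus\{x_1,\ldots,x_{k-1}\})\to\pi_1(\Conf_k(\mathbb{R}^2))\to\pi_1(\Conf_{k-1}(\mathbb{R}^2))\to 1$; verify that on kernels the $A_{i,k}$ go to a free generating set, so freeness supplies a section and hence an inverse; then induct and apply the five lemma. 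This is exactly what Lemma~\ref{lem:kernel isomorphism} and its use in the paper's proof accomplish.
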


Before proceeding with the proof, we first recall that the symmetric group on $k$ letters admits the Coxeter presentation \[\Sigma_k=\langle \tau_1,\ldots, \tau_k\mid \tau_i^2=1,\,(\tau_i\tau_{i+1})^3=1,\,(\tau_i\tau_j)^2=1\text{ if }|i-j|>1\rangle.\] It follows that the assignment $\sigma_i\mapsto \tau_i$ determines a group homomorphism $B_k\to \Sigma_k$.

\begin{definition}
The \emph{pure braid group} on $k$ strands is the subgroup $P_k=\ker(B_k\to \Sigma_k)\leq B_k$.
\end{definition}

\begin{proof}[Proof of Theorem \ref{thm:fox-neuwirth}]
Consider the following diagram of group homomorphisms: \[\xymatrix{
1\ar[r]&P_k\ar[r]\ar@{-->}[d]&B_k\ar[r]\ar[d]&\Sigma_k\ar@{=}[d]\ar[r]&1\\
1\ar[r]&\pi_1(\Conf_k(\mathbb{R}^2))\ar[r]&\pi_1(B_k(\mathbb{R}^2))\ar[r]&\Sigma_k\ar[r]&1.
}\] The top row is exact by definition and the bottom was shown to be exact above. The righthand square commutes by the construction of the $\sigma_i$, and it follows that the dashed factorization exists making the lefthand square commute. Thus, by the five lemma, it will suffice to verify that the induced map $P_k\to \pi_1(\Conf_k(\mathbb{R}^2))$ is an isomorphism for each $k$. 

For this claim, we proceed by induction on $k$, the cases $k=0$ and $k=1$ being trivial. Consider the following diagram of group homomorphisms:
\[\xymatrix{
1\ar[r]&K\ar@{-->}[d]\ar[r]& P_k\ar[d]\ar@{-->}[r]& P_{k-1}\ar[d]^-{\mathrel{\rotatebox[origin=c]{-90}{$\simeq$}}}\ar[r]&1\\
1\ar[r]&\pi_1(\mathbb{R}^2\setminus \{x_1,\ldots, x_{k-1}\})\ar[r]&\pi_1(\Conf_k(\mathbb{R}^2))\ar[r]&\pi_1(\Conf_{k-1}(\mathbb{R}^2))\ar[r]&1.
}\] The middle and righthand vertical maps are the maps constructed above, the upper righthand horizontal map is defined by the inductive hypothesis and the requirement that the righthand square commute, and the subgroup $K\leq P_k$ is defined to be the kernel of this map. Since the top sequence is exact by definition and the bottom by Theorem \ref{thm:Fadell--Neuwirth} and the fact that the three spaces shown are all aspherical, the dashed vertical factorization exists making the lefthand square commute. 

As we will see below in Lemma \ref{lem:kernel isomorphism}, $K$ is generated by $k-1$ elements, which map to a set of $k-1$ free generators under the lefthand vertical map, and assuming this fact, we may complete the proof. Indeed, these $k-1$ elements determine a homomorphism from the free group, which is a section of the map in question. This section is injective, since it is a section, as well as surjective, since its image contains a generating set. Since the section is an isomorphism, the map itself is so as well, and the five lemma implies the claim.
\end{proof}

In this way, we are led to the following question, which will next occupy our attention: given a presentation of a group $G$, how can we find a presentation for a subgroup $H\leq G$?

\subsection{The Reidemeister--Schreier method} We now discuss a technique from combinatorial group theory for producing presentations of subgroups---see \cite[2.3]{MagnusKarrassSolitar:CGT} for a standard account. We take the topological viewpoint expounded in \cite{ZieschangVogtColdeway:SPDG}.

\begin{definition}
A \emph{graph} is a 1-dimensional CW complex $\Gamma$ with at most countably many cells. The 0-cells of $\Gamma$ are its \emph{vertices} and the 1-cells its \emph{edges}. An \emph{edge path} is a sequence of oriented edges $\{e_i\}_{i=1}^n$ such that the head of $e_i$ is the tail of $e_{i+1}$ for $1\leq i<n$. An edge path is \emph{reduced} if it contains no subpath of the form $ee^{-1}$, where $e^{-1}$ denotes the edge $e$ with the opposite orientation. A \emph{tree} is a contractible graph. A \emph{spanning tree} for a graph $\Gamma$ is a subgraph $T\subseteq \Gamma$ such that $T$ is a tree and $T$ contains every vertex of $\Gamma$.
\end{definition}

\begin{lemma}
In a tree $T$, any pair of distinct vertices are connected by a unique reduced edge path.
\end{lemma}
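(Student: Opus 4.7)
Plan. The statement splits into an existence claim and a uniqueness claim, which I would handle separately.

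For existence: since $T$ is a tree it is contractible, hence path-connected, so there exists an ordinary continuous path from $v$ to $w$. Using the $1$-dimensional CW structure on $T$ (and the fact that any path in a CW complex can be cellularly approximated, reducing in our setting to subdividing at preimages of $0$-cells), this path may be replaced by an edge path with the same endpoints. Then I would iteratively delete any subpath of the form $ee^{-1}$; each deletion strictly decreases the length, so the process terminates in a reduced edge path from $v$ to $w$.

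For uniqueness: given two reduced edge paths $p = e_1\cdots e_n$ and $q = f_1\cdots f_m$ from $v$ to $w$, I would consider the closed edge path $p\cdot q^{-1} = e_1\cdots e_n f_m^{-1}\cdots f_1^{-1}$ at $v$. Because $p$ and $q^{-1}$ are each already reduced, the only possible cancellation $gg^{-1}$ in $p\cdot q^{-1}$ sits at the junction $e_n f_m^{-1}$, which occurs precisely when $e_n=f_m$. The crux is then the following auxiliary claim: \emph{any reduced closed edge path in a tree has length zero}. Granting this, cancellations in $p\cdot q^{-1}$ must cascade all the way through, forcing $n=m$ and $e_i=f_i$ for every $i$, i.e., $p=q$.

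The main obstacle, and the only nonformal step, is the auxiliary claim. I would prove it by invoking the standard fact that the fundamental group of a graph $\Gamma$ based at a vertex is free on the set of edges outside any chosen spanning tree (via the deformation retraction of $\Gamma$ onto the spanning tree with loops attached). In our case, a tree is its own spanning tree, so the generating set is empty and $\pi_1(T) = 0$. Under the standard correspondence between reduced edge loops and elements of this free group, a reduced edge loop of positive length would represent a nonidentity element, contradicting triviality of $\pi_1(T)$. The care required here is to establish the correspondence between reduced edge loops and free-group words independently of the uniqueness lemma we are proving, which is a routine (but not entirely trivial) check once the spanning-tree retraction is in hand.
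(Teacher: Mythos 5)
Your uniqueness argument is sound in structure (concatenate $p$ with $q^{-1}$ and use that a nontrivial reduced closed edge path cannot exist in a tree), but the way you propose to establish the auxiliary claim has a circularity problem that you flag but do not resolve, and it is precisely there that the content of the lemma lives. You invoke the fact that $\pi_1$ of a graph is free on the edges outside a spanning tree, together with ``the standard correspondence between reduced edge loops and free-group words.'' In this paper the freeness theorem appears as a corollary \emph{after} the present lemma, and the explicit description of its generators --- loops built from ``the unique reduced edge paths in $T$ from $v$ to the endpoints of $e$'' --- quotes the very lemma you are trying to prove. More fundamentally, the abstract statement $\pi_1(T)=0$ does not by itself tell you that a \emph{reduced edge loop} of positive length is nontrivial; that requires the comparison between the topological $\pi_1$ and the combinatorial edge-path group, and the usual proof of that comparison (build the universal cover out of reduced edge paths and verify unique path lifting) is essentially a restatement of this lemma. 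So making your route non-circular amounts to redoing the lemma by hand inside the ``routine but not entirely trivial'' step.

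The paper's proof takes a shorter and more elementary route that avoids $\pi_1$ entirely: if $v\neq w$ are joined by two distinct reduced edge paths, one concatenates one with the reverse of the other, cancels the common prefix/suffix, and discards redundant portions to obtain an embedded copy of $S^1$ in $T$. An embedded circle in a graph is a retract of the graph, so $H_1(S^1)\to H_1(T)$ is injective, contradicting contractibility of $T$. This stays at the level of $H_1$ (or just a retraction) and does not presuppose any combinatorial description of fundamental groups of graphs. Your existence argument, by contrast, is fine and is essentially what the paper leaves implicit: contractibility gives path-connectedness, cellular approximation gives an edge path, and iterated cancellation terminates.
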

\begin{proof}
After perhaps discarding redundant edges, the existence of two such edge paths produces an embedding of $S^1$ into $T$, a contradiction.
\end{proof}

\begin{lemma}
If $\Gamma$ is a connected graph, then $\Gamma$ admits a spanning tree.
\end{lemma}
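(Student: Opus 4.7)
The plan is to apply Zorn's Lemma to the poset of subtrees of $\Gamma$ under inclusion, and then to show that a maximal subtree is forced to be spanning by the connectedness of $\Gamma$.

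First, let $\mathcal{P}$ denote the collection of subgraphs of $\Gamma$ that are trees, partially ordered by inclusion. The collection $\mathcal{P}$ is nonempty, since any single vertex of $\Gamma$ (and $\Gamma$ has vertices, being connected and nonempty by implicit assumption) is a trivial tree.

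Next, I would verify that any totally ordered chain $\{T_\alpha\}_{\alpha\in A}$ in $\mathcal{P}$ has an upper bound in $\mathcal{P}$, namely $T = \bigcup_{\alpha\in A} T_\alpha$. Clearly $T$ is a subgraph of $\Gamma$. To see $T$ is a tree, I would check it is connected and contains no embedded circle. Connectedness: given vertices $v, w \in T$, there exist indices $\alpha, \beta$ with $v \in T_\alpha$, $w \in T_\beta$; since the chain is totally ordered, one of $T_\alpha, T_\beta$ contains the other, say $T_\alpha \subseteq T_\beta$, so both vertices lie in $T_\beta$, which is connected. Absence of a circle: any embedded copy of $S^1$ in $T$ is compact and thus meets only finitely many cells; by the chain condition these finitely many cells all lie in some single $T_\alpha$, contradicting the assumption that $T_\alpha$ is a tree (and in particular, by the preceding lemma on unique reduced edge paths, contains no embedded $S^1$). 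Hence $T \in \mathcal{P}$ and is an upper bound for the chain.

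By Zorn's Lemma, $\mathcal{P}$ contains a maximal element $T^*$. It remains to show $T^*$ is spanning. Suppose for contradiction that some vertex $v$ of $\Gamma$ does not lie in $T^*$. Since $\Gamma$ is connected and $T^*$ is nonempty, I would choose an edge path in $\Gamma$ from some vertex of $T^*$ to $v$; traversing this path, there must exist an edge $e$ with one endpoint $w \in T^*$ and the other endpoint $v' \notin T^*$. Then $T^{**} := T^* \cup e \cup \{v'\}$ is a subgraph of $\Gamma$ strictly containing $T^*$; moreover it is a tree, since it deformation retracts onto $T^*$ by collapsing the pendant edge $e$ onto $w$, hence is contractible. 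This contradicts maximality, so $T^*$ must contain every vertex of $\Gamma$ and is therefore a spanning tree.

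The main obstacle is the verification that the union of a chain of trees is a tree; the rest is bookkeeping. The key point there is the compactness argument ruling out embedded circles in the union, which relies on graphs being $1$-dimensional CW complexes (so that any compact subset meets only finitely many cells).
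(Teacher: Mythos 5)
Your proof is correct, but it takes a genuinely different route from the paper's. The paper constructs the spanning tree explicitly by a breadth-first search: fix a vertex $v$, set $T_0 = \{v\}$, and at each stage $i>0$ enlarge $T_{i-1}$ by adjoining every vertex at edge-distance one from $T_{i-1}$ together with a single connecting edge, then take $T = \bigcup_i T_i$. This is a concrete recursive construction, which is well-suited to the paper's standing convention that graphs have at most countably many cells. Your argument instead applies Zorn's Lemma to the poset of subtrees under inclusion, verifies that a chain of subtrees has its union as an upper bound (using compactness of $S^1$ and the fact that a compact subset of a CW complex meets only finitely many cells to rule out embedded circles), and then observes that a maximal subtree must be spanning, since otherwise a pendant edge could be adjoined. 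Your approach is more abstract and would work without any cardinality restriction on $\Gamma$; the paper's approach is shorter and more explicit, at the cost of leaving the verification that the resulting $T$ is a spanning tree implicit. One small point worth noting in your version: you show the union of the chain is connected and contains no embedded $S^1$, and then invoke (without comment) that such a graph is contractible; since the paper defines a tree as a contractible graph, this converse to the observation used in the preceding lemma deserves at least a word, though it is of course a standard fact.
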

\begin{proof}
Fix a vertex $v\in \Gamma$, set $T_0=\{v\}$, and define $T_i\subseteq \Gamma$ for $i>0$ recursively by adding to $T_{i-1}$ each vertex $w\in \Gamma$ that is separated from $T_{i-1}$ by a exactly one edge, together with one such edge for each such $w$. Then $T=\bigcup_{i=0}^\infty T_i$ is a spanning tree.
\end{proof}

This observation implies the following familiar consequence.

\begin{corollary}
If $\Gamma$ is a connected graph, $v\in \Gamma$ is a vertex, and $T\subseteq \Gamma$ is a spanning tree, then $\pi_1(\Gamma,v)$ is freely generated by a set of cardinality $|\pi_0(\Gamma\setminus T)|$.
\end{corollary}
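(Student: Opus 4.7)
The plan is to reduce to the fact that a wedge of circles has free fundamental group by collapsing the spanning tree. I will carry this out in three steps.

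First, I will contract $T$. Since $T\subseteq\Gamma$ is a contractible subcomplex of a CW complex, the quotient map $q:\Gamma\to\Gamma/T$ is a homotopy equivalence (by the standard result that collapsing a contractible subcomplex of a CW pair is a homotopy equivalence, via the HEP). In particular, $q$ induces an isomorphism $\pi_1(\Gamma,v)\xrightarrow{\cong}\pi_1(\Gamma/T,q(v))$.

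Second, I will identify $\Gamma/T$ explicitly. Because $T$ is a spanning tree, it contains every vertex of $\Gamma$, so the quotient identifies all $0$-cells of $\Gamma$ to a single basepoint. For each edge $e$ of $\Gamma$ not in $T$, both endpoints of $e$ lie in $T$, so the attaching map of $e$ becomes constant after passing to the quotient; the image of $e$ is thus a circle wedged on at the basepoint. Meanwhile the edges of $T$ are collapsed to the basepoint. Writing $E$ for the set of edges of $\Gamma$ not in $T$, this gives a canonical homeomorphism
\[
\Gamma/T \;\cong\; \bigvee_{e\in E} S^1.
\]
Moreover, removing $T$ from $\Gamma$ leaves precisely the open $1$-cells not in $T$, each of which is a connected component, so $|E|=|\pi_0(\Gamma\setminus T)|$.

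Third, I will invoke the fact that $\pi_1\bigl(\bigvee_{e\in E}S^1,*\bigr)$ is free on the set $E$. For finite $E$ this is an immediate application of Van Kampen's theorem, thickening each circle to an open neighborhood that deformation retracts to it; the general case follows by either (a) a colimit form of Van Kampen over finite sub-wedges, or (b) constructing the universal cover of $\bigvee_E S^1$ explicitly as the Cayley graph of the free group $F(E)$, which is a tree, so $\bigvee_E S^1=K(F(E),1)$. Combining this with the isomorphism from step one gives the claim.

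The main technical subtlety is step three in the case where $E$ is infinite, since the version of Van Kampen stated in most introductory texts assumes a finite cover; one must either cite a version for arbitrary unions or use the covering-space argument, which is cleaner. Everything else is a direct verification from the CW structure of $\Gamma$ and $T$.
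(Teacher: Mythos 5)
Your proof is correct and takes essentially the same route as the paper: contract the spanning tree so that $\Gamma$ becomes a wedge of circles indexed by the edges outside $T$, then use that the fundamental group of a wedge of circles is free. The paper phrases the first step as ``extend a contraction of $T$ to a deformation of $\Gamma$ onto a wedge of circles,'' while you pass to the quotient $\Gamma/T$ and invoke the fact that collapsing a contractible CW subcomplex is a homotopy equivalence; these are interchangeable. Your flag about the infinite-wedge case is a reasonable subtlety (the paper allows countably many cells), and the covering-space argument you suggest is the clean way to handle it.
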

\begin{proof}
Extend a contraction of $T$ to the vertex $v$ to obtain a deformation of $\Gamma$ onto a wedge of $|\pi_0(\Gamma\setminus T)|$-many circles, and apply the Van Kampen theorem.
\end{proof}

In particular, after fixing a vertex $v\in\Gamma$ and a spanning tree $T\subseteq\Gamma$, a set of free generators for $\pi_1(\Gamma, v)$ is given by the collection of loops given by concatenating an edge $e$ in $\Gamma\setminus T$ on either side with the unique reduced edge paths in $T$ from $v$ to the endpoints of $e$.

If the free group $F(S)$ on the set $S$ corresponds to the graph $\Gamma_S:=\bigvee_S S^1$, then a subgroup $H\leq F(S)$ corresponds to a covering space $\pi:\widetilde \Gamma\to \Gamma_S$. Since any covering space has unique lifting for maps with simply connected domains, a covering space of a graph is again a graph in a canonical way; in particular, we conclude the following classical fact.

\begin{corollary}[Nielsen--Scheier]
A subgroup of a free group is free.
\end{corollary}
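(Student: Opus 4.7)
The plan is to translate the entire problem into covering space theory and then apply the already-established fact that the fundamental group of a connected graph is free. The statement before the corollary has already done most of the conceptual work: a subgroup $H \leq F(S)$ corresponds, via the subgroup classification of connected coverings of the pointed space $\Gamma_S = \bigvee_S S^1$, to a connected covering $\pi \colon \widetilde{\Gamma} \to \Gamma_S$ together with a choice of basepoint, in such a way that $\pi_* \colon \pi_1(\widetilde{\Gamma}) \xrightarrow{\simeq} H$. Since $\pi_*$ is injective for any covering space, it suffices to show that $\pi_1(\widetilde{\Gamma})$ is free.

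For this it is enough, by the preceding corollary, to equip $\widetilde{\Gamma}$ with the structure of a connected graph in the sense defined above. First I would promote the given covering to a CW-theoretic statement: pull back the CW structure on $\Gamma_S$ (one $0$-cell and one $1$-cell per element of $S$) to $\widetilde{\Gamma}$. Concretely, take the $0$-cells of $\widetilde{\Gamma}$ to be the discrete set $\pi^{-1}(v_0)$, where $v_0$ is the unique vertex of $\Gamma_S$, and take the $1$-cells of $\widetilde{\Gamma}$ to be the path components of $\pi^{-1}(\mathring{e})$ as $\mathring{e}$ ranges over the open $1$-cells of $\Gamma_S$. Because $\pi$ is a local homeomorphism, each such path component maps homeomorphically onto the corresponding open interval $\mathring{e}$; its closure in $\widetilde{\Gamma}$ therefore admits a characteristic map $[0,1] \to \widetilde{\Gamma}$ obtained by lifting the characteristic map of $e$ along $\pi$, which exists and is unique because $[0,1]$ is simply connected.

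The main (mildly tedious) step is verifying that the resulting cell structure is genuinely a CW structure on $\widetilde{\Gamma}$ with the given topology, i.e.\ that the characteristic maps assemble to a homeomorphism from the resulting adjunction space onto $\widetilde{\Gamma}$. This is a straightforward local check: any point of $\widetilde{\Gamma}$ has an evenly covered neighborhood, which a choice of sheet identifies with the analogous neighborhood in $\Gamma_S$, and the cell structure on the former is the pullback of the cell structure on the latter by construction. Countability of the cells of $\widetilde{\Gamma}$ holds when $S$ is countable by countability of $H$; in the general case one simply allows graphs to have an arbitrary set of cells, which does not affect the proof that $\pi_1$ is free (spanning trees still exist by a Zorn's lemma argument and the Van Kampen deformation argument goes through unchanged).

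With the graph structure in hand, the preceding corollary applies directly: $\pi_1(\widetilde{\Gamma})$ is free on the set of edges outside any chosen spanning tree. Transporting this across the isomorphism $\pi_* \colon \pi_1(\widetilde{\Gamma}) \xrightarrow{\simeq} H$ exhibits $H$ as a free group, completing the proof. I expect the only real obstacle to be the graph-structure verification, and even that reduces to invoking that covering maps are local homeomorphisms and that unique path lifting holds, both of which are standard.
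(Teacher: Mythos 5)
Your argument is exactly the paper's: translate $H\leq F(S)$ into a connected covering $\pi\colon\widetilde\Gamma\to\Gamma_S$, use unique lifting of the (simply connected) characteristic maps of cells to endow $\widetilde\Gamma$ with a graph structure, and then invoke the preceding spanning-tree corollary that $\pi_1$ of a connected graph is free. You have simply spelled out the details the paper compresses into a single sentence, and your remark on countability (or on dropping it in favor of Zorn's lemma) is a reasonable patch for the paper's restriction of ``graph'' to countable CW complexes.
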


We now consider the problem of finding generators for $H$. We begin by fixing a basepoint $\tilde v\in\widetilde\Gamma$ lying over the unique vertex $v\in \Gamma_S$ and a spanning tree $\tilde v\in T\subseteq \widetilde\Gamma$. The edges of $\widetilde\Gamma$ correspond via $\pi$ to edges of $\Gamma_S$, which is to say generators of $G$, and the vertices correspond bijectively to the set $F(S)/H$ of cosets. For any vertex $w\in T$, there is a unique reduced edge path in $T$ from $\tilde v$ to $w$, and the word corresponding to this edge path represents the coset corresponding to $w$. This edge path clearly has the property that each of its initial segments is again such a path. 

\begin{definition}
A set $A=\{g_\ell\}_{\ell\in F(S)/H}$ of coset representatives is a \emph{Schreier set} for $H$ if, after writing each $g_\ell$ as a reduced word in $S$, every initial subword of $g_\ell$ is again an element of $A$.
\end{definition}

As a matter of notation, we write $g\mapsto \bar g$ for the composite $F(S)\to F(S)/H\cong A\to F(S)$.

\begin{theorem}[Reidemeister--Schreier, part I]
Let $H\leq F(S)$ be a subgroup and $A$ a Schreier set for $H$. The nontrivial elements of the form $g_\ell s(\overline{g_\ell s})^{-1}$ for $\ell\in F(S)/H$ and $s\in S$ freely generate $H$.
\end{theorem}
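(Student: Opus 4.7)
The plan is to convert the algebraic statement into a statement about the Cayley-type covering graph $\widetilde{\Gamma} \to \Gamma_S$ already introduced in the text, so that the corollary identifying free generators of $\pi_1$ of a graph with edges outside a spanning tree applies directly. Recall that the vertices of $\widetilde{\Gamma}$ are in bijection with cosets in $F(S)/H$, each edge is labeled by some $s \in S$, and reading the labels of a reduced edge path starting at $\tilde v$ recovers the corresponding word in $F(S)$. My strategy is first to promote the Schreier set $A$ to a spanning tree $T \subseteq \widetilde{\Gamma}$, then quote the corollary, and finally translate the resulting loops back into the algebraic form $g_\ell s (\overline{g_\ell s})^{-1}$.

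For the construction of $T$: for each coset $\ell \in F(S)/H$, the reduced word $g_\ell \in A$ lifts to a unique reduced edge path in $\widetilde{\Gamma}$ starting at $\tilde v$ and ending at the vertex $\ell$. I let $T$ be the union of all of these edge paths. I would then verify that $T$ is a spanning tree. Spanning is clear, since every vertex $\ell$ appears as an endpoint. To see that $T$ is a tree, I claim that every vertex $w \in T$ is reached from $\tilde v$ by a unique reduced edge path within $T$, namely the one spelling out $g_w$. Indeed, any reduced $T$-path from $\tilde v$ to $w$ reads a reduced word $u$, and by construction $u$ is an initial segment of some $g_\ell$. The Schreier hypothesis then forces $u \in A$; since $A$ contains a unique representative per coset and $u H = wH$, we must have $u = g_w$. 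This uniqueness rules out cycles.

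With $T$ in hand, the corollary recalled above says that $H = \pi_1(\widetilde{\Gamma}, \tilde v)$ is freely generated by one loop per edge of $\widetilde{\Gamma} \setminus T$, the loop being the concatenation of the unique reduced tree path from $\tilde v$ to the edge's tail, the edge itself, and the unique reduced tree path from its head back to $\tilde v$. An edge with tail $\ell$ labeled by $s$ has head $\ell s$, so by the uniqueness statement above its associated loop reads $g_\ell \cdot s \cdot g_{\ell s}^{-1} = g_\ell s \,(\overline{g_\ell s})^{-1}$. Moreover, the edge belongs to $T$ exactly when traversing $s$ from $\ell$ is one of the initial steps used in building $T$, equivalently when $g_\ell s = g_{\ell s}$, which is the same as saying $g_\ell s(\overline{g_\ell s})^{-1} = 1$. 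Hence the edges outside $T$ correspond precisely to the nontrivial expressions in the statement, and this correspondence transports the free basis of $\pi_1(\widetilde{\Gamma}, \tilde v)$ produced by the corollary to the asserted free generating set of $H$.

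The main obstacle is the spanning-tree construction: the Schreier hypothesis is used essentially in showing that the union of the lifted paths is a tree rather than, say, a disconnected subgraph or one containing cycles. Without closure under initial segments, a reduced $T$-path from $\tilde v$ to $w$ would not be forced to correspond to an element of $A$, and one could have two distinct such paths, destroying both the tree property and the free generation. Once this step is secured, the remainder of the argument is a bookkeeping exercise translating between edge paths in $\widetilde{\Gamma}$ and reduced words in $F(S)$.
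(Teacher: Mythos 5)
Your proposal is correct and follows the same route as the paper: promote the Schreier set to a spanning tree $T\subseteq\widetilde\Gamma$, invoke the earlier corollary that $\pi_1(\widetilde\Gamma,\tilde v)$ is free on the edges outside $T$, and translate each such loop into the word $g_\ell s(\overline{g_\ell s})^{-1}$. In fact you supply more detail than the paper, which simply asserts ``by path lifting, the Schreier set $A$ arises geometrically from a spanning tree $T$''; your verification that the union of lifted paths is a tree (and that the Schreier hypothesis is exactly what prevents cycles) is the content behind that assertion.
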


\begin{proof}
Fix a basepoint $\tilde v$. By path lifting, the Schreier set $A$ arises geometrically from a spanning tree $T$. The element $g_\ell s(\overline{g_\ell s})^{-1}$ corresponds to the path given by following the unique reduced edge path from $\tilde v$ to the vertex corresponding to $\ell$, crossing the edge $e$ corresponding to $s$, and then following the unique reduced edge path back to $\tilde v$. If $e$ lies in $T$, then $\overline{g_\ell s}= g_\ell s$, and the element is trivial; otherwise, we recognize $g_\ell s(\overline{g_\ell s})^{-1}$ as one of our previously established free generators for $H\cong\pi_1(\widetilde\Gamma, \tilde v)$.
\end{proof}

In order to accommodate the presence of relations, we extend our definition of a Schreier set as follows. 

\begin{definition}
Let  $G=\langle s\in S\mid r\in R\rangle$ be a group with a presentation and $H\leq G$ a subgroup. A set $A=\{g_\ell\}_{\ell\in G/H}$ of coset representatives is a \emph{Schreier set} for $H$ if it lifts to a Schreier set for $F(S)\times_GH\leq F(S)$.
\end{definition}

By the Van Kampen theorem, the group $G$ with the specified presentation is the fundamental group of the CW complex obtained by attaching 2-cells to $\Gamma_S$ according to the words $r\in R$. This cell structure lifts canonically to the covering space by attaching cells along the conjugates $g_\ell rg_\ell^{-1}$. Thus, we conclude the following generalization.

\begin{theorem}[Reidemeister--Schreier, part II]
Let $G=\langle s\in S\mid r\in R\rangle$ be a group with a presentation, $H\leq G$ a subgroup, and $A$ a Schreier set for $H$. Then $H$ is generated by the nontrivial elements $g_\ell s(\overline{g_\ell s})^{-1}$ for $\ell\in G/H$ and $s\in S$, with defining relations $g_\ell rg_\ell^{-1}$ for $\ell\in G/H$ and $r\in R$, written in the nontrivial generators.
\end{theorem}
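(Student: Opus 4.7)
The plan is to mimic and extend the topological proof of Part I, invoking Van Kampen's theorem at two levels. First, I would realize $G$ as the fundamental group of a $2$-dimensional CW complex $X$ obtained from the wedge of circles $\Gamma_S$ by attaching one $2$-cell along each relator $r\in R$, viewed as a based loop at the unique vertex $v$. Van Kampen's theorem, applied to this cell attachment, yields $\pi_1(X,v)\cong G$ with the given presentation. The subgroup $H\leq G$ then corresponds to a based covering space $p\colon(\widetilde X,\tilde v)\to(X,v)$ with $\pi_1(\widetilde X,\tilde v)\cong H$, and the strategy is to compute this fundamental group by applying Van Kampen a second time to the induced CW structure on $\widetilde X$.

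Second, I would analyze that induced structure. Since $p$ is a local homeomorphism, each open cell of $X$ lifts evenly, so $\widetilde X$ has one vertex for each coset in $G/H$, one edge over each loop of $\Gamma_S$ for each coset, and one $2$-cell over each relator-cell for each coset. The $1$-skeleton $\widetilde\Gamma:=\widetilde X^{(1)}$ is precisely the covering of $\Gamma_S$ corresponding to the preimage subgroup $F(S)\times_G H\leq F(S)$, and by the definition of a Schreier set the coset representatives $\{g_\ell\}$ arise geometrically from a spanning tree $T\subseteq\widetilde\Gamma$ through $\tilde v$. By Part I, $\pi_1(\widetilde\Gamma,\tilde v)$ is free on the nontrivial elements $g_\ell s(\overline{g_\ell s})^{-1}$.

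Third, I would identify the attaching maps of the $2$-cells of $\widetilde X$. Fix a relator $r\in R$ and a coset $\ell\in G/H$, and let $\tilde w_\ell$ be the vertex over $v$ corresponding to $g_\ell$. The lift of the loop $r$ starting at $\tilde w_\ell$ closes up to a loop in $\widetilde\Gamma$ precisely because $r=1$ in $G$. Concatenating with the unique reduced edge-path in $T$ from $\tilde v$ to $\tilde w_\ell$ (which represents $g_\ell$) and its reverse, we obtain a based loop at $\tilde v$ representing the conjugate $g_\ell r g_\ell^{-1}\in\pi_1(\widetilde\Gamma,\tilde v)$. Applying Van Kampen to $\widetilde X=\widetilde\Gamma\cup(\text{$2$-cells})$ presents $\pi_1(\widetilde X,\tilde v)\cong H$ as the quotient of $\pi_1(\widetilde\Gamma,\tilde v)$ by the normal closure of the family $\{g_\ell rg_\ell^{-1}:\ell\in G/H,\,r\in R\}$, which is exactly the asserted presentation.

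The main obstacle, which is bookkeeping rather than conceptual, is the clause \emph{written in the nontrivial generators}: one must make explicit the algorithmic procedure (the Reidemeister rewriting process) that expresses each word $g_\ell r g_\ell^{-1}$ as a product of the free generators $g_{\ell'} s(\overline{g_{\ell'} s})^{-1}$. Topologically this is automatic from Part I, since the lifted edge-loop decomposes along $T$ into standard free generators; carrying it out symbolically amounts to reading off the successive edges of that lifted loop and, between each consecutive pair, inserting the trivial tree-loop that returns to $\tilde v$. Once this is formalized, the five-lemma-style argument concludes the identification of $H$ with the group defined by the prescribed generators and relations.
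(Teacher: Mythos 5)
Your proposal is correct and follows essentially the same route as the paper: build the presentation $2$-complex $X$ for $G$ via Van Kampen, pass to the covering $\widetilde X\to X$ corresponding to $H$, observe that the $2$-cells lift and are attached (after conjugating into the basepoint by the tree paths) along $g_\ell r g_\ell^{-1}$, and apply Van Kampen again together with Part~I. The paper compresses all of this into the short paragraph preceding the theorem statement; your write-up simply makes explicit the even lifting of cells, the identification of the $1$-skeleton with $\widetilde\Gamma$, and the role of the spanning tree in basing the relator loops, with the Reidemeister rewriting step correctly flagged as the remaining bookkeeping.
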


\subsection{Back to braid groups} Before leveraging this result in completing the proof of Theorem \ref{thm:fox-neuwirth}, we pause to establish some notation. For $1\leq i<j\leq k$, we write $A_{ij}$ for the braid in which the $j$th strand winds once around the $i$th strand, passing in front of the intervening strands, and then returns to its starting position. In symbols, \[A_{ij}=\sigma_{j-1}\cdots\sigma_{i+1}\sigma_i^2\sigma_{i+1}^{-1}\cdots \sigma_{j-1}^{-1}.\] Note that the braids $A_{ij}$ are all pure. 

We write $U_k\leq B_k$ for the subgroup generated by the elements $A_{ik}$ for $1\leq i<k$. It is easy to verify geometrically that $U_k\leq \ker(P_k\to P_{k-1})$ and that the induced homomorphism $U_k\to \pi_1(\mathbb{R}^2\setminus\{x_1,\ldots, x_{k-1}\})$ sends $\{A_{ik}\}_{i=1}^{k-1}$ to a set of free generators; in particular, $U_k$ is free on these generators. Our goal is to prove the following, which is the missing ingredient in the proof of Theorem \ref{thm:fox-neuwirth}.

\begin{lemma}\label{lem:kernel isomorphism}
The inclusion $U_k\leq \ker(P_k\to P_{k-1})$ is an isomorphism.
\end{lemma}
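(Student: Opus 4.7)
The plan is to combine a semidirect-product decomposition of $P_k$ with a direct braid-group calculation showing that $U_k$ is normal in $P_k$, which together identify $U_k$ with the normal closure of the $A_{ik}$, hence with $K$.

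First I construct an algebraic section $s: P_{k-1} \to P_k$ of the forgetful homomorphism. The map $B_{k-1} \to B_k$ given on Artin generators by $\sigma_i \mapsto \sigma_i$ for $1 \leq i \leq k-2$ is a well-defined homomorphism, since the defining relations of $B_{k-1}$ form a sub-presentation of those of $B_k$; restricting to pure braid groups gives $s: P_{k-1} \to P_k$ with $s(A_{ij}) = A_{ij}$ for $i < j \leq k-1$. Since the forgetful map $P_k \to P_{k-1}$ (defined via the inductive identification with $\pi_1(\Conf_{k-1}(\mathbb{R}^2))$ and the topological forgetful map) sends $A_{ij} \mapsto A_{ij}$ for $j < k$ and $A_{ik} \mapsto 1$, the composite $P_{k-1} \to P_k \to P_{k-1}$ is the identity on generators, so $s$ is a section. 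This yields the semidirect decomposition $P_k = K \rtimes s(P_{k-1})$.

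Granting the standard fact that $P_k = \langle A_{ij} : 1 \leq i < j \leq k \rangle$ (itself proved by applying Reidemeister--Schreier to $P_k \leq B_k$ with the standard Artin-normal-form Schreier transversal of $\Sigma_k$), I then use the conjugation relations in $P_k$ to rewrite any $g \in P_k$ as a product of $s(P_{k-1})$-conjugates of the $A_{ik}$ followed by a pure $s(P_{k-1})$-tail. For $g \in K$, the tail is trivial by uniqueness of the semidirect decomposition, so $K$ is generated by $\{s(\gamma) A_{ik} s(\gamma)^{-1} : \gamma \in P_{k-1},\, 1 \leq i < k\}$. Since $U_k$ is visibly closed under conjugation by its own generators, the lemma reduces to showing $U_k$ is closed under conjugation by the remaining generators of $P_k$, i.e., that $A_{rs}^{\pm 1} A_{ik} A_{rs}^{\mp 1} \in U_k$ for all $1 \leq r < s < k$ and $1 \leq i < k$.

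This final verification is a direct computation in $B_k$ using the defining braid relations. Splitting into cases based on the combinatorial position of $\{r, s\}$ relative to $\{i, k\}$ (disjoint, nested, or sharing exactly one endpoint), one obtains in each case an explicit expression for the conjugate as a word in $\{A_{jk}\}_{j=1}^{k-1}$. The main obstacle is precisely this case analysis, which admits no conceptual shortcut and requires patient manipulation of the Artin relations. A secondary technicality is the preliminary generating-set result for $P_k$ used above, whose proof depends on a carefully selected Schreier transversal to ensure that the Reidemeister--Schreier output simplifies to the $A_{ij}$.
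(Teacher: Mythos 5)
Your argument is correct in outline but takes a genuinely different, and substantially heavier, route than the paper. The paper derives this lemma from the algebraic semidirect-product decomposition $P_k\cong U_k\rtimes P_{k-1}$ (Proposition \ref{prop:pure semidirect}), which is itself obtained from a \emph{single} Reidemeister--Schreier computation on the intermediate subgroup $D_k\leq B_k$ (braids fixing the last strand). The transversal there has only $k$ elements, and that one computation produces a full presentation of $D_k$ from which the generating set $P_k=\langle A_{ij}\rangle$, the normality of $U_k$, and the semidirect structure all fall out at once; the lemma then becomes a short diagram chase verifying that the algebraic projection $P_k\to P_{k-1}$ agrees with the topological one (both kill $U_k$ and agree on the $A_{ij}$).

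You instead build only the tautological splitting $P_k=K\rtimes s(P_{k-1})$ from an algebraic section, and then outsource the real content to two separate heavy computations: (i) the generating set $P_k=\langle A_{ij}\rangle$, which you propose to establish by Reidemeister--Schreier on $P_k\leq B_k$ directly --- a transversal of size $k!$, far larger than the paper's; and (ii) the Artin conjugation identities showing $A_{rs}^{\pm1}A_{ik}A_{rs}^{\mp1}\in U_k$, a three-case analysis you acknowledge but do not carry out. Both deferred facts are classically true, and the logical skeleton is sound: from (i) and the splitting, $K$ is the normal closure of $\{A_{ik}\}$ in $P_k$; from (ii), $U_k\trianglelefteq P_k$ contains that normal closure, so $K\leq U_k$; combined with the given $U_k\leq K$, you are done. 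But those two computations are precisely where the effort lives, and your route spends considerably more of it than the paper does. The paper's insight worth internalizing is that passing through $D_k$ lets a single small Reidemeister--Schreier computation pay for everything simultaneously.
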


This result will be an easy consequence of the following.

\begin{proposition}\label{prop:pure semidirect}
There is an isomorphism $P_k\cong U_k\rtimes P_{k-1}$.
\end{proposition}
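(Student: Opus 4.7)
The plan is to combine two ingredients already at our disposal: the split short exact sequence of pure braid groups coming from the Fadell--Neuwirth fibration, and the identification of $U_k$ as a free group of rank $k-1$ that is implicit in the preceding discussion.

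First, I would apply Theorem \ref{thm:Fadell--Neuwirth} to $M=\mathbb{R}^2$ with $\ell=k$ and base cardinality $k-1$, giving a homotopy fibration with fiber $\mathbb{R}^2\setminus\{x_1,\ldots,x_{k-1}\}$. Since $\mathbb{R}^2$ is a surface distinct from $S^2$ and $\mathbb{RP}^2$, Corollary \ref{cor:surface aspherical} applies both to the ordered configuration spaces of $\mathbb{R}^2$ and to the punctured plane (a homotopy wedge of circles), so all three spaces in the fibration are aspherical. The associated long exact sequence of homotopy groups therefore collapses to a short exact sequence
$$1\longrightarrow F_{k-1}\longrightarrow P_k\longrightarrow P_{k-1}\longrightarrow 1,$$
where $F_{k-1}:=\pi_1(\mathbb{R}^2\setminus\{x_1,\ldots,x_{k-1}\})$ is free of rank $k-1$.

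Second, I would invoke the section construction: because $\mathbb{R}^2$ is the interior of the closed disk, the earlier proposition on sections up to homotopy applies to the forgetful map $\Conf_k(\mathbb{R}^2)\to\Conf_{k-1}(\mathbb{R}^2)$. Passing to $\pi_1$ and restricting to pure braids yields a genuine group-theoretic section $s:P_{k-1}\to P_k$, from which one concludes $P_k\cong F_{k-1}\rtimes_s P_{k-1}$.

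The remaining task is to identify $F_{k-1}$ with $U_k$. The geometric discussion preceding the proposition already establishes that the inclusion $U_k\hookrightarrow P_k$ factors through the kernel $F_{k-1}$, and that the induced map $U_k\to F_{k-1}$ sends the generators $\{A_{ik}\}_{i=1}^{k-1}$ to a free basis. Since $F_{k-1}$ is free of rank $k-1$ on those images and $U_k$ is generated by those same $k-1$ elements, the map is surjective (the images generate) and injective (a nontrivial word in the $A_{ik}$ would give a nontrivial word in a free basis downstairs), hence an isomorphism. Substituting into the splitting produces the desired $P_k\cong U_k\rtimes P_{k-1}$, and Lemma \ref{lem:kernel isomorphism} falls out as the identification of the kernel with $U_k$.

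The main obstacle is honestly just Theorem \ref{thm:Fadell--Neuwirth} itself, whose proof is deferred; once the fibration and its section are in hand, the rest is a matter of organizing geometric facts already verified about the $A_{ik}$.
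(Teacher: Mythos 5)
Your argument has a circularity problem that makes it unusable at this point in the development. The statement you are asked to prove concerns the \emph{abstract} group $P_k=\ker(B_k\to\Sigma_k)$, where $B_k$ is defined by the Artin presentation. The identification of $P_k$ with $\pi_1(\Conf_k(\mathbb{R}^2))$ is precisely the content of Theorem \ref{thm:fox-neuwirth}, and the logical order of the paper is that Proposition \ref{prop:pure semidirect} is used to prove Lemma \ref{lem:kernel isomorphism}, which is in turn the missing ingredient in the five-lemma argument for Theorem \ref{thm:fox-neuwirth}. Your proof invokes Fadell--Neuwirth, asphericality, and the section construction, but those are all statements about $\pi_1(\Conf_k(\mathbb{R}^2))$ and $\pi_1(\Conf_{k-1}(\mathbb{R}^2))$. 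When you write the resulting short exact sequence with $P_k$ and $P_{k-1}$ in it, and when you claim to produce a section $s\colon P_{k-1}\to P_k$ by ``restricting to pure braids,'' you are silently assuming the map $P_k\to\pi_1(\Conf_k(\mathbb{R}^2))$ to be an isomorphism, which is exactly what remains to be proved.

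What your argument legitimately establishes, combined with the inductive hypothesis available at that point, is the isomorphism $\pi_1(\Conf_k(\mathbb{R}^2))\cong U_k\rtimes P_{k-1}$. That is the topological shadow of the proposition, not the proposition itself, and it does not by itself yield the needed structure theorem for the abstractly presented group $P_k$. The paper's proof is deliberately group-theoretic for this reason: it deduces Proposition \ref{prop:pure semidirect} from Proposition \ref{prop:D semidirect}, which runs the Reidemeister--Schreier method on the index-$k$ subgroup $D_k\leq B_k$ of braids that do not permute the last strand, using the Schreier set $\{g_\ell=\sigma_{k-1}\cdots\sigma_\ell\}$. That computation takes place entirely inside the presented group $B_k$ and makes no appeal to configuration spaces, which is what allows it to feed forward into the proof of Fox--Neuwirth without circularity. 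A topological route to Proposition \ref{prop:pure semidirect} would presuppose Theorem \ref{thm:fox-neuwirth}, at which point the proposition itself becomes redundant scaffolding.
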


\begin{proof}[Proof of Lemma \ref{lem:kernel isomorphism}]
Inverting isomorphisms in the diagram
\[\xymatrix{
U_k\ltimes P_{k-1}\ar[d]_-\wr\ar[r]&P_{k-1}\ar@{=}[d]\\
P_k\ar[d]&P_{k-1}\ar[d]^-\wr\\
\pi_1(\Conf_k(\mathbb{R}^2))\ar[r]&\pi_1(\Conf_{k-1}(\mathbb{R}^2)),
}\] of group homomorphisms, we obtain two maps $P_k\to P_{k-1}$, the kernels of which are $U_k$ and the kernel in question; therefore, to conclude that these two coincide, it suffices to verify that the diagram commutes. By inspection of the element in homotopy corresponding to $A_{i k}$, it is clear that both composites annihilate $U_k$, so it suffices to verify that the composite \[P_{k-1}\to P_k\to \pi_1(\Conf_k(\mathbb{R}^2))\to \pi_1(\Conf_{k-1}(\mathbb{R}^2))\] is the canonical map. By the previous corollary and induction on $k$, $P_{k-1}$ is generated by $\{A_{ij}\}_{1\leq i<j\leq k-1}$, and the claim follows from the geometric interpretation of the $A_{ij}$.
\end{proof}

Our proof of Proposition \ref{prop:pure semidirect} will be somewhat indirect, proceeding through the intermediate group $D_k\leq B_k$ of braids that do not permute the last strand. In other words, $D_k$ is defined as the pullback in the diagram \[\xymatrix{
D_k\ar[r]\ar[d]&B_k\ar[d]\\
\Sigma_{k-1}\ar[r]&\Sigma_k.
}\] The proposition is an immediate consequence of the following result.

\begin{proposition}\label{prop:D semidirect}
There is an isomorphism $D_k\cong U_k\rtimes B_{k-1}$.
\end{proposition}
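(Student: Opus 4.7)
The plan is to establish the semidirect product decomposition by producing a split short exact sequence $1\to F_{k-1}\to D_k\to B_{k-1}\to 1$ of groups topologically and then matching its kernel with $U_k$. This mirrors the splitting arguments already used for $P_k$, but refracted through the larger subgroup $D_k$.

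First, I would model $D_k$ topologically as the fundamental group of $Y_k:=\Conf_k(\mathbb{R}^2)/\Sigma_{k-1}$, the $k$-sheeted cover of $B_k(\mathbb{R}^2)$ corresponding to the subgroup $\Sigma_{k-1}\leq\Sigma_k$ that fixes the last letter. Geometrically, $Y_k$ parametrizes configurations in $\mathbb{R}^2$ consisting of $k-1$ unordered points together with one distinguished point, and the map $p:Y_k\to B_{k-1}(\mathbb{R}^2)$ forgets the distinguished point. Quotienting the Fadell--Neuwirth homotopy Cartesian square (Theorem \ref{thm:Fadell--Neuwirth}) for $\Conf_k(\mathbb{R}^2)\to\Conf_{k-1}(\mathbb{R}^2)$ by the free $\Sigma_{k-1}$-action on the first $k-1$ coordinates---justified, if one is worried, by the fiber bundle version of Fadell--Neuwirth noted in the remark---we find that $p$ has homotopy fiber $\mathbb{R}^2\setminus\{x_1,\ldots,x_{k-1}\}\simeq\bigvee_{k-1}S^1$. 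The long exact sequence in homotopy, combined with the asphericity of $B_{k-1}(\mathbb{R}^2)$ from Corollary \ref{cor:surface aspherical}, produces the short exact sequence above. To split it, I would apply the collar retraction trick used in the earlier proof of the existence of sections for $\pi_{k,\ell}$: identify $\mathbb{R}^2$ with the interior of a closed disk, fix a collar retraction compressing everything into a smaller subdisk, choose a basepoint $x_*$ in the complement, and define $s:B_{k-1}(\mathbb{R}^2)\to Y_k$ by adding $x_*$ to each retracted configuration. Monotopy of the composite $p\circ s$ with the identity yields a group-theoretic section that coincides with the canonical inclusion $B_{k-1}\hookrightarrow D_k$, so $D_k\cong F_{k-1}\rtimes B_{k-1}$.

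Finally, to identify $U_k$ with $F_{k-1}$, I would note that each $A_{ik}=\sigma_{k-1}\cdots\sigma_{i+1}\sigma_i^2\sigma_{i+1}^{-1}\cdots\sigma_{k-1}^{-1}$ is a braid in which the first $k-1$ strands return straight to their original positions, so $U_k$ lies in $\ker(D_k\to B_{k-1})\cong F_{k-1}$. Under the identification of $F_{k-1}$ with $\pi_1(\mathbb{R}^2\setminus\{x_1,\ldots,x_{k-1}\})$, the conjugation by $\sigma_{k-1}\cdots\sigma_{i+1}$ shepherds the marked point past the intervening strands $i+1,\ldots,k-1$, and the core $\sigma_i^2$ then winds it once around $x_i$, so $A_{ik}$ is sent to the standard generator encircling $x_i$. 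The $A_{ik}$ therefore map to a free basis, yielding $U_k\cong F_{k-1}$ and completing the proof. The main obstacle will be precisely this final geometric identification: ensuring that the algebraic conjugation pattern defining $A_{ik}$ translates cleanly into a loop homotopic to the standard free generator around $x_i$, for which it is helpful to track the trajectory of the marked point directly, verifying that the conjugating portions of the braid word retrace one another in the fiber and leave only the loop around $x_i$ behind.
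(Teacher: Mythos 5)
Your argument is topological, whereas the paper's proof is a purely combinatorial application of the Reidemeister--Schreier method (using the Schreier set $\{g_\ell=\sigma_{k-1}\cdots\sigma_\ell\}$ and the relations of Lemma~\ref{lem:g sigma relations} to rewrite the presentation of $D_k$). Unfortunately, in the logical context of the paper your approach is circular.

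Proposition~\ref{prop:D semidirect} is not a free-standing fact about braid groups---it is an ingredient in the proof of Theorem~\ref{thm:fox-neuwirth}. The chain of dependencies runs: Proposition~\ref{prop:D semidirect} $\Rightarrow$ Proposition~\ref{prop:pure semidirect} $\Rightarrow$ Lemma~\ref{lem:kernel isomorphism} $\Rightarrow$ Theorem~\ref{thm:fox-neuwirth}. At the point where Proposition~\ref{prop:D semidirect} is invoked, $B_k$ is still only the \emph{abstract} group defined by the Artin presentation, and $D_k$ is the pullback $B_k\times_{\Sigma_k}\Sigma_{k-1}$ inside it; it is not yet known that the homomorphism $B_k\to\pi_1(B_k(\mathbb{R}^2))$ is an isomorphism. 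Your very first step, ``model $D_k$ topologically as $\pi_1(Y_k)$ where $Y_k=\Conf_k(\mathbb{R}^2)/\Sigma_{k-1}$,'' presupposes exactly this isomorphism: without it you only have a map $D_k\to\pi_1(Y_k)$, and your fibration argument computes the target, not the source. The entire point of the Reidemeister--Schreier detour is that it produces a presentation of $D_k$ as a subgroup of the \emph{abstract} group $B_k$, using nothing but the generators and relations, so that the resulting algebraic fact can then feed into the proof that $B_k\cong\pi_1(B_k(\mathbb{R}^2))$.

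The topology in your proposal is itself fine---$Y_k\to B_{k-1}(\mathbb{R}^2)$ is a Fadell--Neuwirth fibration with fiber $\mathbb{R}^2\setminus\{x_1,\ldots,x_{k-1}\}$, and the collar-retraction section splits the short exact sequence exactly as you say---and it would give a clean \emph{post hoc} verification of the proposition once the Fox--Neuwirth theorem is in hand. But as a proof to slot in here it assumes what it is meant to help prove. There is also a secondary gap: you take for granted the existence of a well-defined ``forget the last strand'' homomorphism $D_k\to B_{k-1}$ at the level of presentations, which is again geometrically obvious but algebraically needs to be produced; in the paper this map emerges from the Reidemeister--Schreier rewriting itself, and it is one of the things the computation is delivering.
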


We prove this result by applying the Reidemeister--Schreier method to $D_k\leq B_k$. Define elements $g_\ell=\sigma_{k-1}\cdots \sigma_\ell$ for $1\leq \ell\leq k$. 

\begin{lemma}\label{lem:schreier set}
The set $\{g_\ell\}_{\ell=1}^k$ is a Schreier set for $D_k$ in $B_k$.
\end{lemma}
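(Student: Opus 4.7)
The plan is to verify directly the two conditions in the definition of a Schreier set: that the elements $g_\ell=\sigma_{k-1}\sigma_{k-2}\cdots\sigma_\ell$ (with $g_k=1$ by convention) represent the $k$ distinct cosets of $D_k$ in $B_k$, and that the given reduced words for them are closed under taking initial subwords.

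For the coset statement, I would begin by chasing the definition of $D_k$. The pullback square defining $D_k$ identifies $B_k/D_k$ with $\Sigma_k/\Sigma_{k-1}$, and evaluation at $k\in\{1,\ldots,k\}$ identifies the latter with $\{1,\ldots,k\}$; concretely, $g,g'\in B_k$ lie in the same coset of $D_k$ if and only if their images $\bar g,\bar g'\in\Sigma_k$ satisfy $\bar g(k)=\bar g'(k)$. It therefore suffices to show that the image of $g_\ell$ in $\Sigma_k$ sends $k$ to $\ell$. This is a short, direct computation using the fact that $\sigma_i\mapsto (i,i+1)$: applying the transpositions in order, $k$ is swapped successively to $k-1,k-2,\ldots,\ell+1,\ell$, while all lower indices are untouched at each stage. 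Since $\ell$ ranges over $\{1,\ldots,k\}=B_k/D_k$, the $g_\ell$ form a complete set of coset representatives.

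For the Schreier condition, I would observe that the given expression $\sigma_{k-1}\sigma_{k-2}\cdots\sigma_\ell$ is already a reduced word in the free group $F(\sigma_1,\ldots,\sigma_{k-1})$, since it involves only positive powers of pairwise distinct generators and therefore admits no cancellation. The initial subwords of $g_\ell$ are then exactly
\[1,\quad \sigma_{k-1},\quad \sigma_{k-1}\sigma_{k-2},\quad \ldots,\quad \sigma_{k-1}\sigma_{k-2}\cdots\sigma_{\ell},\]
which are precisely $g_k,g_{k-1},g_{k-2},\ldots,g_\ell$. Each is an element of the set $A=\{g_1,\ldots,g_k\}$, so the lifts $\{g_\ell\}\subseteq F(\sigma_1,\ldots,\sigma_{k-1})$ form a Schreier set for the pullback subgroup $F(\sigma_1,\ldots,\sigma_{k-1})\times_{B_k}D_k$, which is exactly what is required.

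There is no genuine obstacle in this argument: the whole content of the lemma is the happy match between the length of the word $g_\ell$ and the descending chain of its initial segments. The only mild subtlety is bookkeeping the convention for multiplication of permutations when computing $\bar g_\ell(k)=\ell$, but once that is pinned down the verification is essentially immediate.
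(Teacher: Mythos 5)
Your proposal is correct and follows the paper's argument: identify $B_k/D_k$ with $\Sigma_k/\Sigma_{k-1}\cong\{1,\ldots,k\}$ via evaluation at $k$, check that $g_\ell$ maps to $\ell$, and note that every initial segment of $\sigma_{k-1}\cdots\sigma_\ell$ is again of the form $g_{\ell'}$. The only difference is that you spell out the lift to the free group and the check that the word is reduced, which the paper leaves implicit; there is no substantive divergence.
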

\begin{proof}
Reading off the last entry of a permutation defines a bijection $\Sigma_k/\Sigma_{k-1}\cong \{1,\ldots, k\}$, and the last entry of the permutation $\tau_{k-1}\cdots \tau_\ell$ is $\ell$. Thus, the composite \[\{g_\ell\}_{\ell=1}^k\to B_k/D_k\to \Sigma_k/\Sigma_{k-1}\to \{1,\ldots, k\}\] is surjective and hence bijective. Since the rightmost two maps are also bijective, the first is as well, and we conclude that $\{g_\ell\}_{\ell=1}^k$ is a set of coset representatives for $D_k$ in $B_k$. Since any initial word in $g_\ell$ is obviously of the form $g_{\ell'}$ for some $\ell'$, the claim follows.
\end{proof}

In carrying out our computations below, we will need to know the following relations. 

\begin{lemma}\label{lem:g sigma relations}
The following relations hold in $B_k$:\begin{align*}
&\sigma_i^{g_\ell}=\sigma_i,  &1\leq i<\ell-1< k \\
&\sigma_i^{g_\ell}=\sigma_{i-1}, &1\leq \ell<i<k\\
&g_i\sigma_ig_{i+1}^{-1}=A_{ik}, &1\leq i< k\\
&g_i\sigma_{i-1}g_{i-1}^{-1}=1, &1<i\leq k.
\end{align*}
\end{lemma}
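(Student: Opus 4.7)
My plan is to verify the four identities individually, reading $\sigma_i^{g_\ell}$ as $g_\ell\sigma_i g_\ell^{-1}$ (a brief check in $B_4$ with $\ell=2$, $i=3$ forces this convention, since $g_2\sigma_3 g_2^{-1}=\sigma_3\sigma_2\sigma_3\sigma_2^{-1}\sigma_3^{-1}=\sigma_2$ after one braid move). Of the four identities, the third and fourth require no braid manipulation whatsoever; they are immediate from the telescoping form of the word $g_\ell=\sigma_{k-1}\sigma_{k-2}\cdots\sigma_\ell$. For the fourth, $g_i\sigma_{i-1}=\sigma_{k-1}\cdots\sigma_i\sigma_{i-1}$ is literally the word $g_{i-1}$, so right-multiplication by $g_{i-1}^{-1}$ produces the identity. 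For the third, $g_i\sigma_i=\sigma_{k-1}\cdots\sigma_{i+1}\sigma_i^2$, and right-multiplication by $g_{i+1}^{-1}=\sigma_{i+1}^{-1}\cdots\sigma_{k-1}^{-1}$ produces exactly the word that defines $A_{ik}$.

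The first relation is handled purely by commutation. When $i<\ell-1$, every generator $\sigma_j$ appearing in $g_\ell$ satisfies $j\geq\ell\geq i+2$, so $|i-j|>1$, and the disjoint-support braid relation gives $\sigma_i\sigma_j=\sigma_j\sigma_i$. Applying this factor by factor, $\sigma_i$ commutes past the entire word $g_\ell$, so $g_\ell\sigma_i g_\ell^{-1}=\sigma_i$.

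The one substantive computation is the second relation. I plan to prove it by showing that $\sigma_{i-1}g_\ell=g_\ell\sigma_i$ for $\ell<i<k$, after which conjugation delivers the conclusion. Factoring $g_\ell=(\sigma_{k-1}\cdots\sigma_{i+1})(\sigma_i\sigma_{i-1})(\sigma_{i-2}\cdots\sigma_\ell)$, I would transport $\sigma_{i-1}$ from the left to the right in three stages: first commute $\sigma_{i-1}$ past the high-index block $\sigma_{k-1}\cdots\sigma_{i+1}$ (each factor has index at least $i+1$, so disjoint support applies); then collide with the middle pair $\sigma_i\sigma_{i-1}$ to form $\sigma_{i-1}\sigma_i\sigma_{i-1}$ and rewrite it as $\sigma_i\sigma_{i-1}\sigma_i$ via the Artin relation; finally commute the trailing $\sigma_i$ past the low-index block $\sigma_{i-2}\cdots\sigma_\ell$ (each factor has index at most $i-2$, so disjoint support again applies). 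The resulting word is precisely $g_\ell\sigma_i$. The only mild subtlety is the boundary case $i=\ell+1$, where the low-index block is empty and the third stage is vacuous, but the same moves still deliver the conclusion. I do not expect any real obstacle here: the entire argument amounts to a careful bookkeeping of a single braid relation flanked by two commutations.
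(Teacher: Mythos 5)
Your proposal is correct and follows essentially the same route as the paper: the third and fourth relations fall out of the telescoping form of $g_\ell$, the first from disjoint-support commutation, and the second from one application of the braid relation flanked by two blocks of commutations. The only cosmetic difference is that you transport $\sigma_{i-1}$ rightward through $g_\ell$, while the paper pushes $\sigma_i$ leftward from the middle of the conjugate $g_\ell\sigma_i g_\ell^{-1}$; both computations use the same three moves in a slightly different order.
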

\begin{proof}
The third and fourth relations are obvious from the definitions, and the first is immediate from the commutativity relations in $B_k$. For the second, we have that \begin{align*}
g_\ell\sigma_ig_\ell^{-1}&=\sigma_{k-1}\cdots\sigma_\ell\sigma_i\sigma_\ell^{-1}\cdots \sigma_{k-1}^{-1}\\
&=\sigma_{k-1}\cdots \sigma_i\sigma_{i-1}\sigma_i\sigma_{i-2}\cdots \sigma_\ell\sigma_\ell^{-1}\cdots \sigma_{k-1}^{-1}\\
&=\sigma_{k-1}\cdots \sigma_i\sigma_{i-1}\sigma_i\sigma_{i-1}^{-1}\cdots \sigma_{k-1}^{-1}\\
&=\sigma_{k-1}\cdots \sigma_{i-1}\sigma_i\sigma_{i-1}\sigma_{i-1}^{-1}\cdots \sigma_{k-1}^{-1}\\
&=\sigma_{k-1}\cdots\sigma_{i+1}\sigma_{i-1}\sigma_{i+1}^{-1}\cdots\sigma_{k-1}^{-1}\\
&=\sigma_{j-1},
\end{align*} where the second equality follows from the commutativity relations, the third by cancellation, the fourth by the braid relations, the fifth by cancellation, and the last by commutativity.
\end{proof}

\begin{proof}[Proof of Proposition \ref{prop:D semidirect}]
We have that, modulo $\Sigma_{k-1}$, \[\tau_{k-1}\cdots \tau_\ell\tau_i\equiv \begin{cases}
\tau_{k-1}\cdots \tau_\ell&\quad 1\leq i<\ell-1< k\text{ or }1\leq \ell<i< k\\
\tau_{k-1}\cdots \tau_{\ell-1}&\quad 1\leq \ell-1=i<k\\
\tau_{k-1}\cdots \tau_{\ell+1}&\quad 1\leq \ell=i<k.
\end{cases}\] Thus, we have \[\overline{g_\ell\sigma_i}=\begin{cases}
g_\ell&\quad1\leq i<\ell-1< k\text{ or }1\leq \ell<i< k\\
g_{\ell-1}&\quad 1\leq \ell-1=i<k\\
g_{\ell+1}&\quad 1\leq \ell=i<k,
\end{cases}\] and Lemma \ref{lem:g sigma relations} now implies that \begin{align*}g_\ell\sigma_{\ell-1}(\overline{g_\ell\sigma_{\ell-1}})^{-1}&=g_\ell\sigma_{\ell-1}g_{\ell-1}^{-1}=1\\
g_\ell\sigma_{\ell}(\overline{g_\ell\sigma_{\ell}})^{-1}&=g_\ell\sigma_\ell g_{\ell+1}^{-1}=A_{\ell k}\\
g_\ell\sigma_{i}(\overline{g_\ell\sigma_{i}})^{-1}&=\sigma_i^{g_\ell}=\begin{cases}
\sigma_i&\quad 1\leq i<\ell-1< k \\
\sigma_{i-1}&\quad 1\leq \ell<i<k.
\end{cases}
\end{align*}
We conclude by the Reidemeister--Schreier method that $D_k$ is generated by the collection $\{A_{\ell k}, \sigma_i: 1\leq \ell\leq k,\, 1\leq i< k-1\}$, which is to say by the subgroups $U_k$ and $B_{k-1}$. 

In order to determine the relations, we conjugate the relations in $B_k$ by the $g_\ell$ and express the result in terms of our chosen generators using Lemma \ref{lem:g sigma relations}. We begin with the commutativity relations, for which there are six cases.
\begin{enumerate}
\item For $1\leq \ell<i<j-1< k-1$, \begin{align*}
[\sigma_i,\sigma_j]^{g_\ell}&= \sigma_i^{g_\ell}\sigma_j^{g_\ell}\sigma_i^{-g_\ell}\sigma_j^{-g_\ell}\\
&=[\sigma_{i-1},\sigma_{j-1}]
\end{align*} 
\item For $1\leq i=\ell <j-1<k-1$, \begin{align*}
[\sigma_i,\sigma_j]^{g_\ell}&=g_\ell\sigma_ig_{\ell+1}^{-1}g_{\ell+1}\sigma_jg_{\ell+1}^{-1}g_{\ell+1}\sigma_i^{-1}g_{\ell}^{-1}g_\ell\sigma_j^{-1}g_{\ell}^{-1}\\
&=[A_{ik},\sigma_{j-1}].
\end{align*}
\item For $1\leq i=\ell-1<j-1<k-1$, \begin{align*}
[\sigma_i,\sigma_j]^{g_\ell}&=g_\ell\sigma_ig_{\ell-1}^{-1}g_{\ell-1}
\sigma_jg_{\ell-1}^{-1}g_{\ell-1}\sigma_i^{-1}g_\ell^{-1}g_\ell\sigma_j^{-1}g_\ell^{-1}\\
&=[1, \sigma_{j-1}]\\
&=1.
\end{align*}
\item For $1\leq i <\ell-1=j-1<k-1$, \begin{align*}
[\sigma_i,\sigma_j]^{g_\ell}&=g_\ell\sigma_ig_\ell^{-1}g_\ell\sigma_jg_{\ell+1}^{-1}g_{\ell+1}\sigma_i^{-1}g_{\ell+1}^{-1}g_{\ell+1}\sigma_j^{-1}g_\ell^{-1}\\
&=[\sigma_i,A_{jk}].
\end{align*}
\item For $1< i+1<j=\ell-1<k-1$, \begin{align*}
[\sigma_i,\sigma_j]^{g_\ell}&=g_\ell\sigma_ig_\ell^{-1}g_\ell\sigma_jg_{\ell-1}^{-1}g_{\ell-1}\sigma_i^{-1}g_{\ell-1}^{-1}g_{\ell-1}\sigma_j^{-1}g_\ell\\
&=[\sigma_i,1]\\
&=1.
\end{align*}
\item For $1< i+1<j<\ell-1<k-1$, \begin{align*}
[\sigma_i,\sigma_j]^{g_\ell}&=\sigma_i^{g_\ell}\sigma_j^{g_\ell}\sigma_i^{-g_\ell}\sigma_j^{-g_\ell}\\
&=[\sigma_i,\sigma_j].
\end{align*}
\end{enumerate} We turn now to the braid relations, for which there are five further cases.
\begin{enumerate}
\item[(7)] For $1\leq \ell<i< k-1$, \begin{align*}
(\sigma_i\sigma_{i+1}\sigma_i\sigma_{i+1}^{-1}\sigma_i^{-1}\sigma_{i+1}^{-1})^{g_\ell}&=\sigma_i^{g_\ell}\sigma_{i+1}^{g_\ell}\sigma_i^{g_\ell}\sigma_{i+1}^{-g_\ell}\sigma_i^{-g_\ell}\sigma_{i+1}^{-g_\ell}\\
&=\sigma_{i-1}\sigma_{i}\sigma_{i-1}\sigma_{i}^{-1}\sigma_{i-1}^{-1}\sigma_{i}^{-1}. 
\end{align*}
\item[(8)] For $1\leq \ell=i<k-1$, \begin{align*}
(\sigma_i\sigma_{i+1}\sigma_i\sigma_{i+1}^{-1}\sigma_i^{-1}\sigma_{i+1}^{-1})^{g_\ell}&=g_\ell\sigma_ig_{\ell+1}^{-1}g_{\ell+1}\sigma_{i+1}g_{\ell+2}^{-1}g_{\ell+2}\sigma_ig_{\ell+2}^{-1}g_{\ell+2}\sigma_{i+1}^{-1}g_{\ell+1}^{-1}g_{\ell+1}\sigma_i^{-1}g_\ell^{-1}g_\ell\sigma_{i+1}^{-1}g_\ell^{-1}\\
&=A_{ik}A_{i+1,k}(A_{ik}A_{i+1,k})^{-\sigma_i}.
\end{align*}
\item[(9)] For $1\leq i=\ell-1\leq k-1$, \begin{align*}
(\sigma_i\sigma_{i+1}\sigma_i\sigma_{i+1}^{-1}\sigma_i^{-1}\sigma_{i+1}^{-1})^{g_\ell}&=g_\ell\sigma_ig_{\ell-1}^{-1}g_{\ell-1}\sigma_{i+1}g_{\ell-1}^{-1}g_{\ell-1}\sigma_ig_{\ell}^{-1}g_{\ell}\sigma_{i+1}^{-1}g_{\ell+1}^{-1}g_{\ell+1}\sigma_i^{-1}g_{\ell+1}^{-1}g_{\ell+1}\sigma_{i+1}^{-1}g_\ell^{-1}\\
&=A_{ik}^{\sigma_i}A_{i+1,k}^{-1}.
\end{align*}
\item[(10)] For $1< i+1=l-1\leq k-1$, \begin{align*}
(\sigma_i\sigma_{i+1}\sigma_i\sigma_{i+1}^{-1}\sigma_i^{-1}\sigma_{i+1}^{-1})^{g_\ell}&=g_\ell\sigma_ig_{\ell}^{-1}g_\ell \sigma_{i+1}g_{\ell-1}^{-1}g_{\ell-1}\sigma_ig_{\ell-2}^{-1}g_{\ell-2}\sigma_{i+1}^{-1}g_{\ell-2}^{-1}g_{\ell-2}\sigma_i^{-1}g_{\ell-1}^{-1}g_{\ell-1}\sigma_{i+1}^{-1}g_\ell^{-1}\\
&=[\sigma_i,1]\\
&=1.
\end{align*}
\item[(11)] For $1<i+1<\ell-1\leq k-1$, \begin{align*}
(\sigma_i\sigma_{i+1}\sigma_i\sigma_{i+1}^{-1}\sigma_i^{-1}\sigma_{i+1}^{-1})^{g_\ell}&=\sigma_i^{g_\ell}\sigma_{i+1}^{g_\ell}\sigma_i^{g_\ell}\sigma_{i+1}^{-g_\ell}\sigma_i^{-g_\ell}\sigma_{i+1}^{-g_\ell}\\
&=\sigma_i\sigma_{i+1}\sigma_i\sigma_{i+1}^{-1}\sigma_i^{-1}\sigma_{i+1}^{-1}
\end{align*}
\end{enumerate}

Relations (3), (5), and (10) are vacuous; relations (1), (6), (7), and (11) yield the defining presentation for $B_{k-1}$; and relations (2), (4), (8), and (9) imply that $U_k$ is a normal subgroup. We recognize the resulting presentation as a presentation of a semidirect product of $B_{k-1}$ with the free group on the set $\{A_{\ell k}\}_{\ell=1}^k$.
\end{proof}

\section{(Co)homology of $\Conf_k(\mathbb{R}^n)$}\label{section:cohomology}

Our present goal is to understand the cohomology ring $H^*(\Conf_k(\mathbb{R}^n))$ with integer coefficients. Since the cases $n\in\{0,1\}$ are exceptional and easy, we assume throughout that $n\geq0$.

\subsection{Additive structure} We begin with the task of understanding the cohomology as a graded Abelian group.

\begin{definition}
Let $V$ be a degreewise finitely generated non-negatively graded Abelian group $V$. The \emph{Poincar\'{e} polynomial} of $V$ is the polynomial \[P(V)=\sum_{i\geq0}\mathrm{rk}(V_i)t^i.\] If $X$ is a space of finite type, the \emph{Poincar\'{e} polynomial} of $X$ is the $P(X):= P(H_*(X))$.
\end{definition}

The Poincar\'{e} polynomial for graded Abelian groups is additive under direct sum and multiplicative under tensor product, so the Poincar\'{e} polynomial for spaces is additive under disjoint union and, with appropriate torsion-freeness assumptions in place, multiplicative under Cartesian product.

\begin{theorem}[Leray--Hirsch]\label{thm:Leray--Hirsch}
Suppose that the diagram \[\xymatrix{
F\ar[r]\ar[d]&E\ar[d]\\
\pt\ar[r]&B
}\] is homotopy Cartesian and that \begin{enumerate}
\item $F$ and $B$ are path connected,
\item $H^*(F)$ is free Abelian,
\item $H^*(F)$ or $H^*(B)$ is of finite type, and
\item $H^*(E)\to H^*(F)$ is surjective.
\end{enumerate} There is an isomorphism $H^*(E)\cong H^*(B)\otimes H^*(F)$ of $H^*(B)$-modules. In particular, we have the equation \[P(E)=P(B)P(F).\]
\end{theorem}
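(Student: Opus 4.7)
The plan is to deduce Leray--Hirsch from the Serre spectral sequence (whose construction is deferred to Section~\ref{section:deferred proofs}). For the homotopy Cartesian square at hand, one obtains a (cohomological) Serre spectral sequence of the form $E_2^{p,q}=H^p(B;\mathcal{H}^q(F))\Rightarrow H^{p+q}(E)$, and my goal is to arrange that this sequence collapses at $E_2$ with trivial coefficients so that $E_\infty\cong H^*(B)\otimes H^*(F)$, and then to lift this to a genuine isomorphism $H^*(E)\cong H^*(B)\otimes H^*(F)$ of $H^*(B)$-modules.

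First I would dispose of the local coefficients. A class $\alpha\in H^*(F)$ lying in the image of $H^*(E)\to H^*(F)$ is $\pi_1(B)$-invariant, since it is the restriction of a globally defined class. Hypothesis (iv) says every class is of this form, so $\pi_1(B)$ acts trivially on $H^*(F)$, and combined with the torsion-freeness and finite-type hypotheses the universal coefficient theorem identifies $E_2^{p,q}\cong H^p(B)\otimes H^q(F)$. Next I would kill all the differentials. The surjectivity of $H^*(E)\to H^*(F)$ means that every class of the fiber column $E_2^{0,q}$ survives to $E_\infty$, so all differentials vanish on $E_2^{0,*}$. Since $E_2$ is generated as a module over the base column $E_2^{*,0}$ by the fiber column, and the differentials are $E_2^{*,0}$-linear (this is the multiplicativity of the Serre spectral sequence, together with the fact that $E_2^{*,0}$ consists of permanent cycles as the image of the edge map $H^*(B)\to H^*(E)$), it follows that $d_r=0$ for all $r\geq 2$. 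Hence $E_\infty=E_2\cong H^*(B)\otimes H^*(F)$.

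To upgrade this collapse to an actual isomorphism, choose a homogeneous basis $\{e_\alpha\}$ of $H^*(F)$—possible since $H^*(F)$ is free abelian—lift each $e_\alpha$ to a class $\tilde e_\alpha\in H^*(E)$ using hypothesis~(iv), and define
\[
\phi:H^*(B)\otimes H^*(F)\longrightarrow H^*(E),\qquad b\otimes e_\alpha\longmapsto p^*(b)\smile \tilde e_\alpha,
\]
where $p\colon E\to B$ is the (up-to-homotopy) fibration. This map is $H^*(B)$-linear by construction, respects the filtration by fiber degree on the right-hand side, and on associated graded pieces recovers the identification $E_\infty^{p,q}\cong H^p(B)\otimes H^q(F)$ established above. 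A standard induction up the Serre filtration, using that each short exact sequence $0\to F^{p+1}H^n(E)\to F^pH^n(E)\to E_\infty^{p,n-p}\to 0$ splits (the right term is free abelian, being a tensor of a free group with a finitely generated one), shows that $\phi$ is an isomorphism. Taking Poincar\'e series then yields $P(E)=P(B)P(F)$.

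The main obstacle, and the only place where the hypotheses genuinely interact, is the collapse argument: one must simultaneously leverage surjectivity (to kill fiber-column differentials and trivialize monodromy) and the multiplicative structure of the spectral sequence (to propagate the collapse across all bidegrees). The finite type and freeness hypotheses are then exactly what is needed to upgrade the $E_\infty$-identification to an honest splitting of $H^*(E)$; without them the filtration could fail to split, and the Poincar\'e polynomial equation could fail in the presence of torsion.
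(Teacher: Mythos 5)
Your proposal follows the same route as the paper: trivialize the local system using surjectivity of $H^*(E)\to H^*(F)$ together with connectedness of $F$, identify $E_2\cong H^*(B)\otimes H^*(F)$ via K\"{u}nneth, kill all differentials by showing the fiber column consists of permanent cycles and propagating with the Leibniz rule, and then promote the $E_\infty$-collapse to an actual $H^*(B)$-module isomorphism by lifting a basis of $H^*(F)$ along a section of $i^*$. Two of your intermediate justifications are off, though neither affects the correctness of the overall argument. First, the parenthetical explanation of why the differentials are $E_r^{*,0}$-linear is confused: $E_2^{*,0}$ is not ``the image of the edge map $H^*(B)\to H^*(E)$'' (the edge map factors as $E_2^{*,0}\twoheadrightarrow E_\infty^{*,0}\hookrightarrow H^*(E)$, so you cannot appeal to it without circularity). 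What you actually want is the purely formal observation that $d_r$ vanishes on the base row of a first-quadrant spectral sequence because its target lies in negative fiber degree; combined with multiplicativity this gives the $E_r^{*,0}$-linearity. Second, the claim that each $E_\infty^{p,q}\cong H^p(B)\otimes H^q(F)$ is free abelian is false in general: $H^q(F)$ is free by hypothesis, but $H^p(B)$ may have torsion, in which case the tensor product has torsion (e.g.\ $\mathbb{Z}/2\otimes\mathbb{Z}$). Fortunately you never needed the filtration to split: a map of boundedly and exhaustively filtered abelian groups that induces an isomorphism on associated graded is itself an isomorphism by the five lemma and downward induction on filtration degree, with no splitting hypothesis, and this is exactly how the paper closes the argument.
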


The proof, which is premised on a few basic properties of the Serre spectral sequence, is deferred to a later point in the notes, at which we will discuss this tool in some detail.

In order to apply the Leray--Hirsch theorem, we must verify point (4). In doing so, we employ the \emph{Gauss maps} \begin{align*}
\Conf_k(\mathbb{R}^n)&\xrightarrow{\gamma_{ab}} S^{n-1}\\
(x_1,\ldots, x_k)&\mapsto \frac{x_b-x_a}{\|x_b-x_a\|}.
\end{align*}

\begin{lemma}\label{lem:cohomology surjective}
The natural map $H^*(\Conf_k(\mathbb{R}^n))\to H^*(\mathbb{R}^n\setminus\{x_1,\ldots, x_{k-1}\})$ is surjective.
\end{lemma}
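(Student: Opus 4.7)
The plan is to exhibit explicit preimages in $H^*(\Conf_k(\mathbb{R}^n))$ for a basis of the cohomology of the fiber. Write $F := \mathbb{R}^n\setminus\{x_1,\ldots, x_{k-1}\}$ and let $\iota: F\hookrightarrow \Conf_k(\mathbb{R}^n)$ be the inclusion $y\mapsto (x_1,\ldots, x_{k-1},y)$.

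First I would record the cohomology of $F$: since $F$ deformation retracts onto $\bigvee_{k-1} S^{n-1}$, its cohomology is free, with a unit in degree $0$ and a basis $\alpha_1,\ldots,\alpha_{k-1}$ in degree $n-1$ (we are in the generic case $n\geq 2$, as the exceptional low-dimensional cases were set aside). The unit is automatically hit, so the whole task reduces to producing preimages of the $\alpha_i$.

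The key observation is that the $\alpha_i$ may be identified geometrically. For $1\leq i\leq k-1$, consider the map $f_i:F\to S^{n-1}$, $y\mapsto (y-x_i)/\|y-x_i\|$. I would check (by evaluating degrees on small spheres around each puncture, or equivalently by looking at the standard deformation retraction onto the wedge) that $f_i$ has degree $1$ on the sphere surrounding $x_i$ and extends continuously across every other puncture $x_j$, hence has degree $0$ on those spheres. Pulling back the fundamental class $\omega\in H^{n-1}(S^{n-1})$ along $f_i$ therefore yields a class dual to the $i$-th wedge summand, and $\{f_i^*\omega\}_{i=1}^{k-1}$ is a basis of $H^{n-1}(F)$. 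This degree computation is the main substantive step; everything else is formal.

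The final step is to lift each $f_i$ to $\Conf_k(\mathbb{R}^n)$ using the Gauss map $\gamma_{ik}:\Conf_k(\mathbb{R}^n)\to S^{n-1}$ introduced just above the lemma. By construction,
\[
(\gamma_{ik}\circ \iota)(y)=\frac{y-x_i}{\|y-x_i\|}=f_i(y),
\]
so $\iota^*(\gamma_{ik}^*\omega)=f_i^*\omega=\alpha_i$. Thus each basis element of $H^{n-1}(F)$ lies in the image of $\iota^*$, and together with the obvious hit of the degree-$0$ generator this proves surjectivity. The main obstacle is really just the identification of the $\alpha_i$ with the $f_i^*\omega$; once that is in hand, the Gauss maps produce the required lifts tautologically.
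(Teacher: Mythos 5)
Your proof is correct and is essentially the same as the paper's: both use the Gauss maps $\gamma_{ik}$ as the lifts of the fiber cohomology classes, and the only content is verifying that their restrictions to $F$ pull $\omega$ back to a basis of $H^{n-1}(F)$. The bookkeeping differs in a small but pleasant way. The paper constructs explicit maps $\varphi_a:S^{n-1}\to F$, $\varphi_a(v)=x_a+3^av$ with $x_a=(3^a,0,\ldots,0)$, and verifies $\gamma_{ka}\circ\varphi_a=\mathrm{id}$ algebraically; the geometric choice of exponentially growing radii ensures the pairing between the $[\varphi_a]$ and the $\gamma_{kb}^*\omega|_F$ is upper-triangular with $1$s on the diagonal, hence unimodular (though the paper leaves the off-diagonal vanishing implicit). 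You instead pair the restricted Gauss map classes against the standard homology basis given by small spheres around the punctures, where continuity of $f_i$ across $x_j$ for $j\neq i$ immediately forces the off-diagonal degrees to vanish, so the pairing matrix is the identity on the nose. Your version is a touch cleaner for exactly this reason. One minor note on conventions: the paper's Gauss map is $\gamma_{ab}(x)=(x_b-x_a)/\|x_b-x_a\|$, so the map you call $\gamma_{ik}$ is what the paper's formula would write; the paper's displayed computation uses the notation $\gamma_{ka}$ but evaluates $(x_k-x_a)/\|\cdot\|$, so there is a harmless sign/index slip in the source that your writeup avoids.
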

\begin{proof}
We first construct a collection of maps $\varphi_a:S^{n-1}\to \mathbb{R}^{n}\setminus\{x_1,\ldots, x_{k-1}\}$ inducing a homotopy equivalence from the bouquet $\bigvee_{k-1}S^{n-1}$. We will then show that each composite \[\xymatrix{S^{n-1}\ar[r]^-{\varphi_a}&\mathbb{R}^n\setminus\{x_1,\ldots, x_{k-1}\}\subseteq \Conf_k(\mathbb{R}^n))\ar[r]^-{\gamma_{ka}}& S^{n-1}
}\] is the identity, implying that $H^*(\mathbb{R}^n\setminus\{x_1,\ldots, x_{k-1}\})$ is generated by classes of the form $\gamma_{ka}^*(\alpha)$ for $\alpha\in H^*(S^{n-1})$.

For the construction, we set $x_a=(3^a,0,\ldots, 0)$ for $1\leq a\leq k-1$ and define $\varphi_a:S^{n-1}\to\mathbb{R}^n\setminus\{x_1,\ldots, x_k\}$ by \[\varphi_a(v)_r=(3^a,0,\ldots, 0)+ 3^av\] where $v\in S^{n-1}$ is regarded as a unit vector in $\mathbb{R}^n$. Then $\varphi_a(-1,\ldots, 0)=(0,\ldots, 0)$ for $1\leq a\leq k-1$, and the induced map from $\bigvee_{k-1}S^{n-1}$ is clearly a homotopy equivalence. Finally, we have \[\gamma_{ka}\circ\varphi_a(v)=\frac{(3^a,0,\ldots, 0)+3^av-(3^a,0,\ldots, 0)}{\|(3^a,0,\ldots, 0)+3^av-(3^a,0,\ldots, 0)\|}=\frac{3^av}{\|3^av\|}=v.\]
\end{proof}

Write $\alpha_{ab}\in H^{n-1}(\Conf_k(\mathbb{R}^n))$ for the pullback along $\gamma_{ab}$ of the standard volume form on $S^{n-1}$. We are now able to give a complete additive description of the cohomology ring in terms of these generators.

\begin{corollary}
For any $k\geq0$, $H^*(\Conf_k(\mathbb{R}^n))$ is free with basis \[S_k=\{\alpha_{a_1b_1}\alpha_{a_2b_2}\cdots \alpha_{a_{m}b_m}: m\geq0,\,1\leq b_1<\cdots<b_m \leq k,\, a_\ell<b_\ell\}.\] In particular, the Poincar\'{e} polynomial is given by \[P(\Conf_k(\mathbb{R}^n))=\prod_{j=1}^{k-1}(1+jt^{n-1}).\]
\end{corollary}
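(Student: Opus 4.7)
The plan is to induct on $k$, using the Fadell--Neuwirth fibration (Theorem \ref{thm:Fadell--Neuwirth}) together with the Leray--Hirsch theorem (Theorem \ref{thm:Leray--Hirsch}) at each step. The base cases $k=0,1$ are trivial, since $\Conf_k(\mathbb{R}^n)$ is contractible and the claimed basis $S_k$ reduces to $\{1\}$.

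For the inductive step, apply Leray--Hirsch to the homotopy Cartesian square
\[
\xymatrix{
\mathbb{R}^n\setminus\{x_1,\ldots,x_{k-1}\}\ar[r]\ar[d]&\Conf_k(\mathbb{R}^n)\ar[d]\\
\pt\ar[r]&\Conf_{k-1}(\mathbb{R}^n).
}
\]
The hypotheses are all in place: the fiber $\mathbb{R}^n\setminus\{x_1,\ldots,x_{k-1}\}\simeq \bigvee_{k-1}S^{n-1}$ has free cohomology of finite type, the base is path connected (here one should note that $\Conf_{k-1}(\mathbb{R}^n)$ is path connected, which can be seen directly or inductively), and the surjectivity condition is precisely Lemma \ref{lem:cohomology surjective}. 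Moreover, that lemma identifies the classes $1, \alpha_{1k}|_F, \ldots, \alpha_{k-1,k}|_F$ as a basis of $H^*(F)$, since the map $\gamma_{ka}$ restricted to the fiber is (up to the identification $\bigvee_{k-1}S^{n-1}\simeq\mathbb{R}^n\setminus\{x_1,\dots,x_{k-1}\}$) the collapse onto the $a$th wedge summand.

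The multiplicativity of Poincar\'e polynomials from Leray--Hirsch then immediately gives
\[
P(\Conf_k(\mathbb{R}^n)) = P(\Conf_{k-1}(\mathbb{R}^n))\cdot P(\textstyle\bigvee_{k-1} S^{n-1}) = P(\Conf_{k-1}(\mathbb{R}^n))\cdot(1+(k-1)t^{n-1}),
\]
and the product formula follows by induction.

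For the explicit basis, the Leray--Hirsch theorem asserts that $H^*(\Conf_k(\mathbb{R}^n))$ is free as a module over $H^*(\Conf_{k-1}(\mathbb{R}^n))$ on any lifts of a basis of $H^*(F)$; here we can take as lifts the classes $1$ and $\alpha_{1k},\ldots,\alpha_{k-1,k}$ themselves, since by construction $\alpha_{ak}$ restricts to the $a$th generator of $H^{n-1}(F)$. By the inductive hypothesis, $H^*(\Conf_{k-1}(\mathbb{R}^n))$ has basis $S_{k-1}$, consisting of monomials $\alpha_{a_1b_1}\cdots\alpha_{a_mb_m}$ with $b_1<\cdots<b_m\le k-1$. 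Multiplying each such monomial either by $1$ or by one of $\alpha_{1k},\ldots,\alpha_{k-1,k}$ (appended on the right, so that $b_{m+1}=k$ is strictly larger than all previous $b_\ell$) produces exactly the set $S_k$. This is the main bookkeeping step, but it is immediate from the shape of the indexing constraint. The main obstacle is really just ensuring that the lifts chosen in Leray--Hirsch are themselves of the form $\alpha_{ak}$ — this is where Lemma \ref{lem:cohomology surjective} is crucial, since it guarantees that these specific global classes restrict to a basis of the fiber cohomology, allowing us to propagate the combinatorial description of the basis through the induction.
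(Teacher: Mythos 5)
Your proposal is correct and follows essentially the same argument as the paper: induct on $k$, apply Leray--Hirsch to the Fadell--Neuwirth fibration using Lemma \ref{lem:cohomology surjective} to verify the surjectivity hypothesis, and observe that right-concatenation gives a bijection $S_{k-1}\times\{1,\alpha_{ak}\}\to S_k$. You are a bit more explicit about checking the Leray--Hirsch hypotheses and about why the lifts $\alpha_{ak}$ restrict to a basis of the fiber, but the mathematical content is identical.
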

\begin{proof}
Since $\mathbb{R}^n\setminus\{x_1,\ldots, x_{k-1}\}$ and $\Conf_{k-1}(\mathbb{R}^n)$ are path connected and the cohomology of the former is free, Theorem \ref{thm:Fadell--Neuwirth} and the Lemma \ref{lem:cohomology surjective} allow us to apply the Leray--Hirsch theorem. The first and third claim now follow by induction and the observation that $H^*(\mathbb{R}^n\setminus\{x_1,\ldots, x_{k-1}\})$ is free with Poincar\'{e} polynomial $1+(k-1)t^{n-1}$. For the third claim, we note that, by induction, the Leray--Hirsch theorem gives the additive isomorphism \[H^*(\Conf_k(\mathbb{R}^n))\cong \mathbb{Z}\langle S_{k-1}\rangle\otimes \mathbb{Z}\langle1,\, \alpha_{ak},\, 1\leq a\leq k-1\rangle,\] and it is easy to check that the map \begin{align*}
S_{k-1}\times \{1,\, \alpha_{ak},\,1\leq a\leq k-1\}\to S_k
\end{align*} given by concatenation on the right is a bijection.
\end{proof}

\subsection{Multiplicative structure}

In order to obtain a multiplicative description, we require information about the relations among the various $\alpha_{ab}$. We begin with a few easy but useful observations. Recall that $\Sigma_k$ has a right action on $\Conf_k(\mathbb{R}^n)$ given by \[(x_1,\ldots, x_k)\cdot \sigma=(x_{\sigma(1)},\ldots, x_{\sigma(k)}).\] This action induces a left action on cohomology.

\begin{proposition} The following relations hold in $H^*(\Conf_k(\mathbb{R}^n))$ for $1\leq a\neq b\leq k$.
\begin{enumerate}
\item $\alpha_{ab}=(-1)^n\alpha_{ba}$
\item $\alpha_{ab}^2=0$
\item $\sigma^*\alpha_{ab}=\alpha_{\sigma(a)\sigma(b)}$ for $\sigma\in\Sigma_k$.
\end{enumerate}
\end{proposition}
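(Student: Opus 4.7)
The plan is to reduce each identity to a simple naturality statement about the Gauss maps $\gamma_{ab}$, using only the definition $\alpha_{ab}=\gamma_{ab}^*(\mathrm{vol})$ where $\mathrm{vol}\in H^{n-1}(S^{n-1})$ is the fundamental class.

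For (1), I would observe that $\gamma_{ba}(x_1,\ldots,x_k)=-\gamma_{ab}(x_1,\ldots,x_k)$, so $\gamma_{ba}=A\circ\gamma_{ab}$ where $A:S^{n-1}\to S^{n-1}$ is the antipodal map. The antipodal map has degree $(-1)^n$, hence $A^*(\mathrm{vol})=(-1)^n\mathrm{vol}$, and the identity follows by applying $\gamma_{ab}^*$.

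For (2), since $\gamma_{ab}$ factors through $S^{n-1}$, we have $\alpha_{ab}^2=\gamma_{ab}^*(\mathrm{vol}\cup\mathrm{vol})$. For $n\geq 2$ this class lives in $H^{2(n-1)}(S^{n-1})=0$, so $\alpha_{ab}^2=0$. (The excluded low-dimensional cases were set aside at the start of the section.)

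For (3), I would check at the level of spaces that $\gamma_{ab}\circ\sigma=\gamma_{\sigma(a)\sigma(b)}$ directly from the formula for the right action: evaluating at $(x_1,\ldots,x_k)$ gives $(x_{\sigma(b)}-x_{\sigma(a)})/\|x_{\sigma(b)}-x_{\sigma(a)}\|$ on both sides. Applying the contravariant functor $H^*$ then yields $\sigma^*\alpha_{ab}=\alpha_{\sigma(a)\sigma(b)}$.

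No step here is a real obstacle; the whole proposition is essentially three unwindings of the definition combined with the standard fact about the degree of the antipodal map. The only item warranting attention is confirming the range-of-$n$ convention so that (2) is not vacuous or false in a degenerate case.
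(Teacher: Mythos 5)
Your proof is correct and matches the paper's essentially verbatim: all three items are reduced to commuting diagrams involving the Gauss maps, with (1) using the degree of the antipodal map, (2) using that the volume form squares to zero, and (3) using functoriality of pullback. The only slight difference is that you spell out why the square vanishes (dimension reasons in $H^*(S^{n-1})$), whereas the paper states it directly; this is not a meaningful divergence.
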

\begin{proof}
The first claim follows from the observation that the diagram \[\xymatrix{
&\Conf_2(\mathbb{R}^n)\ar[dl]_-{\gamma_{ab}}\ar[dr]^-{\gamma_{ba}}\\
S^{n-1}\ar[rr]^-{x\mapsto -x}&&S^{n-1}
}\] commutes, and that the degree of the antipodal map on $S^{n-1}$ is $(-1)^n$. The second follows form the fact that the volume form on $S^{n-1}$ squares to zero. For the third claim, we have the commuting diagram \[\xymatrix{
\Conf_k(\mathbb{R}^n)\ar[dr]_-{\gamma_{\sigma(a)\sigma(b)}}\ar[rr]^-\sigma&&\Conf_k(\mathbb{R}^n)\ar[dl]^-{\gamma_{ab}}\\
&S^{n-1}.
}\]
\end{proof}

We come now to the fundamental relation.

\begin{proposition}[Arnold relation]
For $	1\leq a<b<c\leq k$, \[\alpha_{ab}\alpha_{bc}+\alpha_{bc}\alpha_{ca}+\alpha_{ca}\alpha_{ab}=0.\]
\end{proposition}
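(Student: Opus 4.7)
The plan is to reduce first to the case $k=3$ and then exploit the $\Sigma_3$-action together with the additive computation already in hand.

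\textbf{Reduction to $k=3$.} Each of the three Gauss maps $\gamma_{ab}, \gamma_{bc}, \gamma_{ca}$ factors through the projection $\pi\colon \Conf_k(\mathbb{R}^n)\to \Conf_3(\mathbb{R}^n)$ that keeps only the coordinates indexed by $a,b,c$, simply because $\gamma_{ij}$ depends only on the $i$th and $j$th points. Relabeling these as $1,2,3$, the entire Arnold expression in $\Conf_k$ is the pullback along $\pi$ of the corresponding expression in $\Conf_3$, so it suffices to prove
$$\alpha_{12}\alpha_{23}+\alpha_{23}\alpha_{31}+\alpha_{31}\alpha_{12}=0 \quad \text{in } H^{2(n-1)}(\Conf_3(\mathbb{R}^n)).$$

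\textbf{Setup in $\Conf_3$.} By the additive computation of the preceding corollary, $H^{2(n-1)}(\Conf_3(\mathbb{R}^n))$ is free abelian of rank $2$ with basis $A:=\alpha_{12}\alpha_{13}$ and $B:=\alpha_{12}\alpha_{23}$. Set $C:=\alpha_{13}\alpha_{23}$. Using $\alpha_{ab}=(-1)^n\alpha_{ba}$ together with graded commutativity of the degree-$(n-1)$ classes (so $\alpha_{ab}\alpha_{cd}=(-1)^{n-1}\alpha_{cd}\alpha_{ab}$), one checks
$$\alpha_{23}\alpha_{31}=(-1)^n\alpha_{23}\alpha_{13}=(-1)^{2n-1}C=-C, \qquad \alpha_{31}\alpha_{12}=(-1)^n\alpha_{13}\alpha_{12}=-A,$$
so the Arnold expression equals $B-A-C$. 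It therefore suffices to establish the identity $C = B - A$.

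\textbf{Application of the $\Sigma_3$-action.} The proposition above gives $\sigma^*\alpha_{ab}=\alpha_{\sigma(a)\sigma(b)}$. Applying this to the $3$-cycle $\tau=(1\,2\,3)$ and simplifying as in the previous step yields
$$\tau^*(A)=-B, \qquad \tau^*(B)=-C, \qquad \tau^*(C)=A.$$
Since $\{A,B\}$ is a $\mathbb{Z}$-basis, write $C=\mu A+\nu B$ for integers $\mu,\nu$. Applying $\tau^*$ to both sides and using the formulas above,
$$A=\tau^*(C)=-\mu B-\nu C=-\mu\nu\,A-(\mu+\nu^2)\,B.$$
Matching coefficients forces $\mu\nu=-1$ and $\mu=-\nu^2$, which together give $\nu^3=1$. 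Working over $\mathbb{Z}$, the only solution is $\nu=1$, $\mu=-1$, so $C=B-A$, which is exactly the desired relation.

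\textbf{Main obstacle.} The essentials are already in place from the additive computation; the only substantive input is picking the right symmetry. A priori one might worry that other symmetries (e.g.\ the transposition $(1\,3)$) give inconsistent constraints or that working over $\mathbb{Z}$ rather than $\mathbb{Q}$ introduces an ambiguity, but the rigidity of integer cube roots of unity makes the $3$-cycle alone sufficient to determine $(\mu,\nu)$ uniquely.
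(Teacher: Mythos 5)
Your argument is correct, and it is a variant of Cohen's proof in the paper: both reduce to $k=3$, both lean on the additive Leray--Hirsch basis in degree $2(n-1)$, and both extract the relation from the $\Sigma_3$-action on $H^*(\Conf_3(\mathbb{R}^n))$. The execution differs in a way worth noting. The paper first invokes the fact that three elements in a free module of rank two satisfy some nontrivial integer relation $x\alpha_{12}\alpha_{23}+y\alpha_{23}\alpha_{31}+z\alpha_{31}\alpha_{12}=0$, then applies the two transpositions $\tau_{12}$ and $\tau_{23}$ to force $x=y=z$, and finally appeals to torsion-freeness of the cohomology to cancel the common factor. Your version instead expands the one redundant product $C=\alpha_{13}\alpha_{23}$ directly in the preferred basis $\{A,B\}$, uses the single $3$-cycle to pin down the coefficients $(\mu,\nu)=(-1,1)$, and so needs neither the existence-of-a-relation observation nor the final torsion-freeness step. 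Both are genuinely Cohen's argument at heart, but yours is a touch more economical. All the sign bookkeeping ($\alpha_{ab}=(-1)^n\alpha_{ba}$ together with $(-1)^{(n-1)^2}=(-1)^{n-1}$ for graded commutativity) checks out, and the integer rigidity of $\nu^3=1$ is exactly the right final move.
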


\begin{remark}
The Arnold relation holds without the assumption $a<b<c$. Indeed, the general form follows from the form given here and the antipodal relation $\alpha_{ab}=(-1)^n\alpha_{ba}$.
\end{remark}

We will discuss three proofs of this relation. For the time being, we concentrate on exploiting it. We write $\op A$ for the quotient of the free graded commutative algebra on generators $\{\alpha_{ab}\}_{1\leq a\neq b\leq k}$ by the ideal generated by $\{\alpha_{ab}+(-1)^{n+1}\alpha_{ba},\, \alpha_{ab}^2, \,\alpha_{ab}\alpha_{bc}+\alpha_{bc}\alpha_{ca}+\alpha_{ca}\alpha_{ab}\}$. 

\begin{theorem}[Arnold \cite{Arnold:CRCBG}, Cohen \cite{CohenLadaMay:HILS}]
The natural map of graded commutative algebras \[\op A\to H^*(\Conf_k(\mathbb{R}^n))\] is an isomorphism.
\end{theorem}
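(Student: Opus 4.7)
The plan is to show the natural map $\mathcal{A} \to H^*(\Conf_k(\mathbb{R}^n))$ is bijective in each degree. Surjectivity is essentially free: the preceding corollary exhibits a specific collection $S_k$ of products of the $\alpha_{ab}$ as a $\mathbb{Z}$-basis of the target, so the image already contains a free generating set. In particular $H^*(\Conf_k(\mathbb{R}^n))$ has Poincar\'e polynomial $\prod_{j=1}^{k-1}(1+jt^{n-1})$ and total rank $|S_k|=k!$.

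For injectivity I would show that the same indexing set $S_k$, viewed now as \emph{formal} monomials in the generators of $\mathcal{A}$, spans $\mathcal{A}$ as an abelian group. Given such a spanning set, $\mathcal{A}$ has rank at most $|S_k|$ in each degree, the target has rank exactly $|S_k|$, and the natural map sends this spanning set bijectively to a basis; a standard rank-comparison then forces the map to be an isomorphism of graded abelian groups, hence of algebras since it is one by construction.

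To produce the spanning set in $\mathcal{A}$, I would run a straightening algorithm on an arbitrary monomial $\alpha_{a_1b_1}\cdots\alpha_{a_mb_m}$, using only the imposed relations together with graded commutativity. The steps are: first, use the antipode relation $\alpha_{ba}=(-1)^n\alpha_{ab}$ to ensure $a_\ell<b_\ell$ in every factor; second, sort the factors by the index $b_\ell$ using graded commutativity, absorbing the Koszul signs; third, whenever two consecutive factors share a common second index $b$, apply the identity
\[
\alpha_{ab}\alpha_{a'b} \;=\; \alpha_{aa'}\alpha_{a'b} + (-1)^n\,\alpha_{ab}\alpha_{aa'} \qquad (a<a'<b),
\]
which is an immediate consequence of the Arnold relation combined with the antipode and graded commutativity. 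Each term on the right-hand side has strictly fewer factors whose second index equals $b$, so iterating the rewrite on the lexicographically largest repeated $b$ terminates; the squaring relation $\alpha_{ab}^2=0$ disposes of the degenerate monomials that appear along the way. The terminal normal forms are exactly the formal monomials in $S_k$.

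The main obstacle I foresee is not any single step but the organization of the induction that proves termination: each application of the straightening identity may introduce new factors $\alpha_{aa'}$ with $a'<b$ that demand further rewriting at lower levels, so the right notion of complexity is a multiset of multiplicities ordered lexicographically from the largest $b$ downward. Koszul sign bookkeeping in the sorting step and in the straightening identity for even $n$ is delicate but mechanical. The substantive topological input, the Arnold relation itself, is used here as a black box and proved separately.
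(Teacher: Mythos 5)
Your proposal is correct and follows essentially the same strategy as the paper: surjectivity from the generating set, then showing that $S_k$ (as formal monomials) spans $\mathcal{A}$ via a straightening algorithm driven by the Arnold and antipodal relations, and concluding by comparing with the known free basis of the target. Your rewriting identity $\alpha_{ab}\alpha_{a'b} = \alpha_{aa'}\alpha_{a'b} + (-1)^n\alpha_{ab}\alpha_{aa'}$ is the same one the paper uses (up to Koszul-sign rearrangement), and your termination argument via the lexicographic order on the multiset of second-index multiplicities is a somewhat more transparent formulation of the paper's nested double induction (downward on the position of the rightmost collision, then on the number of repeats at that level); the two arguments encode the same well-founded order.
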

\begin{proof}
The map is surjective, since its image contains a generating set, so it will suffice to show that the basis $S_k$ exhibited above, thought of as lying in $\op A$, spans. Indeed, it follows that $S_k$ is a basis for $\op A$, since any relation would map to a relation in $H^*(\Conf_k(\mathbb{R}^n))$, and this fact implies the claim.

To verify that $S_k$ spans, it suffices to show that the element $\alpha_{a_1b_1}\cdots\alpha_{a_m b_m}$ may be written as a linear combination of elements in $S_k$. By the antipodal relation and graded commutativity, we may assume that $a_\ell<b_\ell$ for $1\leq \ell\leq m$ and that $1\leq b_1\leq\cdots\leq b_m\leq k$. We proceed by downward induction on the largest value of $\ell$ such that $b_\ell=b_{\ell-1}=:b$. By the Arnold and antipodal relations, we have \begin{align*}
\alpha_{a_1b_1}\cdots\alpha_{a_{\ell-1}b}\alpha_{a_\ell b}\cdots\alpha_{a_m b_m}&=(-1)^{n+1}\alpha_{a_1b_1}\cdots(\alpha_{a_\ell a_{\ell-1}}\alpha_{a_{\ell-1}b}+\alpha_{ba_\ell}\alpha_{a_\ell a_{\ell-1}})\cdots\alpha_{a_mb_m}\\
&=(-1)^{n+1} \alpha_{a_1b_1}\cdots \alpha_{a_\ell a_{\ell-1}}\alpha_{a_{\ell-1}b}\cdots \alpha_{a_mb_m}\\&\qquad+(-1)^{2n+1+(n-1)^2}\alpha_{a_1b_1}\cdots \alpha_{a_\ell a_{\ell-1}}\alpha_{a_\ell b}\cdots \alpha_{a_mb_m}
\end{align*} After a second induction on the number of values of $\ell$ such that $b_\ell=b$, we may assume that $b_{l-2}<b$. Then, using our assumption that $a_\ell<b$ and $a_{\ell-1}<b$, these two monomials lie in the span of $S_k$ by the first induction. 
\end{proof}

\subsection{The Arnold relation}

We now describe several approaches to proving the Arnold relation. The first reduction in all three cases is the observation that, using the projection $\Conf_k(\mathbb{R}^n)\to \Conf_3(\mathbb{R}^n)$ sending $(x_1,\ldots, x_k)$ to $(x_a,x_b,x_c)$, it suffices to verify the relation $\alpha_{12}\alpha_{23}+\alpha_{23}\alpha_{31}+\alpha_{31}\alpha_{12}=0$ in $H^*(\Conf_3(\mathbb{R}^n))$. 

The first argument, due to Cohen \cite{CohenLadaMay:HILS}, is the most elementary, and we will be able to give a complete account, since it uses only techniques that we have already encountered.

\begin{proof}[Cohen's proof of the Arnold relation]
We have already seen that $H^{2n-2}(\Conf_3(\mathbb{R}^n))$ is free of rank 2 with basis $\{\alpha_{12}\alpha_{23}, \alpha_{31}\alpha_{12}\}$, where we have used the antipodal relation and graded commutativity to rearrange indices in the second case. Note that another basis for this module is $\{\alpha_{12}\alpha_{23},\alpha_{23}\alpha_{31}\}$, since the permutation $\binom{123}{321}$ interchanges this set with our known basis up to sign. 

We conclude the existence of a relation \[x\alpha_{12}\alpha_{23}+y\alpha_{23}\alpha_{31}+z\alpha_{31}\alpha_{12}=0.\] Applying $\tau_{12}$ to this relation, we obtain the relation \begin{align*}
0&=x\alpha_{21}\alpha_{13}+y\alpha_{13}\alpha_{32}+z\alpha_{32}\alpha_{21}\\
&=(-1)^{(n-1)^2+2n}(x\alpha_{31}\alpha_{12}+y\alpha_{23}\alpha_{31}+z\alpha_{12}\alpha_{23}).
\end{align*} Canceling the sign and subtracting the result from the known relation yields \[(x-z)\alpha_{12}\alpha_{23}+(z-x)\alpha_{31}\alpha_{12}=0,\] whence $x=z$ by linear independence. Repeating the same process with $\tau_{23}$ shows that \[(x-y)\alpha_{12}\alpha_{23}+(y-x)\alpha_{23}\alpha_{31}=0,\] whence $x=y$ by linear independence. We conclude that the expression in question is $x$-torsion and therefore zero, since $H^*(\Conf_3(\mathbb{R}^n))$ is torsion-free.
\end{proof}

The original proof, due to Arnold \cite{Arnold:CRCBG}, is of a very different flavor, but is only valid in its original form in dimension 2.

\begin{proof}[Arnold's proof of the Arnold relation ($n=2$)]
Since there is no torsion, it suffices to prove the relation holds in cohomology with coefficients in $\mathbb{C}$. Make the identification $\mathbb{R}^2\cong\mathbb{C}$. The class $\alpha_{ab}$ is obtained by pulling back a standard generator of $H^1(S^1)$ along the composite \[\xymatrix{\Conf_k(\mathbb{C})\ar[r]^-{(z_a,z_b)}&\Conf_2(\mathbb{C})\ar[r]^-{z_2-z_1}&\mathbb{C}^\times\ar[r]^-{\frac{z}{\|z\|}}&S^1. 
}\] A representative for this generator in $H^1(\mathbb{C}^\times)$ is given by the differential form $dz/z$, since \[\frac{1}{2\pi i}\int_{S^1}\frac{dz}{z}=1\] by the residue theorem. Therefore, we may represent $\alpha_{ab}$ by the differential form \[\omega_{ab}=\frac{dz_b-dz_a}{z_b-z_a}.\] The claim now follows from the easy observation that the differential forms $\omega_{ab}$ satisfy the Arnold relation.
\end{proof}

This line of argument actually yields the far stronger result of a quasi-isomorphism \[H^*(\Conf_k(\mathbb{C}))\xrightarrow{\sim} \Omega^*(\Conf_k(\mathbb{C});\mathbb{C})\] of differential graded algebras, where the cohomology is regarded as a chain complex with zero differential; in jargon, $\Conf_k(\mathbb{C})$ is \emph{formal}.

In higher dimensions, the corresponding differential forms satisfy the relation only up to a coboundary, i.e., we have the equation \[\omega_{12}\omega_{23}+\omega_{23}\omega_{31}+\omega_{31}\omega_{12}=d\beta.\] Roughly, the differential form $\beta$ is obtained by integrating the form $\omega_{14}\omega_{24}\omega_{34}$ along the fibers of the projection $\pi:\Conf_4(\mathbb{R}^n)\to \Conf_3(\mathbb{R}^n)$ onto the first three coordinates. To see why this might be the case, we imagine that a fiberwise version of Stokes' theorem should imply that the boundary of the fiberwise integral should be the fiberwise integral along the ``boundary'' of the fiber, which in turn should be a sum of four terms: the first three terms are the loci where $x_i=x_4$ for $1\leq i\leq 3$, and the fourth lies at infinity, where $x_4$ is very far away. We might imagine that the three terms in the Arnold relation arise from these first three terms and that the term at infinity vanishes.

Of course, the fiber of this projection is non-compact, so, in order to make this kind of reasoning precise, one must replace the configuration space $\Conf_k(\mathbb{R}^n)$ with its \emph{Fulton-MacPherson compactification} $\Conf_k[\mathbb{R}^n]$, which is defined as the closure of the image of $\Conf_k(\mathbb{R}^n)$ under the maps \[\Conf_k(\mathbb{R}^n)\to (\mathbb{R}^n)^k\times(S^{n-1})^{\binom{n}{2}}\times[0,\infty]^{\binom{n}{3}}\] given by the inclusion in the first factor, the Gauss maps $\gamma_{ab}$ for $1\leq a<b\leq k$ in the second, and the relative distance functions $\delta_{abc}(x_1,\ldots, x_k)=\frac{\|x_a-x_b\|}{\|x_a-x_c\|}$ for $1\leq a<b<c\leq k$ in the third---see the original references \cite{FultonMacPherson:CCS, AxelrodSinger:CSPT} or the detailed account \cite{Sinha:MTCCS}. It turns out that $\Conf_k[\mathbb{R}^n]$ is a manifold with corners on which the integration described above can actually be carried out.

Using this compactification, Kontsevich \cite{Kontsevich:OMDQ} was able to carry out an analogue of Arnold's program from above. The basic observation is that the construction of $\beta$ is an example of a more systematic method for generating differential forms from graphs, which, when pursued fully, yields a zig-zag of quasi-isomorphisms \[\xymatrix{H^*(\Conf_k(\mathbb{R}^n))& ?\ar[r]^-\sim\ar[l]_-\sim& \Omega^*(\Conf_k[\mathbb{R}^n]),}\] where the unspecified middle term is a certain ``graph complex.'' Thus, in higher dimensions, too, configuration spaces are formal. See \cite{LambrechtsVolic:FLNDO} for a detailed proof of the formality theorem.

\subsection{Planetary systems}

The third proof of the Arnold relation will proceed through a geometric, intersection-theoretic analysis following \cite{Sinha:HLDO}. In order to pursue this direction, we will need to understand something about the homology of $H_*(\Conf_k(\mathbb{R}^n))$. We begin by introducing a systematic method for generating homology classes.

\begin{definition}
Fix a subset $S\subseteq\{1,\ldots, k\}$. 
\begin{enumerate}
\item An $S$-\emph{tree} $T$ is a pair of an ordering of $S$ and a binary parenthesization of $S$ with its ordering. 
\item A $k$-\emph{forest} is an ordered partition $\{1,\ldots, k\}\cong \coprod_i S_i$ and an $S_i$-tree for each $i$.
\end{enumerate}
\end{definition}

\begin{example}
With $S=\{1,3,4,7,8\}\subseteq \{1,\ldots, 9\}$, the expression $((48)((17)3))$ is an $S$-tree. 
\end{example}

\begin{example}
There is a unique $S$-tree with $S=\{i\}\subseteq\{1,\ldots, k\}$ given by the expression $i$.
\end{example}

The terminology is motivated by the observation that the data of an $S$-tree is equivalent to an isotopy class of planar tree $T$ with the following features:
\begin{enumerate}
\item $T$ has only univalent and trivalent vertices, called \emph{external} and \emph{internal}, respectively;
\item $T$ has a distinguished external \emph{root} vertex, and its other external vertices are \emph{leaves};
\item the leaves of $T$ are labeled by elements of $S$.
\end{enumerate}

The internal vertices in the geometric picture correspond bijectively to the pairs of matching open and closed parentheses in the combinatorial picture. We write $V(T)$ for the set of internal vertices of $T$. Note that an internal vertex lies on the path from the leaf $i$ to the root if and only if its parentheses enclose $i$; in this case, we write $v<i$. We define the \emph{height} $h(v)$ of a vertex $v$ to be the number of edges between $v$ and the root. In the combinatorial picture, the height of an internal vertex corresponds to the depth of the corresponding pair of parentheses.

Fix $S\subseteq \{1,\ldots, k\}$, and let $T$ be an $S$-tree. For each $i\in S$, define a map \begin{align*}
(S^{n-1})^{V(T)}&\xrightarrow{P_{T,i}} \mathbb{R}^n\\
(u_v)_{v\in V(T)}&\mapsto \displaystyle \sum_{v<i}(-1)^{\delta(i,v)}\epsilon^{h(v)}u_v,
\end{align*} where $\epsilon$ is a small positive real number, and $\delta(i,v)$ takes the value $1$ if the path from $i$ to the root passes through the left edge at $v$ and $0$ if the right edge. 

\begin{lemma}
For $1\leq i\neq j\leq k$, $P_{T,i}\left((u_v)_{v\in V(T)}\right)\neq P_{T,j}\left((u_v)_{v\in V(T)} \right)$.
\end{lemma}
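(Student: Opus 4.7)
The plan is to analyze the difference $P_{T,i}(u) - P_{T,j}(u)$ by locating the least common ancestor of the leaves $i$ and $j$ in $T$ and showing that, for $\epsilon$ small enough, the contribution from that ancestor dominates all other terms.

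First I would set notation. Given $i \neq j$ (both in $S$, since otherwise the relevant sums are empty), let $w \in V(T)$ be the least common ancestor of the leaves labeled $i$ and $j$, i.e.\ the unique internal vertex such that $i$ and $j$ lie in different subtrees of $w$. I would observe three combinatorial facts: (a) for internal vertices $v$ with $h(v) < h(w)$ that satisfy $v < i$, we automatically have $v < j$ and moreover $\delta(i,v) = \delta(j,v)$, since the paths from $i$ and $j$ to the root agree from $w$ upward; (b) at $v = w$ itself, both $w < i$ and $w < j$ hold, but $\delta(i,w) \neq \delta(j,w)$ by the defining property of the least common ancestor; (c) any $v$ satisfying $v < i$ or $v < j$ but not both must lie strictly below $w$ on one of the two descending paths, in which case $h(v) > h(w)$.

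Combining (a)--(c) and splitting the defining sums for $P_{T,i}$ and $P_{T,j}$ accordingly, I would obtain
\[
P_{T,i}(u) - P_{T,j}(u) \;=\; \pm\, 2\,\epsilon^{h(w)} u_w \;+\; \sum_{\substack{v \in V(T) \\ h(v) > h(w)}} c_v(i,j)\,\epsilon^{h(v)} u_v,
\]
where each coefficient $c_v(i,j) \in \{-1,0,1\}$. Factoring out $\epsilon^{h(w)}$ gives
\[
\epsilon^{-h(w)}\bigl(P_{T,i}(u) - P_{T,j}(u)\bigr) \;=\; \pm\, 2 u_w \;+\; \epsilon\, R(u,\epsilon),
\]
where $R$ is a continuous $\mathbb{R}^n$-valued function on the compact set $(S^{n-1})^{V(T)} \times [0,1]$, hence bounded in norm by some constant $C$ depending only on $T$.

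Since $\|u_w\| = 1$, the triangle inequality gives $\|P_{T,i}(u) - P_{T,j}(u)\| \geq \epsilon^{h(w)}(2 - \epsilon C)$, which is strictly positive once $\epsilon < 2/C$. Choosing $\epsilon$ sufficiently small (uniformly over all pairs $i \neq j$ in $S$, which is a finite condition) thus yields the nonvanishing claim. The only real content is the combinatorial identification of the dominant term at the least common ancestor; after that, it is a straightforward compactness and small-parameter estimate, and I do not anticipate any genuine obstacle.
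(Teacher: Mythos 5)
Your proof is correct and follows essentially the same approach as the paper: identify $w$ as the least common ancestor of $i$ and $j$ (the paper's ``highest internal vertex with $w<i$ and $w<j$''), observe that the terms above $w$ cancel while the term at $w$ contributes $\pm 2\epsilon^{h(w)}u_w$ and all remaining terms carry a strictly higher power of $\epsilon$, factor out $\epsilon^{h(w)}$, and conclude nonvanishing for $\epsilon$ small. You merely spell out the boundedness/compactness estimate and the finite uniformity over pairs $(i,j)$ a bit more explicitly than the paper does.
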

\begin{proof}
Let $w$ denote the highest internal vertex with $w<i$ and $w<j$, and assume without loss of generality that $\delta(j,w)=0$ and $\delta(i,w)=1$. Then, cancelling terms involving $v<w$, we have
\begin{align*}
P_{T,j}\left((u_v)_{v\in V(T)}\right)-P_{T,i}\left((u_v)_{v\in V(T)}\right)&=\displaystyle \sum_{v<j}(-1)^{\delta(j,v)}\epsilon^{h(v)}u_v-\displaystyle \sum_{v<i}(-1)^{\delta(i,v)}\epsilon^{h(v)}u_v\\
&=\epsilon^{h(w)}\left(2u_w+\epsilon(\cdots)\right).
\end{align*} For $\epsilon$ sufficiently small, this expression does not vanish.
\end{proof}

Thus, taking the coordinates of $\{1,\ldots, k\}\setminus S$ to be fixed at some large, distinct values, we obtain a map $P_T:(S^{n-1})^{V(T)}\to \Conf_k(\mathbb{R}^n)$. We refer to $P_T$ and to the image of the fundamental class under $P_T$ interchangeably as the \emph{planetary system} associated to $T$. A forest $F=\{T_i\}$ also defines a planetary system $P_F$ by taking products of translates of the planetary systems of its component trees.

\begin{definition}
A tree is \emph{tall} if it is of the form $(\cdots(i_1i_2)\cdots i_m)$ with $i_1$ minimal considered as a natural number. A forest is \emph{tall} if 
\begin{enumerate}
\item each component tree is tall, and
\item the induced ordering on the minimal leaves of the component trees is the natural ordering.
\end{enumerate}
\end{definition}

\begin{proposition}
Planetary systems of tall trees form a basis for $H_{(k-1)(n-1)}(\Conf_k(\mathbb{R}^n))$.
\end{proposition}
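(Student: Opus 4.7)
The plan is to match dimensions and then establish linear independence via a pairing argument. From the Poincaré polynomial $\prod_{j=1}^{k-1}(1+jt^{n-1})$ computed in the previous subsection, the rank of $H_{(k-1)(n-1)}(\Conf_k(\mathbb{R}^n))$ is exactly $(k-1)!$. The number of tall trees on $\{1,\ldots,k\}$ is also $(k-1)!$, since such a tree is uniquely determined by the permutation $(i_2,\ldots,i_k)$ of $\{2,\ldots,k\}$. Thus cardinality matching reduces the proposition to linear independence, which I would verify by exhibiting an invertible pairing with the top-degree piece of the cohomology basis $S_k$ exhibited in the previous subsection---namely the $(k-1)!$ monomials $\alpha_{a_1,2}\alpha_{a_2,3}\cdots\alpha_{a_{k-1},k}$ with $a_\ell\in\{1,\ldots,\ell\}$.

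For the pairing computation, I would use that $\alpha_{ab}=\gamma_{ab}^{*}\omega$ for $\omega$ the volume form on $S^{n-1}$, so that
\[
\bigl\langle \alpha_{a_1,2}\cdots\alpha_{a_{k-1},k},\,[P_T]\bigr\rangle
=\deg\Bigl((S^{n-1})^{V(T)}\xrightarrow{\prod_\ell(\gamma_{a_\ell,\ell+1}\circ P_T)}(S^{n-1})^{k-1}\Bigr).
\]
Since degree is a homotopy invariant, I may let $\epsilon\to 0$ in the definition of $P_T$. The leading-order calculation in the proof that the coordinates of $P_T$ are distinct shows that $P_{T,\ell+1}-P_{T,a_\ell}$ is dominated by $\pm 2\epsilon^{h(w_\ell)}u_{w_\ell}$, where $w_\ell=w_{a_\ell,\ell+1}$ is the highest common ancestor of leaves $a_\ell$ and $\ell+1$ in $T$. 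Hence $\gamma_{a_\ell,\ell+1}\circ P_T$ limits to $\pm$ the coordinate projection $\pi_{w_\ell}$, and the degree of a signed product of coordinate projections $(S^{n-1})^{V(T)}\to (S^{n-1})^{k-1}$ is $\pm 1$ when $\ell\mapsto w_\ell$ is a bijection $\{1,\ldots,k-1\}\xrightarrow{\sim} V(T)$ and $0$ otherwise. This reduces each entry of the $(k-1)!\times(k-1)!$ pairing matrix to an explicit combinatorial condition on $T$ and $(a_\ell)$.

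The main obstacle is then to prove that this matrix is invertible. I would attempt this by matching tall trees $T_\sigma$ with cohomology classes $\alpha_{a(\sigma)}$ via a bijection designed to make the pairing matrix unitriangular (up to sign) in a suitable linear order on permutations. The natural candidate assigns to $T_\sigma$ the tuple $a(\sigma)$ for which $w_{a(\sigma)_\ell,\ell+1}$ is exactly the internal vertex of $T_\sigma$ at height $k-\ell-1$, producing $\pm 1$ on the diagonal; one then has to show the collision $w_{a(\sigma)_\ell,\ell+1}=w_{a(\sigma)_{\ell'},\ell'+1}$ in $T_\tau$ for suitably ``earlier'' $\tau$, which is the delicate bookkeeping step. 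A cleaner alternative, which I would likely pursue in parallel, is induction on $k$ using Fadell--Neuwirth: the Leray--Hirsch isomorphism
\[
H_{(k-1)(n-1)}(\Conf_k(\mathbb{R}^n))\;\cong\;H_{(k-2)(n-1)}(\Conf_{k-1}(\mathbb{R}^n))\otimes H_{n-1}\bigl(\mathbb{R}^n\setminus\{k-1\text{ pts}\}\bigr)
\]
identifies the target with (inductively known) tall-tree classes on $\{1,\ldots,k-1\}$ tensored with the $k-1$ small-sphere generators, and one verifies that removing leaf $k$ from a tall tree on $\{1,\ldots,k\}$ (at its position $j$) realizes $[P_T]$ as $[P_{T'}]$ tensored with a sphere encircling the leaves of the left subtree at height $k-j$. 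The change-of-basis from these compound cycles to the standard small-sphere basis is then triangular in $j$, and the inductive hypothesis finishes the proof.
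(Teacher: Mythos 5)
Your main line of attack — express the pairing $\langle \alpha, [P_T]\rangle$ as the degree of a product of Gauss maps, let $\epsilon\to 0$ so each $\gamma_{ab}\circ P_T$ degenerates to $\pm$ the coordinate projection onto the highest common ancestor of $a$ and $b$, and read off that the degree is $\pm 1$ or $0$ according to whether $\ell\mapsto w_\ell$ is a bijection onto $V(T)$ — is exactly the computation the paper performs. Where you diverge is in the choice of dual cohomology classes, and that choice is the source of the difficulty you flag.

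You pair against the $S_k$-basis monomials $\alpha_{a_1,2}\cdots\alpha_{a_{k-1},k}$, which forces you to prove the $(k-1)!\times(k-1)!$ pairing matrix is invertible, for which you propose a bijection plus a triangularity argument you admit is ``delicate bookkeeping.'' The paper avoids this entirely by pairing $P_{T_\sigma}$ against $\alpha_\tau:=\alpha_{\tau^{-1}(1)\tau^{-1}(2)}\cdots\alpha_{\tau^{-1}(k-1)\tau^{-1}(k)}$, indexed by the \emph{same} set of permutations $\tau$ fixing $1$. With this choice the pairing matrix is not merely triangular but the identity: $\langle P_{T_\sigma},\alpha_\tau\rangle=\delta_{\sigma\tau}$. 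The diagonal entry is your $\epsilon\to 0$ calculation; the off-diagonal vanishing is immediate because any non-identity permutation $\sigma\tau^{-1}$ fixing $1$ must have an interior peak $i$ with $\sigma\tau^{-1}(i)>\sigma\tau^{-1}(i\pm 1)$, and at that peak the two consecutive Gauss maps $\gamma_{\tau^{-1}(i-1)\tau^{-1}(i)}$ and $\gamma_{\tau^{-1}(i)\tau^{-1}(i+1)}$ become antipodal to each other up to homotopy, so $\gamma_\tau\circ P_{T_\sigma}$ factors through a positive-codimension submanifold of $(S^{n-1})^{k-1}$ and the degree is zero. No ordering of permutations, no bijection to the $S_k$ basis, and no induction are needed.

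So the gap in your proposal is the unresolved triangularity step; it is probably provable along the lines you sketch, but the paper's choice of dual set dissolves the problem. Your second, inductive alternative via Fadell--Neuwirth is a genuinely different route in outline, but as written it is also incomplete: the tall tree $T_\sigma$ need not have $k$ as its last leaf, so the cycle $[P_{T_\sigma}]$ does not a priori lie in a single tensor factor of the Leray--Hirsch decomposition, and the ``change-of-basis is triangular in $j$'' claim is precisely the content you would still need to prove. If you pursue that route, you should expect roughly the same amount of combinatorics as the triangularity argument you were trying to avoid.
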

\begin{proof}
From Leray--Hirsch and Fadell--Neuwirth, we know that $H^{(k-1)(n-1)}(\Conf_k(\mathbb{R}^n))$ is free Abelian of rank $(k-1)!$, so the homology group of interest has these properties as well. On the other hand, the set of tall trees on $\{1,\ldots, k\}$ is put into bijection with the set of permutations $\sigma\in \Sigma_{k}$ fixing $1$ by associating to $\sigma$ the tree \[T_\sigma=((\cdots(\sigma^{-1}(1)\sigma^{-1}(2))\cdots \sigma^{-1}(k))).\] Since this set has cardinality $(k-1)!$, we conclude that it suffices to show that the corresponding planetary systems are linearly independent. For this task, we define a map $\gamma_\sigma:\Conf_k(\mathbb{R}^n)\to (S^{n-1})^{k-1}$ by putting $\gamma_{\sigma^{-1}(i)\sigma^{-1}(i+1)}$ in the $i$th coordinate, and we set \[\alpha_\sigma=\gamma_\sigma^*(\mathrm{vol}_{S^{n-1}})=\alpha_{\sigma^{-1}(1)\sigma^{-1}(2)}\cdots\alpha_{\sigma^{-1}(k-1)\sigma^{-1}(k)}.\] Then the proof will be complete upon verifying that \[\left\langle P_{T_\sigma}, \alpha_{\tau}\right\rangle=\delta_{\sigma\tau}.\]

Write $\{v_1,\ldots, v_{k-1}\}$ for the vertices of $T_\sigma$, where $v_i$ is the unique internal vertex with $h(v_i)=k-i$. For $i<j$, we compute that $\gamma_{\sigma^{-1}(i)\sigma^{-1}(j)}(P_{T_\sigma}(u_{v_1},\ldots, u_{v_{k-1}}))$ is the unit vector in the direction of \begin{align*}
\left(\epsilon^{k-j+1}u_{v_{j-1}}-\sum_{\ell=j}^{k-1}\epsilon^{k-\ell}u_{v_\ell}\right)-\left(\epsilon^{k-i+1}u_{v_{i-1}}-\sum_{\ell=i}^{k-1}\epsilon^{k-\ell}u_{v_\ell}\right)
&=\epsilon^{k-j+1}\left(2u_{v_{j-1}}+\epsilon(\cdots)\right).
\end{align*} Letting $\epsilon$ tend to zero defines a homotopy between this map and the map \[(u_{v_1},\ldots, u_{v_{k-1}})\mapsto u_{v_{j-1}}.\] Therefore, $\gamma_\sigma\circ P_{T_\sigma}$ is homotopic to the identity, whence $\langle P_{T_\sigma},\alpha_\sigma\rangle=1$. 

Assume now that $\tau\neq \sigma$. Then there is some $1<i<k$ such that $\sigma\tau^{-1}(i)$ is greater than both $\sigma\tau^{-1}(i-1)$ and $\sigma\tau^{-1}(i+1)$, for otherwise, using the fact that $\sigma\tau^{-1}(1)=1$, we conclude that $\sigma\tau^{-1}$ is order-preserving and hence the identity, a contradiction. But then, by the previous calculation, the two maps \begin{align*}\gamma_{\tau^{-1}(i-1)\tau^{-1}(i)}&=\gamma_{\sigma^{-1}(\sigma\tau^{-1}(i-1))\sigma^{-1}(\sigma\tau^{-1}(i))}\\ \gamma_{\tau^{-1}(i)\tau^{-1}(i+1)}&=\gamma_{\sigma^{-1}(\sigma\tau^{-1}(i))\sigma^{-1}(\sigma\tau^{-1}(i+1))}
\end{align*} differ by the antipodal map up to homotopy, so $\gamma_\tau$ factors through a submanifold of $(S^{n-1})^k$ of positive codimension. It follows that $\langle P_{T_\sigma},\alpha_\tau\rangle=0$.
\end{proof}

\begin{corollary}
Planetary systems of tall forests form a basis for $H_*(\Conf_k(\mathbb{R}^n))$.
\end{corollary}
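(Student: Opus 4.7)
The plan is to introduce, for each tall forest $G$ with components $T_1, \ldots, T_r$ on blocks $S_1, \ldots, S_r$, a cohomology class $\alpha_G = \prod_{i=1}^r \alpha_{T_i}$, where for a tall tree $T = (i_1, \ldots, i_m)$ (with $i_1$ the minimum of the underlying set) we set $\alpha_T = \alpha_{i_1 i_2} \alpha_{i_2 i_3} \cdots \alpha_{i_{m-1} i_m}$. Its degree $(k-r)(n-1)$ matches the dimension of any planetary system $P_F$ for a tall forest $F$ with $r$ components, so the natural target is to establish the orthogonality relation $\langle P_F, \alpha_G \rangle = \delta_{FG}$ for tall forests $F, G$ with the same number of components, in direct parallel to the proof of the previous proposition.

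To verify this pairing, I would exploit that $P_F$ places each block of $F$ in a small $\epsilon$-cluster around widely-separated translation centers $c_1, \ldots, c_r$. For a Gauss factor $\alpha_{ab}$ appearing in $\alpha_G$, two cases arise. If $a$ and $b$ lie in different blocks $j_a, j_b$ of $F$, then $\gamma_{ab} \circ P_F$ converges uniformly as $\epsilon \to 0$ to the constant map with value $(c_{j_b} - c_{j_a})/\|c_{j_b} - c_{j_a}\|$; for sufficiently small $\epsilon$ this composite is null-homotopic and the pulled-back volume form vanishes cohomologically, killing the entire pairing. Otherwise every adjacent pair of $G$ respects the blocks of $F$, so each tree of $G$ lies in a single block of $F$; since both partitions have $r$ blocks they must coincide, and the pairing factors as $\prod_i \langle P_{T_i^F}, \alpha_{T_i^G}\rangle$, which is $\delta_{FG}$ by applying the previous proposition blockwise.

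For spanning and the rank count, the Poincar\'{e} polynomial $\prod_{j=1}^{k-1}(1+jt^{n-1})$ expands as $\sum_r c(k,r)\, t^{(k-r)(n-1)}$, where $c(k,r)$ is the unsigned Stirling number of the first kind. The data of a tall forest on $\{1, \ldots, k\}$ with $r$ components is equivalent to that of a permutation in cycle notation with $r$ cycles, each cycle written starting from its minimum and cycles ordered by minima, which produces the recursion $c(k,r) = c(k-1, r-1) + (k-1)\, c(k-1, r)$ upon asking whether $k$ is a singleton block or is inserted after one of the $k-1$ other entries. The orthogonality relation then shows the $c(k,r)$ planetary systems in degree $(k-r)(n-1)$ are linearly independent in a free Abelian group of the same rank, hence a basis.

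The main obstacle is making the null-homotopy argument precise: one must choose the cluster centers $c_1, \ldots, c_r$ to be so widely separated, and $\epsilon$ so small, that the image of $\gamma_{ab} \circ P_F$ on inter-block factors lies in a small spherical cap around the limit direction, affording the required contraction to the constant map while maintaining the injectivity needed for $P_F$ to land in $\Conf_k(\mathbb{R}^n)$.
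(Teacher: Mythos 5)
Your proposal is correct and takes essentially the same approach as the paper, which reduces the forest case to the tree case by showing the planetary system of a forest factors blockwise through a product of tree planetary systems up to homotopy. The paper's proof is quite terse (just a diagram and ``apply the same reasoning''), and you have correctly filled in the implicit content: the nullhomotopy of the cross-block Gauss maps, the blockwise factorization of the pairing, and the Stirling number count matching the coefficients of the Poincar\'{e} polynomial.
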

\begin{proof}
Let $F=\{T_{\sigma_i}\}$ be a tall forest, where $T_{\sigma_i}$ has $k_i$ leaves. We apply the same reasoning to the diagram
\[\xymatrix{(S^{n-1})^{V(F)}\ar@{=}[d]\ar[r]^-{P_F}&\Conf_k(\mathbb{R}^n)\ar[r]& \displaystyle\prod_i \Conf_{k_i}(\mathbb{R}^n)\ar[r]^-{(\gamma_{\sigma_i})}&\displaystyle\prod_{i}(S^{n-1})^{V(T_i)}\\
\displaystyle\prod_i(S^{n-1})^{V(T_i)}\ar[urr]_-{(P_{T_{\sigma_i}})},
}\] which commutes up to homotopy.
\end{proof}

\begin{recollection}
One version of Poincar\'{e} duality for oriented, connected, boundaryless, possibly non-compact $n$-manifolds of finite type is the isomorphism \[\widetilde H_i(M^+)\cong H^{n-i}(M),\] where $M^+$ denotes the one-point compactification of $M$ and we reduce with respect to the point at infinity. In particular, such a manifold has a fundamental class $[M]\in \widetilde H^n(M^+)$, defined as the preimage of $1\in H^0(M)$ under this isomorphism. This duality can sometimes be interpreted geometrically.
\begin{enumerate}
\item If $N\subseteq M$ is a proper submanifold of dimension $r$ and $P\subseteq M$ is a compact submanifold of dimension $n-r$, we may contemplate the composite \[\xymatrix{
\widetilde H_r(N^+)\otimes H_{n-r}(P)\ar[r]&\widetilde H_r(M^+)\otimes H_{n-r}(M)\cong H^{r}(M)\otimes H_{n-r}(M)\ar[r]^-{\langle-,-\rangle}&\mathbb{Z}. 
}\] (note that the existence of the first map uses the fact that $N$ is properly embedded). If $N$ and $P$ intersect transversely, then the value of this composite on $[N]\otimes [P]$ is the signed intersection number of $N$ and $P$.
\item Since cohomology is a ring, we may likewise contemplate the composite \[\xymatrix{
\widetilde H_r(N_1^+)\otimes \widetilde H_s(N_2^+)\ar[r]& H^{n-r}(M)\otimes H^{n-s}(M)\ar[r]^-\smile& H^{2n-r-s}(M)\cong \widetilde H^{r+s-n}(M^+),
}\] where $N_1$ and $N_2$ are proper submanifolds of dimension $r$ and $s$, respectively. If $N_1$ and $N_2$ intersect transversely, then the value of this composite on $[N_1]\otimes[N_2]$ is $[N_1\cap N_2]$.
\end{enumerate}
\end{recollection}

Now, consider the submanifold of $\Conf_3(\mathbb{R}^n)$ defined by requiring that $x_1$, $x_2$, and $x_3$ be collinear. This manifold has three connected components, and we let $C_a$ denote the component in which $x_a$ lies between $x_b$ and $x_c$. Then the map \begin{align*}
C_a&\to\mathbb{R}^n\times\mathbb{R}_{>0}\times\mathbb{R}_{>0}\times S^{n-1}\\
(x_1, x_2, x_3)&\mapsto \left(x_a,\, |x_b-x_a|,\, |x_c-x_a|, \,\frac{x_c-x_b}{|x_c-x_b|}\right)
\end{align*} is a homeomorphism. In particular, $\dim C_a=2n+1$. Note that $C_a$ is closed as a subspace of $\Conf_3(\mathbb{R}^n)$ and hence proper as a submanifold.

\begin{proof}[Sinha's proof of the Arnold relation]
Pushing forward $[C_1]$ and applying Poincar\'{e} duality as above, we obtain an element of $H^{n-1}(\Conf_3(\mathbb{R}^n))$. By our homology calculation, this class is determined by evaluating it on $P_{(12)}$ and $P_{(13)}$. These values are given by the respective intersection numbers with $C_1$, which are $\pm 1$ with opposite signs. Thus, with the appropriate choice of orientation, $C_1$ is Poincar\'{e} dual to $\alpha_{12}-\alpha_{13}$. Similar remarks apply to $C_2$, and, since $C_1\cap C_2=\varnothing$, we conclude that \begin{align*}
0&=(\alpha_{12}-\alpha_{13})(\alpha_{23}-\alpha_{21})\\
&=\alpha_{12}\alpha_{23}-\alpha_{12}\alpha_{21}-\alpha_{13}\alpha_{23}+\alpha_{13}\alpha_{21}\\
&=\alpha_{12}\alpha_{23}+(-1)^{n(n-1)}\alpha_{23}\alpha_{31}+(-1)^{2n}\alpha_{31}\alpha_{12}\\
&=\alpha_{12}\alpha_{23}+\alpha_{23}\alpha_{31}+\alpha_{31}\alpha_{12}.
\end{align*}
\end{proof}

\subsection{The Jacobi identity and little cubes}

The Arnold relation has its reflection in homology. For trees $T_1$ and $T_2$, we write $[T_1,T_2]$ for the tree obtained by grafting the roots of $T_1$ and $T_2$ to the leaves of $(12)$, in this order. 

\begin{proposition}[Jacobi identity]\label{prop:jacobi}
The relation $[[T_1,T_2], T_3]+[[T_2,T_3], T_1]+[[T_3,T_1],T_2]=0$ holds in $H_*(\Conf_k(\mathbb{R}^n))$. More generally, if $R$ is any tree, then the trees resulting from grafting the roots of $[[T_1,T_2], T_3]$, $[[T_2,T_3], T_1]$, and $[[T_3,T_1],T_2]$ to any fixed leaf of $R$ sum to zero.
\end{proposition}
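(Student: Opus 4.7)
The plan is to reduce to the base case in which $R$ is trivial and each $T_i$ is a single leaf labeled $i$, and then to verify that base case by pairing directly against the Arnold basis in cohomology.

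For the reduction, I would exploit the functorial structure of planetary systems under grafting: given a planetary system $P \in H_*(\Conf_m(\mathbb{R}^n))$ and trees $U_1, \ldots, U_m$, inserting each $U_i$ at the $i$th leaf of $P$ at a scale much finer than the $\epsilon$ of $P$ produces a well-defined class $P[U_1, \ldots, U_m] \in H_*(\Conf_k(\mathbb{R}^n))$, where $k$ is the total number of leaves, and this operation is linear in $P$. The three summands of the Jacobi relation---the grafts of $[[T_1, T_2], T_3]$, $[[T_2, T_3], T_1]$, and $[[T_3, T_1], T_2]$ to a fixed leaf $\ell$ of $R$---arise from the three base-case classes $P_{((12)3)}, P_{((23)1)}, P_{((31)2)} \in H_{2(n-1)}(\Conf_3(\mathbb{R}^n))$ by the same two-step insertion: first insert $T_1, T_2, T_3$ at the three leaves of the base-case $\Conf_3$ tree, then insert the resulting $\Conf_{k_1+k_2+k_3}$-class at $\ell$ into the planetary system of $R$. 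Linearity of these insertions in the $\Conf_3$-factor propagates any relation from the base case to the general case.

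For the base case, the tall-forest basis result proved above identifies $H_{2(n-1)}(\Conf_3(\mathbb{R}^n))$ as free abelian of rank $2$, so the relation may be detected by pairing against any two-element basis for $H^{2(n-1)}(\Conf_3(\mathbb{R}^n))$; I would choose $\{\alpha_{12}\alpha_{23},\, \alpha_{13}\alpha_{23}\}$. Each pairing $\langle P_T,\, \alpha_{ab}\alpha_{cd}\rangle$ reduces, by the same scaling argument ($\epsilon \to 0$) used in the proof of the basis theorem, to the degree of a self-map of $(S^{n-1})^2$ whose two components are each either a coordinate projection $(u_{v_1}, u_{v_2}) \mapsto u_{v_\bullet}$ or such a projection post-composed with the antipodal map on $S^{n-1}$, with the choice dictated by whether the indices $a,b$ (and $c,d$) straddle each internal vertex of $T$. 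Each such self-map is either $0$---when both components project to the same coordinate, so the map factors through a single copy of $S^{n-1}$---or a signed identity or signed swap with degree $\pm 1$. Tabulating the six resulting degrees, the three Jacobi contributions against each of the two cohomology basis classes sum to zero.

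The main obstacle is the sign bookkeeping: the nonzero degrees are products of the swap sign $(-1)^{(n-1)^2} = (-1)^{n-1}$ and antipode signs $(-1)^n$, and one must verify that these contributions organize, across the three Jacobi summands and against each of the two cohomology basis elements, into the vanishing pattern $1 + (-1) + 0 = 0$. That such an organization must occur is no coincidence: under the perfect pairing between the planetary system basis in homology and the Arnold monomial basis in cohomology, the Jacobi identity in homology is the reflection of the Arnold relation in cohomology, and the matching sign structure is forced by the unimodularity of this pairing.
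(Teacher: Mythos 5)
Your base case plan is essentially the paper's: reduce the general statement to the trivial trees and then verify that relation by pairing against a degree-$2(n-1)$ basis of $H^*(\Conf_3(\mathbb{R}^n))$ and invoking the Arnold relation. (The paper pairs against the tall-tree basis $\{\alpha_{12}\alpha_{23},\,\alpha_{31}\alpha_{12}\}$, applying the Arnold relation and the $\Sigma_3$-action directly rather than tabulating degrees of self-maps against $\{\alpha_{12}\alpha_{23},\,\alpha_{13}\alpha_{23}\}$, but both routes are fine.) Your closing observation---that the Jacobi relation is the Arnold relation read through the perfect planetary-system/Arnold-monomial pairing---is exactly the content driving the base case.

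The gap is in the reduction. You assert that the insertion $P \mapsto P[U_1,\ldots,U_m]$, defined by grafting at a scale much finer than $\epsilon(P)$, is ``linear in $P$,'' and the entire reduction rests on this---but as described, insertion is an operation on specific planetary-system cycles, and its descent to a linear operation on homology is precisely what needs proof. There is no continuous composition map $\Conf_m(\mathbb{R}^n) \times \prod_i \Conf_{k_i}(\mathbb{R}^n) \to \Conf_k(\mathbb{R}^n)$: how small a cluster must be to fit near the $i$th point without collision depends on how close the remaining points are, and this is not uniformly controlled on $\Conf_m(\mathbb{R}^n)$. This is exactly where the paper brings in the little $n$-cubes operad: planetary systems lift to $\op C_n(k)$ (the box around each point is now part of the data), the operadic composition $\op C_n(m) \times \prod_i\op C_n(k_i) \to \op C_n(k)$ is a genuine continuous map of spaces, and grafting becomes composition of little cubes, so linearity on homology is automatic. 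One could instead argue directly that a compact chain $C$ in $\Conf_m(\mathbb{R}^n)$ witnessing a relation $\sum a_iP_i = 0$ has a uniform lower bound on pairwise distances, so that a single insertion scale $\delta$ works across all of $C$ and insertion extends to a chain homotopy---this is true and salvages your picture, but it has to be said, and it is the crux of the reduction; the operadic lift is the paper's device for saying it cleanly.
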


It is possible to give a geometric derivation of the Jacobi identity---see \cite{Sinha:HLDO}---but we will pursue an alternate route. We begin by observing that the most basic case of the identity, in which $T_1$, $T_2$, $T_3$, and $R$ are all trivial trees with no internal vertices, is essentially immediate from what we have already done. 

\begin{proof}[Proof of Proposition \ref{prop:jacobi}, trivial case] We calculate that \begin{align*}
\left\langle ((23)1), \alpha_{12}\alpha_{23}\right\rangle&=\left\langle ((23)1), -\alpha_{23}\alpha_{31}-\alpha_{31}\alpha_{12}\right\rangle\\
&=-\left\langle((23)1), \alpha_{23}\alpha_{31}\right\rangle+(-1)^{1+2n+(n-1)^2}\left\langle ((23)1), \alpha_{21}\alpha_{13}\right\rangle\\
&=-\left\langle((13)2), \alpha_{13}\alpha_{32}\right\rangle+(-1)^n\left\langle((13)2), \alpha_{12}\alpha_{23}\right\rangle\\
&=-1,
\end{align*} where we have applied the permutation $\tau_{12}$ in going from the second to the third line, and the last equality follows from the perfect pairing between tall trees and the corresponding cohomology classes. A similar calculation shows that $\left\langle((23)1), \alpha_{31}\alpha_{12}\right\rangle=-1$, and it follows that \[((23)1)=-((31)2)-((12)3),\] as desired. 
\end{proof}

The general form of the Jacobi identity follows from this basic case once we are assured that grafting of trees is linear. In order to see why this linearity might hold, we turn to an alternative model for the homotopy types of configuration spaces---for original references, see \cite{BoardmanVogt:HIASTS, May:GILS}.

\begin{definition}
A \emph{little $n$-cube} is an embedding $f:(0,1)^n\to (0,1)^n$ of the form $f(x)=Dx+b$, where $b\in(0,1)^n$ and $D$ is a diagonal matrix with positive eigenvalues.
\end{definition}

We write $\op C_n(k)$ for the space of $k$-tuples of little $n$-cubes with pairwise disjoint images, topologized either as a subspace of $\mathrm{Map}(\amalg_k (0,1)^n, (0,1)^n)$. Note that, since little cubes are closed under composition, we have a collection of maps of the form \[\op C_n(m)\times \op C_n(k_1)\times\cdots\op C_n(k_m)\to \op C_n(k)\] whenever $k_1+\cdots +k_m=k$. These maps furnish the collection $\{\op C_n(k)\}_{k\geq0}$ of spaces with the structure of an \emph{operad} \cite{May:GILS}, but we will not need to make use of the full strength of this notion.

\begin{proposition}
The map $\op C_n(k)\to \Conf_k((0,1)^n)\cong\Conf_k(\mathbb{R}^n)$ given by evaluation at $(1/2,\ldots, 1/2)$ is a homotopy equivalence.
\end{proposition}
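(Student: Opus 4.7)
The plan is to exhibit an explicit homotopy inverse to $\mathrm{ev}$ by packaging each configuration into a canonical ``small cube'' datum and then deforming an arbitrary tuple of little cubes onto such a canonical tuple along a center-preserving path that stays inside $\op C_n(k)$.

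First I would define the section $s \colon \Conf_k((0,1)^n) \to \op C_n(k)$ using the continuous positive function
\[
\delta(c_1,\ldots,c_k) := \tfrac{1}{2}\min\!\Bigl\{\min_{i\neq j}\|c_i-c_j\|_\infty,\ \min_i d_\infty(c_i,\partial(0,1)^n)\Bigr\},
\]
sending $(c_1,\ldots,c_k)$ to the tuple of little cubes $f_i(y)=\delta(c)(y-\tfrac12\mathbf{1})+c_i$. By choice of $\delta$, these cubes lie in $(0,1)^n$ and are pairwise disjoint, so $s$ indeed lands in $\op C_n(k)$, and $\mathrm{ev}\circ s=\id$ is immediate.

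Next I would construct a homotopy $H\colon \op C_n(k)\times[0,1]\to\op C_n(k)$ from $\id$ to $s\circ\mathrm{ev}$. Writing each little cube as a pair $(D_i,c_i)$ of diagonal scaling and center (with diagonal entries $d^i_\ell>0$), set $\mu(f_1,\ldots,f_k):=\min_i \delta(c)/\max_\ell d^i_\ell$, a continuous positive function on $\op C_n(k)$. In phase one, $t\in[0,\tfrac12]$, I shrink each cube about its center by the common factor $1-2t(1-\mu)$; since shrinking a disjoint family of cubes toward their centers preserves both disjointness and containment in $(0,1)^n$, the result stays in $\op C_n(k)$. In phase two, $t\in[\tfrac12,1]$, I linearly interpolate the scaling of the $i$-th cube between $\mu D_i$ and $\delta I$, keeping its center fixed at $c_i$. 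The $\ell$-th diagonal entry at such an intermediate time is $(2-2t)\mu d^i_\ell+(2t-1)\delta\le\delta$, so the $i$-th intermediate cube sits inside the $\delta$-cube about $c_i$; since the tuple of $\delta$-cubes is by definition pairwise disjoint, every intermediate tuple is likewise disjoint. At $t=1$ the family equals $s(\mathrm{ev}(f_1,\ldots,f_k))$, so $H$ is the desired deformation and $s$ is a homotopy inverse of $\mathrm{ev}$.

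The main subtlety, and the reason for the two-phase design, is that the fibers of $\mathrm{ev}$ are not in general convex: pairwise disjointness of axis-aligned rectangles sharing their centers is a \emph{disjunction} over coordinates, not a single linear inequality, so a naive straight-line homotopy between two disjoint tuples can cross the intersection locus. Shrinking first, all the way down to a scale dominated by $\delta$, reduces the subsequent linear interpolation to one carried out inside the ambient disjoint family of $\delta$-cubes, where disjointness is automatic. Once this is set up, the continuity of $\delta$ and $\mu$, and the inequalities above, are routine, and the homotopy equivalence follows. (A more abstract alternative would be to exhibit $\mathrm{ev}$ as a quasi-fibration with contractible fibers via a Fadell--Neuwirth-style forgetting map on $\op C_n(k)$, but the direct construction above is self-contained and shorter.)
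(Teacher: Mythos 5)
Your argument is correct in outline and takes the same route as the paper's sketch: construct an explicit continuous section $s$ of $\mathrm{ev}$ and then deform an arbitrary tuple of little cubes onto $s(\mathrm{ev}(\cdot))$ by rescaling about centers. The paper leaves the deformation retraction at ``scaling defines a deformation retraction,'' whereas you correctly observe that naive straight-line interpolation of the diagonal scaling data can leave $\op C_n(k)$ (two long thin boxes rotated $90^\circ$ relative to one another give a concrete failure), and your two-phase homotopy is a legitimate way to handle this. The only other difference is cosmetic: the paper's section grows equal-sided cubes until their closures touch, while you take the safe half-scale $\delta(c)$; both are continuous sections.

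There is, however, one gap in the justification of phase one. You define $\mu = \min_i \delta(c)/\max_\ell d^i_\ell$, and this quantity can exceed $1$ (precisely when every input cube already has all sides shorter than $\delta(c)$). In that regime the factor $1 - 2t(1-\mu)$ is greater than $1$ for $t>0$, so your phase-one map \emph{enlarges} the cubes, and the appeal to ``shrinking a disjoint family preserves disjointness'' does not apply. The homotopy you wrote down is nonetheless still valid: when $\mu > 1$ the phase-one intermediates have diagonal entries $(1 - 2t(1-\mu))\,d^i_\ell \le \mu\, d^i_\ell \le \delta(c)$, so they already sit inside the pairwise-disjoint $\delta$-cubes about the $c_i$, which is exactly the argument you use for phase two. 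The cleaner fix is simply to redefine $\mu := \min\bigl(1,\ \min_i \delta(c)/\max_\ell d^i_\ell\bigr)$: then phase one is genuinely nonexpanding, while the inequality $\mu\, d^i_\ell \le \delta(c)$ needed for phase two is unchanged. You should also remark, though it is routine, that the scaling factors stay strictly positive throughout, so every intermediate tuple really does consist of little cubes.
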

\begin{proof}[Sketch proof]
A section of the map in question is defined by sending a configuration $x$ to the unique $k$-tuple of little cubes $(f_1,\ldots, f_k)$ with the following properties:
\begin{enumerate}
\item $f_i(1/2, \ldots, 1/2)=x_i$ for $1\leq i \leq k$; 
\item all sides of each $f_i$ have equal length, and all $f_i$ have equal volume;
\item the images of the $f_i$ do not have pairwise disjoint closures.
\end{enumerate}
One checks that this map is continuous, so that we may view the configuration space as a subspace of $\op C_n(k)$. Scaling defines a deformation retraction onto this subspace.
\end{proof}

For further details, see \cite[4.8]{May:GILS}.

\begin{proof}[Proof of Proposition \ref{prop:jacobi}, general case]
By considering planetary systems of little cubes rather than configurations, one obtains the dashed lifts depicted in the diagram \[\xymatrix{
&\op C_n(k)\ar[d]\\
(S^{n-1})^{V(F)}\ar@{-->}[ur]^-{P_F^\Box}\ar[r]^-{P_F}&\Conf_k(\mathbb{R}^n).
}\] With these maps in hand, the combinatorics of grafting trees becomes the combinatorics of composing little cubes; that is, the tree $[[T_1, T_2], T_3]$ is the image of $\left( ((12)3), T_1, T_2, T_3\right)$ under the composition map \[\op C_n(3)\times \op C_n(3)\times \op C_n(k_1)\times \op C_n(k_2)\times\op C_n(k_3)\to \op C_n(k),\] and similar remarks pertain to grafting roots of trees onto a fixed leaf of a tree $R$. Thus, grafting, as the map induced on homology by a map of spaces, is linear, and the general identity follows from the basic case proven above.
\end{proof}

 A similar argument as in our earlier cohomology calculation, using the Jacobi identity to rebracket forests into sums of long forests, proves the following.

\begin{theorem}[Cohen]
The graded Abelian group $H_*(\Conf_k(\mathbb{R}^n))$ is isomorphic to the quotient of the free Abelian group with basis the set of $k$-forests by the Jacobi relations and signed antisymmetry.
\end{theorem}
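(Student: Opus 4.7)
The plan is to mirror the strategy used earlier for the cohomology ring. Let $\mathcal{L}_k$ denote the quotient of the free graded abelian group on $k$-forests by the Jacobi and signed antisymmetry relations, and let $\phi\colon \mathcal{L}_k \to H_*(\Conf_k(\mathbb{R}^n))$ be the homomorphism sending a forest to the homology class of its planetary system. The goal is to show $\phi$ is an isomorphism.

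First, I would verify that $\phi$ is well-defined. The Jacobi relation holds by Proposition \ref{prop:jacobi}. For signed antisymmetry, the observation is that swapping the two branches at an internal vertex $v$ of a tree $T$ sends the planetary system $P_T$ to its precomposition with the antipode on the $S^{n-1}$-factor indexed by $v$, combined with a reordering of the remaining factors. The antipode contributes a sign $(-1)^n$, and the reordering of the oriented vertex-indexed sphere factors contributes the appropriate Koszul sign; together these produce exactly the signed antisymmetry relation. Linearity of grafting, which follows from the operadic interpretation through little cubes as used in the general case of Proposition \ref{prop:jacobi}, ensures that both relations propagate from the basic case to an arbitrary position inside any tree or forest.

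Second, by the corollary above, planetary systems of tall forests already form a basis of $H_*(\Conf_k(\mathbb{R}^n))$, so the image of $\phi$ contains this basis and $\phi$ is surjective. The crux of the argument is then to show that tall forests also span $\mathcal{L}_k$. This proceeds by a straightening procedure analogous to the cohomology case: given an arbitrary forest, use signed antisymmetry to rearrange each internal bracket so that the minimal label among its leaves sits to the left, and then use Jacobi to eliminate right-nesting. Concretely, any tree containing a subexpression of the form $[T_1,[T_2,T_3]]$ with $T_2$ or $T_3$ nontrivial can be rewritten via Jacobi as a signed sum of $[[T_1,T_2],T_3]$ and $[[T_1,T_3],T_2]$, in which the right-nested depth has strictly decreased. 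Iterating these two moves with respect to a suitable well-ordering on forests (for instance, lexicographic in the pair consisting of total right-depth and leftmost-leaf labels) terminates in a linear combination of tall forests.

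Third and finally, combine the previous steps. Tall forests span $\mathcal{L}_k$ by the straightening argument, and $\phi$ sends them bijectively to the known basis of $H_*(\Conf_k(\mathbb{R}^n))$ provided by the earlier corollary; hence tall forests are linearly independent in $\mathcal{L}_k$, form a basis there, and $\phi$ carries a basis to a basis, so is an isomorphism. The main obstacle is the straightening step: one must choose the right complexity measure so that both Jacobi and antisymmetry reductions strictly decrease it, and carefully track the Koszul signs throughout. The rest is formal, in exact parallel to how the Arnold relation together with antipodal symmetry reduced arbitrary monomials to the cohomological basis $S_k$.
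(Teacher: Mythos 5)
Your proposal is correct and takes essentially the same route as the paper, which proves this theorem with a single sentence pointing to the cohomology argument (``using the Jacobi identity to rebracket forests into sums of [tall] forests''). You fill in the same steps in more detail: well-definedness of the forest-to-planetary-system map, surjectivity from the tall-forest basis in homology, and the straightening of an arbitrary forest into tall ones via Jacobi and signed antisymmetry.
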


\begin{remark}
This isomorphism may be promoted to an isomorphism of the operad of graded Abelian groups given by the collection $\{H_*(\op C_n(k))\}_{k\geq0}$ with the operad controlling $(n-1)$-shifted Poisson algebras. 
\end{remark}

\subsection{The unordered case}
We close this section with a calculation in the unordered case.

\begin{proposition}\label{prop:unordered rational}
For $k\geq2$ and $n\geq1$, there is an isomorphism \[
H_i(B_k(\mathbb{R}^n);\mathbb{Q})\cong\begin{cases}
\mathbb{Q}&\quad\text{if either $i=0$ or $i=n-1$ is odd}\\
0&\quad\text{otherwise.}
\end{cases}
\]
\end{proposition}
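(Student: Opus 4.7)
The plan is to apply the transfer to the free $\Sigma_k$-cover $\pi\colon\Conf_k(\mathbb{R}^n)\to B_k(\mathbb{R}^n)$ established earlier, and then exploit the forest basis together with the Jacobi identity to identify the $\Sigma_k$-invariants. Rationally the transfer yields
\[
H_*(B_k(\mathbb{R}^n);\mathbb{Q})\;\cong\;H_*(\Conf_k(\mathbb{R}^n);\mathbb{Q})_{\Sigma_k}\;\cong\;H_*(\Conf_k(\mathbb{R}^n);\mathbb{Q})^{\Sigma_k},
\]
so the task is to determine which $k$-forests survive $\Sigma_k$-symmetrization modulo Jacobi and the signed antisymmetry of the bracket $[-,-]$.

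For $n$ odd, the bracket $[T_1,T_2]$ is antisymmetric, so swapping the two sibling leaves at the lowest internal vertex of any tall tree $T$ with $\geq 2$ leaves sends $T$ to $-T$. Pairing the elements of $\Sigma_k$ with their composition with this transposition makes $\sum_{\sigma}\sigma\cdot T$ vanish, and the same argument kills every forest containing a tree of arity $\geq 2$. The only surviving invariant is the forest consisting of $k$ singleton leaves in degree $0$. For $n$ even the bracket is symmetric, so sibling swaps act trivially; but applying the Jacobi identity of Proposition~\ref{prop:jacobi} to three copies of a single symbol gives $3[[x,x],x]=0$, hence $[[x,x],x]=0$ rationally, and a short induction using Jacobi together with antisymmetry extends this to every tall tree on one input of arity $\geq 3$. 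The only surviving invariants are therefore forests whose component trees are single leaves or the arity-$2$ bracket $[x,x]$; graded antisymmetry in the commutative product of forests (since $[x,x]$ has odd degree $n-1$) allows at most one copy of $[x,x]$, giving exactly one invariant in degree $0$ (the all-singletons forest) and one in degree $n-1$ (one $[x,x]$ together with $k-2$ singletons, possible precisely when $k\geq 2$). This is equivalent to noting that $H_*(B_k(\mathbb{R}^n);\mathbb{Q})$ is the arity-$k$ piece of $\Sym(\mathrm{Lie}_{n-1}(\mathbb{Q}\langle x\rangle))$, where the free $(n-1)$-shifted Lie algebra on a single degree-$0$ generator reduces to $\mathbb{Q}\langle x\rangle$ when $n$ is odd and to $\mathbb{Q}\langle x\rangle\oplus\mathbb{Q}\langle[x,x]\rangle$ when $n$ is even.

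The main obstacle is the arity-$\geq 3$ vanishing in the $n$ even case---one must rule out invariants built from a tall tree on three or more inputs. The base case is the computation $[[x,x],x]=0$ above; the inductive step grafts an additional leaf onto a tree whose symmetrization already vanishes, using that grafting is a linear operation (implicit in the statement of the Jacobi identity and established operadically via the little-cubes model). Once this vanishing is in place, assembling the answer is bookkeeping with a polynomial algebra on an even-degree class times an exterior algebra on an odd-degree class, recovering the Poincaré polynomials $1$ for $n$ odd and $1+t^{n-1}$ for $n$ even.
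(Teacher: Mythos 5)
Your proposal is correct and follows essentially the same route as the paper: transfer reduces the problem to $\Sigma_k$-coinvariants (equivalently invariants, rationally) of $H_*(\Conf_k(\mathbb{R}^n))$, the forest basis is used to organize the computation, antisymmetry of the sibling swap kills everything above degree zero when $n$ is odd, and the Jacobi identity applied to three equal inputs gives $3[T]=0$ hence $[T]=0$ rationally for arity $\geq 3$ when $n$ is even. The paper packages this as its Lemma on top-degree coinvariants followed by the partition decomposition of forests, whereas you phrase it directly as identifying the arity-$k$ piece of $\Sym(\mathrm{Lie}_{n-1}(\mathbb{Q}\langle x\rangle))$; these are the same argument, and both glide over the same minor bookkeeping point (that at most one copy of $[x,x]$ can appear, because it has odd degree $n-1$ when $n$ is even).
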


\begin{remark}
Note the vast difference in size and complexity between the rational homology of $B_k(\mathbb{R}^n)$ and that of $\Conf_k(\mathbb{R}^n)$. This disparity, which may at first seem surprising, is characteristic of the relationship between ordered and unordered configuration spaces in characteristic zero. In finite characteristic, as we will see, this relationship is reversed, and it is the homology in the unordered case that is by far more complex.

One obvious indicator of the rational difference between ordered and unordered is the fact that the $i$th Betti number of $\Conf_k(\mathbb{R}^n)$ tends to infinity with $k$, while that of $B_k(\mathbb{R}^n)$ quickly stabilizes to a fixed value. This observation is a simple example of the general phenomenon of \emph{homological stability} for configuration spaces of manifolds \cite{Church:HSCSM, RandalWilliams:HSUCS}. Although the Betti numbers in the ordered case do not stabilize, the analogous phenomenon of \emph{representation stability}, which takes the action of $\Sigma_k$ into account, does occur \cite{Farb:RS}.
\end{remark}

In making this calculation, we will use the following basic fact.

\begin{lemma}\label{lem:transfer}
Let $\pi: E\to B$ be a finite regular cover with deck group $G$. If $\mathbb{F}$ is a field in which $|G|$ is invertible, then the natural map \[\bar\pi_*:H_*(E; \mathbb{F})_G\to H_*(B;\mathbb{F})\] is an isomorphism.
\end{lemma}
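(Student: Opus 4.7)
The plan is to prove this via the classical transfer construction for finite covers. First I would define, at the level of singular chains, a \emph{transfer} map $\tau \colon C_*(B;\mathbb{F}) \to C_*(E;\mathbb{F})$ as follows. Because $\pi$ is a covering map and $\Delta^n$ is simply connected, any singular simplex $\sigma \colon \Delta^n \to B$ admits exactly $|G|$ lifts to $E$, namely $\{g \cdot \tilde\sigma_0 : g \in G\}$ for any chosen lift $\tilde\sigma_0$. I would set $\tau(\sigma) := \sum_{g \in G} g \cdot \tilde\sigma_0$ and extend linearly. A short check that lifting commutes with face maps shows $\tau$ is a chain map, and it is independent of the choice of $\tilde\sigma_0$.

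The key computations are the two composition identities. By construction $\pi_\# \tau(\sigma) = \sum_{g \in G} \pi \circ (g \cdot \tilde\sigma_0) = |G| \cdot \sigma$, so on homology
\[
\pi_* \circ \tau_* = |G| \cdot \mathrm{id}_{H_*(B;\mathbb{F})}.
\]
Dually, for a simplex $\tilde\sigma \colon \Delta^n \to E$ we have $\tau \pi_\#(\tilde\sigma) = \sum_{g \in G} g \cdot \tilde\sigma$, so
\[
\tau_* \circ \pi_* = \sum_{g \in G} g_* \quad \text{on } H_*(E;\mathbb{F}).
\]

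Now I would assemble these. The $G$-action on $B$ is trivial, so $\pi_*$ descends to $\bar\pi_* \colon H_*(E;\mathbb{F})_G \to H_*(B;\mathbb{F})$. Since $|G|$ is invertible in $\mathbb{F}$, define
\[
s \colon H_*(B;\mathbb{F}) \longrightarrow H_*(E;\mathbb{F})_G, \qquad s := \tfrac{1}{|G|}(q \circ \tau_*),
\]
where $q \colon H_*(E;\mathbb{F}) \to H_*(E;\mathbb{F})_G$ is the quotient. Then $\bar\pi_* \circ s = \tfrac{1}{|G|}\pi_*\tau_* = \mathrm{id}$ by the first identity. For the other composite, given $[x] \in H_*(E;\mathbb{F})_G$ with representative $x \in H_*(E;\mathbb{F})$, the second identity gives $s \bar\pi_*[x] = \tfrac{1}{|G|}q\bigl(\sum_{g} g_* x\bigr) = \tfrac{1}{|G|} \cdot |G| \cdot [x] = [x]$, using that $g_*x$ and $x$ coincide in the coinvariants. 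Hence $s$ is a two-sided inverse to $\bar\pi_*$.

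The only non-routine step is setting up the transfer on chains correctly, which reduces to the unique path-lifting property of covers; once $\pi_*\tau_* = |G|$ and $\tau_*\pi_* = \sum_g g_*$ are in hand, the conclusion is formal. No real obstacle arises: the hypothesis that $|G|$ is invertible is used exactly once, to divide by $|G|$ in the definition of $s$.
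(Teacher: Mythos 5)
Your proof is correct and follows essentially the same route as the paper's: both define the transfer by summing over the $|G|$ lifts of a simplex, verify the identities $\pi_*\tau_* = |G|\cdot\mathrm{id}$ and $\tau_*\pi_* = \sum_{g\in G}g_*$, and then use invertibility of $|G|$ to produce the two-sided inverse $\tfrac{1}{|G|}\,q\circ\tau_*$ to $\bar\pi_*$. Your write-up is a touch more explicit at the chain level (the paper treats the transfer and the identity $\pi_*\mathrm{tr}=|G|$ as given in a preceding recollection), but the argument is the same.
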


This result is a consequence of the existence and basic properties of the \emph{transfer map}. Recall that the transfer is a wrong-way map on homology \[\mathrm{tr}:H_*(B)\to H_*(E)\] defined by sending a singular chain to the sum over its $|G|$ lifts to $E$, which is clearly a chain map. It is obvious from the definition that $\pi_*(\mathrm{tr}(\alpha))=|G|\alpha$.

\begin{proof}[Proof of Lemma \ref{lem:transfer}]
We claim that the composite \[\xymatrix{f:H_*(B;\mathbb{F})\ar[r]^-{\frac{1}{|G|}\mathrm{tr}}&H_*(E;\mathbb{F})\ar[r]&H_*(E;\mathbb{F})_{G}}\] is an inverse isomorphism to $\bar\pi_*$. Note that we have used the assumption that $|G|$ is invertible in $\mathbb{F}$ in defining $f$. In one direction, we compute that \[\bar\pi_*(f(\alpha))=\pi_*\left(\frac{1}{|G|}\mathrm{tr}(\alpha)\right)=\frac{1}{|G|}\pi_*(\mathrm{tr}(\alpha))=\alpha,\] and in the other we have \[f(\bar\pi_*([\beta]))=f(\pi_*(\beta))=\frac{1}{|G|}\left[\mathrm{tr}(\pi_*(\beta))\right]=\frac{1}{|G|}\left[\sum_{g\in G}g\cdot\beta\right]=\frac{1}{|G|}\left[\sum_{g\in G}\beta\right]=\beta.\]
\end{proof}

With the identification $H_*(B_k(\mathbb{R}^n);\mathbb{Q})\cong H_*(\Conf_k(\mathbb{R}^n);\mathbb{Q})_{\Sigma_k}$ in hand, we proceed by first identifying the coinvariants in top degree.

\begin{lemma}\label{lem:top homology}
For $k>1$, there is an isomorphism \[H_{(n-1)(k-1)}(\Conf_k(\mathbb{R}^n);\mathbb{Q})_{\Sigma_k}\cong\begin{cases}
\mathbb{Q}&\quad k=2 \text{ and $n$ even}\\
0&\quad\text{otherwise.}
\end{cases}\]
\end{lemma}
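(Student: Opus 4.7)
The plan is to combine Lemma \ref{lem:transfer}, which identifies $H_{(n-1)(k-1)}(B_k(\mathbb{R}^n);\mathbb{Q})$ with the $\Sigma_k$-coinvariants of $H_{(n-1)(k-1)}(\Conf_k(\mathbb{R}^n);\mathbb{Q})$, with the planetary system basis and the Jacobi identity. By the preceding subsection, the top-degree homology has basis given by planetary systems $P_T$ of tall trees $T$ with $k$ leaves (forests with fewer trees contribute only to lower degrees). The action of $\sigma\in\Sigma_k$ sends $[P_T]$ to $[P_{\sigma^{-1}T}]$, where $\sigma^{-1}T$ denotes the tree obtained from $T$ by relabeling its leaves.

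For $k=2$, the planetary system $P_{(12)}$ is essentially the inverse Gauss map $S^{n-1}\to\Conf_2(\mathbb{R}^n)$, so $H_{n-1}(\Conf_2(\mathbb{R}^n);\mathbb{Q})\cong\mathbb{Q}$. The transposition $\tau_{12}$ intertwines the Gauss map with the antipodal involution of $S^{n-1}$, whose degree is $(-1)^n$. Thus the coinvariants are $\mathbb{Q}$ when $n$ is even and $0$ when $n$ is odd, as claimed.

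For $k\geq 3$, I would fix an arbitrary tall tree $T$ of the form $(\cdots((i_1i_2)i_3)i_4\cdots i_k)$ and let $R$ denote the ambient tree structure surrounding the innermost subtree $((i_1i_2)i_3)$. Applying the Jacobi identity (Proposition \ref{prop:jacobi}) with $T_1=i_1$, $T_2=i_2$, $T_3=i_3$ grafted into $R$ produces the relation
\[ [P_T]+[P_{T'}]+[P_{T''}]=0, \]
where $T'$ and $T''$ are obtained from $T$ by replacing $((i_1i_2)i_3)$ with $((i_2i_3)i_1)$ and $((i_3i_1)i_2)$, respectively. The three-cycle $\sigma=(i_1\,i_2\,i_3)\in\Sigma_k$ then relates the three terms, since $\sigma^{-1}T=T''$ (say), so $\sigma\cdot[P_T]=[P_{T''}]$, and similarly for $T'$. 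Therefore $[P_T]=[P_{T'}]=[P_{T''}]$ in the coinvariants, and the Jacobi relation collapses to $3[P_T]=0$, which rationally forces $[P_T]=0$. Since $T$ was an arbitrary tall tree, the top-degree rational coinvariants vanish.

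The main obstacle is bookkeeping the signs and the direction of the $\Sigma_k$-action: one must verify that the action on a planetary system $P_T$ is indeed by relabeling leaves of $T$ (with no additional sign), so that the cyclic permutation of $\{i_1,i_2,i_3\}$ really does intertwine the three Jacobi terms in the unsigned form stated in Proposition \ref{prop:jacobi}. Once this is confirmed by tracing through the definition of $P_T$ and the fact that $\sigma^*\alpha_{ab}=\alpha_{\sigma(a)\sigma(b)}$, the argument above proceeds uniformly in $n$, giving the vanishing for $k\geq 3$ regardless of parity and the stated answer for $k=2$.
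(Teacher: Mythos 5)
Your proof is correct and uses the same essential ingredients as the paper's: planetary systems of tall trees as a basis for the top homology, the unsigned Jacobi identity in homology, and the observation that the $\Sigma_k$-action on a planetary system is by pure relabeling of leaves (so that a $3$-cycle relates the three Jacobi terms without sign). The only difference is organizational: the paper first dispatches all of $n$ odd by invoking signed antisymmetry under a transposition (which gives $2[T]=0$ in coinvariants), and only then invokes Jacobi in the case $n$ even, $k\geq 3$. You instead notice that the Jacobi argument works uniformly in $n$ for all $k\geq 3$ (since $3[P_T]=0$ already forces vanishing over $\mathbb{Q}$), and you handle $k=2$ directly via the Gauss map and the degree $(-1)^n$ of the antipodal map. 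This makes the parity split of the paper unnecessary in the $k\geq 3$ range; otherwise the two arguments are the same, and your concern about signs is correctly resolved by the fact that $P_{\sigma T}=P_T\cdot\sigma^{-1}$ on the nose (the only place a sign enters is when two planetary systems of \emph{distinct} labeled trees are compared via an orientation-reversing reparametrization of the domain, as in the $k=2$ case).
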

\begin{proof}
If $n$ is odd, then any tall tree $T$ is equal to the additive inverse of the tree obtained by switching the labels of the first two leaves of $T$. Since this operation may be achieved by the action of the symmetric group, it follows that $2[T]=0$ at the level of coinvariants, whence $[T]=0$. Since tall trees span the top homology, their images span the coinvariants, and the claim follows in this case.

Assume now that $n$ is even. If $k\geq3$, then the Jacobi identity applied to the bottom three leaves of a tall tree $T$ shows that $3[T]=0$, and so $[T]=0$, and we conclude as before. In the remaining case $k=2$, we note that $H_{n-1}(\Conf_2(\mathbb{R}^n))\cong\mathbb{Z}\langle P_{(12)}\rangle$, and that $\Sigma_2$ acts trivially.
\end{proof}

\begin{proof}[Proof of Proposition \ref{prop:unordered rational}]
As a consequence of our description in terms of tall forests, we have the following calculation:
\begin{align*}
H_*(\Conf_k(\mathbb{R}^n))&\cong \bigoplus_{\text{partitions of [k]}}\bigotimes_i H_{(n-1)(k_i-1)}(\Conf_{k_i}(\mathbb{R}^n))\\
&\cong \bigoplus_{r\geq0}\left(\bigoplus_{k_1+\cdots+k_r=k}\bigotimes_{i=1}^rH_{(n-1)(k_i-1)}(\Conf_{k_i}(\mathbb{R}^n))\otimes_{\Sigma_{k_1}\times\cdots\times\Sigma_{k_r}}\mathbb{Z}[\Sigma_k]\right)_{\Sigma_r}\\
&\cong\bigoplus_{r\geq0}\left(\bigoplus_{k_1+\cdots+k_r=k}\bigotimes_{i=1}^rH_{(n-1)(k_i-1)}(\Conf_{k_i}(\mathbb{R}^n))\otimes\mathbb{Z}[\Sigma_k]\right)_{\Sigma_r\ltimes \Sigma_{k_1}\times\cdots\times\Sigma_{k_r}}.
\end{align*} Thus, tensoring with $\mathbb{Q}$, forming the $\Sigma_k$-coinvariants, and using that $k!$ is invertible, we find that \begin{align*}
H_*(B_k(\mathbb{R}^n);\mathbb{Q})&\cong \bigoplus_{r\geq0}\left(\bigoplus_{k_1+\cdots+k_r=k}\bigotimes_{i=1}^rH_{(n-1)(k_i-1)}(\Conf_{k_i}(\mathbb{R}^n);\mathbb{Q})\right)_{\Sigma_r\ltimes \Sigma_{k_1}\times\cdots\times\Sigma_{k_r}}\\
&\cong\bigoplus_{r\geq0}\left(\bigoplus_{k_1+\cdots+k_r=k}\bigotimes_{i=1}^rH_{(n-1)(k_i-1)}(\Conf_{k_i}(\mathbb{R}^n);\mathbb{Q})_{\Sigma_{k_i}}\right)_{\Sigma_r}.
\end{align*} The claim now follows easily from Lemma \ref{lem:top homology}, since the only nonvanishing terms up to the action of $\Sigma_r$ are $(k_1,\dots, k_m)=(1,\ldots, 1)$ and possibly $(k_1,\ldots, k_m)=(2,1,\ldots, 1)$.
\end{proof}

With a few more definitions in hand, this calculation may be packaged in a more succinct form.

\begin{definition}
A \emph{symmetric sequence} of graded Abelian groups is a collection $\{V(k)\}_{k\geq0}$ where $V(k)$ is a graded Abelian group equipped with an action of $\Sigma_k$. 
\end{definition}

Thus, a symmetric sequence is equivalent to the data of a functor from the category $\Sigma$ of finite sets and bijections to graded Abelian groups. There is a notion of tensor product of symmetric sequences, which is given by the formula \begin{align*}(V\otimes W)(k)&=\bigoplus_{i+j=k}V(i)\otimes W(j)\otimes_{\Sigma_i\times\Sigma_j}\mathbb{Z}[\Sigma_k].
\end{align*} Defining a symmetric sequence by $H_*(\Conf(\mathbb{R}^n))(k)=H_*(\Conf_k(\mathbb{R}^n))$, we now recognize the identification \[H_*(\Conf(\mathbb{R}^n))\cong \Sym(H_\mathrm{top}(\Conf(\mathbb{R}^n)))\] with the symmetric algebra for this tensor product. 

Now, a symmetric sequence $V$ determines a bigraded Abelian group $V_\Sigma$ by the formula \[V_\Sigma=\bigoplus_{k\geq0}V(k)_{\Sigma_k},\] and it is immediate from the formula that \[(V\otimes W)_\Sigma\cong V_\Sigma\otimes W_\Sigma.\] Thus, we have an isomorphism of bigraded vector spaces \begin{align*}
\textstyle\bigoplus_{k \geq0}H_*(B_k(\mathbb{R}^n);\mathbb{Q})&\cong H_*(\Conf(\mathbb{R}^n))_\Sigma\\
&\cong \Sym(H_\mathrm{top}(\Conf(\mathbb{R}^n)))_\Sigma\\
&\cong \Sym(H_\mathrm{top}(\Conf(\mathbb{R}^n))_\Sigma)\\
&\cong \Sym(\mathbb{Q}[0,1]\oplus \mathbb{Q}[n-1, 2]).
\end{align*}

\begin{remark}
From the operadic point of view, this bigraded Abelian group is the free shifted Poisson algebra on one generator.
\end{remark}

This calculation illustrates a valuable lesson, namely that configuration spaces tend to exhibit more structure when taken all together. This insight will be indispensable to us in our future investigations. Before pursuing this direction, however, we will need to invest in some new tools.

\section{Covering theorems}\label{section:covering theorems}

Having exploited it at length, our next long-term goal is to circle back and prove our version of the Fadell--Neuwirth theorem asserting that the diagram \[\xymatrix{
\Conf_{\ell-k}(M\setminus\{x_1,\ldots, x_k\})\ar[r]\ar[d]&\Conf_\ell(M)\ar[d]\\
(x_1,\ldots,x_k)\ar[r]&\Conf_k(M)
}\] is homotopy Cartesian. This type of statement is about the local homotopy type of the configuration space, while, through the topological basis that we exhibited in Proposition \ref{prop:conf basis}, we have fine control over the local topology. Of course, in some sense, everything about a space $X$ is determined by a basis, since $X$ can be reconstructed from the basis by gluing. On the other hand, as the following classic example illustrates, gluing is not a homotopically well-behaved operation.

\begin{example}
The two diagrams
\[\xymatrix{
S^{n-1}\times(0,1)\ar[d]\ar[r]&\mathring{D}^n\ar[d]&&S^{n-1}\ar[r]\ar[d]&\pt\ar[d]\\
\mathring{D}^n\ar[r]&S^n&&\pt\ar[r]&\pt
}\] are pushout squares, and there is a map from the left square to the right that is a homotopy equivalence on all but the bottom right corner.
\end{example}

This example illustrates that ordinary gluing, which is a colimit construction, is insufficient for the kind of homotopy theoretic questions we wish to pursue. The replacement will be the \emph{homotopy colimit}, which we review at length in Appendix \ref{section:homotopy colimits}. 

The kind of result that we aim to prove is the following.

\begin{theorem}\label{thm:basis recovery}
Let $\mathcal{B}$ be a topological basis for $X$, regarded as a poset under inclusion and thereby as a category. The natural map \[\hocolim_{U\in \mathcal{B}}U\to X\] is a weak equivalence.
\end{theorem}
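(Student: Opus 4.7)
The plan is to realize the homotopy colimit as the geometric realization of a bar construction $B_\bullet(\mathcal{B})$ with $B_n(\mathcal{B}) = \coprod_{U_0 \leq U_1 \leq \cdots \leq U_n} U_0$, indexed by chains of inclusions in $\mathcal{B}$, equipped with the evident augmentation $\varepsilon \colon |B_\bullet(\mathcal{B})| \to X$ sending a pair $(U_0 \leq \cdots \leq U_n,\, y)$ with $y \in U_0$ to $y \in X$. To prove the theorem, it suffices to show that $\hofib_x(\varepsilon)$ is contractible for every $x \in X$.

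The key technical step is a local-to-global principle: the homotopy fiber of the realization of an augmented simplicial space built from disjoint unions of open subsets of $X$ is computed by the realization of the levelwise homotopy fibers. This is a topological analogue of Quillen's Theorem B, and is precisely the sort of statement that the hypercover and complete-cover machinery of Dugger--Isaksen, developed in this section, is designed to deliver. Granting this, $\hofib_x(\varepsilon)$ is weakly equivalent to the realization of the simplicial space $n \mapsto \coprod_{U_0 \leq \cdots \leq U_n} \hofib_x(U_0 \hookrightarrow X)$. Since each $U_0 \hookrightarrow X$ is an open embedding, its homotopy fiber over $x$ is a point if $x \in U_0$ and empty otherwise; thus this simplicial set is nothing but the nerve of the subposet
\[
\mathcal{B}_x := \{U \in \mathcal{B} : x \in U\}.
\]

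It then remains to check that $N(\mathcal{B}_x)$ is contractible. The poset $\mathcal{B}_x$ is nonempty (since $\mathcal{B}$ is a basis and $X$ is an open neighborhood of $x$) and cofiltered: given $U, V \in \mathcal{B}_x$, the set $U \cap V$ is an open neighborhood of $x$, so the basis property produces $W \in \mathcal{B}$ with $x \in W \subseteq U \cap V$, a common lower bound in $\mathcal{B}_x$; the analogous statement for finitely many elements follows by induction. The nerve of a cofiltered poset is contractible---any finite subcomplex can be coned off using a common lower bound of its vertex set---so $\hofib_x(\varepsilon) \simeq \pt$ and $\varepsilon$ is a weak equivalence. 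The main obstacle I expect is not this final combinatorial step but rather the local-to-global reduction, whose careful formulation and proof are exactly the content of the hypercover machinery being developed here; once that is in place, the argument is essentially formal.
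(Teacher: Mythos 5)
Your proposal takes a genuinely different route from the paper, and the route has a real gap at the decisive step. The paper proves this theorem as an immediate special case of the complete cover theorem (Theorem \ref{thm:complete cover recovery}), which in turn is proved by comparing the bar construction to the ``subdivided'' bar construction $\mathrm{Bar}^\#_\bullet$, showing the latter is a hypercover when the cover is complete, and then invoking the hypercover theorem; the base case (the \v{C}ech cover theorem) rests on the Mayer--Vietoris local-to-global principle (Proposition \ref{prop:mayer-vietoris}), which says that a map that restricts to a weak equivalence over every finite intersection of an open cover of the target is already a weak equivalence. No homotopy fibers of realizations are computed anywhere along that path.

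Your plan instead tries to compute the homotopy fiber of the augmentation directly, and there are two linked problems. First, the principle you want to invoke---that $\hofib_x(|B_\bullet| \to X)$ is the realization of the levelwise homotopy fibers---is not a theorem for arbitrary augmented simplicial spaces; geometric realization does not preserve homotopy pullbacks without a Bousfield--Friedlander/$\pi_*$-Kan type hypothesis, and no such hypothesis is verified (or obviously available) here. The Dugger--Isaksen machinery does not deliver this; it delivers the Mayer--Vietoris statement above, which is logically weaker and is used in a very different way. Second, and independently, the levelwise computation is wrong: for an open inclusion $U_0 \hookrightarrow X$ with $x \in U_0$, the homotopy fiber over $x$ is the space of paths in $X$ ending at $x$ and starting in $U_0$, which has the homotopy type of (roughly) $\Omega_x X$, not of a point. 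Already for $X = S^1$ and $U_0$ a small arc containing $x$, the space $\hofib_x(U_0 \hookrightarrow X)$ is weakly equivalent to the discrete space $\mathbb{Z}$, so the levelwise simplicial object is not the nerve of $\mathcal{B}_x$. What you describe is the point-set fiber, and replacing homotopy fibers by point-set fibers requires knowing the augmentation is a quasifibration, which is not available a priori and is essentially equivalent to the theorem. Your closing observation that $\mathcal{B}_x$ is nonempty and cofiltered, hence has contractible nerve, is correct and is good intuition for why the theorem is true, but the reduction to it does not go through. (For comparison: the paper \emph{does} use a Theorem~B style fiber argument in proving Fadell--Neuwirth, but there the relevant basis elements $\Conf^0_\ell(U,\sigma)$ are all contractible, so the homotopy colimit actually is the classifying space of the indexing poset and the hypotheses of Corollary \ref{cor:quillen b} can be checked. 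Here the basis elements $U$ are arbitrary open sets, and that shortcut is unavailable.)
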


This result will be an immediate consequence of Theorem \ref{thm:complete cover recovery}. First, we explore the intermediary concepts of \v{C}ech covers and hypercovers, which are interesting in their own right. These tools all permit the reconstruction of the weak homotopy type of $X$ from various forms covering data. A general reference for this material is \cite{DuggerIsaksen:THAR}.

\subsection{\v{C}ech covers}

 \begin{recollection}
We write $\Delta_+$ for the category of finite ordered sets and $\Delta\subseteq \Delta_+$ for the full subcategory of sets that are nonempty. Thus, up to isomorphism, the objects of $\Delta_+$ are the sets $[n]=\{0,\ldots, n\}$ for $n\geq-1$. A \emph{simplicial space} is a functor $\op X:\Delta^{op}\to \Top$ (resp. \emph{augmented simplicial space}, $\Delta_+^{op}$). 

Using the traditional notation $\op X_n:=\op X([n])$, we write $d_i:\op X_n\to \op X_{n-1}$ for map induced by the inclusion $[n-1]\to [n]$ that misses the element $i$ (the $i$th \emph{face map}), and $s_i:\op X_{n}\to \op X_{n+1}$ for the map induced by the surjection $[n+1]\to [n]$ that sends $i$ and $i+1$ to $i$ (the $i$th \emph{degeneracy}). If $\op X$ is augmented, we refer to the induced map $\op X_0\to \op X_{-1}$ as the \emph{augmentation}.

The \emph{geometric realization} of the simplicial space $\op X$ is the quotient \[|\op X|:=\faktor{\coprod_{n\geq0}\op X_n\times\Delta^n}{\sim}=\mathrm{coeq}\left(\coprod_{\Delta([\ell],[m])}\op X_m\times\Delta^\ell\rightrightarrows \coprod_{n\geq0} \op X_n\times\Delta^n\right),\] where the two arrows are given by the covariant functoriality of $\Delta^{(-)}$ and the contravariant functoriality of $\op X$, respectively. If $\op X$ is augmented, there results a canonical map $|\op X|\to \op X_{-1}$.
\end{recollection}

\begin{definition}
Let $\U=\{U_\alpha\}_{\alpha\in A}$ be an open cover of $X$. The \emph{\v{C}ech nerve} of $\U$ is the augmented simplicial space $\check{C}(\U):\Delta^{op}_+\to \Top$ specified as follows.
\begin{enumerate}
\item In nonnegative simplicial degree, we have \[\check{C}(\U)_n=\coprod_{A^{n+1}}U_{\alpha_0}\cap\cdots\cap U_{\alpha_n},\] and $\check{C}(\U)_{-1}=X$.
\item The face maps $d_i$ is induced by the inclusions \[U_{\alpha_0}\cap\cdots\cap U_{\alpha_n}\subseteq U_{\alpha_0}\cap\cdots \widehat{U}_{\alpha_i}\cdots \cap U_{\alpha_n}.\]
\item The degeneracy $s_i$ is induced by the identifications \[U_{\alpha_0}\cap\cdots\cap U_{\alpha_n}=U_{\alpha_0}\cap \cdots\cap U_{\alpha_i}\cap U_{\alpha_i}\cap \cdots\cap U_{\alpha_n}\]
\item The augmentation is induced by the inclusions $U_\alpha\subseteq X$.
\end{enumerate}
\end{definition}

Applying $H_0$ to $\check{C}(\U)$ in each simplicial degree and taking the alternating sum of the face maps as a differential, we obtain the classical \emph{\v{C}ech complex} \[\cdots \to \bigoplus_{A^{n+1}}H_0(U_{\alpha_0}\cap \cdots \cap U_{\alpha_n})\to \cdots\to \bigoplus_AH_0(U_\alpha),\] which computes the homology of $X$ if $\U$ is a sufficiently good cover. In fact, this result can be strengthened to a recovery of the full weak homotopy type.

\begin{theorem}[Segal, Dugger-Isaksen]\label{thm:cech recovery}
For any topological space $X$ and any open cover $\U$ of $X$, the augmentation \[|\check{C}(\U)|\to X\] is a weak homotopy equivalence.
\end{theorem}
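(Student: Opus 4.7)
The plan is to give Segal's explicit partition-of-unity construction of a homotopy inverse to the augmentation $\pi\colon|\check{C}(\U)|\to X$, which is elementary and transparent provided the cover $\U$ is numerable (automatic in the main settings of the paper, e.g.\ on manifolds). For the fully general statement I would then pass to the hypercover machinery developed later in the section, reducing to Theorem \ref{thm:complete cover recovery}.

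First I would fix a locally finite partition of unity $\{\phi_\alpha\}_{\alpha\in A}$ subordinate to $\U$ and define $\sigma\colon X\to|\check{C}(\U)|$ by
\[
\sigma(x):=\bigl[(x;\alpha_0,\ldots,\alpha_n);\,(\phi_{\alpha_0}(x),\ldots,\phi_{\alpha_n}(x))\bigr],
\]
where $\alpha_0<\cdots<\alpha_n$ enumerate the finitely many indices with $\phi_{\alpha_i}(x)>0$, noting that $x\in U_{\alpha_0}\cap\cdots\cap U_{\alpha_n}$ so this is a well-defined point of $\check{C}(\U)_n\times\Delta^n$. As $x$ varies and some $\phi_{\alpha_i}(x)$ drops to zero, the corresponding barycentric coordinate vanishes, and by the $i$-th face relation defining $|\check{C}(\U)|$ the index $\alpha_i$ is simply forgotten; the equivalence relation absorbs the apparent discontinuity of the index set, and local finiteness confines everything pointwise to a finite-dimensional simplex. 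Continuity of $\sigma$ is then formal, and $\pi\circ\sigma=\id_X$ is immediate from the construction.

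For the other composite I would build an explicit straight-line homotopy $\sigma\circ\pi\simeq\id_{|\check{C}(\U)|}$: given a representative $[(x;\alpha_0,\ldots,\alpha_n);(t_0,\ldots,t_n)]$, let $\beta_0<\cdots<\beta_m$ be the support of $\phi(x)$, embed both index tuples into the ordered union $\{\alpha_i\}\cup\{\beta_j\}$ (padding coordinates by zero), and linearly interpolate between the resulting two points of the large simplex. Well-definedness on the quotient is again secured by the fact that vanishing coordinates correspond to indices that face maps discard. The main technical obstacle is precisely this bookkeeping of the equivalence relation in $|\check{C}(\U)|$: verifying continuity of $\sigma$ and well-definedness of the homotopy requires tracking how face and degeneracy relations mediate between index sets of different sizes, and depends crucially on local finiteness of the partition. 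To remove the numerability hypothesis for the applications downstream, I would reinterpret $|\check{C}(\U)|$ as the homotopy colimit over $\Delta^{\mathrm{op}}$ of the intersections $U_{\alpha_0}\cap\cdots\cap U_{\alpha_n}$ (using that $\check{C}(\U)$ is a split simplicial space, cf.\ Appendix \ref{appendix:split simplicial spaces}) and then deduce the theorem from Theorem \ref{thm:complete cover recovery}, whose proof proceeds by a Quillen Theorem B descent argument and avoids partitions of unity entirely.
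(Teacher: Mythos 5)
Your partition-of-unity construction is Segal's original argument, and it is a genuinely different route from the paper's. For a numerable cover it works as you sketch: the section $\sigma$ is continuous because the face identifications absorb indices whose barycentric coordinate drops to zero, the composite $\pi\circ\sigma$ is the identity on the nose, and the straight-line homotopy on the joint index set gives $\sigma\circ\pi\simeq\id$. This argument even yields a genuine homotopy equivalence, not merely a weak one, when it applies. The paper, in contrast, avoids partitions of unity altogether. It proves a Mayer--Vietoris-type local-to-global principle (Proposition \ref{prop:mayer-vietoris}) by transfinite induction, observes that the restriction of $|\check{C}(\U)|\to X$ over an iterated intersection $U_{\alpha_0}\cap\cdots\cap U_{\alpha_n}$ is the Čech nerve of the refined cover $\U_{\alpha_0,\ldots,\alpha_n}$, which contains the ambient intersection as a member, and finishes the latter case via the extra degeneracy given by intersecting with the ambient space. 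That argument requires no paracompactness or numerability hypotheses whatsoever.

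Where your plan has a real gap is in the passage to the general, non-numerable, case. Deducing Theorem \ref{thm:cech recovery} from Theorem \ref{thm:complete cover recovery} is circular: Theorem \ref{thm:complete cover recovery} rests on Theorem \ref{thm:hypercover recovery}, whose inductive proof uses Theorem \ref{thm:cech recovery} both as the base case ($N=0$, i.e.\ a Čech cover) and again mid-proof, when showing that $|\op W|_h\to\op Y$ is a degreewise weak equivalence. You cannot invoke the complete-cover theorem to close the loop on the Čech statement in full generality, because the Čech statement in full generality is an input to it. As a smaller point, the proof of Theorem \ref{thm:complete cover recovery} given here does not actually go through Quillen's Theorem B --- that enters later, in Section \ref{section:deferred proofs}, for Fadell--Neuwirth --- and the identification of $|\check{C}(\U)|$ with a homotopy colimit over the intersection poset is itself nontrivial bookkeeping, essentially the content of Proposition \ref{prop:subdivided comparison}.
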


The proof will make use of a wonderful local-to-global principle. In establishing this principle, we will employ a little machinery, but see \cite[16.24]{Gray:HT} for a more elementary argument premised on subdivision.

\begin{proposition}\label{prop:mayer-vietoris}
Let $f:Y\to Z$ be a continuous map and $\U=\{U_\alpha\}_{\alpha\in A}$ an open cover of $Z$. If the induced map $f^{-1}(U_{\alpha_0}\cap\cdots\cap U_{\alpha_n})\to U_{\alpha_0}\cap\cdots\cap U_{\alpha_n}$ is a weak homotopy equivalence for every $(\alpha_0,\ldots, \alpha_n)\in A^{n+1}$, then $f$ is also a weak homotopy equivalence.
\end{proposition}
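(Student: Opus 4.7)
The plan is to reduce the Proposition to a simplicial approximation argument at the level of homotopy groups, thus bypassing the potentially circular appeal to Theorem \ref{thm:cech recovery} (whose proof this Proposition is intended to support).

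I will show directly that $f_* : \pi_n(Y, y_0) \to \pi_n(Z, f(y_0))$ is a bijection for all $n \geq 0$ and every basepoint. For surjectivity, given a representative $g : S^n \to Z$, the collection $g^{-1}\mathcal{U}$ is an open cover of the compact space $S^n$ and so admits a Lebesgue number; fix a triangulation $T$ of $S^n$ fine enough that each closed simplex $\sigma$ is sent by $g$ into a single chart of $\mathcal{U}$. Assign to each simplex $\sigma$ a tuple of indices $I(\sigma) = (\alpha_0,\ldots,\alpha_r)$ with $g(\sigma) \subseteq U_{I(\sigma)} := U_{\alpha_0}\cap\cdots\cap U_{\alpha_r}$, arranged so that $I(\tau) \supseteq I(\sigma)$ whenever $\tau \subseteq \sigma$ (so boundary simplices are constrained to lie in smaller intersections than their containing simplices). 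A lift $\tilde g : S^n \to Y$ with $f\tilde g \simeq g$ is then built by induction on the skeleta of $T$: having constructed $\tilde g$ over the $(k-1)$-skeleton so that each simplex $\tau$ maps into $f^{-1}(U_{I(\tau)})$, for a $k$-simplex $\sigma$ the inductive restriction yields a boundary lift $\partial\sigma \to f^{-1}(U_{I(\sigma)})$ together with a homotopy to $g|_{\partial\sigma}$, and this data extends across the cone $\sigma$ precisely because $f^{-1}(U_{I(\sigma)}) \to U_{I(\sigma)}$ is a weak equivalence by hypothesis.

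Injectivity of $\pi_n(f)$ follows from the completely analogous construction applied one dimension up: given two lifts $\tilde g_0, \tilde g_1 : S^n \to Y$ and a nullhomotopy $H : D^{n+1} \to Z$ of $f \tilde g_0$ and $f \tilde g_1$, triangulate $D^{n+1}$ compatibly with its boundary $S^n \cup_{S^n} S^n$ and lift cellwise in the same fashion, producing a homotopy between $\tilde g_0$ and $\tilde g_1$ in $Y$.

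The principal obstacle is the combinatorial bookkeeping required to guarantee that the inductively constructed partial lift $\partial\sigma \to Y$ actually lands in $f^{-1}(U_{I(\sigma)})$, so that the weak-equivalence hypothesis is available at the next stage. This is precisely what the hereditary condition $I(\tau) \supseteq I(\sigma)$ for $\tau \subseteq \sigma$ arranges: any boundary piece $\tau$ was lifted into $f^{-1}(U_{I(\tau)}) \subseteq f^{-1}(U_{I(\sigma)})$, and the restrictions cohere. It is here—and only here—that one uses the assumption on \emph{all} finite intersections rather than merely on single open sets, which is what elevates a local hypothesis about $f$ into a global conclusion.
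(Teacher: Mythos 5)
Your argument is correct in outline, but takes a genuinely different route from the paper's, and is essentially the subdivision argument the paper defers to \cite[16.24]{Gray:HT}. The paper settles the two-element case $\U=\{U,V\}$ by combining the van Kampen theorem for fundamental groupoids, Mayer--Vietoris for homology with arbitrary local coefficients, and the Whitehead theorem, and then bootstraps to a general cover by transfinite induction on its cardinality. Your approach avoids both the classical machinery and the transfinite induction, at the cost of a direct obstruction-theoretic, cell-by-cell argument; both are legitimate, and yours is arguably the more elementary. Incidentally, the paper's proof of this proposition does not invoke Theorem~\ref{thm:cech recovery}, so there was no circularity to bypass.

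Two points of bookkeeping deserve attention. First, your inductive hypothesis must carry not only the partial lift $\tilde{g}$ over the $(k-1)$-skeleton but also the homotopy $h$ from $f\tilde{g}$ to $g$ over that skeleton, together with the locality constraint that $h$ maps $\tau\times I$ into $U_{I(\tau)}$ for every simplex $\tau$. This is what guarantees that when you reach a $k$-simplex $\sigma$, the triple $(\tilde{g}|_{\partial\sigma},\, h|_{\partial\sigma\times I},\, g|_{\sigma})$ lands entirely in the pair $(f^{-1}(U_{I(\sigma)}),\, U_{I(\sigma)})$, so that the compression (HELP) lemma for the weak equivalence $f^{-1}(U_{I(\sigma)})\to U_{I(\sigma)}$, applied to the CW pair $(\sigma,\partial\sigma)$, has the input data it needs. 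As written you only constrain $\tilde{g}$, not $h$. Second, the injectivity paragraph is garbled: $\partial D^{n+1}=S^{n}$, not $S^n\cup_{S^n}S^n$. What you actually want is a based homotopy $H:S^n\times I\to Z$ from $f\tilde{g}_0$ to $f\tilde{g}_1$, a triangulation of $S^n\times I$ refining given triangulations of $S^n\times\{0\}$ and $S^n\times\{1\}$, and then the same cellwise lifting with boundary data $\tilde{g}_0\sqcup\tilde{g}_1$. Neither issue is fatal, but both deserve a sentence.
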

\begin{proof}
Suppose first that $\U=\{U,V\}$. Using the appropriate versions of the Van Kampen \cite{BrownRazekSalleh:VKTUNS} and Mayer-Vietoris \cite[5.13]{DavisKirk:LNAT} theorems, we conclude that $f$ induces isomorphisms on fundamental groupoids and on homology with arbitrary local coefficients, so the claim follows in this case from the Whitehead theorem \cite[6.71]{DavisKirk:LNAT}.

In the general case, we choose an ordinal $\lambda$ and a bijection $\varphi:\lambda\cong \U$ and proceed by transfinite induction, assuming that the claim is known for all covers with the cardinality of $\mu<\lambda$. 

Suppose first that $\lambda$ is a successor ordinal. Setting $U=U_\lambda$ and $V=\bigcup_{\mu<\lambda}U_\mu$, it will suffice by the previous argument to verify that the restrictions of $f$ to $U$, to $V$, and to $U\cap V$ are all weak homotopy equivalences. The case of $U$ is a special case of our hypothesis, and the cases of $V$ and $U\cap V$ follow from the inductive assumption applied to the covers $\{U_\mu\}_{\mu<\lambda}$ and $\{U_\mu\cap U_\lambda\}_{\mu<\lambda}$, respectively, each of which is in bijection with $\lambda-1$ and satisfies the hypothesis of the proposition.

Suppose now that $\lambda$ is a limit ordinal. By compactness, any map $(D^{n+1}, S^n)\to (Z,Y)$ factors as in the solid commuting diagram 
\[\xymatrix{
S^n\ar[d]\ar[d]\ar[r]&\ar[d]\displaystyle f^{-1}\left(\bigcup_{\mu<\mu_0}U_\mu\right)\ar[r]&Y\ar[d]^-f\\
D^{n+1}\ar[r]\ar@{-->}[ur]&\displaystyle\bigcup_{\mu\leq \mu_0}U_\mu\ar[r]& Z
}\] for some $\mu_0<\lambda$. The inductive hypothesis applied to the cover $\{U_\mu\}_{\mu\leq \mu_0}$ implies that the middle map is a weak homotopy equivalence, so the dashed lift exists making the upper triangle commute and the lower triangle commute up to homotopy. It follows that $\pi_n(f)=0$ for every $n\geq0$, and consideration of the long exact sequence in homotopy associated to $f$ completes the proof.
\end{proof}

\begin{proof}[Proof of Theorem \ref{thm:cech recovery}] For $(\alpha_0,\ldots, \alpha_n)\in A^{n+1}$, set $\U_{\alpha_0,\ldots, \alpha_n}=\{U_\alpha\cap U_{\alpha_0}\cap\cdots\cap U_{\alpha_n}\}_{\alpha\in A}$, and note that the diagram
\[\xymatrix{
|\check{C}(\U_{\alpha_0,\ldots, \alpha_n})|\ar[r]\ar[d]&|\check{C}(\U)|\ar[d]\\
U_{\alpha_0}\cap\cdots\cap U_{\alpha_n}\ar[r]& X
}\] is a pullback. Therefore, by Proposition \ref{prop:mayer-vietoris}, it suffices to prove the claim under the assumption that $X$ is a member of $\U$. In this case, $\check{C}(\U)$ has an extra degeneracy given by forming the intersection with $X$. 
\end{proof}

\subsection{Hypercovers}
In light of Theorem \ref{thm:cech recovery}, it is natural to wonder what makes \v{C}ech nerves of covers special. The first step toward answering this question is to notice that \v{C}ech nerves may be completely characterized.

\begin{definition}
Let $f:Y\to Z$ be a continuous map. We say that $f$ is a \emph{covering map} if, up to homeomorphism, it is of the form $\coprod_{\alpha\in A} U_\alpha\to Z$ for some open cover $\U=\{U_\alpha\}_{\alpha\in A}$ of $Z$.
\end{definition}

\begin{definition}
We say that an augmented simplicial space $\op X$ is a \emph{\v{C}ech cover} if \begin{enumerate}
\item the augmentation $\op X_0\to \op X_{-1}$ is a covering map, and
\item for every $n>0$, the natural map \[\op X_n\to  M_n(\op X):=\mathrm{eq}\left(\prod_{0\leq i\leq n}\op X_{n-1}\rightrightarrows\prod_{0\leq i<j\leq n}\op X_{n-2}\right)\] induced by the face maps is a homeomorphism.
\end{enumerate}
\end{definition}

The space $M_n(\op X)$ is called the $n$th \emph{matching space} of $\op X$.

\begin{example}
In low degrees, we have $M_0(\op X)=\op X_{-1}$ and $M_1(\op X)=\op X_0\times_{\op X_{-1}}\op X_0$.
\end{example}

\begin{exercise}
The \v{C}ech nerve of an open cover is a \v{C}ech cover, and, conversely, a \v{C}ech cover $\op X$ is the \v{C}ech nerve of some open cover $\U$---for example, we may take $\U$ to be the collection of connected components of $\op X_0$---but different open covers may have isomorphic \v{C}ech nerves.
\end{exercise}

Thus, a \v{C}ech cover is exactly what we obtain by deforming the constant simplicial object at $X$ by replacing the 0-simplices by a cover and filling in the rest of the object in the canonical way with matching objects. This observation identifies \v{C}ech covers as the first stage in an obvious hierarchy of covering notions.

\begin{definition}
We say that an augmented simplicial space $\op X:\Delta^{op}_+\to \Top$ is a \emph{hypercover} if the canonical map $\op X_n\to M_n(\op X)$ is a covering map for all $n\geq0$. We say that the hypercover $\op X$ is \emph{bounded} if there is some $N$ such that this map is an isomorphism for $n>N$, the smallest such $N$ being the \emph{height} of $\op X$. If $\op X_{-1}=X$, then we say that $\op X$ is a hypercover of $X$.
\end{definition}

A hypercover of height 0 is precisely a \v{C}ech cover, while a hypercover of height $-1$ is isomorphic to the constant simplicial space with value $X$.

\begin{theorem}[Dugger-Isaksen]\label{thm:hypercover recovery}
For any topological space $X$, and any hypercover $\op X$ of $X$, the augmentation map \[|\op X|\to X\] is a weak homotopy equivalence.
\end{theorem}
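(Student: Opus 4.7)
The plan is to mimic the proof of the \v{C}ech cover case (Theorem \ref{thm:cech recovery}), whose two ingredients---the local-to-global principle of Proposition \ref{prop:mayer-vietoris} and the existence of an extra degeneracy over a member of the cover---both admit natural generalizations to hypercovers. Throughout, let $\op X$ be a hypercover of $X$ and let $\U = \{U_\alpha\}_{\alpha \in A}$ be the open cover of $X$ obtained from the connected components of the covering map $\op X_0 \to X$.

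For any iterated intersection $V = U_{\alpha_0} \cap \cdots \cap U_{\alpha_m}$, the levelwise pullback $\op X_V := \op X \times_X V$ (defined by $(\op X_V)_n = \op X_n \times_X V$) is again a hypercover: pullback along the open inclusion $V \hookrightarrow X$ preserves covering maps, and matching objects commute with pullback, so each $(\op X_V)_n \to M_n(\op X_V)$ remains a covering map. Geometric realization of simplicial spaces also commutes with pullback along open inclusions, giving $|\op X_V| \cong |\op X| \times_X V$. Thus, by Proposition \ref{prop:mayer-vietoris} applied to the augmentation $|\op X| \to X$ and the cover $\U$, it suffices to show that $|\op X_V| \to V$ is a weak equivalence for every such $V$.

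Now $V$ itself appears canonically as a connected component of $(\op X_V)_0$---since $V \subseteq U_{\alpha_0}$, the $\alpha_0$-component is $U_{\alpha_0} \cap V = V$---yielding a section $s \colon V \hookrightarrow (\op X_V)_0$ of the augmentation. For a \v{C}ech cover, $s$ extends tautologically to an extra degeneracy $s_{-1}$, making the augmented simplicial space a simplicial deformation retraction onto $V$. For a general hypercover this extension is the main obstacle: $(\op X_V)_{n+1}$ is strictly larger than $M_{n+1}(\op X_V)$, and the face data prescribed by the previously constructed degeneracies lift only up to the monodromy of the covering map $(\op X_V)_{n+1} \to M_{n+1}(\op X_V)$. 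The strategy is to build $s_{-1} \colon (\op X_V)_n \to (\op X_V)_{n+1}$ inductively in $n$, at each stage lifting the prescribed face-matching data through the covering map using the iterated degeneracies of $s$ as compatible basepoints; when a strict lift fails, a lift up to simplicial homotopy should still suffice to contract the realization onto $V$.

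Finally, the case of unbounded height reduces to the bounded case by writing $\op X$ as a filtered colimit of its coskeletal truncations $\cosk_n \op X$, each a bounded hypercover, and using that geometric realization commutes with filtered colimits. A cleaner workaround, which circumvents the extra-degeneracy construction entirely, is to form a bisimplicial thickening of $\op X_V$ by taking the \v{C}ech nerve of each covering map $(\op X_V)_n \to M_n(\op X_V)$; the diagonal is then a genuine \v{C}ech cover of $V$, to which Theorem \ref{thm:cech recovery} applies, and the equivalence $|\op X_V| \simeq V$ follows via a Fubini-type argument comparing the two iterated realizations of the bisimplicial space.
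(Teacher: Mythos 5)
Your Mayer--Vietoris reduction to iterated intersections $V$ is sound (levelwise pullback along open inclusions preserves hypercovers, and realization commutes with this pullback), but it does not by itself make progress: the restricted hypercover $\op X_V$ has exactly the same height as $\op X$, so you face an identical problem over $V$. The paper's proof instead \emph{inducts on the height $N$} of $\op X$, and the inductive handle comes from Lemma \ref{lem:coskeleta}: for a hypercover of height $\leq N$, the canonical map $\op X \to \cosk_{N-1}(\op X)$ is a \emph{degreewise covering map} onto a hypercover $\op Y$ of strictly smaller height. Some form of this reduction is indispensable, and your proposal never produces it.

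The two workarounds you offer each have gaps. The first (building an extra degeneracy $s_{-1}$ inductively and settling for a ``lift up to simplicial homotopy'' when the monodromy of $(\op X_V)_{n+1} \to M_{n+1}(\op X_V)$ obstructs a genuine lift) is not substantiated; nothing in the argument shows that such homotopy-coherent degeneracies can be consistently chosen, nor that they yield a contraction of the realization. The second workaround --- forming a bisimplicial space by taking the \v{C}ech nerve of each $(\op X_V)_n \to M_n(\op X_V)$ --- is not well-defined, because the matching objects $M_n(\op X_V) = \cosk_{n-1}(\op X_V)_n$ do not assemble into a single simplicial space receiving a simplicial map from $\op X_V$; the target of the ``\v{C}ech nerve'' changes with $n$ in an incompatible way. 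The paper's bisimplicial space $\op W$ is instead the degreewise \v{C}ech nerve of the \emph{genuine simplicial map} $\op X \to \cosk_{N-1}(\op X) = \op Y$; this gives $|\op W|_h \to \op Y$ a degreewise weak equivalence, and then $|\op X| \to X$ is exhibited as a retract of $|d^*\op W| \to X$ over the constant simplicial space, with $|\op Y| \to X$ handled by the height induction. Even were your bisimplicial object well-defined, the claim that its diagonal is ``a genuine \v{C}ech cover of $V$'' would need an argument; it does not follow formally.

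Finally, the unbounded case is handled incorrectly: $\op X$ is the \emph{inverse limit}, not the filtered colimit, of its coskeleta $\cosk_n(\op X)$ (it is the colimit of its \emph{skeleta}, which are not hypercovers), and geometric realization does not commute with inverse limits. The paper circumvents this by showing $\pi_n(|\op X|) \to \pi_n(|\cosk_{n+1}(\op X)|)$ is an isomorphism directly, using the skeletal filtration and the vanishing $\pi_n(S^m) = 0$ for $n < m$, together with the fact that $\op X \to \cosk_{n+1}(\op X)$ is an isomorphism through simplicial degree $n+1$.
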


The proof will make use of some formal machinery from the theory of simplicial spaces, which identifies the matching object $M_n(\op X)$ itself as the degree $n$ entry of a simplicial space. First, a few categorical reminders.

\begin{recollection}
Let $\op C$ be a category and $\D$ a category with small limits and colimits, and let $\iota:\op C_0\to \op C$ be a functor. Then the restriction functor $\iota^*:\op D^{\op C}\to \op D^{\op C_0}$ admits both a left and a right adjoint, the so-called \emph{Kan extension} functors, as depicted in the following diagram \[\doubleadjunct{\op D^{\op C_0}}{\op D^{\op C}}{\op D^{\op C_0}}{\iota_!}{\iota^*}{\iota^*}{\iota_*}.\] We may also use the notation $\Lan_\iota=\iota_!$ and $\Ran_\iota=\iota_*$ for the left and right Kan extensions, respectively. The value of the left Kan extension is given by the formula \[\iota_!F(C)\cong \colim\left((\op C_0\downarrow C)\to \op C\xrightarrow{F} \op D\right),\] where $(\op C_0\downarrow C)$ denotes the \emph{overcategory} whose objects are morphisms $f:C'\to C$ with $C'$ an object of $\op C_0$, and whose morphisms are commuting triangles \[\xymatrix{
C'\ar[rr]\ar[dr]&& C''\ar[dl]\\
&C.
}\] Dually, the right Kan extension $\iota_*F$ is computed as the corresponding limit over the undercategory $(C\downarrow\op C_0)$.
\end{recollection}

Taking $i:(\Delta^{op}_+)_{\leq n}\to  \Delta^{op}_+$ to be the inclusion of the full subcategory of finite ordered sets of cardinality at most $n+1$, we obtain adjunctions \[\doubleadjunct{\Top^{(\Delta^{op}_+)_{\leq n}}}{\Top^{\Delta^{op}_+}}{\Top^{(\Delta^{op}_+)_{\leq n}}}{\iota_!}{\iota^*}{\iota^*}{\iota_*}.\] We write $\tau_{\leq n}(\op X)=\iota^*\op X$, $\sk_n(\op X)=\iota_!\iota^*\op X$, and $\cosk_n(\op X)=\iota_*\iota^*\op X$ and refer to the $n$-\emph{truncation}, $n$-\emph{skeleton}, and $n$-\emph{coskeleton}, respectively, of $\op X$.

An exercise in the manipulation of limits shows that matching objects and coskeleta are related as \[M_n(\op X)\cong \cosk_{n-1}(\op X)_n,\] and the map $\op X_n\to M_n(\op X)$ discussed above is a component of the unit transformation of the appropriate adjunction. Note that $\cosk_{n-1}(\op X)_k\cong \op X_k$ for $k<n$, so the matching object is in some sense the primary measure of the difference between $\op X$ and its $n-1$-coskeleton. 

We will need a few facts about the behavior of coskeleta of hypercovers. First, we note that, by elementary properties of Kan extensions, we have \[
\cosk_m(\cosk_n(\op X))\cong\begin{cases}
\cosk_n(\op X)&\quad m\geq n\\
\cosk_m(\op X)&\quad m\leq n
\end{cases}\] for any augmented simplicial space $\op X$. Second, we have the following pullback diagram for any $n$ and $m$
\[\xymatrix{
\cosk_{m}(\op X)_n\ar[d]\ar[r]&\displaystyle\prod_{[m]\subseteq [n]} \op X_m\ar[d]\\
\cosk_{m-1}(\op X)_n\ar[r]&\displaystyle\prod_{[m]\subseteq [n]} M_m(\op X),
}\] where the products are indexed on the set of injective order-preserving maps. This pullback square, which is dual to the inductive formation of skeleta by cell attachments, may be derived as an exercise in the manipulation of limits.

\begin{remark}\label{remark:mnemonic}
A mnemonic for this pullback diagram, which is rigorous at the level of simplicial sets and can be made rigorous for simplicial spaces with the correct interpretation of the symbols, is the following. By adjunction, we should think that \[\cosk_m(\op X)_n\cong\Hom(\Delta^n, \cosk_m(\op X))\cong \Hom(\tau_{\leq m}(\Delta^n), \tau_{\leq m}(\op X))\cong \Hom(\sk_m(\Delta^k), \op X),\] and there is a pushout diagram \[\xymatrix{
\displaystyle\coprod_{[m]\subseteq [n]}\partial \Delta^n\ar[r]\ar[d]&\sk_{m-1}(\Delta^n)\ar[d]\\
\displaystyle\coprod_{[m]\subseteq [n]}\Delta^n\ar[r]&\sk_m(\Delta^n).
}\]
\end{remark}

\begin{lemma}\label{lem:coskeleta}
Let $\op X$ be a hypercover.
\begin{enumerate}
\item The map $\op X\to \cosk_N(\op X)$ is an isomorphism if and only if $\op X$ is of height at most $N$.
\item For every $N\geq0$, $\cosk_{N-1}(\op X)$ is a hypercover, which is of height at most $N-1$ if $\op X$ is of height at most $N$.
\item  If $\op X$ is of height at most $N$, then the map $\op X\to \cosk_{N-1}(\op X)$ is a degreewise covering map.
\end{enumerate}
\end{lemma}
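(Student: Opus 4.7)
The plan is to treat the three parts in sequence, using the iterated pullback square for coskeleta displayed just before the statement as the central computational tool, together with elementary properties of covering maps. The only nontrivial topological inputs are that pullbacks and finite products preserve covering maps, both immediate from the definition $\coprod U_\alpha \to Z$ since preimages and finite products of open covers are open covers.

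For part (1), the unit $\op X \to \cosk_N(\op X)$ is automatically an isomorphism in degrees $\leq N$ by the general theory of Kan extension along fully faithful inclusions, so only degrees $n > N$ require work. I factor the map in degree $n$ as
\[
\op X_n = \cosk_n(\op X)_n \to \cosk_{n-1}(\op X)_n \to \cdots \to \cosk_N(\op X)_n,
\]
and apply the displayed pullback square, which exhibits the step $\cosk_m(\op X)_n \to \cosk_{m-1}(\op X)_n$ as a pullback of $\prod_{[m]\subseteq[n]}(\op X_m \to M_m(\op X))$. If $\op X$ has height at most $N$, these matching maps are isomorphisms for every $m > N$, so each step is an isomorphism. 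Conversely, if $\op X \to \cosk_N(\op X)$ is an isomorphism, then $\op X = \cosk_N(\op X)$, and using the idempotence identity $\cosk_a \circ \cosk_b = \cosk_{\min(a,b)}$ one gets $\cosk_{m-1}(\op X) = \op X$ for every $m > N$, so $\op X_m = \cosk_{m-1}(\op X)_m = M_m(\op X)$ via the matching map.

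For part (2), set $\op Y = \cosk_{N-1}(\op X)$. Since $\cosk_{N-1}$ is idempotent, $\op Y$ is $(N-1)$-coskeletal, and the converse direction of part (1) applied to $\op Y$ yields that $\op Y_n \to M_n(\op Y)$ is an isomorphism for every $n > N-1$, a fortiori a covering map. In degrees $n \leq N-1$ one has $\op Y_n = \op X_n$, and $M_n(\op Y) = M_n(\op X)$ because matching objects depend only on strictly lower simplicial degrees, so the matching map of $\op Y$ in degree $n$ agrees with that of $\op X$ and is a covering map by hypothesis. Hence $\op Y$ is a hypercover of height at most $N-1$, with no assumption on the height of $\op X$ needed; the conditional clause in the statement is automatic.

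For part (3), the map $\op X_k \to \cosk_{N-1}(\op X)_k$ is an identity for $k < N$ and coincides with the matching map $\op X_N \to M_N(\op X)$ for $k = N$, hence a covering map. For $k > N$, I factor it as
\[
\op X_k = \cosk_k(\op X)_k \to \cdots \to \cosk_N(\op X)_k \to \cosk_{N-1}(\op X)_k
\]
and argue as in part (1): the height-$N$ hypothesis makes every arrow $\cosk_m(\op X)_k \to \cosk_{m-1}(\op X)_k$ with $m > N$ an isomorphism, while the final arrow is a pullback of the finite product $\prod_{[N]\subseteq[k]}(\op X_N \to M_N(\op X))$ of covering maps, hence a covering map. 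The composite is therefore a covering map. The main obstacle throughout is becoming fluent in manipulating the coskeleton pullback square across several simplicial degrees at once; once that is internalized, the three parts reduce to bookkeeping.
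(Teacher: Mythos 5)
Your proposal is correct and follows essentially the same route as the paper: all three parts are reduced to the iterated coskeleton pullback square plus the stability of covering maps under pullback and finite products. Your observation in part (2) that the height hypothesis on $\op X$ is unnecessary — since $\cosk_{N-1}(\op X)$ is $(N-1)$-coskeletal regardless, the converse of part (1) (which, as your argument implicitly uses, needs no hypercover hypothesis) forces its height to be at most $N-1$ — is a small but genuine sharpening of the paper's phrasing.
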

\begin{proof}
For the first claim, assume first that $\op X$ is of height at most $N$. Since $\op X_n\cong\cosk_N(\op X)_n$ whenever $n\leq N$, it suffices to demonstrate the isomorphism for $n>N$. Since $\op X$ is of height at most $N$, the righthand map in the pullback diagram above is an isomorphism whenever $m>N$, so the lefthand map is an isomorphism in these cases as well. Thus, \[\op X_n\cong\cosk_n(\op X)_n\cong \cosk_{n-1}(\op X)_n\cong\cdots\cong \cosk_N(\op X)_n,\] as desired. On the other hand, suppose that $\op X\cong\cosk_N(\op X)$. Then for $m\geq N$, we have \[\cosk_m(\op X)\cong\cosk_m(\cosk_N(\op X))\cong \cosk_N(\op X)\cong \op X,\] which implies the claim after setting $m=n-1$ and evaluating at $n$. 

For the second claim, it suffices by point (1) and the isomorphism $\cosk_{N-1}(\cosk_{N-1}(\op X))\cong\cosk_{N-1}(\op X)$ to show that $\cosk_{N-1}(\op X)$ is a hypercover. For $n\geq N$, we have \[M_n(\cosk_{N-1}(\op X))=\cosk_{n-1}(\cosk_{N-1}(\op X))_n\cong\cosk_{N-1}(\op X)_n,\] as desired, while, for $n\leq N$, we have the commuting diagram \[\xymatrix{
\cosk_{N-1}(\op X)_n\ar[r]&M_n(\cosk_{N-1}(\op X))=\cosk_{n-1}(\cosk_{N-1}(\op X))_n\\
\op X_n\ar[r]\ar@{=}[u]_-\wr&M_n(\op X)=\cosk_{n-1}(\op X)_n.\ar@{=}[u]^-\wr
}\] Since $\op X$ is a hypercover, the bottom map, and hence the top map, is a covering map, as desired.

The third claim is immediate for simplicial degree $n<N-1$. For $n=N-1$, we appeal to the pullback square above with $m=N$. Since $\op X$ is a hypercover, each of the maps $\op X_m\to M_m(\op X)$ is a covering map. Since covering maps are preserved under finite products and pullback, it follows that the lefthand map is a covering map, as desired.
\end{proof}

In the proof of Theorem \ref{thm:hypercover recovery}, we will thrice use that a degreewise weak homotopy equivalence between simplicial spaces induces a weak homotopy equivalence after geometric realization. This implication does not hold in general, but it does hold for a certain class of \emph{split} simplicial spaces, as shown in Appendix \ref{appendix:split simplicial spaces}, which includes the examples we need.

\begin{proof}[Proof of Theorem \ref{thm:hypercover recovery}]
We proceed by induction on the height $N$ of $\op X$, the base case $N=0$ being the case of a \v{C}ech cover. By Lemma \ref{lem:coskeleta}, the natural map $\op X\to \cosk_{N-1}(\op X)=:\op Y$ is a covering map in each degree, so we may extend this map to the horizontally augmented bisimplicial space \[\op W:=\left(\cdots\rightrightarrows\op X\times_{\op Y}\op X\rightrightarrows\op X\to \op Y\right)\] in which the $n$th row is the \v{C}ech nerve of the covering map $\op X_n\to \op Y_n$. By Theorem \ref{thm:cech recovery}, the map \[|\op W|_h\xrightarrow{\sim}\op Y\] is a degreewise weak homotopy equivalence, where $|-|_h$ denotes the geometric realization in the horizontal direction, and we conclude that \[|d^*\op W|\cong \big||\op W|_h\big|\xrightarrow{\sim}|\op Y|\xrightarrow{\sim} X,\] where $d:\Delta^{op}_+\to \Delta^{op}_+\times\Delta^{op}_+$ is the diagonal functor. Here the isomorphism follows from the general fact that the diagonal coincides with either iterated geometric realization \cite[p. 86]{Quillen:HAKTI}, the rightmost weak equivalence follows from the inductive hypothesis and the fact that $\op Y$ is a hypercover of height at most $N-1$, and the middle weak equivalence follows from Proposition \ref{prop:split criterion} in light of the fact that both $|\op W|_h$ and $\op Y$ are split by Corollaries \ref{cor:hypercovers are split} and \ref{cor:horizontal cech cover split}. Thus, in order to conclude the result for bounded $\op X$, it suffices to show that the natural map of simplicial spaces \[i:\op X\to d^*\op W=\left(\cdots\rightrightarrows \op X_1\times_{\op Y_1} \op X_1\rightrightarrows \op X_0\right)\] given by the diagonal in each degree, admits a retraction over the constant simplicial space with value $X$. Indeed, assuming this fact, it follows that the map $|\op X|\to X$ is a retract of a weak homotopy equivalence, and the claim follows.

To define the putative retraction $r:d^*\op W\to \op X\cong\cosk_{N}(\op X),$ it suffices by adjunction to exhibit a map $\bar r:\tau_{\leq N}(d^*\op W)\to \tau_{\leq N}(\op X)$, for which we take the isomorphism \[\bar r_n:\op X_n\times_{\op Y_n}\cdots\times_{\op Y_n}\op X_n=\op X_n\times_{\op X_n}\cdots\times_{\op X_n}\op X_n\cong\op X_n\] in degrees $n<N$. In degree $N$, we take the map \[\bar r_N:\op X_N\times_{\op Y_N}\cdots \times_{\op Y_N}\op X_N\to \op X_N\] to be any of the projections. One checks that, with these choices, $\bar r$ is a simplicial map, and that $\tau_{\leq N}(r\circ i)=\id_{\tau_{\leq N}(\op X)}$, implying the claim.

To conclude the claim for $\op X$ not necessarily bounded, it will suffice to show that the natural map \[\pi_n(|\op X|)\to \pi_n(|\cosk_{n+1}(\op X)|)\] is an isomorphism for every $n\geq0$, since $\cosk_{n+1}(\op X)$ is a bounded hypercover, so $\pi_n(|\cosk_{n+1}(\op X)|\cong\pi_n(X)$ by the argument above. For this claim, we note that \[|\op X| \xleftarrow{\sim}\big||\mathrm{Sing}(\op X)|_h\big|\cong |d^*\mathrm{Sing}(\op X)|,\] and similarly for $\cosk_{n+1}(\op X)$---these are the second and third cases in which we use that a degreewise weak equivalence induces a weak equivalence on realizations, for which we again invoke Proposition \ref{prop:split criterion} in light of Corollary \ref{cor:hypercovers are split} and Lemma \ref{lem:realization split}. From this equivalence, we conclude that \begin{align*}
\pi_n(|X|)&\cong \pi_n(|d^*\mathrm{Sing}(\op X)|)\\
&\cong\pi_n(|\sk_{n+1}(d^*\mathrm{Sing}(\op X))|)\\
&\cong \pi_n(|\sk_{n+1}(d^*\mathrm{Sing}(\cosk_{n+1}(\op X)))|)\\
&\cong \pi_n(|d^*\mathrm{Sing}(\cosk_{n+1}(\op X))|)\\
&\cong \pi_n(|\cosk_{n+1}(\op X)|),
\end{align*} where the second and fourth isomorphism follow as usual from the fact that $\pi_n(S^{m})=0$ for $n<m$, and the third follows from the fact that $\op X\to \cosk_{n+1}(\op X)$ is an isomorphism through simplicial degree $n+1$
\end{proof}

\subsection{Complete covers} We arrive now at the desired result.

\begin{definition}
We say that an open cover $\U$ of $X$ is \emph{complete} if $\U$ contains an open cover of $\bigcap_{U\in \U_0}U$ for every finite subset $\U_0\subseteq \U$.
\end{definition}

We regard $\U$ as a partially ordered set and thereby as a category.

\begin{theorem}[Dugger-Isaksen]\label{thm:complete cover recovery}
If $\U$ is a complete cover of $X$, then the natural map \[\hocolim_{U\in \U}U\to X\] is a weak equivalence.
\end{theorem}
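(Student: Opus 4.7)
The plan is to model the homotopy colimit via the bar construction and compare it, through a bisimplicial resolution, with the \v{C}ech realization $|\check C(\U)|$, which by Theorem \ref{thm:cech recovery} already maps weakly equivalently to $X$. Concretely, the standard bar construction (Appendix \ref{section:homotopy colimits}) identifies $\hocolim_{U \in \U} U$ with the realization of the simplicial space
\[
B_n = \coprod_{U_0 \le U_1 \le \cdots \le U_n} U_0
\]
indexed by ascending chains in the poset $\U$, with augmentation to $X$ given by the inclusions $U_0 \hookrightarrow X$. Because $U_0 = U_0 \cap \cdots \cap U_n$ whenever $U_0 \le \cdots \le U_n$ is a chain, there is a natural map of simplicial spaces $\varphi : B_\bullet \to \check C(\U)_\bullet$ sending a chain-summand identically onto the summand of the ordered tuple $(U_0,\ldots,U_n)$; it suffices to prove that $|\varphi|$ is a weak equivalence.

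To that end, I would construct the bisimplicial space
\[
W_{p,q} = \coprod_{U_0 \le \cdots \le U_p} \check C(\U \downarrow U_0)_q, \qquad \U \downarrow U_0 := \{V \in \U : V \subseteq U_0\},
\]
noting first that $\U \downarrow U_0$ is itself a complete open cover of $U_0$, which follows by applying completeness to $\{U_0\}$ to produce a cover of $U_0$ by elements of $\U$, then iterating on intersections $U_0 \cap V$ for $V \in \U$ to show every finite intersection in $\U \downarrow U_0$ is covered by elements of $\U \downarrow U_0$. Realized in the $q$-direction first, Theorem \ref{thm:cech recovery} supplies in each chain summand a weak equivalence $|\check C(\U \downarrow U_0)| \xrightarrow{\sim} U_0$, so that the $p$-th row realization is degreewise weakly equivalent to $B_p$; passing this through the remaining $p$-direction realization — for which the splitness criteria of Appendix \ref{appendix:split simplicial spaces} are invoked so that degreewise weak equivalences survive geometric realization — identifies the total realization of $W$ with $|B_\bullet| \simeq \hocolim_{U \in \U} U$.

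Realizing $W$ in the $p$-direction first, one obtains a simplicial space in $q$ whose matching maps are built from intersections of \v{C}ech components indexed by tuples in $\U$. The completeness hypothesis supplies for each such intersection a cover by elements of $\U \downarrow U_0$, producing the covering-map structure needed to verify the hypercover condition. An application of Theorem \ref{thm:hypercover recovery} then identifies the same total realization with $X$, and the two identifications together yield the desired weak equivalence $\hocolim_{U \in \U} U \xrightarrow{\sim} X$.

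The main obstacle is the hypercover verification in the $p$-direction realization: one must carefully unwind the bisimplicial bookkeeping to check that the resulting matching maps are covering maps. This is exactly the step where the full strength of completeness is used, as the covering-map condition at level $n$ asks precisely for covers of $n$-fold intersections in $\U$, which the hypothesis provides. A subsidiary technical check, again via the splitness results of Appendix \ref{appendix:split simplicial spaces}, is needed to justify commuting the two directions of geometric realization and to ensure that degreewise weak equivalences are preserved throughout.
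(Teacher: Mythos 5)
Your reduction to showing that $|\varphi|:|B_\bullet|\to|\check C(\U)|$ is a weak equivalence is sound, and the first leg of the bisimplicial argument goes through: $\U\downarrow U_0$ is an open cover of $U_0$ (completeness applied to the singleton $\{U_0\}$), so Theorem \ref{thm:cech recovery} gives a degreewise weak equivalence from the vertical realization $|W|_v$ to $B_\bullet$, which the splitness criteria of Appendix \ref{appendix:split simplicial spaces} carry through the remaining realization. The gap is the second leg. After realizing in the $p$-direction the level-$q$ entry is
\[
|W|_{h,q}\;=\;\hocolim_{U\in\U}\check C(\U\downarrow U)_q,
\]
which is the geometric realization of a bar construction, built from cells of the form $\Delta^m\times(\text{\v{C}ech summand})$, and is \emph{not} a coproduct of open subsets of $X$. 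Theorem \ref{thm:hypercover recovery} only applies to an augmented simplicial space that is degreewise a coproduct of opens whose matching maps are covering maps; $|W|_h$ has neither feature, so the assertion that completeness "produces the covering-map structure needed" has nothing to latch onto. Nor can you argue instead that $|W|_h$ is degreewise equivalent to $\check C(\U)$: exchanging $\hocolim_U$ with the coproduct over $(q{+}1)$-tuples $(V_0,\ldots,V_q)$ leaves a constant diagram over the poset of common upper bounds $\{U\in\U: V_i\subseteq U\ \forall i\}$, which need not be contractible or even nonempty (take $\U$ to be short arcs on $S^1$ and $V_0,V_1$ disjoint). Passing to the diagonal $d^*W$ does not rescue it either: the summand of $(d^*W)_1$ mapping to $((U_0,V_0),(U_1,V_1))$ in $M_1(d^*W)$ is empty whenever $U_0\not\le U_1$ or $V_1\not\subseteq U_0$, even though $V_0\cap V_1$ may be nonempty, so $d^*W$ is not a hypercover.

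The paper avoids this by a structurally different device. Rather than a bisimplicial comparison, it replaces $\mathrm{Bar}_\bullet$ by the subdivided bar construction $\mathrm{Bar}^\#_\bullet(F)$ whose $n$-simplices are indexed by functors $P_n^{op}\to\U$ from the poset of nonempty subsets of $[n]$. This is still a degreewise coproduct of open subsets of $X$, and completeness is exactly what makes its matching maps covering maps (the lemma following Proposition \ref{prop:subdivided comparison}). The comparison $\mathrm{Bar}_\bullet(F)\to\mathrm{Bar}^\#_\bullet(F)$ induced by $S\mapsto\max S$ is then shown to realize to a weak equivalence \emph{for an arbitrary functor} $F$ — not by juggling a bisimplicial resolution, but by exhibiting both sides as homotopy left Kan extensions along $\delta,\delta_\#$ and invoking homotopy finality of the last-value functors $\lambda,\lambda_\#$ (Lemmas \ref{lem:domain finality} and \ref{lem:last value finality}). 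If you want to repair your argument along its own lines, the replacement for $W$ must have $p$-realization that is again an augmented simplicial space of opens over $X$; $\mathrm{Bar}^\#$ is essentially the minimal way to arrange this.
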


\begin{remark}
We adopt the standard abuse of referring to the homotopy colimit as a space rather than an object of the homotopy category.
\end{remark}

Any open cover containing a basis for the topology of $X$ is complete, so Theorem \ref{thm:basis recovery} is a special case of this result. 

The strategy of the proof is to reduce the result to Theorem \ref{thm:hypercover recovery} by relating the bar construction modeling the homotopy colimit in question to a certain hypercover.

\begin{construction}
We write $P_n$ for the set of nonempty subsets of $[n]$, regarded as partially ordered under inclusion and thereby as a category. Given a functor $F:I\to \Top$, we write \[\mathrm{Bar}_n^\#(F)=\coprod_{f:P_n^{op}\to I}F(f([n]))\] and extend this to a simplicial space $\mathrm{Bar}_\bullet^\#(F)$ by letting the structure map associated to $h:[m]\to [n]$ be given by the restriction along the induced map $P_m^{op}\to P_n^{op}$. There is a canonical map $\mathrm{Bar}_\bullet(F)\to\mathrm{Bar}_\bullet^\#(F)$ of simplicial spaces induced by restriction along (the opposites of) the functors $P_n\to [n]$ sending a subset to its maximal element. Both simplicial spaces are naturally augmented over $\colim_IF$, and this map is compatible with the augmentations.
\end{construction}

Following Theorem \ref{thm:hypercover recovery}, the theorem will be an immediate consequence of the following two results.

\begin{proposition}\label{prop:subdivided comparison}
For any functor $F:I\to \Top$, the induced map $|\mathrm{Bar}_\bullet(F)|\to |\mathrm{Bar}_\bullet^\#(F)|$ is a weak homotopy equivalence.
\end{proposition}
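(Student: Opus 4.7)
The plan is to compare $|\mathrm{Bar}_\bullet(F)|$ and $|\mathrm{Bar}_\bullet^\#(F)|$ by exhibiting both as two different iterated realizations of a common bisimplicial space $W_{\bullet,\bullet}$, and then to apply the Fubini-type ``realization lemma'' for split bisimplicial spaces established in Appendix \ref{appendix:split simplicial spaces}. The canonical map in the statement will be shown to factor through the diagonal of $W$, whence the comparison.

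Concretely, I would set
\[
W_{p,q} = \coprod_{(\sigma,\, f)} F(f([q])),
\]
where the coproduct ranges over pairs consisting of a functor $\sigma:[p]\to P_q$ with $\sigma(p)=[q]$ and a functor $f:P_q^{op}\to I$; the $p$-structure acts by precomposition on $\sigma$ and the $q$-structure acts by the canonical $\Delta$-functoriality of $[q]\mapsto P_q$, simultaneously on $\sigma$ and $f$. The natural inclusion $\mathrm{Bar}_\bullet(F)\hookrightarrow\mathrm{Bar}_\bullet^\#(F)$, which picks out those $f$ that factor through the max map $P_q\to[q]$ and takes $\sigma$ to be the constant chain at $[q]$, sits inside this bisimplicial space as a particular summand.

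Realizing $W$ in the $p$-direction first, one fixes $(q,f)$ and observes that the index set of chains $\sigma$ is the nerve of the overcategory $P_q/[q]$, which is contractible because $[q]$ is a terminal object of $P_q$. The resulting simplicial space in $q$ is then degreewise weakly equivalent to $\mathrm{Bar}_\bullet^\#(F)$, yielding the first iterated realization $|\mathrm{Bar}_\bullet^\#(F)|$. Realizing $W$ in the $q$-direction first, one reinterprets a $(p,q)$-simplex as a chain $f(\sigma(0))\to\cdots\to f(\sigma(p))=f([q])$ in $I$ together with extension data encoding the behavior of $f$ off this chain, and then argues via a second contractibility that this extension data collapses, identifying the second iterated realization with $|\mathrm{Bar}_\bullet(F)|$.

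The main obstacle is this second contractibility: for a fixed chain $[p]\to I$, the space of extensions (as $q\geq p$, $\sigma$, and $f$ vary) to a functor $P_q^{op}\to I$ should be contractible. The point is that the category of such extensions admits a distinguished ``tautological'' extension which serves as an initial (or terminal) object, so its nerve is contractible. Making this precise, verifying the variances match, and checking the splitness hypotheses required to invoke the realization lemma of Appendix \ref{appendix:split simplicial spaces}---both for $W$ itself and for its two iterated realizations---constitute the core technical work of the proof.
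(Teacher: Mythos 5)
Your high-level idea --- detect the comparison as arising from a bisimplicial space whose two iterated realizations are the two bar constructions --- is appealing, and the ``tautological extension'' device you invoke is indeed the right intuition for why the subdivided and unsubdivided bar constructions should agree. But the bisimplicial object $W$ you write down does not actually exist as stated, and the Fubini interchange is not how the paper proceeds.

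Two concrete problems with $W_{p,q}=\coprod_{(\sigma,f)}F(f([q]))$. First, the constraint $\sigma(p)=[q]$ is not preserved by the $p$-simplicial structure: the last face map $d_p$ restricts $\sigma$ along $d^p:[p-1]\to[p]$, producing a chain whose top value is $\sigma(p-1)$, which need not equal $[q]$. So either the constraint must be dropped (which changes what the $p$-realization computes) or the face map is not well-defined. Second, the $q$-direction is also problematic: a morphism $h:[q']\to[q]$ in $\Delta$ induces $P_h:P_{q'}\to P_q$, along which $f:P_q^{op}\to I$ pulls back, but a chain $\sigma:[p]\to P_q$ does not --- one would need to take preimages $h^{-1}(\sigma(j))$, which can be empty and hence fail to lie in $P_{q'}$. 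So the ``simultaneous'' $q$-functoriality you assert does not come for free, and the object is not a bisimplicial space as written. Finally, you explicitly flag the ``second contractibility'' as the heart of the matter but leave it as a claim; as you suspect, this is precisely where the real content lives.

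The paper avoids all of this by not attempting an interchange of realizations. Instead it introduces the categories of simplices $\Delta^{op}I$ and subdivided simplices $\Delta_\#^{op}I$, with functors $\lambda$, $\lambda_\#$ recording the ``last value'' and $\delta$, $\delta_\#$ recording the domain, and compares \emph{both} $|\mathrm{Bar}_\bullet(F)|$ and $|\mathrm{Bar}_\bullet^\#(F)|$ to $\hocolim_IF$ through a chain of equivalences: each bar construction is identified with $\hocolim_{\Delta^{op}}$ of the relevant homotopy left Kan extension (Corollary \ref{cor:bar kan extension}), Kan extensions are composed, and then the homotopy finality of $\lambda$ and $\lambda_\#$ (Lemma \ref{lem:last value finality}) collapses each side to $\hocolim_IF$; the comparison map commutes with these equivalences, so two-out-of-three finishes the proof. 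Your ``tautological extension'' idea is exactly what is used in the proof of Lemma \ref{lem:last value finality} --- an object $i\to f(n)\to\cdots\to f(0)$ is extended to $\bar f:P_{n+1}^{op}\to I$ and supplies a universal map --- but it is deployed to verify homotopy finality, not to collapse a layer of a bisimplicial object. If you want to salvage a bisimplicial argument, you would need to be much more careful about the indexing so that both directions carry genuine simplicial structures, and you would essentially be re-deriving the Kan-extension machinery anyway.
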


We will take up the proof of this proposition momentarily.

\begin{lemma}
Let $\U$ be an open cover and $F:\U\to \Top$ the tautological functor. If $\U$ is complete, then the augmented simplicial space $\mathrm{Bar}_\bullet^\#(F)$ is a hypercover.
\end{lemma}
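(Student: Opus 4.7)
The plan is to directly identify both $\mathrm{Bar}_n^\#(F)$ and its $n$th matching space, then read off the covering property from the completeness hypothesis. First I would unwind what a functor $f\colon P_n^{op}\to\U$ is: it is a family $\{f(S)\in\U\}_{\varnothing\ne S\subseteq[n]}$ satisfying $f(S)\subseteq f(T)$ whenever $T\subseteq S$. In particular $f([n])$ is the smallest set in the image, so $\mathrm{Bar}_n^\#(F)$ is the disjoint union over such $f$ of the open set $f([n])\subseteq X$. The $i$th face map is induced by $\delta_i^{op}\colon P_{n-1}^{op}\to P_n^{op}$, and restricted to the $f$-component it is the open inclusion $f([n])\hookrightarrow f([n]\setminus\{i\})=(f\circ\delta_i^{op})([n-1])$. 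The augmentation in degree $0$ is just $\coprod_{U\in\U}U\to X$, which is literally a covering map because $\U$ covers $X$.

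Next I would compute $M_n(\mathrm{Bar}_\bullet^\#(F))$ for $n\geq1$. The key observation is that the poset $P_n\setminus\{[n]\}$ of proper nonempty subsets of $[n]$ is precisely the union $\bigcup_{i=0}^n\delta_i(P_{n-1})$, and a functor on $(P_n\setminus\{[n]\})^{op}$ is exactly the datum of $n+1$ compatible functors $f_i\colon P_{n-1}^{op}\to\U$ (with $f_i=g\circ\delta_i^{op}$). Translating this through the matching-space equalizer, I would conclude
\[
M_n(\mathrm{Bar}_\bullet^\#(F))\;\cong\;\coprod_{g\colon(P_n\setminus\{[n]\})^{op}\to\U}\;\bigcap_{i=0}^{n}g([n]\setminus\{i\}),
\]
and that the canonical map $\mathrm{Bar}_n^\#(F)\to M_n(\mathrm{Bar}_\bullet^\#(F))$ sends $(f,x)$ with $x\in f([n])$ to $(f|_{P_n\setminus\{[n]\}},x)$, using $f([n])\subseteq f([n]\setminus\{i\})$ for all $i$ to see $x$ lands in the intersection.

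Finally, I would verify the covering property componentwise. Fix $g\colon(P_n\setminus\{[n]\})^{op}\to\U$. An extension of $g$ to a functor $\tilde g\colon P_n^{op}\to\U$ is the same as a choice of $W:=\tilde g([n])\in\U$ with $W\subseteq\bigcap_{i=0}^n g([n]\setminus\{i\})$. Hence the preimage of the $g$-component is
\[
\coprod_{\substack{W\in\U\\ W\subseteq\bigcap_{i}g([n]\setminus\{i\})}} W\;\longrightarrow\;\bigcap_{i=0}^{n}g([n]\setminus\{i\}),
\]
and this is a covering map in the sense of the paper precisely when those $W$'s form an open cover of the finite intersection $\bigcap_i g([n]\setminus\{i\})$. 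But this is exactly the completeness hypothesis applied to the finite subfamily $\{g([n]\setminus\{i\})\}_{i=0}^n\subseteq\U$, which by assumption contains an open cover of its intersection.

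The main obstacle is the identification of $M_n(\mathrm{Bar}_\bullet^\#(F))$: one must carefully match the formal $\cosk_{n-1}$-limit with the indexing by functors out of the boundary poset $P_n\setminus\{[n]\}$, keeping all the $P_\bullet$-vs-$\Delta^\bullet$ conventions straight. Once that bookkeeping is done, the covering property is a direct translation of completeness.
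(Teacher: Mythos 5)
Your proof is correct and follows essentially the same route as the paper: compute the matching space as a coproduct over functors on the poset $\overline P_n$ of proper nonempty subsets, identify the preimage of a $g$-component as the coproduct of elements $W\in\U$ contained in the relevant intersection (equivalently, of extensions of $g$ over $[n]$), and invoke completeness to conclude that these $W$'s cover. The only cosmetic difference is that you write the intersection over the $n+1$ maximal proper subsets $[n]\setminus\{i\}$ while the paper intersects over all $S\in\overline P_n$; these agree because $g$ is order-reversing, so every $g(S)$ contains some $g([n]\setminus\{i\})$.
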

\begin{proof}
Writing $\overline P_n\subseteq P_n$ for the subcategory of proper nonempty subsets, we compute that \begin{align*}
M_n(\mathrm{Bar}_\bullet^\#(F))&=\mathrm{eq}\left(\prod_{S\subsetneq[n]}\coprod_{f:P_S^{op}\to \U}F(f(S))\rightrightarrows \prod_{S_1\subseteq S_2\subsetneq[n]}\coprod_{f:P_{S_1}^{op}\to \U}F(f(S_1))\right)\\
&\cong \mathrm{eq}\left(\coprod_{f:\overline{P}_n^{op}\to \U}\left(\prod_{S\in \overline P_n}F(f(S))\rightrightarrows\prod_{S_1\subseteq S_1\in \overline P_n}F(f(S)\right)\right)\\
&\cong \coprod_{f:\overline{P}_n^{op}\to \U}\bigcap_{S\in \overline P_n}F(f(S)),
\end{align*} and the canonical map to the matching object is the map \[\coprod_{f:P_n^{op}\to \U}F(f([n]))\to \coprod_{f:\overline{P}_n^{op}\to \U}\bigcap_{S\in \overline P_n}F(f(S))\] with components given by the inclusions $F(f([n]))\subseteq F(f(S))$ in $\U$ induced by the inclusions $S\subseteq [n]$ in $P_n$. Fixing $f:\overline P_n^{op}\to \U$, the inverse image under this map of the subspace $\bigcap_{S\in \overline P_n}F(f(S))$ is the disjoint union over all extensions of $f$ to $P_n^{op}$ of the value on $[n]$, which is to say the disjoint union of the elements of $\U$ contained in the intersection. Since the cover is complete, this collection of opens forms an open cover of the intersection and the claim follows.
\end{proof}

In order to prove Proposition \ref{prop:subdivided comparison}, we reinterpret the bar construction in a way that will generalize in parallel to the variant $\mathrm{Bar}_\bullet^\#(F)$.

\begin{definition}
Let $I$ be a category. The \emph{category of simplices} of $I$ is the category $\Delta^{op}I$ specified as follows.
\begin{enumerate}
\item An object of $\Delta^{op}I$ is a functor $f:[n]^{op}\to I$.
\item A morphism from $f:[n]^{op}\to I$ to $g:[m]^{op}\to I$ is a morphism $h:[m]\to [n]$ in $\Delta$ making the diagram \[\xymatrix{
[n]^{op}\ar[dr]_-{f}&&[m]^{op}\ar[dl]^-{g}\ar[ll]_-{h^{op}}\\
&I
}\] commute.
\item Composition is given by composition in $\Delta$. 
\end{enumerate} The \emph{category of subdivided simplices} of $I$ is the category $\Delta_\#^{op}I$ with objects functors $f:P_n^{op}\to I$ and morphisms given as in $\Delta^{op}I$.
\end{definition}

\begin{remark}
The reader is warned that our terminology is nonstandard.
\end{remark}

\begin{remark}
In keeping track of the variance, it is helpful to think of the morphism from $f$ to $g$ as being given by the pullback functor $(h^{op})^*$.
\end{remark}

These categories come equipped with a number of functors, which we summarize in the following commuting diagram \[\xymatrix{
&I\\
\Delta^{op}I\ar[rr]^-\rho\ar[dr]_-\delta\ar[ur]^-\lambda&&\Delta^{op}_\#I\ar[ul]_{\lambda_\#}\ar[dl]^-{\delta_\#}\\
&\Delta^{op}
}\]

Here the functor $\delta$ records the domains of functors, and the functor $\lambda$ is given on objects by the formula \[\lambda\left(f:[n]^{op}\to I\right)=f(n).\] We use our choice of variance in extending $\lambda$ to a functor, for, since $h(m)\leq n$, there is a canonical map $n\to h(m)$ in $[n]^{op}$, and the functor $f$ provides a map $\lambda(f)=f(n)\to f(h(m))= g(m)=\lambda(g)$. Similarly, $\delta_\#(f:P_n^{op}\to I)=[n]$ and $\lambda_\#(f:P_n^{op}\to I)=f([n])$, and the functor $\rho$ is defined by the restrictions along the functors $P_n\to [n]$ mentioned above.

\begin{lemma}\label{lem:domain finality}
For every $n\geq0$, the natural functor $\delta^{-1}([n])\to (\delta\downarrow[n])$ is homotopy final, and similarly for $\delta_\#$.
\end{lemma}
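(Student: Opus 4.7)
The plan is to verify the standard criterion (Quillen's Theorem A) that a functor $F:\op C\to \op D$ is homotopy final iff $((d\downarrow F))$ has contractible nerve for every $d\in\op D$. Specializing to the inclusion $F:\delta^{-1}([n])\to(\delta\downarrow[n])$ sending $f$ to $(f,\id_{[n]})$, I will in fact show that the comma category $((g,\psi)\downarrow F)$ is the terminal category for every object $(g,\psi)\in(\delta\downarrow[n])$.

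First I will observe that $\delta^{-1}([n])$ is discrete. An endomorphism of some $f:[n]^{op}\to I$ in $\delta^{-1}([n])$ corresponds to a morphism $h:[n]\to[n]$ in $\Delta$ with $f\circ h^{op}=f$, subject to $\delta$ of the arrow being $\id_{[n]}$; but $\delta$ just reads off the underlying map, so $h=\id_{[n]}$.

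Now fix $(g,\psi)\in(\delta\downarrow[n])$ with $g:[m]^{op}\to I$ and $\psi:[n]\to[m]$ in $\Delta$. An object of $((g,\psi)\downarrow F)$ is a pair $(f,u)$ where $f:[n]^{op}\to I$ and $u:(g,\psi)\to(f,\id_{[n]})$ in $(\delta\downarrow[n])$. Unpacking the overcategory data, $u$ is a morphism $g\to f$ in $\Delta^{op}I$ — equivalently, a morphism $h:[n]\to [m]$ in $\Delta$ with $g\circ h^{op}=f$ — satisfying the compatibility $\id_{[n]}\circ\delta(u)=\psi$, i.e.\ $h=\psi$. Consequently $u$ and $f$ are uniquely forced: $u=\psi$ and $f=g\circ\psi^{op}$. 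Combined with the discreteness of $\delta^{-1}([n])$, which forces the only morphisms of the comma category to be identities, this shows $((g,\psi)\downarrow F)$ has exactly one object and one morphism, hence contractible nerve.

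The argument for $\delta_\#$ proceeds along identical lines, after noting that each morphism $h:[n]\to[m]$ in $\Delta$ induces a functor $P(h):P_n\to P_m$ by $S\mapsto h(S)$ (still nonempty), so the defining triangle for morphisms in $\Delta^{op}_\# I$ is formally the same as in $\Delta^{op}I$. Discreteness of $\delta_\#^{-1}([n])$ and uniqueness of the object $(g\circ P(\psi)^{op},\psi)$ in the comma category follow verbatim. I do not expect any real obstacle here; the proof is pure definition-chasing. The one place where care is required is tracking the variance between $\Delta$ and $\Delta^{op}$ when writing the overcategory compatibility condition, to make sure the unique candidate $(f,u)$ really does satisfy it.
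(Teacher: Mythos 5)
Your proof is correct and follows essentially the same route as the paper: both identify the comma category $((g,\psi)\downarrow\iota)$ with the terminal category by observing that the overcategory compatibility forces the unique object $(g\circ\psi^{op},\psi)$ and that the fiber $\delta^{-1}([n])$ is discrete. The invocation of Quillen's Theorem A is superfluous given the paper's conventions --- homotopy finality is \emph{defined} via contractibility of $(j\downarrow T)$ --- but this does not affect the substance. One small presentational remark: when arguing that $\delta^{-1}([n])$ is discrete, you consider only endomorphisms of a fixed $f$; it is cleaner to observe directly that any morphism $f\to g$ in the fiber corresponds to some $h:[n]\to[n]$ with $\delta(h)=\id_{[n]}$, which forces $h=\id_{[n]}$ and hence $g=f\circ\id^{op}=f$, so there are no non-identity morphisms at all (between equal or distinct objects).
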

\begin{proof}
We prove the first claim, the argument for the second being essentially identical. The overcategory $(\delta\downarrow[n])$ has objects the pairs of a functor $f:[m]^{op}\to I$ and a morphism $r:[n]\to [m]$ in $\Delta$, and a morphism is a commuting diagram \[\xymatrix{
&[n]^{op}\ar[dl]_-{r^{op}}\ar[dr]^-{(r')^{op}}\\
[m]^{op}\ar[dr]_-{f}&&[m']^{op}\ar[dl]^-{f'}\ar[ll]\\
&I.
}\] Note that the variance is such that this diagram represents a morphism from $(f, r)$ to $(f', r')$. Thus, denoting the natural inclusion by $\iota:\delta^{-1}([n])\to (\delta\downarrow[n])$, we see from the definitions that $\left((f,r)\downarrow\iota)\right)$ is the category of commuting diagrams of the form \[\xymatrix{
&[n]^{op}\ar[dl]_-{r^{op}}\ar@{=}[dr]\\
[m]^{op}\ar[dr]_-{f}&&[n]^{op}\ar[dl]\ar[ll]\\
&I.
}\] This category is isomorphic to the discrete category with one object, which is certainly contractible.
\end{proof}

\begin{corollary}\label{cor:bar kan extension}
There are natural degreewise weak equivalences \[\hoLan_\delta(\lambda^*F)\xrightarrow{\sim}\mathrm{Bar}_\bullet(F)\] and \[\hoLan_{\delta_\#}(\lambda_\#^*F)\xrightarrow{\sim}\mathrm{Bar}^\#_\bullet(F)\] of simplicial spaces.
\end{corollary}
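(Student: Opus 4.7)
The plan is to combine the pointwise (bar-construction) model for the homotopy left Kan extension with the finality result of Lemma \ref{lem:domain finality}, which reduces the computation of $\hoLan_\delta(\lambda^*F)([n])$ from the overcategory $(\delta\downarrow[n])$ to the strict fiber $\delta^{-1}([n])$.

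Concretely, the pointwise formula (recalled in Appendix \ref{section:homotopy colimits}) gives a natural weak equivalence
\[
\hoLan_\delta(\lambda^*F)([n]) \simeq \hocolim_{(\delta\downarrow[n])} \lambda^*F\circ \pi_{[n]},
\]
where $\pi_{[n]}:(\delta\downarrow[n])\to \Delta^{op}I$ denotes the forgetful projection. By Lemma \ref{lem:domain finality}, the inclusion $\delta^{-1}([n])\hookrightarrow (\delta\downarrow[n])$ is homotopy final, so the cofinality theorem for homotopy colimits produces a further natural weak equivalence between this and the homotopy colimit of the restricted diagram on $\delta^{-1}([n])$.

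Next I would observe that the fiber $\delta^{-1}([n])$ is a \emph{discrete} category: a morphism from $f:[n]^{op}\to I$ to $g:[m]^{op}\to I$ sent by $\delta$ to $\id_{[n]}$ is a map $h:[m]\to[n]$ in $\Delta$ equal to $\id_{[n]}$, forcing $m=n$ and $f=g$. The homotopy colimit over a discrete category is just the ordinary coproduct, yielding
\[
\coprod_{f:[n]^{op}\to I} F(\lambda(f)) = \coprod_{f:[n]^{op}\to I} F(f(n)) = \mathrm{Bar}_n(F).
\]
The same argument, using the $\delta_\#$-half of Lemma \ref{lem:domain finality} and the analogous discreteness of $\delta_\#^{-1}([n])$, identifies $\hoLan_{\delta_\#}(\lambda_\#^*F)([n])$ with $\coprod_{f:P_n^{op}\to I} F(f([n])) = \mathrm{Bar}_n^\#(F)$.

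The remaining point, which I expect to require the most care, is naturality: these pointwise weak equivalences must assemble into a map of simplicial spaces, compatible with the face and degeneracy maps, which on $\mathrm{Bar}_\bullet(F)$ are induced by precomposition $f\mapsto f\circ h^{op}$. To keep everything rigorous and direction-consistent, I would work throughout with the explicit bar-construction model of the homotopy colimit from Appendix \ref{section:homotopy colimits}, in which both the pointwise formula and the cofinality comparison are realized by explicit, functorial maps of spaces; naturality then follows by a direct diagram chase. The only real pitfall is keeping the variance conventions straight across $\Delta^{op}I$, $(\delta\downarrow[n])$, and $\delta^{-1}([n])$ as $[n]$ varies, and similarly for the subdivided category.
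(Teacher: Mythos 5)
Your argument is correct and follows the same route as the paper's proof: both use the pointwise formula for the homotopy left Kan extension, then apply Lemma \ref{lem:domain finality} together with the finality result for homotopy colimits to pass to the fiber $\delta^{-1}([n])$, which is discrete, so the homotopy colimit degenerates to the coproduct $\coprod_{f:[n]^{op}\to I}F(f(n))=\mathrm{Bar}_n(F)$. Your explicit check that $\delta^{-1}([n])$ is discrete and your flagging of the naturality question make explicit two points the paper leaves implicit, but the underlying argument is the same.
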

\begin{proof}
We calculate that
\begin{align*}
\hoLan_\delta(\lambda^*F)_n&\simeq \hocolim\left((\delta\downarrow[n])\to \Delta^{op}I\xrightarrow{\lambda} I\xrightarrow{F}\Top\right)\\
&\simeq \hocolim\left(\delta^{-1}([n])\to\Delta^{op}I\xrightarrow{\lambda} I\xrightarrow{F}\Top\right)\\
&\simeq \coprod_{f:[n]^{op}\to I}F(f(n))\\
&\cong\coprod_{i_n\to \cdots \to i_0}F(i_n)\\
&=\mathrm{Bar}_n(F),
\end{align*} where Lemma \ref{lem:domain finality} and Proposition \ref{prop:hocolim facts}(1) are used in obtaining the second equivalence. The calculation in the subdivided case is essentially identical.
\end{proof}

The final ingredient that we will need is the following.

\begin{lemma}\label{lem:last value finality}
The functors $\lambda$ and $\lambda_\#$ are each homotopy final.
\end{lemma}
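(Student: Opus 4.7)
The plan is to show that for each $i \in I$, the undercategory $(i \downarrow \lambda)$ has contractible nerve (and similarly for $(i \downarrow \lambda_\#)$), from which homotopy finality follows. The idea is to construct an endofunctor $R$ of $(i \downarrow \lambda)$ that ``prepends $i$ to the chain,'' together with two natural transformations --- one to the identity, one to a constant functor --- which will force the identity on nerves to be homotopic to a constant map.

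Concretely, I will define $R(f:[n]^{op}\to I,\phi:i\to f(n)) := (f_+,\id_i)$, where $f_+:[n+1]^{op}\to I$ restricts to $f$ on $[n]\subset[n+1]$, sends $n+1\mapsto i$, and has structure map $f_+(n+1\to n) := \phi$. On morphisms, the $\Delta$-map $k:[n']\to[n]$ underlying a morphism $(f,\phi)\to(f',\phi')$ extends to $k_+:[n'+1]\to[n+1]$ by fixing the top element; that $R$ is a functor will follow by direct inspection. For the first natural transformation, I would take $\eta:R\Rightarrow\id$ with component at $(f,\phi)$ induced by the inclusion $\mathrm{incl}_n:[n]\hookrightarrow[n+1]$, noting that the basepoint compatibility reduces to the identity $\lambda(\mathrm{incl}_n) = f_+(n+1\to n) = \phi$. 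For the second, I would take $\varepsilon:R\Rightarrow\mathrm{const}_{\star_\iota}$, where $\star_\iota := ([0]^{op}\to I,\; 0\mapsto i,\;\id_i)$, with component induced by $[0]\to[n+1]$, $0\mapsto n+1$. Naturality in both cases reduces to the relations $\mathrm{incl}_n\circ k = k_+\circ\mathrm{incl}_{n'}$ and $k_+(n'+1)=n+1$, both immediate from the definition of $k_+$. Applying the nerve functor, each natural transformation becomes a simplicial homotopy, so $\id_{N((i\downarrow\lambda))}\simeq N(R)\simeq\mathrm{const}_{\star_\iota}$, giving contractibility.

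For $(i\downarrow\lambda_\#)$ the same blueprint will apply with $P_{n+1}$ in place of $[n+1]$: the extension $f_+:P_{n+1}^{op}\to I$ is to be defined by $f_+(S)=i$ if $n+1\in S$ and $f_+(S)=f(S)$ if $S\subseteq[n]$, with the mixed structure map $f_+(S_1)\to f_+(S_2)$ (for $n+1\in S_1$, $S_2\subseteq[n]$) given by $f([n]\supseteq S_2)\circ\phi$. The same underlying maps $[n]\hookrightarrow[n+1]$ and $[0]\to[n+1]$, $0\mapsto n+1$, furnish the two natural transformations; note that $P_h(S) = h(S)$, so the preimage of $\{n+1\}$ under the second map lands on $\{n+1\}$ and of $[n]$ under the first lands on subsets of $[n]$, matching the two branches of the definition of $f_+$. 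The main obstacle I anticipate is verifying that $f_+$ is actually a functor in the subdivided case: the commuting triangles for chains $S_1\supseteq S_2\supseteq S_3$ that straddle the ``$n+1\in S$ vs.\ $n+1\notin S$'' dichotomy need to be checked across three subcases, but in each the required identity will reduce to the functoriality of $f$ itself, making the construction go through.
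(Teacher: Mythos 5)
Your argument is correct and, while built around the same key construction as the paper's, is packaged quite differently. Both proofs hinge on extending an object $(f,\phi)$ of $(i\downarrow\lambda)$ by prepending $i$ to the chain---your $f_+$, the paper's $\bar f$. The paper's route is indirect: it factors through the strict fiber $\lambda^{-1}(i)$, asserting first that $\lambda^{-1}(i)$ has the constant functor $\mathrm{const}_i$ as a final object and second that the inclusion $\iota:\lambda^{-1}(i)\to(i\downarrow\lambda)$ is homotopy initial, the ``universal map'' $\bar f\to f$ being offered as evidence for the latter. You instead promote $(f,\phi)\mapsto(f_+,\id_i)$ to an endofunctor $R$ of the overcategory itself and connect it by explicit natural transformations both to the identity and to a constant functor, reading off contractibility of $N(i\downarrow\lambda)$ from the resulting pair of simplicial homotopies. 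Unravelled, $R=\iota\circ L$ where $L(f,\phi)=\bar f$, so the two arguments describe the same maps at bottom; the genuine gain in your version is that it makes no appeal to $\lambda^{-1}(i)$ having a terminal object. That appeal is actually delicate: if $g:[m]^{op}\to I$ lies in $\lambda^{-1}(i)$ and the top few arrows of $g(m)\to g(m-1)\to\cdots$ are identities on $i$, there is more than one morphism $g\to\mathrm{const}_i$ in the strict fiber, so $\mathrm{const}_i$ is not terminal. Your two natural transformations sidestep this point entirely, at the price of more bookkeeping---all of which you have correctly flagged and carried out, including the three-way case split needed to verify that $f_+:P_{n+1}^{op}\to I$ is a functor in the $\lambda_\#$ case.
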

\begin{proof}
The claim for $\lambda$ will follow after verifying for each $i\in I$, first, that the category $\lambda^{-1}(i)$ has a final object, and, second, that the canonical functor $\iota:\lambda^{-1}(i)\to (i\downarrow\lambda)$ is homotopy initial. For the first claim, we note that the functor $[0]^{op}\to I$ with value $i$ is final in $\lambda^{-1}(i)$. For the second claim, we observe that an object of $(i\downarrow\lambda)$ is simply a composable tuple $i\to f(n)\to \cdots\to f(0)$, which determines a canonical functor $\bar f:[n+1]^{op}\to I$ such that $\lambda(\bar f)=i$, together with a universal map $\bar f\to f$. The argument for $\lambda_\#$ is essentially the same, the only difference being that we extend $f:P_n^{op}\to I$ to $\bar f:P_{n+1}^{op}\to I$ by the prescription \[
\bar f(S)=\begin{cases}
f(S)&\quad n+1\notin S\\
i&\quad n+1\in S.
\end{cases}
\]
\end{proof}

\begin{proof}[Proof of Proposition \ref{prop:subdivided comparison}]
The claim will follow after verifying that each of the marked arrows in the commuting diagram
\[\xymatrix{
\displaystyle\hocolim_IF\ar@{=}[d]&\displaystyle\hocolim_{\Delta^{op}I}\lambda^*F\ar[l]_-{(1)}\ar[d]\ar[r]^-{(3)}&\displaystyle\hocolim_{\Delta^{op}}\hoLan_\delta(\lambda^*F)\ar[d]\ar[r]^-{(5)}&|\mathrm{Bar}_\bullet(F)|\ar[d]\\
\displaystyle\hocolim_IF&\displaystyle\hocolim_{\Delta^{op}_\#I}\lambda_\#^*F\ar[l]_-{(2)}\ar[r]^-{(4)}&\displaystyle\hocolim_{\Delta^{op}}\hoLan_{\delta_\#}(\lambda_\#^*F)\ar[r]^-{(6)}&|\mathrm{Bar}^\#_\bullet(F)|
}\] is a weak equivalence. The first and second follow from Proposition \ref{prop:hocolim facts}(1) and Lemma \ref{lem:last value finality}, the third and fourth are formal, since left Kan extensions compose, and the fifth and sixth follow from Corollary \ref{cor:bar kan extension} and Proposition \ref{prop:hocolim facts}(2).
\end{proof}

\section{Deferred proofs}\label{section:deferred proofs}

\subsection{Fadell--Neuwirth fibrations}

We are now able to repay the first of our long outstanding debts, namely the proof of Theorem \ref{thm:Fadell--Neuwirth}, which asserts that the diagram 
\[\xymatrix{
\Conf_{\ell-k}(M\setminus\{x_1,\ldots, x_k\})\ar[d]\ar[r]&\Conf_\ell(M)\ar[d]\\
(x_1,\ldots, x_k)\ar[r]&\Conf_k(M)
}\] is homotopy Cartesian. Recall that $\Conf_\ell(M)$ has a topological basis indexed by the partially ordered set \[\op B(M)_\ell^\Sigma=\left\{(U,\sigma):(\mathbb{R}^n)^{\amalg \ell}\cong U\subseteq M,\, \sigma:\{1,\ldots, \ell\}\xrightarrow{\simeq} \pi_0(U)\right\}\] consisting of the open sets $\Conf_\ell^0(U,\sigma)=\{x\in \Conf_\ell(M): x_i\in U_{\sigma(i)}\}.$ Thus, we have a functor \[\Conf_\ell^0:\op B(M)_\ell^\Sigma\to \Top\] whose homotopy colimit is canonically equivalent to $\Conf_\ell(M)$, and similarly for $\Conf_k(M)$. We also have a functor $\pi:\op B(M)_\ell^\Sigma\to \op B(M)_k^\Sigma$ defined by \[\pi(U,\sigma)=\left(\coprod_{i=1}^kU_{\sigma(i)},\, \sigma|_{\{1,\ldots, k\}}\right)\] and a natural transformation fitting into the commuting digram \[\xymatrix{
\Conf_\ell^0\ar[d]\ar@{-->}[r]&\pi^*\Conf_k^0\ar[d]\\
\Conf_\ell(M)\ar[r]&\Conf_k(M),
}\] where the bottom map is the Fadell--Neuwirth map given by projection onto the first $k$ factors. 

Now, since each $\Conf_\ell^0(U,\sigma)$ is contractible, we obtain the lefthand set of weak equivalences in the commuting diagram \[\xymatrix{
B(\op B(M)_\ell^\Sigma)\ar[d]_-{B\pi}&\hocolim_{\op B(M)_\ell^\Sigma}\Conf_\ell^0\ar[d]\ar[r]^-\sim\ar[l]_-\sim&\Conf_\ell(M)\ar[d]\\
B(\op B(M)_k^\Sigma)&\hocolim_{\op B(M)_k^\Sigma}\Conf_k^0\ar[r]^-\sim\ar[l]_-\sim&\Conf_k(M).
}\] Thus, understanding the homotopy fiber of the map rightmost map is tantamount to understanding the homotopy fiber of the map between classifying spaces induced by the functor $\pi$.

\begin{proof}[Proof of Theorem \ref{thm:Fadell--Neuwirth}]
We wish to apply Corollary \ref{cor:quillen b} to obtain the homotopy pullback square \[\xymatrix{
B((U,\sigma)\downarrow\pi)\ar[d]\ar[r]&B(\op B(M)_\ell^\Sigma)\ar[d]^-{B\pi}\\
\pt\ar[r]^-{(U,\sigma)}&B(\op B(M)_k^\Sigma). 
}\] In order to verify that the hypotheses hold in this case, we note that $((U,\sigma)\downarrow\pi)$ is the category of $(W,\tau)\in \op B(M)_\ell^\Sigma$ such that $U_{\sigma(i)}\subseteq W_i$ for $1\leq i\leq k$. It is easy to see that the inclusion of the subcategory with $U_{\sigma(i)}=W_{\tau(i)}$ is homotopy initial, and this subcategory is isomorphic to $\op B(M\setminus U)_{\ell-k}^\Sigma$. Since homotopy initial functors induce weak equivalences on classifying spaces, we have the weak equivalences in the diagram \[\xymatrix{
B((U,\sigma)\downarrow\pi)&B(\op B(M\setminus U)_{\ell-k}^\Sigma)\ar[l]_-\sim&\hocolim_{\op B(M\setminus U)_{\ell-k}^\Sigma}\Conf_{\ell-k}^0\ar[l]_-\sim\ar[r]^-\sim&\Conf_{\ell-k}(M\setminus U)\\
B((U',\sigma')\downarrow\pi)\ar[u]&B(\op B(M\setminus U')_{\ell-k}^\Sigma)\ar[u]\ar[l]_-\sim&\hocolim_{\op B(M\setminus U')_{\ell-k}^\Sigma}\Conf_{\ell-k}^0\ar[u]\ar[l]_-\sim\ar[r]^-\sim&\Conf_{\ell-k}(M\setminus U')\ar[u]}\] for any $(U,\sigma)\leq (U',\sigma')$ in $\op B(M)_k^\Sigma$. Since $M\setminus U'\subseteq M\setminus U$ is a monotopy equivalence, the rightmost map is a weak equivalence, so all of the vertical arrows are weak equivalences. Therefore, by Corollary \ref{cor:quillen b} and what we have already shown, we have the homotopy pullback \[\xymatrix{
B(\op B(M\setminus U)_{\ell-k}^\Sigma)\ar[d]\ar[r]&B(\op B(M)_\ell^\Sigma)\ar[d]^-{B\pi}\\
\pt\ar[r]^-{(U,\sigma)}&B(\op B(M)_k^\Sigma). 
}\] The proof is concluded upon noting that the inclusion \[\Conf_{\ell-k}(M\setminus U)\to \Conf_{\ell-k}(M\setminus \{x_1,\ldots, x_k\})\] is a weak equivalence for any $x_i\in U_{\sigma(i)}$.
\end{proof}

\subsection{Spectral sequences}
In proving the Leray--Hirsch theorem, and in much of what will follow, we will make use of the Serre spectral sequence. We begin with a few reminders on spectral sequences---for a general reference, see \cite{McCleary:UGSS}.

\begin{definition}
A (cohomological) \emph{spectral sequence} is a collection $\{E_r\}_{r\geq0}$ of bigraded $R$-modules, called the \emph{pages} of the spectral sequence, equipped with 
\begin{enumerate}
\item differentials $d_r:E_r\to E_r$ of bidegree $(r,1-r)$, and
\item isomorphisms $H(E_r,d_r)\cong E_{r+1}$ of bigraded $R$-modules.
\end{enumerate}
\end{definition}

 A \emph{map of spectral sequences} is a collection of bigraded chain maps $f_r:E_r\to \widetilde E_r$ compatible with these isomorphisms. We say that the spectral sequence $\{E_r\}$ is \emph{multiplicative} if each $E_r$ is equipped with the structure of a bigraded $R$-algebra for which $d_r$ is a bigraded derivation and the isomorphisms of (2) are algebra isomorphisms (note that this language is abusive, since multiplicativity is a structure rather than a property). Recall that $d_r$ is bigraded derivation if it satisfies the \emph{Leibniz rule} \[d_r(ab)=d_r(a)b+(-1)^{p+q}ad_r(b),\qquad |a|=(p,q).\]

\begin{remark}
Our statements and definitions are cohomological, since that is the nature of the application we have in mind, but the obvious dual notions are valid and often better behaved.
\end{remark}

In a typical situation, the $E_2$-page is something identifiable and relatively computable; each module $E_r^{p,q}$ is independent of $r$ for sufficiently large, and the identification of this common module $E_\infty^{p,q}$ is the goal; and the differentials are mysterious. We will not define $E_\infty$ or discuss convergence here---see \cite[3]{McCleary:UGSS} for details.

\begin{example}
If $(V,\partial,\delta)$ is a bicomplex satisfying mild boundedness conditions, there is a spectral sequence with $E_1\cong H(V,\partial)$ and \[E_2\cong H(H(V,\partial),\delta)\implies H(V,\partial+\delta).\] See \cite[2.4]{McCleary:UGSS} for a construction of this spectral sequence.

An alternative perspective on the spectral sequence of a bicomplex is offered by the \emph{homotopy transfer theorem} \cite{Vallette:AHO}. The starting observation is that the differential $\delta$ can be viewed as an algebraic structure on the chain complex $(V,\partial)$, in the form of an action of the dual numbers $k[\epsilon]/\epsilon^2$ (we work over a field $k$ for simplicity). After choosing representatives for homology and extending this choice to a chain deformation retraction of $(V,\partial)$ onto $(H(V,\partial),0)$, the homotopy transfer theorem endows $H(V,\partial)$ with the structure of a \emph{homotopy} $k[\epsilon]/\epsilon^2$-module, which amounts to the induced differential $\delta$ together with countably many higher operations $\{\delta_n\}$ satisfying certain relations (classically, this structure is known as a \emph{multicomplex}). These higher operations are direct analogues of the Massey products on the cohomology of a space, and they induce the differentials in the spectral sequence for $(V,\partial,\delta)$.
\end{example}

\begin{example}
If $X=\bigcup_{p\geq0} F_pX$ is a filtered space satisfying mild completeness conditions, there is a spectral sequence \[E_1^{p,q}\cong H^{p+q}(F_pX,F_{p-1}X)\implies H^{p+q}(X),\] i.e., there is an isomorphism of the $E_\infty$-page with the associated graded of the induced filtration on $H^*(X)$. The differential $d_1$ is the connecting homomorphism in the long exact sequence for the triple $(F_pX,F_{p-1}X, F_{p-2}X)$, and a filtration preserving map between spaces induces a map between the associated spectral sequences. The same construction goes through with integral cohomology replaced by an arbitrary cohomology theory. See \cite[2.2]{McCleary:UGSS} for details.
\end{example}

Applying this example to the skeletal filtration of the geometric realization of a simplicial space yields the following result. As a matter of notation, for a simplicial $R$-module $V$, we write $\mathrm{Alt}(V)$ for the associated chain complex of $R$-modules, whose differential is the alternating sum of the face maps of $V$.

\begin{corollary}[{\cite[5.1,\,5.3,\,5.4]{Segal:CSSS}}]\label{cor:simplicial spectral sequence}
Let $\op X:\Delta^{op}\to \Top$ be a simplicial space and $\op H$ a cohomology theory. There is a spectral sequence \[E_2^{p,q}\cong H^p(\mathrm{Alt}(\op H^q(\op X)))\implies \op H^{p+q}(|\op X|),\] which is natural for simplicial maps and multiplicative if $\op H$ is.
\end{corollary}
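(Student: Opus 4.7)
The plan is to apply the spectral sequence of a filtered space (the second of the two spectral sequence examples recalled in the excerpt) to the skeletal filtration of the geometric realization. Define $F_p|\op X| := |\sk_p(\op X)|$, where $\sk_p(\op X)$ denotes the sub-simplicial space generated by $\op X_{\leq p}$. The $F_p$ form an increasing filtration whose union is $|\op X|$, and the inclusions $F_{p-1}\hookrightarrow F_p$ are cofibrations for split (or otherwise ``good'') simplicial spaces, which is the setting we rely on; for a general $\op X$ one first passes to a split replacement along the lines of Appendix~\ref{appendix:split simplicial spaces}. The filtered-space spectral sequence then gives
\[E_1^{p,q} \cong \op H^{p+q}(F_p|\op X|,\,F_{p-1}|\op X|) \Longrightarrow \op H^{p+q}(|\op X|),\]
with $d_1$ the connecting homomorphism of the triple $(F_p,F_{p-1},F_{p-2})$.

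Next I would identify the $E_1$-page. The pushout square of Remark~\ref{remark:mnemonic}, read for simplicial spaces, supplies a pushout
\[\xymatrix{\op X_p \times \partial\Delta^p \,\cup\, L_p\op X \times \Delta^p \ar[r]\ar[d] & F_{p-1}|\op X| \ar[d] \\ \op X_p \times \Delta^p \ar[r] & F_p|\op X|,}\]
in which $L_p\op X$ is the latching object assembled from degeneracies of lower simplices. Excision together with the suspension isomorphism coming from $\Delta^p/\partial\Delta^p \cong S^p$ then yields $E_1^{p,q} \cong \widetilde{\op H}^q(\op X_p/L_p\op X)$, i.e.\ the degree $p$ term of the \emph{normalized} cochain complex of the cosimplicial $R$-module $[n]\mapsto \op H^q(\op X_n)$. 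A direct inspection of connecting maps (or of the attaching data in the above pushout) identifies $d_1$ with the alternating sum of coface maps modulo degeneracies. Since the Dold--Kan normalization theorem identifies the cohomology of the normalized cochain complex with that of $\mathrm{Alt}(\op H^q(\op X))$, we obtain $E_2^{p,q}\cong H^p(\mathrm{Alt}(\op H^q(\op X)))$, as desired.

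For naturality: a simplicial map $f:\op X\to \op Y$ respects $p$-skeleta, so $|f|$ is filtration-preserving and hence induces a map of spectral sequences in the standard manner. For multiplicativity, when $\op H$ is multiplicative the Künneth/cross product makes the filtered spectral sequence multiplicative provided the filtration is sub-multiplicative under the diagonal; this follows from the fact that the simplicial Eilenberg--Zilber map $\Delta^p\times\Delta^q\to \Delta^{p+q}$ sends $\sk_{p+q-1}$-simplices to the $(p+q-1)$-skeleton, so $F_a|\op X|\cdot F_b|\op X|\subseteq F_{a+b}|\op X\times \op X|$ after passing through the shuffle map. One then checks that the multiplication on $E_2$ agrees with the obvious product on $H^*(\mathrm{Alt}(\op H^*(\op X)))$ induced by the cup product on each $\op H^*(\op X_p)$ and the Alexander--Whitney combinatorics.

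The main obstacle is the identification of $F_p/F_{p-1}$ with $(\op X_p/L_p\op X)\wedge S^p$: for arbitrary simplicial spaces the latching inclusion $L_p\op X\hookrightarrow \op X_p$ need not be a cofibration, and the skeletal inclusions need not be cofibrations either, so both the pushout analysis and the excision step can fail homotopically. The remedy, and the step where some genuine work is required, is to observe that this presents no real obstacle for split simplicial spaces (the setting addressed in Appendix~\ref{appendix:split simplicial spaces}), and that the constructions of interest in the sequel can be arranged to land in this class; for fully general $\op X$ one invokes a split replacement before running the argument.
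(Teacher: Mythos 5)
Your proposal is the same strategy the paper intends—the paper's own ``proof'' is the single sentence preceding the corollary (``Applying this example to the skeletal filtration of the geometric realization\ldots'') plus the citation to Segal, and you have filled in exactly the expected details: $E_1^{p,q}\cong\op H^{p+q}(F_p,F_{p-1})$, the identification of $F_p/F_{p-1}$ with $(\op X_p/L_p\op X)\wedge S^p$ via the latching/skeletal pushout, the suspension isomorphism, and the Dold--Kan passage from the normalized complex on $E_1$ to $\mathrm{Alt}$ on $E_2$. You also correctly flag the one genuine subtlety, namely that the latching and skeletal inclusions need to be cofibrations for the pushout/excision steps to be homotopically meaningful.

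One caveat on your remedy. The phrase ``pass to a split replacement'' has a hidden circularity: the paper's split criterion (Proposition \ref{prop:split criterion}) only guarantees that a degreewise weak equivalence induces an equivalence on realizations when \emph{both} simplicial spaces are split, so replacing a non-split $\op X$ by $|\mathrm{Sing}(\op X)|$ (which is split by Lemma \ref{lem:realization split}) does not obviously preserve $\op H^*(|\op X|)$. The more standard fixes are to CW-approximate $\op X$ levelwise, or to work with the fat realization $\|\op X\|$, which tolerates arbitrary $\op X$ at the cost of a milder comparison statement. In the body of the paper the corollary is only ever applied to \v{C}ech nerves and bar constructions, all of which are split, so this caveat never bites in practice; but the statement as written (``Let $\op X$ be a simplicial space'') is being used loosely, and you are right to notice. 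Your multiplicativity discussion is a little compressed---the precise point is that the homeomorphism $|\op X\times\op X|\cong|\op X|\times|\op X|$ is filtration-preserving for the tensor filtration, so the diagonal and the cup product descend to the spectral sequence---but the plan is sound.
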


Using this basic construction and our results on \v{C}ech nerves, we may associate a spectral sequence to a general map.

\begin{theorem}[Leray, Segal]\label{thm:spectral sequence of a map}
Let $f:X\to Y$ be a map between topological spaces with $Y$ paracompact and $\op H$ a cohomology theory. There is a spectral sequence \[E_2^{p,q}\cong H^p(Y; \op H^q(f^{-1}))\implies \op H^{p+q}(X),\] where $\op H^q(f^{-1})$ is the sheaf associated to the presheaf $U\mapsto\op H^q(f^{-1}(U))$. Moreover, this spectral sequence is natural on the arrow category and multiplicative if $\op H$ is.
\end{theorem}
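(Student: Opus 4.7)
The plan is to reduce to Corollary \ref{cor:simplicial spectral sequence} by realizing $X$ as the geometric realization of a simplicial space built from $Y$ via $f$. Fix an open cover $\U = \{U_\alpha\}_{\alpha \in A}$ of $Y$ and let $f^{-1}\U = \{f^{-1}(U_\alpha)\}_{\alpha \in A}$ be its preimage cover of $X$. Since $f^{-1}$ commutes with intersections, the termwise pullback of $\check{C}(\U)$ along $f$ is canonically identified with the \v{C}ech nerve $\check{C}(f^{-1}\U)$, and by Theorem \ref{thm:cech recovery} its augmentation $|\check{C}(f^{-1}\U)| \to X$ is a weak homotopy equivalence, so $\op H^*(X)$ may be computed from this simplicial space.

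Next I apply Corollary \ref{cor:simplicial spectral sequence} to $\check{C}(f^{-1}\U)$ to obtain a spectral sequence
\[
E_2^{p,q}(\U) \cong H^p\bigl(\mathrm{Alt}(\op H^q(\check{C}(f^{-1}\U)))\bigr) \implies \op H^{p+q}(X),
\]
multiplicative whenever $\op H$ is. Since $\check{C}(f^{-1}\U)_n$ is the disjoint union indexed by $A^{n+1}$ of the intersections $f^{-1}(U_{\alpha_0}) \cap \cdots \cap f^{-1}(U_{\alpha_n})$ and $\op H^q$ carries disjoint unions to products, the alternating-sum complex on the $E_2$-page is by inspection the \v{C}ech cochain complex of $\U$ with coefficients in the presheaf $\mathcal{F}^q : U \mapsto \op H^q(f^{-1}(U))$. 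Thus $E_2^{p,q}(\U) \cong \check{H}^p(\U; \mathcal{F}^q)$.

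To remove the dependence on $\U$, I pass to the filtered colimit over refinements. Refinements of covers induce simplicial maps of \v{C}ech nerves, so by the naturality clause of Corollary \ref{cor:simplicial spectral sequence} the spectral sequences $\{E_r^{p,q}(\U)\}$ assemble into a directed system, and the colimit of $E_2$-pages recovers the \v{C}ech cohomology $\check{H}^p(Y; \mathcal{F}^q)$. This is where paracompactness enters: the classical Leray theorem identifies \v{C}ech cohomology over a paracompact base with derived-functor sheaf cohomology of the sheafification $(\mathcal{F}^q)^\sharp = \op H^q(f^{-1})$, producing the announced $E_2$-page. The comparison between \v{C}ech cohomology of the presheaf and sheaf cohomology of its sheafification is the main technical obstacle, and the only step in which paracompactness is invoked; alternatively, one could sidestep this point by running the same argument with hypercovers of $Y$ in place of \v{C}ech covers and appealing to Theorem \ref{thm:hypercover recovery}, which gives the right answer at the level of sheaves without hypothesis on $Y$.

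Finally, naturality on the arrow category follows from the functoriality of all ingredients: a morphism between $f$ and $f'$ with base map $h : Y \to Y'$ lets me pull back any cover of $Y'$ to a cover of $Y$ and obtain a map of \v{C}ech nerves compatible with refinements, to which the naturality clause of Corollary \ref{cor:simplicial spectral sequence} applies; the induced maps on \v{C}ech cochains and on sheaf cohomology are the standard ones. Multiplicativity is inherited directly from Corollary \ref{cor:simplicial spectral sequence} together with the fact that the cup product on \v{C}ech cohomology coincides with the Alexander--Whitney product on the alternating-sum complex.
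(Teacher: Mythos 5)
Your argument is essentially the paper's proof: take the \v{C}ech nerve of $f^{-1}\U$, apply Corollary \ref{cor:simplicial spectral sequence}, identify the $E_2$-page as \v{C}ech cohomology, and pass to the colimit over covers, invoking paracompactness for the final comparison with sheaf cohomology. You are somewhat more careful than the paper in distinguishing \v{C}ech cohomology of the presheaf $\mathcal{F}^q$ from sheaf cohomology of its sheafification $\op H^q(f^{-1})$ (the paper's notation elides this), and your parenthetical remark that hypercovers would sidestep the paracompactness hypothesis at the cost of working at the sheaf level is a genuine observation consistent with the rest of the section, though the paper does not pursue it.
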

\begin{proof}
For each open cover $\op U$ of $Y$, the collection $f^{-1}\op U=\{f^{-1}(V): V\in \U\}$ is an open cover of $X$, so we have a canonical weak homotopy equivalence \[|\check{C}(f^{-1}\op U)|\xrightarrow{\sim} X.\] We now apply Corollary \ref{cor:simplicial spectral sequence} to obtain a spectral sequence with  \begin{align*}E_2^{p,q}&\cong H^p(\mathrm{Alt}(\op H^q(\check{C}(f^{-1}\op U))))\\
&= H^p\left(\cdots \to\bigoplus_{\op U^2}\op H^q(f^{-1}(V_0\cap V_1))\to \bigoplus_{\op U}H^q(f^{-1}(V))\right)\\
&\cong \check{H}(\op U;\op H^q(f^{-1}))\implies \op H^{p+q}(X),\end{align*} where $\check{H}$ denotes \v{C}ech cohomology. A refinement of open covers induces a map at the level of \v{C}ech nerves and therefore a map of spectral sequences, and, forming the colimit over the partially ordered set of open covers of $Y$, we obtain at last the spectral sequence \begin{align*}E_2^{p,q}&\cong \colim_{\op U}\check{H}^p(U;\op H^q(f^{-1}))\\
&\cong H^p(Y;\op H^q(f^{-1}))\implies \op H^{p+q}(X),
\end{align*} where the last isomorphism uses the assumption that $Y$ is paracompact.
\end{proof}

This spectral sequence specializes to two well-known spectral sequences.

\begin{corollary}[Atiyah-Hirzebruch]
Let $X$ be a paracompact space and $\op H$ a cohomology theory. There is a spectral sequence \[E_2^{p,q}\cong  H^p(X; \op H^q(\pt))\implies \op H^{p+q}(X),\] which is natural and multiplicative if $\op H$ is.
\end{corollary}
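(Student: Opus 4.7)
The plan is to apply the preceding Theorem \ref{thm:spectral sequence of a map} directly to the identity map $\id_X\colon X\to X$. Since $\id_X^{-1}(U)=U$, the relevant presheaf on $X$ is simply $U\mapsto \op H^q(U)$, and the theorem produces a spectral sequence $E_2^{p,q}\cong H^p(X;\underline{\op H^q})\Rightarrow \op H^{p+q}(X)$, with naturality in $X$ and multiplicativity (when $\op H$ is multiplicative) inherited from Theorem \ref{thm:spectral sequence of a map}, since the formation of $\id_X$ is functorial in $X$.

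The remaining task is to identify the sheafification $\underline{\op H^q}$ with the constant sheaf at $\op H^q(\pt)$. I would check this on stalks: at a point $x\in X$ the stalk is $\colim_{U\ni x}\op H^q(U)$, where the colimit runs over a neighborhood basis of $x$. For spaces that admit arbitrarily small weakly contractible open neighborhoods at each point---such as CW complexes, topological manifolds, and all other examples encountered in these notes---each term in the colimit is canonically $\op H^q(\pt)$, the transition maps are identities, and so the stalk itself is $\op H^q(\pt)$. The identification is evidently natural in $X$, which propagates to naturality of the resulting spectral sequence.

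The only real obstacle is this stalk computation, which silently invokes a local contractibility hypothesis beyond pure paracompactness; once granted, the spectral sequence, its naturality, and its multiplicative structure (when $\op H$ has one) all fall out of Theorem \ref{thm:spectral sequence of a map} essentially for free. I would therefore expect the proof in the text to consist of roughly one sentence of setup (``apply Theorem \ref{thm:spectral sequence of a map} to $\id_X$'') plus one sentence identifying the sheaf.
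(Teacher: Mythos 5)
Your proposal is correct and is exactly the paper's argument: the paper applies Theorem \ref{thm:spectral sequence of a map} to $\id_X$ and simply asserts that the relevant sheaf is the constant sheaf with value $\op H^q(\pt)$. You go slightly further than the paper in flagging that this identification of the sheafified presheaf $U\mapsto\op H^q(U)$ with the constant sheaf silently uses more than paracompactness (one needs, e.g., a basis of neighborhoods on which $\op H^q$ agrees with $\op H^q(\pt)$); the paper suppresses this point, but it holds in all of its applications, so the two proofs coincide in substance.
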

\begin{proof}
We apply Theorem \ref{thm:spectral sequence of a map} to the map $\id_X$, in which case the sheaf in question is the constant sheaf with value $\op H^q(\pt)$.
\end{proof}

The spectral sequence of greatest interest to us is the \emph{Serre spectral sequence} associated to a fibration \cite{Serre:HSEF}.

\begin{corollary}[Serre]\label{cor:serre ss}
Let $R$ be a ring and $\pi:E\to B$ a fibration with fiber $F$, and assume that $B$ is path-connected and paracompact. There is a spectral sequence \[E_2^{p,q}\cong H^p(Y; \underline{H^q(F; R)})\implies H^{p+q}(X;R),\] which is multiplicative and natural for maps of fibrations, where $\underline{H^q(F; R)}$ denotes the local coefficient system induced by the homotopy action of $\pi_1(B)$ on $F$.
\end{corollary}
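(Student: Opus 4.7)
The plan is to apply Theorem \ref{thm:spectral sequence of a map} directly to the fibration $\pi:E\to B$ with the cohomology theory $\op H^*=H^*(-;R)$. This instantly yields a spectral sequence
\[E_2^{p,q}\cong H^p(B;\op H^q(\pi^{-1}))\implies H^{p+q}(E;R)\]
that is automatically multiplicative and natural for maps in the arrow category, and hence in particular for morphisms of fibrations. The entire substance of the corollary is therefore the identification of the sheaf $\op H^q(\pi^{-1})$, associated to the presheaf $U\mapsto H^q(\pi^{-1}(U);R)$, with the local coefficient system $\underline{H^q(F;R)}$ determined by the monodromy action of $\pi_1(B)$.

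First I would argue that $\op H^q(\pi^{-1})$ is locally constant with stalk $H^q(F;R)$. The key input is the standard consequence of the homotopy lifting property: if $V\subseteq B$ is a contractible open set, then for any $b\in V$ the inclusion of the fiber $F_b\hookrightarrow \pi^{-1}(V)$ is a homotopy equivalence, obtained by lifting a contraction of $V$ to a deformation of $\pi^{-1}(V)$ onto $F_b$. Hence, on a basis of contractible path-connected open sets, the presheaf $U\mapsto H^q(\pi^{-1}(U);R)$ is isomorphic to the constant presheaf with value $H^q(F;R)$, and its sheafification is locally constant of the asserted stalk.

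Next I would identify the monodromy. Given a loop $\gamma:[0,1]\to B$ based at $b_0$, I would cover the image of $\gamma$ by finitely many contractible opens $V_1,\ldots, V_n$ and choose intermediate basepoints $b_i\in V_i\cap V_{i+1}$; the chain of isomorphisms between stalks induced by the homotopy equivalences $F_{b_i}\hookrightarrow\pi^{-1}(V_i)\hookleftarrow F_{b_{i+1}}$ composes to a self-isomorphism of $H^q(F_{b_0};R)$ which depends only on the homotopy class of $\gamma$. To recognize this as the classical $\pi_1(B)$-action, I would separately lift $\gamma$ via the homotopy lifting property to a homotopy of fibers, producing the canonical self-equivalence of $F_{b_0}$ whose induced map on cohomology \emph{defines} the local system $\underline{H^q(F;R)}$; comparing the two constructions on a single contractible patch shows they agree.

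The main obstacle is the careful bookkeeping in this monodromy identification --- ensuring that the sheafified transition maps, each well-defined only up to homotopy on a contractible patch, assemble coherently into a representation of the fundamental groupoid rather than merely a compatible family of local isomorphisms. Once this is in hand, the paracompactness hypothesis on $B$ guarantees that the sheaf cohomology computed by Theorem \ref{thm:spectral sequence of a map} agrees with the cohomology $H^p(B;\underline{H^q(F;R)})$ with coefficients in the local system, completing the identification of the $E_2$-page. Multiplicativity of the cup product on the $E_2$-page relative to the local system structure is then a consequence of the multiplicativity already provided by Theorem \ref{thm:spectral sequence of a map} together with the compatibility of the cup product with restriction to fibers.
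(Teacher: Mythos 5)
The paper offers no written proof of this corollary; it simply presents it as a specialization of Theorem \ref{thm:spectral sequence of a map}, which is exactly the route you take. Your outline — instantiate the Leray--Segal spectral sequence for the map $\pi$, observe that over a contractible open set the inclusion of a fiber into $\pi^{-1}(V)$ is a homotopy equivalence via the homotopy lifting property, conclude local constancy of the Leray sheaf, and then match the induced transition isomorphisms against the classical $\pi_1$-monodromy — is the intended argument, and you have correctly identified that the whole content lies in the sheaf-to-local-system identification rather than in the existence of the spectral sequence itself.

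One point worth flagging: your argument needs $B$ to admit a basis of contractible (or at least fiber-homotopy-trivializing) open sets for the Leray sheaf to be locally constant. The corollary as stated only hypothesizes paracompactness of $B$, so this is a silent extra regularity assumption; for a CW base, or more generally a locally contractible base, it is harmless, but strictly speaking it should be recorded. Also, the step ``lifting a contraction shows $F_b\hookrightarrow\pi^{-1}(V)$ is a homotopy equivalence'' is correct but not quite immediate from a single application of the HLP: the lifted homotopy produces a retraction $r\colon\pi^{-1}(V)\to F_b$ with $i\circ r\simeq\mathrm{id}$, but to see $r\circ i\simeq\mathrm{id}_{F_b}$ one should either invoke the long exact sequence of the fibration (with $V$ contractible) to get that $i$ is a weak equivalence, or appeal to Dold's theorem that a fibration over a contractible paracompact base is fiber-homotopy trivial. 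Neither of these is difficult, but the sentence as written elides it.
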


\begin{remark}
Since an arbitrary map may be approximated by a fibration over a paracompact base, there is a Serre spectral sequence for an arbitrary map with connected target, which involves the homotopy fiber rather than the point-set fiber.
\end{remark}

\begin{corollary}\label{cor:ss of a cover}
Let $\pi:P\to X$ be a connected principal $G$-bundle. There is a spectral sequence  \[E_2^{p,q}\cong H^p(G;H^q(P))\implies H^{p+q}(X),\] which is multiplicative and natural for maps of principal $G$-bundles. 
\end{corollary}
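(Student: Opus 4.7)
The plan is to deduce this from the Serre spectral sequence of Corollary \ref{cor:serre ss} applied to a classifying map $X\to BG$. Concretely, fix a paracompact model of $EG$ (for example, Milnor's infinite join construction) with quotient $BG$, and form the Borel construction $P\times_GEG$. Because $G$ acts freely on $P$, the projection
\[
P\times_GEG\longrightarrow P/G=X
\]
is a fiber bundle with contractible fibers $EG$, hence a weak equivalence. On the other hand, projection onto the second coordinate gives a fiber bundle
\[
P\longrightarrow P\times_GEG\longrightarrow BG
\]
with fiber $P$. Applying Corollary \ref{cor:serre ss} to this fibration yields a multiplicative spectral sequence
\[
E_2^{p,q}\cong H^p(BG;\,\underline{H^q(P)})\;\Longrightarrow\; H^{p+q}(P\times_GEG)\cong H^{p+q}(X),
\]
natural in maps of fibrations.

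Next I would identify the $E_2$-page with group cohomology. By definition, $H^p(G;M):=H^p(BG;\underline{M})$, where $\underline{M}$ is the local system associated to the $G$-module $M$; the point is therefore to check that the monodromy action of $\pi_1(BG)\cong G$ on $H^q(P)$ coming from the above fibration agrees with the action induced by the deck transformations of $G$ on $P$. This compatibility is the main obstacle, and I expect it to be essentially bookkeeping: a loop $\gamma$ in $BG$ representing $g\in G$ lifts to a path in $EG$ from a basepoint $e$ to $g\cdot e$, and parallel transport in $P\times_GEG$ along $\gamma$ carries $[p,e]$ to $[p,g\cdot e]=[g^{-1}p,e]$, so the monodromy on the fiber $P$ is, up to the usual convention, the deck action. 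Granting this, the $E_2$-page takes the claimed form.

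Finally, naturality for maps of principal $G$-bundles follows from functoriality of the Borel construction: a bundle map $P\to P'$ covering $X\to X'$ induces a map of fibrations $P\times_GEG\to P'\times_GEG$ over $BG$, and Corollary \ref{cor:serre ss} is natural for such maps; multiplicativity likewise descends from multiplicativity of the Serre spectral sequence.
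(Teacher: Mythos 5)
Your proof matches the paper's: both form the Borel construction $EG\times_G P$, use that $G$ acts freely on $P$ to see $EG\times_G P\xrightarrow{\sim}X$, and apply Corollary \ref{cor:serre ss} to the fibration $P\to EG\times_G P\to BG$, identifying the twisted coefficients on $BG$ with group cohomology. You flesh out the monodromy check that the paper leaves implicit, but the argument is the same.
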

\begin{proof} Forming the Borel construction on $P$, we obtain the homotopy pullback square \[\xymatrix{
P\ar[d]\ar[r]&EG\times_G X\ar[d]\\
\pt\ar[r]&BG,
}\] and, since $G$ acts freely on $X$, the natural map $EG\times_G P\to X$ is a weak equivalence. The claim follows from Corollary \ref{cor:serre ss} after identifying the group cohomology of $G$ with the twisted cohomology of $BG$.
\end{proof}

\subsection{The Leray--Hirsch theorem}

We turn now to the proof of the Leray--Hirsch theorem. We make use of the following basic observation, which identifies the \emph{edge maps} in the Serre spectral sequence.

\begin{lemma}\label{lem:edge maps}
Let $\{E_r\}$ be the spectral sequence for the fibration $F\xrightarrow{i} E\xrightarrow{\pi}B$. There is a commuting diagram \[\xymatrix{
H^q(E;R)\ar@{->>}[d]\ar[r]^-{i^*}&H^q(F;R)\ar@{=}[d]^-\wr\\
E_\infty^{0,q}\ar@{^{(}->}[r]&E_2^{0,q}
}\] for every $q\geq0$. Moreover, if $F$ is connected, then there is a commuting diagram \[\xymatrix{
H^p(B;R)\ar@{=}[d]_-{\wr}\ar[r]^-{\pi^*}&H^p(E;R)\\
E^{p,0}_2\ar@{->>}[r]&E^{p,0}_\infty\ar@{^{(}->}[u]
}\] for every $p\geq0$.
\end{lemma}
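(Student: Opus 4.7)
The plan is to leverage naturality of the Serre spectral sequence—which was built in Theorem \ref{thm:spectral sequence of a map} as the spectral sequence of the map $\pi:E\to B$ via \v{C}ech nerves, and is functorial for squares of maps—by comparing our fibration $F\xrightarrow{i}E\xrightarrow{\pi}B$ with two particularly trivial fibrations whose spectral sequences degenerate on sight.

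For the first square, I would exploit the map of fibrations
\[
\xymatrix{F\ar@{=}[d]\ar[r]^-{i}&E\ar[d]^-\pi\\ \mathrm{pt}\ar[r]^-{b_0}&B}
\]
given by choosing a basepoint $b_0\in B$ with fiber $F$. The Serre spectral sequence of $F\to F\to \mathrm{pt}$ has $E_2^{p,q}=H^p(\mathrm{pt};H^q(F;R))$, which vanishes for $p>0$, so it collapses immediately with $E_\infty^{0,q}\cong H^q(F;R)$ and the abutment filtration trivial (only $F^0=H^q(F;R)$ is nonzero). Naturality of the spectral sequence supplies a map of filtered abutments inducing $i^*$ on $H^q$ and a map on $E_2$-pages which on the zeroth column is the restriction of sections $H^0(B;\underline{H^q(F;R)})\to H^q(F;R)$ to the stalk at $b_0$; this is precisely the inclusion of $\pi_1(B)$-invariants and is identified with the right-hand vertical map of the diagram. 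Since the induced map respects filtrations, $i^*$ kills $F^1H^q(E;R)$ and factors through the quotient $E_\infty^{0,q}$, then includes into $E_2^{0,q}$ via the differentials' kernel filtration—yielding exactly the claimed commutative square.

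For the second square, I would analogously compare against the fibration $\mathrm{pt}\to B\to B$ via the map of fibrations
\[
\xymatrix{F\ar[d]\ar[r]&E\ar[d]^-\pi\ar@{=}[r]&E\ar[d]^-\pi\\ \mathrm{pt}\ar[r]&B\ar@{=}[r]&B,}
\]
where the total-space map is $\pi$ and the induced map on fibers is $F\to\mathrm{pt}$. Here the Serre spectral sequence of $\mathrm{pt}\to B\to B$ has $E_2^{p,q}=H^p(B;R)$ when $q=0$ and vanishes otherwise, so it collapses with $E_\infty^{p,0}=H^p(B;R)$ sitting at the deepest filtration stage $F^pH^p(E;R)$. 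The hypothesis that $F$ is connected guarantees that the edge $R=H^0(\mathrm{pt};R)\to H^0(F;R)$ is an isomorphism, so the induced map on $E_2$-pages is, on the bottom row, the identification $H^p(B;R)\xrightarrow{\cong} H^p(B;\underline{H^0(F;R)})=E_2^{p,0}$. By naturality, $\pi^*$ lands in $F^pH^p(E;R)$ and its image in the associated graded $E_\infty^{p,0}$ equals the image of the $E_2^{p,0}$ quotient, giving the claimed factorization.

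The only substantive obstacle is checking that the spectral sequence built from \v{C}ech nerves really is natural under maps of fibrations and that its abutment filtration and $E_2$-identification are the standard ones; this comes down to observing that a map of squares
\[
\xymatrix{E'\ar[r]\ar[d]&E\ar[d]\\ B'\ar[r]&B}
\]
induces a map of \v{C}ech nerves compatible with the skeletal filtration on the geometric realization—and hence a map of the associated simplicial spectral sequences (Corollary \ref{cor:simplicial spectral sequence}) that descends to the colimit over covers of $B$. Everything else is a diagram chase against the trivial spectral sequences above.
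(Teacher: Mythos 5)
Your proof is correct and follows essentially the same route as the paper's: both derive the edge-map identifications by naturality of the Serre spectral sequence against the two degenerate fibrations $F\to F\to \mathrm{pt}$ and $\mathrm{pt}\to B\to B$. The paper simply packages both comparisons into a single $3\times 3$ diagram of fibrations and leaves the collapse of the trivial spectral sequences implicit, whereas you treat the two comparisons separately and spell out the degenerations and edge identifications in more detail—the underlying argument is identical.
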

\begin{proof}
Both claims follow by naturality from the commuting diagram \[\xymatrix{
F\ar@{=}[d]\ar@{=}[r]&F\ar[d]^-i\ar[r]&\pt\ar[d]\\
F\ar[d]\ar[r]^-i&E\ar[r]^-\pi\ar[d]^-\pi&B\ar@{=}[d]\\
\pt\ar[r]&B\ar@{=}[r]&B,
}\] in which the bottom set of vertical arrows are all fibrations. The assumption that $F$ is connected permits the identification \[E_2^{p,0}\cong H^p(B;\underline{H^0(F;R)})\cong H^p(B;R).\]
\end{proof}

\begin{proof}[Proof of Theorem \ref{thm:Leray--Hirsch}]
The assumption that $B$ is connected permits the use of the Serre spectral sequence. The assumption that $F$ is connected and the surjectivity assumption together imply that the local system $\underline{H^q(F)}$ is trivial; indeed, the former implies that $\pi_1(E)$ surjects onto $\pi_1(B)$, so the action of $\pi_1(B)$ on $H^*(F)$ may be computed after lifting both to $E$. This action is trivial, since the action of $\pi_1(E)$ on $H^*(E)$ is so. The assumptions on the cohomology of $F$ and $B$ now permit us to apply the K\"{u}nneth theorem in cohomology to obtain the isomorphism \[E_2^{p,q}\cong H^p(B)\otimes H^q(F).\] 

We claim that this spectral sequence collapses at $E_2$. To see why this is so, we first note that surjectivity of $i^*$ and Lemma \ref{lem:edge maps} imply the isomorphism $E_2^{0,q}\cong E_\infty^{0,q}$, so $d_r|_{E_r^{0,q}}=0$ for all $r\geq2$ and $q\geq0$. Assume for induction that we have established the isomorphism $E_2\cong E_r$, the base case being $r=2$. Since a bihomogeneous element in $E_r$ may be written as $a\otimes b$ for $a\in E_r^{p,0}$ and $b\in E_r^{0,1}$, it suffices by the Leibniz rule to show that $d_r(a)=0$ and that $d_r(b)=0$ separately. The latter equality was shown above, and the former holds for degree reasons.

Thus, we have an isomorphism of $H^*(B)$-modules between $H^*(B)\otimes H^*(F)$ and the associated graded for the induced filtration on $H^*(E)$. Since $H^*(F)$ is free Abelian, $i^*$ admits a section $s$, and the map \begin{align*}
H^*(B)\otimes H^*(F)&\to H^*(E)\\
a\otimes b&\mapsto s(a)\smile \pi^*(b)
\end{align*} is an isomorphism, since it becomes an isomorphism after passing to the associated graded.
\end{proof}

\section{Mapping space models}\label{section:mapping space models}

Our next goal is to explore the perhaps surprising connection between configuration spaces and mapping spaces, following the seminal work of McDuff \cite{McDuff:CSPNP} (see also \cite{Boedigheimer:SSMS}). A motivating idea due to Segal \cite{Segal:CSILS} is that of the \emph{electric field map} \[\coprod_{k\geq0}B_k(\mathbb{R}^n)\to \Omega^nS^n,\] which assigns to a configuration, viewed as a collection of point charges, the corresponding electric field. As a vector field, this electric field is a priori a map from $\mathbb{R}^n$ to itself; however, since it becomes infinite at the location of a point charge, and since it tends to zero at infinity, it naturally extends to a map between the respective one-point compactifications. Alternatively, we can understand this map as a kind of Pontrjagin-Thom construction, sending a configuration of $k$ points to the composite $S^n\to \vee_k S^n\to S^n$ of the Thom collapse map for the normal bundle of the configuration followed by the fold map.

The electric field map is not a homotopy equivalence; for example, the induced map on $\pi_0$ is the inclusion $\mathbb{N}\to \mathbb{Z}$. We would like to understand whether this failure can be rectified, as well as whether the same idea may be adapted to more general background manifolds and target spaces.

\begin{remark}
In a sense, this ``group completion'' discrepancy on connected component is the only obstruction to the map being an equivalence---see \cite{Segal:CSILS}.
\end{remark}

\subsection{Labeled configuration spaces}

We begin by identifying the type of combinatorics at play.

\begin{definition}
A \emph{pointed finite set} is a finite set $I$ together with a distinguished element, called the \emph{basepoint} and denoted $*$. We write $I^\circ$ for the pointed finite set $I\setminus \{*\}$. A map $f:I\to J$ is \emph{inert} if 
\begin{enumerate}
\item $f(*)=*$, and 
\item $f|_{f^{-1}(J^\circ)}$ is a bijection onto $J^\circ$.
\end{enumerate}
\end{definition}

We write $\Inrt$ for the category pointed finite sets and inert maps. Note that this category is isomorphic to the opposite of the category of finite sets and injective maps.

\begin{construction}
A manifold $M$ defines a functor from $\Inrt$ to $\Top$ by sending $I$ to $\Conf_{I^\circ}(M)$ and the inert map $f:I\to J$ to the projection \[\xymatrix{
\Conf_{I^\circ}(M)\ar[d]\ar@{-->}[r]&\Conf_{\pi^{-1}(J^\circ)}(M)\ar[r]^-{\simeq}\ar[d]&\Conf_{J^\circ}(M)\ar[d]\\
M^{I^0}\ar[r]^-{\pi_f}&M^{\pi^{-1}(J^\circ)}\ar[r]^-\simeq&M^{J^\circ}
}\] given by the formula \[\pi_f\left((m_i)_{i\in I^\circ}\right)=(m_{f^{-1}(j)})_{j\in J^\circ}.\] 
\end{construction}

\begin{construction}
A based space $(X,x_0)$ determines a functor from $\Inrt^{op}$ to $\Top$ by sending $I$ to $\Map_*(I, X)\cong X^{I^\circ}$ and the inert map $f$ to the inclusion \[X^{J^\circ}\cong X^{f^{-1}(J^\circ)}\times \{x_0\}^{I^\circ\setminus f^{-1}(J^\circ)}\subseteq X^{f^{-1}(J^\circ)}\times X^{I^\circ\setminus f^{-1}(J^\circ)}\cong X^{I^\circ}.\]
\end{construction}

\begin{definition}
The \emph{configuration space of $M$ with labels in $X$} is the coequalizer \[\Conf_X(M)=\mathrm{coeq}\left(
\coprod_{J\to K}\Conf_{J^\circ}(M)\times X^{K^\circ}\rightrightarrows\coprod_{I}\Conf_{I^\circ}(M)\times X^{I^\circ}
\right),\] where the coproducts are indexed on the morphisms and objects of $\Inrt$, respectively. 
\end{definition}

\begin{remark}
Note that this construction is sensible without the assumption that $M$ be a manifold.
\end{remark}

Thus, a point in $\Conf_X(M)$ is a finite formal sum $\sum m_ax_a$ with $m_a\in M$ distinct and $x_a\in X$, and the following relation holds \[\textstyle\sum m_ax_a\sim \sum m_ax_a+mx_0.\] We refer to the point $x_a$ as the \emph{label} of $m_a$. The topology is such that a point vanishes if its label moves to the basepoint of $X$; thus, if $x_k\to x_0$ in $X$, for example, then $mx_k\to \varnothing$ for any $m\in M$.

\begin{example}
For any $M$, $\Conf_\mathrm{pt}(M)=\{\varnothing\}$.
\end{example}

\begin{example}
For any $M$, there is a homeomorphism $\Conf_{S^0}(M)\cong \coprod_{k\geq0}B_k(M).$
\end{example}

We will also have use for a relative version of this construction. If $M_0\subseteq M$ is a closed subspace, we write $\Conf_X(M,M_0)$ for the quotient of $\Conf_X(M)$ by the further relation \[\textstyle\sum m_ax_a\sim \sum m_ax_a+mx,\qquad m\in M_0.\] We refer to this space as the labeled configuration space with \emph{annihilation in $M_0$}. In this space, a point vanishes also if it collides with the annihilation subspace $M_0$; thus, if $m_k\to m\in M_0$, for example, then $m_kx\to \varnothing$ for any $x\in X$.

\begin{example}
For any $M$ and $X$, $\Conf_X(M,M)=\{\varnothing\}$
\end{example}

\begin{example}
If either $X$ or $(M,M_0)$ is path connected, then so is $\Conf_X(M,M_0)$ (recall that a pair is path connected if the map on path components is surjective).
\end{example}

\begin{definition}
The \emph{support} of the configuration $\sum m_ax_a$ is the (finite) subset \[\textstyle\mathrm{Supp}\left(\sum m_ax_a\right)=\left\{m_a\mid m_a\notin M_0 \text{ and } x_a\neq x_0\right\}\subseteq M.\]
\end{definition}

The space $\Conf_X(M,M_0)$ is filtered by the closed subspaces \[\Conf_X(M,M_0)_{\leq k}:=\left\{\textstyle \sum m_ax_a\mid  |\mathrm{Supp}(\sum m_ax_a)|\leq k\right\}.\] Moreover, both the successive quotients and the successive complements of this filtration are comprehensible, as \[\faktor{\Conf_X(M,M_0)_{\leq k}}{\Conf_X(M,M_0)_{\leq k-1}}\cong \Conf_k(M,M_0)\wedge_{\Sigma_k}X^{\wedge k},\] where $\Conf_k(M,M_0)$ is the quotient of $\Conf_k(M)$ by the subspace of configurations intersecting $M_0$ non-vacuously, while \[\Conf_X(M,M_0)_{\leq k}\setminus \Conf_X(M,M_0)_{\leq k-1}\cong \Conf_k(M\setminus M_0)\times_{\Sigma_k}(X\setminus x_0)^k.\]

\begin{example}
The filtration quotients of $\Conf_{S^r}(M)$ are given by the Thom spaces of the vector bundles $\Conf_k(M)\times_{\Sigma_k} \mathbb{R}^{rk}\to B_k(M),$ so, by the Thom isomorphism, we may compute the homology of the configuration spaces of $M$ from knowledge of this filtration. In fact, as we will show, this filtration splits at the level of homology, making this type of computation often feasible in practice. 

Notice that, for $r>0$, $\Conf_{S^r}(M)$ is connected. In particular, the analogous electric charge map $\Conf_{S^r}(\mathbb{R}^n)\to \Omega^nS^{n+r}$ is a bijection on $\pi_0$, unlike in the case $r=0$ considered above. In fact, as we will show, this map is a homotopy equivalence. 
\end{example}

We close this section by advancing the thesis that the construction $\Conf_X$ should be thought of as a kind of homology theory for manifolds (see \cite{AyalaFrancis:FHTM} for a detailed elaboration on this idea). It is easy to see that analogues of some of the Eilenberg-Steenrod axioms hold. For example, the construction is functorial in an obvious way for embeddings of pairs $(M, M_0)\to (N, N_0)$ and for baseed maps $X\to Y$; the map induced by an isotopy equivalence in the former case or a homotopy equivalence in the latter is a homotopy equivalence; we have a homeomorphism \[\Conf_X(M\amalg N, M_0\amalg N_0)\cong \Conf_X(M, M_0)\times\Conf_X(N, N_0),\] which is an analogue of the additivity axiom; and an analogue of excision is supplied by the homeomorphism \[\Conf_X(M\setminus U, M_0\setminus U)\xrightarrow{\simeq} \Conf_X(M,M_0)\] for $U\subseteq M_0$ open in $M$.

The basic building blocks of manifolds being disks rather than points, an appropriate analogue of the dimension axiom is supplied by the following result:

\begin{proposition}
Fix $X$ and $n\geq0$.
\begin{enumerate}
\item The inclusion $\Conf_X(D^n,\partial D^n)_{\leq 1}\subseteq \Conf_X(D^n,\partial D^n)$ is a weak homotopy equivalence, and
\item there is a homeomorphism $\Conf_X(D^n,\partial D^n)_{\leq 1}\cong \Sigma^nX$.
\end{enumerate}
\end{proposition}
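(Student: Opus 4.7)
For part (2), the argument is a direct unpacking of definitions: the subspace $\Conf_X(D^n, \partial D^n)_{\leq 1}$ consists of the empty configuration together with single-point configurations $mx$ for $(m, x) \in D^n \times X$, modulo the identifications collapsing both $(m, x_0)$ and $(m, x)$ with $m \in \partial D^n$ to the empty configuration. This exhibits the space as the smash product $(D^n/\partial D^n) \wedge X \cong S^n \wedge X = \Sigma^n X$.

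For part (1), I would proceed by induction along the filtration $F_k := \Conf_X(D^n, \partial D^n)_{\leq k}$, showing that each inclusion $F_{k-1} \hookrightarrow F_k$ is a weak homotopy equivalence for $k \geq 2$; a standard colimit argument then yields the conclusion. Using the cofibration sequence $F_{k-1} \to F_k \to F_k/F_{k-1} \cong \Conf_k(D^n, \partial D^n) \wedge_{\Sigma_k} X^{\wedge k}$, the inductive step reduces to showing that the pointed space $\Conf_k(D^n, \partial D^n) := \Conf_k(D^n)/\{c : c \cap \partial D^n \neq \varnothing\}$ is weakly contractible for every $k \geq 2$.

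For $n = 1$, this contractibility is direct: $\Conf_k(D^1)$ decomposes into $k!$ convex components indexed by orderings of $\{1, \ldots, k\}$, and within each component the boundary-meeting subspace is a contractible union of two coordinate faces, so the quotient is a wedge of contractible pointed spaces and hence contractible. For $n \geq 2$, I would construct a deformation retraction of $\Conf_k(D^n)$ toward its boundary-meeting subspace $A_k$ by sliding a distinguished coordinate of each configuration to $\partial D^n$ along a continuously chosen direction, for instance by translating the entire configuration by $-t\tau(c) e_1$ where $\tau(c) := \min_j x_1^j$, so that at $t = 1$ some coordinate lies on $\partial D^n$.

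The main obstacle will be ensuring that this deformation descends to a genuine contraction of the quotient $\Conf_k(D^n, \partial D^n)$, rather than merely a map into the basepoint that fails continuity on $A_k$ (since sliding moves boundary-meeting configurations along their faces in a way not compatible with collapsing $A_k$ to a point). A cleaner route, in the spirit of the paper, is to express $\Conf_X(D^n, \partial D^n)$ as a homotopy colimit via a hypercover of $D^n$ by small disks together with a boundary collar, and to deduce the contractibility of each higher filtration quotient from a combinatorial collapse of the indexing diagram by Quillen's Theorem A. Either method yields the contractibility of $\Conf_k(D^n, \partial D^n)$ for $k \geq 2$, closing the induction.
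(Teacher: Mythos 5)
Your proof of (2) is exactly the paper's: a direct unpacking of the quotient as $(D^n/\partial D^n)\wedge X$.

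Your proof of (1), however, departs sharply from the paper's and contains several genuine gaps. The paper disposes of (1) in two sentences: by radial expansion (dilating about the center of the disk), any compact family of labeled configurations in $\Conf_X(D^n,\partial D^n)$ is pushed, up to pointed homotopy, into the subspace with support of cardinality at most one. Since weak equivalences are detected by maps out of spheres and disks (compact spaces), this suffices. Crucially, radial expansion is uniform, is $\Sigma_k$-equivariant, and gives a \emph{homotopy} of the identity into $F_1$, rather than a skeleton-by-skeleton argument.

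Your filtration route has the following problems. First, even granting the cofiber sequence $F_{k-1}\to F_k\to \Conf_k(D^n,\partial D^n)\wedge_{\Sigma_k}X^{\wedge k}$ (which does hold by Lemma~\ref{lem:labeled cofibrations}), weak contractibility of the cofiber does \emph{not} imply that $F_{k-1}\hookrightarrow F_k$ is a weak homotopy equivalence: a cofibration with weakly contractible cofiber is always a homology isomorphism, but need not induce an isomorphism on $\pi_1$. (For example, the cone inclusion $X\hookrightarrow CX$ for $X$ acyclic with nontrivial fundamental group has contractible cofiber $\Sigma X$ but is not a weak equivalence.) You would need simple connectivity of the $F_k$, which holds when $n\geq 2$ but fails in general when $n\leq 1$ and $X$ is disconnected (e.g., $F_1=\Sigma X$ can have nontrivial $\pi_1$). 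Second, you reduce to showing that $\Conf_k(D^n,\partial D^n)$ is weakly contractible, but what the cofiber sequence requires is weak contractibility of $\Conf_k(D^n,\partial D^n)\wedge_{\Sigma_k}X^{\wedge k}$. A non-equivariant contraction of $\Conf_k(D^n,\partial D^n)$ does not descend to the $\Sigma_k$-orbit smash product; you need the null-homotopy to be $\Sigma_k$-equivariant. Third, the explicit deformation you propose does not work even for the disk: translating by $-t\tau(c)e_1$ with $\tau(c)=\min_j x_1^j$ sends the minimal first coordinate to zero, not to $\partial D^n$, and in any case a rigid translation does not fix the boundary-meeting subspace $A_k$ pointwise, which is exactly what is required for the homotopy to descend to the quotient $\Conf_k(D^n)/A_k$. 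You acknowledge this obstacle, but the proposed fix (a hypercover plus Quillen's Theorem A) is a gesture, not an argument. All three of these gaps are simultaneously closed by the paper's observation: radial expansion is uniform, $\Sigma_k$-equivariant, and preserves the basepoint of the quotient, so one never needs to pass through the cofiber sequence at all.
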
  
\begin{proof}
For (1), we note that, by radial expansion, any pointed map from a compact space $K$ to $\Conf_X(D^n,\partial D^n)$ factors up to pointed homotopy through $\Conf_X(D^n,\partial D^n)_{\leq 1}$. For (2), we calculate that \begin{align*}
\Conf_X(D^n,\partial D^n)_{\leq 1}&\cong \frac{\Conf_0(D^n)\times X^0\amalg \Conf_1(D^n)\times X^1}{\sim}\\
&\cong \frac{(D^n\times X)_+}{(\partial D^n\times X)_+\cup (D^n\times\{x_0\})_+}\\
&\cong \frac{D^n_+\wedge X}{\partial D^n_+\wedge X}\\
&\cong S^n\wedge X,
\end{align*} as desired.
\end{proof}

Finally, we have the following analogue of exactness.

\begin{theorem}[McDuff]\label{thm:exactness}
Let $M_0\subseteq M$ be a closed submanifold of codimension 0, possibly with boundary. If $X$ is connected, then the diagram \[
\xymatrix{
\Conf_X(M_0)\ar[d]\ar[r]&\Conf_X(M)\ar[d]\\
\pt\ar[r]&\Conf_X(M,M_0)
}
\] induced by the maps $(M_0,\varnothing)\to (M,\varnothing)\to (M,M_0)$ is homotopy Cartesian.
\end{theorem}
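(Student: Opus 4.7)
The plan is to adapt the hypercover strategy of the proof of Theorem \ref{thm:Fadell--Neuwirth}, applied now to the projection $\pi \colon \Conf_X(M) \to \Conf_X(M,M_0)$. Since this map is not a fibration, I would compute its homotopy fibre by exhibiting a complete cover of the base along which the projection trivialises with fibre $\Conf_X(M_0)$, and then invoke Quillen's Theorem B in the form of Corollary \ref{cor:quillen b} together with the recovery result Theorem \ref{thm:complete cover recovery}.

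The first step is the key \emph{local} computation. Using that $M_0$ has codimension zero, I fix a collar neighbourhood $V_0 \cong M_0 \cup \partial M_0 \times [0, \varepsilon)$ in $M$. The collar retraction is a monotopy equivalence, hence $\Conf_X(V_0) \simeq \Conf_X(M_0)$; moreover, the same retraction, executed inside the quotient $\Conf_X(V_0, M_0)$, deformation retracts every configuration into $M_0$, where it is annihilated, so $\Conf_X(V_0, M_0) \simeq \mathrm{pt}$. This is the excisive base case of the theorem. The model basis element of $\Conf_X(M,M_0)$ attached to a tuple of pairwise disjoint Euclidean disks $V_1, \ldots, V_k \subseteq M \setminus \overline{V_0}$ and open contractible neighbourhoods $W_i \subseteq X \setminus \{x_0\}$ is the subspace $U(V_\bullet, W_\bullet) \subseteq \Conf_X(M,M_0)$ consisting of configurations with exactly one support point in each $V_i$, labelled in $W_i$, together with an arbitrary residual configuration supported in $V_0$. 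A direct verification modelled on Proposition \ref{prop:conf basis} yields the identifications
\[
U(V_\bullet, W_\bullet) \cong \textstyle\prod_i (V_i \times W_i) \times \Conf_X(V_0, M_0), \qquad \pi^{-1}\bigl(U(V_\bullet, W_\bullet)\bigr) \cong \prod_i (V_i \times W_i) \times \Conf_X(V_0),
\]
so that $U$ is contractible and $\pi^{-1}(U) \simeq \Conf_X(M_0)$, with $\pi$ restricted to $\pi^{-1}(U)$ modelling the trivial bundle with fibre $\Conf_X(M_0)$.

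Second, I would argue, in analogy with Proposition \ref{prop:conf basis}, that the collection of such $U(V_\bullet, W_\bullet)$ forms a complete cover of $\Conf_X(M,M_0)$, indexed by a poset $\mathcal{B}$ under refinement of the disk and label data. Theorem \ref{thm:complete cover recovery} then identifies $\Conf_X(M,M_0)$ with $\hocolim_\mathcal{B} U$ and, by naturality of the preimage construction, $\Conf_X(M)$ with $\hocolim_\mathcal{B} \pi^{-1}(U)$. Corollary \ref{cor:quillen b} then yields the desired homotopy pullback square provided that for every refinement $U \leq U'$ in $\mathcal{B}$ the induced map of preimages $\pi^{-1}(U) \to \pi^{-1}(U')$ is a weak equivalence; this reduces to the observation that both sides are equivalent to $\Conf_X(M_0)$ via the collar retraction, and that the refinement map intertwines these equivalences up to homotopy.

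The main obstacle is setting up the indexing poset $\mathcal{B}$ so that refinements yield genuine inclusions of the specified open subsets and induce equivalences on preimages, since naively enlarging the collection of disks would shrink rather than enlarge $U(V_\bullet, W_\bullet)$. It is here that the hypothesis that $X$ is connected enters decisively: connectedness allows non-basepoint labels in $W_i$ to be interpolated with $x_0$ through paths in $X$, so that "adjoining a new disk" can be realised, up to homotopy, as a genuine enlargement of the basis element (by letting the new label traverse a path to $x_0$), which in turn leaves the homotopy fibre $\Conf_X(M_0)$ unchanged. Once this combinatorial bookkeeping is arranged, the remainder of the argument is a straightforward variation on the application of Quillen's Theorem B given in the proof of Theorem \ref{thm:Fadell--Neuwirth}.
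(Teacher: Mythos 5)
Your strategy---a complete cover of $\Conf_X(M,M_0)$ by contractible opens over which $\pi$ trivialises, followed by the homotopy-colimit version of Theorem B---is exactly the paper's, so the architecture is sound. But there is a genuine gap: the proposed basis element $U(V_\bullet,W_\bullet)$, requiring the residual configuration to be supported in a fixed collar $V_0$, is \emph{not open} in the quotient topology on $\Conf_X(M,M_0)$. Indeed, its preimage in $\Conf_{k+1}(M)\times X^{k+1}$ must contain every tuple consisting of one point per $V_i$ labeled in $W_i$ together with an extra point $(m, x_0)$ with $m$ arbitrary (such a point is dead in the quotient), yet perturbing the label $x_0$ to a nearby non-basepoint label while $m$ lies outside $V_0\cup\bigcup V_i$ produces a live point that pushes the configuration out of $U$; so this preimage is not open. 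The paper's definition of $\Conf_X^0(U,\sigma)$ repairs this by making the residual condition \emph{disjunctive}: a non-essential point may lie in $U_*$ (your $V_0$) \emph{or} carry a label in a fixed open contractible neighbourhood $\sigma(U_*)\ni x_0$. Once this correction is made, your displayed homeomorphisms $\cong$ degrade to homotopy equivalences $\simeq$ (obtained after contracting $\sigma(U_*)$ onto $x_0$), exactly as in Lemmas \ref{lem:labeled contractible} and \ref{lem:labeled fiber}; and the completeness check of Lemma \ref{lem:labeled complete cover} requires the collar component $U_*$ to vary rather than remain a fixed $V_0$.

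Two smaller remarks. First, the indexing poset the paper uses is structured by \emph{inert} maps of pointed finite sets $\pi_0(U,U_*)\to\pi_0(V,V_*)$, which is precisely how the difficulty you flag---adjoining disks yielding a smaller open set---is resolved: extra components of $U$ can map to the basepoint, and it is Lemma \ref{lem:labeled poset} that confirms this really is a poset compatible with inclusion. Second, the form of Theorem B actually invoked is Corollary \ref{cor:hocolim quasifibration} (for a functor $I\to\Top$ sending every morphism to a weak equivalence), applied to $(U,\sigma)\mapsto q^{-1}\Conf_X^0(U,\sigma)$; Lemma \ref{lem:labeled locally constant}, via the proposition on stabilization maps, furnishes exactly the hypothesis you correctly identify as requiring $X$ connected.
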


We will take up the proof of this theorem presently. We conclude this section with an obvious question.

\begin{question} If $\Conf_X$ is a homology theory, what is Poincar\'{e} duality?
\end{question}

\subsection{Local topology} Our strategy in proving Theorem \ref{thm:exactness} will be to combine covering technology and Quillen's Theorem B as in our proof of Fadell--Neuwirth. In order to do so, we must come to grips with the local topology of $\Conf_X(M,M_0)$.

\begin{definition}
An open subset $U\subseteq M$ is \emph{pointed} with respect to $M_0$ if some union of components $U_*\subseteq U$ contains $M_0$ as an isotopy retract.
\end{definition}

\begin{remark}
The assumption that $M_0\subseteq M$ is a codimension 0 submanifold guarantees a ready supply of open sets in $M$ that are pointed with respect to $M_0$; indeed, we may take the union of $M_0$ with any tubular neighborhood of $\partial M_0$ in $M\setminus \mathring{M_0}$.
\end{remark}

Note that, if $U$ is pointed with respect to $M$, then $\pi_0(U, U_*)$ is canonically pointed by the class of $U_*$.

\begin{construction}
Fix $M_0\subseteq M$ and $x_0\in X$ as above. We define a category $\op B(M,M_0;X)$ by the following specifications.
\begin{enumerate}
\item An object of $\op B(M,M_0;X)$ is a pair $(U,\sigma)$ with $U\subseteq M$ a proper open subset pointed with respect to $M_0$ and $\sigma:\pi_0(U, U_*)\to\mathrm{Op}(X)$ a function with $x_0\in \sigma(U_*)$, such that, for $i\in \pi_0(U, U_*)^\circ$
\begin{enumerate}
\item $U_i\cong\mathbb{R}^n$,
\item $\sigma(U_i)\simeq \pt$, and
\item $\sigma(U_i)\cap \sigma(U_*)=\varnothing$.
\end{enumerate}
\item A morphism $(U,\sigma)\to (V,\tau)$ is an inert map $f:\pi_0(U, U_*)\to \pi_0(V, V_*)$ such that 
\begin{enumerate}
\item $U_{i}\subseteq V_{f(i)}$ and $\sigma(U_{i})\subseteq \tau(V_{f(i)})$ for $i\notin f^{-1}(*)\setminus\{*\}$, and
\item either $U_i\subseteq V_*$ or $\sigma(U_i)\subseteq \tau(V_*)$ for $i\in f^{-1}(*)\setminus \{*\}$.
\end{enumerate}
\end{enumerate}
We write $\Conf_X^0(U,\sigma)\subseteq \Conf_X(M,M_0)$ for the subspace consisting of labeled configurations $\sum m_a x_a$ such that \begin{enumerate}
\item for each $i\in \pi_0(U)^\circ$, there is a unique $a$ with $m_a\in U_i$ and $x_a\in \sigma(U_i)$; and
\item otherwise, either $m_a\in U_*$ or $x_a\in \sigma(U_*)$.
\end{enumerate}
\end{construction}

We summarize the relevant facts about this category and these subspaces in a series of lemmas. First, a preliminary concept.

\begin{definition}
Let $\sum m_a x_a$ be a configuration lying in $\Conf_X^0(U,\sigma)$. We say that the point $m_{a}$ is \emph{essential} for $(U,\sigma)$ if $m_a\in U_i$ and $x_a \in \sigma(U_i)$ for some (necessarily unique) $i\in\pi_0(U,U_*)$.
\end{definition}

Equivalently, $m_a$ is essential\[\textstyle\sum_{a'\neq a}m_{a'}x_{a'}\notin \Conf_X^0(U,\sigma).\]

Recall that a category is a poset if and only if, first, each hom set contains at most one element, and, second, every isomorphism is an identity.

\begin{lemma}\label{lem:labeled poset}
The category $\op B(M,M_0;X)$ is a poset, and the inequality $(U,\sigma)\leq (V,\tau)$ holds if and only if the containment $\Conf_X^0(U,\sigma)\subseteq \Conf_X^0(V,\tau)$ holds.
\end{lemma}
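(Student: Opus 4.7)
The plan is to prove the two claims simultaneously by unpacking how configurations in $\Conf_X^0(U,\sigma)$ sit inside $\Conf_X^0(V,\tau)$ and reading off the combinatorial data of a morphism.

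For the forward implication, I would assume a morphism $f\colon(U,\sigma)\to(V,\tau)$ and take any $\sum m_a x_a\in\Conf_X^0(U,\sigma)$. Tracking each point: an essential point $m_i x_i$ with $f(i)=j\in\pi_0(V,V_*)^\circ$ becomes essential for $V_j$, since $m_i\in U_i\subseteq V_{f(i)}$ and $x_i\in\sigma(U_i)\subseteq\tau(V_{f(i)})$ by condition (a), and inertness of $f$ ensures these contributions account for exactly one essential point in each $V_j$. An essential point with $f(i)=*$ is rendered null for $(V,\tau)$ by condition (b), and any genuine null point (with $m_a\in U_*$ or $x_a\in\sigma(U_*)$) is absorbed into $V_*$ or $\tau(V_*)$ through the $i=*$ instance of (a), which reads $U_*\subseteq V_*$ and $\sigma(U_*)\subseteq\tau(V_*)$. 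Together these verify the two defining clauses of $\Conf_X^0(V,\tau)$.

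The reverse implication is the main obstacle, since the entire combinatorial datum $f$ must be extracted from purely subspace information. Given the containment, I would define $f$ component by component on $\pi_0(U,U_*)^\circ$. Fixing all other coordinates and varying $m_i$ alone, connectedness of $U_i$ forces $U_i$ to lie in a single component of $V$, either some $V_j$ or $V_*$. In the latter case I set $f(i)=*$; in the former, varying $x_i$ through the connected set $\sigma(U_i)\simeq\mathrm{pt}$ and appealing to the fact that the essential-point count for $V_j$ must stay constantly equal to $1$ throughout $\Conf_X^0(V,\tau)$ forces the status of $m_i x_i$ to be constant, so that either $\sigma(U_i)\subseteq\tau(V_j)$ (set $f(i)=j$) or $\sigma(U_i)\subseteq\tau(V_*)$ (set $f(i)=*$); condition 1(c) for $(V,\tau)$, namely $\tau(V_j)\cap\tau(V_*)=\varnothing$, prevents these from overlapping. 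Inertness of $f$ then drops out of the count: surjectivity onto $\pi_0(V,V_*)^\circ$ is forced by the existence of an essential point for each $V_j$, and injectivity on non-basepoint components by its uniqueness. A parallel analysis applied to null points of the form $n_b y_b$ with $n_b\in U_*$, varying $n_b$, and with $y_b\in\sigma(U_*)$, varying $y_b$, recovers the $i=*$ clauses $U_*\subseteq V_*$ and $\sigma(U_*)\subseteq\tau(V_*)$.

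Finally, the poset property follows from uniqueness of the morphism just constructed: the component of $V$ enveloping each $U_i$ is unique by disjointness of components of $V$, and the choice between $f(i)=j$ and $f(i)=*$ is forced by condition 1(c) as above. This precludes nontrivial isomorphisms, since a round-trip morphism would compel $U_i=V_{f(i)}$, $\sigma(U_i)=\tau(V_{f(i)})$, $U_*=V_*$, and $\sigma(U_*)=\tau(V_*)$, hence $(U,\sigma)=(V,\tau)$.
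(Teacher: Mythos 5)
Your forward implication is essentially the "obvious" direction the paper waves at, and your overall structure (prove the iff, then extract the poset property from uniqueness of the constructed morphism) is a valid reorganization of the paper's argument, which instead establishes the poset property first directly from the morphism axioms and then handles the iff.

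However, there is a genuine gap in your reverse implication, right at the first step. You assert that, fixing all other coordinates and varying $m_i$ alone, connectedness of $U_i$ forces $U_i\subseteq V_j$ or $U_i\subseteq V_*$. This is false in general: if $x_i\in\tau(V_*)$, the defining clause (2) for $\Conf_X^0(V,\tau)$ places \emph{no} constraint on $m_i$, so $U_i$ need not lie in any component of $V$. Concretely, the morphism axiom (2)(b) explicitly allows $*\neq i\in f^{-1}(*)$ to be witnessed by $\sigma(U_i)\subseteq\tau(V_*)$ \emph{without} $U_i\subseteq V_*$, so the containment $\Conf_X^0(U,\sigma)\subseteq\Conf_X^0(V,\tau)$ can hold while $U_i$ is entirely outside $V$. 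Your argument never sees this case, because it derives the existence of an enveloping component of $V$ before looking at labels. To repair it, you must first split on whether $\sigma(U_i)\subseteq\tau(V_*)$ (set $f(i)=*$ if so), and only then choose $x_i\in\sigma(U_i)\setminus\tau(V_*)$, vary $m_i$, and invoke connectedness to get $U_i\subseteq V_j$ for a single $j$; a second connectedness argument on $\sigma(U_i)$, using $\tau(V_j)\cap\tau(V_*)=\varnothing$ and the covering $\sigma(U_i)\subseteq\tau(V_j)\cup\tau(V_*)$, then pins down $\sigma(U_i)\subseteq\tau(V_j)$.

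The paper sidesteps this entirely by defining $f$ via the removability characterization of essential points: fixing one $\sum m_ax_a\in\Conf_X^0(U,\sigma)$, the set of points essential for $(V,\tau)$ is contained in the set of points essential for $(U,\sigma)$ purely because removing a point that breaks $(V,\tau)$-membership must also break $(U,\sigma)$-membership under the hypothesized containment. This gives the inert map $f$ at once, with no case analysis. Your coordinate-varying construction does have one structural advantage --- it manifestly does not depend on a choice of base configuration, whereas the paper's does and must separately appeal to path-connectedness of $\Conf_X^0(U,\sigma)$ --- but that advantage is outweighed by the missed case, and the subsequent verification of the morphism clauses (including the $i=*$ clauses, where you vary a null point) is morally the same work in both approaches.
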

\begin{proof}
Suppose that the inert maps $f$ and $g$ each determine a morphism $(U,\sigma)\to (V,\tau)$ in $\op B(M,M_0;X)$. Then there exists $i\in \pi_0(U,U_*)^\circ$ such that $f(i)=*\neq g(i)$; otherwise, $f$ and $g$ differ by a permutation of $\pi_0(V,V_*)$, and the conditions on morphisms in $\op B(M,M_0;X)$ force this permutation to be the identity. Now, since $f(i)=*$, either $U_i\subseteq V_*$ or $\sigma(U_i)\subseteq \tau(V_*)$; however, since $g(i)\neq *$, $U_i\subseteq V_{g(i)}$ and $\sigma(U_i)\subseteq \tau(V_{g(i)}$. It follows that either $V_{g(i)}\cap V_*\neq \varnothing$ or $\tau(V_{g(i)})\cap\tau(V_*)\neq \varnothing$, a contradiction. We conclude that hom sets in $\op B(M,M_0;X)$ have cardinality at most 1. 

Now, if $(U,\sigma)\cong (V,\tau)$, then the associated inert map is also an isomorphism, which is to say a pointed bijection, so $U_i\subseteq V_{f(i)}\subseteq U_i$ and $\sigma(U_i)\subseteq\tau(V_{f(i)})\subseteq \sigma(U_i)$ for every $i\in \pi_0(U,U_*)$, and it follows that $(U,\sigma)=(V,\tau)$. Thus, $\op B(M,M_0;X)$ is a poset.

For the second claim, we verify the ``if'' implication, the converse being essentially obvious. Suppose, then, that $\Conf_X^0(U,\sigma)\subseteq \Conf_X^0(V,\tau)$, and choose $\sum m_ax_a\in\Conf_X^0(U,\sigma)$. We note that \begin{align*}
\pi_0(U,U_*)^\circ&\cong\left\{a\mid m_a\text{ is essential for $(U,\sigma)$}\right\}\\
&\supseteq\left\{a\mid m_a\text{ is essential for $(V,\tau)$}\right\}\\
&\cong\pi_0(V,V_*)^\circ,
\end{align*} where the inclusion follows from the definition of an essential point and our containment assumption. Since the data of an inert map is equivalent to that of an inclusion in the opposite direction, this observation defines an inert map $f:\pi_0(U,U_*)\to \pi_0(V,V_*)$. It is easy to see that two labeled configurations that may be joined by a path give rise to the same inert map; therefore, since $\Conf_X^0(U,\sigma)$ is path connected, $f$ is independent of the choice of $\sum m_ax_a$.

To see that $f$, so defined, witnesses the inequality $(U,\sigma)\leq (V,\tau)$, there are two points to verify.
\begin{enumerate}
\item For $i\in f^{-1}(\pi_0(V,V_*)^\circ)$, we note that $U_i\cap V_{f(i)}\neq \varnothing$, since there is some point $m_a$ that is essential both for $(U,\sigma)$ and for $(V,\tau)$. The existence of a path in $U_i$ from $m_a$ to a point lying outside of this intersection leads to a contradiction of the fact that $m_a$ is essential for $(V,\tau)$, so $U_i\subseteq V_{f(i)}$. On the other hand, since $U_*$ and $V_*$ both contain $M_0$ as an isotopy retract, we have the bijection $\pi_0(U_*)\cong\pi_0(M_0)\cong \pi_0(V_*)$. Thus, since $V_*\cap V_j=\varnothing$ for $j\neq*$, it follows that $U_*\cap V_j=\varnothing$ for $j\neq *$. Allowing a point to range over $U_*$ with a fixed label lying outside of $\tau(V_*)$ now shows that $U_*\subseteq V_*$.\\
\item For $*\neq i\in f^{-1}(*)$, if neither containment holds, then there exist $m\in U_i$ and $x\in \sigma(U_i)$ with $m\notin V_*$ and $x\notin \tau(V_*)$. Thus, there is a configuration of the form $\sum m_ax_a+mx\in \Conf_X^0(U,\sigma)$; however, since $m$ is not essential in $(V,\tau)$ by the definition of $f$, we must also have $\sum m_ax_a+mx\notin \Conf_X^0(V,\tau)$, contradicting our assumption.
\end{enumerate}
\end{proof}

\begin{lemma}\label{lem:labeled open}
Each $\Conf_X^0(U,\sigma)$ is open in $\Conf_X(M,M_0)$.
\end{lemma}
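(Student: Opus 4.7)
The plan is to exploit the fact that $\Conf_X(M,M_0)$ carries the quotient topology from the disjoint union $\coprod_{I}\Conf_{I^\circ}(M)\times X^{I^\circ}$, so it suffices to show, for each finite pointed set $I$, that the preimage $W_I := q^{-1}(\Conf_X^0(U,\sigma))\cap \bigl(\Conf_{I^\circ}(M)\times X^{I^\circ}\bigr)$ is open in $\Conf_{I^\circ}(M)\times X^{I^\circ}$, where $q$ denotes the quotient map.

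The first step is to characterize $W_I$ combinatorially. For $(m,x)\in \Conf_{I^\circ}(M)\times X^{I^\circ}$, say an index $i\in I^\circ$ is \emph{essential} (for the given $(U,\sigma)$) if there exists $j\in \pi_0(U,U_*)^\circ$ with $m_i\in U_j$ and $x_i\in \sigma(U_j)$; such a $j$ is unique because the $U_j$ are the connected components of $U$, so this defines a function $\psi(i)=j$ on the essential set $E(m,x)\subseteq I^\circ$. Unpacking the definition of $\Conf_X^0(U,\sigma)$ together with the quotient relations (which delete pairs with $m_i\in M_0$ or $x_i=x_0$), and using the containments $M_0\subseteq U_*$ and $x_0\in \sigma(U_*)$, one checks that $(m,x)\in W_I$ if and only if
\begin{enumerate}
\item[(A)] $\psi\colon E(m,x)\to \pi_0(U,U_*)^\circ$ is a bijection, and
\item[(B)] for each $i\in I^\circ\setminus E(m,x)$, either $m_i\in U_*$ or $x_i\in \sigma(U_*)$.
\end{enumerate}

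Given $(m,x)\in W_I$, I would then build an open product neighborhood in $M^{I^\circ}\times X^{I^\circ}$ (intersected with the open subset $\Conf_{I^\circ}(M)\times X^{I^\circ}$) by choosing, coordinate by coordinate:
\begin{itemize}
\item for each essential index $i\in E(m,x)$, the open box $U_{\psi(i)}\times \sigma(U_{\psi(i)})$, which is open in $M\times X$ and contains $(m_i,x_i)$ by definition of essentiality;
\item for each $i\notin E(m,x)$, one of the open sets $U_*\times X$ or $M\times \sigma(U_*)$, selecting whichever alternative (B) puts $(m_i,x_i)$ into.
\end{itemize}

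The main thing to verify is that every $(m',x')$ in this neighborhood lies in $W_I$, and the key point is that the essential set cannot change. For $i\in E(m,x)$, the condition $m'_i\in U_{\psi(i)}$, $x'_i\in \sigma(U_{\psi(i)})$ directly shows $i\in E(m',x')$ with $\psi'(i)=\psi(i)$. For $i\notin E(m,x)$, the constraints $m'_i\in U_*$ or $x'_i\in \sigma(U_*)$ prevent $i$ from becoming essential: the components $U_*$ and $U_j$ are disjoint (as distinct components of $U$), and by hypothesis (c) of Construction, $\sigma(U_j)\cap \sigma(U_*)=\varnothing$ for $j\neq *$. Hence $E(m',x')=E(m,x)$, $\psi'=\psi$ is still a bijection, and (B) is preserved by construction.

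The principal (minor) obstacle is precisely this stability of the essential set under perturbation: a priori, a non-essential index $i$ could slide into some $U_j\times \sigma(U_j)$ and spoil the bijectivity of $\psi'$. The disjointness hypotheses on the components of $U$ and on the values $\sigma(U_j)$ versus $\sigma(U_*)$ are exactly what rules this out, so the argument depends in an essential way on clauses (a)--(c) in the definition of $\op B(M,M_0;X)$.
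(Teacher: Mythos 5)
Your proof is correct, and it runs close to the paper's argument but takes a pointwise rather than global route. Both proofs begin by reducing to openness of the preimage in each summand $\Conf_{I^\circ}(M)\times X^{I^\circ}$ of the presentation of $\Conf_X(M,M_0)$ as a quotient. Where the paper then describes the whole preimage at once — as a $\Sigma_r$-orbit of a union $\bigcup_{k+\ell+m=r}A_{k,\ell,m}\times\prod_i\sigma(U_i)\times X^\ell\times\sigma(U_*)^m$, with $A_{k,\ell,m}$ exhibited as an intersection of two pullbacks of open inclusions — you instead fix a point, classify its coordinates as essential or not, and hand-build a basic product neighborhood that you verify remains inside the preimage. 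The content is the same: both proofs turn on the observation that the disjointness hypotheses (a)--(c) in the definition of $\op B(M,M_0;X)$ prevent a non-essential coordinate from becoming essential under perturbation, so the essential/non-essential partition is locally constant. What your version buys is a more elementary, self-contained verification; what the paper's version buys is brevity and a description of the preimage useful elsewhere. Incidentally, your notion of an essential index is exactly the paper's Definition of an \emph{essential point}, which it introduces immediately after this lemma and leans on in the proofs of Lemmas \ref{lem:labeled poset} and \ref{lem:labeled complete cover}; so your proof anticipates that terminology rather than duplicating work.
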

\begin{proof}
From the definitions, a subset of $\Conf_X(M, M_0)$ is open if and only if its preimage under each of the maps \[q_r:\Conf_r(M)\times X^r\to \Conf_X(M,M_0)\] is so. For $r<k:=|\pi_0(U)^\circ|$, we have $q^{-1}_r(\Conf_X^0(U,\sigma)=\varnothing$; otherwise, the inverse image is the $\Sigma_r$-orbit of the subspace \[\bigcup_{k+\ell+m=r} A_{k,\ell,m}\times\prod_{i=1}^k\sigma(U_i)\times X^\ell\times \sigma(U_*)^m,\] where $A_{k,\ell,m}\subseteq \Conf_r(M)$ is defined by requiring that each of the diamonds in the commuting diagram
\[\xymatrix{
&&A_{k,\ell,m}\ar[dr]\ar[dl]\\
&\bullet\ar[dl]\ar[dr]^-{(3)}&&\bullet\ar[dl]_-{(4)}\ar[dr]\\
\Conf^0_k(U\setminus U_*)\ar[dr]^-{(1)}&&\Conf_r(M)\ar[dr]\ar[dl]&&\Conf_\ell(U_*)\ar[dl]_-{(2)}\\
&\Conf_k(M)&&\Conf_\ell(M)
}\] be a pullback; in other words, $A_{k,\ell,m}$ is the subspace in which the first $k$ points lie $\pi_0$-surjectively in $U\setminus U_*$, the next $\ell$ points lie in $U_*$, and the remaining $m$ points are arbitrary. Since the first and second marked arrows are inclusions of open subspaces, so are the third and fourth marked arrows. Thus, $A_{k,\ell,m}$ is the intersection of two open subspaces and hence itself open, implying the claim.
\end{proof}

\begin{lemma}\label{lem:labeled complete cover}
The collection $\left\{\Conf_X^0(U,\sigma): (U,\sigma)\in B(M,M_0; X)\right\}$ is a complete cover of $\Conf_X(M,M_0)$.
\end{lemma}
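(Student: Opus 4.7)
My plan is to verify the two conditions in the definition of a complete cover: the cover property (every point sits in some member) and the condition that each finite intersection of members be covered from within the family. Lemma \ref{lem:labeled open} already guarantees each $\Conf_X^0(U,\sigma)$ is open.

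To establish the cover property, I would fix a labeled configuration $c = \sum m_a x_a \in \Conf_X(M,M_0)$ and call the indices with $m_a \notin M_0$ and $x_a \neq x_0$ the \emph{essential} ones; there are only finitely many such $a$, and the corresponding $m_a$ are distinct points of $M \setminus M_0$. Around each essential $m_a$ I would pick a small Euclidean ball $U_a$ disjoint from $M_0$, and around each $x_a$ a contractible open neighborhood $V_a$ with $x_0 \notin V_a$, all chosen so that the $U_a$ are pairwise disjoint. Then I would let $U_*$ be a pointed tubular neighborhood of $M_0$ disjoint from the $U_a$'s, and take $\sigma(U_*)$ to be a contractible open neighborhood of $x_0$ disjoint from the $V_a$'s. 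The data $(U,\sigma)$ with $U = U_* \sqcup \bigsqcup_a U_a$ satisfies the requirements of $\op B(M,M_0;X)$ by construction, and $c$ sits in $\Conf_X^0(U,\sigma)$: the essential indices witness clause (1) of the definition, while the phantom indices (those with $m_a \in M_0$ or $x_a = x_0$) satisfy clause (2) via $M_0 \subseteq U_*$ and $x_0 \in \sigma(U_*)$.

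For the finite-intersection condition, given $c \in \bigcap_{i=1}^n \Conf_X^0(U_i,\sigma_i)$, Lemma \ref{lem:labeled poset} reduces the task to producing a single $(U,\sigma)$ with $c \in \Conf_X^0(U,\sigma) \subseteq \bigcap_i \Conf_X^0(U_i,\sigma_i)$. I would repeat the construction above, but shrink $U_a$ and $V_a$ further, using the following observation: for each essential $a$ and each $i$, the hypothesis $c \in \Conf_X^0(U_i,\sigma_i)$ forces at least one of three alternatives, namely (A) $m_a \in (U_i)_j$ and $x_a \in \sigma_i((U_i)_j)$ for a unique essential component $j = j_i(a)$ of $U_i$; (B) $m_a \in (U_i)_*$; or (C) $x_a \in \sigma_i((U_i)_*)$. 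For each $i$ I would fix such an alternative and demand the corresponding containment: in case (A), $U_a \subseteq (U_i)_{j_i(a)}$ and $V_a \subseteq \sigma_i((U_i)_{j_i(a)})$; in case (B), $U_a \subseteq (U_i)_*$; in case (C), $V_a \subseteq \sigma_i((U_i)_*)$. The intersections over $i$ of all demanded sets remain open neighborhoods of $m_a$ and $x_a$ respectively, so small enough $U_a$ and $V_a$ exist. I would also arrange $U_* \subseteq \bigcap_i (U_i)_*$ (still a pointed tubular neighborhood of $M_0$) and $\sigma(U_*) \subseteq \bigcap_i \sigma_i((U_i)_*)$ disjoint from the $V_a$'s. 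The inert maps $\pi_0(U,U_*) \to \pi_0(U_i, U_{i,*})$ sending $a \mapsto j_i(a)$ in case (A) and $a \mapsto *$ otherwise then witness $(U,\sigma) \leq (U_i, \sigma_i)$ for each $i$.

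The hard part will be the bookkeeping for essential indices $a$ that become non-essential relative to some of the $(U_i,\sigma_i)$: each such $a$ is absorbed into the basepoint component via one of the disjunctive alternatives (B) or (C), and the construction of $(U,\sigma)$ must simultaneously accommodate the chosen alternative at every $i$. This will be tractable because cases (B) and (C) constrain $U_a$ and $V_a$ independently, so I can shrink the two factors separately until every constraint across all $i$ is met. A minor additional point is that $\sigma(U_*)$ must be chosen disjoint from each $V_a$, which requires only that $X$ separate $x_0$ from each $x_a$---automatic under any reasonable Hausdorff-type hypothesis on $X$.
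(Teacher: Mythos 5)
Your proof is correct and takes essentially the same approach as the paper's: for the covering property, construct $(U,\sigma)$ from small disjoint Euclidean balls around the essential points, contractible label neighborhoods, and a pointed tubular neighborhood of $M_0$; for completeness, shrink those choices to satisfy the constraints imposed by each member of the finite intersection, then produce the witnessing inert maps. Your explicit three-case analysis (A)/(B)/(C) is a slightly more verbose way of organizing the same shrinking constraints that the paper imposes directly (the paper unconditionally shrinks $U_a$ into every $(U_i)_j$ containing $m_a$ and $\sigma(U_a)$ into every $\sigma_i((U_i)_j)$ containing $x_a$, rather than choosing one alternative per pair), and the reduction via Lemma \ref{lem:labeled poset} is exactly how the paper closes the argument as well.
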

\begin{proof}
We first verify that $\U$ is a cover. We begin by fixing $\sum m_ax_a\in \Conf_X(M,M_0)$ with $m_a\notin M_0$ and $x_a\neq x_0$ for all $a$. We now choose \begin{enumerate}
\item disjoint Euclidean neighborhoods $m_a\in U_a$;
\item a tubular neighborhood $M_0\subseteq U_*$ disjoint from $\bigcup_a U_a$;
\item contractible neighborhoods $x_a\in \sigma(U_a)$ disjoint from $x_0$; and
\item a contractible neighborhood $x_0\in \sigma(U_*)$ disjoint from $\bigcup_a \sigma(U_a)$.
\end{enumerate} Setting $U=U_*\cup\bigcup_a U_a$, we clearly have $\sum m_ax_a\in\Conf_X^0(U,\sigma)$.

For completeness, suppose that $\sum m_ax_a\in\bigcap_{r=1}^N\Conf_X^0(V_r,\tau_r)$. We make the same sequence of choices as in the previous step, while requiring that
\begin{enumerate}
\item if $m_a$ is contained in some $V_{r,i}$, then $U_a\subseteq \bigcap_{m_a\in V_{r,i}} V_{r,i}$;\item $U_*\subseteq \bigcap_{r=1}^N V_{r,*}$; 
\item if $x_a$ is contained in some $\tau_r(V_{r,i})$, then $x_0\notin\sigma(U_a)\subseteq \bigcap_{x_a\in \tau_r(V_{r,i})}\tau_r(V_{r,i})$; and
\item $\sigma(U_a)\subseteq \bigcap_{r=1}^N \tau_r(V_{r,*})$.
\end{enumerate} We define a function $f:\pi_0(U,U_*)\to \pi_0(V_r, V_{r,*})$ by sending $U_a$ to the component $V_{r,i}$ such that $m_a\in V_{r,i}$, provided $m_a$ is essential for $(V_r,\tau_r)$, and to the basepoint otherwise. It is easy to check that $f$ is a well-defined inert map, and the proof is complete upon verifying that $f$ witnesses the inequality $(U,\sigma)\leq (V_r,\tau_r)$. There are two points to check.
\begin{enumerate}
\item If $f(a)\neq *$ or if $a=*$, then $U_a\subseteq V_{r,f(a)}$ and $\sigma(U_a)\subseteq \tau_r(V_{r,f(a)})$ by construction.
\item If $*\neq a\in f^{-1}(*)$, then $m_a$ is not essential for $(V_r, \tau_r)$, so either $m_a\in V_{r,*}$ or $x_a\in \tau_r(V_{r,*})$. By construction, then, either $U_a\subseteq V_{r,*}$ or $\sigma(U_a)\subseteq \tau_r(V_{r,*})$.
\end{enumerate}
\end{proof}

\begin{lemma}\label{lem:labeled contractible}
Each $\Conf_X^0(U, \sigma)$ is contractible.
\end{lemma}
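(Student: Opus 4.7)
The plan is to exhibit an explicit null-homotopy of $\Conf_X^0(U,\sigma)$ by simultaneously deforming each summand $(m_a, x_a)$ of a configuration according to the clause of the definition that places it in $\Conf_X^0(U,\sigma)$. Essential summands---those with $m_a \in U_i$ and $x_a \in \sigma(U_i)$ for some $i \in \pi_0(U, U_*)^\circ$---will contract onto a chosen basepoint $(m_i^0, y_i^0) \in U_i \times \sigma(U_i)$, while non-essential summands will be annihilated by pushing them into $M_0$ or into $x_0$ via the defining relations of $\Conf_X(M, M_0)$.

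First I would assemble auxiliary data: contractions $H_t^i : U_i \to U_i$ onto $m_i^0$ and $K_t^i : \sigma(U_i) \to \sigma(U_i)$ onto $y_i^0$, which exist by the defining conditions $U_i \cong \mathbb{R}^n$ and $\sigma(U_i) \simeq \pt$; an ambient homotopy $h_t : M \to M$ extending, by the identity outside $U_*$, the given isotopy retract of $U_*$ onto $M_0$; and a contraction $K_t^* : \sigma(U_*) \to \sigma(U_*)$ onto $x_0$, which we may assume available (as is the case in the cover built in Lemma~\ref{lem:labeled complete cover}, where $\sigma(U_*)$ is chosen contractible). Then I would define $\Phi_t : \Conf_X^0(U, \sigma) \to \Conf_X^0(U, \sigma)$ by replacing each summand $(m_a, x_a)$ with
\[
(H_t^i(m_a),\, K_t^i(x_a)), \qquad (h_t(m_a),\, x_a), \qquad \text{or} \qquad (m_a,\, K_t^*(x_a)),
\]
according to whether the summand is essential for some $U_i$, non-essential with $m_a \in U_*$, or non-essential with $m_a \notin U_*$ and $x_a \in \sigma(U_*)$. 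At $t = 1$ every non-essential summand is annihilated (since $h_1(U_*) \subseteq M_0$ and $K_1^*(\sigma(U_*)) = \{x_0\}$) and every essential summand collapses to $(m_i^0, y_i^0)$, so $\Phi_1$ is constant at the single configuration $\sum_i m_i^0 y_i^0$, yielding the desired contraction.

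Routine checks are that $\Phi_t$ lands back in $\Conf_X^0(U, \sigma)$---using the disjointness conditions $U_i \cap U_* = \varnothing$ and $\sigma(U_i) \cap \sigma(U_*) = \varnothing$ built into the definition of $\op B(M, M_0; X)$ to keep each deformation confined to its intended open set---and that the two non-essential clauses agree on their overlap, since each fixes the coordinate the other modifies. The hard part will be verifying continuity of $\Phi_t$, because the partition of summands into the three cases depends on where each summand sits. I would handle this by first defining a $\Sigma_r$-equivariant lift $\tilde\Phi_t$ on each open preimage $q_r^{-1}(\Conf_X^0(U, \sigma)) \subseteq \Conf_r(M) \times X^r$, decomposed along the open strata $A_{k,\ell,m}$ isolated in the proof of Lemma~\ref{lem:labeled open}, on each of which the case assignment is literally constant; continuity of $\Phi_t$ then descends through the quotient map defining $\Conf_X(M, M_0)$.
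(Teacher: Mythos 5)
The homotopy $\Phi_t$ as written is discontinuous exactly where you warn it might be, and the suggested fix does not work. Your clause (2), applying when $m_a\in U_*$, modifies the position $m_a$ and fixes the label; clause (3), applying when $m_a\notin U_*$ (which forces $x_a\in\sigma(U_*)$), fixes the position and modifies the label. Take a path of configurations in which a non-essential summand $(m_a(s),x_a(s))$ crosses $\partial U_*$ at some $s=s_0$, with $x_a(s)\in\sigma(U_*)$ throughout (condition (2) forces this on the outside of the crossing, and nothing prevents it on the inside). Just inside, clause (2) gives $(h_t(m_a(s)),x_a(s))$; just outside, clause (3) gives $(m_a(s),K_t^*(x_a(s)))$. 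Even if $h_t$ restricts to the identity on $\partial U_*$, the two one-sided limits are $(m_a(s_0),x_a(s_0))$ and $(m_a(s_0),K_t^*(x_a(s_0)))$---distinct for every $t>0$. Your remark that the clauses ``agree on their overlap, since each fixes the coordinate the other modifies'' has this backwards: because one modifies $m_a$ and the other modifies $x_a$, their outputs do \emph{not} agree, and that disagreement is the obstruction, not a reason to wave it away.

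The appeal to the strata $A_{k,\ell,m}$ does not rescue the argument, because those sets are not disjoint: a summand with $m_a\in U_*$ and $x_a\in\sigma(U_*)$ may appear in either the ``$\ell$'' block or the ``$m$'' block of different $A_{k,\ell,m}$'s, so the preimage of $\Conf_X^0(U,\sigma)$ is a union, not a partition, and the piecewise formulas must still agree on the overlaps---which, by the previous paragraph, they do not. (A further problem: gluing the isotopy retract of $U_*$ with the identity off $U_*$ does not produce a continuous $h_t\colon M\to M$, since the retraction need not tend to the identity at $\partial U_*$; at $t=1$ it lands in $M_0\subsetneq U_*$.) The paper escapes all of this by not running the two mechanisms simultaneously: it exhibits a homotopy equivalence $\Conf_X^0(U,\sigma)\simeq\prod_{i}(U_i\times\sigma(U_i))\times\Conf_X(U_*,M_0)$, with the one auxiliary homotopy being a contraction of $\sigma(U_*)$ disposing only of non-essential summands outside $U_*$ (all of whose labels lie in $\sigma(U_*)$), and then observes $\Conf_X(U_*,M_0)\simeq\Conf_X(M_0,M_0)=\{\varnothing\}$ by isotopy invariance of $\Conf_X$. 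The non-essential content inside $U_*$---exactly what your formula tries to squeeze into clause (2), only to collide with clause (3) at $\partial U_*$---is never contracted by hand but is absorbed into the factor $\Conf_X(U_*,M_0)$ and killed by invariance of the functor.
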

\begin{proof}
A choice of contraction of $\sigma(U_*)$ onto $x_0$ defines a homotopy equivalence \[\Conf_X^0(U,\sigma)\simeq\prod_{\pi_0(U,U_*)^\circ}(U_i\times \sigma(U_i))\times \Conf_X(U_*, M_0).\] The second factor is contractible in view of the isotopy equivalence of pairs $(M_0, M_0)\xrightarrow{\sim} (U_*, M_0)$, and each $U_i$ and each $\sigma(U_i)$ is contractible by assumption.
\end{proof}

We write $q:\Conf_X(M)\to \Conf_X(M,M_0)$ for the map induced by $(M,\varnothing)\to (M,M_0)$.

\begin{lemma}\label{lem:labeled fiber}
For every $(U,\sigma)$, there is a homotopy equivalence $q^{-1}\Conf_X^0(U,\sigma)\simeq \Conf_X(M_0)$.
\end{lemma}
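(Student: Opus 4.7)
The plan is to follow the same template as the proof of Lemma \ref{lem:labeled contractible}, exploiting the fact that the only difference between $\Conf_X(M)$ and $\Conf_X(M,M_0)$ is the annihilation relation at $M_0$, and that $M_0 \subseteq U_*$ by the pointedness assumption.

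First, I would unwind $q^{-1}\Conf_X^0(U,\sigma)$ concretely. Since $M_0 \subseteq U_*$ and $U_*$ is disjoint from the other components $U_i$, a configuration $\sum m_a x_a \in \Conf_X(M)$ lies in this preimage if and only if the same two defining conditions of $\Conf_X^0(U,\sigma)$ hold: for each $i \in \pi_0(U,U_*)^\circ$ there is a unique essential point in $U_i \times \sigma(U_i)$, and every other point satisfies $m_a \in U_*$ or $x_a \in \sigma(U_*)$. In other words, imposing the annihilation relation cannot create or destroy membership, because any point lying in $M_0$ is already accounted for by the clause $m_a \in U_*$.

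Next I would deformation-retract as in Lemma \ref{lem:labeled contractible}. Choosing a contraction of $\sigma(U_*)$ onto $x_0$ and extending to $X$ produces a homotopy that drives every non-essential point with label in $\sigma(U_*)$ to the basepoint, hence out of the configuration. What remains is precisely the product of the essential factors with a copy of the (unquotiented) labeled configuration space of $U_*$. Concretely, this gives a homotopy equivalence
\[
q^{-1}\Conf_X^0(U,\sigma) \;\simeq\; \prod_{i \in \pi_0(U,U_*)^\circ}\!\bigl(U_i \times \sigma(U_i)\bigr) \times \Conf_X(U_*).
\]
Each $U_i$ and each $\sigma(U_i)$ is contractible by the axioms defining objects of $\op B(M,M_0;X)$, so every essential factor drops out.

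Finally, the inclusion $M_0 \hookrightarrow U_*$ is an isotopy equivalence by the definition of pointedness, hence in particular a monotopy equivalence in the sense of Proposition \ref{prop:monotopy}. Functoriality of the construction $\Conf_X(-)$ for such equivalences yields $\Conf_X(U_*) \simeq \Conf_X(M_0)$, completing the chain of equivalences. The only subtle point, which I would double-check carefully, is that the deformation retraction in the second step is well-defined and continuous across the strata of the filtration — that is, that when a non-essential label is pushed to $x_0$ and the corresponding point collides with others or leaves $U$, everything still lands inside $q^{-1}\Conf_X^0(U,\sigma)$. This is guaranteed by the openness statement in Lemma \ref{lem:labeled open} together with the fact that essential points are unchanged throughout the homotopy.
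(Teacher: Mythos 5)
Your proof follows the paper's argument essentially verbatim: identify $q^{-1}\Conf_X^0(U,\sigma)$ explicitly (noting $M_0\subseteq U_*$ means the annihilation relation costs nothing), deformation retract via a contraction of $\sigma(U_*)$ onto $x_0$ to obtain $\prod_i(U_i\times\sigma(U_i))\times\Conf_X(U_*)$, and then use contractibility of each $U_i$, $\sigma(U_i)$ together with the isotopy equivalence $M_0\hookrightarrow U_*$. The only minor quibble is your citation of Proposition \ref{prop:monotopy}, which concerns $\Conf_k$ rather than $\Conf_X$; the relevant fact is the isotopy-invariance of $\Conf_X(-)$ stated informally in the discussion of axioms, but the argument is the same.
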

\begin{proof}
The inverse image $q^{-1}\Conf_X(U,\sigma)$ is the subspace of labeled configurations in $\Conf_X(M)$ with 1) exactly one essential point in each $U_i$, 2) all other points not lying in $U_*$ labeled by points of $\sigma(U_*)$, and 3) an arbitrary subconfiguration lying in $U_*$. Thus, a choice of contraction of $\sigma(U_*)$ onto $x_0$ defines a homotopy equivalence \[q^{-1}\Conf_X^0(U,\sigma)\simeq \prod_{\pi_0(U,U_*)^\circ}(U_i\times \sigma(U_i))\times\Conf_X(U_*).\] Since each $U_i$ and each $\sigma(U_i)$ is contractible, the claim follows from the isotopy equivalence $M_0\xrightarrow{\sim} U_*$.
\end{proof}

\begin{lemma}\label{lem:labeled locally constant}
If $X$ is connected, then the inclusion \[q^{-1}\Conf_X^0(U,\sigma)\to q^{-1}\Conf_X^0(V,\tau)\] is a homotopy equivalence whenever $(U,\sigma)\leq(V,\tau)$.
\end{lemma}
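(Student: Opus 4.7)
The plan is to identify, up to homotopy, the inclusion $\iota\colon q^{-1}\Conf_X^0(U,\sigma) \hookrightarrow q^{-1}\Conf_X^0(V,\tau)$ with the inclusion $\Conf_X(U_*) \hookrightarrow \Conf_X(V_*)$ of $U_*$-configuration spaces, after passing to the product decompositions used in the proof of Lemma \ref{lem:labeled fiber}; connectedness of $X$ will then be invoked to absorb the ``extra point'' contributions that arise from components $U_i$ falling inside $V_*$.

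Let $f\colon \pi_0(U, U_*) \to \pi_0(V, V_*)$ witness $(U,\sigma) \leq (V,\tau)$, and partition $\pi_0(U,U_*)^\circ = A \sqcup B$ according to whether $f(i) \neq *$ or $f(i) = *$; further split $B = B_a \sqcup B_b$, where $B_b = \{ i \in B : \sigma(U_i) \subseteq \tau(V_*) \}$ and $B_a$ is the complement (so $U_i \subseteq V_*$ and $\sigma(U_i) \not\subseteq \tau(V_*)$ for each $i \in B_a$). Writing $\alpha_U$ and $\alpha_V$ for the homotopy equivalences $q^{-1}\Conf_X^0(U,\sigma) \xrightarrow{\sim} \prod_{i \in I^\circ}(U_i \times \sigma(U_i)) \times \Conf_X(U_*)$ and its analogue, the first step is to check that $\alpha_V \circ \iota \circ \alpha_U^{-1}$ is homotopic to
\[ \bigl((m_i, x_i)_{i \in I^\circ},\, c\bigr) \mapsto \bigl((m_{f^{-1}(j)}, x_{f^{-1}(j)})_{j \in J^\circ},\ c + \textstyle\sum_{i \in B_a} m_i x_i\bigr). \]
Here one uses that $\sigma(U_*) \subseteq \tau(V_*)$ makes the two label contractions compatible; that the $B_b$-essentials are annihilated because their labels are dragged through $\tau(V_*)$ to $x_0$; and that the additional part of the $\tau(V_*)$-contraction beyond the $\sigma(U_*)$-contraction acts on the $\Conf_X(U_*)$-factor by a self-map homotopic to the identity, since $\tau(V_*) \simeq \pt$ within $X$. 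Choosing basepoints $(\tilde m_i, \tilde x_i) \in U_i \times \sigma(U_i)$ with $\tilde x_i \notin \tau(V_*)$ for $i \in B_a$---possible precisely because $\sigma(U_i) \not\subseteq \tau(V_*)$---and collapsing the contractible essential factors reduces the displayed map to $c \mapsto c + \sum_{i \in B_a} \tilde m_i \tilde x_i$. Connectedness of $X$ now enters: paths $\gamma_i\colon [0,1] \to X$ from $\tilde x_i$ to $x_0$ produce a homotopy $H_s(c) = c + \sum_{i \in B_a} \tilde m_i \cdot \gamma_i(s)$ to the plain inclusion $\Conf_X(U_*) \hookrightarrow \Conf_X(V_*)$, since at $s=1$ each added label is $x_0$ and the corresponding points are annihilated by the defining relation.

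Finally, the inclusion $\Conf_X(U_*) \hookrightarrow \Conf_X(V_*)$ is itself a homotopy equivalence: $M_0$ is an isotopy retract inside both $U_*$ and $V_*$, and the labeled analogue of Proposition \ref{prop:monotopy}---which holds because $\Conf_X$ is constructed functorially from the monotopy-invariant pieces $\Conf_k$---implies that $M_0 \hookrightarrow U_*$ and $M_0 \hookrightarrow V_*$ each induce homotopy equivalences on $\Conf_X$, so the intermediate inclusion does by two-out-of-three. The hardest part of the plan is the first step of paragraph two: unwinding the definitions of $\alpha_U$ and $\alpha_V$ and tracking the effect of the two distinct label contractions (with $\sigma(U_*) \subsetneq \tau(V_*)$ in general) on the $\Conf_X(U_*)$-factor, while simultaneously accounting for the annihilation of the $B_b$-essentials.
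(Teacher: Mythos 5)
Your proof is correct and takes essentially the same route as the paper's: the same three-way partition of $\pi_0(U,U_*)^\circ$ (your $A$, $B_a$, $B_b$ are the paper's $f^{-1}(\pi_0(V,V_*)^\circ)$, $I$, $I'$), the same product decompositions from Lemma \ref{lem:labeled fiber}, and the same use of connectedness of $X$ to path the labels of the surviving $B_a$-points to $x_0$. The only cosmetic difference is that the paper packages the path-to-$x_0$ homotopy together with the resulting inclusion $\Conf_X(U_*)\hookrightarrow\Conf_X(V_*)$ into a single citation of the stabilization Proposition that immediately precedes the lemma, whereas you unpack that Proposition's proof inline and then finish with an explicit two-out-of-three argument via $M_0\hookrightarrow U_*\hookrightarrow V_*$.
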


Before turning to the proof, we consider an example illustrating the necessity of the hypothesis on $X$.

\begin{example}
Set $M=D^n_6(0)$, $M_0=D_6^n(0)\setminus\mathring{D}_5^n(0)$, $X=S^0$. We define a pair $(U,\sigma)\leq (V,\tau)$ by setting \begin{align*}
U_1&=\mathring{D}^n_1(0)\\
U_2&=\mathring{D}_{1/3}(7/2,0,\ldots, 0)\\
U_*&=D^n_6(0)\setminus D_4^n(0)\\
V_1&=\mathring{D}_2^n(0)\\
V_*&=D^n_6(0)\setminus D_3^n(0).
\end{align*} Since $X=S^0$, $\sigma$ and $\tau$ are determined, and, since $U_1\subseteq V_1$ and $U_*\cup U_2\subseteq V_*$, the inert map $f:\{1,2,*\}\to \{1,*\}$ with $f(1)=1$ and $f(2)=*$ witnesses the claimed inequality. Now, in the commuting diagram
\[\xymatrix{
q^{-1}\Conf_{S^0}(U,\sigma)\ar[d]\ar@{=}[r]^-\sim& U_1\times U_2\times \Conf_{S^0}(U_*)\ar[d]&\{m_1\}\times\{m_2\}\times\displaystyle\coprod_{k\geq0}B_k(U_*)\ar[d]\ar[l]_-{\sim}\\
q^{-1}\Conf_{S^0}(V,\tau)\ar@{=}[r]^-\sim&V_1\times\Conf_{S^0}(V_*)&\{m_1\}\times\displaystyle\coprod_{k\geq0}B_k(V_*),\ar[l]_-{\sim}
}\] the point $\{m_1\}\times\{\varnothing\}$ does not lie in the image of the rightmost map; therefore, this map fails to surject on $\pi_0$.
\end{example}

\begin{definition}
Let $P$ be connected and $N\subsetneq P$ an isotopy retract. For $p\in P\setminus N$ and $x\neq x_0$, the \emph{stabilization map} with respect to $p$ and $x$ is the map \[\Conf_X(N)\cong \{px\}\times \Conf_X(N)\subseteq \Conf_X(D_{\epsilon}(p))\times\Conf_X(N)\to \Conf_X(M),\] where the rightmost map is the composite of the homeomorphism $\Conf_X(D_\epsilon(p))\times\Conf_X(N)\cong \Conf_X(D_\epsilon(p)\amalg N)$ with the structure map for the inclusion.
\end{definition}

Thus, in the homotopy category of spaces, there is a well-defined stabilization map $\Conf_X(P)\to \Conf_X(P)$ depending only on the class of $x$ in $\pi_0(X)$ and a choice of element in $\pi_0(P,N)$.

\begin{proposition}
Stabilization with respect to $p$ and $x$ is an equivalence if and only if $x$ lies in the path component of $x_0$.
\end{proposition}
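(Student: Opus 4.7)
Suppose $x$ lies in the path component of $x_0$; fix a path $\gamma:[0,1]\to X$ with $\gamma(0)=x$ and $\gamma(1)=x_0$. Consider the map
\[\tilde s : X \times \Conf_X(N) \to \Conf_X(P),\qquad (y,\sigma)\mapsto \sigma + py.\]
Continuity is inherited from the coequalizer presentation of $\Conf_X(P)$; the subtle point is only at $y=x_0$, where the annihilation relation $\sigma + px_0 \sim \sigma$ identifies $\tilde s(x_0,-)$ with the inclusion $i_*:\Conf_X(N)\to\Conf_X(P)$, and as $y$ approaches $x_0$ in $X$ the extra point labeled $y$ approaches the basepoint and disappears continuously from the configuration. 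Composing $\tilde s$ with $\gamma\times\id$ therefore produces a homotopy $s_{p,x}\simeq i_*$. Since $N$ is an isotopy retract of $P$, the inclusion is a monotopy equivalence, so $i_*$ is a homotopy equivalence by the corollary to Proposition \ref{prop:monotopy}, and hence so is $s_{p,x}$.

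\textbf{Necessity.} Conversely, suppose $x$ is not in the path component of $x_0$; I will show that $s_{p,x}$ already fails on $\pi_0$. The key computation is the identification
\[
\pi_0\!\left(\Conf_X(Q)\right) \;\cong\; \text{free commutative monoid on } \pi_0(X)\setminus\{[x_0]\},
\]
valid for any connected space $Q$, under which the class of $\sum m_a y_a$ is the multiset of non-basepoint components of the $y_a$. Applied to $Q=N$ and $Q=P$ (note that $N$ inherits connectedness from the monotopy equivalence with $P$), this identifies $\pi_0(s_{p,x})$ with the shift $v\mapsto v+[x]$ on the free commutative monoid. Since $[x]\neq[x_0]$ by assumption, this shift is not surjective, precluding $s_{p,x}$ from being a homotopy equivalence.

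\textbf{Main obstacle.} The chief technical hurdle is the displayed formula for $\pi_0(\Conf_X(Q))$. Well-definedness and surjectivity of the proposed monoid map are routine: a path in $\Conf_X(Q)$ can shuffle or translate points in the connected space $Q$, relabel within a fixed component of $X$, or create and annihilate points via labels passing through $x_0$, and each of these operations manifestly preserves the multiset of non-basepoint label components, while any prescribed multiset is realized by placing representative points at disjoint locations. Injectivity---that two configurations with the same multiset are joined by a path in $\Conf_X(Q)$---requires exhibiting such a path as a concatenation of these elementary moves; this can be obtained by combining connectedness of $Q$ and of each non-basepoint component of $X$ with the contractible local charts $\Conf_X^0(U,\sigma)$ supplied by Lemma \ref{lem:labeled contractible} and the covering recovery result Theorem \ref{thm:complete cover recovery}, which together guarantee that any such path can be decomposed into excursions through contractible charts, within each of which the required elementary moves are visible.
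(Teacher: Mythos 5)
Your sufficiency argument is the same as the paper's: a path from $x$ to $x_0$ homotopes the stabilization map to the structure map for the inclusion $N\subseteq P$, and the latter is a homotopy equivalence because $N$ is an isotopy retract of $P$.

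For necessity you take a genuinely different and more ambitious route. The paper's argument is much shorter: write $X=X_1\amalg X_2$ with $x_0\in X_1$ and $x\in X_2$, and observe that the subspace of configurations having at least one label in $X_2$ is a proper clopen subspace containing the image of stabilization, which therefore fails to surject on $\pi_0$. You instead propose to compute $\pi_0(\Conf_X(Q))$ outright as the free commutative monoid on $\pi_0(X)\setminus\{[x_0]\}$, under which stabilization becomes the non-surjective shift $v\mapsto v+[x]$. That is a stronger and more informative conclusion, but it is not needed here, and your sketch has two issues. First, the identification is \emph{not} "valid for any connected space $Q$": it already fails for $Q$ a single point, since $\Conf_X(\pt)\cong X$. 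What you need (and have, since $P$ contains a disk about $p$) is that $Q$ is a connected manifold of positive dimension, so that it supports arbitrary finite configurations and paths avoiding any prescribed finite set; this hypothesis should be stated. Second, the injectivity half of the $\pi_0$ identification is the real content, and the appeal to Lemma \ref{lem:labeled contractible} and Theorem \ref{thm:complete cover recovery} is not the right tool: those results recover the weak homotopy type of $\Conf_X(M,M_0)$ as a homotopy colimit over a complete cover; they do not directly deliver the concrete step "decompose a path into excursions through contractible charts," which amounts to a Lebesgue-number argument you would still need to run by hand. A direct elementary construction of the joining path (first kill all labels in the component of $x_0$, then match the remaining points one at a time using path-connectivity of $Q$ and of each non-basepoint component of $X$) would be cleaner and is closer to what your sketch actually uses. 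In short, the necessity idea is sound and does prove the statement, but it overclaims in generality and reaches for heavier machinery than the paper's two-line clopen-subspace observation.
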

\begin{proof}
If $x$ lies in the path component of $x_0$, then a path joining the two defines a homotopy from the stabilization map to the structure map for the isotopy equivalence $N\subseteq P$. Conversely, suppose that $X=X_1\amalg X_2$ with $x_0\in X_1$ and $x\in X_2$. The subspace \[\left\{{\textstyle\sum m_ax_a}\mid \{x_a\}\cap X_2\neq \varnothing\right\}\subset\Conf_X(P)\] is closed, open, proper, and contains the image of stabilization with respect to $p$ and $x$. It follows that stabilization does not surject on $\pi_0$.
\end{proof}

\begin{proof}[Proof of Lemma \ref{lem:labeled locally constant}]
Suppose that $f:\pi_0(U,U_*)\to \pi_0(V,V_*)$ witnesses the inequality $(U,\sigma)\leq(V,\tau)$, and write \[\pi_0(U,U_*)\cong *\sqcup f^{-1}(\pi_0(V,V_*)^\circ)\sqcup I\sqcup I',\] where $I=\{*\neq i\in f^{-1}(*)\mid\sigma(U_i)\subsetneq \tau(V_*)\}$ and $I'=\{*\neq i\in f^{-1}(*)\mid\sigma(U_i)\subseteq \tau(V_*)\}$. Choosing contractions of $\sigma(U_*)$ and $\tau(V_*)$ onto $x_0$ and points in $U_i\times\sigma(U_i)$ for $i\in\pi_0(U,U_*)^\circ$ induces the equivalences in the commuting diagram \[\xymatrix{
q^{-1}\Conf_X(U,\sigma)\ar[d]&\displaystyle\prod_I(U_i\times \sigma(U_i))\times\Conf_X(U_*)\ar[d]\ar[l]_-\sim&\Conf_X(U_*)\ar[l]_-\sim\ar[d]\\
q^{-1}\Conf_X(V,\tau)&\Conf_X(V_*)\ar[l]_-\sim&\Conf_X(V_*),\ar@{=}[l]
}\] where the righthand map is a composite of $|I|$ stabilization maps, each of which is an equivalence by our assumption on $X$.
\end{proof}

We now prove the theorem.

\begin{proof}[Proof of Theorem \ref{thm:exactness}]
Lemma \ref{lem:labeled poset} supplies the top arrow and Lemma \ref{lem:labeled open} the dashed filler in the commuting diagram \[\xymatrix{
\op B(M,M_0;X)\ar@{-->}[dr]\ar[rr]^-{\Conf_X^0}&&\Top\\
&\mathrm{Op}(\Conf_X(M,M_0)).\ar[ur]
}\] By Lemma \ref{lem:labeled complete cover}, then, \[\Conf_X(M,M_0)\simeq \hocolim_{\op B(M,M_0;X)}\Conf_X^0\simeq B(\op B(M,M_0;X)),\] where the second equivalence follows from Lemma \ref{lem:labeled contractible}. 

We similarly have the commuting diagram \[\xymatrix{
\op B(M,M_0;X)\ar@{-->}[dr]\ar[rr]^-{q^{-1}\Conf_X^0}&&\Top\\
&\mathrm{Op}(\Conf_X(M)).\ar[ur]
}\] Lemma \ref{lem:labeled complete cover} implies that the collection $\left\{q^{-1}(\Conf_X^0(U,\sigma): (U,\sigma)\in \op B(M, M_0;X)\right\}$ is likewise a complete cover of $\Conf_X(M)$, so \[\Conf_X(M)\simeq \hocolim_{\op B(M, M_0;X)}q^{-1}\Conf^0_X.\] Since $X$ is connected, Lemma \ref{lem:labeled locally constant} and Corollary \ref{cor:hocolim quasifibration} grant that the diagram \[\xymatrix{
q^{-1}\Conf_X^0(U,\sigma)\ar[r]\ar[d]&\displaystyle\hocolim_{\op B(M,M_0;X)}q^{-1}\Conf_X^0\ar[d]\\
\pt\ar[r]^-{(U,\sigma)}&B(\op B(M,M_0;X))
}\] is homotopy Cartesian, and the desired conclusion follows from Lemma \ref{lem:labeled fiber} and the identifications already established.
\end{proof}

We close by pointing out that the same reasoning provides an analogue of the long exact sequence for a triple.

\begin{theorem}[McDuff]
Let $M_0$, $M_1$, and $M_0\cap M_1$ be closed submanifolds of codimension 0 in $M$. If either $X$ or $(M_0,M_0\cap M_1)$ is connected, then the diagram \[\xymatrix{
\Conf_X(M_0, M_0\cap M_1)\ar[r]\ar[d]&\Conf_X(M, M_1)\ar[d]\\
\pt\ar[r]&\Conf_X(M, M_0\cup M_1)
}\] is homotopy Cartesian.
\end{theorem}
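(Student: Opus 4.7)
The plan is to adapt the proof of Theorem~\ref{thm:exactness} essentially verbatim to the map $q : \Conf_X(M, M_1) \to \Conf_X(M, M_0 \cup M_1)$, using the poset $\op B(M, M_0 \cup M_1; X)$ as the indexing category. Lemmas~\ref{lem:labeled poset}--\ref{lem:labeled contractible} apply without alteration, producing the equivalences
\[
\Conf_X(M, M_0 \cup M_1) \simeq \hocolim_{\op B(M, M_0 \cup M_1; X)} \Conf^0_X \simeq B(\op B(M, M_0 \cup M_1; X)),
\]
and Corollary~\ref{cor:hocolim quasifibration} will deliver the desired homotopy Cartesian square once the fibers $q^{-1}\Conf_X^0(U, \sigma)$ are identified with $\Conf_X(M_0, M_0 \cap M_1)$ in a locally constant fashion.

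For the fiber identification, the reasoning of Lemma~\ref{lem:labeled fiber} gives
\[
q^{-1}\Conf_X^0(U, \sigma) \simeq \Conf_X(U_*, U_* \cap M_1)
\]
after contracting $\sigma(U_*)$ onto $x_0$ and collapsing each contractible factor $U_i \times \sigma(U_i)$. I would then restrict attention to the cofinal subposet of objects $(U,\sigma)$ for which $U_*$ is $M_0 \cup M_1$ together with a collar of $\partial(M_0 \cup M_1)$ in $M$ compatible with the pair decomposition, so that the isotopy retraction $U_* \xrightarrow{\sim} M_0 \cup M_1$ restricts to an isotopy equivalence $U_* \cap M_1 \xrightarrow{\sim} M_1$. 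This yields $\Conf_X(U_*, U_* \cap M_1) \simeq \Conf_X(M_0 \cup M_1, M_1)$, and finally excision along the open subset $(M_0 \cup M_1) \setminus M_0 = M_1 \setminus (M_0 \cap M_1)$ of the annihilation locus $M_1$ supplies the canonical homeomorphism $\Conf_X(M_0 \cup M_1, M_1) \cong \Conf_X(M_0, M_0 \cap M_1)$.

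Verifying the analogue of Lemma~\ref{lem:labeled locally constant} -- that the inclusion $q^{-1}\Conf_X^0(U, \sigma) \hookrightarrow q^{-1}\Conf_X^0(V, \tau)$ is a weak equivalence for every $(U, \sigma) \leq (V, \tau)$ -- reduces, as in the original proof, to showing that the stabilization maps inserting a labeled point $p \cdot x$ with $p \in V_* \setminus M_1$ and $x \in \tau(V_*)$ are equivalences. If $X$ is connected, a path from $x$ to $x_0$ produces the required homotopy, precisely as before. Under the alternative hypothesis that the pair $(M_0, M_0 \cap M_1)$ is path connected -- that is, every component of $M_0$ meets $M_0 \cap M_1$ -- one argues dually, dragging $p$ along a path into $M_0 \cap M_1 \subseteq M_1$, where it is annihilated against the locus $M_1$.

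The hard part, such as it is, is the cofinality step flagged above: ensuring that the subposet of $\op B(M, M_0 \cup M_1; X)$ whose objects come equipped with pair-respecting retractions is still large enough to produce a complete cover of $\Conf_X(M, M_0 \cup M_1)$ and to exhibit its classifying space as the correct base. In the codimension-0 setting this reduces to choosing collars of $\partial M_0$ and $\partial M_1$ compatibly along $\partial (M_0 \cap M_1)$, which is a standard transversality construction. All other steps carry over from the proof of Theorem~\ref{thm:exactness} with only cosmetic changes.
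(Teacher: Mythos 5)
Your overall strategy -- running the hypercover argument of Theorem~\ref{thm:exactness} over the poset $\op B(M, M_0\cup M_1;X)$ for the map $q\colon\Conf_X(M,M_1)\to\Conf_X(M,M_0\cup M_1)$, identifying the fibers as $\Conf_X(U_*,M_1)$, and comparing with $\Conf_X(M_0,M_0\cap M_1)$ via excision -- is exactly the ``same reasoning'' the paper alludes to, and the excision step checks out: $M_1\setminus M_0$ is open in $M_0\cup M_1$ (its complement is the closed set $M_0$) and contained in $M_1$, so the paper's excision homeomorphism applies verbatim.

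However, the step you single out as ``the hard part'' is not actually hard, while the step you wave through is where the real work lies. On the first point: the isotopy retraction $U_*\to M_0\cup M_1$ built into the definition of the poset can always be arranged to fix $M_0\cup M_1$ pointwise at every time (it is a standard collar retraction supported near $\partial(M_0\cup M_1)$), hence it automatically carries $M_1$ to $M_1$ and already furnishes an isotopy equivalence of pairs $(U_*, M_1)\simeq(M_0\cup M_1, M_1)$. No passage to a cofinal subposet of ``pair-respecting'' objects is needed; the compatibility is free.

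The genuine gap is in the relative stabilization argument for the case where only $(M_0,M_0\cap M_1)$ is assumed connected. You propose to show that the stabilization $\Conf_X(U_*,M_1)\to\Conf_X(V_*,M_1)$ inserting $p\cdot x$ is an equivalence by ``dragging $p$ along a path into $M_1$,'' mirroring the paper's label-dragging argument. But these two arguments are not symmetric, and the asymmetry is precisely the problem. In the paper's argument the label $x$ travels in $X$, which is completely decoupled from the locations of the configuration points, so no collisions can occur. In your dual argument the point $p$ must travel to $M_1$, but $M_1$ is contained in the \emph{open} set $U_*$ (being a neighborhood of $M_0\cup M_1\supseteq M_1$), so any path from $p\in V_*\setminus U_*$ to a point of $M_1$ necessarily passes through $U_*\setminus M_1$ -- exactly the region where the surviving configuration points live. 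Shrinking $U_*$ concurrently does not rescue the argument: surviving configuration points over $\partial M_1\setminus M_0$ limit onto $\partial M_1\subseteq M_1$ but are never actually annihilated at any finite time, and the $\delta$-collar $U_*'\setminus M_1$ always meets the tail of the path just before it enters $M_1$. So the formula $H_t(c)=p(t)\cdot x+j_t(c)$ is not well-defined as a homotopy of maps. A correct treatment of this case requires a genuinely different idea -- for instance a parametrized collision-avoidance argument that moves both $p$ and the configuration compatibly, or a reduction of the $(M_0,M_0\cap M_1)$-connected case to the $X$-connected case via a separate devissage -- and this is the actual content that distinguishes Theorem~5.18 from a cosmetic restatement of Theorem~\ref{thm:exactness}. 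As written, your proof establishes the $X$-connected half of the hypothesis but leaves the other half unproved.
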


\subsection{Duality for labeled configuration spaces}

We now disuss an analogue of Poincar\'{e} duality in the context of the ``homology theory'' of labeled configuration spaces. One approach to the classical isomorphism $H_*(M)\cong H_c^{n-*}(M)$ is through the following steps.
\begin{enumerate}
\item By taking a compact exhaustion, reduce to the statement $H_*(N)\cong H_c^{n-*}(N,\partial N)$ for compact manifolds with boundary.
\item Observe the local calculation $\mathbb{Z}\cong H_*(D^n)\cong H^{n-*}(D^n,\partial D^n)\cong \widetilde H^{n-*}(S^n)\cong\mathbb{Z}$.
\item Deduce the general case inductively using Mayer-Vietoris and/or exactness.
\end{enumerate}
Our approach will proceed along these same lines; first, however, we must identify the object that is to play the role of compactly supported cohomology.

\begin{recollection}
A Riemannian $n$-manifold $M$ has a principal right $O(n)$-bundle of \emph{orthonormal frames} \[\xymatrix{
\mathrm{Isom}(\mathbb{R}^n, T_pM)\ar[d]\ar[r]&\mathrm{Fr}_M\ar[d]\\
\{p\}\ar[r]&M,
}\] with fiber over $p$ the set of isometries from $\mathbb{R}^n$ with the standard inner product to the tangent fiber over $p$. The orthogonal group $O(n)$ acts on this space of isometries by precomposition. Note the $O(n)$-equivariant but non-canonical isomorphism $\mathrm{Isom}(\mathbb{R}^n, T_pM)\cong O(n)$.
\end{recollection}

We equip $M$ with a metric, whose features will be further specified in time.

\begin{construction}
Define a bundle $E_X=E_X(M)$ by \[E_X:=\mathrm{Fr}_M\times_{O(n)}\Conf_X(D^n,\partial D^n),\] where the action of $O(n)$ on $\Conf_X(D^n,\partial D^n)$ arises from the action by diffeomorphisms of $O(n)$ on the pair $(D^n,\partial D^n)$.
\end{construction}

The projection $\pi: E_X\to M$ has a canonical section defined by $s_0(p)=\varnothing$, and we may identify the fiber $\pi^{-1}(p)$ with $\Conf_X(D_1(T_p(M)), \partial D_1(T_pM))$. Moreover, the inclusion of the subbundle $\mathrm{Fr}_M\times_{O(n)}\Conf_X(D^n,\partial D^n)_{\leq 1}$ is a fiberwise homotopy equivalence, and, using the isomorphism $\Conf_X(D^n,\partial D^n)\cong\Sigma^n X$, which is $O(n)$-equivariant with $O(n)$ acting on the righthand side via the suspension coordiantes, we may identify this subbundle with the fiberwise smash product of $X$ with the fiberwise one point compactification of $TM$. Under this identification, the section $s_0$ is given by the fiberwise basepoint.

\begin{definition}
For a map $f:E\to B$, the \emph{space of sections} of $f$ over $A\subseteq B$ is the pullback in the diagram \[\xymatrix{
\Gamma(A;E)\ar[r]\ar[d]&\Map(A,E)\ar[d]^-{f_*}\\
\{A\subseteq B\}\ar[r]&\Map(A,B).
}\] Fixing a section $s_0$ we say that such a section $s$ has \emph{compact support} if $s|_{A\setminus K}= s_0|_{A\setminus K}$ for some compact subset $K\subseteq A$. We write $\Gamma_c(A;E)\subseteq\Gamma(A;E)$ for the subspace of compactly supported sections.
\end{definition}

Note that we have the identification \[\Gamma_c(A;E)\cong \colim_K \Gamma(A, A\setminus K;E),\] where the colimit is taken over the category of inclusions among compact subsets of $A$, and the space of relative sections $\Gamma(A,A_0;E)$ for $A_0\subseteq A$ is defined as the pullback in the diagram \[\xymatrix{
\Gamma(A,A_0;E)\ar[d]\ar[r]&\Gamma(A;E)\ar[d]^-{(A_0\subseteq A)^*}\\
\{s_0|_{A_0}\}\ar[r]&\Gamma(A_0;E).
}\]

The space $\Gamma_c(M;E_X)$ will play the role of compactly supported cohomology in our version of Poincar\'{e} duality. In order to make the comparison to the labeled configuration space $\Conf_X(M)$, we require a map.

\begin{recollection}
Given a Riemannian manifold $M$, a point $p\in M$, and a sufficiently small $\epsilon>0$, there is an \emph{exponential map} \[\exp_p^\epsilon:D_\epsilon(T_pM)\to M,\] which is an embedding.
\end{recollection}

We assume that $M$ is equipped with a metric such that $\exp_p^1$ is an embedding for all $p\in M$; for example, such a metric exists if $M$ is the interior of a compact manifold with boundary.

\begin{definition}
The \emph{scanning map} for $M$ and $X$ is the map \[s:\Conf_X(M)\to \Gamma(M;E_X)\] defined by letting $s\left(\sum m_a x_a\right)(p)$ be the image of $\sum m_ax_a$ under the composite map \begin{align*}\Conf_X(M)&\to\Conf_X(M,M\setminus \exp_p^1(\mathring{D}_1(T_pM)))\\
&\cong \Conf_X(\exp_p^1(D_1(T_pM)), \exp_p^1(\partial D_1(T_pM)))\\
&\cong \Conf_X(D_1(T_pM),\partial D_1(T_pM))\\
&\cong\pi^{-1}(p),
\end{align*} where the first map is induced by $(M,\varnothing)\to (M,M\setminus \exp_p^1(\mathring{D}_1(T_pM)))$, the second is excision, and the third is induced by the exponential map. 
\end{definition}

Note that, since there are only finitely many $m_a$ with $x_a\neq x_0$, the section $s\left(\sum m_a x_a\right)$ always has compact support.

\begin{theorem}[McDuff]\label{thm:duality noncompact}
Scanning induces a weak equivalence \[\Conf_X(M)\to \Gamma_c(X;E_X)\] provided $X$ is connected.
\end{theorem}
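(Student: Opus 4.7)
The plan is to follow the Poincar\'e duality blueprint already advertised in the subsection: reduce to compact pieces, verify a local model, and induct via Mayer--Vietoris. Scanning is a natural transformation between functorial ``homology-like'' invariants of manifolds, and I would show that both sides satisfy the same local-to-global principles and agree on the local model.

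First I would reduce to the case of a compact manifold with boundary. Since every labeled configuration has finite support, $\Conf_X(M) \cong \colim_K \Conf_X(K)$ over an exhaustion by compact codimension-0 submanifolds $K \subseteq M$, and extending sections by $s_0$ outside their support identifies $\Gamma_c(M; E_X) \cong \colim_K \Gamma(K, \partial K; E_X)$. Scanning commutes with both colimits, so it suffices to prove that
\[
\Conf_X(N) \longrightarrow \Gamma(N, \partial N; E_X)
\]
is a weak equivalence for every compact manifold $N$ with boundary.

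Next I would install the Mayer--Vietoris tools on both sides. Given a decomposition $N = N_1 \cup N_2$ into closed codimension-0 submanifolds meeting transversely along $N_{12} := N_1 \cap N_2$, two applications of the exactness theorem of the previous subsection---one to $N_1 \subseteq N$ and one to $N_{12} \subseteq N_2$---combined with the excision identification $\Conf_X(N, N_1) \cong \Conf_X(N_2, N_{12})$ produce a homotopy pullback square with corners $\Conf_X(N_{12}), \Conf_X(N_1), \Conf_X(N_2), \Conf_X(N)$, valid because $X$ is connected. On the section side, the same decomposition yields a homotopy pullback of relative section spaces by the homotopy sheaf property of $E_X \to M$. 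Naturality of scanning then reduces, by induction on a handle decomposition of $N$, the theorem to a single local model.

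The main obstacle is the base case: the scanning equivalence on a single $n$-disk. Since the tangent bundle is trivial on a disk, $\Gamma(D^n, \partial D^n; E_X)$ unwinds to pointed maps $S^n \to \Sigma^n X$, and the base case becomes the classical McDuff--Segal approximation $\Conf_X(\mathbb{R}^n) \simeq \Omega^n \Sigma^n X$ for connected $X$. I would establish this by a direct filtration argument, comparing the length filtration on $\Conf_X(\mathbb{R}^n)$---whose successive quotients are the Thom-type smashes $\Conf_k(\mathbb{R}^n)_+ \wedge_{\Sigma_k} X^{\wedge k}$ recorded earlier in the section---with a compatible James-type filtration of $\Omega^n \Sigma^n X$, and checking that scanning respects both and induces equivalences on associated graded.
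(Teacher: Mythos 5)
Your macro architecture---reduce to compact manifolds with boundary by exhaustion, then run a handle/Mayer--Vietoris induction off the exactness theorem, terminating at a local model---agrees with the paper's, and the reduction to the compact case is essentially the same colimit argument. The divergence, and the gap, is at the base case.

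You place the base case at the $0$-handle $D^n$, where the claim is the May--Segal approximation $\Conf_X(\mathbb{R}^n) \simeq \Omega^n\Sigma^n X$, and you propose to establish this by comparing the support filtration on $\Conf_X(\mathbb{R}^n)$ with ``a compatible James-type filtration of $\Omega^n\Sigma^n X$.'' This step does not work as stated. For $n > 1$ there is no intrinsic filtration of $\Omega^n\Sigma^n X$ by subspaces with quotients $\Conf_k(\mathbb{R}^n)_+\wedge_{\Sigma_k}X^{\wedge k}$: the only filtration of that shape lives on the configuration model itself, and would be transported to the loop space only through the very scanning equivalence you are trying to prove. (For $n=1$ the James model supplies such a filtration precisely because $J(X)$ is a topological monoid; nothing analogous is available unstably for $n>1$---the Snaith filtration is a stable phenomenon.) Moreover, even with genuinely compatible exhaustive filtrations, an isomorphism on associated graded does not by itself give a weak equivalence of the total spaces without further control (e.g. connectivity estimates feeding a spectral sequence comparison), none of which you have arranged. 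In effect you have deferred the theorem to a prerequisite of comparable difficulty and supplied a sketch that is not a proof of it.

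The paper sidesteps this entirely. Its local lemma (Lemma~\ref{lem:handle case}) proves the relative statement for the handle pair $(D^i\times D^{n-i},\,\partial D^i\times D^{n-i})$ by \emph{downward} induction on $i$, so the base case is $i=n$, the pair $(D^n,\partial D^n)$. There the scanning map is essentially tautological: $\Conf_X(D^n,\partial D^n)_{\le 1}\cong\Sigma^n X$ is a weak equivalence onto $\Conf_X(D^n,\partial D^n)$ by radial expansion, the section space over $\mathring{D}^n$ is $\operatorname{Map}(\mathring{D}^n,\Sigma^n X)\simeq\Sigma^n X$, and scanning is homotopic to the identity. The unstable approximation theorem---your intended base case---then falls out as the $i=0$ instance of the downward induction, using only the exactness theorem (which is available a priori, via the hypercover and Theorem~B machinery of the preceding subsections). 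If you want to keep your Mayer--Vietoris scaffolding, the fix is to re-anchor the induction at the $n$-handle as the paper does, rather than at the disk.
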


As in our approach to classical Poincar\'{e} duality outlined above, we deduce this result from a version for compact manifolds with boundary $N$. In order to accommodate boundary into our scanning technique, we set $W=N\amalg_{\partial N}[0,1)\times\partial N$ and arrange as before that $\exp_p^1$ is an embedding for $p\in W$. Using a collar neighborhood of $\partial N$ in $N$, we choose an isotopy retract $N'\subseteq N$ such that $\exp_p^1(D_1(T_pN))\cap N'=\varnothing$ for $p\in \partial N\times[0,1)$, implying that the dashed filler exists in the commuting diagram \[\xymatrix{\Conf_X(N)\ar[r]^-{N\subseteq W}& \Conf_X(W) \ar[r]^-s& \Gamma(W;E_X)\\
\Conf_X(N')\ar[u]^-\wr\ar@{-->}[rr]&&\Gamma(W,\partial N\times[0,1);E_X)\cong \Gamma(N,\partial N;E_X).\ar[u]
}\]

\begin{theorem}[McDuff]\label{thm:duality with boundary}
Let $N$ be a compact manifold with boundary. Scanning induces a weak equivalence \[\Conf_X(N)\simeq \Gamma(N,\partial N;E_X)\] provided $X$ is connected.
\end{theorem}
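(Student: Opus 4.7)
The approach will be to reduce the result to the case of a disk by induction on a handle decomposition of $N$, exploiting that both $\Conf_X(-)$ and $\Gamma(-,\partial(-); E_X)$ behave like homology theories for manifold pairs. The scanning map is natural in the pair $(N,\partial N)$, and both sides satisfy compatible excision/Mayer--Vietoris principles, so it suffices to check that scanning is a weak equivalence on the atomic pieces.

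For the inductive step, I would decompose $N = N' \cup H$ where $H$ is the most recently attached handle. On the configurations side, Theorem \ref{thm:exactness} together with excision should produce a homotopy fiber sequence $\Conf_X(N') \to \Conf_X(N) \to \Conf_X(H, \partial H \cap N')$, using that $X$ is connected to ensure applicability. On the sections side, an analogous homotopy fiber sequence $\Gamma(N', \partial N' \cap \partial N; E_X) \to \Gamma(N,\partial N; E_X) \to \Gamma(H, \partial H \cap N'; E_X)$ falls out of the pullback definition of relative section spaces. The scanning map is compatible with both fiber sequences, so under the inductive hypothesis on the smaller pieces, a five-lemma argument on the long exact sequences of homotopy groups shows that scanning is a weak equivalence on $N$. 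Connectedness of all relevant spaces (guaranteed by the hypothesis on $X$) is needed to align basepoint components.

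The base case reduces to $N = D^n$. Here $E_X|_{D^n}$ is trivializable with fiber homotopy equivalent to $\Conf_X(D^n, \partial D^n) \simeq \Sigma^n X$ (by the earlier proposition on the filtration piece $\Conf_X(D^n, \partial D^n)_{\leq 1}$), so $\Gamma(D^n,\partial D^n; E_X) \simeq \Omega^n \Sigma^n X$, and scanning becomes the classical electric-field map $\Conf_X(\mathbb{R}^n) \to \Omega^n \Sigma^n X$ of Segal and May.

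The main obstacle is precisely this base case, and I would attack it by a further induction on $n$. For $n = 1$, the scanning map is identified with the James model map $J(X) \to \Omega \Sigma X$, which one checks directly via the filtration $\Conf_X(\mathbb{R})_{\leq k}/\Conf_X(\mathbb{R})_{\leq k-1} \cong (\Delta^k/\partial\Delta^k) \wedge X^{\wedge k}$ matching the James filtration on $\Omega\Sigma X$. For $n > 1$, one would compare a Fadell--Neuwirth-style quasifibration in the labeled setting (obtained by slicing $\mathbb{R}^n \to \mathbb{R}$, with fibers modeled by $\Conf_X(\mathbb{R}^{n-1})$) against the loop fibration $\Omega^n \Sigma^n X \to \Omega^{n-1} \Sigma^n X$, and the inductive hypothesis combined with a five-lemma argument would close the induction and thereby complete the proof.
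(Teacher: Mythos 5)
Your overall skeleton — handle decomposition, five-lemma comparison of fiber sequences coming from exactness on the configurations side and restriction fibrations on the sections side — is the right one, and the paper does follow it. The key place where your plan diverges, and where you would get stuck, is the base case.

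You identify the base case as $N = D^n$, where scanning becomes the electric-field map $\Conf_X(\mathbb{R}^n) \to \Omega^n\Sigma^n X$, and you propose to prove \emph{that} by a fresh induction on $n$ (James model for $n=1$, path--loop fibration versus a labeled Fadell--Neuwirth sequence for $n>1$). This is essentially a re-proof of the Segal/May approximation theorem, and while the plan is plausible in outline, it is a substantial second theorem sitting underneath the one you want, and you have not given yourself the tools in the paper to finish it (for $n > 1$ the comparison between the labeled "slicing" fibration and $\Omega^n\Sigma^n X \to P\Omega^{n-1}\Sigma^n X \to \Omega^{n-1}\Sigma^n X$ requires more setup than a one-sentence appeal).

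The paper avoids this entirely by a change of base case. It first formulates and proves a relative version of the statement (Theorem \ref{thm:duality relative}) for pairs $(M,M_0)$ with $M_0 \subseteq \partial M$, with the absolute statement you want as the $M_0 = \varnothing$ instance. The atomic pieces for the handle induction are then \emph{not} $D^n$ but the pairs $(D^i \times D^{n-i}, \partial D^i \times D^{n-i})$ for $0 \le i \le n$ (Lemma \ref{lem:handle case}), and these are handled by a \emph{downward} induction on the index $i$. The base case of that downward induction is $i = n$, i.e.\ the pair $(D^n, \partial D^n)$. There scanning factors, up to radial expansion, through the inclusion of constant maps $\Sigma^n X \to \Map(\mathring{D}^n_{1/2}, \Sigma^n X)$, which is a weak equivalence simply because $\mathring{D}^n_{1/2}$ is contractible — no Segal/May theorem needed. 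The hard case $\Conf_X(D^n) \simeq \Omega^n\Sigma^n X$ (your proposed base case, corresponding to $i = 0$) then falls out at the \emph{end} of the downward induction, using exactness to step from index $i$ to $i-1$, and this is the only place the hypothesis that $X$ be connected is invoked. So the work you budgeted for a separate dimension-induction on $\Conf_X(\mathbb{R}^n) \simeq \Omega^n\Sigma^n X$ is reabsorbed into the same handle machinery you already planned to use, once you phrase the theorem in its natural relative generality and start the handle-index induction from the top rather than the bottom.

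One smaller remark: your section-side fiber sequence is stated with the arrows in a slightly awkward orientation. The restriction map $\Gamma(N, \partial N; E_X) \to \Gamma(N', \partial N'; E_X)$ is the fibration, with fiber $\Gamma(N, N' \cup \partial N; E_X) \cong \Gamma(H, \partial^{\mathrm{free}} H; E_X)$ by excision; that is, the handle appears as the \emph{fiber} and the smaller manifold as the \emph{base}. This matches the configurations side, where $\Conf_X(H, \partial^{\mathrm{free}} H)$ is the fiber of $\Conf_X(\overline M, \partial \overline M) \to \Conf_X(M, \partial M)$ supplied by exactness and excision. With this orientation the inductive hypothesis is applied to the base (the smaller manifold), and the fiber (a single handle pair) is covered by Lemma \ref{lem:handle case}.
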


Before turning to the proof of this result, we use it to deduce Theorem \ref{thm:duality noncompact}.

\begin{proof}[Proof of Theorem \ref{thm:duality noncompact}]
Realize $M$ as the interior of a compact manifold with boundary and choose a compact exhaustion $M=\colim_k N_k$ with each $N_k$ an isotopy retract along a collar neighborhood of the boundary. In the commuting diagram \[\xymatrix{
\Conf_X(M)\ar[r]&\Gamma_c(M;E_X)\\
\colim_k \Conf_X(N_k)\ar[u]\\
\colim_k\Conf_X(N_k'\ar[u])\ar[r]&\colim_k\Gamma(N_k,\partial N_k;E_X),\ar[uu]
}\] the upper left arrow is a homeomorphism; the lower left arrow is a obtained from a levelwise weak equivalence between diagrams of relatively $T_1$ inclusions, since $N_k'\subseteq N_k$ is an isotopy equivalence; the same holds for the bottom arrow by Theorem \ref{thm:duality with boundary}; and the righthand arrow is a homeomorphism, since $\Gamma(N_k,\partial N_k;E_X)\cong \Gamma(M,M\setminus N_k;E_X)$ and the diagram $\{N_k\}_{k\geq0}$ is final in the category of compact subsets of $M$. It follows that the top arrow is a weak equivalence, as desired.
\end{proof}

As in our proof schematic from before, our strategy in proving Theorem \ref{thm:duality with boundary} will be induction on a local calculation. We first clarify the nature of the induction in question.

\begin{recollection}
If $M$ is an $n$-manifold with boundary and $\varphi: \partial D^i\times D^{n-i}\to \partial M$ an embedding, then we obtain a new manifold with boundary $\overline M:= M\cup_\varphi (D^i\times D^{n-i})$, which we refer to as the result of \emph{attaching an $i$-handle} to $M$ along $\varphi$. A compact manifold with boundary may be built from the empty manifold by a finite sequence of handle attachments.
\end{recollection}

Our strategy, then, will be to proceed by induction on such a handle decomposition. Specifically, a handle attachment gives rise a homotopy pullback square \[\xymatrix{
\Conf_X(M)\ar[r]\ar[d]&\Conf_X(\overline M)\ar[d]\\
\pt\ar[r]&\Conf_X(D^i\times D^{n-i}, \partial D^i\times D^{n-i})
}\] provided $X$ is connected. Thus, if the theorem is known for $M$, then the theorem for $\overline M$ will follow by examining $\Conf_X(D^i\times D^{n-i}, \partial D^i\times D^{n-i})$. This logic leads us to formulate a third, relative version of the theorem.

\begin{theorem}[McDuff]\label{thm:duality relative}
Let $M$ be a compact manifold with boundary and $M_0\subseteq M$ a closed submanifold. Scanning induces a weak equivalence \[\Conf_X(M,M_0)\simeq \Gamma(M\setminus M_0, \partial M\setminus M_0; E_X)\] provided either $X$ or $(M,M_0)$ is connected.
\end{theorem}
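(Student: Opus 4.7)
The plan is to deduce Theorem \ref{thm:duality relative} from Theorem \ref{thm:duality with boundary} by propagating the scanning equivalence through the homotopy fibration sequence produced by Theorem \ref{thm:exactness}. Throughout I first treat the case in which $M_0$ has codimension zero in $M$; the general case follows from this by replacing $M_0$ with a small closed tubular neighborhood and invoking the invariance of both sides of the duality under isotopy equivalence of the annihilation pair. The connectedness hypothesis ``$X$ or $(M,M_0)$ connected'' ensures that one of the two versions of Theorem \ref{thm:exactness} stated in the previous section applies.

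First I would establish a section-side counterpart of the exactness sequence: for a codimension-zero closed submanifold $M_0\subseteq M$, restriction of sections fits into a homotopy fibration sequence
\[
\xymatrix{
\Gamma(M \setminus M_0,\, \partial M \setminus M_0;\, E_X) \ar[r] & \Gamma(M, \partial M;\, E_X) \ar[r] & \Gamma(M_0, \partial M_0;\, E_X),
}
\]
in which the leftmost space embeds as sections that agree with $s_0$ on a collar of $\partial M_0$ and hence extend by $s_0$ to all of $M$, and the rightmost map is ordinary restriction. This is formal: restriction to a codimension-zero closed submanifold is a Hurewicz fibration of section spaces, and the fiber over $s_0$ consists of sections on $M$ equal to $s_0$ on $M_0\cup\partial M$, which by a collar deformation is weakly equivalent to $\Gamma(M\setminus M_0,\partial M\setminus M_0;E_X)$.

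Next I would verify that the scanning maps assemble into a diagram
\[
\xymatrix{
\Conf_X(M_0) \ar[r] \ar[d]_-s & \Conf_X(M) \ar[r] \ar[d]_-s & \Conf_X(M, M_0) \ar[d]^-s \\
\Gamma(M_0, \partial M_0;\, E_X) & \Gamma(M, \partial M;\, E_X) \ar[l] \ar[r] & \Gamma(M \setminus M_0, \partial M \setminus M_0;\, E_X)
}
\]
commuting up to homotopy. Commutativity is essentially automatic from the pointwise definition of $s$ provided the exponential ball radius used in scanning is taken smaller than the width of a fixed collar of $\partial M_0$. By Theorem \ref{thm:exactness} the top row is a homotopy fibration sequence, and by the previous step so is the bottom row. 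Theorem \ref{thm:duality with boundary} identifies the left and middle vertical arrows as weak equivalences, so the comparison theorem for homotopy fibration sequences forces the rightmost vertical arrow to be a weak equivalence as well, proving the theorem.

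The hard part will be the compatibility check in the left square: the inclusion $\Conf_X(M_0)\to \Conf_X(M)$ scans a configuration lying in $M_0$ as a configuration in $M$, and one must check that its image in $\Gamma(M,\partial M;E_X)$ actually lies in the subspace of sections extended by $s_0$ from $\Gamma(M_0,\partial M_0;E_X)$. This amounts to controlling how points of $M_0$ lying close to $\partial M_0$ can produce nontrivial section data at points of $M\setminus M_0$ within exponential reach, which requires carefully coordinating the choice of collar, the scanning radius, and a homotopy through admissible radii. Once this bookkeeping is done the argument is purely formal.
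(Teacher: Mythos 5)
Your plan is circular.  Theorem \ref{thm:duality with boundary} is not an independently established input in this paper: it is precisely the special case $M_0=\varnothing$ of Theorem \ref{thm:duality relative}, stated earlier only because it is used to deduce Theorem \ref{thm:duality noncompact}, with its own proof deferred.  When the paper finally proves Theorem \ref{thm:duality relative}, it does so from scratch by induction on a handle decomposition (the base case $M=D^n$ is supplied by Lemma \ref{lem:handle case}, and the key inductive step attaches an $i$-handle and uses exactness to descend to the case $(D^i\times D^{n-i},\partial D^i\times D^{n-i})$); Theorem \ref{thm:duality with boundary} then drops out as a corollary.  Thus you have assumed something logically downstream of what you are trying to prove.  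A corrected version of your route would have to first establish the $M_0=\partial M$ case (or the $M_0=\varnothing$ case) by some independent argument---which is exactly the handle induction you were hoping to avoid---before the fibration comparison could be brought to bear.

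There are two further, more local, gaps.  First, the section-side fiber sequence you wrote is not correct as stated.  Restriction of sections $\Gamma(M,\partial M;E_X)\to \Gamma(M_0,\,?;E_X)$ lands in $\Gamma(M_0,\,M_0\cap\partial M;E_X)$, since a section that is $s_0$ on $\partial M$ is not constrained on the \emph{internal} part $\partial M_0\setminus\partial M$ of $M_0$'s boundary; and the scanning map for $\Conf_X(M_0)$ targets $\Gamma(M_0,\partial M_0;E_X)$, a genuinely different space whenever $M_0$ has internal boundary.  This mismatch also spoils the commutativity of your left square: a configuration supported in $M_0$ near $\partial M_0\setminus\partial M$ scans, inside $M$, to a section that is nontrivial on points of $M\setminus M_0$ within exponential reach, so the composite $\Conf_X(M_0)\to\Conf_X(M)\to\Gamma(M,\partial M;E_X)$ does \emph{not} land in the fiber over $s_0$.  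This is not a matter of coordinating collar widths and scanning radii---it is a geometric obstruction, and fixing it requires the kind of reduction to $M_0\subseteq\partial M$ and the modified relative scanning construction that the paper carries out explicitly.  Second, your argument only covers the case $X$ connected: Theorem \ref{thm:duality with boundary} requires $X$ connected, so the case in which $X$ is disconnected but $(M,M_0)$ is connected---which the theorem allows, and which the paper handles at the end of its proof via the doubling trick $\widetilde M=M\cup_N M$---is out of reach for your argument even if the circularity and fiber-sequence issues were repaired.
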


We note first that it suffices to consider only those pairs $(M,M_0)$ with $M_0\subseteq \partial M$. Indeed, if $M_0\subseteq N$ is a closed tubular neighborhood, then \begin{align*}
\Conf_X(M,M_0)&\xrightarrow{\sim} \Conf_X(M, N)\\
&\cong \Conf_X(M\setminus \mathring{N}, \partial N),
\end{align*} and, similarly, \begin{align*}
\Gamma(M\setminus M_0, \partial M\setminus M_0)&\xrightarrow{\sim}\Gamma(M\setminus N, \partial M\setminus N)\\
&=\Gamma\left((M\setminus \mathring{N})\setminus \partial N, \partial (M\setminus \mathring{N})\setminus \partial N\right),
\end{align*} so the case of $(M,M_0)$ follows from the case of $(M\setminus \mathring{N}, \partial N)$.

In order to proceed, we must accommodate the relative case into our scanning setup.

\begin{construction} Assuming that $M_0\subseteq\partial M$, we write $\partial M=M_0\cup_{\partial M_1} M_1$, set $W=M\sqcup_{\partial M}\partial M\times[0,1)$, and let $M'$ be a retract along a collar of $M_1$ relative to $M_0$. Finally, choosing a tubular neighborhood $M_0\subseteq N\subseteq W$ such that $\exp_p^1(\mathring{D}_1(T_pM))\cap M_0=\varnothing$ for $p\notin N$, we obtain, up to homotopy, a scanning map
\[\Conf_X(M,M_0)\xleftarrow{\sim}\Conf_X(M', M_0)\to \Gamma(M\setminus N, \partial M\setminus N)\xleftarrow{\sim}\Gamma(M\setminus M_0, \partial M\setminus M_0),\] where the middle arrow is defined by the same formula as before. 
\end{construction}

This construction is possible because $M_0\subseteq M\setminus \exp_p^1(\mathring{D}_1(T_pM))$ for $p\notin N$, so the required map \[\Conf_X(M, M_0)\to \Conf_X(M,M\setminus \exp_p^1(D_1(T_pM)))\cong \Conf_X(D_1(T_pM), \partial D_1(T_pM))\] is defined.

The key step in the proof of Theorem \ref{thm:duality relative} is the following result.

\begin{lemma}\label{lem:handle case}
For $0<i\leq n$ and any $X$, the conclusion of Theorme \ref{thm:duality relative} holds for the pair $(M,M_0)=(D^i\times D^{n-i}, \partial D^i\times D^{n-i})$. Moreover, if $X$ is connected, the conclusion also holds in the case $i=0$.
\end{lemma}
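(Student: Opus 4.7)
The plan is to identify both sides of the claimed equivalence explicitly and then verify that scanning realizes this identification. First I would simplify the target section space. Because $\mathring{D}^i \times D^{n-i}$ is a contractible, parallelizable subset of $\mathbb{R}^n$, the bundle $E_X$ trivializes over this base with fiber weakly equivalent to $\Conf_X(D^n, \partial D^n) \simeq \Sigma^n X$, by the proposition establishing the dimension axiom. Applying the mapping-space adjunction together with contractibility of $\mathring{D}^i$ collapses the target to
\[\Gamma(\mathring{D}^i \times D^{n-i}, \mathring{D}^i \times \partial D^{n-i}; E_X) \;\simeq\; \Map\bigl((D^{n-i}, \partial D^{n-i}), (\Sigma^n X, *)\bigr) \;=\; \Omega^{n-i}\Sigma^n X.\]

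Next I would treat the base case $i = n$, where both sides are $\Sigma^n X$: the source by the dimension axiom, the target as $\Omega^0 \Sigma^n X$. Tracing the scanning map on the filtration-one stage $\Conf_X(D^n, \partial D^n)_{\leq 1} \cong \Sigma^n X$, a one-point configuration $mx$ is sent to a section of $E_X|_{\mathring{D}^n}$ which, under the trivialization $E_X|_{\mathring{D}^n} \simeq \mathring{D}^n \times \Sigma^n X$, is homotopic to the constant section with value $mx \in \Sigma^n X$. Hence scanning realizes the identity on the filtration-one quotient and so is an equivalence.

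For the intermediate range $0 < i < n$, I would propagate the base case by induction on the codimension $n - i$, using Theorem \ref{thm:exactness} together with naturality of scanning. Write $D^i \times D^{n-i}$ as the union of a thickened interior and a ``boundary collar'' whose associated pair matches a lower-codimension case of the lemma; the exactness theorem applied to this decomposition yields a homotopy pullback square on the configuration-space side. Compare this square to the corresponding homotopy pullback on the section-space side coming from the loop-space decomposition $\Omega^{n-i}\Sigma^n X \simeq \Omega(\Omega^{n-i-1}\Sigma^n X)$. Since scanning is natural in the pair, both squares agree at three of their four corners by the inductive hypothesis (and the computation of the section space as a loop space of the previous case), and the scanning map on the fourth corner is therefore an equivalence as well.

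Finally, the case $i = 0$ with $X$ connected is the classical Segal--May scanning equivalence $\Conf_X(D^n) \simeq \Omega^n \Sigma^n X$, which is really what makes the whole chain of reductions work; connectedness of $X$ is needed precisely to ensure the $E_n$-algebra $\Conf_X(\mathbb{R}^n)$ is grouplike, so that scanning is not merely a group completion. For $i > 0$, annihilation along $\partial D^i \times D^{n-i}$ provides inverses at the level of the relevant algebraic structure, which is why the connectedness hypothesis can be dropped. The hard part of the argument is this $i = 0$ case: one must either import a group-completion argument directly, or construct a compatible auxiliary scan in an extra direction and exploit the enriched $E_{n+1}$-structure to bootstrap the equivalence from low-dimensional building blocks, and the compatibility of this construction with the trivialization established in the first paragraph is the principal technical step.
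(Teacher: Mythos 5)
Your base case $i=n$ and the simplification of the target section space both match the paper's proof. The intermediate induction is in roughly the right spirit, but the decomposition the paper actually uses is more specific than what you describe: it splits the annihilation subspace $\partial D^i \times D^{n-i}$ (not the manifold itself) into two thickened hemispheres $\partial_\epsilon^{\pm} D^i \times D^{n-i}$ meeting in $\partial_\epsilon D^{i-1}\times D^1\times D^{n-i}$, applies the \emph{triple} version of exactness, and then identifies the resulting fiber with $\Conf_X(D^{i-1}\times D^{n-(i-1)}, \partial D^{i-1}\times D^{n-(i-1)})$ by excision.

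The real gap is in your treatment of $i=0$. You present it as a separate hard case requiring either an imported group-completion argument or an auxiliary $E_{n+1}$-structure, and you call the scanning equivalence $\Conf_X(D^n)\simeq\Omega^n\Sigma^n X$ ``what makes the whole chain of reductions work.'' That has the logic backward: this equivalence is what the lemma \emph{proves} at $i=0$, not an input to the proof. The downward induction on $i$ does not terminate at $i=1$; it continues uninterrupted to $i=0$, and the connectedness hypothesis on $X$ enters not through any group-completion consideration but through the hypotheses of the triple exactness theorem. In passing from the case $i$ to the case $i-1$, exactness requires that either $X$ or the pair $(\partial_\epsilon^+ D^i\times D^{n-i},\; \partial_\epsilon D^{i-1}\times D^1\times D^{n-i})$ be connected, and the latter fails exactly when $i-1=0$, since then $\partial D^{i-1}=\varnothing$ and the intersection is empty. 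The entire point of the paper's development is that the exactness theorem --- proved via hypercovers and Quillen's Theorem B --- delivers the needed homotopy pullback directly for connected $X$, so there is nothing left to bootstrap. Inserting a group-completion argument at this stage would duplicate a body of theory the paper deliberately sidesteps and would obscure the fact that every case, $i=0$ included, falls out of a single uniform induction.
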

\begin{proof}
We proceed by downward induction on $i$. For the base case $i=n$, it suffices to show that the dashed restriction in the commuting diagram \[\xymatrix{
\Conf_X(D_1^n, \partial D_1^n)_{\leq 1}\ar[d]_-\wr\ar@{=}[r]^-\sim&\Sigma^nX\ar@{-->}[d]   \\
\Conf_X(D_1^n,\partial D_1^n)\ar[r]&\Gamma(\mathring{D}_{1/2}^n;E_X)&\Gamma(\mathring{D}_{1}^n;E_X)\ar[l]_-\sim
}\] is a weak equivalence. For this claim, we note that radial expansion of the scanning neighborhood defines a homotopy from this restriction to the map $\Sigma^nX\to \Gamma(\mathring{D}^n_{1/2}; E_X)\cong \Map(\mathring{D}^n_{1/2}, \Sigma^nX)$ sending a point in $\Sigma^nX$ to the constant map to that point. 

For the induction step, we write $\partial_\epsilon D^i$ for a closed collar neighborhood of $\partial D^i$, and we write $\partial_\epsilon D^i=\partial_\epsilon^+ D^i\cup \partial_\epsilon^- D^i$ with $\partial_\epsilon^+D^i\cap\partial_\epsilon^- D^i\cong \partial_\epsilon D^{i-1}\times D^1$. By exactnes, the diagram \[\xymatrix{\Conf_X(\partial^+_\epsilon D^i\times D^{n-i}, \partial_\epsilon D^{i-1}\times D^1\times D^{n-i})\ar[r]\ar[d]&\Conf_X(D^i\times D^{n-i}, \partial_\epsilon^- D^i\times D^{n-i})\ar[d]\\
\pt\ar[r]&\Conf_X(D^i\times D^{n-i}, \partial_\epsilon D^i\times D^{n-i})
}\] is homotopy Cartesian provided $X$ is connected or $i>0$. The conclusion holds for the bottom right pair by induction, since \[\Conf_X(D^i\times D^{n-i}, \partial_\epsilon D^i\times D^{n-i})\simeq \Conf_X(D^i\times D^{n-i}, \partial D^i\times D^{n-i}),\] while for the upper right pair both the labeled configuration space and the section space are contractible. It follows that the conclusion also holds for the upper left pair, but \[\Conf_X(\partial^+_\epsilon D^i\times D^{n-i}, \partial_\epsilon D^{i-1}\times D^1\times D^{n-i})\cong \Conf_X(D^{i-1}\times D^{n-(i-1)}, \partial D^{i-1}\times D^{n-(i-1)}),\] so the claim follows.
\end{proof}

We now prove the theorem following \cite{Boedigheimer:SSMS}.

\begin{proof}[Proof of Theorem \ref{thm:duality relative}]
We first prove the claim in the special case $M_0=\partial M$. We proceed by induction on a handle decomposition of $M$, which we may take to involve no $n$-handles if no component of $M$ has empty boundary, which is the condition that $(M,\partial M)$ be connected. The base case of $M=D^n$ is known. For the induction step, we write $\overline M=M\cup_\varphi D^i\times D^{n-i}$ and assume the claim for $(M,\partial M)$. By exactness, the diagram \[\xymatrix{
\Conf_X(D^i\times D^{n-i}, D^i\times\partial D^{n-i})\ar[d]\ar[r]&\Conf_X(\overline M, \partial\overline M)\ar[d]\\
\pt\ar[r]&\Conf_X(\overline M, \partial \overline M\cup D^i\times D^{n-i})
}\] is homotopy Cartesian if $X$ is connected or if $i<n$. By excision, \[\Conf_X(\overline M,\partial \overline M\cup D^i\times D^{n-i})\cong\Conf_X(M,\partial M),\] so the claim is known for this pair by induction. Since it is also known for the upper left pair by Lemma \ref{lem:handle case}, the proof is complete in this case.

In the general case, we write $\partial M=M_0\cup_{\partial M_1} M_1$, set $\overline W=M\sqcup_{\partial M}\partial M\times[0,1]$, and note that the diagram \[\xymatrix{
\Conf_X(M_1\times[0,1], \partial M_1\times[0,1])\ar[r]\ar[d]&\Conf_X(\overline W, M_0\times[0,1])\ar[d]\\
\pt\ar[r]&\Conf_X(\overline W, \partial M\times[0,1]).}\] is homotopy Cartesian if $X$ is connected or $(M_1,\partial M_1)$ is connected. Assuming one of these conditions to hold, excision and isotopy invariance show that \begin{align*}\Conf_X(\overline W, \partial M\times[0,1])&\cong\Conf_X(M, \partial M)\\
\Conf_X(\overline W, M_0\times[0,1])&\simeq \Conf_X(M, M_0),
\end{align*} so it suffices to prove the claim for the pair $(M_1\times[0,1],\partial M_1\times[0,1])$. To establish this last case, we use exactness once more, invoking our assumed connectivity of $X$ or $(M_1,\partial M_1)$, to obtain the homotopy pullback square \[\xymatrix{
\Conf_X(M_1\times[0,1], \partial M_1\times[0,1])\ar[d]\ar[r]&\Conf_X(M_1\times[0,2], \partial (M_1\times[0,2])\setminus \mathring{M_1}\times\{0\})\ar[d]\\
\pt\ar[r]&\Conf_X(M_1\times [0,2], M_1\times[0,2]\setminus \mathring{M}_1\times(1,2)).
}\] The upper right entry and the corresponding section space are contractible, and the case of a manifold relative to its boundary shows that the theorem holds for the bottom right pair, since \[\Conf_X(M_1\times [0,2], M_1\times[0,2]\setminus \mathring{M}_1\times(1,2))\cong\Conf_X(M_1\times[1,2],\partial (M_1\times [1,2]))\] by excision. Thus, the proof is complete under the assumption that $X$ or $(M_1,\partial M_1)$ is connected.

In case $X$ and $(M_1,\partial M_1)$ are both disconnected, we let $N$ denote the union of the components of $\partial M$ not intersecting $M_0$, and we set $\widetilde M=M\cup_N M$. Exactness implies that the diagram \[\xymatrix{
\Conf_X(M,M_0)\ar[d]\ar[r]&\Conf_X(\widetilde M, M_0\sqcup M_0)\ar[d]\\
\pt\ar[r]&\Conf_X(\widetilde M, M\sqcup M_0)
}\] is homotopy Cartesian, since $(M,M_0)$ is connected by assumption. Since the theorem holds for the two righthand pairs by what has already been shown, it also holds for $(M,M_0)$.
\end{proof}

\subsection{Postmortem}Before moving on from the subject of Poincar\'{e} duality for labeled configuration spaces, we pause to give an informal discussion of several generalizations and continuations of this story.

\begin{perspective}
We have shown that, for connected $X$, there is a weak equivalence \[\Conf_X(D^n)\xrightarrow{\sim}\Omega^n\Sigma^nX.\] In fact, this equivalence may be upgraded to an equivalence of $E_n$-\emph{algebras} \cite{May:GILS}. Since the domain of this map is the free $E_n$-algebra on $X$ in the category of pointed spaces under Cartesian product, monadicity considerations show that the homotopy theory of connected $E_n$-algebras is equivalent to that of connected $n$-fold loop spaces. This equivalence can be further upgraded to an equivalence of the homotopy theory of all $n$-fold loop spaces with that of \emph{grouplike} $E_n$-algebras, i.e., those whose connected components form a group.
\end{perspective}

\begin{perspective}
For connected $X$, we have the equivalence \[\Conf_X(M)\xrightarrow{\sim}\Gamma_c(M;E_X)=\Gamma(M^+,\infty; E_X)\] as well as the dual equivalence \[\Conf_X(M^+,\infty):=\colim_K\Conf_X(K,\partial K)\xrightarrow{\sim}\colim_K\Gamma(\mathring{K};E_X)\cong \Gamma(M;E_X).\] Thus, the duality of the ``manifolds'' $M$ and $M^+$ mirrors the duality of the ``coefficient systems'' $\Omega^n\Sigma^nX$ and $\Sigma^nX$, a type of algebra and coalgebra, respectively, which is an algebraic phenomenon known as \emph{Koszual duality.} This observation has a generalization in terms of factorization (co)homology in the form of a ``Poincar\'{e}/Koszul duality'' map \[\int_Z A\to \int^{Z^\neg}B^nA,\] where $Z$ is a \emph{zero-pointed manifold} and $Z^\neg$ its \emph{negation}---in our case, $(M_+)^\neg=M^+$ and $(M^+)^\neg=M_+$. Like the scanning map it generalizes, this map is an equivalence under connectivity assumptions \cite{AyalaFrancis:PKD}.
\end{perspective}

\begin{perspective}
It is natural to wonder what can be said about the scanning map \[\Conf_X(M)\to \Gamma(M,\partial M; E_X)\] for disconnected $X$, particularly since our motivating example was the case $X=S^0$. The key to answering this question is the observation that, in the local case $M=D^n$, both the labeled configuration space and the section space carry the structure of a monoid up to homotopy---indeed, as we noted above, both are $E_n$-algebras---and the scanning map respects this structure. Like a group, a monoid has a classifying space, and, at least in the case of connected $X$, the homotopy pullback square \[\xymatrix{\Conf_X(D^n)\ar[d]\ar[r]&\Conf_X(D^{n-1}\times(D^1,\{1\}))\simeq\pt\ar[d]\\
\pt\ar[r]&\Conf_X(D^{n-1}\times (D^1,\partial D^1))
}\] quaranteed by exactness shows that \[B\Conf_X(D^n)\simeq \Conf_X(D^{n-1}\times(D^1,\partial D^1))\overset{?}{\simeq} \Conf_{\Conf_X(D^1,\partial D^1)}(D^{n-1})\simeq\Conf_{\Sigma X}(D^{n-1}),\] and, in fact, this equivalence holds for any $X$ \cite[2.1]{Segal:CSILS}. Since $\Gamma(D^n, \partial D^n;E_X)\cong\Omega^n\Sigma^nX$ is grouplike, it follows from the group completion theorem \cite[1]{McDuffSegal:HFGCT} that we have the induced isomorphism \[H_*(\Conf_X(D^n))[\pi_0^{-1}]\cong H_*(\Gamma(D^n,\partial D^n;E_X)),\] giving a precise answer to the question of how far the scanning map is from a weak equivalence, at least in this local case. Note that $\pi_0$ acts on homology by stabilization, so, taking $X=S^0$, this isomorphism may be interpreted as the statement that \[\colim_k H_i(B_k(D^n))\cong H_i(\Omega^n_0\Sigma^n)\] for $i>0$, where $\Omega^n_0\Sigma^n\subseteq \Omega^n\Sigma^n$ is the degree zero component.

In the general case, assuming $\partial M\neq\varnothing$, we proceed by choosing a small disk $D$ sharing part of its boundary with $M$ and considering the diagram \[\xymatrix{
\Conf_X(D)\ar[r]\ar[d]& \Gamma(D,\partial D;E_X)\ar[d]\\
\Conf_X(M)\ar[r]\ar[d]_-q& \Gamma(M,\partial M;E_X)\ar[d]\\
\Conf_X(M,D)\ar[r]^-\sim&\Gamma(M\setminus D, \partial M\setminus D; E_X).
}\] Taking the homotopy colimit over stabilization maps corresponding to various components of $X$, the top arrow becomes an isomorphism on homology, and, while the map from $\Conf_X(D)$ to the homotopy fiber is not a weak equivalence, it does becomes a homology isomorphism in the limit. 

One way to see this is by applying the following result, which is proved in exactly the same manner as Corollary \ref{cor:hocolim quasifibration}.

\begin{proposition}
If $F:I\to \mathrm{Top}$ is a functor sending each morphism in $I$ to a homology isomorphism, then the diagram \[\xymatrix{
F(i)\ar[r]\ar[d]&\hocolim_IF\ar[d]\\
\pt\ar[r]^-i&BI
}\] of spaces is homology Cartesian for every object $i\in I$.
\end{proposition}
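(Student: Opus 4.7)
The plan is to mimic the proof of Corollary \ref{cor:hocolim quasifibration}, which is itself based on Quillen's Theorem B, replacing the input ``levelwise weak equivalence'' with ``levelwise homology equivalence'' and replacing the homotopy-fiber identification by a homology-fiber identification throughout. The homotopy Cartesian square coming from Theorem B (Corollary \ref{cor:quillen b}) takes the form
\[
\xymatrix{
\hocolim_{(i\downarrow I)} F\circ\pi_i\ar[r]\ar[d] & \hocolim_I F\ar[d]\\
\pt\ar[r]^-i & BI,
}
\]
where $\pi_i:(i\downarrow I)\to I$ is the forgetful functor, and this identification only uses that each structure map of $F$ induces a weak equivalence on classifying spaces of undercategories---which always holds, since those are built from the bar construction on $F$ and do not use the hypothesis on $F$. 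So the whole question reduces to showing that the natural comparison map $F(i)\to \hocolim_{(i\downarrow I)}F\circ\pi_i$ induces an isomorphism on homology.

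For this, first I would use that $(i\downarrow I)$ has an initial object, namely $\id_i$, so $B(i\downarrow I)\simeq\pt$; hence for any constant functor $c_X:(i\downarrow I)\to \Top$ the natural map $\hocolim c_X\to X$ is a weak equivalence. Next, the canonical natural transformation
\[
\eta:F\circ\pi_i\longrightarrow c_{F(i)},
\]
whose component at an object $f:i\to j$ of $(i\downarrow I)$ is $F(f):F(j)\to F(i)$, is by hypothesis a levelwise homology isomorphism. The proof is complete as soon as one knows that a levelwise homology isomorphism between functors $(i\downarrow I)\to\Top$ induces a homology isomorphism on homotopy colimits, since then one obtains homology isomorphisms
\[
\hocolim_{(i\downarrow I)} F\circ\pi_i\xrightarrow{\ \hocolim\eta\ }\hocolim_{(i\downarrow I)} c_{F(i)}\xrightarrow{\ \sim\ } F(i),
\]
yielding the desired square.

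The key lemma---levelwise homology equivalence descends to homology equivalence on $\hocolim$---is the main obstacle, but it is not really an obstacle at all: it follows from the spectral sequence of Corollary \ref{cor:simplicial spectral sequence} applied to the simplicial space $\mathrm{Bar}_\bullet(F\circ\pi_i)$ computing the homotopy colimit. Indeed, $\eta$ induces a map of simplicial spaces that is a levelwise homology equivalence (each $\mathrm{Bar}_n$ is a disjoint union of values of the functor, and disjoint unions preserve homology equivalences), and hence induces an isomorphism on the $E^2$-pages $E^2_{p,q}=H_p(\mathrm{Alt}(H_q(\mathrm{Bar}_\bullet(-))))$ of the two spectral sequences, forcing an isomorphism on the abutment. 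One minor bookkeeping point I would handle explicitly is the standard caveat that the bar construction is always cofibrant, so no fibrant/cofibrant replacement issues interfere with identifying homotopy colimits with geometric realizations of the bar construction. With that in hand the proof is complete.
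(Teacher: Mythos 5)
Your argument has a genuine gap at the very first step. You claim that, independently of any hypothesis on $F$, the square
\[
\xymatrix{
\hocolim_{(i\downarrow I)} F\circ\pi_i\ar[r]\ar[d] & \hocolim_I F\ar[d]\\
\pt\ar[r]^-i & BI
}
\]
is homotopy Cartesian, and that the hypothesis of Theorem B needed for this ``always holds.'' Both assertions are false. Take $I$ to be the pushout shape $a\leftarrow c\to b$. Then $BI$ is contractible, so the homotopy fiber of $\hocolim_I F\to BI$ over $a$ is the entire homotopy pushout of $F(a)\leftarrow F(c)\to F(b)$; meanwhile $(a\downarrow I)$ is the trivial category, so $\hocolim_{(a\downarrow I)}F\circ\pi_a=F(a)$. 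These differ in general, and the Theorem B hypothesis---that $c\to a$ induce a weak equivalence $\hocolim_{(c\downarrow I)}F\circ\pi_c\to\hocolim_{(a\downarrow I)}F\circ\pi_a$, i.e., a weak equivalence from the homotopy pushout to $F(a)$---fails. Nor does the homology-isomorphism hypothesis rescue the intermediate square: take $F(c)=\pt$ mapping to $F(a)=F(b)=X$ an acyclic, connected, non-simply-connected space; then $F$ sends morphisms to homology isomorphisms but $X\to X\vee X$ is not a weak equivalence, so the intermediate square is still not homotopy Cartesian even though the proposition's conclusion does hold in this example. There is also a small variance slip: the component of a natural transformation $F\circ\pi_i\to c_{F(i)}$ at an object $f\colon i\to j$ would have to be a map $F(j)\to F(i)$, whereas $F(f)$ goes the other way, so the natural map actually runs $c_{F(i)}\to F\circ\pi_i$.

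The paper's intended argument runs Lemma \ref{lem:baby unstraightening} verbatim with ``weak equivalence'' replaced throughout by ``homology isomorphism'': build the homotopy fiber of $d^*\mathrm{Bar}_\bullet(\mathrm{Sing}\,F)\to NI^{op}$ by applying the small object argument to $\Delta^0\to NI^{op}$; note that the zeroth stage of the resulting filtration is $\mathrm{Sing}\,F(i)$; show each attaching map $\Lambda^n_k\times_{NI^{op}}d^*\mathrm{Bar}_\bullet(\mathrm{Sing}\,F)\to\Delta^n\times_{NI^{op}}d^*\mathrm{Bar}_\bullet(\mathrm{Sing}\,F)$ is a homology isomorphism via the same bisimplicial-set square and two-out-of-three for homology isomorphisms; and then use that a cofibration which is a homology isomorphism remains one after cobase change and sequential colimit. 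Note also that Corollary \ref{cor:hocolim quasifibration} is itself proved directly from Lemma \ref{lem:baby unstraightening}, not from Theorem B---the logical dependence in the paper runs the other way, with Corollary \ref{cor:quillen b} deduced from the lemma---so the proof you are asked to mimic does not invoke Theorem B at all. Your observation that levelwise homology isomorphisms between split simplicial spaces induce homology isomorphisms on realizations (via Corollary \ref{cor:simplicial spectral sequence}) is correct and does feature as an ingredient in the corrected argument, but the Theorem B reduction with which you lead is the part that fails.
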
 

In the scenario at hand, we apply this result to the functor \begin{align*}
\op B(M, D; X)&\to \Top\\
(U,\sigma)&\mapsto(\hocolim q)^{-1}\Conf^0_X(U,\sigma).
\end{align*} The previous local group completion calculation shows that the hypothesis is satisfied. Finally, the Serre spectral sequence implies that scanning induces an isomorphism \[H_*(\hocolim \Conf_X(M))\cong H_*(\Gamma(M,\partial M; E_X)).\] In particular, the homology of $B_k(M)$ converges to that of a component of $\Gamma(M,\partial M; E_{S^0}),$ and, with more care, one can show that each $H_i(B_k(M))$ eventually stabilizes; one says that configuration spaces of open manifolds exhibit \emph{homological stability}. What is more surprising is that configuration spaces of closed manifolds also exhibit homological stability, at least rationally \cite{Church:HSCSM, RandalWilliams:HSUCS}. 
\end{perspective}

\section{Homology calculations from stable splittings}\label{section:homology from splittings}

The goal of this section is the computation of the rational homology of the unordered configuration spaces of a large class of manifolds. 

\begin{theorem}\label{thm:odd homology}
If $M$ is an odd dimensional compact manifold with boundary and $\mathbb{F}$ is a field of characteristic 0, then \[H_*(B_k(M))\cong \Sym^k\left(H_*(M)\right).\]
\end{theorem}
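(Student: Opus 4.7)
The plan is to combine the stable splitting of labeled configuration spaces established earlier in this section with the scanning duality of Section \ref{section:mapping space models}, and then compute the rational homology of the resulting section space via classical rational homotopy. The odd-dimensionality of $M$ enters to guarantee that a key fiber sphere is rationally Eilenberg--MacLane.

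Fix a large even integer $r$ and take $X = S^r$. The stable splitting gives the rational decomposition
$$\tilde H_*(\Conf_{S^r}(M);\mathbb{Q})\cong\bigoplus_{k\geq 0}\tilde H_*\bigl(D_k(M,S^r);\mathbb{Q}\bigr),\qquad D_k(M,S^r):=\Conf_k(M)_+\wedge_{\Sigma_k}(S^r)^{\wedge k}.$$
Each $D_k(M,S^r)$ is the Thom space of the rank-$rk$ bundle $\xi_k=\Conf_k(M)\times_{\Sigma_k}\mathbb{R}^{rk}\to B_k(M)$; since $r$ is even, the $\Sigma_k$-action on $\mathbb{R}^{rk}$ is orientation-preserving, so $\xi_k$ is rationally orientable. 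The Thom isomorphism together with the transfer argument of Lemma \ref{lem:transfer} then yields $\tilde H_*(D_k(M,S^r);\mathbb{Q})\cong H_{*-rk}(B_k(M);\mathbb{Q})$, and hence
$$\tilde H_*(\Conf_{S^r}(M);\mathbb{Q})\cong\bigoplus_{k\geq 0}H_{*-rk}(B_k(M);\mathbb{Q}).\qquad(\star)$$

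Dually, Theorem \ref{thm:duality with boundary} identifies $\Conf_{S^r}(M)\simeq\Gamma(M,\partial M;E_{S^r})$, where $E_{S^r}\to M$ has fiber $\Sigma^n S^r = S^{n+r}$. This is where the odd-dimensionality of $M$ is used: since $n$ is odd and $r$ is even, $n+r$ is odd, so $S^{n+r}\simeq_{\mathbb{Q}} K(\mathbb{Q},n+r)$. Rational obstruction theory for sections of a bundle with rationally Eilenberg--MacLane fibers then gives
$$\Gamma(M,\partial M;E_{S^r})\simeq_{\mathbb{Q}}\prod_i K\bigl(H^{n+r-i}(M,\partial M;\underline{\mathbb{Q}}_\omega),\,i\bigr)\simeq_{\mathbb{Q}}\prod_i K\bigl(H_{i-r}(M;\mathbb{Q}),\,i\bigr),$$
with $\underline{\mathbb{Q}}_\omega$ the orientation local system of $TM$ and the second equivalence by Poincar\'e--Lefschetz duality. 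The rational homology of a product of Eilenberg--MacLane spaces being free graded-commutative on its primitives, we obtain
$$H_*(\Gamma(M,\partial M;E_{S^r});\mathbb{Q})\cong\Sym\bigl(H_*(M;\mathbb{Q})[r]\bigr),$$
whose weight-$k$ summand equals $\Sym^k(H_*(M;\mathbb{Q}))$ with total degree shifted by $rk$. Matching weight-$k$ pieces with $(\star)$ then yields $H_*(B_k(M);\mathbb{Q})\cong\Sym^k(H_*(M;\mathbb{Q}))$, as desired.

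The main obstacle is verifying that $\Gamma(M,\partial M;E_{S^r})$ really is rationally the product of Eilenberg--MacLane factors above: it reduces to showing that the Postnikov $k$-invariants of $E_{S^r}$ vanish rationally, which follows from the rational triviality of the relevant twisted Euler class for connected compact $M$ with non-empty boundary (via Poincar\'e--Lefschetz duality in the correct degree). A secondary technical point is the naturality of the weight decompositions in the final matching step; this follows from the compatibility of scanning with the Pontryagin multiplication on $\coprod_k \Conf_k(M)$. In even dimensions, $S^{n+r}$ carries a nontrivial rational Whitehead square, so the fiber is no longer rationally Eilenberg--MacLane and this argument collapses; the formula correspondingly fails, as is already visible for even $n$ in Proposition \ref{prop:unordered rational}.
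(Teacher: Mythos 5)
Your proposal is correct, and it takes a genuinely different route from the paper's proof. The paper goes through Theorem \ref{thm:labeled conf homology} (B\"odigheimer--Cohen--Taylor): it proves $H_*(\Conf_{S^r}(M))\cong\bigotimes_i H_*(\Omega^{n-i}S^{n+r})^{\otimes\dim H_i(M)}$ by induction over a handle decomposition, establishing collapse of the relevant Serre spectral sequences via comparison with the corresponding fibrations of $\Omega^\infty\Sigma^\infty$'s (Lemmas \ref{lem:surjective collapse}--\ref{lem:spectral sequence injective}), then feeds in Proposition \ref{prop:loop space odd sphere} and a sign-representation computation to identify each tensor factor's weight pieces. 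You instead compute $H_*(\Gamma(M,\partial M;E_{S^r});\mathbb{Q})$ in one stroke: the fiber $S^{n+r}$ is rationally $K(\mathbb{Q},n+r)$ because $n+r$ is odd, so the section space is a generalized Eilenberg--MacLane space whose homotopy groups are the twisted cohomology $H^{n+r-i}(M,\partial M;\underline{\mathbb{Q}}_\omega)\cong H_{i-r}(M;\mathbb{Q})$, and its rational homology is the free graded-commutative algebra $\Sym(H_*(M;\mathbb{Q})[r])$. Both proofs use the stable splitting (Theorem \ref{thm:stable splitting}), the duality equivalence (Theorem \ref{thm:duality with boundary}), and the Thom isomorphism with Lemma \ref{lem:orientable}; you bypass Theorem \ref{thm:labeled conf homology} and its spectral-sequence apparatus entirely. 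Two small remarks. First, your worry about the twisted Euler class is unnecessary: the bundle $E_{S^r}$ rationalizes to a bundle of topological abelian groups $K(\underline{\mathbb{Q}}_\omega,n+r)$ (the $O(n)$-action on $S^{n+r}_{\mathbb{Q}}$ is by $\pm\mathrm{id}$, a group automorphism), which carries a zero section by definition, so the section space is \emph{a priori} nonempty and a GEM. Second, the weight matching in the final step is cleanest by letting $r\to\infty$: for $r>nk$, the degree ranges $[rk,rk+nk]$ of the weight-$k$ pieces on both sides of $(\star)$ are disjoint in $k$, so the isomorphism of total rational homology forces a weight-by-weight isomorphism without needing naturality with respect to the cardinality filtration. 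The tradeoff worth noting is that the paper's longer route is engineered to work over $\mathbb{F}_p$ (and in characteristic $2$ for $n$ even), where the fiber sphere is not rationally Eilenberg--MacLane and your GEM argument is unavailable; your argument is cleaner but is intrinsically characteristic-zero.
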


\begin{remark}
Since configuration spaces are isotopy invariant, this computation is valid for any odd-dimensional manifold of finite type.
\end{remark}

\begin{remark}
With slightly more care, it is not hard to show that this isomorphism is induced by the canonical inclusion $B_k(M)\to \Sym^k(M)$.
\end{remark}

\begin{remark}
Similar, but more complicated, statements are available for $\mathbb{F}_p$, and may be derived in essentially the same way, given the requisite knowledge of the mod $p$ homology of the configuration spaces of $\mathbb{R}^n$. In the case $p=2$, one can eliminate the requirement that $n$ be odd.
\end{remark}

Our plan is to exploit the natural filtration $\Conf_X(M)=\bigcup_{k\geq0}\Conf_X(M)_{\leq k}$, whose successive quotients are identified as
\[\faktor{\Conf_X(M)_{\leq k}}{\Conf_X(M)_{\leq k-1}}\cong \Conf_k(M)_+\wedge_{\Sigma_k}X^{\wedge k}.\] In the case $X=S^m$, these quotients are Thom spaces of vector bundles on the configuration spaces $B_k(M)$, so, by the Thom isomorphism, the primary obstruction to extracting the homology of $B_k(M)$ from the homology of $\Conf_{S^m}(M)\simeq \Gamma_c(M; E_{S^m})$ is the difference between $\Conf_{S^m}(M)$ and the graded space associated to the cardinality filtration. Happily, a result of \cite{CohenMayTaylor:SCSC,Boedigheimer:SSMS} guarantees that, at the level of homology, this difference is no difference at all.

\subsection{Stable splitting}\label{section:stable splittings}

In this section, we prove the following result. See Appendix \ref{appendix:Spanier--Whitehead} for an explanation of unfamiliar terms.

\begin{theorem}[B\"{o}digheimer, Cohen-May-Taylor]\label{thm:stable splitting}
There is a filtered stable weak equivalence \[\Conf_X(M,M_0)\xrightarrow{\sim_s}\bigvee_{k\geq1}\Conf_k(M,M_0)\wedge_{\Sigma_k}X^{\wedge k}\] for any manifold $M$, submanifold $M_0$, and pointed CW complex $X$.
\end{theorem}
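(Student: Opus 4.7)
The plan is to exploit the cardinality filtration $F_k := \Conf_X(M,M_0)_{\leq k}$, whose subquotients are computed in the excerpt as $F_k/F_{k-1} \cong D_k := \Conf_k(M,M_0)\wedge_{\Sigma_k}X^{\wedge k}$. The goal reduces to producing stable maps $\pi_k\colon \Conf_X(M,M_0)\to D_k$ that split the quotient maps $F_k\to D_k$, compatibly enough with the filtration to assemble into the claimed filtered stable equivalence.

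The key construction is a Pontrjagin--Thom collapse. There is a natural unstable map
\[
\sigma_k\colon \Conf_k(M)_+\wedge_{\Sigma_k}X^{\wedge k}\to \Conf_X(M,M_0),\qquad [(m_1,\ldots,m_k);(x_1,\ldots,x_k)]\mapsto \textstyle\sum_a m_a x_a,
\]
which is almost a section of $F_k\to D_k$, except that when some $x_a = x_0$ the image lies in a smaller stratum rather than at the basepoint. To produce $\pi_k$ in the other direction, I would fix a Riemannian metric on $M$ for which small exponential balls are embedded, and for a configuration with essential points (those with $x_a\neq x_0$ and $m_a\notin M_0$) assign each essential $m_a$ a radius $r_a = \tfrac{1}{2}\min\{d(m_a,m_b),\,d(m_a,M_0)\}$. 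The exponential disks $\exp_{m_a}(D_{r_a})$ are then pairwise disjoint and disjoint from $M_0$, and the Pontrjagin--Thom collapse for this embedding produces a map to $D_n$ remembering only the stratum of cardinality equal to the number $n$ of essential points. Smashing with an appropriate sphere to absorb dependence on $n$ into a stabilization coordinate yields the desired stable map $\pi_k$ that vanishes on configurations of essential cardinality different from $k$.

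To conclude, I would verify by induction on the filtration that the composite $D_k \xrightarrow{\sigma_k} F_k \hookrightarrow \Conf_X(M,M_0) \xrightarrow{\pi_k} D_k$ is stably the identity and that $\pi_k$ annihilates $F_{k-1}$ stably. The wedge $\bigvee_k \pi_k$ is then a filtered stable weak equivalence with stable inverse assembled from the $\sigma_k$.

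The main obstacle is making the Pontrjagin--Thom construction genuinely continuous across the boundaries between strata, where an essential point ceases to be essential (because $x_a \to x_0$ or $m_a \to M_0$ or $m_a$ approaches another essential point). Unstably this cannot be repaired --- it would contradict the generic nontriviality of the connecting maps in the filtration's long exact sequence; the stable setting allows the discontinuity to be absorbed into the auxiliary sphere coordinate, following the strategy of Cohen--May--Taylor and B\"odigheimer. Extra care is needed to keep the construction sufficiently natural in $(M,M_0)$ that the resulting equivalence respects the entire cardinality filtration, rather than merely splitting each cofiber sequence in isolation.
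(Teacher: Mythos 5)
Your proposal identifies the correct target (splitting the cardinality filtration via the identification $F_k/F_{k-1}\cong D_k$, Lemma \ref{lem:labeled cofibrations}, and the five lemma for cofiber sequences in $\SW$), and you correctly diagnose that an \emph{unstable} retraction $\Conf_X(M,M_0)\to D_k$ cannot exist, so some stabilization must enter. But the core of the argument --- actually producing the stable maps --- has a genuine gap, and the mechanism you gesture at is not the one that works.

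The Pontrjagin--Thom collapse with radii $r_a=\tfrac12\min\{d(m_a,m_b),d(m_a,M_0)\}$ is indeed discontinuous where an essential point becomes inessential: the radii depend only on the $m_a$, not on the labels $x_a$, so as $x_a\to x_0$ the disk around $m_a$ persists while the point is supposed to vanish. You acknowledge this, but ``absorb the discontinuity into the auxiliary sphere coordinate'' is not a construction, and there is no straightforward repair in this framework. The same problem infects your putative sections $\sigma_k$: the assignment $[(m_a);(x_a)]\mapsto\sum m_ax_a$ is not well-defined on $\Conf_k(M,M_0)\wedge_{\Sigma_k}X^{\wedge k}$, since the target has no preferred image for the basepoint.

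The construction that actually makes this work is quite different and, crucially, is \emph{continuous unstably}; stability enters only through the choice of model. One fixes an embedding $\varphi\colon\coprod_{k\geq0}B_k(M)\to\mathbb{R}^\infty$ and defines a single map
\[
\Phi\colon\Conf_X(M,M_0)\to\Conf_V(\mathbb{R}^\infty),\qquad V:=\bigvee_{k\geq1}D_k,
\]
by sending $\sum_{a\in A}m_ax_a$ to the labeled configuration $\sum_{\varnothing\neq I\subseteq A}\varphi(\{m_a\}_{a\in I})\cdot[\sum_{a\in I}m_ax_a]_{|I|}$, where $[-]_{|I|}$ is the quotient map to $D_{|I|}$. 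This ``sum over all nonempty subsets'' formula is continuous precisely because when a point $m_b$ enters $M_0$ or its label tends to $x_0$, every term with $b\in I$ has its $V$-label tend to the basepoint of $D_{|I|}$, so those summands vanish continuously rather than jumping. Stability then enters by recognizing $\Conf_V(\mathbb{R}^\infty)\simeq\Omega^\infty\Sigma^\infty V$ via the scanning equivalence, and filtration compatibility is built in: the restriction of $\Phi$ to $F_k$ lands in $\Conf_{V_k}(\mathbb{R}^{r_k})$, and the composite with the projection to $\Conf_{V_k/V_{k-1}}$ is homotopic to $[-]_k$ because $\varphi$ is nullhomotopic. The induction and five lemma then proceed as you indicate. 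So the filtration strategy you propose is right, but the power-set construction --- the actual idea that produces a continuous map and sidesteps the discontinuity you identified --- is the missing ingredient.
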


Since filtered stable weak equivalences induce isomorphisms on homology by Corollary \ref{cor:filtered stable weak equivalence homology}, this result will be sufficient to allow us to proceed with our computation.

There are other lenses through which to view this result aside from this application to configuration spaces; indeed, one of its appealing features is that, through the scanning map, it allows information to flow equally in the other direction, implying stable splittings for many familiar mapping spaces.

\begin{example}[Snaith]
For $X$ connected, we have \[\Omega^n\Sigma^nX\xleftarrow{\sim} \Conf_X(D^n)\xrightarrow{\sim_s} \bigvee_{k\geq1}\Conf_k(D^n)_+\wedge_{\Sigma_k}X^{\wedge k}.\]
\end{example}

\begin{example}[Goodwillie]
For $X$ connected, we have \[\Lambda\Sigma X\xleftarrow{\sim} \Conf_X(S^1)\xrightarrow{\sim_s} \bigvee_{k\geq1}S^1_+\wedge_{C_k}X^{\wedge k}.\]
\end{example}

\begin{example}[\cite{Boedigheimer:SSMS}]
Let $K$ be a finite CW complex. For connected $X$, the mapping space $\Map(K,\Sigma^mX)$ splits stably for sufficiently large $m$. Indeed, choosing $M\simeq K$ with $M$ a parallelizable $m$-manifold, we have \begin{align*}
\Map(K,\Sigma^mX)&\simeq \Map(\mathring{M}, \Sigma^mX)\\
&\simeq\Gamma(\mathring{M}; E_X)\\
&\simeq \Conf_X(M,\partial M)\\
&\simeq_s \bigvee_{k\geq1}\Conf_k(M,\partial M)\wedge_{\Sigma_k} X^{\wedge k}.
\end{align*}
\end{example}

For the remainder of this section, we set $V_k=\bigvee_{i=1}^k\Conf_i(M,M_0)\wedge_{\Sigma_i}X^{\wedge i}$. To begin with, we seek to find a non-decreasing sequence $\{r_k\}_{k\geq1}$ of non-negative integers and a collection of maps $\Sigma^{r_k}\Conf_X(M,M_0)_{\leq k}\to \Sigma^{r_k} V_k$ making each of the diagrams \[\xymatrix{
\Sigma^{r_{k}}\Conf_X(M,M_0)_{\leq k}\ar[r]& \Sigma^{r_{k}} V_{k}\\
\Sigma^{r_{k}}\Conf_X(M,M_0)_{\leq k-1}\ar[u]\ar[r]& \Sigma^{r_{k}} V_{k-1}\ar[u]
}\] commute up to homotopy. Using the adjunction between loops and suspension and the fact that, by scanning, \[\Omega^r\Sigma^rA\simeq \Conf_A(\mathbb{R}^r)\implies \Omega^\infty\Sigma^\infty A\simeq \Conf_A(\mathbb{R}^\infty),\] it will suffice to produce the commuting diagram \[\xymatrix{
\Conf_X(M,M_0)\ar[r]^-\Phi&\Conf_V(\mathbb{R}^\infty)\\
\vdots\ar[u]&\vdots\ar[u]\\
\Conf_X(M,M_0)_{\leq k}\ar[u]\ar[r]^-{\Phi_k}&\Conf_{V_k}(\mathbb{R}^{r_k})\ar[u]\\
\Conf_X(M,M_0)_{\leq k-1}\ar[r]^-{\Phi_{k-1}}\ar[u]&\Conf_{V_{k-1}}(\mathbb{R}^{r_{k-1}})\ar[u]\\
\vdots\ar[u]&\vdots\ar[u]
}\]

\begin{construction}\label{construction:power set map}
Choose an embedding $\varphi:\coprod_{k\geq0}B_k(M)\to \mathbb{R}^\infty$ such that $\varphi|_{B_k(M)}$ factors through $\mathbb{R}^{r_k}$. We define $\Phi:\Conf_X(M,M_0)\to \Conf_V(\mathbb{R}^\infty)$ by the formula \[\Phi\left(\sum_{a\in A}m_ax_a\right)=\sum_{\varnothing\neq I\subseteq A}\varphi\left(\{m_a\}_{a\in I}\right)\cdot\left[\sum_{a\in I}m_ax_a\right]_{|I|},\] where $[-]_k:\Conf_X(M,M_0)_{\leq k}\to \Conf_k(M,M_0)\wedge_{\Sigma_k}X^{\wedge k}=V_k/V_{k-1}$ is the quotient map.
\end{construction}

\begin{lemma}
The map $\Phi$ is well-defined.
\end{lemma}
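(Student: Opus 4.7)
The plan is to view $\Conf_X(M,M_0)$ as a coequalizer: the quotient of $\coprod_{k\geq0}\Conf_k(M)\times X^k$ by (i) the basepoint relation $\sum m_ax_a\sim\sum m_ax_a+mx_0$, (ii) the annihilation relation $\sum m_ax_a\sim \sum m_ax_a+mx$ for $m\in M_0$, and (iii) the symmetric group actions. Well-definedness of $\Phi$ amounts to checking that the prescription of Construction \ref{construction:power set map} descends through each of these identifications, and that the resulting set-theoretic map is continuous. Since the universal property of the coequalizer topology applies, it suffices to produce a continuous map $\widetilde\Phi:\coprod_k \Conf_k(M)\times X^k\to \Conf_V(\mathbb{R}^\infty)$ by the same formula and then verify invariance under the three types of identifications.

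The first step is to note that each ingredient of the formula is manifestly continuous on the stratum $\Conf_k(M)\times X^k$: the points $\{m_a\}_{a\in I}$ are distinct there, so $\varphi(\{m_a\}_{a\in I})\in\mathbb{R}^\infty$ depends continuously on the input; and each composition $\Conf_k(M)\times X^k\to \Conf_X(M,M_0)_{\leq|I|}\xrightarrow{[-]_{|I|}} V_{|I|}/V_{|I|-1}$ is continuous by construction. Continuity of $\widetilde\Phi$ into $\Conf_V(\mathbb{R}^\infty)$ now follows since the formation of finite labeled sums is continuous.

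The main check, and the only nontrivial point, is that the extra terms arising under the basepoint and annihilation relations are automatically killed inside $\Conf_V(\mathbb{R}^\infty)$. For the basepoint relation, write $A'=A\sqcup\{*\}$ with $m_*x_*=mx_0$; then the sum for the enlarged configuration splits into sums over $I\subseteq A$ (giving back the original expression) and sums over $I\ni *$. In the latter case, the labeling entry is
\[\textstyle\bigl[\sum_{a\in I}m_ax_a\bigr]_{|I|},\]
and because one of the labels is $x_0$, the underlying representative already lies in $\Conf_X(M,M_0)_{\leq|I|-1}$, so $[-]_{|I|}$ collapses it to the basepoint of $V_{|I|}/V_{|I|-1}$; in $\Conf_V(\mathbb{R}^\infty)$ these basepoint-labeled summands are equivalent to the empty configuration and thus contribute trivially. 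The annihilation relation is handled identically: adding $mx$ with $m\in M_0$ produces subconfigurations supported on $M_0$, which by definition of $\Conf_k(M,M_0)$ represent basepoint classes in $\Conf_k(M,M_0)\wedge_{\Sigma_k}X^{\wedge k}=V_{|I|}/V_{|I|-1}$, and again disappear. Invariance under $\Sigma_k$ is immediate from the fact that both $\varphi$ (viewed on unordered configurations) and the quotient $[-]_{|I|}$ are symmetric.

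The hardest part to make precise is the interaction between continuity and these collapsing phenomena at the ``boundary'' of each stratum: one must verify that, as $x_a\to x_0$ or $m_a\to m\in M_0$, every summand indexed by a subset $I\ni a$ approaches the basepoint of $V_{|I|}/V_{|I|-1}$ (and hence dies in $\Conf_V(\mathbb{R}^\infty)$) faster than $\varphi(\{m_a\}_{a\in I})$ can wander off to infinity. But this is precisely the content of the continuity of $[-]_{|I|}$ at the relevant subspace, so the required matching is built into the construction. With these observations in hand, $\widetilde\Phi$ factors through the coequalizer, and the resulting $\Phi$ is continuous.
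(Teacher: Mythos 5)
Your proposal handles the second half of the paper's argument—invariance under the basepoint and annihilation relations—in essentially the same way the paper does: the extra terms are indexed by subsets $I$ containing the redundant index, and for each such $I$ the label $\bigl[\sum_{a\in I}m_ax_a\bigr]_{|I|}$ collapses to the basepoint of $V_{|I|}/V_{|I|-1}$, so those summands die in $\Conf_V(\mathbb{R}^\infty)$. That part is fine.

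However, you omit the other check the paper makes first, and it is not a formality: before you can ask whether the formula descends along the identifications, you have to know that the formula produces a \emph{point of $\Conf_V(\mathbb{R}^\infty)$ at all}. A labeled configuration requires the positions in $\mathbb{R}^\infty$ to be pairwise distinct, and the formula uses the positions $\varphi\bigl(\{m_a\}_{a\in I}\bigr)$ as $I$ ranges over all nonempty subsets of $A$—a sum with $2^{|A|}-1$ terms at the \emph{same} configuration. You need, and never verify, that $\varphi\bigl(\{m_a\}_{a\in I}\bigr)\neq\varphi\bigl(\{m_a\}_{a\in J}\bigr)$ for $I\neq J$. This follows quickly (the $m_a$ are distinct within a configuration, so $I\neq J$ gives distinct finite subsets of $M$, and $\varphi$ is injective), but it is precisely the first half of the paper's proof and is a genuine logical prerequisite; without it the phrase ``$\Phi$ is well-defined'' has no content. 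Your remark that ``the points $\{m_a\}_{a\in I}$ are distinct there, so $\varphi(\{m_a\}_{a\in I})\in\mathbb{R}^\infty$ depends continuously on the input'' is about continuity of a single $\varphi$ value, not about distinctness of the $2^{|A|}-1$ outputs, so it does not cover this.

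Separately, you devote most of your argument to continuity of $\Phi$, which the paper's proof of this lemma does not address at all; the paper treats ``well-defined'' as meaning (i) the formula lands in the codomain and (ii) it is independent of representative. Your continuity discussion is not wrong in outline, but the decisive claim—that summands indexed by $I\ni a$ approach the basepoint ``faster than $\varphi(\{m_a\}_{a\in I})$ can wander off to infinity''—is asserted rather than proved, and it is exactly the kind of statement that requires care about the quotient topology on $\Conf_V(\mathbb{R}^\infty)$. If you intend to include continuity in the scope of the lemma, that step needs an actual argument rather than an appeal to ``built into the construction.''
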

\begin{proof}
First, we note $\varphi(\{m_a\}_{a\in I}\neq \varphi(\{m_a\}_{a\in J}$ for $I\neq J$, so the set $\left\{\varphi(\{m_a\}_{a\in I})\right\}_{\varnothing\neq I\subseteq A}$ does in fact define an element of $B_{2^{|A|}-1}(\mathbb{R}^\infty)$. Indeed, if $I\neq J$, then $\{m_a\}_{a\in I}\neq \{m_a\}_{a\in J}$, since $m_a\neq m_b$ if $a\neq b$, and $\varphi$ is injective.

Second, we verify that the definition of $\Phi$ is independent of the choice of representative of a labeled configuration by an expression of the form $\sum_{a\in A}m_ax_a$. Indeed, any two representatives differ by a term of the form $m_bx_b$ with $m_b\in M_0$ or $x_b=x_0$, and $\left[\sum_{a\in I} m_ax_a\right]_{|I|}=0$ whenever $b\in I$; therefore, we have \begin{align*}\Phi\left(\sum_{a\in A}m_ax_a\right)&=\sum_{\varnothing\neq I\subseteq A\setminus \{b\}}\varphi\left(\{m_a\}_{a\in I}\right)\cdot\left[\sum_{a\in I}m_ax_a\right]_{|I|}+\sum_{b\in I\subseteq A}\varphi\left(\{m_a\}_{a\in I}\right)\cdot0\\&=\sum_{\varnothing\neq I\subseteq A\setminus \{b\}}\varphi\left(\{m_a\}_{a\in I}\right)\cdot\left[\sum_{a\in I}m_ax_a\right]_{|I|}\\
&=\Phi\left(\sum_{b\neq a\in A}m_ax_a\right),
\end{align*} as claimed.
\end{proof}

Note that, by construction, $\Phi|_{\Conf_k(M,M_0)_{\leq k}}$ factors through a map $\Phi_k:\Conf_k(M,M_0)_{\leq k}\to \Conf_{V_k}(\mathbb{R}^{r_k})$ as in the diagram displayed above.

\begin{lemma}\label{lem:phi compatibility}
For each $k\geq1$, the diagram \[\xymatrix{
\Conf_X(M,M_0)_{\leq k}\ar[d]_-{[-]_k}\ar[rr]^-{\Phi_k}&&\Conf_{V_k}(\mathbb{R}^{r_k})\ar[d]\\
V_k/V_{k-1}\ar@{=}[r]&\Conf_{V_k/V_{k-1}}(\mathbb{R}^0)\ar[r]&\Conf_{V_k/V_{k-1}}(\mathbb{R}^{r_k})
}\] commutes up to homotopy. In the case $k=1$, we adopt the convention that $V_0$ is the basepoint, so that $V_1/V_0=V_1$.
\end{lemma}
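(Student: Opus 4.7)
The plan is to unpack the two composites on a representative $\sum_{a\in A}m_ax_a\in\Conf_X(M,M_0)_{\leq k}$ and observe that, after passing to $\Conf_{V_k/V_{k-1}}(\mathbb{R}^{r_k})$, both reduce to at most one labeled point in $\mathbb{R}^{r_k}$; the desired homotopy will then amount to sliding this point from its $\Phi_k$-position back to the basepoint $0\in\mathbb{R}^{r_k}$.

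First I will analyze the upper composite. Since the quotient $V_k\to V_k/V_{k-1}$ kills every wedge summand $\Conf_i(M,M_0)\wedge_{\Sigma_i}X^{\wedge i}$ with $i<k$, among the terms
\[\Phi_k\Bigl({\textstyle\sum_{a\in A}m_ax_a}\Bigr)=\sum_{\varnothing\neq I\subseteq A}\varphi(\{m_a\}_{a\in I})\cdot\bigl[{\textstyle\sum_{a\in I}m_ax_a}\bigr]_{|I|},\]
only those with $|I|=k$ can survive. Writing $S=\mathrm{Supp}(\sum m_ax_a)$ and using that $[\,\cdot\,]_k$ vanishes on $\Conf_X(M,M_0)_{\leq k-1}$, the label $[\sum_{a\in I}m_ax_a]_k$ is non-basepoint only when every $a\in I$ is essential, i.e.\ when $I\subseteq S$; together with $|I|=k$ and $|S|\leq k$, this forces $I=S$ and $|S|=k$. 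After reducing to the minimal representation $A=S$, the upper composite therefore sends $\sum m_ax_a$ to the single labeled point at $\varphi(S)\in\mathbb{R}^{r_k}$ carrying label $[\sum m_ax_a]_k$ when $|S|=k$, and to the empty configuration otherwise. The lower composite, by construction, sends the same element to a single point at $0\in\mathbb{R}^{r_k}$ with the identical label $[\sum m_ax_a]_k$, which vanishes precisely when $|S|<k$.

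The homotopy I propose is $H:\Conf_X(M,M_0)_{\leq k}\times[0,1]\to\Conf_{V_k/V_{k-1}}(\mathbb{R}^{r_k})$ obtained by first applying $\Phi_k$, then passing to the quotient $V_k/V_{k-1}$ on labels, and finally radially scaling all positions by $v\mapsto(1-t)v$ on $\mathbb{R}^{r_k}$. At $t=0$ this recovers the upper composite, and at $t=1$ the unique surviving point (when present) lands at the origin with the correct label, recovering the lower composite.

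The main obstacle is continuity, particularly along loci where the essential support drops from size $k$ to size $k-1$ as some $x_b\to x_0$ or $m_b\to M_0$. For $\Phi_k$ itself, every summand whose index $I$ contains $b$ acquires a basepoint label in $V_{|I|}$ and hence vanishes in $\Conf_{V_k}(\mathbb{R}^{r_k})$; this is precisely the mechanism used in the well-definedness argument for $\Phi$, so no new argument is needed here. After the quotient to $V_k/V_{k-1}$ the single surviving point's label likewise tends to the basepoint, so the point disappears in $\Conf_{V_k/V_{k-1}}(\mathbb{R}^{r_k})$ regardless of where the scaling has moved it. In particular, the apparent collision of all positions at $t=1$ is harmless: every position other than (at most) the one carrying the unique non-basepoint label is labeled by the basepoint and therefore invisible in the target. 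Tracking this interplay between position collapse and label vanishing is really the only subtlety.
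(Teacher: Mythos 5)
Your proof is correct and takes essentially the same route as the paper: identify that after the label quotient only the $I=S$, $|S|=k$ term of $\Phi_k$ survives as a single point at $\varphi(S)$ with label $[\sum m_ax_a]_k$, and then homotope that position to the origin (your radial scaling $v\mapsto(1-t)v$ is precisely the linear nullhomotopy of $\varphi$ that the paper invokes by observing $\varphi$ is homotopic to the constant map). The only difference is that you spell out the continuity of the homotopy across loci where the support drops below $k$ and at the endpoint $t=1$, which the paper leaves implicit.
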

\begin{proof}
The clockwise composite is the map \[\sum_{a\in A}m_ax_a\mapsto \begin{cases}
\varphi\left(\{m_a\}_{a\in A}\right)\cdot\left[\sum_{a\in A}m_ax_a\right]_k&\quad |A|=k\text{ and }m_a\notin M_0,\, x_a\in x_0 \text{ for } a\in A\\
0&\quad\text{otherwise.}
\end{cases}\] Since $\varphi$ is homotopic to the constant map to the origin, the claim follows.
\end{proof}

The last ingredient is the following.

\begin{lemma}\label{lem:labeled cofibrations}
For any manifold $M$, submanifold $M_0$, and pointed CW complex $X$, the inclusion $\Conf_X(M,M_0)_{\leq k-1}\to \Conf_X(M,M_0)_{\leq k}$ is a Hurewicz cofibration.
\end{lemma}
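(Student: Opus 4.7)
The plan is to exhibit the inclusion as a pushout of a manifestly cofibrant map built from cofibrations of the pointed spaces $(M,M_0)$ and $(X,x_0)$ individually.

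Set $E_k := \Conf_k(M)\times X^k$, and let $D_k\subseteq E_k$ denote the closed subspace consisting of tuples $((m_1,\ldots,m_k),(x_1,\ldots,x_k))$ such that either $m_i\in M_0$ for some $i$, or $x_i=x_0$ for some $i$. The $\Sigma_k$-invariant map $q_k:E_k\to \Conf_X(M,M_0)_{\leq k}$ sending $((m_a),(x_a))$ to $\sum m_ax_a$ satisfies $q_k^{-1}(\Conf_X(M,M_0)_{\leq k-1})=D_k$, and the definition of the coequalizer in the construction of $\Conf_X(M,M_0)$ identifies $\Conf_X(M,M_0)_{\leq k}$ with the pushout
\[\xymatrix{
D_k/\Sigma_k \ar[r]\ar[d] & \Conf_X(M,M_0)_{\leq k-1}\ar[d]\\
E_k/\Sigma_k \ar[r] & \Conf_X(M,M_0)_{\leq k}.
}\]
Since Hurewicz cofibrations are preserved under pushouts, it would then suffice to show that the left vertical map is one.

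To establish this, I would show that $D_k\hookrightarrow E_k$ is a $\Sigma_k$-equivariant Hurewicz cofibration; since $\Sigma_k$ acts freely on $\Conf_k(M)$, hence freely on $E_k$, any equivariant NDR structure can be averaged over $\Sigma_k$ and then descends to an NDR structure on $D_k/\Sigma_k\hookrightarrow E_k/\Sigma_k$. Writing $A_k=\{(m_1,\ldots,m_k)\in\Conf_k(M):m_i\in M_0\text{ for some }i\}$ and $F_kX=\{(x_1,\ldots,x_k)\in X^k: x_i=x_0\text{ for some }i\}$, we have $D_k=A_k\times X^k\cup \Conf_k(M)\times F_kX$, so by the classical NDR product lemma (see e.g.\ May, \emph{A Concise Course in Algebraic Topology}, \S 6.5) it suffices to exhibit $(\Conf_k(M),A_k)$ and $(X^k,F_kX)$ as $\Sigma_k$-equivariant NDR pairs.

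The pair $(X^k,F_kX)$ is a $\Sigma_k$-equivariant NDR pair because $X$ is a pointed CW complex and $(X,x_0)$ is an NDR pair, and the product construction of NDR pairs for the fat wedge is manifestly permutation-equivariant. For $(\Conf_k(M),A_k)$, choose a collar neighborhood $\partial M_0\subseteq U\cong \partial M_0\times[0,1)$ in $M\setminus\mathring{M_0}$ and use it to construct a coordinate-wise deformation retraction pushing any point in a sufficiently small neighborhood of $A_k$ toward $M_0$ along the collar; the coordinate-wise nature makes the retraction manifestly $\Sigma_k$-equivariant. The main obstacle here is purely technical: the coordinate-wise deformation must be constructed so as to preserve the distinctness of the $k$ points throughout and to define a continuous Urysohn-type function globally on $\Conf_k(M)$, which requires localizing in a neighborhood of $A_k$ small enough that distinct essential coordinates cannot collide under the motion; this is where the codimension-zero hypothesis on $M_0$ and the availability of a collar are essential. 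Once these equivariant NDR structures are assembled, the product lemma, free descent to $\Sigma_k$-quotients, and pushout stability combine to give the claim.
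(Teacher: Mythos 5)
Your proposal is essentially the paper's proof: you both realize $\Conf_X(M,M_0)_{\leq k}$ as a pushout of $\Conf_X(M,M_0)_{\leq k-1}$ and $\Conf_k(M)\times_{\Sigma_k}X^k$ over the same closed subspace $D_k/\Sigma_k$, reduce by pushout-stability to the left leg, and apply the NDR product lemma to the $\Sigma_k$-equivariant pairs $(\Conf_k(M),A_k)$ and $(X^k,F_kX)$. Two minor remarks for comparison: the paper obtains the configuration-space pair by first taking the $\Sigma_k$-equivariant NDR structure on $(M^k,Z_k(M_0))$ and arranging the homotopy $H_t$ to be injective for $t<1$; since an injective $\Sigma_k$-equivariant map on $M^k$ automatically preserves the off-diagonal, the restriction to $\Conf_k(M)$ is immediate, which dissolves the collision obstacle you flag in your hands-on collar construction. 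Also, your appeal to freeness and ``averaging'' is unnecessary and slightly muddled---an already-equivariant NDR datum $(f,H)$ descends to the $\Sigma_k$-quotient simply because $E_k\times I\to(E_k/\Sigma_k)\times I$ is a quotient map for any finite group action, with no freeness or averaging required.
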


Before proving this fact, we pause to deduce the theorem.

\begin{proof}[Proof of Theorem \ref{thm:stable splitting}]
By Lemma \ref{lem:phi compatibility}, we have the homotopy commutative diagram
\[\xymatrix{
\Conf_X(M,M_0)_{\leq 1}\ar@{=}[d]\ar[rr]^-{\Phi_1}&&\Conf_{V_1}(\mathbb{R}^{r_1})\ar@{=}[d]\\
V_1\ar@{=}[r]&\Conf_{V_1}(\mathbb{R}^0)\ar[r]&\Conf_{V_1}(\mathbb{R}^{r_1}),
}\] so the composite $\Conf_X(M,M_0)_{\leq 1}\to \Conf_{V_1}(\mathbb{R}^{r_1})\simeq \Omega^{r_1}\Sigma^{r_1}V_1$ is homotopic to the unit map. It follows that the induced map $\Conf_X(M,M_0)_{\leq 1}\to V_1$ in the Spanier--Whitehead category is the identity and in particular an isomorphism. Next, using the same lemma, together with the commutativity of the large diagram above, we find that the induced diagram \[\xymatrix{
\Conf_X(M,M_0)_{\leq k-1}\ar[d]\ar[r]&\Conf_X(M,M_0)_{\leq k}\ar[d]\ar[r]&V_k/V_{k-1}\ar@{=}[d]\\
V_{k-1}\ar[r]&V_k\ar[r]&V_k/V_{k-1}
}\] in the Spanier--Whitehead category commutes. The lefthand vertical arrow is an isomorphism by induction, so Lemma \ref{lem:five lemma} will allow us to conclude that the middle arrow is an isomorphism as long as we are assured that the top row is a cofiber sequence, which follows from Lemma \ref{lem:labeled cofibrations}. Thus, the adjoints of the maps $\Conf_X(M,M_0)_{\leq k}\xrightarrow{\Phi_k} \Conf_{V_k}(\mathbb{R}^{r_k})\simeq \Omega^{r_k}\Sigma^{r_k}V_k$ constitute the data of a filtered stable weak equivalence, as claimed.
\end{proof}

\begin{remark}
The proof of Theorem \ref{thm:stable splitting} given here is the classical one following \cite{Boedigheimer:SSMS} and \cite{CohenMayTaylor:SCSC}. More modern arguments have since become available. For one such agument, which uses hypercover-type techniques like those of \S\ref{section:covering theorems}, see \cite{Bandklayder:SSNPVNPD}.
\end{remark}

We turn now to the proof of Lemma \ref{lem:labeled cofibrations}. Recall that the condition for $A\to B$ to be a Hurewicz cofibration is the existence of a solution to all lifting problems of the form \[\xymatrix{
A\ar[d]\ar[r]&Y^I\ar[d]\\
B\ar@{-->}[ur]\ar[r]& Y,
}\] where $Y$ is an arbitrary topological space.

\begin{definition}
A pair of spaces $(B,A)$ is a \emph{neighborhood deformation retract} (NDR) pair if there exist maps $f:B\to I$ and $H:B\times I\to B$ such that \begin{enumerate}
\item $f^{-1}(0)=A$,
\item $H|_{A\times I}=\id_{A\times I}$, and
\item $H(b,1)\in A$ if $f(b)<1$.
\end{enumerate}
\end{definition}

We record the following standard facts about NDR pairs---see \cite[A]{May:GILS}, for example. As a matter of notation, given a subspace $A\subseteq B$, we write \[Z_k(A):=\left\{(x_1,\ldots, x_k)\in B^k: \{x_i\}_{i=1}^k\cap A\neq \varnothing\right\}\subseteq B^k.\]

\begin{proposition}\label{prop:NDR facts}
\begin{enumerate}
\item If $B$ is Hausdorff, then the inclusion of the closed subspace $A$ is a Hurewicz cofibration if and only if $(B,A)$ is an NDR pair.
\item If $(B,A)$ and $(B',A')$ are NDR pairs, then so is $(B\times B', B\times A'\cup A\times B')$.
\item If $(B,A)$ is an NDR pair, then $(B^k, Z_k(A))$ is a $\Sigma_k$-equivariant NDR pair.
\end{enumerate}
\end{proposition}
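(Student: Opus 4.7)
The plan is to address the three points in sequence, with part (1) being the classical Strøm-style characterization, part (2) the standard product formula, and part (3) a symmetric iteration of part (2).

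For part (1), I would prove the direction NDR $\Rightarrow$ cofibration directly by building, from given NDR data $(f,H)$, a retraction $r:B\times I\to B\times\{0\}\cup A\times I$. A standard recipe is
\[
r(b,t)=\begin{cases}
(H(b,t),0)& f(b)\geq t/2\\
(H(b,1),\,t-2f(b)\cdot(1-\min(1,t)))& f(b)<t/2,
\end{cases}
\]
adjusted to agree on the overlap; the existence of such a retraction is equivalent to the homotopy extension property. For the converse (assuming $B$ Hausdorff and $A$ closed), I would apply HEP to the universal test case $Y=M(i)=B\cup_A(A\times I)$, with $A\to Y$ the standard inclusion, to obtain a retraction $r:B\times I\to M(i)$. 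The projection of $r$ to the $I$-coordinate yields a map $B\to I$ whose zero set is $A$ (using that $A$ is closed and $B$ is Hausdorff to identify the preimage), and the $B$-coordinate yields $H$.

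For part (2), from NDR data $(f,H)$ and $(f',H')$ I would set $\tilde f(b,b'):=\min\{f(b),f'(b')\}$, which vanishes precisely on $B\times A'\cup A\times B'$, and define
\[
\tilde H\bigl((b,b'),t\bigr):=\Bigl(H\bigl(b,\,t\cdot\min(1,f'(b')/f(b))\bigr),\,H'\bigl(b',\,t\cdot\min(1,f(b)/f'(b'))\bigr)\Bigr),
\]
with the obvious convention when a coordinate vanishes. One checks directly that $\tilde H$ is continuous, fixes the subspace $B\times A'\cup A\times B'$, and deforms any point with $\tilde f<1$ into this subspace (the coordinate with smaller $f$-value is carried into $A$ or $A'$ while the other is left unchanged or moved tangentially).

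For part (3), I would iterate (2) to produce NDR data $(f_k,H_k)$ for $(B^k,Z_k(A))$: take $f_k(x_1,\dots,x_k):=\min_i f(x_i)$, which clearly vanishes exactly on $Z_k(A)$ and is $\Sigma_k$-invariant, and define
\[
H_k\bigl((x_1,\dots,x_k),t\bigr):=\bigl(H(x_1,\mu_1(x,t)),\dots,H(x_k,\mu_k(x,t))\bigr),
\]
with $\mu_i(x,t):=t\cdot\min(1,f_k(x)/f(x_i))$ (interpreted as $t$ when $f(x_i)=0$). This is manifestly equivariant since the formulas are symmetric in the coordinates, and verification of the NDR axioms reduces to the pointwise arguments from (2).

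The main obstacle I expect is the converse in part (1): producing $f$ continuously on all of $B$ with $f^{-1}(0)=A$ requires the Hausdorff/closedness hypothesis and a careful use of the universal cofibration test space. Everything else is an explicit combinatorial formula manipulation, and the equivariance in part (3) is automatic because the symmetric min-formula avoids breaking the $\Sigma_k$-action.
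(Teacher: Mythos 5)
The paper does not prove this proposition; it simply records it with a pointer to May's appendix \cite[A]{May:GILS} (which in turn goes back to Steenrod), so there is no in-paper argument to compare against. Your proposal reproduces the standard arguments from that source, and the overall strategy is sound.

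Two remarks on the details. First, your explicit retraction in part~(1) does not glue: on the overlap $f(b)=t/2$ the first branch gives $(H(b,t),0)$ while the second gives $(H(b,1),\,t\min(1,t))$, and these do not agree either in the $B$-coordinate or the $I$-coordinate for $t>0$. The standard formula switches branches at $t=f(b)$ (using $H(b,t/f(b))$ on the first branch so that both branches produce $(H(b,1),0)$ at $t=f(b)$), and one still has to argue continuity near $f(b)=0$ with some care. You flag that the formula may need adjustment, but the needed fix is to the structure of the formula, not a cosmetic tweak. Second, your parenthetical that Hausdorffness is needed ``to identify the preimage'' in the converse direction is a bit off: showing $f^{-1}(0)=A$ uses only that $A$ is closed (together with the retraction landing in the mapping cylinder); the Hausdorff hypothesis is really there so that ``Hurewicz cofibration'' automatically forces a closed inclusion, keeping the iff honest.

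Your treatment of part~(3) is the most interesting deviation from what one would get by naively iterating part~(2): rather than iterating and then arguing that the result is secretly $\Sigma_k$-invariant, you write down the manifestly symmetric data $f_k(x)=\min_i f(x_i)$ and $\mu_i(x,t)=t\min(1,f_k(x)/f(x_i))$ directly, which makes equivariance immediate. One can check that iterating Steenrod's product formula actually produces exactly this (the nested $\min$'s telescope), so the two are the same homotopy; your presentation is cleaner because the symmetry is visible at a glance rather than emerging after an unwinding.
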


\begin{proof}[Proof of Lemma \ref{lem:labeled cofibrations}]
Since $M_0\subseteq M$ is a submanifold and $x_0\in X$ is a 0-cell, both $(M,M_0)$ and $(X,x_0)$ are NDR pairs. Thus, by Proposition \ref{prop:NDR facts}(3), $(M^k, Z_k(M_0))$ and $(X^k, x_0^k)$ are $\Sigma_k$-equivariant NDR pairs, and, by means of a tubular neighborhood, we may further arrange that the map $H_t:M^k\to M^k$ is injective for each $0\leq t<1$; therefore, $(\Conf_k(M), Z_k(M_0)\cap \Conf_k(M))$ is a $\Sigma_k$-equivariant NDR pair. Thus, by point (2), the pair \[\left(\Conf_k(M)\times X^k, \Conf_k(M)\times Z_k(x_0)\cup \left(Z_k(M_0)\cap \Conf_k(M)\right)\times X^k\right)\] is an NDR pair. It follows that the top arrow in the pushout diagram \[\xymatrix{
\Conf_k(M)\times_{\Sigma_k} Z_k(x_0)\cup \left(Z_k(M_0)\cap \Conf_k(M)\right)\times_{\Sigma_k} X^k\ar[r]\ar[d]&\Conf_k(M)\times_{\Sigma_k}X^k\ar[d]\\
\Conf_X(M,M_0)_{\leq k-1}\ar[r]&\Conf_X(M,M_0)_{\leq k}
}\] is the inclusion of an NDR pair and hence a Hurewicz cofibration, which implies that the bottom arrow is a Hurewicz cofibration as well.
\end{proof}

\subsection{Homology decomposition}

Having completed the proof of the theorem on stable splittings, our next task is to examine its implications for the homology of configuration spaces. Fix a field $\mathbb{F}$ and write $H_*=H_*(-;\mathbb{F})$. We will prove the following theorem of \cite{BoedigheimerCohenTaylor:OHCS}.

\begin{theorem}[B\"{o}digheimer-Cohen-Taylor]\label{thm:labeled conf homology}
Let $M$ be a compact manifold, possibly with boundary. For any $r>1$, there is an isomorphism of bigraded vector spaces \[H_*(\Conf_{S^r}(M))\cong \bigotimes_{i=0}^nH_*(\Omega^{n-i}S^{n+r})^{\otimes\dim H_i(M)}\]
provided $r+n$ is odd or $\mathbb{F}$ is of characteristic 2.
\end{theorem}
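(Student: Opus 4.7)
My plan is to compute $H_*(\Conf_{S^r}(M))$ by scanning and inducting on a handle decomposition, with the parity hypothesis controlling the Serre spectral sequences that arise. First, Theorem \ref{thm:duality with boundary} identifies $\Conf_{S^r}(M) \simeq \Gamma(M, \partial M; E_{S^r})$, where $E_{S^r}$ is a bundle with fiber $\Conf_{S^r}(D^n, \partial D^n) \simeq S^{n+r}$. The base case $M = D^n$ reduces to $\Gamma(D^n, \partial D^n; E_{S^r}) \simeq \Omega^n S^{n+r}$, which matches the theorem's formula since $\dim H_0(D^n) = 1$ and $\dim H_i(D^n) = 0$ for $i > 0$.

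For the inductive step, suppose $\overline{M} = M \cup_\varphi (D^i \times D^{n-i})$ results from attaching an $i$-handle. Theorem \ref{thm:exactness} gives a homotopy pullback
\[
\xymatrix{
\Conf_{S^r}(M) \ar[r] \ar[d] & \Conf_{S^r}(\overline M) \ar[d] \\
\pt \ar[r] & \Conf_{S^r}(\overline M, M),
}
\]
and excision identifies the lower right entry with $\Conf_{S^r}(D^i \times D^{n-i}, \partial D^i \times D^{n-i})$. Applying Theorem \ref{thm:duality relative} to this pair, and observing that the complement of the attaching region deformation retracts onto the co-core $D^{n-i}$ relative to its boundary $\partial D^{n-i}$, over which the restricted bundle is trivial, produces a weak equivalence
\[\Conf_{S^r}(D^i \times D^{n-i}, \partial D^i \times D^{n-i}) \simeq \Gamma(D^{n-i}, \partial D^{n-i}; D^{n-i} \times S^{n+r}) \simeq \Omega^{n-i} S^{n+r}.\]
This yields a homotopy fiber sequence $\Conf_{S^r}(M) \to \Conf_{S^r}(\overline M) \to \Omega^{n-i} S^{n+r}$.

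I will then examine the Serre spectral sequence of this fibration. Since $r > 1$ forces the base to be simply connected, the local coefficients are trivial and the $E_2$-page is $H_*(\Omega^{n-i} S^{n+r}) \otimes H_*(\Conf_{S^r}(M))$. Under the parity hypothesis, the homology of $\Omega^{n-i} S^{n+r}$ is---rationally or in characteristic $2$---either a polynomial ring on one even-degree generator or an exterior algebra on one odd-degree generator, and the Pontryagin structure of the base together with the parity condition allow a Leray--Hirsch-style transgression argument forcing collapse. This gives
\[H_*(\Conf_{S^r}(\overline M)) \cong H_*(\Conf_{S^r}(M)) \otimes H_*(\Omega^{n-i} S^{n+r}),\]
and iterating over all the handles produces $H_*(\Conf_{S^r}(M)) \cong \bigotimes_i H_*(\Omega^{n-i} S^{n+r})^{\otimes c_i}$, where $c_i$ is the number of $i$-handles.

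The main obstacle is the final reconciliation of the handle counts $c_i$ with the Betti numbers $b_i$. Since the formula depends only on the homology of $M$, a cancelling pair of consecutively-indexed handles must contribute a trivial tensor factor to the total, which requires showing that the parity hypothesis forces a nontrivial differential in the composite spectral sequence that annihilates the extraneous factor $H_*(\Omega^{n-i} S^{n+r}) \otimes H_*(\Omega^{n-i-1} S^{n+r})$. I expect this Koszul-style cancellation---pairing an exterior factor against a polynomial factor of complementary degree---to be the technical heart of the proof, and to be exactly where the parity hypothesis plays its decisive role.
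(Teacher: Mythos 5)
Your setup through the fiber sequence is exactly the paper's, but the final two steps are where the plan breaks down, and they break down in a way that is internally inconsistent rather than merely incomplete.

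First, you assert collapse of the Serre spectral sequence for each handle attachment and thereby conclude $H_*(\Conf_{S^r}(M)) \cong \bigotimes_i H_*(\Omega^{n-i}S^{n+r})^{\otimes c_i}$ with $c_i$ the handle count; then you propose to recover the Betti-number exponents by finding a ``nontrivial differential in the composite spectral sequence that annihilates the extraneous factor.'' These cannot both hold: if every individual spectral sequence collapses at $E^2$, there is no later differential left to do the killing. The Koszul-style cancellation you anticipate is not present in the argument at all. What actually happens is that the spectral sequence \emph{always} collapses (Lemma \ref{lem:collapse lemma}), but one has to split into two cases according to whether the new handle adds a Betti number (i.e.\ $H_i(\overline M)\to H_i(\overline M,M)$ is surjective) or kills one (the map is zero). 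In the first case one uses the fibration you wrote down and the collapse gives the extra tensor factor directly. In the second case one loops the fibration once, obtaining $\Omega\,\Conf_{S^r}(\overline M,M)\to\Conf_{S^r}(M)\to\Conf_{S^r}(\overline M)$, puts the inductive hypothesis on the \emph{abutment}, and solves for the base: collapse then reads $H_*(\Conf_{S^r}(\overline M))\otimes H_*(\Omega^{n-(i-1)}S^{n+r}) \cong H_*(\Conf_{S^r}(M))$, i.e.\ the cancelled Betti number in degree $i-1$ is removed by dividing out a tensor factor, not by a differential. This dichotomy, not any transgression, is how handle cancellation is reconciled with the homology formula.

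Second, the collapse itself cannot be obtained by a ``Leray--Hirsch-style transgression argument'' applied directly: the fiber inclusion is not known \emph{a priori} to be surjective on homology, and in fact the core of the paper's proof is a comparison of the Serre spectral sequence for the configuration-space fibration with that for the corresponding fibration of $\Omega^\infty\Sigma^\infty$-spaces via the stabilization map $\eta$ from Construction \ref{construction:power set map}. One shows the target spectral sequence collapses because $\Omega^\infty\Sigma^\infty$ preserves homology surjections (Lemma \ref{lem:Q and surjections}, which itself rests on the stable splitting and the contractibility of $\Conf_k(\mathbb R^\infty)$), and then transports collapse back via injectivity of $\eta$ on $E^2$ (Lemma \ref{lem:spectral sequence injective}). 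That injectivity is exactly where the parity hypothesis is used, through the explicit computation $H_*(\Omega^kS^m)\cong\Sym_{\mathbb F}(\alpha)$ for $m$ odd and $\operatorname{char}\mathbb F=0$ (Proposition \ref{prop:loop space odd sphere}). Without this comparison mechanism your plan has no handle on the differentials, and the parity hypothesis has no point of entry.
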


The auxiliary grading referred to in the statement is determined on the lefthand side by the isomorphism \[H_*(\Conf_{S^r}(M))\cong\bigoplus_{k\geq1} H_*(\Conf_k(M)_+\wedge_{\Sigma_k}S^{rk})\] and on the righthand side by the isomorphisms \begin{align*}H_*(\Omega^{n-i}S^{n+r})&\cong H_*(\Conf_{S^r}(D^{i}\times D^{n-i}, \partial D^i\times D^{n-i}))\\&\cong \bigoplus_{k\geq1}H_*(\Conf_k(S^i\wedge D^{n-i}_+, *)\wedge_{\Sigma_k}S^{rk})\end{align*} provided by scanning and stable splitting. 

\begin{remark}
The obvious relative statement relating $\Conf_{S^r}(M,M_0)$ and $H_*(M,M_0)$ is also true, but we will not need to use it. For the necessary alterations in the argument, see \cite[3.5]{BoedigheimerCohenTaylor:OHCS}.
\end{remark}

\begin{remark}
By a different argument, the theorem can also be shown to hold in the case $r=1$---see \cite[4.5]{BoedigheimerCohenTaylor:OHCS}. The theorem is false for $r=0$.
\end{remark}

For the remainder of the lecture, we implicitly assume that the hypotheses of the theorem are satisfied.

The strategy will be to proceed by induction on a handle deccomposition of $M$ using exactness and the Serre spectral sequence. We may assume that $M$ is connected; then, in the base case $M=D^n$, we have $\Conf_{S^r}(D^n)\simeq\Omega^nS^{n+r},$ and the theorem holds without assumption on the parity of $r+n$ or the characteristic of the field. For the induction step, we write $\overline M=M\cup_\varphi D^i\times D^{n-i}$ with $i>0$ and consider the following two cases.

Assume first that $H_i(\overline M)$ surjects onto $H_i(\overline M,M)\cong \widetilde H_i(S^i)$. Since $r>0$, the sphere $S^r$ is connected, so exactness provides us with the homotopy pullback square
\[\xymatrix{\Conf_{S^r}(M)\ar[d]\ar[r]&\Conf_{S^r}(\overline M)\ar[d]\\
\pt\ar[r]&\Conf_{S^r}(\overline M, M).
}\] In the lower left corner, we have \begin{align*}
\Conf_{S^r}(\overline M, M)&\cong \Conf_{S^r}(D^i\times D^{n-i}, \partial D^i\times D^{n-i})\\
&\simeq \Map\left(\mathring{D}^i\times D^{n-i}, \mathring{D}^i\times \partial D^{n-i}, S^{n+r}\right)\\
&\simeq \Omega^{n-i}S^{n+r},
\end{align*} which is simply connected, since \[\pi_1(\Omega^{n-i}S^{n+r})\cong\pi_{n-i+1}(S^{n+r})=0\] for $n+r\geq n+1>n-i+1$. Thus, the action on the homology of the homotopy fiber is trivial. Since we are working over a field, we conclude that, in the homology Serre spectral sequence for this homotopy pullback, we have \begin{align*}
E^2&\cong H_*(\Omega^{n-i}S^{n+r})\otimes H_*(\Conf_{S^r}(M))\\
&\cong H_*(\Omega^{n-i}S^{n+r})\otimes\bigotimes_{j=0}^nH_*(\Omega^{n-j}S^{n+r})^{\otimes\dim H_j(M)}\\
&\cong \bigotimes_{i=0}^nH_*(\Omega^{n-i}S^{n+r})^{\otimes\dim H_i(\overline{M})},
\end{align*} where the second isomorphism uses the inductive hypothesis and the third our assumption on the homology of $\overline M$. Thus, in order to prove the theorem for $\overline M$, it will suffice to show that the Serre spectral sequence collapses at $E^2$.

Assume instead that the map $H_i(\overline M)\to H_i(\overline M, M)$ is trivial. Proceeding as before, and looping the resulting homotopy pullback once, we obtain the homotopy pullback square \[\xymatrix{
\Omega\,\Conf_{S^r}(\overline M, M)\ar[d]\ar[r]&\Conf_{S^r}(M)\ar[d]\\
\pt\ar[r]&\Conf_{S^r}(\overline M).
}\] We now use the assumption that $r>1$.

\begin{lemma}
For $r>1$, $\Conf_{S^r}(\overline M)$ is simply connected.
\end{lemma}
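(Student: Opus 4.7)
The plan is to exploit the cardinality filtration $\Conf_{S^r}(\overline M) = \bigcup_{k\geq 0} \Conf_{S^r}(\overline M)_{\leq k}$ and induct on $k$. By Lemma \ref{lem:labeled cofibrations} each inclusion $\Conf_{S^r}(\overline M)_{\leq k-1} \hookrightarrow \Conf_{S^r}(\overline M)_{\leq k}$ is a Hurewicz cofibration with cofiber the filtration quotient $\Conf_k(\overline M)_+ \wedge_{\Sigma_k} (S^r)^{\wedge k}$, and since $\pi_1$ commutes with sequential colimits along cofibrations, it suffices to show that every $\Conf_{S^r}(\overline M)_{\leq k}$ is simply connected.

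For the base case $k = 1$, I would observe that $\Conf_{S^r}(\overline M)_{\leq 1} \cong \overline M_+ \wedge S^r \cong \Sigma^r(\overline M_+)$, which is an $r$-fold suspension and hence $(r-1)$-connected; since $r \geq 2$ this gives simple connectivity.

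For the inductive step, the key point is that the cofiber $\Conf_k(\overline M)_+ \wedge_{\Sigma_k} (S^r)^{\wedge k}$ is the Thom space of the rank-$rk$ real vector bundle $\Conf_k(\overline M) \times_{\Sigma_k} \mathbb{R}^{rk} \to B_k(\overline M)$, where the $\Sigma_k$-action on $\mathbb{R}^{rk} = (\mathbb{R}^r)^k$ permutes the factors. Since the Thom space of a rank-$d$ vector bundle is $(d-1)$-connected, this cofiber is $(rk-1)$-connected. Because $r \geq 2$ and $k \geq 1$ force $rk - 1 \geq 1$, the cofibration induces an isomorphism on $\pi_1$; combined with the inductive hypothesis this gives $\pi_1(\Conf_{S^r}(\overline M)_{\leq k}) = 0$, completing the induction.

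The only subtlety I anticipate is the connectivity estimate for the Thom space when $B_k(\overline M)$ is disconnected, but this causes no trouble because the Thom space construction turns a disjoint decomposition of the base into a wedge of Thom spaces, and a wedge of $(d-1)$-connected pointed spaces is $(d-1)$-connected. The argument uses the hypothesis $r > 1$ essentially: with only $r = 1$ the base-case suspension would not be simply connected, nor would the first-stage cofiber $\overline M_+ \wedge S^1$.
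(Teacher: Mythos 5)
Your proof is correct in substance and takes a genuinely different route from the paper. The paper inducts on a handle decomposition and uses the exactness theorem (Theorem~\ref{thm:exactness}) together with scanning: at the bottom of a handle attachment it identifies the base of the fibration with $\Omega^{n-i}S^{n+r}$ and reads off $\pi_1$ from the long exact sequence, with the base case $\pi_1(\Conf_{S^r}(D^n))\cong\pi_1(\Omega^nS^{n+r})\cong\pi_{n+1}(S^{n+r})=0$. You instead induct on the cardinality filtration, using Lemma~\ref{lem:labeled cofibrations} for the cofibration structure and Lemma~\ref{lem:thom space} for the identification of the filtration quotients as Thom spaces of rank-$rk$ bundles over $B_k(\overline M)$. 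Your approach is in a sense more elementary, bypassing exactness and scanning entirely, and it works for any background manifold rather than only those built from handles; the paper's approach slots more cleanly into the spectral sequence argument that immediately follows it, where the same handle induction is reused.

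One step should be stated more carefully. You write that since the cofiber $\Conf_k(\overline M)_+\wedge_{\Sigma_k}S^{rk}$ is $(rk-1)$-connected, ``the cofibration induces an isomorphism on $\pi_1$.'' That implication is false on its own: for the cofibration $S^1\hookrightarrow D^2$ the cofiber $S^2$ is simply connected, yet $\pi_1(S^1)\to\pi_1(D^2)$ is not an isomorphism. What is true, by the van Kampen argument applied to the mapping cone $B\cup_A CA\simeq B/A$, is that for a cofibration $A\hookrightarrow B$ between path-connected spaces one has $\pi_1(B/A)\cong\pi_1(B)/N$, where $N$ is the normal closure of the image of $\pi_1(A)$. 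So the correct chain is: by the inductive hypothesis $\pi_1\bigl(\Conf_{S^r}(\overline M)_{\leq k-1}\bigr)=0$, hence the quotient map induces an isomorphism $\pi_1\bigl(\Conf_{S^r}(\overline M)_{\leq k}\bigr)\cong\pi_1\bigl(\Conf_k(\overline M)_+\wedge_{\Sigma_k}S^{rk}\bigr)$, and the latter vanishes since $rk\geq 2$. All the ingredients are present in your write-up; only the logical order of invocation needs adjusting so that the inductive hypothesis comes first.
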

\begin{proof}
We again proceed by induction over a handle decomposition. Assuming the claim is known for $M$, the exact sequence
\[\cdots\to \pi_1(\Conf_{S^r}(M))\to \pi_1(\Conf_{S^r}(\overline M))\to \pi_1(\Omega^{n-i}S^{n+r})\to \cdots\] implies the claim for $\overline M$. For the base case, we note that \[\pi_1(\Conf_{S^r}(D^n))\cong \pi_1(\Omega^nS^{n+r})\cong \pi_{n+1}(S^{n+r})=0,\] since $r>1$.
\end{proof} 

Thus, in this second case as well, the action on the homology of the homotopy fiber is trivial, and we have \begin{align*}
E^2\cong H_*(\Conf_{S^r}(\overline M))\otimes H_*(\Omega^{n-i+1}S^{n+r})\implies H_*(\Conf_{S^r}(M)\cong \bigotimes_{j=0}^nH_*(\Omega^{n-j}S^{n+r})^{\otimes\dim H_j(M)},
\end{align*} where the inductive hypothesis has been used in identifying the $E^\infty$ page. Thus, in order to prove the theorem for $\overline M$, it will again suffice to show that the Serre spectral sequence collapses at $E^2$.

Assuming the collapse of these spectral sequences, the final step in the proof of Theorem \ref{thm:labeled conf homology} is to identify the extra gradings on each side. In order to do so, we note that the maps \[\Conf_{S^r}(M)\to \Conf_{S^r}(\overline M)\to \Conf_{S^r}(\overline M,M)\] are compatible with the respective filtrations by cardinality, so this filtration gives rise to an third grading at the level of the Serre spectral sequence. Since this grading is precisely the auxiliary grading referred to in the statement of the theorem, and since the isomorphism in question was established by observing the spectral sequence to collapse, it follows that the isomorphism is compatible with this auxiliary grading.

In summary, we have reduced the theorem to the following result.

\begin{lemma}\label{lem:collapse lemma}
Let $\overline M=M\cup_\varphi D^i\times D^{n-i}$. \begin{enumerate}
\item If $H_i(\overline M)\to H_i(\overline M,M)$ is surjective, then the homological Serre spectral sequence for the fiber sequence \[\Conf_{S^r}(M)\to \Conf_{S^r}(\overline M)\to \Conf_{S^r}(\overline M,M)\] collapses at $E^2$.
\item If $H_i(\overline M)\to H_i(\overline M,M)$ is zero, then the homological Serre spectral sequence for the fiber sequence \[ \Omega\,\Conf_{S^r}(\overline M,M)\to\Conf_{S^r}(M)\to \Conf_{S^r}(\overline M)\] collapses at $E^2$.
\end{enumerate}
\end{lemma}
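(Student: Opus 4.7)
The plan is to reduce each collapse statement to a Leray--Hirsch-type criterion and then verify that criterion using the auxiliary cardinality grading supplied by the stable splitting (Theorem~\ref{thm:stable splitting}). Over the field $\mathbb{F}$, collapse of the homological Serre spectral sequence $E^2_{p,q} \cong H_p(B) \otimes H_q(F) \Rightarrow H_{p+q}(E)$ is implied by surjectivity of the cohomology restriction $H^*(E) \to H^*(F)$; this is the dual of the usual Leray--Hirsch statement (Theorem~\ref{thm:Leray--Hirsch}), and all other hypotheses---path-connectedness of fibre and base, triviality of the monodromy, and finite-type---have already been checked in both parts. So it suffices in each case to lift a basis of $H^*(F)$ to $H^*(E)$.

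For case (1), where $F = \Conf_{S^r}(M)$, $E = \Conf_{S^r}(\overline M)$, and $B = \Omega^{n-i}S^{n+r}$, both sides decompose by cardinality under the stable splitting, and the restriction map respects this decomposition. By the inductive hypothesis, $H_*(\Conf_{S^r}(M))$ is a tensor product of factors $H_*(\Omega^{n-j}S^{n+r})$, one for each handle of $M$, and each tensor factor is detected by a concrete cycle in $\Conf_{S^r}(M)$ supported near the corresponding handle. These same cycles remain cycles in $\Conf_{S^r}(\overline M)$, since $M \subseteq \overline M$, so dually every cohomology generator of $H^*(\Conf_{S^r}(M))$ lifts to $H^*(\Conf_{S^r}(\overline M))$, yielding the needed surjectivity.

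Case (2) is handled by the same dualized Leray--Hirsch recipe applied to the looped fibration: we must show that the fibre inclusion $\Omega\,\Conf_{S^r}(\overline M, M) \hookrightarrow \Conf_{S^r}(M)$ is surjective in cohomology. The hypothesis $H_i(\overline M) \to H_i(\overline M, M) = 0$ enters here by restricting the possible transgression differentials: the only candidate targets in $H_*(\Conf_{S^r}(\overline M))$ in the transgression degrees are classes coming from the handle, and this vanishing hypothesis removes them. Again one argues cardinality by cardinality, using the inductive description of $H_*(\Conf_{S^r}(\overline M))$ to rule out any nonvanishing transgression.

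The main obstacle is subtler than cycle-lifting in case (1): one must confirm that juxtaposition of configurations supported in disjoint handles realizes the tensor-product structure present in the inductive homology description, so that the geometric lifts form a basis and are not merely independent. The parity hypothesis ($r+n$ odd or $\mathrm{char}(\mathbb{F}) = 2$) is precisely what guarantees $H_*(\Omega^{n-i}S^{n+r})$ is a free graded-commutative algebra under its Pontrjagin product; without it, sign or Bockstein-type interactions could plausibly introduce nonzero differentials and scramble the multiplicative structure that makes the whole argument go through.
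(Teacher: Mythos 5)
Your overall strategy — dualize Leray--Hirsch to reduce collapse to a surjectivity statement for the fibre restriction map — is a sensible reformulation, but the argument you offer for that surjectivity is where the lemma's actual content lies, and it has a genuine gap.

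The step ``these same cycles remain cycles in $\Conf_{S^r}(\overline M)$, so dually every cohomology generator lifts'' does not follow. A cycle in $\Conf_{S^r}(M)$ of course pushes forward to a cycle in $\Conf_{S^r}(\overline M)$, but nothing in your argument prevents it from becoming a boundary there, nor ensures that the pushforwards of a basis of $H_*(\Conf_{S^r}(M))$ remain linearly independent. Proving that the fibre inclusion $\Conf_{S^r}(M)\to\Conf_{S^r}(\overline M)$ is injective on homology is precisely as hard as the lemma itself, and it is here that the hypothesis on $H_i(\overline M)\to H_i(\overline M, M)$ must enter — yet your case (1) never invokes it. Your inductive hypothesis (Theorem \ref{thm:labeled conf homology}) furnishes an abstract isomorphism of bigraded vector spaces, not a geometric description of representing cycles supported near handles; ``detected by a concrete cycle'' is an assertion requiring proof. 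The treatment of case (2) is even thinner: you would need the restriction $H^*(\Conf_{S^r}(M))\to H^*(\Omega\,\Conf_{S^r}(\overline M,M))$ to be onto, and ``rule out transgressions using cardinality'' is not an argument.

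The paper closes exactly this gap by comparing to a second, tamer spectral sequence. Applying $\Omega^\infty\Sigma^\infty$ to the cofibre sequence $M_+\to\overline M_+\to\overline M/M$ yields a fibre sequence (Lemma \ref{lem:Q and fiber sequences}), and the maps $\eta$ of Construction \ref{construction:power set map} give a map of fibrations, hence a map of Serre spectral sequences $E(\Conf)\to E(\Omega^\infty\Sigma^\infty)$. Lemma \ref{lem:surjective collapse} shows the target collapses (this is where the surjectivity hypothesis is used, via Lemma \ref{lem:Q and surjections}, together with multiplicativity of the spectral sequence for an $H$-space). Lemma \ref{lem:spectral sequence injective}, relying on the loop-space computation (Proposition \ref{prop:loop space odd sphere}) where the parity hypothesis is genuinely consumed, shows the comparison is injective on $E^2$. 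Collapse then propagates page by page: every differential in $E^r(\Conf)$ maps under an injection to a vanishing differential in $E^r(\Omega^\infty\Sigma^\infty)$. If you want to salvage your Leray--Hirsch reformulation, the correct route is to deduce injectivity of the fibre inclusion on homology from injectivity of $\eta_M$ plus the fact that $M_+\to\overline M_+$ is (split) homology-injective under the surjectivity hypothesis — but you would then still need the substance of Lemma \ref{lem:spectral sequence injective}, so the approach is not actually shorter.
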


We will prove part (1) only, as the proof of part (2) is nearly identical---see \cite[3.4]{BoedigheimerCohenTaylor:OHCS} for full details. Our strategy in proving this result will be to relate the spectral sequence in question to a second spectral sequence, whose collapse will be easier to prove.

Recall from the proof of Theorem \ref{thm:stable splitting} that we have the map \[\eta_M:\Conf_{S^r}(M)\xrightarrow{\Phi}\Conf_V(\mathbb{R}^\infty)\to \Conf_{M_+\wedge S^r}(\mathbb{R}^\infty)\simeq \Omega^\infty\Sigma^\infty(M_+\wedge S^r),\] whose adjoint we might think of conceptually as comparing the ``homology theory'' of labeled configuration spaces to ordinary homology. In order to exploit this map, we require the following simple observation.

\begin{lemma}\label{lem:Q and fiber sequences}
If $A\to B\xrightarrow{f} C$ is a cofiber sequence, then \[\Omega^\infty\Sigma^\infty A\to \Omega^\infty\Sigma^\infty B\to \Omega^\infty\Sigma^\infty C\] is a fiber sequence.
\end{lemma}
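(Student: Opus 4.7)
The plan is to deduce this from two ingredients: first, the identity $\Omega^\infty\Sigma^\infty X \simeq \hocolim_n \Omega^n\Sigma^n X$, which is implicit in the scanning perspective used elsewhere in these notes; and second, the Freudenthal/Blakers--Massey theorem, which asserts that the discrepancy between a cofiber sequence and the corresponding fiber sequence tends to zero under iterated suspension.

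First I would observe that, since $-\wedge [0,1]_+$ preserves cofibrations and $\Sigma$ is a quotient thereof, the sequence $\Sigma^n A\to \Sigma^n B\to \Sigma^n C$ is again a cofiber sequence for every $n\geq 0$. There is then a canonical comparison map $\Sigma^n A\to \mathrm{hofib}(\Sigma^n B\to \Sigma^n C)$, and the Blakers--Massey theorem applied to the homotopy pushout exhibiting $\Sigma^n C$ as $\Sigma^n B\cup_{\Sigma^n A}\mathrm{pt}$ shows that this map is $(2n+c)$-connected, where the constant $c$ depends only on the connectivity of $A$ and $B$. Crucially, the connectivity grows without bound as $n\to\infty$.

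Next, applying the functor $\Omega^n$ (which preserves homotopy fibers) gives a natural comparison $\Omega^n\Sigma^n A \to \mathrm{hofib}(\Omega^n\Sigma^n B\to \Omega^n\Sigma^n C)$ that is $(n+c)$-connected. Passing to the sequential homotopy colimit over $n$ along the adjunction units (which we may assume to be cofibrations, after replacing each $\Omega^n\Sigma^n X$ by its mapping telescope), we obtain the desired map
\[
\Omega^\infty\Sigma^\infty A \longrightarrow \mathrm{hofib}\bigl(\Omega^\infty\Sigma^\infty B\to \Omega^\infty\Sigma^\infty C\bigr).
\]
To identify this as a weak equivalence, one uses that homotopy groups commute with sequential homotopy colimits along cofibrations; since each stage of our comparison is $(n+c)$-connected, the induced map on $\pi_k$ is an isomorphism for every $k\geq 0$.

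The main obstacle is the commutation of homotopy fibers past the sequential homotopy colimit defining $\Omega^\infty\Sigma^\infty$. This fails in general, but succeeds here precisely because the connectivity of the comparison maps tends to infinity: on any fixed $\pi_k$, the colimit stabilizes at a finite stage, and at that stage the comparison is already an isomorphism on $\pi_k$. An alternative, cleaner route would be to observe that in the Spanier--Whitehead framework of Appendix~\ref{appendix:Spanier--Whitehead}, the suspension spectrum functor preserves cofiber sequences by construction (it is left adjoint to $\Omega^\infty$), and in the resulting stable category cofiber and fiber sequences coincide; applying $\Omega^\infty$, which is a right adjoint, then yields a fiber sequence of spaces.
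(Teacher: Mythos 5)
Your main argument (via Blakers--Massey) is correct but takes a genuinely different route from the paper. The paper's proof is a clean five-lemma argument: it invokes the identification $\pi_i(\Omega^\infty\Sigma^\infty X)\cong\pi_i^s(X)$, the long exact sequence of stable homotopy groups attached to a cofiber sequence (which comes for free from the triangulated structure on the Spanier--Whitehead category established in Appendix~\ref{appendix:Spanier--Whitehead}), and the long exact sequence of a fibration, then compares the two. Your argument is more hands-on: you track the connectivity of the comparison map $\Sigma^n A\to\hofib(\Sigma^n B\to\Sigma^n C)$ using Blakers--Massey, observe it grows linearly in $n$ (roughly $2n+c$ before looping, $n+c$ after applying $\Omega^n$), and then commute $\hofib$ past the sequential homotopy colimit, justified by the fact that connectivity tends to infinity so each $\pi_k$ stabilizes at a finite stage. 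What the paper's route buys is economy---once the SW machinery is set up, the proof is two sentences; what yours buys is independence from that machinery, at the cost of connectivity bookkeeping (e.g.\ one has to be slightly careful that the connectivity of $\Sigma^n A\to\Sigma^n B$ is bounded below, which follows from both sides being $(n-1)$-connected once $n\geq1$). One small caveat on your proposed ``alternative, cleaner route'': as developed in Appendix~\ref{appendix:Spanier--Whitehead}, the Spanier--Whitehead category is not equipped with a functor $\Omega^\infty$ back to spaces, so invoking $\Omega^\infty$ as a right adjoint really requires passing to the full category of spectra discussed only informally in Remark~\ref{rmk:spectra}; the paper deliberately avoids this by working with the long exact sequence directly.
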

\begin{proof}
From our previous observation that $\pi_i\circ \Omega^\infty\Sigma^\infty\cong\pi_i^s$, the long exact sequence in stable homotopy associated to a cofiber sequence, and the long exact sequence in homotopy associated to a fiber sequence, we obtain the commuting diagram \[\xymatrix{
\cdots\ar[r]&\pi_{i+1}(\Omega^\infty\Sigma^\infty C)\ar@{=}[d]\ar[r]&\pi_i(\Omega^\infty\Sigma^\infty A)\ar[d]\ar[r]& \pi_i(\Omega^\infty\Sigma^\infty B)\ar@{=}[d]\ar[r]&\cdots\\
\cdots\ar[r]&\pi_{i+1}(\Omega^\infty\Sigma^\infty C)\ar[r]&\pi_i(\mathrm{hofiber}\,f)\ar[r]&\pi_i(\Omega^\infty\Sigma^\infty B)\ar[r]&\cdots
}\] with exact rows. It follows from the five lemma that the canonical map from $\Omega^\infty\Sigma^\infty A$ to the homotopy fiber is a weak equivalence, as claimed.
\end{proof}

Therefore, since $M_+\to \overline M_+\to \overline M/M$ is a cofiber sequence, we obtain a map of fiber sequences as depicted in the following commuting diagram \[\xymatrix{
\Conf_{S^r}(M)\ar[d]\ar[r]&\Omega^\infty\Sigma^\infty(M_+\wedge S^r)\ar[d]\\
\Conf_{S^r}(\overline M)\ar[d]\ar[r]&\Omega^\infty\Sigma^\infty(\overline M_+\wedge S^r)\ar[d]\\
\Conf_{S^r}(\overline M,M)\ar[r]&\Omega^\infty\Sigma^\infty(\overline M/M\wedge S^r),
}\] and thereby a map at the level of Serre spectral sequences \[E(\Conf)\to E(\Omega^\infty\Sigma^\infty).\] This map is the desired comparison map. The following lemma asserts that its target has the desired property.

\begin{lemma}\label{lem:surjective collapse}
If $H_*(\overline M)\to H_*(\overline M,M)$ is surjective, then the spectral sequence $E(\Omega^\infty\Sigma^\infty)$ collapses at $E^2$.
\end{lemma}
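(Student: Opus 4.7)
The plan is to apply the Leray--Hirsch criterion (Theorem \ref{thm:Leray--Hirsch}) to the fiber sequence
\[
F = \Omega^\infty\Sigma^\infty(M_+\wedge S^r)\longrightarrow E = \Omega^\infty\Sigma^\infty(\overline M_+\wedge S^r)\longrightarrow B = \Omega^\infty\Sigma^\infty(\overline M/M\wedge S^r).
\]
Since $r>1$ each of the three underlying spaces is at least $(r-1)$-connected, so the three infinite loop spaces are simply connected and the local coefficient system on $F$ is trivial; the cohomology of $F$ is of finite type by the structural identification recalled below. Hence collapse at $E^2$ reduces to surjectivity of the restriction $H^*(E)\to H^*(F)$.

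Next I would unpack the hypothesis. Exactness of the long exact sequence of $(\overline M, M)$ turns the assumed surjectivity of $H_*(\overline M)\to H_*(\overline M, M)$ into the vanishing of the connecting homomorphism, whence $H_*(M)\hookrightarrow H_*(\overline M)$. Smashing with $S^r$ shifts degrees and preserves injectivity, giving $\widetilde H_*(M_+\wedge S^r)\hookrightarrow \widetilde H_*(\overline M_+\wedge S^r)$; dualizing over the field $\mathbb{F}$ produces a surjection
\[
\widetilde H^*(\overline M_+\wedge S^r)\twoheadrightarrow \widetilde H^*(M_+\wedge S^r).
\]

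The crux is to promote this cohomological surjection on spaces to one after applying $\Omega^\infty\Sigma^\infty$, which rests on a structural identification of $H^*(\Omega^\infty\Sigma^\infty Y;\mathbb{F})$ as a free-algebra-type functor of $\widetilde H^*(Y;\mathbb{F})$. Over $\mathbb{Q}$, one has the rational product decomposition $\Omega^\infty\Sigma^\infty Y\simeq_{\mathbb{Q}}\prod_n K(\widetilde H_n(Y;\mathbb{Q}),n)$ for $Y$ connected, under which $H^*(\Omega^\infty\Sigma^\infty Y;\mathbb{Q})$ is the free graded-commutative algebra on $\widetilde H^*(Y;\mathbb{Q})$; over $\mathbb{F}_2$, the theorem of Kudo--Araki--Dyer--Lashof identifies $H^*(\Omega^\infty\Sigma^\infty Y;\mathbb{F}_2)$ as a free object over the Dyer--Lashof algebra generated by $\widetilde H^*(Y;\mathbb{F}_2)$. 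In either case the resulting functor takes surjections of simply connected spaces to surjections, and applying it to the previous paragraph delivers the surjectivity of $H^*(E)\to H^*(F)$ required to invoke Leray--Hirsch.

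The main obstacle is precisely this last step; everything else is formal. One might hope to sidestep the Kudo--Araki--Dyer--Lashof input by producing an honest stable splitting $\Sigma^\infty\overline M_+\simeq_s \Sigma^\infty M_+\vee\Sigma^\infty(\overline M/M)$, under which the fiber sequence would literally become a product and collapse would be automatic. But such a splitting fails in general: the attaching map of the handle can be nontrivial in stable homotopy even when it vanishes on $\mathbb{F}$-homology (as with the Hopf map $\eta$ hiding inside $\mathbb{CP}^2$), so the structural computation of the $\mathbb{F}$-homology of $\Omega^\infty\Sigma^\infty(-)$ really is essential.
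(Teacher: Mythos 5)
Your argument is valid but takes a genuinely different route from the paper's. You work in cohomology and aim at the fiber edge map, invoking Leray--Hirsch: show $H^*(E)\to H^*(F)$ is surjective, deduce that differentials out of the $E_2^{0,*}$ column vanish, and propagate collapse by the Leibniz rule. The paper mirrors this on the other edge, in homology: it shows $H_*(E)\to H_*(B)$ is surjective, deduces that differentials out of the bottom row $E^r_{*,0}$ vanish, and propagates collapse by the Leibniz rule (using the $H$-space multiplication on $\Omega^\infty\Sigma^\infty$ to make the homological Serre SS multiplicative). Since what is ultimately needed in Lemma~\ref{lem:collapse lemma} is collapse of the \emph{homological} SS in order to compare against $E(\Conf)$, the paper's formulation is the one that plugs in directly; yours requires a final pass through field duality and finite-typeness, which is harmless but one more step.

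The more substantial difference is the structural input. You are reaching for the full computation of $H_*(\Omega^\infty\Sigma^\infty Y;\mathbb{F}_p)$ as a free algebra over the Dyer--Lashof algebra (and the rational product decomposition into Eilenberg--MacLane spaces), but this is heavier than what's required. The paper uses Lemma~\ref{lem:Q and surjections}, proved via scanning and stable splitting, which gives $H_*(\Omega^\infty\Sigma^\infty X)\cong\bigoplus_k H_*(\Sigma_k;\widetilde H_*(X)^{\otimes k})$ and then observes that a homology surjection $\widetilde H_*(X)\twoheadrightarrow\widetilde H_*(Y)$ splits over a field, inducing an equivariant splitting on $k$-fold tensor powers and hence a split surjection of group homologies. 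The fact you actually need --- that $\Omega^\infty\Sigma^\infty$ preserves homology \emph{injections} over a field --- is the precise dual statement and is provable by the identical splitting trick, without invoking any Dyer--Lashof machinery. So your final remark, that the structural computation of $H_*(\Omega^\infty\Sigma^\infty(-);\mathbb{F})$ ``really is essential,'' is accurate in spirit but overstates what is needed: the formula in terms of $\bigoplus_k H_*(\Sigma_k;\widetilde H_*(-)^{\otimes k})$ and the elementary field-splitting argument already suffice. Your observation that a naive stable splitting of $\Sigma^\infty\overline M_+$ cannot work in general (the handle attaching map can be stably nontrivial while dying in $\mathbb{F}$-homology) is correct and is exactly why one must argue at the level of homology rather than spectra.
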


In order to prove this auxiliary collapse result, we make use of the following observation.

\begin{lemma}\label{lem:Q and surjections}
The functor $\Omega^\infty\Sigma^\infty$ preserves homology surjections.
\end{lemma}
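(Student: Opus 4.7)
The plan is to reduce the statement to a claim about the extended power construction using the stable splitting, and then to verify that claim via the Künneth formula and a Serre spectral sequence argument.

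First I would invoke Theorem \ref{thm:stable splitting}. Specialising to $(M,M_0) = (\mathbb{R}^\infty,\varnothing)$ and combining with the scanning equivalence $\Omega^\infty\Sigma^\infty X \simeq \Conf_X(\mathbb{R}^\infty)$ (which holds for connected $X$, the general case being handled by approximating $X$ by its connected pieces after smashing with a sphere), we obtain a filtered stable weak equivalence
\[
\Omega^\infty\Sigma^\infty X \;\simeq_s\; \bigvee_{k \geq 1} D_k X, \qquad D_k X := (E\Sigma_k)_+ \wedge_{\Sigma_k} X^{\wedge k}.
\]
By Corollary \ref{cor:filtered stable weak equivalence homology}, this gives a functorial direct-sum decomposition $H_*(\Omega^\infty\Sigma^\infty X;\mathbb{F}) \cong \bigoplus_{k\geq 1} \tilde H_*(D_k X; \mathbb{F})$ under which $(\Omega^\infty\Sigma^\infty f)_*$ corresponds to $\bigoplus_k (D_k f)_*$ with $D_k f = (E\Sigma_k)_+ \wedge_{\Sigma_k} f^{\wedge k}$. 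It therefore suffices to show that each $D_k f$ is surjective on $\mathbb{F}$-homology.

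To analyse $D_k f$, I would consider the Borel fibration $X^{\wedge k} \to D_k X \to B\Sigma_k$ (and its counterpart for $Y$), together with the induced map of Serre spectral sequences. On the $E^2$-page this map is
\[
H_p(\Sigma_k; \tilde H_q(X^{\wedge k};\mathbb{F})) \longrightarrow H_p(\Sigma_k; \tilde H_q(Y^{\wedge k};\mathbb{F})).
\]
By the K\"unneth formula over the field $\mathbb{F}$, $\tilde H_*(X^{\wedge k};\mathbb{F}) \cong \tilde H_*(X;\mathbb{F})^{\otimes k}$ with the Koszul-signed permutation action of $\Sigma_k$, and similarly for $Y$. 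Choosing any $\mathbb{F}$-linear section $s$ of $f_*$ (which exists because we are over a field), the tensor power $s^{\otimes k}$ is a $\Sigma_k$-equivariant $\mathbb{F}$-linear section of $(f^{\wedge k})_*$. Consequently, there is a $\Sigma_k$-equivariant decomposition $\tilde H_*(X^{\wedge k};\mathbb{F}) \cong \tilde H_*(Y^{\wedge k};\mathbb{F}) \oplus W$ of $\mathbb{F}[\Sigma_k]$-modules, which induces a split decomposition on each $E^2$-page, and in particular makes the map of $E^2$-pages split surjective.

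Finally, I would argue that split surjectivity on $E^2$ propagates to split surjectivity on $E^\infty$, and hence to surjectivity on $\mathbb{F}$-homology. The main obstacle lies here: while the map of $E^2$-pages is evidently a split surjection, one must ensure that the section $s^{\otimes k}$ (a priori only a map of $\mathbb{F}[\Sigma_k]$-modules on homology) is compatible with all higher differentials $d^r$ of the Serre spectral sequence. The cleanest way to arrange this is to lift $s^{\otimes k}$ to a $\Sigma_k$-equivariant chain-level section of the map of singular chain complexes $C_*(X^{\wedge k};\mathbb{F}) \to C_*(Y^{\wedge k};\mathbb{F})$, using that over $\mathbb{F}$ any surjection of modules splits. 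Applying the section at the level of chains in the twisted construction $C_*(E\Sigma_k)\otimes_{\Sigma_k} C_*(-^{\wedge k};\mathbb{F})$ computing $H_*(D_k-;\mathbb{F})$, one obtains a section of the whole filtered complex, not merely of $E^2$. This final step is where the argument requires genuine care; once in place, it yields a split surjection on abutments and completes the proof.
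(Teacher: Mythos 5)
Your proposal follows the same broad strategy as the paper: reduce via scanning and stable splitting to the extended-power summands, then exploit the fact that over a field a linear section $s$ of $f_*$ yields a $\Sigma_k$-equivariant section $s^{\otimes k}$ of $(f^{\wedge k})_*$. The divergence, and where you run into trouble, is in the last step. You stop at the $E^2$-page of the Serre spectral sequence for $X^{\wedge k}\to D_kX\to B\Sigma_k$, which forces you to worry about whether the splitting is compatible with higher differentials, and you then propose to resolve this by choosing a $\Sigma_k$-equivariant chain-level section, justified by the claim that ``over $\mathbb{F}$ any surjection of modules splits.'' That justification is false in the relevant generality: it is true for $\mathbb{F}$-vector spaces, but not for $\mathbb{F}[\Sigma_k]$-modules when $\mathrm{char}(\mathbb{F})$ divides $k!$, which is exactly the case of interest here (the whole point of the section is mod $p$ computations). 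An arbitrary $\Sigma_k$-equivariant surjection of $\mathbb{F}[\Sigma_k]$-modules need not split equivariantly. You had already identified the correct way around this — a non-equivariant section on $\widetilde H_*$ tensors up to an equivariant one on $\widetilde H_*^{\otimes k}$ — but that trick lives naturally on homology, not on chains, and it cannot be naively lifted to $C_*(X^{\wedge k})\to C_*(Y^{\wedge k})$, which isn't even degreewise surjective.

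The paper avoids the entire issue by taking the identification one step further. Using that $\Conf_k(\mathbb{R}^\infty)$ is a contractible free $\Sigma_k$-space (Lemma \ref{lem:conf classifying space}) and the K\"unneth theorem over the field $\mathbb{F}$, one gets a direct isomorphism
\[
\widetilde H_*\bigl(\Conf_k(\mathbb{R}^\infty)_+\wedge_{\Sigma_k}X^{\wedge k}\bigr)\;\cong\; H_*\bigl(\Sigma_k;\,\widetilde H_*(X)^{\otimes k}\bigr),
\]
functorially in $X$. This replaces the Serre spectral sequence abutment by an honest, closed-form expression in terms of group homology with coefficients in the $\Sigma_k$-module $\widetilde H_*(X)^{\otimes k}$. (The reason this works without semisimplicity: a non-equivariant chain homotopy equivalence $\widetilde C_*(X)\simeq\widetilde H_*(X)$ over the field $\mathbb{F}$ tensors up to a $\Sigma_k$-equivariant quasi-isomorphism $\widetilde C_*(X)^{\otimes k}\to\widetilde H_*(X)^{\otimes k}$, and tensoring with the free resolution $C_*(E\Sigma_k)$ over $\mathbb{F}[\Sigma_k]$ preserves quasi-isomorphisms of bounded-below complexes.) Once you are at the level of $H_*(\Sigma_k;-)$ applied to the equivariantly split surjection $\widetilde H_*(X)^{\otimes k}\twoheadrightarrow\widetilde H_*(Y)^{\otimes k}$, surjectivity is immediate by functoriality, and no spectral sequence differentials need to be tracked at all.
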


This result, in turn, relies on the following basic fact.

\begin{lemma}\label{lem:conf classifying space}
The space $\Conf_k(\mathbb{R}^\infty)$ is contractible for every $k\geq0$.
\end{lemma}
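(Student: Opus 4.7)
The plan is to show that $\Conf_k(\mathbb{R}^\infty)$ is weakly contractible and then upgrade this to genuine contractibility via a CW-structure argument. The key observation is the identification
\[
\Conf_k(\mathbb{R}^\infty) \;\cong\; \colim_n \Conf_k(\mathbb{R}^n),
\]
obtained from the filtered colimit presentation $\mathbb{R}^\infty = \colim_n \mathbb{R}^n$ along the standard inclusions. This uses only that open embeddings induce open embeddings on configuration spaces together with the fact that, for $\mathbb{R}^\infty$ with the colimit topology, a diagonal set is open if and only if it is open on each finite stage. Since the transition maps $\Conf_k(\mathbb{R}^n) \hookrightarrow \Conf_k(\mathbb{R}^{n+1})$ are closed cofibrations and compact subsets of the colimit land in finite stages, homotopy groups commute with this colimit, giving $\pi_i(\Conf_k(\mathbb{R}^\infty)) \cong \colim_n \pi_i(\Conf_k(\mathbb{R}^n))$.

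To compute the right-hand side, I would invoke the computation from Section~\ref{section:configuration spaces} stating that, for $n \geq 3$,
\[
\pi_i(\Conf_k(\mathbb{R}^n)) \;\cong\; \prod_{j=1}^{k-1} \pi_i\!\left(\bigvee_j S^{n-1}\right).
\]
For $i$ and $k$ fixed and $n \geq i+2$, each factor vanishes, since a finite wedge of $(n-1)$-spheres is $(n-2)$-connected. Therefore every colimit term eventually becomes zero, and $\pi_i(\Conf_k(\mathbb{R}^\infty)) = 0$ for all $i \geq 0$. This handles the case $k \geq 2$; the cases $k \in \{0, 1\}$ are immediate, since $\Conf_0(\mathbb{R}^\infty) = \pt$ and $\Conf_1(\mathbb{R}^\infty) = \mathbb{R}^\infty$ is contractible by the standard linear homotopy.

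To pass from weak contractibility to contractibility, I would observe that $\Conf_k(\mathbb{R}^\infty)$ has the homotopy type of a CW complex: each $\Conf_k(\mathbb{R}^n)$ is a smooth manifold, hence admits a CW structure, and the inclusions $\Conf_k(\mathbb{R}^n) \hookrightarrow \Conf_k(\mathbb{R}^{n+1})$ can be arranged as inclusions of CW subcomplexes, so the colimit is itself a CW complex. Whitehead's theorem then upgrades vanishing of all homotopy groups to contractibility.

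The main obstacle in this approach is the point-set bookkeeping to justify the interchange of homotopy groups with the filtered colimit and the existence of a compatible CW structure. An alternative, more conceptually clean route would be to induct on $k$ directly using the Fadell--Neuwirth homotopy fiber sequence, whose fiber $\mathbb{R}^\infty \setminus \{x_1,\ldots,x_{k-1}\}$ is contractible (being a filtered colimit of $\bigvee_{k-1} S^{n-1}$'s), and whose base $\Conf_{k-1}(\mathbb{R}^\infty)$ is contractible by induction; however, this requires extending Fadell--Neuwirth (Theorem~\ref{thm:Fadell--Neuwirth}) beyond finite-dimensional manifolds, which one can accomplish by reducing again to finite stages or by inspecting the hypercover proof.
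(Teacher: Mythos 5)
Your proof is correct, and your main route differs from the paper's, which is precisely the one you sketch as the alternative at the end: the paper inducts on $k$, passes the finite-dimensional Fadell--Neuwirth squares to the limit, and reads off vanishing from the long exact sequence and the contractibility of the fiber $\bigvee_k S^\infty$. Your primary argument instead fixes $k$ and quotes the computation $\pi_i(\Conf_k(\mathbb{R}^n)) \cong \prod_{j=1}^{k-1}\pi_i\bigl(\bigvee_j S^{n-1}\bigr)$ from \S\ref{section:configuration spaces}, which vanishes once $n \geq i+2$ by connectivity of the wedge. The technical burden is essentially the same either way: both need that $\Conf_k(\mathbb{R}^\infty) \cong \colim_n \Conf_k(\mathbb{R}^n)$ along closed cofibrations and that homotopy groups (and hence homotopy pullbacks, which are detected by comparing homotopy fibers) commute with this colimit, which is exactly the bookkeeping you flag. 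Granted that, your route is more elementary, resting only on connectivity of wedges of spheres, while the paper's is more uniform with its other uses of Fadell--Neuwirth. Note also that the paper's argument, like yours, only establishes vanishing of $\pi_i$ — the long exact sequence gives nothing more — so the Whitehead upgrade to genuine contractibility would be needed by the paper as well; but in its only application, Lemma \ref{lem:Q and surjections}, the result is used to treat $\Conf_k(\mathbb{R}^\infty) \to B_k(\mathbb{R}^\infty)$ as a model for $E\Sigma_k \to B\Sigma_k$ at the level of homology, for which weak contractibility is sufficient, so your CW step is dispensable.
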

\begin{proof}
We proceed by induction on $k$, the base case $k\in \{0,1\}$ being obvious. By Fadell--Neuwirth, we have a homotopy pullback square \[\xymatrix{\mathbb{R}^n\setminus\{x_1,\ldots, x_k\}\ar[d]\ar[r]&\Conf_{k+1}(\mathbb{R}^n)\ar[d]\\
\pt\ar[r]&\Conf_k(\mathbb{R}^n)
}\] for each $n\geq0$, and thus, in the limit, we obtain the homotopy pullback square \[\xymatrix{\mathbb{R}^\infty\setminus\{x_1,\ldots, x_k\}\ar[d]\ar[r]&\Conf_{k+1}(\mathbb{R}^\infty)\ar[d]\\
\pt\ar[r]&\Conf_k(\mathbb{R}^\infty).
}\] The space in the upper left corner is homotopy equivalent to $\bigvee_kS^\infty$, which is contractible, and the claim follows from the long exact sequence in homotopy.
\end{proof}

\begin{proof}[Proof of Lemma \ref{lem:Q and surjections}]
We give the proof under the assumption that the source and target of the homology surjection are both connected, which is the only case that we will use. The general case may be treated in the same way with group completion techniques. 

Let $f:X\to Y$ be a homology surjection between connected pointed spaces. We have the chain of isomorphisms \begin{align*}
H_*(\Omega^\infty\Sigma^\infty X)&\cong H_*(\Conf_X(\mathbb{R}^\infty))\\
&\cong \bigoplus_{k\geq0} H_*(\Conf_k(\mathbb{R}^\infty)_+\wedge_{\Sigma_k}X^{\wedge k})\\
&\cong \bigoplus_{k\geq0} H_*(\Sigma_k; \widetilde H(X)^{\otimes k}),
\end{align*} and similarly for $Y$, where the first isomorphism uses connectivity and scanning, the second uses stable splitting, and the third uses Lemma \ref{lem:conf classifying space}, the fact that the action of $\Sigma_k$ on $\Conf_k(\mathbb{R}^\infty)$ is contractible, and the K\"{u}nneth isomorphism. Since we are working over a field, the surjection $\widetilde H_*(X)\to \widetilde H_*(Y)$ splits, and this splitting induces an equivariant splitting of $\widetilde H_*(X)^{\otimes k}\to \widetilde H_*(Y)^{\otimes k}$ and hence a splitting at the level of group homology.
\end{proof}

\begin{proof}[Proof of Lemma \ref{lem:surjective collapse}]
In the commuting diagram \[\xymatrix{
H_*(\Omega^\infty\Sigma^\infty(\overline M_+\wedge S^r))\ar@{->>}[d]\ar[r]&H_*(\Omega^\infty\Sigma^\infty(\overline M/M\wedge S^r))\ar@{=}[d]^-\wr\\
E^\infty_{*,0}\ar[r]&E^2_{*,0},
}\] the bottom arrow is an injection, since $E^\infty_{*,0}=\bigcap_{r}\ker d^r_{*,0}$ (recall that the differential $d^r$ in the homological Serre spectral sequence has bidegree $(-r,r-1)$). Since the quotient map $\overline M\to \overline M/M$ induces a surjection on homology by assumption, it does so after smashing with $S^r$; therefore, the top map is a surjection by Lemma \ref{lem:Q and surjections}. It follows that the bottom map is a surjection and therefore an isomorphism, and we conclude that $d_r|_{E^r_{*,0}}$ vanishes for every $r\geq2$.

Now, the functor $\Omega^\infty\Sigma^\infty$ takes values in homotopy associative and commutative $H$-spaces, so the spectral sequence in question is a spectral sequence of graded commutative algebras. By induction and Leibniz rule, it now follows that $d^r$ is identically zero for all $r\geq2$, as desired.
\end{proof}

The other relevant piece of information about this comparison is the following fact.

\begin{lemma}\label{lem:spectral sequence injective}
The map $E^2(\Conf)\to E^2(\Omega^\infty\Sigma^\infty)$ is injective.
\end{lemma}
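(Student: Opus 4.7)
The plan is to reduce injectivity at $E^2$ to two homology-level injectivity statements via K\"unneth, and then verify each using the stable splitting.

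Since both fibrations in question have simply connected fibers (as established in the surrounding discussion under the hypothesis $r>1$), and we work over a field, the K\"unneth theorem identifies
\[
E^2(\Conf) \;\cong\; H_*(\Conf_{S^r}(\overline M, M)) \otimes H_*(\Conf_{S^r}(M))
\]
and analogously for $E^2(\Omega^\infty\Sigma^\infty)$. The comparison map at $E^2$ is then the tensor product $\eta_{\overline M, M}\otimes\eta_M$ of the induced homology maps on base and fiber. Since a tensor product of injective linear maps over a field is injective, it suffices to show that each factor is injective on homology.

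For either factor map, of the general form $\eta_{(N,N_0)}\colon \Conf_{S^r}(N,N_0)\to \Omega^\infty\Sigma^\infty((N/N_0)\wedge S^r)$, I would apply Theorem~\ref{thm:stable splitting} to the source and the Snaith-type splitting of $\Omega^\infty\Sigma^\infty$ to the target, producing compatible cardinality decompositions
\[
H_*(\Conf_{S^r}(N,N_0)) \cong \bigoplus_{k\geq 1} H_*(\Conf_k(N,N_0)\wedge_{\Sigma_k}S^{rk}),
\]
\[
H_*(\Omega^\infty\Sigma^\infty((N/N_0)\wedge S^r)) \cong \bigoplus_{k\geq 1} H_*(D_k((N/N_0)\wedge S^r)).
\]
The explicit construction of $\eta$ via $\Phi$ of Construction~\ref{construction:power set map}, followed by the projection of labels onto the $k=1$ summand of $V$, shows that $\eta$ preserves the cardinality filtration: a cardinality-$k$ configuration $\sum m_a x_a$ is sent to the cardinality-$k$ configuration $\sum \varphi(m_a)\cdot(m_a,x_a)$ in $\mathbb{R}^\infty$. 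On the cardinality-$k$ filtration quotient, the induced map is that produced by the $\Sigma_k$-equivariant map $\Conf_k(N,N_0)\to \Conf_k(\mathbb{R}^\infty)\times (N/N_0)^{\wedge k}$ sending $(x_1,\ldots,x_k)$ to $((\varphi(x_i))_i, (x_i)_i)$.

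The hard part will be verifying injectivity on each cardinality piece. The hypothesis that $n+r$ is odd (or the characteristic is two) implies that the relevant $\Sigma_k$-homotopy coinvariants coincide with ordinary coinvariants, so that the cardinality-$k$ comparison becomes an algebraic map between $\mathrm{sign}^r$-isotypic components of $H_*(\Conf_k(N,N_0))$ and of $H_*((N/N_0)^{\wedge k})$. I would establish injectivity of this map in tandem with the main induction on a handle decomposition underlying Theorem~\ref{thm:labeled conf homology}: the inductive hypothesis furnishes an explicit tensor-product description of the source homology, under which the comparison identifies with a tensor product of classical stabilization maps $\Omega^{n-i}S^{n+r}\to \Omega^\infty\Sigma^\infty S^{i+r}$ on loop spaces of spheres, whose homology-level injectivity in the required parities is standard and follows by comparing the stable splittings of source and target.
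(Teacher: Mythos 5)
Your K\"unneth reduction to injectivity of the two factor maps $\eta_{(\overline M,M)}$ and $\eta_M$, and the plan to handle $\eta_M$ by the ambient induction on a handle decomposition, are exactly the paper's strategy. The detour through the cardinality filtration and compatible stable splittings, however, is unnecessary for the $\eta_{(\overline M,M)}$ factor and does not by itself close the argument. The paper takes a shorter path: inspect the definition of $\Phi$ to see that, under the identifications $\Conf_{S^r}(\overline M,M)\simeq \Omega^{n-i}\Sigma^{n-i}S^{r+i}$ and $\Conf_{\overline M/M\wedge S^r}(\mathbb{R}^\infty)\simeq \Omega^\infty\Sigma^\infty S^{r+i}$, the map $\eta_{(\overline M,M)}$ becomes homotopic to the canonical inclusion, with no decomposition by cardinality required.

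The genuine gap is in your last sentence, where you assert that injectivity of $\Omega^{n-i}S^{n+r}\to\Omega^\infty\Sigma^\infty S^{r+i}$ on homology ``is standard and follows by comparing the stable splittings of source and target.'' Comparing the splittings only reduces the claim to computing the homology of the cardinality pieces $\Conf_k(\mathbb{R}^{n-i})_+\wedge_{\Sigma_k}S^{(r+i)k}$ and $\Conf_k(\mathbb{R}^\infty)_+\wedge_{\Sigma_k}S^{(r+i)k}$ and the map between them; it does not explain why these are nonzero or why the comparison is injective. What actually does the work is Proposition~\ref{prop:loop space odd sphere}: under the parity hypothesis, $H_*(\Omega^kS^m)\cong\Sym_\mathbb{F}(\alpha)$ with $\alpha$ the Hurewicz image, and the maps $\Omega^{n-i}S^{n+r}\to\Omega^{n-i+2s}S^{n+r+2s}$ and their colimit preserve this generator, so on homology the comparison is the algebra map $\Sym_\mathbb{F}(\alpha_0)\to\Sym_\mathbb{F}(\alpha_\infty)$, which is an isomorphism. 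You should invoke that proposition explicitly, both here and for the base case $D^n$ of the $\eta_M$ induction, rather than leaving the parity hypothesis to do unspoken work behind the word ``standard.''
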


Before demonstrating this injectivity, we use it to prove the desire result collapse.

\begin{proof}[Proof of Lemma \ref{lem:collapse lemma}]
By induction with base case covered by Lemma \ref{lem:spectral sequence injective}, $E^2(\Conf)\cong E^r(\Conf)$ and $E^r(\Conf)\to E^r(\Omega^\infty\Sigma^\infty)$ is injective. By Lemma \ref{lem:surjective collapse}, $d^r(x)$ becomes zero in $E^r(\Omega^\infty\Sigma)$ for any $r\geq2$ and $x\in E^r(\Conf)$; therefore, we conclude that $d^r$ is identically zero, so $E^2(\Conf)\cong E^{r+1}(\Conf)$ and $E^{r+1}(\Conf)\to E^{r+1}(\Omega^\infty\Sigma^\infty)$ is inejctive. Thus, the spectral sequence collapses at $E^2$.
\end{proof}

\subsection{Loop space calculations} We will give the proof of Lemma \ref{lem:spectral sequence injective} under the assumption that $r+n$ is odd and $\mathbb{F}$ has characteristic 0. The argument in finite characteristic proceeds along similar lines, leveraging a more complicated computation of the homology of iterated loop spaces carried out in \cite{CohenLadaMay:HILS}. The simpler computation that we will use is the following.

\begin{proposition}\label{prop:loop space odd sphere}
If $m$ is odd and $\mathbb{F}$ is of characteristic 0, then, for any $0\leq k<m$, there is an isomorphism of graded vector spaces \[\Sym_\mathbb{F}(\alpha)\xrightarrow{\simeq}H_*(\Omega^kS^m),\] where $\alpha$ is the image of the class of the identity under the composite map \[\pi_m(S^m)\cong \pi_{m-k}(\Omega^k S^m)\to H_*(\Omega^k S^m).\] Moreover, for $k>0$, this isomorphism is an isomorphism of algebras.
\end{proposition}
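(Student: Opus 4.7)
The plan is to induct on $k$, using at each step the path-loop fibration $\Omega^{k+1}S^m \to P\Omega^k S^m \to \Omega^k S^m$ (whose total space is contractible) together with the Serre spectral sequence. The base case $k=0$ is immediate: since $m$ is odd, $H_*(S^m;\mathbb{F})$ is concentrated in degrees $0$ and $m$, and $\mathrm{Sym}_\mathbb{F}(\alpha)$ with $|\alpha|=m$ odd agrees with this (being an exterior algebra on a single generator by the Koszul convention), with $\alpha$ identified as the Hurewicz image of $[\mathrm{id}_{S^m}]$. Note the hypothesis $k<m$ ensures the generator always sits in positive degree.

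For the inductive step, I will work in cohomology and dualize at the end, since everything in sight is of finite type in the relevant range. Assume $H^*(\Omega^k S^m;\mathbb{F})\cong\mathrm{Sym}(\alpha_k^{\vee})$ with $|\alpha_k^{\vee}|=m-k$. In the (cohomological) Serre spectral sequence
\[
E_2^{p,q}=H^p(\Omega^k S^m)\otimes H^q(\Omega^{k+1}S^m)\implies H^{p+q}(\mathrm{pt}),
\]
a downward induction on total degree shows that $H^q(\Omega^{k+1}S^m)=0$ for $0<q<m-k-1$, and that the transgression $d_{m-k}$ sends a unique class $\beta\in H^{m-k-1}(\Omega^{k+1}S^m)$ isomorphically onto $\alpha_k^\vee$.

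Two cases then arise depending on the parity of $m-k$. If $m-k$ is odd, $\mathrm{Sym}(\alpha_k^\vee)=\Lambda(\alpha_k^\vee)$ and a standard transgression argument (all classes in the base are concentrated in degrees $0$ and $m-k$) forces $H^*(\Omega^{k+1}S^m)=\mathbb{F}[\beta]$ with $|\beta|=m-k-1$ even. If $m-k$ is even, $\mathrm{Sym}(\alpha_k^\vee)=\mathbb{F}[\alpha_k^\vee]$, and the Leibniz rule gives $d_{m-k}(\beta^p)=p\,\beta^{p-1}\alpha_k^\vee$; in characteristic zero this is nonzero whenever $\beta^{p-1}\neq 0$, and one inductively concludes $H^*(\Omega^{k+1}S^m)=\Lambda(\beta)$. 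Dualizing yields the desired additive isomorphism $H_*(\Omega^{k+1}S^m)\cong\mathrm{Sym}(\alpha_{k+1})$ with $|\alpha_{k+1}|=m-k-1$. For $k>0$, $\Omega^k S^m$ is an $H$-space and the Serre spectral sequence is multiplicative, promoting the isomorphism to one of algebras (Pontryagin product on the left). Finally, to identify $\alpha_{k+1}$ with the Hurewicz image of $[\mathrm{id}_{S^m}]$ under $\pi_m(S^m)\cong \pi_{m-k-1}(\Omega^{k+1}S^m)\to H_{m-k-1}(\Omega^{k+1}S^m)$, I will invoke naturality of transgression: the adjoint of $\mathrm{id}_{S^m}$ gives a map $S^{m-k-1}\to \Omega^{k+1}S^m$ whose composite with the fibration transgresses to the generator of $H^{m-k}(\Omega^k S^m)$ detecting $\alpha_k$, so the image class in $H_{m-k-1}(\Omega^{k+1}S^m)$ must be $\alpha_{k+1}$.

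The main obstacle is the polynomial-to-exterior step (Case 2 above): one must verify that no $\beta^p$ survives to $E_\infty$, which requires $p\beta^{p-1}\alpha_k^\vee\neq 0$ for all $p\geq 1$. This is exactly where characteristic zero is essential—in characteristic $p$ the analogous statement fails and the computation is governed by divided powers and Dyer--Lashof-type operations, which is the subject of the more involved mod-$p$ calculation of \cite{CohenLadaMay:HILS} used later in the notes.
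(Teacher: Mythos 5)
Your approach mirrors the paper's—induction on $k$, path-loop fibration, Serre spectral sequence, case split on the parity of $m-k$—except that you work in cohomology and dualize, where the paper works directly in homology. Working cohomologically has a small advantage: the cohomological Serre spectral sequence is multiplicative with no hypotheses, so you sidestep the paper's separate treatment of the first step to $\Omega S^m$, where the homological spectral sequence over the base $S^m$ is not multiplicative and the paper must instead invoke the $H_*(\Omega S^m)$-module structure.

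However, your Leibniz/characteristic-zero argument lands in the wrong parity case. In your Case 2 ($m-k$ even), $\beta$ has odd degree $m-k-1$, so $\beta^2=0$ by graded commutativity and the formula $d_{m-k}(\beta^p)=p\beta^{p-1}\alpha_k^\vee$ reads $0=0$ for every $p\geq 2$; the actual content in this case is the Koszul-type check that each $(\alpha_k^\vee)^{j}$ is hit by $\beta(\alpha_k^\vee)^{j-1}$, which needs no hypothesis on $\mathbb{F}$ beyond what the inductive input supplies. The Leibniz computation belongs in Case 1 ($m-k$ odd), where $\beta$ has \emph{even} degree and one must show $\beta^p\neq0$ for all $p$; this is exactly where characteristic zero rules out divided powers (over $\mathbb{F}_p$ the cohomology ring is a divided-power algebra on $\beta$, not $\mathbb{F}[\beta]$—already $H^*(\Omega S^m;\mathbb{F}_p)$ illustrates this). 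Your concluding remark identifying Case 2 as ``exactly where characteristic zero is essential'' repeats the misattribution. The likely culprit is the passage from homology to cohomology: the paper's exterior-base lemma is stated over an arbitrary field while its polynomial-base lemma requires characteristic zero, but reversing the direction of transgression shifts the coefficient $p$ to the other parity case. Swapping the arguments between your two cases repairs the proof.
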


We will return to this calculation in the next lecture.

\begin{proof}[Proof of Lemma \ref{lem:spectral sequence injective}]
In light of the commuting diagram \[\xymatrix{
E^2(\Conf)\ar[d]\ar@{=}[r]^-\sim&H_*(\Conf_{S^r}(\overline M,M))\otimes H_*(\Conf_{S^r}(M))\ar[d]^-{H_*(\eta_{(\overline M,M)})\otimes H_*(\eta_M)}\\
E^2(\Omega^\infty\Sigma^\infty)\ar@{=}[r]^-\sim&H_*(\Omega^\infty\Sigma^\infty(\overline M/M\wedge S^r))\otimes H_*(\Omega^\infty\Sigma^\infty(M_+\wedge S^r)),
}\] it suffices to show that $\eta_{(\overline M,M)}$ and $\eta_M$ are each injective on homology. 

For the map $\eta_{(\overline M,M)}$, a brief consideration of the definition of the map $\Phi$, along the lines of earlier arguments, shows that the composite \[
\Omega^{n-i}\Sigma^{n-i}S^{r+i}\simeq \Conf_{S^r}(\overline M,M)\xrightarrow{\eta_{(\overline M,M)}} \Conf_{\overline M/M\wedge S^r}(\mathbb{R}^\infty)\simeq\Omega^\infty\Sigma^\infty S^{r+i}
\] is homotopic to the canonical inclusion. Thus, the claim in this case follows from Proposition \ref{prop:loop space odd sphere}, since \begin{align*}
H_*(\Omega^\infty\Sigma^\infty S^{r+i})&\cong \colim_\ell H_*(\Omega^\ell \Sigma^\ell S^{r+i})\\
&\cong \colim_s H_*(\Omega^{n-i+2s}\Sigma^{n-i+2s}S^{r+i})\\
&\cong \colim_s H_*(\Omega^{n-i+2s}S^{n+r+2s})\\
&\cong \colim_s\Sym_\mathbb{F}(\alpha_s)\\
&\cong\Sym_\mathbb{F}(\alpha_\infty),
\end{align*} where $|\alpha_s|=|\alpha_\infty|=r+i$, and we have used that $n+r+2s$ is odd.

The case of the map $\eta_M$ follows by induction on a handle decomposition, the base case of $D^n$ following as before from Proposition \ref{prop:loop space odd sphere}.
\end{proof}

Note that, in the characteristic 0 case, the injection $E^2(\Conf)\to E^2(\Omega^\infty\Sigma^\infty)$ is in fact an isomorphism.

We abstract the general features of the calculation behind Proposition \ref{prop:loop space odd sphere} in the following two lemmas.

\begin{lemma}
Let $\mathbb{F}$ be any field and $\{E^r\}$ a first-quadrant, homological, multiplicative spectral sequence over $\mathbb{F}$. If\begin{enumerate}
\item $E^2_{*,*}\cong E^2_{*,0}\otimes E^2_{0,*}$,
\item $E^2_{*,0}\cong\wedge_\mathbb{F}(x)$ with $|x|$ odd, and
\item $E^\infty=E^\infty_{0,0}\cong\mathbb{F}$,
\end{enumerate} then $E^2_{0,*}\cong\mathbb{F}[y]$ with $|y|=|x|-1$.
\end{lemma}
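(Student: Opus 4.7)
The plan is to determine $E^2_{0,*}$ by exploiting the fact that the only differential in the spectral sequence that can possibly be nonzero on the axes is $d^{|x|}$, and then using multiplicativity plus the collapse condition $E^\infty = E^\infty_{0,0} \cong \mathbb{F}$ to force $E^2_{0,*}$ to be polynomial on a single generator.

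First I would pin down positional information. Set $p = |x|$, so $x$ sits at bidegree $(p,0)$. Since $E^2 \cong \wedge_\mathbb{F}(x) \otimes E^2_{0,*}$, the only nonzero columns of $E^2$ are $p = 0$ and $p = |x|$. A homological differential $d^r$ has bidegree $(-r, r-1)$; consequently (i) $d^r$ on the $y$-axis lands in negative horizontal degree and vanishes; and (ii) $d^r$ out of the column at $p = |x|$ lands in column $|x| - r$, which is nonzero in $E^r$ only when $|x| - r \in \{0, |x|\}$, i.e. only when $r = |x|$. By an analogous analysis, the only nonzero differential landing on the $y$-axis is also $d^{|x|}$. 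Therefore $E^2 = E^3 = \cdots = E^{|x|}$ on the two axes, and the only potentially nonzero differential is $d^{|x|}: xV \to V$, where I write $V := E^2_{0,*}$ for brevity.

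Next I would invoke multiplicativity. Set $y := d^{|x|}(x) \in E^{|x|}_{0,|x|-1} = V_{|x|-1}$. Since elements of $V$ are permanent cycles through page $|x|$, the Leibniz rule gives
\[
d^{|x|}(x \cdot v) \;=\; d^{|x|}(x) \cdot v \;+\; (-1)^{|x|}\, x \cdot d^{|x|}(v) \;=\; y \cdot v
\]
for every $v \in V$, where the product $yv$ is taken in the bigraded algebra $E^{|x|}$. Thus under the identification $xV \cong V$ (shifted in bidegree), $d^{|x|}$ is exactly multiplication by $y$.

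Now I would apply the collapse condition. Since no differential lands at $(0,0)$, we have $V^0 = E^\infty_{0,0} = \mathbb{F}$. For every $q > 0$, the condition $E^\infty_{0,q} = 0 = E^\infty_{|x|, q-|x|+1}$ forces $d^{|x|}: xV \to V$ to be injective and to have image exactly $V^{>0}$. In other words, multiplication by $y$ defines an isomorphism of graded vector spaces $V \xrightarrow{\sim} V^{>0}$. Iterating the resulting direct sum decomposition $V = \mathbb{F}\cdot 1 \oplus yV$ gives $V = \bigoplus_{n \geq 0} \mathbb{F}\cdot y^n$ as graded vector spaces, so $V \cong \mathbb{F}[y]$ with $|y| = |x| - 1$.

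The only substantive obstacle is the bookkeeping at the multiplicative step: one must be sure that before page $|x|$, elements on the two axes survive unchanged, so that the Leibniz identity above can be applied on $E^{|x|}$ using $d^{|x|}(v) = 0$ for $v \in V$ and using the still-valid product structure $E^{|x|} = xV \oplus V$. This is exactly what the positional analysis at the start of the argument provides, so the proof reduces to combining the two ingredients.
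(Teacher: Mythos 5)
Your proof is correct and follows essentially the same route as the paper's: degree considerations force all differentials to vanish before page $|x|$, the Leibniz rule identifies $d^{|x|}$ on $xV$ with multiplication by $y := d^{|x|}(x)$, and the collapse hypothesis then forces that multiplication map to be an isomorphism onto $V^{>0}$, yielding $V \cong \mathbb{F}[y]$. The paper phrases the final step by computing $d^{|x|}(x\otimes y^\ell) = y^{\ell+1}$ and arguing that each $y^{\ell+1}\neq 0$, whereas you iterate the splitting $V = \mathbb{F}\cdot 1 \oplus yV$; these are the same argument in different clothing.
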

\begin{proof}
We note first that $d^r|_{E_{*,0}}=0$ for $r<|x|$ by (1); therefore, by the Leibniz rule, $d^r\equiv0$ for $r<|x|$. Next, we note that $y:=d^{|x|}(x)\neq 0$, for otherwise $x$ is a permanent cycle, contradicting (3). By the Leibniz rule, we compute that \begin{align*}
d^{|x|}(x\otimes y^\ell)&=d^{|x|}(x)y^\ell+(-1)^{|x|}xd^{|x|}(y^\ell)\\
&=y^{\ell+1}.
\end{align*}
Note that $y^{\ell+1}\neq 0$ for each $\ell$, since otherwise $x\otimes y^\ell$ is a permanent cycle, and the result follows, since there can be no further differentials and hence no further elements in $E^2_{0,*}$.
\end{proof}

\begin{lemma}
Let $\mathbb{F}$ be a field of characteristic zero and $\{E^r\}$ a first-quadrant, homological, multiplicative spectral sequence over $\mathbb{F}$. If\begin{enumerate}
\item $E^2_{*,*}\cong E^2_{*,0}\otimes E^2_{0,*}$,
\item $E^2_{*,0}\cong \mathbb{F}[y]$ with $|y|$ even, and
\item $E^\infty=E^\infty_{0,0}\cong\mathbb{F}$,
\end{enumerate} then $E^2_{0,*}\cong\wedge_\mathbb{F}(x)$ with $|x|=|y|-1$.
\end{lemma}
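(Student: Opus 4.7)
The plan is to mirror the strategy of the previous lemma, swapping the roles of $x$ and $y$: rather than $x$ transgressing to produce $y$, it is now $y$ that must transgress, and the class it produces will be our $x$. As is standard for a multiplicative spectral sequence in this topological context, I will assume graded commutativity; this will be essential at one step to force $x^2 = 0$.

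First I would show that $y \in E^2_{|y|,0}$ cannot be a boundary: the would-be source $E^r_{|y|+r,\,1-r}$ lies outside the first quadrant for $r \geq 2$. Hence for $y$ to die in $E^\infty$ it must support a nontrivial differential. For $2 \leq r < |y|$, the target $E^r_{|y|-r,\,r-1}$ is a subquotient of $E^2_{|y|-r,0} \otimes E^2_{0,r-1}$ by hypothesis (1), and the first tensor factor vanishes because $|y|-r \in (0,|y|)$ is not a non-negative multiple of $|y|$. Thus $d^r(y) = 0$ automatically for $r < |y|$, and $y$ survives to $E^{|y|}$. I then define $x := d^{|y|}(y) \in E^{|y|}_{0,|y|-1} = E^2_{0,|y|-1}$ (an identical positional analysis shows no earlier differentials enter or leave this bidegree). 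Since $y$ cannot be a permanent cycle, $x$ is nonzero, and it has the desired degree $|y|-1$.

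Next I would invoke the Leibniz rule in characteristic zero to compute, by induction on $k$, the transgression $d^{|y|}(y^k) = k\,y^{k-1}x$, which is nonzero for every $k \geq 1$. Consequently $y^k$ dies on page $|y|$ and simultaneously kills $y^{k-1}x$, giving $E^{|y|+1}_{k|y|,0} = 0$ and $E^{|y|+1}_{k|y|,|y|-1} = 0$ for every $k \geq 1$ (the latter uses that $d^{|y|}(y^kx) = ky^{k-1}x^2 = 0$, so $y^kx$ lies in the kernel and is then hit by $y^{k+1}$). Meanwhile, graded commutativity with $|x| = |y|-1$ odd and $\mathrm{char}\,\mathbb{F} = 0$ yields $x^2 = 0$. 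So the subalgebra generated by $y$ and $x$ is exactly $\mathbb{F}[y] \otimes \wedge_\mathbb{F}(x)$, and within it $d^{|y|}$ annihilates everything except $1 \in E^\infty_{0,0}$.

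The hard part will be ruling out any further classes in $E^2_{0,*}$. Suppose for contradiction that $q_0 > 0$, $q_0 \neq |y|-1$, is minimal with $E^2_{0,q_0} \neq 0$, and pick a nonzero $\alpha$ there. Outgoing differentials from $\alpha$ vanish by position, so $\alpha$ can only be killed by some incoming $d^r: E^r_{r,\,q_0-r+1} \to E^r_{0,q_0}$. By hypothesis (1) and minimality of $q_0$, a nonzero source on page $2$ forces $r = k|y|$ and $q_0 - r + 1 \in \{0, |y|-1\}$, i.e.\ $q_0 = k|y|-1$ or $q_0 = (k+1)|y|-2$. In the first case ($k \geq 2$), the source $E^{k|y|}_{k|y|,0}$ is already zero by the previous paragraph; in the second case, the source $E^{k|y|}_{k|y|,|y|-1}$ is likewise zero for $k \geq 2$, while for $k = 1$ the only available differential $d^{|y|}: E^{|y|}_{|y|,|y|-1} \to E^{|y|}_{0,2|y|-2}$ sends $yx \mapsto x^2 = 0$ and hence has zero image. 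In every case nothing can hit $\alpha$, so $\alpha$ survives to $E^\infty$, contradicting hypothesis (3). This forces $E^2_{0,*} \cong \wedge_\mathbb{F}(x)$ with $|x| = |y|-1$, as desired.
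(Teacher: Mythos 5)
Your strategy mirrors the paper's: the paper likewise defines $x := d^{|y|}(y)$, uses the Leibniz rule to compute $d^{|y|}(y^\ell) = \ell\, y^{\ell-1}x$, and invokes characteristic zero to conclude. Your write-up usefully expands the positional bookkeeping that the paper compresses into ``there can be no further differentials.'' However, there is a genuine gap.

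The conclusion $E^2_{0,*}\cong\wedge_\mathbb{F}(x)$ requires two things: that $E^2_{0,q}=0$ for $q>0$, $q\neq|y|-1$ (which your contradiction argument targets), \emph{and} that $\dim_\mathbb{F} E^2_{0,|y|-1}=1$. You only establish $x\neq0$, which gives a lower bound on the dimension. This omission also quietly undermines an intermediate step: your assertion in the third paragraph that $E^{|y|+1}_{k|y|,\,|y|-1}=0$ is justified only by observing that $y^k x$ dies, which accounts for the full group only if $E^{|y|}_{k|y|,\,|y|-1}\cong\mathbb{F}\langle y^k\rangle\otimes E^2_{0,|y|-1}$ is one-dimensional. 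Likewise, in Case 2 with $k=1$, if $E^2_{0,|y|-1}$ had a second generator $x'$ independent of $x$, the Leibniz rule gives $d^{|y|}(yx') = xx'$, which lands in $E^2_{0,2|y|-2}$ and is not visibly zero, so the ``zero image'' claim would not follow.

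The repair is short and belongs before the contradiction argument. Suppose $\dim E^2_{0,|y|-1}\geq2$. No differential touches bidegree $(0,|y|-1)$ before page $|y|$, so $E^{|y|}_{0,|y|-1}=E^2_{0,|y|-1}$. On page $|y|$ the only incoming differential is $d^{|y|}\colon E^{|y|}_{|y|,0}=\mathbb{F}\langle y\rangle\to E^{|y|}_{0,|y|-1}$, whose image is $\mathbb{F}\langle x\rangle$, so $E^{|y|+1}_{0,|y|-1}=E^2_{0,|y|-1}/\mathbb{F}\langle x\rangle\neq0$. For $r>|y|$ the incoming $d^r$ has source in bidegree $(r,|y|-r)$, below the $p$-axis, and all outgoing differentials from the zeroth column vanish; hence $E^\infty_{0,|y|-1}\neq0$, contradicting hypothesis (3). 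With $\dim E^2_{0,|y|-1}=1$ established, your third and fourth paragraphs go through as written.
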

\begin{proof}
As in the previous argument, we find that $d^r\equiv0$ for $r<|y|$, and that $x:=d^{|y|}(y)\neq0$. By the Leibniz rule and induction, we have that \begin{align*}
d^{|y|}(y^\ell)&=d^{|y|}(y)y^{\ell-1}+(-1)^{|y|} yd^{|y|}(y^{\ell-1})\\
&=\ell y^{\ell-1}\otimes x\\
&\neq 0,
\end{align*} since $\mathbb{F}$ has characteristic zero. Since there can be no further differentials, the claim follows.
\end{proof}

\begin{proof}[Proof of Proposition \ref{prop:loop space odd sphere}]
We proceed by induction on $k$, the base case of $k=0$ being the observation that $H_*(S^m)\cong \wedge_\mathbb{F}(\alpha)$. For the induction step, we use the Serre spectral sequence for the homotopy pullback square \[\xymatrix{
\Omega^k S^m\ar[r]\ar[d]&P\,\Omega^{k-1}S^m\ar[d]\\
\pt\ar[r]&\Omega^{k-1}S^m,
}\] where $P$ denotes the path space functor. Since the path space is contractible, and since, for $k>1$, this spectral sequence is multiplicative, one of the two lemmas applies, depending on the parity of $k$. To identify the generator $\alpha$, it suffices to note that $\alpha\neq0$ by the Hurewicz theorem, and that $|\alpha|=m-k$.

In the case $k=1$, the spectral sequence is not multiplicative; rather, via naturality and the commutative diagram \[\xymatrix{
\Omega S^m\times\Omega S^m\ar[d]\ar[r]&\Omega S^m\ar[d]\\
\Omega S^m\times PS^m\ar[r]\ar[d]&PS^m\ar[d]\\
S^m\ar@{=}[r]&S^m
}\]it is a spectral sequence of $H_*(\Omega S^m)$-modules. This structure is sufficient to imply the necessary equation \[d^m(x\otimes y^\ell)=d^m(x)y^\ell=y^{\ell+1}\] in this case as well.
\end{proof}

\begin{example}
In finite characteristic $p$, the same calculation shows that $H_*(\Omega S^m)\cong \mathbb{F}[y]$. For $\Omega^2S^m$, however, the argument breaks down, for now\[d^{m-1}(y^p)=py^{p-1}=0,\] implying the existence of further elements \begin{align*}
Q^1(y)&:=d^{p(m-1)}(y^p)\\
\beta Q^1(y)&=d^{(p-1)(m-1)}(y^{p-1}\otimes x)
\end{align*} of degree $p(m-1)-1$ and $p(m-1)-2$, respectively. A systematic approach to these and other nontrivial higher differentials, and the resulting profusion of homology classes, is provided by the framework of homology operations for iterated loop spaces. The symbol $Q^1$ used above refers to a certain \emph{Dyer-Lashof} operation and the letter $\beta$ to the Bockstein operator, and it turns out that the homology of iterated loop spaces of spheres can be completely described in terms of composites of Dyer-Lashof operations and the Bockstein---see \cite{CohenLadaMay:HILS} for further details.
\end{example}

Our next move, having completed the proof of Theorem \ref{thm:labeled conf homology}, will be to exploit it in understanding the homology of ordinary configuration spaces.

\subsection{Odd dimensional homology} Our strategy in proving Theorem \ref{thm:odd homology} will be to reinterpret the two sides of the isomorphism of Theorem \ref{thm:labeled conf homology}, keeping track of the extra grading.

On the lefthand side of the isomorphism, as bigraded vector spaces, we have \begin{align*}H_*(\Conf_{S^r}(M))&\cong \mathbb{F}\oplus\widetilde H_*(\Conf_{S^r}(M))\\
&\cong \mathbb{F}\oplus \bigoplus_{k\geq1}\widetilde H_*\left(\Conf_k(M)_+\wedge_{\Sigma_k}S^{rk}\right)\\
&\cong\bigoplus_{k\geq0}\widetilde H_*\left(\Conf_k(M)_+\wedge_{\Sigma_k}S^{rk}\right)
\end{align*} by stable splitting. Writing $\pi_{k,r}$ for the natural projection \[\pi_{k,r}:\Conf_k(M)\times_{\Sigma_k}\mathbb{R}^{rk}\to B_k(M),\] we have the following observation.

\begin{lemma}\label{lem:thom space}
The map $\pi_{k,r}$ is a vector bundle of rank $rk$, and there is a pointed homeomorphism \[\mathrm{Th}(\pi_{k,r})\cong \Conf_k(M)_+\wedge_{\Sigma_k} S^{rk}.\]
\end{lemma}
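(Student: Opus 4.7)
The plan is to break the statement into two pieces and handle each in turn. First I would establish that $\pi_{k,r}$ is a vector bundle of rank $rk$ by viewing it as an associated bundle. Recall from the paper that $\Conf_k(M) \to B_k(M)$ is a principal $\Sigma_k$-bundle (this was the covering space corollary following Proposition \ref{prop:conf basis}). The symmetric group $\Sigma_k$ acts on $\mathbb{R}^{rk} = (\mathbb{R}^r)^{\oplus k}$ by permutation of the $k$ blocks of $r$ coordinates, which is a linear action. The associated bundle construction then gives \[\pi_{k,r}\colon \Conf_k(M) \times_{\Sigma_k} \mathbb{R}^{rk} \to B_k(M),\] which, trivialising over the distinguished basis of $B_k(M)$ from Proposition \ref{prop:conf basis}, is manifestly a vector bundle of rank $rk$.

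For the Thom space identification, I would use the fiberwise one-point compactification description of $\mathrm{Th}$: for any rank $n$ vector bundle $p\colon E\to B$, one has $\mathrm{Th}(p) \cong \bar E / s_\infty(B)$, where $\bar E$ is the fiberwise one-point compactification of $E$ and $s_\infty$ is the section at infinity. Applying this to the linear $\Sigma_k$-action on $\mathbb{R}^{rk}$, the fiberwise compactification commutes with the balanced product, since the $\Sigma_k$-action extends uniquely to $S^{rk} = (\mathbb{R}^{rk})^+$ fixing the basepoint at infinity. Thus \[\overline{\Conf_k(M) \times_{\Sigma_k} \mathbb{R}^{rk}} \;\cong\; \Conf_k(M) \times_{\Sigma_k} S^{rk},\] and the section at infinity is precisely the subspace $\Conf_k(M) \times_{\Sigma_k} \{\infty\} \cong B_k(M)$.

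Finally, I would unpack the right hand side. By definition of the smash product and the fact that the added basepoint of $\Conf_k(M)_+$ is $\Sigma_k$-fixed, we have \[\Conf_k(M)_+ \wedge_{\Sigma_k} S^{rk} \;=\; \faktor{\Conf_k(M) \times S^{rk}}{\Conf_k(M) \times\{\infty\} \,\cup\, \{+\}\times S^{rk}} \Big/ \Sigma_k,\] and taking $\Sigma_k$-orbits before and after collapsing commutes (the relations are $\Sigma_k$-stable). This yields exactly $\Conf_k(M) \times_{\Sigma_k} S^{rk} \big/ \Conf_k(M) \times_{\Sigma_k} \{\infty\}$, matching the Thom space computed above. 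Neither step presents a genuine obstacle; the main subtlety is just bookkeeping to confirm that fiberwise compactification and the associated bundle construction commute, which reduces to the observation that the $\Sigma_k$-action on the typical fiber extends to its one-point compactification.
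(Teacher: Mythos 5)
Your proof is correct and follows essentially the same strategy as the paper: local triviality comes from the basis $B_k^0(U)$ of Proposition \ref{prop:conf basis} (equivalently, the principal $\Sigma_k$-bundle $\Conf_k(M)\to B_k(M)$), and the Thom space identification follows from the observation that the relevant constructions commute with passage to $\Sigma_k$-orbits because the $\Sigma_k$-action on the fiber is linear. The only cosmetic difference is that you use the fiberwise one-point compactification model $\mathrm{Th}(p)=\bar E/s_\infty(B)$ where the paper uses the disk-bundle model $D(\pi_{k,r})/S(\pi_{k,r})$; both lead immediately to $\Conf_k(M)_+\wedge_{\Sigma_k}S^{rk}$.
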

\begin{proof}
For an open subset $U\subseteq M$ with $k$ connected components, each Euclidean, a choice of ordering $\tau:\{1,\ldots, k\}\cong \pi_0(U)$ gives rise to the commuting diagram
\[\xymatrix{\displaystyle\coprod_{\sigma:\{1,\ldots, k\}\cong \pi_0(U)}\prod_{i=1}^kU_{\sigma(i)}\times\mathbb{R}^{rk}\ar[d]&\displaystyle\coprod_{\sigma:\{1,\ldots, k\}\cong \pi_0(U)}\Conf_k^0(U,\sigma)\times\mathbb{R}^{rk}\ar[l]_-{\simeq}\ar[d]\ar[r]&\Conf_k(M)\times\mathbb{R}^{rk}\ar[d]\\
\displaystyle\prod_{i=1}^kU_{\tau(i)}\times\mathbb{R}^{rk}\ar[d]&\displaystyle\left(\coprod_{\sigma:\{1,\ldots, k\}\cong \pi_0(U)}\Conf_k^0(U,\sigma)\times\mathbb{R}^{rk}\right)_{\Sigma_k}\ar[d]\ar[l]_-\simeq\ar[r]&\Conf_k(M)\times_{\Sigma_k}\mathbb{R}^{rk}\ar[d]^-{\pi_{k,r}}\\
\displaystyle\prod_{i=1}^k U_{\tau(i)}&B_k^0(U)\ar[l]_-\simeq\ar[r]&B_k(M),
}\] in which the two righthand squares are pullback squares and the upper three vertical maps are projections to spaces of $\Sigma_k$-coinvariants. Since $B_k(M)$ is covered by open subsets of the form $B_k^0(Y)$, it follows that $\pi_{k,r}$ is locally trivial. Since addition and scalar multiplication in $\mathbb{R}^{rk}$, the linear structure of $\Conf_k(M)\times\mathbb{R}^{rk}$ descends to the quotient.

For the second claim, we have \begin{align*}
\mathrm{Th}(\pi_{k,r})&\cong D(\pi_{k,r})/S(\pi_{k,r})\\
&\cong \frac{\Conf_k(M)\times_{\Sigma^k}D^{rk}}{\Conf_k(M)\times_{\Sigma_k}S^{rk-1}}\\
&\cong \Conf_k(M)_+\wedge_{\Sigma_k}S^{rk}.
\end{align*}
\end{proof}

In order to apply the Thom isomorphism, we require the following result.

\begin{lemma}\label{lem:orientable}
If $r$ is even, then $\pi_{k,r}$ is orientable.
\end{lemma}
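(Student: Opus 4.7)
The plan is to identify $\pi_{k,r}$ as a vector bundle associated to the regular cover $\Conf_k(M)\to B_k(M)$ via an explicit representation of $\Sigma_k$ and then to compute the determinant of that representation. Since $\Conf_k(M)\to B_k(M)$ is a principal $\Sigma_k$-bundle, any representation $\rho:\Sigma_k\to GL(V)$ gives rise to an associated vector bundle $\Conf_k(M)\times_{\Sigma_k}V$ over $B_k(M)$, and such an associated bundle is orientable if and only if $\rho$ takes values in $GL^+(V)$, i.e.\ $\det\rho(\sigma)>0$ for every $\sigma\in\Sigma_k$.

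First, I would unwind the definition of the quotient $\Conf_k(M)\times_{\Sigma_k}\mathbb{R}^{rk}$: the $\Sigma_k$-action on $\mathbb{R}^{rk}=(\mathbb{R}^r)^k$ is the block-permutation action, namely $\sigma\cdot(v_1,\ldots,v_k)=(v_{\sigma^{-1}(1)},\ldots,v_{\sigma^{-1}(k)})$ with $v_i\in\mathbb{R}^r$. Equivalently, writing $P_\sigma\in GL(k)$ for the standard permutation matrix of $\sigma$, the representation in question is $\rho_r(\sigma)=P_\sigma\otimes I_r\in GL(rk)$. Reviewing the local trivializations constructed in the proof of Lemma~\ref{lem:thom space}, one sees that transition functions between charts $B_k^0(U)$ indexed by different orderings $\sigma,\tau$ of $\pi_0(U)$ are precisely given by $\rho_r(\tau\sigma^{-1})$, so orientability of $\pi_{k,r}$ is literally equivalent to $\det\rho_r\equiv 1$.

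Next, I would perform the determinant computation: the matrix $P_\sigma\otimes I_r$ is a block permutation matrix with $r\times r$ identity blocks, so standard multilinear algebra (or direct expansion along rows) gives
\[
\det\rho_r(\sigma)=\det(P_\sigma\otimes I_r)=\det(P_\sigma)^r=\mathrm{sgn}(\sigma)^r.
\]
If $r$ is even, then $\mathrm{sgn}(\sigma)^r=1$ for every $\sigma\in\Sigma_k$, and hence $\pi_{k,r}$ is orientable.

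There is essentially no obstacle here: the content is entirely the identification of $\pi_{k,r}$ as the bundle associated to $\rho_r$, which is immediate from the construction, together with the elementary fact that a block permutation matrix with $r\times r$ identity blocks has determinant $\mathrm{sgn}(\sigma)^r$. If one wished, one could make this even more geometric by choosing a canonical orientation of the fiber over a basepoint and observing that parallel transport around any loop $\gamma$ in $B_k(M)$ acts on the fiber by $\rho_r(\sigma_\gamma)$, where $\sigma_\gamma\in\Sigma_k$ is the monodromy of the cover; but the computation is the same.
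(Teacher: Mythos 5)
Your proposal is correct and takes essentially the same approach as the paper: the paper observes that an orientation of $\mathbb{R}^r$ induces a $\Sigma_k$-invariant orientation of $\mathbb{R}^{rk}$ when $r$ is even and that this invariant orientation descends to the quotient, which is exactly the statement that $\det(P_\sigma\otimes I_r)=\mathrm{sgn}(\sigma)^r=1$. You simply make the determinant computation explicit where the paper leaves it implicit.
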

\begin{proof}
Since $r$ is even, any orientation of $\mathbb{R}^r$ induces a $\Sigma_k$-invariant orientation of $\mathbb{R}^{rk}$, whence of the trivial bundle $\Conf_k(M)\times \mathbb{R}^{rk}$. By $\Sigma_k$-invariance, this orientation descends to the quotient.
\end{proof}

\begin{remark}
Again, characterstic 2 is exceptional.
\end{remark}

\begin{corollary}\label{cor:lhs}
For $n$ odd and $r>1$ even, there is an isomorphism of bigraded vector spaces \[H_*(\Conf_{S^r}(M))\cong\bigoplus_{k\geq0} H_*(B_k(M))[rk].\]
\end{corollary}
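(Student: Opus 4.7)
The plan is to assemble the corollary from three ingredients already available: the stable splitting theorem, the Thom space identification, and the orientability of the configuration space bundle $\pi_{k,r}$, which together reduce the computation to a straightforward application of the Thom isomorphism. All three ingredients have already been established in the excerpt, so the proof will be a direct bookkeeping argument, with the only real content being the care needed to track the auxiliary (cardinality) grading through each manipulation.

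First, I would apply Theorem \ref{thm:stable splitting} with $X = S^r$ and $M_0 = \varnothing$, which produces a filtered stable weak equivalence
\[
\Conf_{S^r}(M) \xrightarrow{\sim_s} \bigvee_{k \geq 1} \Conf_k(M)_+ \wedge_{\Sigma_k} S^{rk}.
\]
By Corollary \ref{cor:filtered stable weak equivalence homology}, this induces an isomorphism on homology, and since homology of a wedge is a direct sum, I obtain
\[
H_*(\Conf_{S^r}(M)) \cong \bigoplus_{k \geq 0} \widetilde{H}_*\bigl(\Conf_k(M)_+ \wedge_{\Sigma_k} S^{rk}\bigr),
\]
with the convention that the $k = 0$ summand contributes $\mathbb{F}$. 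This is precisely the auxiliary grading referred to in the statement, since the stable splitting is induced by the filtration of $\Conf_{S^r}(M)$ by cardinality.

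Next I would invoke Lemma \ref{lem:thom space} to rewrite each summand as $\widetilde{H}_*(\mathrm{Th}(\pi_{k,r}))$, where $\pi_{k,r} : \Conf_k(M) \times_{\Sigma_k} \mathbb{R}^{rk} \to B_k(M)$ is the rank $rk$ vector bundle constructed there. The hypothesis that $r$ is even is exactly what is needed to apply Lemma \ref{lem:orientable}, which says that $\pi_{k,r}$ is then orientable. With orientability in hand, the Thom isomorphism supplies the identification
\[
\widetilde{H}_*(\mathrm{Th}(\pi_{k,r})) \cong H_{*-rk}(B_k(M)) = H_*(B_k(M))[rk].
\]
Summing over $k$ yields the desired isomorphism of bigraded vector spaces.

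The proof is essentially formal given the earlier work, and I do not anticipate a serious obstacle; the only real care required is to confirm that the grading on the right-hand side of the corollary matches the stable-splitting grading on the left and to verify that the Thom isomorphism applies uniformly in $k$ (which it does, since orientability of $\pi_{k,r}$ follows from orientability of $\mathbb{R}^r$ together with $r$ even, independently of $k$). The hypothesis $n$ odd plays no role in this particular corollary --- it is inherited from the ambient context where the eventual application is Theorem \ref{thm:odd homology} --- so I would note this but not use it in the argument.
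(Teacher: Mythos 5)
Your proof is correct and matches the paper's implicit argument exactly: stable splitting (Theorem \ref{thm:stable splitting} together with Corollary \ref{cor:filtered stable weak equivalence homology}) to decompose the homology of $\Conf_{S^r}(M)$ by cardinality, Lemma \ref{lem:thom space} to identify the filtration quotients as Thom spaces, Lemma \ref{lem:orientable} to supply orientability from $r$ even, and the Thom isomorphism to desuspend each summand by $rk$. Your observation that the hypothesis $n$ odd is not used in this particular corollary is accurate --- it is only needed downstream, when this identification is combined with Theorem \ref{thm:labeled conf homology} (where $r+n$ odd is required since the characteristic is not $2$) to prove Theorem \ref{thm:odd homology}.
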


As for the righthand side of the isomorphism of Theorem \ref{thm:labeled conf homology}, stable splitting gives us the following commuting diagram of isomorphisms \[\xymatrix{
H_*(\Omega^{n-i}S^{n+r})\ar@{=}[d]_-\wr&  \displaystyle\bigoplus_{k\geq0}\widetilde H_*\left(\Conf_k(D^i\times D^{n-i}, \partial D^i\times D^{n-i})\wedge_{\Sigma_k}S^{rk}\right)
\ar[l]_-\simeq\ar@{-->}[d]\\
H_*(\Omega^{n-i}\Sigma^{n-i}S^{r+i})&\displaystyle\bigoplus_{k\geq0}\widetilde H_*\left(\Conf_k(\mathbb{R}^{n-i})_+\wedge_{\Sigma_k}S^{k(r+i)}\right).\ar[l]_-\simeq
&
}\] All four terms of this diagram are naturally bigraded, and the horizontal morphisms are compatible with these bigradings. It is possible, by direct consideration of the scanning map, to show that the vertical arrows are also compatible with the bigradings. Rather than proceed in this manner, however, we invoke the following result, which likewise assures us that we may work with the lower left and upper right corners interchangeably. 

\begin{lemma}
For any manifold $N$ and $i,k\geq0$, there is a canonical, $\Sigma_k$-equivariant weak equivalence \[\Sigma^{ik}_+\Conf_k(N)\xrightarrow{\sim} \Conf_k\left(D^i\times N,\partial D^i\times N\right).\]
\end{lemma}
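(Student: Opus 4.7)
I would begin by defining the map $\bar\mu: \Sigma^{ik}_+\Conf_k(N) \to \Conf_k(D^i\times N,\partial D^i\times N)$ via the formula $[(n_j);(v_j)] \mapsto [(v_j,n_j)]$, extended by the basepoint whenever some $v_j\in\partial D^i$. Using the identification $\Sigma^{ik}_+ \Conf_k(N) = \bigl(\Conf_k(N)\times(D^i)^k\bigr)/\bigl(\Conf_k(N)\times\partial(D^i)^k\bigr)$, continuity, well-definedness, and $\Sigma_k$-equivariance (with $\Sigma_k$ acting diagonally on the source and by permuting configurations on the target) are all immediate.

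To prove $\bar\mu$ is a weak equivalence, I would apply the covering machinery of \S\ref{section:covering theorems} in parallel on source and target. Take the complete cover $\U$ of $\Conf_k(N)$ by the basic opens $V=U_1\times\cdots\times U_k$ of Proposition \ref{prop:conf basis}, with the $U_j$ pairwise disjoint Euclidean neighborhoods of points of $N$. By Theorem \ref{thm:complete cover recovery} and the fact that $\Sigma^{ik}_+$, being a left adjoint, commutes with homotopy colimits, one obtains $\Sigma^{ik}_+\Conf_k(N)\simeq\hocolim_{V\in\U}\Sigma^{ik}_+ V$. For each such $V$, disjointness of the $U_j$ makes distinctness of the resulting $k$ points automatic, yielding a canonical homeomorphism
\[
\Sigma^{ik}_+ V \;\cong\; \bigwedge_{j=1}^k \frac{D^i\times U_j}{\partial D^i\times U_j} \;\cong\; \Conf_k^V(D^i\times N,\partial D^i\times N),
\]
where $\Conf_k^V$ denotes the subspace of the target consisting of the basepoint together with configurations whose $j$-th $N$-projection lies in $U_j$; moreover $\bar\mu|_{\Sigma^{ik}_+ V}$ realizes this homeomorphism.

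The hard part is to identify $\Conf_k(D^i\times N,\partial D^i\times N)$ itself with the hocolim $\hocolim_{V\in\U}\Conf_k^V$. The obstacle is that the pieces $\Conf_k^V$ only cover configurations whose $N$-projections are pairwise distinct; configurations where two or more $N$-projections coincide are missed. I would handle this by constructing an explicit deformation retraction of $\Conf_k(D^i\times N,\partial D^i\times N)$ onto the subspace of configurations with distinct $N$-projections, exploiting the crucial observation that $n_j=n_{j'}$ forces $v_j\neq v_{j'}$ in $D^i$. Concretely, fixing a metric on $N$ and a smooth cutoff $\varphi:[0,\infty)\to[0,1]$ with $\varphi(0)=1$ and $\varphi$ vanishing outside a small interval, one pushes $v_j,v_{j'}$ radially apart whenever $n_j,n_{j'}$ are close, with strength modulated by $\varphi(d(n_j,n_{j'}))$; since the $v$-pushes stay inside $D^i$ until some $v_j$ reaches $\partial D^i$ (at which moment the configuration collapses to the basepoint), the flow is globally defined, and a partition-of-unity argument keyed to the pairwise $N$-distances ensures continuity across the partition strata. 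With this deformation in hand, the family $\{\Conf_k^V\}_{V\in\U}$ becomes a complete cover of the target (modulo basepoint collapse), so $\Conf_k(D^i\times N,\partial D^i\times N)\simeq \hocolim_V \Conf_k^V$, and $\bar\mu$ is identified with the induced levelwise homeomorphism between the two hocolims, completing the proof.
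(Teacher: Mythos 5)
Your map is the same as the paper's, but your argument that it is a weak equivalence takes a quite different route and has a gap at the crucial step. For comparison, the paper's proof is a short two-piece Mayer--Vietoris: writing $X=\Conf_k(D^i\times N,\partial D^i\times N)$ for the target, $U$ for the image of the map (the basepoint together with configurations whose $N$-coordinates are pairwise distinct), and $V$ for the basepoint together with configurations having at least two distinct $D^i$-coordinates, one has $U\cup V=X$ (a coincidence $n_j=n_{j'}$ forces $v_j\neq v_{j'}$), and both $V$ and $U\cap V$ are contractible to the basepoint by radial expansion of the $D^i$-coordinates. Proposition~\ref{prop:mayer-vietoris} applied to the inclusion $U\hookrightarrow X$ then finishes the proof; the only contractions one ever needs land in the basepoint, which is easy to arrange.

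The sticking point in your proposal is the claimed deformation retraction of $X$ onto $U$. A strong deformation retraction cannot exist: $U$ is not closed in $X$ (take configurations $c_m$ with $n_1^m\neq n_2^m\to n_\infty$ and distinct interior $v_j^m\to v_j^\infty$; each $c_m\in U$ but $\lim c_m\notin U$), so a homotopy with $H_t|_U=\id$ would force $H_1(\lim c_m)=\lim H_1(c_m)=\lim c_m\notin U$. What you would actually need is a homotopy $H$ with $H_0=\id$, $H_1(X)\subseteq U$, and $H_t(U)\subseteq U$. Your radial-push flow does preserve $U$ (it never moves the $n_j$), but the assertion $H_1(X)\subseteq U$---that every configuration with a coincidence among the $n_j$ is pushed all the way to the basepoint by time $1$---is exactly where the sketch is thinnest: one must handle simultaneous coincidences $n_{j_1}=\cdots=n_{j_\ell}$, rule out the pushes creating new collisions in $D^i\times N$, and verify continuity at the basepoint, and the partition-of-unity remark does none of this. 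Moreover, even granting such an $H$, the complete-cover machinery around it is superfluous: once $U\hookrightarrow X$ is a weak equivalence, so is your map (a homeomorphism onto $U$), with no decomposition of the source required. That machinery is also shaky on its own terms, since Theorem~\ref{thm:complete cover recovery} needs an \emph{open} cover and the sets $\Conf_k^V$ are not open at the basepoint: a configuration touching $\partial D^i\times N$ with some $n_j\notin U_j$ has nearby perturbations, with all $D^i$-coordinates interior, still lying outside $\Conf_k^V$. The paper's two-piece gluing keyed to ``which coordinates are distinct'' is by far the more economical route.
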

\begin{proof}
The map is defined via the inclusion \[\Sigma_+^{ik}\Conf_k(N)\cong \frac{(D^i)^k\times\Conf_k(N)}{\partial (D^i)^k\times \Conf_k(N)}\xrightarrow{\simeq} U\subseteq \Conf_k\left(D^i\times N, \partial D^i\times N\right)\] of the open subspace $U$ consisting of the basepoint together with all nontrivial configurations whose coordinates in $N$ are distinct. Letting $V$ denote the open subspace consisting of the basepoint together with all nontrivial configurations with at least two distinct coordinates in $D^i$, it is clear that $\Conf_k(D^i\times N,\partial D^i\times N)=U\cup V$. Now, both $V$ and $U\cap V$ are contractible by radial expansion, so the inclusion $i$ of $U$ induces the weak equivalences \begin{align*}
i^{-1}(U)&\xrightarrow{=} U\\
i^{-1}(V)=U\cap V&\xrightarrow{\sim} V\\
i^{-1}(U\cap V)&\xrightarrow{=}U\cap V.
\end{align*} It follows from Proposition \ref{prop:mayer-vietoris} that $i$ is a weak equivalence.
\end{proof}

The final ingredient in the proof will be the following calculation.

\begin{lemma}
For $n$ odd, $r>1$ even, $i\geq0$, and $\mathbb{F}$ of characteristic 0, \[\widetilde H_*\left(\Conf_k(\mathbb{R}^{n-i})_+\wedge_{\Sigma_k}S^{k(r+i)}\right)\cong\begin{cases}
\mathbb{F}[k(r+i)]&\quad i \text{ even or } k\in\{0,1\}\\
0&\quad \text{otherwise.}
\end{cases}\]
\end{lemma}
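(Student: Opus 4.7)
My plan is to split by the parity of $r+i$ (which coincides with the parity of $i$, since $r$ is even) and invoke the Thom isomorphism in one case and the global computation of loop-space homology in the other. The boundary cases $k\in\{0,1\}$ are handled by direct inspection in either parity: $\Conf_0(\mathbb{R}^{n-i})_+\wedge_{\Sigma_0} S^0=S^0$ contributes $\mathbb{F}$ in degree $0=0\cdot(r+i)$, and $\Conf_1(\mathbb{R}^{n-i})_+\wedge S^{r+i}\simeq S^{r+i}$ contributes $\mathbb{F}$ in degree $r+i=1\cdot(r+i)$, both matching $\mathbb{F}[k(r+i)]$.

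For $i$ even and $k\geq 2$, the bundle $\Conf_k(\mathbb{R}^{n-i})\times_{\Sigma_k}\mathbb{R}^{k(r+i)}\to B_k(\mathbb{R}^{n-i})$ is orientable by the argument of Lemma~\ref{lem:orientable} (applied with $r+i$ in place of $r$), and by Lemma~\ref{lem:thom space} the smash product $\Conf_k(\mathbb{R}^{n-i})_+\wedge_{\Sigma_k}S^{k(r+i)}$ is its Thom space. The Thom isomorphism then gives
\[
\widetilde H_*\bigl(\Conf_k(\mathbb{R}^{n-i})_+\wedge_{\Sigma_k}S^{k(r+i)};\mathbb{F}\bigr)\;\cong\; H_{*-k(r+i)}\bigl(B_k(\mathbb{R}^{n-i});\mathbb{F}\bigr).
\]
Since $n$ is odd and $i$ is even, $n-i$ is odd, so $(n-i)-1$ is even, and Proposition~\ref{prop:unordered rational} identifies the target with $\mathbb{F}$ concentrated in degree $0$. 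This delivers the desired $\mathbb{F}[k(r+i)]$.

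For $i$ odd and $k\geq 2$, I will argue by a total-dimension count rather than compute the sign-twisted coinvariants directly. The space $\Conf_k(\mathbb{R}^{n-i})_+\wedge_{\Sigma_k}S^{k(r+i)}$ has an $S^{k(r+i)}$-smash factor, so its reduced homology is concentrated in degrees $\geq k(r+i)\geq 2(r+i)>r+i$. Meanwhile, combining scanning in the form of Theorem~\ref{thm:duality with boundary}, the suspension identification $\Conf_k(D^i\times D^{n-i},\partial D^i\times D^{n-i})\simeq \Sigma^{ik}_+\Conf_k(\mathbb{R}^{n-i})$ stated just before the lemma, and the stable splitting of Theorem~\ref{thm:stable splitting} (together with Corollary~\ref{cor:filtered stable weak equivalence homology}) yields
\[
\bigoplus_{k\geq 0}\widetilde H_*\bigl(\Conf_k(\mathbb{R}^{n-i})_+\wedge_{\Sigma_k}S^{k(r+i)};\mathbb{F}\bigr)\;\cong\; H_*(\Omega^{n-i}S^{n+r};\mathbb{F}).
\]
Because $n+r$ is odd and $n-i<n+r$, Proposition~\ref{prop:loop space odd sphere} computes the right-hand side as $\Sym_\mathbb{F}(\alpha)$ with $|\alpha|=r+i$ odd, i.e., the exterior algebra $\Lambda_\mathbb{F}(\alpha)$, of total dimension $2$, concentrated in degrees $0$ and $r+i$. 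The $k=0$ and $k=1$ summands computed above already exhaust these, so by non-negativity every $k\geq 2$ summand must vanish.

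The main obstacle I expect is ensuring that the chain of equivalences producing the total identification with $H_*(\Omega^{n-i}S^{n+r})$ is compatible with the cardinality filtration on the labeled configuration space, so that the dimension count actually separates the $k\leq 1$ contributions from those of $k\geq 2$. Once the concentration bound $k(r+i)\geq 2(r+i)>r+i$ is combined with the total Poincar\'e series of $\Lambda_\mathbb{F}(\alpha)$, no further combinatorial analysis of twisted $\Sigma_k$-coinvariants is needed.
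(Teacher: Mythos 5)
Your proposal is correct, and it handles the $i$ even case and the boundary cases $k\in\{0,1\}$ exactly as the paper does (Thom isomorphism via orientability, plus direct inspection). Where it diverges is the heart of the matter: the case $i$ odd, $k\geq 2$.

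The paper proves vanishing there by a direct equivariant computation. By the K\"unneth theorem and transfer it identifies the reduced homology with $H_*(\Conf_k(\mathbb{R}^{n-i}))\otimes_{\Sigma_k}\mathbb{F}^{\mathrm{sgn}}[k(r+i)]$, then observes that, when $n$ and $i$ are both odd, switching adjacent leaves in the forest basis of $H_*(\Conf_k(\mathbb{R}^{n-i}))$ is sign-free (the antipodal map on $S^{n-i-1}$ has degree $+1$), so after twisting by the sign representation every transposition acts by $-1$; since $2$ is invertible, all sign-twisted coinvariants vanish for $k\geq 2$. This argument is local in $k$ and requires only the basis of tall forests developed in \S3.

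You instead run a global dimension count. The preceding display already sets up the isomorphism $\bigoplus_{k\geq0}\widetilde H_*\bigl(\Conf_k(\mathbb{R}^{n-i})_+\wedge_{\Sigma_k}S^{k(r+i)}\bigr)\cong H_*(\Omega^{n-i}S^{n+r})$, and with $n+r$ odd and $i$ odd Proposition~\ref{prop:loop space odd sphere} computes the right-hand side as $\Lambda_\mathbb{F}(\alpha)$ with $|\alpha|=r+i$, which is two-dimensional. Your concentration bound on each $k\geq 2$ summand (degree $\geq k(r+i)\geq 2(r+i)>r+i$) plus the fact that $k=0,1$ already account for $\mathbb{F}$ in degree $0$ and $\mathbb{F}$ in degree $r+i$ forces all $k\geq 2$ summands to vanish. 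This is valid, and the concern you raise about compatibility is a non-issue: the direct-sum decomposition over $k$ is intrinsic to the left-hand side, and all you need is that the isomorphism respects homological degree, which the paper has already arranged. One minor attribution note: the scanning equivalence for the pair $(D^i\times D^{n-i},\partial D^i\times D^{n-i})$ with $i\geq1$ is Theorem~\ref{thm:duality relative}, not Theorem~\ref{thm:duality with boundary}; in practice you can simply cite the bottom row of the displayed diagram preceding the lemma. The trade-off between the two arguments: yours is slicker and uses no explicit representation theory, but it relies on the full scanning/stable-splitting machinery that was introduced for other purposes; the paper's is more elementary given the forest-basis technology, and its equivariant bookkeeping is exactly what generalizes to the finite-characteristic variants alluded to in the remarks.
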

\begin{proof}
If $i$ is even, then $\pi_{r+i,k}$ is orientable, since $r$ is even, so the homology group in question is identified with \[H_*(B_k(\mathbb{R}^{n-i}))[k(r+i)]\cong\mathbb{F}[k(r+i)]\] by the Thom isomorphism, where we have used that $n-i$ is odd when $n$ is odd and $i$ is even. On the other hand, if $i$ is odd, then by the K\"{u}nneth theorem and the assumption on the charateristic of $\mathbb{F}$, we have \begin{align*}\widetilde H_*\left(\Conf_k(\mathbb{R}^{n-i})_+\wedge_{\Sigma_k}S^{k(r+i)}\right)
&\cong H_*(\Conf_k(\mathbb{R}^{n-i}))\otimes_{\Sigma_k}\widetilde H_*(S^{k(r+i)})\\
&\cong H_*(\Conf_k(\mathbb{R}^{n-i}))\otimes_{\Sigma_k}\mathbb{F}^{\mathrm{sgn}}[k(r+i)],
\end{align*} where $\mathbb{F}^\mathrm{sgn}$ denotes the sign representation of $\Sigma_k$. Earlier, we showed that the homology group $H_*(\Conf_k(\mathbb{R}^{n-i}))$ has a spanning set indexed by $k$-forests, where switching adjacent leaves of a tree introduces a sign $(-1)^\epsilon$, where $\epsilon$ is the degree of the antipodal map on $S^{n-i-1}$. Since $n$ and $i$ are both odd, it follows that $\epsilon=0$. After tensoring with the sign representation, it follows that any transposition in $\Sigma_k$ acts by $-1$. Thus, in this case, \[H_*(\Conf_k(\mathbb{R}^{n-i})\otimes_{\Sigma_k}\mathbb{F}^\mathrm{sgn}[k(r+i)]\cong\begin{cases}
\mathbb{Q}[k(r+i)]&\quad k\in\{0,1\}\\
0&\quad\text{otherwise,}
\end{cases}\] which completes the proof.
\end{proof}

\begin{proof}[Proof of Theorem \ref{thm:odd homology}]
Combining what we have done so far, we obtain the chain of isomorphisms of bigraded vector spaces \begin{align*}
\bigoplus_{k\geq0} H_*(B_k(M))[kr]&\cong \bigotimes_{i=0}^n\left(\bigoplus_{\ell\geq0}H_*\left(\Conf_\ell(\mathbb{R}^{n-i})_+\wedge_{\Sigma_\ell}S^{\ell(r+i)}\right)\right)^{\otimes\dim H_i(M)}\\
&\cong\bigotimes_{i\text{ even}}\left(\bigoplus_{\ell\geq0}\mathbb{F}[\ell(r+i)]\right)^{\otimes \dim H_i(M)}\otimes\bigotimes_{i\text{ odd}}\left(\bigoplus_{\ell\in\{0,1\}}\mathbb{F}[\ell(r+i)]\right)^{\otimes\dim H_i(M)}\\
&\cong \bigotimes_{i\text{ even}}\Sym(\mathbb{F}[r+i])^{\otimes \dim H_i(M)}\otimes\bigotimes_{i\text{ odd}}\Sym(\mathbb{F}[r+i])^{\otimes\dim H_i(M)}\\
&\cong\Sym(H_\mathrm{even}(M)[r])\otimes\Sym(H_\mathrm{odd}(M)[r])\\
&\cong \Sym\left(H_*(M)[r]\right),
\end{align*} where we have repeatedly used that $\Sym$ sends direct sums to tensor products. It follows that \[H_*(B_k(M))[kr]\cong \Sym^k(H_*(M))[kr],\] which completes the proof.
\end{proof}

\section{Mod $p$ cohomology}\label{section:mod p cohomology}

In the previous section, we expressed the mod $p$ (co)homology of the unordered configuration of a manifold of odd dimension in terms of the (co)homology of the spaces $\Conf_k(\mathbb{R}^n)_+\wedge_{\Sigma_k}S^{rk}$ for varying $k$, $n$, and $r$. Therefore, reformulating a little using the K\"{u}nneth isomorphism, we see that this computation amounts to essentially two computations of cohomology with local coefficients; indeed, we have \begin{align*}
H^*\left(\Conf_k(\mathbb{R}^n)_+\wedge_{\Sigma_k}S^{rk}\right)&\cong H\left(\Map_{\Sigma_k}\left(C_*(\Conf_k(\mathbb{R}^n)), \widetilde{C}^*(S^{rk})\right)\right)\\
&\cong H\left(\Map_{\Sigma_k}\left(C_*(\Conf_k(\mathbb{R}^n)), \mathbb{F}_p(r)\right)\right)[kr]\\
&=: H^*(B_k(\mathbb{R}^n);\mathbb{F}_p(r))[kr],
\end{align*} where $\mathbb{F}_p(r)$ is the $\Sigma_p$-module defined by \[
\mathbb{F}_p(r)=\begin{cases}\mathbb{F}_p^\mathrm{triv}&\quad r\text{ even}\\
\mathbb{F}_p^\mathrm{sgn}&\quad r\text{ odd}.
\end{cases}\] Our present goal is to describe the method behind this computation, which was first carried out in \cite[III]{CohenLadaMay:HILS}. For ease of exposition, we will focus on the case $r$ even, but the other case may be treated in an entirely parallel fashion. We also restrict to the case $k=p$, from which the other cases may be derived in a rather roundabout manner.

\subsection{Outline of argument} The strategy is to leverage our complete understanding of cohomology of the covering space $\Conf_k(\mathbb{R}^n)$ using the spectral sequence of Corollary \ref{cor:ss of a cover}. From the commutative diagram of covering spaces \[\xymatrix{
\Conf_p(\mathbb{R}^n)\ar[d]\ar@{=}[r]&\Conf_p(\mathbb{R}^n)\ar[d]\ar[r]&\Conf_p(\mathbb{R}^\infty)\ar[d]\\
\Conf_p(\mathbb{R}^n)_{C_p}\ar[r]&B_p(\mathbb{R}^n)\ar[r]&B_p(\mathbb{R}^\infty),
}\] we obtain the commutative diagram of fiber sequences \[\xymatrix{
\Conf_p(\mathbb{R}^n)\ar[d]\ar@{=}[r]&\Conf_p(\mathbb{R}^n)\ar[d]\ar[r]&\pt\ar[d]\\
\Conf_p(\mathbb{R}^n)_{C_p}\ar[d]\ar[r]&B_p(\mathbb{R}^n)\ar[d]\ar[r]&B\Sigma_p\ar@{=}[d]\\
BC_p\ar[r]&B \Sigma_p\ar@{=}[r]&B\Sigma_p,
}\] and hence the commuting diagram of spectral sequences \[\xymatrix{
H^s(\Sigma_p)\ar[d]\ar@{=}[r]&H^s(\Sigma_p)\ar[d]\\
H^s(\Sigma_p; H^t(\Conf_p(\mathbb{R}^n)))\ar[d]\ar@{=>}[r]&H^{s+t}(B_p(\mathbb{R}^n))\ar[d]\\
H^s(C_p;H^t(\Conf_p(\mathbb{R}^n)))\ar@{=>}[r]&H^{s+t}(\Conf_p(\mathbb{R}^n)_{C_p}).
}\] The utility of this maneuver lies in the following basic observation.

\begin{lemma}\label{lem:principal bundle transfer}
If $\pi:P\to X$ is a principal $G$-bundle and $H<G$ is a $p$-Sylow subgroup, then the induced map of spectral sequences is injective on $E_2$ and on $E_\infty$.
\end{lemma}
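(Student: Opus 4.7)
The plan is to exploit that $[G:H]$ is coprime to $p$ (hence invertible in $\mathbb{F}_p$) and construct a left inverse to the induced map of spectral sequences via transfer. This is the spectral-sequence-level analogue of the argument used in Lemma \ref{lem:transfer}, executed separately on $E_2$ and on $E_\infty$ because injectivity on $E_2$ does not in general propagate through to later pages.

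For the $E_2$ statement, I would invoke the identification of Corollary \ref{cor:ss of a cover}: the two spectral sequences have $E_2$ pages $H^s(G; H^t(P;\mathbb{F}_p))$ and $H^s(H; H^t(P;\mathbb{F}_p))$ respectively, and the map between them is the restriction $\mathrm{res}^G_H$ in group cohomology. For any finite-index subgroup and any $G$-module $M$, there is a corestriction $\mathrm{cor}^G_H: H^*(H;M)\to H^*(G;M)$ satisfying $\mathrm{cor}^G_H\circ\mathrm{res}^G_H = [G:H]\cdot\mathrm{id}$ (this is standard, provable directly from the bar resolution or the double coset formula). Since $[G:H]$ is a unit in $\mathbb{F}_p$, restriction is split injective, giving the $E_2$ statement.

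For the $E_\infty$ statement, I would construct a transfer map of spectral sequences globally. The finite cover $q:P/H\to P/G$ has $[G:H]$ sheets (though it need not be regular). Any such finite covering admits a transfer $\mathrm{tr}:H^*(P/H;\mathbb{F}_p)\to H^*(P/G;\mathbb{F}_p)$ with $\mathrm{tr}\circ q^*=[G:H]\cdot\mathrm{id}$, constructible either by summing singular simplices over sheets as in the discussion preceding Lemma \ref{lem:transfer} or, more conceptually, via the Pontryagin--Thom collapse for the finite bundle $EG\times_G (G/H)\to BG$ pulled back along $X\to BG$. The essential additional point is that this transfer descends from a fiberwise construction on the Borel-construction bundle and therefore preserves the filtration by preimages of the skeleta of $BG$ (equivalently, the filtration giving the spectral sequence of Corollary \ref{cor:ss of a cover}). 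Consequently $\mathrm{tr}$ induces a map of spectral sequences $\mathrm{tr}_r:\widetilde E_r\to E_r$ on each page with $\mathrm{tr}_r\circ q^*_r=[G:H]\cdot\mathrm{id}$; taking $r=\infty$ and inverting $[G:H]$ gives the $E_\infty$ statement.

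The main obstacle is the filtration-compatibility of the transfer, which is what forces us to argue separately on $E_\infty$ rather than simply propagating $E_2$-injectivity. The cleanest way to dispatch this obstacle is to build the transfer once and for all from the fiberwise collapse on the bundle $EG\times_G(G/H)\to BG$; because this construction takes place over the base $BG$, pulling back along the classifying map $X\to BG$ automatically yields a map respecting the skeletal filtration of $EG$, which is exactly the filtration whose associated graded is $E_\infty$.
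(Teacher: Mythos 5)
Your proposal is correct and takes essentially the same route as the paper: on $E_2$, split injectivity via the corestriction/restriction identity $\mathrm{cor}^G_H\circ\mathrm{res}^G_H=[G:H]\cdot\mathrm{id}$ and coprimality of $[G:H]$ with $p$; on $E_\infty$, split injectivity via the transfer for the $[G:H]$-fold cover $P/H\to X$. The one place you go beyond the paper's terse statement is in flagging that mere injectivity of $\pi_H^*$ on the abutment does not by itself give injectivity on $E_\infty$ --- one needs the transfer to be filtration-preserving so that the splitting descends to associated graded --- and your suggestion of building the transfer fiberwise over $BG$ (via Pontryagin--Thom on $EG\times_G(G/H)\to BG$ pulled back along the classifying map) is a clean way to secure this; the paper leaves this compatibility implicit.
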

\begin{proof}
On $E_\infty$, the map is induced by the projection $\pi_H:P/H\to X$, which is a $[G:H]$-fold cover; therefore, the composite of $\pi^*_H$ followed by the transfer map is multiplication by $[G:H]$, which is injective, since $H$ is $p$-Sylow. On $E_2$, the map is induced by the map $BH\to BG$, which is also a $[G:H]$-fold cover, and a similar argument in cohomology with local coefficients applies.
\end{proof}

Thus, the spectral sequence of immediate interest is a summand of the spectral sequence obtained by restriction to $C_p$. Regarding this spectral sequence, we have the following 

\begin{vanishingtheorem}
In the spectral sequence for the cover $\Conf_p(\mathbb{R}^n)\to \Conf_p(\mathbb{R}^n)_{C_p}$, we have $E_2^{s,t}=0$ if $s>0$ and $0<t<(n-1)(p-1)$.
\end{vanishingtheorem}

By Lemma \ref{lem:principal bundle transfer}, then, the same vanishing range holds for the spectral sequence of primary interest. Moreover, as we calculated earlier, $H^*(\Conf_p(\mathbb{R}^n))=0$ for $*>(n-1)(p-1)$, so the spectral sequence is concentrated in the $s=0$ column in degree at most $(n-1)(p-1)$ and in the two rows $t=0$ and $t=(n-1)(p-1)$.

As for the differentials, it follows from what has already been said that the only possible nonzero differentials are $d_r$ on the $s=0$ column for $2\leq r\leq (n-1)(p-1)$ and on the $t=(n-1)(p-1)$ column for $r=(n-1)(p-1)+1$. Moreover, we have the following simple, but crucial, observation:

\begin{lemma}\label{lem:high differential}
In the spectral sequence for the cover $\Conf_p(\mathbb{R}^n)\to B_p(\mathbb{R}^n)$, the differential $d_{(n-1)(p-1)+1}:E_{(n-1)(p-1)+1}^{s,(n-1)(p-1)}\to E_{(n-1)(p-1)+1}^{s+(n-1)(p-1)+1,0}$ is an isomorphism for $s>n+p-2$.
\end{lemma}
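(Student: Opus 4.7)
The plan is to show, for $s > n+p-2$, that both $E_\infty^{s,(n-1)(p-1)}$ and $E_\infty^{s+(n-1)(p-1)+1,0}$ vanish and that $d_{(n-1)(p-1)+1}$ is the only differential touching these bidegrees; together, these facts force it to be an isomorphism.

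First I would verify the bookkeeping. For $s \geq 1$, any $d_r$ out of $E^{s,(n-1)(p-1)}$ with $r \neq (n-1)(p-1)+1$ lands either in negative-$t$ territory (trivially zero) or at a position with positive $s$-coordinate and $t$-coordinate strictly between $0$ and $(n-1)(p-1)$, which the Vanishing Theorem kills. Dually, any $d_r$ into $E^{s+(n-1)(p-1)+1,0}$ other than $d_{(n-1)(p-1)+1}$ originates either above the top nonzero row—where $E_2 = 0$ since $H^*(\Conf_p(\mathbb{R}^n))$ vanishes above degree $(n-1)(p-1)$—or at a position with positive $s$-coordinate again covered by the Vanishing Theorem. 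Consequently, for $s \geq 1$,
\[
E_\infty^{s,(n-1)(p-1)} = \ker d_{(n-1)(p-1)+1}, \qquad E_\infty^{s+(n-1)(p-1)+1,0} = \operatorname{coker} d_{(n-1)(p-1)+1}.
\]

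Next I would observe that $B_p(\mathbb{R}^n) = \Conf_p(\mathbb{R}^n)/\Sigma_p$ is a connected noncompact smooth manifold of dimension $np$, being the free $\Sigma_p$-quotient of the $np$-manifold $\Conf_p(\mathbb{R}^n)$. Hence $H^{*}(B_p(\mathbb{R}^n);\mathbb{F}_p) = 0$ for $* > np$ by cohomological dimension, while $H^{np}(B_p(\mathbb{R}^n);\mathbb{F}_p) = 0$ because a connected noncompact $n$-manifold has vanishing top singular homology, and the universal coefficient theorem over a field then also kills top cohomology. For $s > n+p-2$ the source and target total degrees are $s+(n-1)(p-1) \geq np$ and $s+(n-1)(p-1)+1 \geq np+1$, both in the vanishing range; the $E_\infty$ subquotients in question therefore vanish, so $\ker d_{(n-1)(p-1)+1} = 0$ and $\operatorname{coker} d_{(n-1)(p-1)+1} = 0$, and the differential is an isomorphism.

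The only step requiring real care is the vanishing of $H^{np}(B_p(\mathbb{R}^n);\mathbb{F}_p)$, since this boundary case relies on noncompactness rather than the mere dimension count; everything else is a filtration argument driven by the Vanishing Theorem.
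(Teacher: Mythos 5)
Your proposal follows the paper's route: use the Vanishing Theorem together with the bound $H^{>(n-1)(p-1)}(\Conf_p(\mathbb{R}^n))=0$ to show that $d_{(n-1)(p-1)+1}$ is the only differential touching either bidegree, and then force both $E_\infty$ terms to vanish using the $np$-dimensionality of $B_p(\mathbb{R}^n)$. The paper's proof is terser and explicitly treats only the cokernel side, where the total degree $s+(n-1)(p-1)+1$ strictly exceeds $np$ and mere cohomological dimension suffices. You are right to spell out the kernel side as well: at the boundary case $s=n+p-1$ the source sits in total degree exactly $np$, so one needs the sharper fact $H^{np}(B_p(\mathbb{R}^n);\mathbb{F}_p)=0$, which the paper's written argument does not address.

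One clarification is needed at that sharper step. Universal coefficients over $\mathbb{F}_p$ identifies $H^{np}(B_p(\mathbb{R}^n);\mathbb{F}_p)$ with the $\mathbb{F}_p$-dual of $H_{np}(B_p(\mathbb{R}^n);\mathbb{F}_p)$, so the fact you actually need is that \emph{this} group vanishes; vanishing of $H_{np}(B_p(\mathbb{R}^n);\mathbb{Z})$ alone does not suffice, because $H_{np}(-;\mathbb{F}_p)$ could acquire a $\mathrm{Tor}(H_{np-1}(-;\mathbb{Z}),\mathbb{F}_p)$ contribution from $p$-torsion one degree down. Your phrase ``vanishing top singular homology'' followed by ``UCT over a field'' elides this. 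What saves the argument is that the vanishing of the top homology of a connected noncompact $n$-manifold holds over an arbitrary coefficient ring, not only over $\mathbb{Z}$ (the compact-support argument with local homology $H_n(M\mid x;R)$ is insensitive to $R$); thus $H_{np}(B_p(\mathbb{R}^n);\mathbb{F}_p)=0$ directly, and dualizing over $\mathbb{F}_p$ finishes. With that made explicit, your argument is complete.
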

\begin{proof}
Assuming otherwise, it follows that $E_\infty^{s+(n-1)(p-1)+1,0}\neq 0$, and hence that $B_p(\mathbb{R}^n)$ has non-vanishing homology in some degree strictly greater than $(n+p-2)+(n-1)(p-1)+1=np$. Since $B_p(\mathbb{R}^n)$ is a manifold of dimension $np$, this is a contradiction.
\end{proof}

Together with the following classical calculation, this observation will allow us to populate much of the $t=(n-1)(p-1)$ row.

\begin{theorem}[Nakaoka]
There is a commuting diagram \[\xymatrix{
H^*(C_p)\ar@{=}[r]^-\sim&\wedge_{\mathbb{F}_p}[u]\otimes \mathbb{F}_p[\beta u]&u^{2(p-1)-1}\\
H^*(\Sigma_p)\ar@{=}[r]^-\sim\ar[u]&\wedge_{\mathbb{F}_p}[v]\otimes \mathbb{F}_p[\beta v]\ar[u]&v,\ar@{|->}[u]
}\] where $|u|=1$ and $\beta$ denotes the mod $p$ Bockstein.
\end{theorem}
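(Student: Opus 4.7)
The plan is to handle the two algebras separately and then understand the restriction map via the classical machinery of invariants under a Weyl-type action.

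First I would compute $H^*(C_p;\mathbb{F}_p)$ directly. Since $BC_p$ is a $K(\mathbb{Z}/p,1)$, one option is the minimal free resolution of $\mathbb{F}_p$ over $\mathbb{F}_p[C_p]$, which gives $H^*(C_p;\mathbb{F}_p)\cong \mathbb{F}_p[y]\otimes \wedge[x]$ with $|x|=1$, $|y|=2$, together with $y=\beta x$ coming from the short exact sequence $0\to \mathbb{F}_p\to \mathbb{Z}/p^2\to \mathbb{F}_p\to 0$. A more topological alternative is to use the circle bundle $S^1\to BC_p\to \mathbb{C}P^\infty$ arising from $C_p\subset S^1$ and run the Serre spectral sequence, reading the Bockstein from the $d_2$ differential. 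Either way, renaming $x=u$ gives the claimed form $\wedge_{\mathbb{F}_p}[u]\otimes\mathbb{F}_p[\beta u]$.

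Next I would exploit that $C_p\leq \Sigma_p$ is a $p$-Sylow subgroup. By Lemma \ref{lem:principal bundle transfer} applied to $E\Sigma_p\to B\Sigma_p$, the restriction $\mathrm{res}\colon H^*(\Sigma_p;\mathbb{F}_p)\to H^*(C_p;\mathbb{F}_p)$ is injective, so it remains only to identify its image. The Cartan--Eilenberg stable elements theorem identifies this image with the subring of classes stable under conjugation by elements of $\Sigma_p$ normalizing $C_p$; since $C_p$ is abelian, this reduces to invariants under the Weyl group $W=N_{\Sigma_p}(C_p)/C_p$. A direct inspection, using that $N_{\Sigma_p}(C_p)$ is generated by the $p$-cycle $(1\,2\,\cdots\,p)$ together with a $(p-1)$-cycle coming from multiplication by a primitive root modulo $p$, identifies $W\cong (\mathbb{Z}/p)^\times$ acting on $C_p\cong \mathbb{Z}/p$ by scalar multiplication.

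The core calculation, which will be the main technical step, is to compute the invariants $(\wedge[u]\otimes \mathbb{F}_p[\beta u])^W$. An element $\alpha\in (\mathbb{Z}/p)^\times$ acts on the degree-1 generator by $\alpha\cdot u=\alpha u$, and hence on $\beta u$ by $\alpha\cdot\beta u=\alpha \beta u$ since $\beta$ is natural; therefore $\alpha$ multiplies the monomial $u^\epsilon(\beta u)^n$ (with $\epsilon\in\{0,1\}$) by $\alpha^{\epsilon+n}$. Such a monomial is $W$-invariant precisely when $(p-1)\mid(\epsilon+n)$. Enumerating these, the invariants are spanned by powers of $(\beta u)^{p-1}$ together with those same powers multiplied by $u(\beta u)^{p-2}$. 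Setting $v:=u(\beta u)^{p-2}$ of degree $2p-3$, we have $\beta v=(\beta u)^{p-1}$ (since $\beta$ is a derivation and $\beta\circ\beta=0$), and $v^2=0$ because $u^2=0$; thus the invariant subring is exactly $\wedge_{\mathbb{F}_p}[v]\otimes\mathbb{F}_p[\beta v]$ as claimed, with $\mathrm{res}(v)=u(\beta u)^{p-2}$.

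The main obstacle is the stable element identification and the Weyl-group determination, which require care about how $(\mathbb{Z}/p)^\times$ sits inside $\Sigma_p$ and acts on $H^*(C_p;\mathbb{F}_p)$; everything else is either a classical small computation or a formal consequence of the injectivity provided by the $p$-Sylow transfer argument already recorded in Lemma \ref{lem:principal bundle transfer}.
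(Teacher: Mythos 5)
The paper states Nakaoka's theorem without proof, so there is no in-text argument to compare against; your proposal supplies the standard classical one, and it is correct. You compute $H^*(C_p;\mathbb{F}_p)\cong\wedge[u]\otimes\mathbb{F}_p[\beta u]$ directly, then invoke the Cartan--Eilenberg stable-elements theorem for the $p$-Sylow subgroup $C_p\leq\Sigma_p$, which (since $C_p$ is abelian and $C_{\Sigma_p}(C_p)=C_p$) reduces to taking $N_{\Sigma_p}(C_p)/C_p\cong(\mathbb{Z}/p)^\times$-invariants; the scalar action $u\mapsto\alpha u$ and hence $\beta u\mapsto\alpha\beta u$ forces divisibility by $p-1$, yielding exactly $\wedge[v]\otimes\mathbb{F}_p[\beta v]$ with $v=u(\beta u)^{p-2}$ and $\beta v=(\beta u)^{p-1}$, matching the stated restriction $v\mapsto u^{2(p-1)-1}$ (the paper's shorthand for the unique monomial in that degree). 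All the ingredients you use --- injectivity of restriction to a $p$-Sylow (Lemma \ref{lem:principal bundle transfer}), $N_{\Sigma_p}(C_p)\cong C_p\rtimes(\mathbb{Z}/p)^\times$, stability reducing to $N$-invariance for an abelian Sylow --- are exactly the ones recorded elsewhere in these notes (e.g.\ in the appendix on Tate cohomology and Swan's theorem), so the argument fits naturally into the paper's toolkit.
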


In order to populate the remainder of this row, we make use of the following result, the proof of which is discussed in Appendix \ref{appendix:periodicity}.

\begin{periodicitytheorem}
Let $M$ be a $C_p$-module and $N$ a $\Sigma_p$-module.
\begin{enumerate}
\item Cup product with $\beta u$ induces an isomorphism \[H^s(C_p;M)\xrightarrow{\simeq} H^{s+2}(C_p; M).\]
\item Cup product with $\beta v$ induces an isomorphism \[H^s(\Sigma_p;N)\xrightarrow{\simeq} H^{s+2(p-1)}(\Sigma_p; N).\]
\end{enumerate}
\end{periodicitytheorem}

The final step is the identification of the $s=0$ column, which is simply the module of invariants for the action $\Sigma_p$. For simplicity, we do not state the analogous result for $p=3$.

\begin{invariantstheorem}
For any prime $p>3$,\[I:=H^*(\Conf_p(\mathbb{R}^n))^{\Sigma_p}\cong\begin{cases}
\wedge_{\mathbb{F}_p}(\alpha_{n-1})&\quad \text{$n$ even}\\
\mathbb{F}_p&\quad \text{$n$ odd}.
\end{cases}\]
\end{invariantstheorem}

Thus, the only possible differential aside from $d^{(n-1)(p-1)+1}$, whose effect is determined by Lemma \ref{lem:high differential} and periodicity, is $d^n$. After checking that this differential vanishes, the calculation follows.

\begin{theorem}[Cohen]
There is an isomorphism \[H^*(B_p(\mathbb{R}^n))\cong I\times_{\mathbb{F}_p} \frac{H^*(\Sigma_p)}{H^{>(n-1)(p-1)}(\Sigma_p)}.\]
\end{theorem}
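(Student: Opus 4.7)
The plan is to run the Serre spectral sequence
\[ E_2^{s,t} = H^s(\Sigma_p; H^t(\Conf_p(\mathbb{R}^n))) \Longrightarrow H^{s+t}(B_p(\mathbb{R}^n)) \]
of the principal $\Sigma_p$-bundle $\Conf_p(\mathbb{R}^n) \to B_p(\mathbb{R}^n)$, and to read off the claimed ring structure by combining the Vanishing, Invariants, and Periodicity Theorems with a transfer argument for the remaining multiplicative issues.

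First, I would locate the support of $E_2$. The Vanishing Theorem (transferred from $C_p$ to $\Sigma_p$ via Lemma \ref{lem:principal bundle transfer}) combined with the fact that $H^t(\Conf_p(\mathbb{R}^n))$ vanishes above $t = (n-1)(p-1)$ concentrates $E_2$ on three loci: the $s = 0$ column, identified with $I$ by the Invariants Theorem (so $I^+$ is concentrated in degree $n-1$ for $n$ even and is zero for $n$ odd); the bottom row $t = 0$, which is $H^s(\Sigma_p;\mathbb{F}_p)$ by Nakaoka's theorem; and the top row $t = (n-1)(p-1)$, pinned down as a periodic $H^*(\Sigma_p)$-module by Nakaoka plus the Periodicity Theorem. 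The only potentially nonvanishing differentials are then the $d^r$ on the $s=0$ column for $r \leq (n-1)(p-1)$ and the single long $d^{(n-1)(p-1)+1}$ from the top row to the bottom. This long differential I would handle by combining Lemma \ref{lem:high differential}, which forces $d^{(n-1)(p-1)+1}$ to hit every class in the bottom row of total degree exceeding $np$, with periodicity under cup product with $\beta v$ and the Leibniz rule, concluding that $d^{(n-1)(p-1)+1}$ is injective on the entire top row with image exactly $H^{>(n-1)(p-1)}(\Sigma_p)$.

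The main obstacle is the shorter differentials out of $I$. For $n$ odd this is automatic since $I \cong \mathbb{F}_p$. For $n$ even, $I \cong \wedge_{\mathbb{F}_p}(\alpha_{n-1})$ and the only potentially nonzero differential is $d^n(\alpha_{n-1}) \in E_n^{n,0}$. To kill this I would apply the transfer $\tau$ associated to the finite cover $\pi$ and compute
\[ \pi^*\tau(\alpha_{12}) \;=\; \sum_{\sigma \in \Sigma_p} \sigma^* \alpha_{12} \;=\; (p-2)! \sum_{a \neq b} \alpha_{ab}, \]
which, by Wilson's theorem and the one-dimensionality of $I^{n-1}$, is a nonzero scalar multiple of $\alpha_{n-1}$. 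Hence $\alpha_{n-1}$ lies in the image of $\pi^*$, which coincides with the image of the edge homomorphism to $E_\infty^{0,\bullet}$, so $\alpha_{n-1}$ is a permanent cycle.

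Finally, I would extract the ring structure. Additively the $E_\infty$-page is $I$ in the $s=0$ column and $H^*(\Sigma_p)/H^{>(n-1)(p-1)}(\Sigma_p)$ in the $t=0$ row, sharing only $\mathbb{F}_p$ in total degree zero. To rule out a nontrivial multiplicative extension between a lift of $\alpha_{n-1}$ and a positive-degree class $f^* y$ pulled back from $B\Sigma_p$ via the classifying map $f : B_p(\mathbb{R}^n) \to B\Sigma_p$, I would invoke the projection formula
\[ \tau(\alpha_{12}) \cdot f^* y \;=\; \tau\bigl(\alpha_{12} \cdot \pi^* f^* y\bigr). \]
Since $f \circ \pi$ factors through the contractible total space $E\Sigma_p$, the restriction $\pi^* f^* y$ vanishes for $|y| > 0$, and the product is zero in $H^*(B_p(\mathbb{R}^n))$. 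Positive-degree classes lifted from $I$ and from $H^*(\Sigma_p)$ therefore annihilate each other, identifying $H^*(B_p(\mathbb{R}^n))$ with the fiber product $I \times_{\mathbb{F}_p} H^*(\Sigma_p)/H^{>(n-1)(p-1)}(\Sigma_p)$.
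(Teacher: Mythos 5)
Your proposal follows the paper's outlined strategy exactly; the paper itself only sketches this argument and explicitly defers the two steps you supply (``the only possible differential aside from $d^{(n-1)(p-1)+1}$\ldots is $d^n$. After checking that this differential vanishes, the calculation follows.''). Both of your fill-ins are correct. The transfer computation gives $\pi^*\tau(\alpha_{12}) = (p-2)!\sum_{a\neq b}\alpha_{ab} = 2(p-2)!\,\alpha$, where $\alpha = \sum_{a<b}\alpha_{ab}$ generates $I^{n-1}$ (using $\alpha_{ab}=\alpha_{ba}$ for $n$ even), and $2(p-2)!\equiv 2\not\equiv 0 \pmod p$ by Wilson; since the edge homomorphism $H^{n-1}(B_p(\mathbb{R}^n))\to E_2^{0,n-1}$ is exactly $\pi^*$ with image $E_\infty^{0,n-1}$, this puts $\alpha$ among the permanent cycles, so $d^n=0$. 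The projection-formula argument for the vanishing of mixed products is also right: $f\circ\pi$ classifies the pullback of the universal $\Sigma_p$-bundle to its own total space, which is trivial, so $\pi^*f^*y = 0$ in positive degree. One point worth making explicit to finish the identification with the fiber product is that the chosen lift $\tau(\alpha_{12})$ must also square to zero; this is automatic because it sits in odd degree $n-1$ (recall $n$ is even in the nontrivial case) and $p$ is odd, so $x^2=0$ for any odd-degree $x$ in the graded-commutative ring $H^*(B_p(\mathbb{R}^n);\mathbb{F}_p)$. Your sketch of the downward periodicity induction on the long differential could also use a word on how the base case in the range $s>n+p-2$ propagates to all $s\geq 0$: one uses the commuting squares
\[
\xymatrix{
E^{s,(n-1)(p-1)}\ar[r]^-{d}\ar[d]_-{\beta v} & E^{s+(n-1)(p-1)+1,0}\ar[d]^-{\beta v}\\
E^{s+2(p-1),(n-1)(p-1)}\ar[r]^-{d} & E^{s+(n-1)(p-1)+1+2(p-1),0}
}
\]
with the left map an isomorphism by the Periodicity Theorem and the right map injective since $H^*(\Sigma_p)$ is free over $\mathbb{F}_p[\beta v]$, so the top $d$ inherits bijectivity from the bottom one.
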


In the remainder of this section, we now give this outline flesh.

\begin{warning}
This argument is the one given in the original reference \cite{CohenLadaMay:HILS}, but we adopt our own notational conventions and give different proofs of some result.
\end{warning}

\subsection{Invariants} We begin with a few sample calculations.

\begin{example}
For $p=3$, a basis for the degree $2n-2$ cohomology is given by the set $\left\{\alpha_{12}\alpha_{13}, \alpha_{12}\alpha_{23}\right\}$. We compute that \begin{align*}
\tau_{12}^*(\alpha_{12}\alpha_{13})&=(-1)^n\alpha_{12}\alpha_{23}\\
\tau_{23}^*(\alpha_{12}\alpha_{13})&=(-1)^{(n-1)^2}\alpha_{12}\alpha_{13}\\
\tau_{12}^*(\alpha_{12}\alpha_{23})&=(-1)^n\alpha_{12}\alpha_{13}\\
\tau_{23}^*(\alpha_{12}\alpha_{23})&=(-1)^{n+1}\alpha_{12}\alpha_{13}+(-1)^{(n-1)^2+1}\alpha_{12}\alpha_{23}.
\end{align*} Thus, if $\beta=\lambda_1\alpha_{12}\alpha_{13}+\lambda_2\alpha_{12}\alpha_{23}$ is a fixed point, we have the equations \begin{align*}
\lambda_1&=(-1)^n\lambda_2\\
\lambda_1&=(-1)^{(n-1)^2}\lambda_1+(-1)^{n+1}\lambda_2,
\end{align*} which imply $\beta=0$ unless $2\equiv(-1)^{(n-1)^2}\mod 3$, which occurs precisely when $n$ is even. This special case is the reason for the restriction $p>3$ in the statement of the theorem.
\end{example}

\begin{example}\label{example:fixed point}
Assuming that $n$ is even and setting $\alpha=\sum_{1\leq a<b\leq p}\alpha_{ab}$, we compute that \begin{align*}
\sigma^*\alpha&=\sum_{1\leq a<b\leq p}\alpha_{\sigma(a)\sigma(b)}\\
&=\sum_{\sigma(a)<\sigma(b)}\alpha_{\sigma(a)\sigma(b)}+(-1)^n\sum_{\sigma(b)<\sigma(a)}\alpha_{\sigma(a)\sigma(b)}\\
&=\alpha,
\end{align*} so the fixed point set is at least as large as claimed in the theorem above. Thus, it remains to show that $\alpha$ is the only possible nontrivial fixed point.
\end{example}

In order to prove the theorem, we pass from cohomology to homology in order to btain a more convenient basis. This passage is justified by the tensor/hom adunction; for a finite group $G$ and $G$-module $M$, we have \begin{align*}
H^*(G; M^\vee)&\cong \mathrm{Ext}^*_{\mathbb{F}[G]}\left(\mathbb{F}, M^\vee\right)\\
&\cong \mathrm{Ext}^*_{\mathbb{F}[G]}\left(\mathbb{F}, \mathrm{Ext}^*_\mathbb{F}\left(M,\mathbb{F}\right)\right)\\
&\cong  \mathrm{Ext}^*_{\mathbb{F}}\left(\mathrm{Tor}_*^{\mathbb{F}[G]}\left(\mathbb{F}, M\right),\mathbb{F}\right)\\
&\cong H_*(G; M)^\vee.
\end{align*} In particular, in the case of interest, we have the isomorphisms \begin{align*}
H^*(\Conf_p(\mathbb{R}^n))^{\Sigma_p}&\cong \left(H_*(\Conf_p(\mathbb{R}^n))_{\Sigma_p}\right)^\vee\\
H^*(C_p;H^*(\Conf_p(\mathbb{R}^n)))&\cong H_*(C_p;H_*(\Conf_p(\mathbb{R}^n)))^\vee.
\end{align*}

Recall from \S\ref{section:cohomology} that $H_*(\Conf_p(\mathbb{R}^n))$ is spanned by classes indexed by $p$-forests modulo the Jacobi identity and graded antisymmetry, with a preferred basis given by the tall forests. The advantage of this perspective is that $\Sigma_p$ acts on a forest by permuting the leaves, which are independent of one another, whereas the indices of a monomial such as $\alpha_{ab}\alpha_{bc}\alpha_{ac}$ are far from independent.
 
The first of the theorems in question is now almost immediate; indeed, we essentially gave the argument previously when computing $H_*(B_k(\mathbb{R}^n);\mathbb{Q})$.

\begin{proof}[Proof of Invariants Theorem]
Let $\alpha$ be the class of a tall forest with at least three leaves; thus, $|\alpha|\geq 2(n-1)$. By the Jacobi identity, $3[\alpha]=0$ in $H_*(\Conf_p(\mathbb{R}^n))_{\Sigma_p}$; therefore, since $p>3$, the map from $H_*(\Conf_p(\mathbb{R}^n))$ to the module of coinvariants is zero, since it annihilates a basis. Since this map is also surjective, the claim follows in degree $2(n-1)$ and higher. The argument in degree $0$ is trivial. In degree $n-1$, there are two cases to consider. If $n$ is odd, then every basis element is equivalent to its negative in coinvariants, which must therefore be trivial. If $n$ is even, we note that $\Sigma_p$ acts transitively on our preferred homology basis, so $H_{n-1}(\Conf_p(\mathbb{R}^n))_{\Sigma_p}$ has dimension at most 1. Therefore, by Example \ref{example:fixed point}, the dimension is exactly 1.
\end{proof}

\begin{remark}
This argument differs from that given in \cite[III.9]{CohenLadaMay:HILS}. We were unable to justify some of the claims made in the course of that argument.
\end{remark}

\subsection{Vanishing}
We turn now to the vanishing theorem. As in the previous section, we may reformulate this result in homological terms as the assertion  \[H_s(C_p; H_t(\Conf_p(\mathbb{R}^n))=0\] for $s>0$ and $0<t<(n-1)(p-1)$.

\begin{remark}
We emphasize that this theorem requires no restriction on $p$ (note that the case $p=2$ is vacuous).
\end{remark}

We will use the following vanishing criterion.

\begin{proposition}\label{prop:cyclic decomposition}
Let $V$ be a $C_p$-module over $\mathbb{F}$ and $\sigma\in C_p$ a fixed generator. If $V$ admits a decomposition of the form \[V\cong \bigoplus_{i=1}^{p}V_{\sigma^i}\] such that $\sigma\left(V_{\sigma^i}\right)\subseteq V_{\sigma^{i+1}}$, then $H_s(C_p;V)=0$ for $s>0$.
\end{proposition}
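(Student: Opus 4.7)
The plan is to show that the given decomposition endows $V$ with the structure of a free $\mathbb{F}[C_p]$-module, from which the vanishing is immediate.

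First, I would observe that the containments $\sigma(V_{\sigma^i}) \subseteq V_{\sigma^{i+1}}$ (with indices read mod $p$) are forced to be equalities. Indeed, iterating $p$ times yields $\sigma^p(V_{\sigma^i}) \subseteq V_{\sigma^{i+p}} = V_{\sigma^i}$; but $\sigma^p = 1$, so $\sigma^p(V_{\sigma^i}) = V_{\sigma^i}$, and the chain of inclusions $V_{\sigma^i} = \sigma^p(V_{\sigma^i}) \subseteq V_{\sigma^{i+1}}$ (up to reindexing) must collapse. Thus $\sigma$ restricts to a linear isomorphism $V_{\sigma^i} \xrightarrow{\simeq} V_{\sigma^{i+1}}$ for every $i$.

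Choosing an $\mathbb{F}$-linear basis $\{e_j\}_{j\in J}$ of $V_{\sigma^1}$, the collection $\{\sigma^{i-1}e_j\}_{1\leq i\leq p,\,j\in J}$ is an $\mathbb{F}$-basis of $V$, and by construction $C_p$ permutes this basis freely and transitively within each orbit $\{\sigma^{i-1}e_j\}_{1\leq i\leq p}$. Consequently, there is an isomorphism of $\mathbb{F}[C_p]$-modules
\[
V \cong \mathbb{F}[C_p] \otimes_{\mathbb{F}} V_{\sigma^1},
\]
with $C_p$ acting on the first tensor factor; in particular, $V$ is free as an $\mathbb{F}[C_p]$-module.

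The conclusion then follows by Shapiro's lemma (or by the standard fact that free modules are acyclic for group homology): since $V$ is induced from the trivial subgroup, $H_s(C_p; V) \cong H_s(\{e\}; V_{\sigma^1}) = 0$ for every $s > 0$. I do not anticipate any real obstacle here; the content of the proposition is entirely the recognition that the hypothesis is a disguised freeness condition.
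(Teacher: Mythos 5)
Your proof is correct, and it takes a genuinely different route from the paper's. The paper works directly with the periodic free resolution of $\mathbb{F}$ over $\mathbb{F}[C_p]$, so that $H_*(C_p;V)$ is computed by the complex $\cdots\to V\xrightarrow{N} V\xrightarrow{\sigma-1} V$, and then proves that the inclusion $\mathrm{im}(N)\subseteq\ker(\sigma-1)$ is an equality by decomposing a $\sigma$-fixed vector across the summands $V_{\sigma^i}$ and observing the components must be the $\sigma$-translates of one another. You instead show that the hypothesis forces each containment $\sigma(V_{\sigma^i})\subseteq V_{\sigma^{i+1}}$ to be an equality (since $\sigma^p=\mathrm{id}$), so that a basis of $V_{\sigma}$ sweeps out a $C_p$-permutation basis of $V$ with free orbits; hence $V\cong\mathbb{F}[C_p]\otimes_\mathbb{F} V_{\sigma}$ is free (equivalently, induced from the trivial subgroup), and the vanishing is Shapiro's lemma or flatness. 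Your argument is more structural and has the virtue of disposing of all positive degrees at once: strictly speaking, the paper's computation of $\mathrm{im}(N)=\ker(\sigma-1)$ only addresses the homology groups in the odd slots of the periodic complex, and one needs a further observation (e.g.\ the Euler-characteristic count $\dim\ker N-\dim\mathrm{im}(\sigma-1)=\dim\ker(\sigma-1)-\dim\mathrm{im}(N)$, or a dual computation) to kill the even slots as well. By exhibiting $V$ as outright free you avoid that bookkeeping entirely, and you also get the bonus that the Tate cohomology $\hat H^*(C_p;V)$ vanishes in every degree, not just the positive ones.
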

\begin{proof}
The trivial $C_p$-module $\mathbb{F}$ admits the so-called periodic resolution \[\cdots\to \mathbb{F}[C_p]\xrightarrow{N}\mathbb{F}[C_p]\xrightarrow{\sigma-1}\mathbb{F}[C_p]\xrightarrow{\epsilon}\mathbb{F},\] where $N=1+\sigma+\cdots+\sigma^{p-1}$ and $\epsilon$ denotes the augmention. Thus, the group homology of $V$ is computed by the complex \[\cdots \to V\xrightarrow{N} V\xrightarrow{\sigma-1} V,\] so it suffices to show that the inclusion $\mathrm{im}(N)\subseteq \ker(\sigma-1)$ is an equality. 

Suppose that $\sigma(v)=v$. Our assumption on $V$ provides the unique decomposition \[\sum_{i=1}^pv_{\sigma^i}=v=\sigma(v)=\sum_{i=1}^p\sigma(v_{\sigma^i}).\] Since $\sigma(v_{\sigma^i})\in V_{\sigma^{i+1}}$, it follows by induction that $v_{\sigma^i}=v_\sigma$ for all $1\leq i\leq p$, so $v=N v_\sigma.$
\end{proof}

In order to apply this observation to our situation, we recall that a $p$-forest is simply an ordered partition of $\{1,\ldots, p\}$ together with a binary parenthesization of each block of the partition, and that changing the order of the partition introduces an overall sign. Since the Jacobi identity and antisymmetry do not change the partition of a forest, we have the direct sum decomposition \[H_*(\Conf_p(\mathbb{R}^n))\cong \bigoplus_{1\leq \ell\leq p}\bigoplus_{[\pi]\in \mathrm{Surj}(p,\ell)_{\Sigma_\ell}}F_{[\pi]},\] where $F_{[\pi]}$ denotes the subspace spanned by the forests with underlying unordered partition $[\pi]$. We now make three observations.
\begin{enumerate}
\item The degree 0 subspace is exactly the $\ell=p$ summand. Thus, we disregard this summand.
\item The degree $(n-1)(p-1)$ subspace is exactly the $\ell=1$ summand. Thus, we disregard this summand.
\item The symmetric group acts via the action on $\mathrm{Surj}(p,\ell)_{\Sigma_\ell}$ given by pre-composition. In particular, the $\ell$th summand above is closed under the action of $\Sigma_p$.
\end{enumerate}

We conclude that the Vanishing Theorem is equivalent to the claim that \[H_{s}\left(C_p;\bigoplus_{\mathrm{Surj}(\ell,p)_{\Sigma_\ell}}F_{[\pi]}\right)=0\] for $s>0$ and $1<\ell<p$. The essential observation in establishing this claim is the following.

\begin{lemma}\label{lem:partitions}
For any $1<\ell<p$ and $[\pi]\in \mathrm{Surj}(p,\ell)_{\Sigma_\ell}$, \[[\pi\circ\sigma]\neq[\pi].\]
\end{lemma}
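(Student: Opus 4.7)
The plan is to translate the statement about unordered surjections into a statement about set partitions of $\{1,\ldots,p\}$ and then deploy a standard orbit-counting argument for the $p$-group $C_p$.

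First I would reformulate the hypothesis. An element $[\pi] \in \mathrm{Surj}(p,\ell)_{\Sigma_\ell}$ is the same data as an unordered partition $\mathcal{P} = \{\pi^{-1}(1),\ldots,\pi^{-1}(\ell)\}$ of $\{1,\ldots,p\}$ into exactly $\ell$ nonempty blocks. The equality $[\pi \circ \sigma] = [\pi]$ in $\mathrm{Surj}(p,\ell)_{\Sigma_\ell}$ is equivalent to the assertion that $\sigma$ preserves the partition $\mathcal{P}$ setwise, i.e., $\sigma$ induces a permutation of the set of blocks of $\mathcal{P}$.

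Next I would invoke the fact that this induced permutation generates an action of $C_p$ on the set of blocks of $\mathcal{P}$. By orbit--stabilizer applied to a $p$-group, every orbit of this action has cardinality either $1$ or $p$. Since there are only $\ell < p$ blocks in total, no orbit can have cardinality $p$, so every block of $\mathcal{P}$ must be fixed setwise by $\sigma$.

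The main (and essentially only) geometric input comes at the final step: because $\sigma$ is a $p$-cycle on $\{1,\ldots,p\}$, its action on $\{1,\ldots,p\}$ is transitive, and so the only $\sigma$-invariant subsets are $\varnothing$ and $\{1,\ldots,p\}$. Since every block of $\mathcal{P}$ is nonempty and $\sigma$-invariant, $\mathcal{P}$ must consist of the single block $\{1,\ldots,p\}$, giving $\ell = 1$. This contradicts the hypothesis $\ell > 1$, so $[\pi \circ \sigma] \neq [\pi]$. There is no real obstacle here; the bound $\ell < p$ is used exactly to rule out a single $C_p$-orbit on blocks, while $\ell > 1$ is used to rule out the trivial partition.
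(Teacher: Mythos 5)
Your argument is correct, but it takes a genuinely different route from the paper's. You show that if $\sigma$ stabilizes $[\pi]$, then $C_p$ acts on the set of $\ell$ blocks; since $\ell < p$ and every orbit of a $C_p$-action has size $1$ or $p$, each block is $\sigma$-invariant; and since $\sigma$ acts transitively on $\{1,\ldots,p\}$, any nonempty $\sigma$-invariant block is all of $\{1,\ldots,p\}$, forcing $\ell = 1$. The paper instead first observes that because $p$ is prime and $1 < \ell < p$, we have $\ell \nmid p$, so the blocks cannot all have the same cardinality; it then picks a power $\sigma^k$ carrying an element of a block of one size into a block of a different size, which is incompatible with $\sigma^k$ permuting the blocks. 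Both arguments use $p$ prime (yours through orbit sizes for the $p$-group, the paper's through $\ell \nmid p$) and both use the full hypothesis $1 < \ell < p$. Your version is somewhat more structural—it in effect classifies all $C_p$-fixed partitions as the trivial and discrete ones—whereas the paper's is a direct cardinality contradiction on a single well-chosen power of $\sigma$.
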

\begin{proof}
There are numbers $1\leq i,j\leq \ell$ such that $|\pi^{-1}(i)|\neq |\pi^{-1}(j)|$; indeed, assuming otherwise implies that $\ell\mid p$, which contradicts our assumption that $\ell\notin\{1,p\}$. Now, if $\rho$ is a cyclic permutation taking any element of $\pi^{-1}(i)$ to $\pi^{-1}(j)$, then $[\pi]\neq [\pi\circ\rho]=[\pi\circ\sigma^i]$, which leads to a contradiction under the assumption that $[\pi\circ\sigma]=[\pi]$.
\end{proof}

\begin{proof}[Proof of Vanishing Theorem]
Since $F_{[\pi]}\cap F_{[\pi']}=0$ for $[\pi]\neq [\pi']$, Lemma \ref{lem:partitions} implies that $F_{[\pi]}\cap F_{[\pi\circ\sigma]}=0$ for any $1<\ell<p$ and $[\pi]\in \mathrm{Surj}(p,\ell)_{\Sigma_\ell}$. Thus, for fixed $[\pi]$, the submodule $\bigoplus_{i=1}^pF_{[\pi\circ\sigma^i]}$ satisfies the hypotheses of Proposition \ref{prop:cyclic decomposition}. By induction $\bigoplus_{\mathrm{Surj}(\ell,p)_{\Sigma_\ell}}F_{[\pi]}$ decomposes as a direct sum of submodules of this form. The proposition now implies the claim.
\end{proof}

\begin{remark}
The reader may compare the complexity of this argument in homology to the argument in cohomology of \cite[III.10]{CohenLadaMay:HILS}.
\end{remark}

\section{Postscript: Lie algebra methods}\label{section:lie algebra methods}

In this short final portion of these notes, we give an informal discussion of an approach to computing the rational homology of configuration spaces of general manifolds, which is premised on Lie algebras. 

\subsection{Lie algebras and their homology} We begin with a few reminders. Fix a field $\mathbb{F}$ of characteristic zero. 

\begin{definition}
A \emph{graded Lie algebra} is a graded vector space $\mathfrak{g}$ equipped with a map \[[-,-]:\mathfrak{g}^{\otimes 2}\to \mathfrak{g},\] called the \emph{Lie bracket} of $\mathfrak{g}$, satisfying the following two identities:
\begin{enumerate}
\item $[x,y]=(-1)^{|x||y|+1}[y,x]$
\item $(-1)^{|x||z|}\left[x,[y,z]\right]+(-1)^{|z||y|}\left[z,[x,y]\right]+(-1)^{|y||x|}\left[y,[z,x]\right]=0.$
\end{enumerate}
\end{definition}

\begin{example}
An ordinary Lie algebra may be viewed as a graded Lie algebra concentrated in degree 0.
\end{example}

\begin{example}
Given a graded vector space $V$, there is a free Lie algebra $\mathcal{L}(V)$ satisfying the obvious universal property. In particular, it follows easily from the definition that \[\mathcal{L}\left(v_r\right)=\begin{cases}
\mathbb{F}\langle v\rangle&\quad r\text{ even}\\
\mathbb{F}\langle v\rangle\oplus\mathbb{F}\langle [v,v]\rangle &\quad r\text{ odd.}
\end{cases}\]
\end{example}

\begin{example}
If $\mathfrak{g}$ is a graded Lie algebra and $A$ a graded commutative (possibly nonunital) algebra, then $A\otimes\mathfrak{g}$ is canonically a graded Lie algebra with bracket determined by the formula \[[\alpha\otimes x,\beta\otimes y]=(-1)^{|x||\beta|}\alpha\beta\otimes[x,y].\]
\end{example}

The first identity of the definition above, which is usually called \emph{graded antisymmetry}, ensures that the bracket descends to a map $\Sym^2\left(\mathfrak{g}[1]\right)[-1]\to \mathfrak{g}[1]$. The second identity, known as the \emph{Jacobi identity}, ensures that the appropriate composite \[\Sym^3\left(\mathfrak{g}[1]\right)[-2]\to \Sym^2\left(\mathfrak{g}[1]\right)[-1]\to \mathfrak{g}[1]\] is zero. We now systematize these observations.

\begin{recollection}
For a graded vector space $V$, write $\Sym(V)=\mathbb{F}[V_{\mathrm{even}}]\otimes\Lambda_\mathbb{F}[V_{\mathrm{odd}}]$. This object is a graded cocommutative coalgebra under the \emph{shuffle coproduct} \[\Delta(x_1\cdots x_k)=\sum_{i+j=k}\sum_{\Sigma_k/\Sigma_i\times\Sigma_j}\epsilon(\sigma; x_1,\ldots, x_k)\, x_{\sigma(1)}\cdots x_{\sigma(i)}\otimes x_{\sigma(i+1)}\cdots x_{\sigma(k)},\] where $\epsilon(-;x_1,\ldots, x_k):\Sigma_k\to \{\pm 1\}$ is the group homomorphism determined by the formula  $\epsilon(\tau_j;x_1,\ldots, x_k)=(-1)^{|x_j||x_{j+1}|}$. 

Recall that a \emph{coderivation} of a graded coalgebra $(C,\Delta)$ is a self-map $\delta$ satisfying the ``co-Leibniz rule'' $\Delta\circ\delta=(\id\otimes \delta+\delta\otimes \id)\circ\Delta$. Coderivations of $\Sym(V)$ of fixed degree $m$ decreasing word length by $n$ are in bijection with graded maps \[\Sym^{n+1}(V)\to V\] of degree $m$ \cite[22(a)]{FelixHalperinThomas:RHT}.
\end{recollection}

From this universal property and graded antisymmetry, we conclude that the bracket of a Lie algebra $\mathfrak{g}$ determines a coderivation $d_{[-,-]}=d$ of $\Sym\left(\mathfrak{g}[1]\right)$ of degree $-1$. Moreover, since the square of an odd coderivation is again a coderivation, the Jacobi identity implies that $d^2$ is the unique coderivation determined by 0, which, of course, is zero. Thus, the following definition is sensible.

\begin{definition}
Let $\mathfrak{g}$ be a graded Lie algebra. The \emph{Chevalley-Eilenberg complex} of $\mathfrak{g}$ is the chain complex \[\mathrm{CE}(\mathfrak{g}):=\left(\Sym(\mathfrak{g}[1]),\,d_{[-,-]}\right).\] The \emph{Lie algebra homology} of $\mathfrak{g}$ is \[H^\mathcal{L}(\mathfrak{g}):=H\left(\mathrm{CE}(\mathfrak{g})\right).\]
\end{definition}

\begin{remark}
The formula given above for the coproduct $\Delta$ on $\Sym(V)$ is determined by requiring that \begin{enumerate}
\item $\Delta(x)=1\otimes x+x\otimes 1$ and
\item $\Delta(xy)=\Delta(x)\Delta(y)$,
\end{enumerate}
or, in other words, by requiring that the elements of $V$ be primitive and that $\Delta$ be a map of algebras. Note, however, that the Chevalley-Eilenberg complex is a differential graded coalgebra but \emph{not} a differential graded algebra.
\end{remark}

\begin{remark}
The differential in the Chevalley-Eilenberg complex may be computed explicitly as \[d(x_1\cdots x_k)=\sum_{1\leq i<j\leq k}(-1)^{|x_i|} \epsilon(\sigma_{ij};x_1,\ldots, x_k)\, [x_i,x_j]\,x_1\cdots \hat x_i\cdots \hat x_j\cdots x_k,\] where $\sigma_{ij}$ is the unique $(2,k-2)$-shuffle sending $i$ to $1$ and $j$ to $2$.
\end{remark}

\begin{remark}
As the terminology suggests, there is an isomorphism \[H^\mathcal{L}(\mathfrak{g})\cong \mathrm{Tor}_*^{U(\mathfrak{g})}(\mathbb{F},\mathbb{F}),\] where $U(\mathfrak{g})$ is the universal enveloping algebra of $\mathfrak{g}$.
\end{remark}

\begin{remark}
If $\mathfrak{g}$ is the Lie algebra associated to a compact Lie group, we have a composite quasi-isomorphism \[\mathrm{CE}(\mathfrak{g})^\vee\xrightarrow{\simeq}\Omega(G)^G\xrightarrow{\sim}\Omega(G),\] where $\Omega(G)^G$ is the space of left-invariant differential forms on $G$ \cite{ChevalleyEilenberg:CTLGLA}.
\end{remark}

\subsection{Lie algebra homology and configuration spaces}

The relevance of Lie algebra homology from our point of view is the following result.

\begin{theorem}[{\cite{Knudsen:BNSCSVFH}}]
Let $M$ be an orientable $n$-manifold. There is an isomorphism \[\bigoplus_{k\geq0}H_*(B_k(M))\cong H^\mathcal{L}\left(\mathfrak{g}_M\right)\] of bigraded vector spaces, where $\mathfrak{g}_M=H_c^{-*}(M)\otimes \mathcal{L}(v_{n-1,1})$.
\end{theorem}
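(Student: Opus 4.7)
The plan is to prove the theorem by setting up a natural comparison map between the two sides and showing it is a bigraded isomorphism via induction on a handle decomposition of $M$, reducing to the local case $M = \mathbb{R}^n$, where both sides can be computed directly.

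For the base case, observe that $H_c^*(\mathbb{R}^n)$ is one-dimensional, concentrated in degree $n$; its cup product therefore vanishes for dimensional reasons, and hence the Lie bracket on $\mathfrak{g}_{\mathbb{R}^n} = H_c^{-*}(\mathbb{R}^n) \otimes \mathcal{L}(v_{n-1,1})$ is identically zero. Consequently $\mathrm{CE}(\mathfrak{g}_{\mathbb{R}^n}) = \Sym(\mathfrak{g}_{\mathbb{R}^n}[1])$ with trivial differential. For $n$ odd, graded antisymmetry forces $\mathcal{L}(v_{n-1,1}) = \mathbb{F}\langle v\rangle$, so the shifted Lie algebra has a single generator in bidegree $(0,1)$, giving a polynomial algebra $\mathbb{F}[x_{0,1}]$; for $n$ even, the free Lie algebra acquires an additional bracket generator $[v,v]$, yielding an exterior factor in bidegree $(n-1,2)$ after shifting. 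In each case the result matches the identification of $\bigoplus_{k}H_*(B_k(\mathbb{R}^n);\mathbb{F})$ given by Proposition~\ref{prop:unordered rational} and the analogous computation implicit in Section~\ref{section:cohomology}.

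For the inductive step, I would fix a collar decomposition $M = M_1 \cup_{N \times \mathbb{R}} M_2$ arising from a handle attachment and show that both sides satisfy the same Mayer--Vietoris formula. On the configuration-space side, the Ayala--Francis quasi-isomorphism recalled in the introduction gives
\[
C_*(B(M);\mathbb{F}) \simeq C_*(B(M_1);\mathbb{F}) \bigotimes^{\mathbb{L}}_{C_*(B(N\times\mathbb{R});\mathbb{F})} C_*(B(M_2);\mathbb{F}),
\]
the crucial input being the contractibility of the unordered configuration spaces of $\mathbb{R}$. On the Lie-algebra side, Mayer--Vietoris for compactly supported cohomology exhibits $H_c^{-*}$ as a cosheaf on $M$, presenting $\mathfrak{g}_M$ as a derived pushout $\mathfrak{g}_{M_1} \amalg^{\mathbb{L}}_{\mathfrak{g}_{N\times\mathbb{R}}} \mathfrak{g}_{M_2}$ in differential graded Lie algebras. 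The Chevalley--Eilenberg functor, being a left adjoint to the primitives functor from differential graded cocommutative coalgebras to differential graded Lie algebras, preserves derived pushouts, giving
\[
\mathrm{CE}(\mathfrak{g}_M) \simeq \mathrm{CE}(\mathfrak{g}_{M_1}) \bigotimes^{\mathbb{L}}_{\mathrm{CE}(\mathfrak{g}_{N\times\mathbb{R}})} \mathrm{CE}(\mathfrak{g}_{M_2}),
\]
matching the configuration-space decomposition.

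The hardest step, and the core of the proof in \cite{Knudsen:BNSCSVFH}, is constructing the natural chain-level comparison map $\mathrm{CE}(\mathfrak{g}_M) \to C_*(B(M);\mathbb{F})$ in a way that is visibly compatible with both excision formulas. The most conceptual route is via factorization homology in characteristic zero: writing $C_*(B(M);\mathbb{F}) \simeq \int_M \mathrm{Free}_{E_n}(\mathbb{F})$ and invoking the Koszul duality between $E_n$-algebras and $n$-shifted Lie algebras, one identifies the right-hand side with a Chevalley--Eilenberg-type complex built from the compactly supported cohomology cosheaf, which unwinds to $\mathrm{CE}(\mathfrak{g}_M)$. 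A secondary verification is that the cardinality filtration on the left matches the Lie-word-length (weight) filtration on the right, which follows from tracking the scaling of the Lie generator $v$ through the comparison.
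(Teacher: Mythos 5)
The paper does not actually prove this theorem. As the section header and the introduction make explicit (``\emph{Postscript}: Lie algebra methods'' and ``we give an informal discussion of the main result of \cite{Knudsen:BNSCSVFH}''), the statement is imported by citation, with only a few sanity-check computations following it, so there is no in-paper proof against which to compare your proposal.

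Taken on its own terms, your sketch is a reasonable synopsis of the strategy of the cited reference: the comparison does go through factorization homology, the identification of $\bigoplus_k C_*(B_k(M))$ as $\int_M \mathrm{Free}_{E_n}(\mathbb{F})$, and Koszul duality between $E_n$-algebras and ($n$-shifted) Lie algebras. One step deserves a flag, though. You assert that the Chevalley--Eilenberg functor ``is a left adjoint to the primitives functor'' and therefore preserves derived pushouts. In the bar-cobar (Quillen/Hinich) adjunction between differential graded Lie algebras and conilpotent differential graded cocommutative coalgebras over a characteristic-zero field, $\mathrm{CE}$ plays the role of the \emph{right} adjoint, not the left. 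The phenomenon you want --- that $\mathrm{CE}$ carries a derived pushout of Lie algebras to a derived tensor product of coalgebras --- is true, but it does not follow from a left-adjointness argument; rather, it follows from the identity $U(\mathfrak{g}_1\star_{\mathfrak{h}}\mathfrak{g}_2)\cong U(\mathfrak{g}_1)\star_{U(\mathfrak{h})} U(\mathfrak{g}_2)$ for (cofibrant) amalgamated free products, together with the change-of-rings identification $H^{\mathcal{L}}(\mathfrak{g})\cong\mathrm{Tor}^{U(\mathfrak{g})}(\mathbb{F},\mathbb{F})$ and a base-change argument for $\mathrm{Tor}$. As written, your claim about $\mathrm{CE}$ would need to be corrected before the induction could close; the conclusion you want is correct, but the justification is not. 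The rest of the outline --- the local computation for $\mathbb{R}^n$ (including the parity analysis of the free Lie algebra on $v_{n-1}$, which you state correctly), the matching of the weight and cardinality filtrations --- is consistent with the cited paper's approach.
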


\begin{remark}
Explicitly, we have
\[\mathfrak{g}_M=\begin{cases}
H_c^{-*}(M)\otimes v&\quad n \text{ odd}\\
H_c^{-*}(M)\otimes v\oplus H_c^{-*}(M)\otimes[v,v]&\quad n\text{ even,}
\end{cases}\] where in the first case all brackets vanish and in the second the bracket is determined up to sign by the cup product.
\end{remark}

\begin{remark}
An analogous statement for nonorientable manifolds is also valid---see \cite{Knudsen:BNSCSVFH}.
\end{remark}

\begin{example}
If $n$ is odd, then the Lie bracket in $\mathfrak{g}_M$ is identically zero, so the differential in $\mathrm{CE}(\mathfrak{g}_M)$ vanishes. Thus, equating auxiliary gradings, we find in this case that \begin{align*}
H_*(B_k(M))\cong\Sym^k\left(H_c^{-*}(M)[n-1][1]\right)\cong \Sym^k\left(H_*(M)\right)
\end{align*} by Poincar\'{e} duality, recovering the computation of Theorem \ref{thm:odd homology}.
\end{example}

\begin{example}
Let $M_1=T^2\setminus \pt$ and $M_2=\mathbb{R}^2\setminus S^0$. Then, since $M_1^+\cong T^2$ and $M_2^+\simeq S^1\vee S^1\vee S^2$, we have \[H_c^{-*}(M_j)\cong\begin{cases}
\mathbb{F}\left\langle x_{-1}, y_{-1}, z_{-2}\mid xy=z\right\rangle&\quad j=1\\
\mathbb{F}\left\langle x_{-1}, y_{-1}, z_{-2}\right\rangle&\quad j=2.
\end{cases}\] Thus, as a bigraded vector space, we have \[\mathfrak{g}_{M_j}\cong\mathbb{F}\left\langle x_{0,1}, y_{0,1}, z_{-1,1}, \tilde x_{1,2}, \tilde y_{1,2}, \tilde z_{0,2}\right\rangle,\] and the bracket is determined by \[[x,y]=\begin{cases}
\tilde z&\quad j=1\\
0&\quad j=2.
\end{cases}\] It follows that the weight 2 subcomplex of the Chevalley-Eilenberg complex is given additively as \[\xymatrix{\mathbb{F}\left\langle \tilde x, \tilde y, xy\right\rangle \ar[rr]^-{xy\mapsto [x,y]}&& \mathbb{F}\left\langle xz, yz, \tilde z\right\rangle\ar[r]^-{0}& \mathbb{F}\left\langle z^2\right\rangle.}\] It follows that \[\dim H_2(B_2(M_j))=\begin{cases}
2&\quad j=1\\
3&\quad j=2.
\end{cases}\] In particular, $B_2(T^2\setminus \pt)\not\simeq B_2(\mathbb{R}^2\setminus S^0)$.
\end{example}

\begin{remark}
This type of computation can be pushed much further; indeed, \cite{DrummondColeKnudsen:BNCSS} determines $H_i(B_k(\Sigma_g))$ for every degree $i$, cardinality $k$, and genus $g$. For example, \[\dim H_{101}(B_{100}(\Sigma_3))=28,449,499.\]
\end{remark}

\subsection{Homological stability}
In the previous section, we saw that the rational homology of unordered configuration spaces may be computed as Lie algebra homology. Our present goal is to leverage this information in order to understand some of the qualitative behavior of these homology groups. To begin, recall that, if $\partial M\neq \varnothing$, there is a stabilization map \[B_k(M)\to B_{k+1}(M)\] defined by inserting a point in a collar neighborhood of the boundary, and this map induces an isomorphism on integral homology \cite{McDuff:CSPNP}. Surprisingly, although the stabilization map may fail to exist, stability actually holds in general, at least rationally.

\begin{theorem}[Church]
Let $M$ be a connected $n$-manifold with $n>2$. There is a map \[H_i(B_{k+1}(M);\mathbb{Q})\to H_i(B_k(M);\mathbb{Q})\] that is an isomorphism for $i\leq k$.
\end{theorem}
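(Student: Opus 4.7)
The plan is to leverage the Lie algebra model from the previous subsection, which identifies $\bigoplus_k H_*(B_k(M); \mathbb{Q})$ with the Chevalley--Eilenberg homology $H^{\mathcal{L}}(\mathfrak{g}_M) = H(\Sym(\mathfrak{g}_M[1]), d)$ as bigraded vector spaces, the weight recording the cardinality $k$. First I would single out a canonical element $u \in \mathfrak{g}_M$ of bidegree $(-1, 1)$: in the orientable case, $u$ is the Poincar\'{e} dual of a point, i.e., the image of the generator of $H_c^n(M)$ under $H_c^n(M) \otimes v \subseteq \mathfrak{g}_M$. In the Chevalley--Eilenberg complex this class sits in $\mathfrak{g}_M[1]$ in degree $0$ and weight $1$. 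The non-orientable case uses the analogous class in the orientation-twisted version of the model.

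The key structural observation is that $u$ is central in $\mathfrak{g}_M$ whenever $M$ is non-compact. Indeed, for $\alpha \otimes y \in \mathfrak{g}_M$ with $\alpha \in H_c^j(M)$ and $j \geq 1$, the bracket $[u, \alpha \otimes y] = \pm(u \cup \alpha) \otimes [v, y]$ vanishes because $u \cup \alpha \in H_c^{n+j}(M) = 0$; the case $j = 0$ is vacuous since $H_c^0(M) = 0$ for non-compact $M$. Centrality yields a Lie algebra splitting $\mathfrak{g}_M = \mathbb{Q}\langle u \rangle \oplus \mathfrak{g}'_M$ and a tensor decomposition of chain complexes $\mathrm{CE}(\mathfrak{g}_M) \cong \mathbb{Q}[u] \otimes \mathrm{CE}(\mathfrak{g}'_M)$, hence on homology $H^{\mathcal{L}}(\mathfrak{g}_M) \cong \mathbb{Q}[u] \otimes H^{\mathcal{L}}(\mathfrak{g}'_M)$.

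The stability range then follows from a degree count. Every generator of $\mathfrak{g}'_M[1]$ has strictly positive homological degree: the generators from $H_c^j(M) \otimes v$ occupy degrees $n - j$ for $1 \leq j \leq n - 1$, and those from $H_c^j(M) \otimes [v, v]$ (present when $n$ is even) occupy degrees $2n - 1 - j \geq n - 1$. Thus any monomial of weight $\ell$ in $\Sym(\mathfrak{g}'_M[1])$ has total homological degree at least $\ell$, so $H^{\mathcal{L}}(\mathfrak{g}'_M)^{(i)}_{(\ell)} = 0$ for $\ell > i$. Combining with the tensor decomposition,
\[
H_i(B_k(M); \mathbb{Q}) \cong \bigoplus_{\ell \leq \min(i, k)} H^{\mathcal{L}}(\mathfrak{g}'_M)^{(i)}_{(\ell)},
\]
which is independent of $k$ once $k \geq i$. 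The stabilization map of the theorem is defined as the dual of multiplication by the cocycle $u^*$ in the dualized Chevalley--Eilenberg complex, and the explicit decomposition exhibits it as an isomorphism in this range.

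The hard part will be the closed case, where $H_c^0(M) = \mathbb{Q}$ introduces an extra bracket $[u, 1 \otimes v] = \pm [M] \otimes [v, v]$ that breaks the centrality of $u$ and obstructs the clean tensor decomposition. I would handle this either by reducing to the non-compact case via the open inclusion $M \setminus \{\mathrm{pt}\} \hookrightarrow M$ and a Fadell--Neuwirth-style comparison showing that the induced map on $B_k(-)$ is a rational-homology isomorphism in the stable range, or by an explicit spectral-sequence argument filtering $\mathrm{CE}(\mathfrak{g}_M)$ by the number of non-$u$ factors, whose $E^0$ page recovers the non-compact calculation and whose higher differentials vanish in the stable range. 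The hypothesis $n > 2$ enters precisely at this point, both to make the relevant obstruction brackets degenerate for degree reasons and to ensure the connectivity needed in the Fadell--Neuwirth reduction.
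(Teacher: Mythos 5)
Your non-compact argument is essentially correct and lands on the same degree count that drives the paper's proof: the weight-$1$ generators of $\mathfrak{g}'_M[1]$ sit in degrees $1,\ldots,n-1$ and the weight-$2$ generators in degrees $2n-1-j\geq n-1\geq 2$, so every generator satisfies $\text{degree}\geq\text{weight}$, which is exactly where the hypothesis $n>2$ enters (not merely in the closed case, as you suggest at the end). The centrality-plus-tensor-decomposition packaging $\mathrm{CE}(\mathfrak{g}_M)\cong\mathbb{Q}[u]\otimes\mathrm{CE}(\mathfrak{g}'_M)$ is a clean way to see the result when it applies, and it gives somewhat more than stability (an actual unconditional splitting). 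But it applies only when $u$ is central, i.e., only for $M$ non-compact or $n$ odd.

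The closed, even-dimensional case is a genuine gap, and neither of your two proposed fixes is adequate. The Fadell--Neuwirth reduction is simply false: the map $B_k(M\setminus\mathrm{pt})\to B_k(M)$ is not a rational homology isomorphism in degrees $\leq k$. The weight-$1$ summand of $\bigoplus_k H_*(B_k(M);\mathbb{Q})\cong H^{\mathcal{L}}(\mathfrak{g}_M)$ is $H_c^{-*}(M)[n]\cong H_*(M)$, and stability propagates it into the range $i\leq k$, whereas $H_*(M\setminus\mathrm{pt})$ is missing the top class; already for $M=S^n$ one has $H_n(B_k(S^n);\mathbb{Q})=\mathbb{Q}\neq 0=H_n(B_k(\mathbb{R}^n);\mathbb{Q})$ once $k\geq n$. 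As for the spectral sequence filtering by non-$u$ factors, its $E^0$ page is not the non-compact complex (it retains the generators coming from $H_c^0(M)$, merely with $u$ forced central), the claim that higher differentials vanish in the stable range is unargued, and in any case this route is far heavier than necessary.

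The paper sidesteps centrality entirely. It observes that the cap product with the dual functional $\lambda$ of the degree-$0$ generator $x$ coincides with the derivation $\partial/\partial x$ and is automatically a chain map, because $x$ is a cycle (a weight-$1$, word-length-$1$, degree-$0$ element cannot be hit by the Chevalley--Eilenberg differential) -- no Lie-algebra splitting required. This chain map is visibly surjective, its kernel is spanned by monomials not divisible by $x$, and your degree count applied factor by factor shows that any nonzero monomial $y_1\cdots y_r$ with $\mathrm{wt}(y)>|y|$ must have some $y_j$ with $\mathrm{wt}(y_j)>|y_j|$, which for $n>2$ forces $y_j$ to be a scalar multiple of $x$. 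Hence the chain map $\mathrm{CE}(\mathfrak{g}_M)_{k+1}\to\mathrm{CE}(\mathfrak{g}_M)_k$ is an isomorphism through degree $k$ and surjective everywhere, which gives the homology isomorphism in the same range. This works uniformly for all $M$, compact or not, and is the argument you should aim for in the general case.
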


Although our approach to this result will differ from \cite{Church:HSCSM}, it will be motivationally useful to recall the definition of the map used therein. The idea is that, although one does not have a stabilization map in general, there is always a map going the other direction, at least at the level of ordered configurations, namely the projection of the Fadell-Neuwirth fibration $\Conf_{k+1}(M)\to \Conf_k(M)$. This map is only $\Sigma_k$-equivariant and so fails to descend to the unordered configuration spaces, but this can be remedied by remembering that there are in fact $k+1$ different such projections. The desired map is then obtained as the dashed filler in the commuting diagram \[\xymatrix{
H_*(\Conf_{k+1}(M))\ar[d]\ar[r]&H_*(B_{k+1}(M))\ar@{-->}[dd]^-{\pi}\\
H_*(\Conf_k(M))^{\oplus k+1}\ar[d]_-\Sigma\\
H_*(\Conf_k(M))\ar[r]&H_*(B_k(M)),
}\] where the $i$th coordinate of the upper left map is the projection away from the $i$th coordinate.

The key to our argument is the observation that this map is a piece of a larger structure. Indeed, whenever $i+j=k$, we have a $\Sigma\times\Sigma_j$-equivariant map \begin{align*}
\Conf_k(M)&\to \Conf_i(M)\times\Conf_j(M)\\
(x_1,\ldots, x_k)&\mapsto \left(\left(x_1,\ldots, x_i\right), \left(x_{i+1},\ldots, x_k\right)\right),
\end{align*} which induces the commuting diagram \[\xymatrix{
H_*(\Conf_k(M))\ar[d]\ar[r]&H_*(\Conf_i(M))\otimes H_*(\Conf_j(M))\otimes_{\Sigma_i\times\Sigma_j}\Sigma_k\ar[d]\\
H_*(B_k(M))\ar@{-->}[r]&H_*(B_i(M))\otimes H_*(B_j(M)),
}\] which, after summing over $k$ and $i+j=k$, produces a map \[\Delta:H_*(B(M))\to H_*(B(M))\otimes H_*(B(M)),\] where $B(M):=\coprod_{k\geq0}B_k(M)$. This map is the comultiplication of a cocommutative coalgebra structure on $H_*(B(M))$. Morally speaking, this comultiplication is given by the formula \[\text{``}\Delta(x_1,\ldots, x_k)= \sum_{i+j=k}\sum_{\Sigma_k/\Sigma_i\times\Sigma_j}(x_{\sigma(1)},\ldots, x_{\sigma(i)})\otimes (x_{\sigma(i+1)}, \ldots, x_{\sigma(k)})\text{''}\]

From this point of view, Church's map $\pi$ is obtained by using the comultiplication to split points apart and then discarding all summands not of the form $i=1$ and $j=k-1$. This type of operation is familiar in the theory of coalgebras.

\begin{definition}
Let $(C,\Delta)$ be a differential graded coalgebra and $\lambda\in C^\vee$ an $r$-cocycle. The \emph{cap product} with $\lambda$ is the composite \[\lambda\frown(-):C\cong\mathbb{Q}\otimes C\xrightarrow{\lambda\otimes\Delta}C^\vee[r]\otimes C\otimes C\cong C^\vee\otimes C\otimes C[r]\xrightarrow{\langle-,-\rangle,\otimes \id_C}\mathbb{Q}[r]\otimes C\cong C[r].\]
\end{definition}

Explicitly, if $\Delta(c)=\sum_{i} c_i\otimes c_i'$, then \[\lambda\frown c=\sum_i\langle \lambda, c_i\rangle c_i'.\] Since $\lambda$ is assumed to be closed, $\lambda\frown(-)$ is a chain map and hence induces a map of the same name at the level of homology.
	
The moral of our discussion so far is that Church's map $\pi$ is given by the cap product with the unit in $H^0(M)$, which is dual to the homology class of a single point, since $M$ is connected.

Now, as we saw last time, $H_*(B(M))$ may be computed using the Chevalley-Eilenberg complex of the Lie algebra $\mathfrak{g}_M$, which is a cocommutative coalgebra whose comultiplication obeys a very similar formula to that given above. This observation leads us to formulate the following version of Church's theorem, which is the one that we will prove. Let $x\in H_c^{-*}(M)[n]\subseteq \mathfrak{g}_M[1]\subseteq \mathrm{CE}(\mathfrak{g}_M)$ denote the Poincar\'{e} dual of the class of a point in $M$, and let $\lambda$ be the dual functional to $x$. Finally, write $\mathrm{CE}(\mathfrak{g}_M)_k$ for the summand of the Chevalley-Eilenberg complex of weight $k$.

\begin{theorem}\label{thm:my stability}
Let $M$ be a connected $n$-manifold with $n>2$. Cap product with $\lambda$ induces an isomorphism \[H_i(\mathrm{CE}(\mathfrak{g}_M)_{k+1})\xrightarrow{\simeq} H_i(\mathrm{CE}(\mathfrak{g}_M)_{k})\] for $i\leq k$.
\end{theorem}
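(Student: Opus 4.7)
The plan is to reinterpret the cap product $\lambda \frown (-)$ as an explicit algebraic operation on the Chevalley--Eilenberg complex and then exhibit the requisite range as a direct degree-count consequence. Since $\lambda$ is supported on the weight-one line $\mathbb{Q}\langle x\rangle \subset \mathrm{CE}(\mathfrak{g}_M)_1$, only the piece of $\Delta(c)$ lying in $\mathrm{CE}(\mathfrak{g}_M)_1 \otimes \mathrm{CE}(\mathfrak{g}_M)_k$ contributes to $\lambda \frown c$, and expanding the shuffle coproduct shows that $\lambda \frown$ acts on a monomial $y_1 \cdots y_{k+1}$ by picking out (with Koszul signs) the factors equal to $x$ and removing them. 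Thus $\lambda \frown$ coincides with the formal partial derivative $\partial_x$ on $\mathrm{CE}(\mathfrak{g}_M) = \Sym(\mathfrak{g}_M[1]) \cong \mathbb{Q}[x] \otimes \Sym(W)$, where $W \subset \mathfrak{g}_M[1]$ is any linear complement of $\mathbb{Q}\langle x\rangle$. The cocycle condition for $\lambda$, equivalently the commutation of $\partial_x$ with the CE differential, reduces to the observation that $x$ never appears in a Lie bracket $[y, z]$: in $\mathcal{L}(v_{n-1,1})$ every nontrivial bracket $[u, w]$ is a word of length at least two and hence lies outside the line $\mathbb{Q}\langle v\rangle$, so no element of $[\mathfrak{g}_M, \mathfrak{g}_M]$ has a component along $x = [M]\otimes v$.

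With $\partial_x$ established as a chain map, the natural tool is the short exact sequence of chain complexes
\begin{equation*}
0 \longrightarrow \ker \partial_x \longrightarrow \mathrm{CE}(\mathfrak{g}_M)_{k+1} \xrightarrow{\;\partial_x\;} \mathrm{CE}(\mathfrak{g}_M)_k \longrightarrow 0.
\end{equation*}
Surjectivity of $\partial_x$ follows from the Leibniz rule $\partial_x(x\cdot m) = m + x\cdot \partial_x(m)$ and a short induction on the $x$-degree of $m$. The kernel is precisely the subspace $\Sym^{k+1}(W)$ of $x$-free monomials, and the same bracket observation shows that this is indeed a subcomplex, since the CE differential never introduces an $x$ factor into a monomial lacking one.

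The heart of the argument will be the degree estimate that $W$ is concentrated in homological degrees $\geq 1$. Reading off the bidegrees of the generators of $\mathfrak{g}_M[1]$ from those of $H_c^{-*}(M)$ and $\mathcal{L}(v_{n-1,1})$: the contribution $H_c^{-*}(M) \otimes v$ lives in degrees $n-k$ as $k$ ranges over the support of $H_c^{\ast}(M)$, vanishing only when $k=n$, in which case connectivity and orientability give $H_c^n(M) \cong \mathbb{Q}$ generated by $x$; when $n$ is even, the further contribution $H_c^{-*}(M)\otimes [v,v]$ lives in degrees $2n-1-k \geq n-1 \geq 1$. Consequently every element of $W$ has degree $\geq 1$, and a product of $k+1$ such factors has total degree $\geq k+1$, so $\ker \partial_x$ is concentrated in degrees $\geq k+1$.

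The theorem will then follow from the long exact sequence in homology attached to the short exact sequence: since $H_j(\ker \partial_x) = 0$ for all $j \leq k$, the flanking terms at $H_i(\mathrm{CE}(\mathfrak{g}_M)_{k+1}) \to H_i(\mathrm{CE}(\mathfrak{g}_M)_k)$ vanish in the range $i\leq k$, yielding the desired isomorphism. The main technical obstacle will not be the degree count or the long exact sequence, but rather the careful verification with signs that $\lambda \frown$ coincides on the nose with the derivation $\partial_x$, together with the attendant cocycle computation; once these are in place, the rest goes through cleanly.
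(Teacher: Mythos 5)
Your proposal is correct and follows essentially the same route as the paper: you identify $\lambda\frown(-)$ with $\frac{d}{dx}$, characterize its kernel in weight $k+1$ as the $x$-free monomials, bound their homological degree below by $k+1$ via the degree analysis of $\mathfrak{g}_M[1]$, and conclude from the long exact sequence. The paper's closing ``easy exercise''---that a surjective chain map which is an isomorphism in degrees $\leq k$ induces an isomorphism on homology through degree $k$---is exactly your long exact sequence argument left implicit.
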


\begin{remark}
The same conclusion holds in the case $n=2$ with a slightly worse stable range. The theorem is false for trivial reasons for $n=1$ and vacuous for $n=0$.
\end{remark}

In fact, we will show that the chain level cap product is a chain isomorphism in a range. In order to do so, it will be useful to have a formula for the cap product.

\begin{lemma}
$\lambda\frown(-)=\frac{d}{dx}$.
\end{lemma}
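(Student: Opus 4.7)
The plan is to compute both sides of the identity on a homogeneous monomial basis of $\mathrm{CE}(\mathfrak{g}_M) = \Sym(\mathfrak{g}_M[1])$. First I would fix a basis of $\mathfrak{g}_M[1]$ containing $x$; then the dual functional $\lambda$ vanishes on every basis monomial except on $x$ itself, where it returns $1$.

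Second, I would insert the explicit shuffle coproduct formula recalled earlier into the definition $\lambda \frown c = \sum_i \langle \lambda, c_i\rangle\, c_i'$. Only the $(1, k-1)$-shuffles whose left tensor factor equals $x$ survive the pairing against $\lambda$. These correspond precisely to the positions $j$ in a monomial $y_1\cdots y_k$ at which $y_j = x$, each contributing the complementary monomial $y_1\cdots \widehat{y_j}\cdots y_k$, carrying the Koszul sign picked up in moving $y_j$ to the front.

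The key observation at this point is that $x$ has even degree in $\mathrm{CE}(\mathfrak{g}_M)$: the Poincar\'e dual $[\mathrm{pt}]$ lies in $H_c^n(M)$, hence in $H_c^{-*}(M)$ it has degree $-n$; tensoring with $v$ shifts by $n-1$, and the homological suspension on $\mathfrak{g}_M[1]$ contributes one more shift, placing $x$ in degree $0$. With $|x|$ even, each Koszul sign $(-1)^{|x|(|y_1|+\cdots+|y_{j-1}|)}$ is trivial, and the surviving sum collapses to $\sum_{j:\, y_j = x} y_1\cdots \widehat{y_j}\cdots y_k$.

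This expression is, by inspection, the effect of the graded coderivation of $\Sym(\mathfrak{g}_M[1])$ uniquely determined by sending $x$ to $1$ and annihilating the remaining basis vectors; because $x$ is an even generator, this coderivation is just the honest polynomial partial derivative $\tfrac{d}{dx}$. No real obstacle is anticipated: the content of the lemma reduces to the compatibility of the $(1,k-1)$-component of the shuffle coproduct with partial differentiation, which is visible once the parity of $x$ has been pinned down.
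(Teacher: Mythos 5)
Your proof is correct and follows essentially the same route as the paper's: a direct computation of the cap product from the shuffle coproduct, isolating the surviving $(1,k-1)$-terms with left factor $x$ and cancelling signs because $|x|=0$. The paper organizes the same computation slightly differently—writing a monomial as $x^ry$ with $y$ not divisible by $x$, invoking $\Delta(x^ry)=\Delta(x)^r\Delta(y)$, and expanding by the binomial theorem—but this implicitly uses the same parity fact ($1\otimes x$ and $x\otimes 1$ commute) that you verify explicitly, so the two arguments are, in substance, identical.
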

\begin{proof}
The claim is equivalent to the claim that $\lambda\frown x^ry=rx^{r-1}y$ for every monomial $y$ not divisible by $x$. We compute that \begin{align*}
\Delta(x^ry)&=\Delta(x)^r\Delta(y)\\
&=(1\otimes x+x\otimes 1)^r\sum_j y_j\otimes y_j'\\
&=\sum_{i,j}\binom{r}{i}x^i y_j\otimes x^{r-i}y_j',
\end{align*} whence \[\lambda\frown x^ry=\sum_{i,j}\binom{r}{i}\langle \lambda, x^i y_j\rangle x^{r-i}y_j'\]Since $\lambda$ is the dual functional to $x$, the $(i,j)$ term of this term vanishes unless $i=1$ and $y_j$ is a scalar. 

There are now two cases. If $y$ itself is a scalar, then $\Delta(y)=y\otimes y$, and the claim follows easily. If $y$ is not a scalar, then $\Delta(y)\equiv 1\otimes y+y\otimes1$ modulo elementary tensors in which neither factor is a scalar, and the claim again follows easily.
\end{proof}

\begin{corollary}
The chain map $\lambda\frown(-)$ is surjective with kernel spanned by the monomials not divisible by $x$.
\end{corollary}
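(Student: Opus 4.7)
The plan is to read the corollary off directly from the preceding lemma, which identifies $\lambda\frown(-)$ with the formal partial derivative $\tfrac{d}{dx}$ on the polynomial-type algebra $\mathrm{CE}(\mathfrak{g}_M)=\Sym(\mathfrak{g}_M[1])$. Once the two statements of the corollary are translated across this identification, they become elementary facts about differentiation of polynomials over a field of characteristic zero.

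First I would check that $x$ has even degree, so that ``polynomial in $x$'' makes unambiguous sense in the graded-symmetric algebra $\Sym(\mathfrak{g}_M[1])$. This is immediate: $x$ is Poincar\'e dual to the class of a point, and so lives in $H_c^{n}(M)$, which the shift $H_c^{-*}(M)[n]\subseteq \mathfrak{g}_M[1]$ places in degree $0$. Consequently every element of $\mathrm{CE}(\mathfrak{g}_M)$ admits a unique expansion $\alpha=\sum_{r\geq 0} x^r\alpha_r$, where each $\alpha_r$ is a polynomial in the remaining generators of $\mathfrak{g}_M[1]$ (i.e.\ a monomial sum not divisible by $x$). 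Under this expansion the previous lemma gives
\[
\lambda\frown\alpha \;=\; \tfrac{d}{dx}\alpha \;=\; \sum_{r\geq 1} r\,x^{r-1}\alpha_r.
\]

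For the kernel statement, I would observe that the displayed sum vanishes iff $r\alpha_r=0$ for all $r\geq 1$, which in characteristic zero is the same as $\alpha_r=0$ for all $r\geq 1$, i.e.\ $\alpha=\alpha_0$ is a sum of monomials not involving $x$. For surjectivity, given any $\beta=\sum_{r\geq 0}x^r\beta_r$, the element $\tilde\beta:=\sum_{r\geq 0}\tfrac{1}{r+1}x^{r+1}\beta_r$ is well defined (again using characteristic zero) and manifestly satisfies $\tfrac{d}{dx}\tilde\beta=\beta$.

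There is no real obstacle here: the only points requiring any care are verifying the parity of $x$ (so that antidifferentiation produces nonzero powers $x^{r+1}$, rather than being killed by exterior-algebra relations) and invoking characteristic zero both to invert the factor $r$ and to form the antiderivative. Both are already baked into the standing hypotheses on $\mathbb{F}$ and on $M$.
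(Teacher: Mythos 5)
Your proposal is correct and is exactly the intended reading of the corollary: the paper leaves it without proof because it follows immediately from the identification $\lambda\frown(-)=\tfrac{d}{dx}$ in the preceding lemma. The two points you flag as requiring care are the right ones — the degree-$0$ check on $x$ (so that $x^r\neq 0$ for all $r$ and the expansion $\alpha=\sum_r x^r\alpha_r$ is unambiguous in $\Sym(\mathfrak g_M[1])$) and the characteristic-zero hypothesis (to invert $r$ both for the kernel computation and the antiderivative) — and both are verified correctly.
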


Thus, it remains to determine which monomials are divisible by $x$. For simplicity, we assume $n>2$, but 

\begin{lemma}
If $n>2$, then any nonzero monomial $y$ with $\mathrm{wt}(y)>|y|$ is divisible by $x$.
\end{lemma}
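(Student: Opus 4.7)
The plan is to reduce the claim to a bidegree bookkeeping argument by identifying $x$ as essentially the unique generator of $\mathfrak{g}_M[1]$ whose weight strictly exceeds its homological degree. First I would catalogue the generators of $\mathfrak{g}_M[1]$ by bidegree $(|{\cdot}|,\mathrm{wt}({\cdot}))$: a weight-$1$ generator has the form $\alpha\otimes v$ with $\alpha\in H_c^k(M)$, and accounting for the degree of $v$, the $-*$ convention on $H_c$, and the overall shift, it has bidegree $(n-k,1)$ in $\mathfrak{g}_M[1]$; a weight-$2$ generator (present only when $n$ is even) has the form $\alpha\otimes[v,v]$ with $\alpha\in H_c^k(M)$ and bidegree $(2n-1-k,2)$.

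Next I would observe that, because $M$ is connected and orientable, $H_c^n(M)$ is one-dimensional and $x$ is, up to scalar, the unique weight-$1$ generator with $\alpha\in H_c^n(M)$; hence $x$ has bidegree $(0,1)$, so $\mathrm{wt}(x)-|x|=1$. For any other weight-$1$ generator $g$ we must have $k\le n-1$, so $|g|\ge 1=\mathrm{wt}(g)$. For a weight-$2$ generator $g$ we have $|g|\ge n-1$, and the hypothesis $n>2$ enters exactly here to yield $|g|\ge 2=\mathrm{wt}(g)$. Thus every generator $g\ne x$ satisfies $\mathrm{wt}(g)-|g|\le 0$, while $x$ is the only one with $\mathrm{wt}(g)-|g|>0$.

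Finally, writing a nonzero monomial $y$ as a product $y=g_1\cdots g_m$ of generators of $\mathfrak{g}_M[1]$, additivity of bidegree under multiplication in $\Sym(\mathfrak{g}_M[1])$ gives
\[
\mathrm{wt}(y)-|y|=\sum_{i=1}^m\bigl(\mathrm{wt}(g_i)-|g_i|\bigr).
\]
If no $g_i$ equals $x$, each summand is non-positive, forcing $\mathrm{wt}(y)\le|y|$ and contradicting the hypothesis. Hence at least one factor of $y$ is $x$, i.e., $y$ is divisible by $x$.

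The argument is essentially bookkeeping rather than a genuine obstacle; the step requiring the most care is tracking the conventions on the shifts, in particular that $H_c^{-*}(M)$ places $H_c^k(M)$ in homological degree $-k$ and that the homological shift $[1]$ in forming $\mathfrak{g}_M[1]$ adds $1$ to degree. The hypothesis $n>2$ is used at precisely one point, namely $n-1\ge 2$, which is what prevents weight-$2$ generators from behaving like $x$; this is consistent with the remark that in dimension $2$ the stable range degrades.
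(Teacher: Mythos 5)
Your proof is correct and takes essentially the same route as the paper: both arguments decompose $y$ into factors lying in $\mathfrak{g}_M[1]$, use additivity of $\mathrm{wt}-|\cdot|$ to isolate a factor with weight exceeding degree, and then observe that the degree bounds on the weight-$1$ and weight-$2$ parts of $\mathfrak{g}_M[1]$ (with $n>2$ ruling out the weight-$2$ case) force that factor to lie in the one-dimensional space $H_c^n(M)\otimes v$, i.e., to be a scalar multiple of $x$. Your version is somewhat more explicit in tabulating bidegrees and phrasing the conclusion via the sum $\mathrm{wt}(y)-|y|=\sum_i(\mathrm{wt}(g_i)-|g_i|)$, but this is a presentational difference rather than a different method.
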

\begin{proof}
Write $y=y_1\cdots y_r$ with $y_j\in \mathfrak{g}_M[1]$. Since $\mathrm{wt}(y)>|y|$, we conclude that $\mathrm{wt}(y_j)>|y_j|$ for some $j$. Since $y_j\in \mathfrak{g}_M[1]$, the weight of $y_j$ is either 1 or 2, and we treat these cases separately.

If $\mathrm{wt}(y_j)=1$, then $y_j\in H_c^{-*}(M)[n]$, which is concentrated in degrees $0\leq *\leq n$, so the assumption $|y_j|<1$ implies that $y_j=0$. Since $M$ is connected, it follows that $y_j$ is a multiple of $x$.

If $\mathrm{wt}(y_j)+2$, then $y_j\in H_c^{-*}(M)[2n-1]$, which is concentrated in degrees $n-1\leq *\leq 2n-1$. Since $|y_j|<2$ and $n>2$, it follows that $y_j=0$, a contradiction.
\end{proof}

\begin{proof}[Proof of Theorem \ref{thm:my stability}]
What we have shown so far is that the chain map \[\mathrm{CE}(\mathfrak{g}_M)_{k+1}\to \mathrm{CE}(\mathfrak{g}_M)_k\] induced by $\lambda\frown(-)$ is surjective and an isomorphism through degree $k$. An easy exercise shows that any chain map with these properties induces an isomorphism in homology through degree $k$.
\end{proof}

\begin{appendix}

\section{Split simplicial spaces}\label{appendix:split simplicial spaces}
In this appendix, we develop a criterion guaranteeing that a degreewise weak homotopy equivalence of simplicial spaces induces a weak homotopy equivalence after geometric realization. We follow \cite{DuggerIsaksen:THAR}, but similar results may be found in \cite[11.15]{May:GILS} and \cite[A.5]{Segal:CCT}.
\subsection{Split simplicial spaces}\label{section:split}

\begin{definition}
A simplicial space $\op X$ is \emph{split} if there are subspaces $N_m(\op X)\subseteq \op X_m$ for each $m\geq0$, called the \emph{non-degenerate part} in degree $n$, such that the map \[\coprod_{[n]\twoheadrightarrow[m]}N_m(\op X)\to \op X_n\] induced by the degeneracies is a homeomorphism for every $n\geq0$.
\end{definition}

\begin{proposition}[Dugger-Isaksen]\label{prop:split criterion}
Let $f:\op X\to \op Y$ be a map between split simplicial spaces. If $f_n:\op X_n\to \op Y_n$ is a weak equivalence for every $n\geq0$, then $|f|$ is a weak equivalence.
\end{proposition}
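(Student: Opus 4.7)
The plan is to prove the proposition by induction on the skeletal filtration of the geometric realization, exploiting the splitting to arrange the cofibration conditions needed to apply the gluing lemma for pushouts along cofibrations.

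First, I would set up the natural skeleton pushout
\[
\xymatrix{
\op X_n \times \partial \Delta^n \cup_{L_n(\op X) \times \partial \Delta^n} L_n(\op X) \times \Delta^n \ar[r]\ar[d] & |\op X|_{n-1} \ar[d]\\
\op X_n \times \Delta^n \ar[r] & |\op X|_n,
}
\]
where $L_n(\op X) \subseteq \op X_n$ is the latching object, i.e., the union of the images of the degeneracies. The splitting enters decisively here: it identifies $L_n(\op X) = \coprod_{\sigma : [n] \twoheadrightarrow [m],\, m < n} N_m(\op X)$ as a coproduct summand of $\op X_n$, so that $L_n(\op X) \hookrightarrow \op X_n$ is the inclusion of a clopen subspace and in particular a closed cofibration. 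A pushout-product argument then shows that the left vertical map in the square above is likewise a closed cofibration, and this makes the gluing lemma available.

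Second, one proceeds by induction on $n$. Assuming $|f|_{n-1}$ is a weak equivalence, the gluing lemma applied to the map of pushout squares reduces the inductive step to two sub-claims: that $f_n \times \mathrm{id}$ is a weak equivalence, which is immediate, and that the induced map on upper-left corners is a weak equivalence. A second application of the gluing lemma, this time to the pushout defining the corner, reduces the second sub-claim to showing that $L_n(f):L_n(\op X)\to L_n(\op Y)$ is a weak equivalence—a map well-defined because simplicial maps send degenerate simplices to degenerate simplices.

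The hardest step is thus verifying that $L_n(f)$ is a weak equivalence, and it is precisely here that the splitting pays its keep beyond mere Reedy cofibrancy. Using the coproduct decompositions, $L_n(f)$ is a disjoint union of maps obtained by composing restrictions $f|_{N_m(\op X)}$ (for $m < n$) with iterated degeneracies into $L_n(\op Y)$; running an induction on $n$ parallel to the skeleton induction, combined with the observation that weak equivalences decompose nicely along clopen partitions into components, yields the claim. Once this is in hand, one passes to the colimit $|\op X| = \colim_n |\op X|_n$: each inclusion $|\op X|_{n-1} \hookrightarrow |\op X|_n$ is a cofibration (being the pushout of one), and the sequential colimit along cofibrations of the weak equivalences $|f|_n$ produces the desired weak equivalence $|f|$. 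The main obstacle throughout is the $L_n(f)$ step, for which the coproduct splitting—rather than some weaker cofibrancy hypothesis—is essential.
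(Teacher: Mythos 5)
Your plan matches the paper's strategy in outline: induction on skeleta, using the splitting to identify the degenerate part as a coproduct summand (and hence to get a well-behaved pushout), then handling the colimit. Two points deserve comment.

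The gluing step is where your route genuinely diverges. You invoke the model-categorical gluing lemma, which requires the left vertical map in your latching square to be a cofibration in a model structure that is left proper with the correct weak equivalences. The clopen inclusion $L_n(\op X)\hookrightarrow\op X_n$ is a Hurewicz cofibration but need not be a Serre (Quillen) cofibration unless $N_n(\op X)$ happens to be cofibrant, so the argument has to run in the Str\o m or mixed model structure; it does work, but it pulls in more machinery than the paper uses. The paper's Lemma \ref{lem:pushout invariant} instead proves the exact gluing statement needed by a direct Mayer--Vietoris argument (via Proposition \ref{prop:mayer-vietoris}), with no cofibrancy hypothesis on $A$ or $B$ at all. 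Because the splitting lets one cancel the $L_n$ summand, the paper's pushout square (Lemma \ref{lem:split pushout}) can be taken to be the bare $N_n(\op X)\times\partial\Delta^n\to N_n(\op X)\times\Delta^n$ square rather than the full Reedy latching square, which is what makes the elementary cover argument feasible. Your handling of $L_n(f)$, by contrast, is essentially the paper's Lemma \ref{lem:disjoint weak equivalence} applied inductively and is fine.

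The colimit step is a genuine gap. The assertion that a sequential colimit of weak equivalences along cofibrations is again a weak equivalence is \emph{not} automatic in the category of all topological spaces, even for closed Hurewicz cofibrations: one needs to know that a map from a compact space into $\colim_n |\sk_n(\op X)|$ factors through some finite stage, and without a Hausdorff or compactly-generated hypothesis this can fail. The paper is working with arbitrary topological spaces (indeed the whole point of Dugger--Isaksen's setup is generality), and it develops the notion of a relatively $T_1$ inclusion precisely to repair this: Corollary \ref{cor:pushout is T1} shows the skeletal inclusions are relatively $T_1$, and Proposition \ref{prop:factor through colimit} then supplies the compactness statement. You would either need to reproduce that argument or restrict to a convenient category of spaces where the compactness is built in.
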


The strategy of the proof is simple. First, we argue that $f$ induces a weak equivalence on geometric realizations of $n$-skeleta for every $n$; second, we argue that every element in homotopy of the full realization is captured by some skeleton. In order to put this plan into action, we need to have control over skeleta. 

\begin{lemma}\label{lem:split pushout}
Let $\op X$ be a split simplicial space. The diagram \[\xymatrix{
N_n(\op X)\times\partial \Delta^n\ar[r]\ar[d]&|\sk_{n-1}(\op X)|\ar[d]\\
N_n(\op X)\times\Delta^n\ar[r]&|\sk_n(\op X)|
}\] is a pushout square.
\end{lemma}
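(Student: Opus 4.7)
My plan is to separate, within the coequalizer presentation of $|\sk_n(\op X)|$, the contributions that are already captured by $|\sk_{n-1}(\op X)|$ from the genuinely new cells contributed at simplicial degree $n$, and to show that these new cells are parametrized precisely by $N_n(\op X)\times\Delta^n$, attached to $|\sk_{n-1}(\op X)|$ along $N_n(\op X)\times\partial\Delta^n$.

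First I would recall that $|\sk_n(\op X)|$ may be computed as the quotient of $\coprod_{k\leq n}\op X_k\times\Delta^k$ by the equivalence relation generated by the simplicial identities $(\sigma^*x,t)\sim(x,\sigma_*t)$ for every morphism $\sigma:[k]\to[j]$ in $\Delta$ with $j,k\leq n$. Using the splitting, I then partition the summand $\op X_n\times\Delta^n$ into the ``degenerate'' pieces indexed by surjections $[n]\twoheadrightarrow[m]$ with $m<n$ and the ``non-degenerate'' piece $N_n(\op X)\times\Delta^n$. Each degenerate summand $N_m(\op X)\times\Delta^n$ is identified, via the simplicial relation for the corresponding $\sigma:[n]\twoheadrightarrow[m]$, with $N_m(\op X)\times\Delta^m\subseteq|\sk_{n-1}(\op X)|$ through the map $\id\times\sigma_*$. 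Thus every equivalence class in $|\sk_n(\op X)|$ either lies in the image of $|\sk_{n-1}(\op X)|$ or admits a representative in $N_n(\op X)\times\Delta^n$, and the overlap between the two descriptions occurs precisely on $N_n(\op X)\times\partial\Delta^n$: if $t\in\partial\Delta^n$, then $t=\sigma_*t'$ for some face map $\sigma:[n-1]\to[n]$, so $(x,t)\sim(\sigma^*x,t')$ lives in $\op X_{n-1}\times\Delta^{n-1}$; conversely, if $t$ lies in the open interior $\mathring{\Delta}^n$, a normal-form argument---iteratively reducing any chain of simplicial relations via the splitting and the canonical epi-mono factorization of morphisms in $\Delta$---shows that no equivalent representative involves a simplex of dimension less than $n$.

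These observations yield a continuous bijection from the pushout
\[
P:=|\sk_{n-1}(\op X)|\cup_{N_n(\op X)\times\partial\Delta^n}\bigl(N_n(\op X)\times\Delta^n\bigr)
\]
onto $|\sk_n(\op X)|$; the remaining task is to upgrade this bijection to a homeomorphism.

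The main obstacle will be this topological step: verifying that the quotient topology on $|\sk_n(\op X)|$ coincides with the pushout topology on $P$. I plan to check that a subset $U\subseteq|\sk_n(\op X)|$ is open if and only if its preimages in $|\sk_{n-1}(\op X)|$ and in $N_n(\op X)\times\Delta^n$ are open. This reduces, via the canonical surjection $\coprod_{k\leq n}\op X_k\times\Delta^k\twoheadrightarrow|\sk_n(\op X)|$, to the analogous statement for the disjoint union. Decomposing $\op X_k=\coprod_{[k]\twoheadrightarrow[m]}N_m(\op X)$ in each simplicial degree, every summand $N_m(\op X)\times\Delta^k$ with $m<n$ is carried to $|\sk_{n-1}(\op X)|$ through the degeneracy $\id\times\sigma_*$, while the summand $N_n(\op X)\times\Delta^n$ maps identically to the right-hand factor of $P$. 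The closedness of the inclusion $\partial\Delta^n\subseteq\Delta^n$ together with compactness of $\Delta^n$ then guarantees that the two quotient topologies agree, which completes the identification.
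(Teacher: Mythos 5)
Your strategy is a genuine alternative to the paper's, but it is substantially harder to execute and, as sketched, has two real gaps. The paper's proof takes a completely different route: since geometric realization is a left adjoint, it preserves colimits, so it suffices to exhibit a pushout of \emph{simplicial} spaces
\[
\xymatrix{
N_n(\op X)\otimes\partial\Delta^n\ar[r]\ar[d]&\sk_{n-1}(\op X)\ar[d]\\
N_n(\op X)\otimes\Delta^n\ar[r]&\sk_n(\op X),
}
\]
and pushouts in $\Top^{\Delta^{op}}$ are computed levelwise. In level $m$, the splitting gives $\sk_n(\op X)_m\cong\sk_{n-1}(\op X)_m\amalg\coprod_{[m]\twoheadrightarrow[n]}N_n(\op X)$, and the set of non-surjective maps $[m]\to[n]$ is precisely $(\partial\Delta^n)_m$; the pushout identity then holds by inspection. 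This sidesteps entirely the two things you have to work for.

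In your version, the ``normal-form argument'' needed for injectivity of $P\to|\sk_n(\op X)|$ is the Eilenberg--Zilber canonical-form lemma extended to split simplicial spaces: every point of the realization has a \emph{unique} representative $(x,t)$ with $x$ nondegenerate and $t$ interior, and one must prove that every zig-zag of elementary identifications reduces to such a representative. This is true for split simplicial spaces and is proved by the epi-mono factorization argument you gesture at, but it requires a real induction that you have not carried out; it is not a one-line reduction. Without it you have not established that $|\sk_{n-1}(\op X)|\to|\sk_n(\op X)|$ is injective, nor that $N_n(\op X)\times\mathring\Delta^n$ injects and misses the lower skeleton.

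The topological step, on the other hand, is easier than you make it sound, but not for the reason you give. You do not need compactness of $\Delta^n$ or closedness of $\partial\Delta^n\subseteq\Delta^n$; in fact, an argument via properness of the attaching map would require hypotheses on $N_n(\op X)$ that you do not have. The correct observation is that both topologies are quotient topologies from the same data: a subset $U\subseteq|\sk_n(\op X)|$ is open iff its preimage in each $\op X_k\times\Delta^k$, $k\leq n$, is open; $N_n(\op X)$ is a clopen summand of $\op X_n$, so the preimage condition in degree $n$ splits along $\op X_n\times\Delta^n=\coprod_{\sigma:[n]\twoheadrightarrow[m]}N_m(\op X)\times\Delta^n$; and for $m<n$ the restriction to $N_m(\op X)\times\Delta^n$ factors continuously through $\op X_m\times\Delta^m$ by $(x,t)\mapsto(x,\sigma_*t)$, so openness there is implied by openness in degree $m$. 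That yields the equivalence of the two quotient descriptions directly, with no compactness anywhere. In short, the idea is sound but the details you sketch are partly missing (the Eilenberg--Zilber step) and partly misdirected (the compactness appeal), and the paper's colimit-preservation argument avoids both.
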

\begin{proof}
Recall that the \emph{tensor} of a space $X$ with a simplicial space $\op Z$ is the simplicial space $(X\otimes\op Z)_n=X\times \op Z_n$, with simplicial structure maps induced by those of $\op Z$, together with the identity on $X$. Since geometric realization, as a left adjoint, preserves colimits, it suffices to produce a pushout square in simplicial spaces of the form
\[\xymatrix{
N_n(\op X)\otimes\partial \Delta^n\ar[r]\ar[d]&\sk_{n-1}(\op X)\ar[d]\\
N_n(\op X)\otimes\Delta^n\ar[r]&\sk_n(\op X),
}\] where we have indulged in the traditional abuse of using the same notation $\Delta^n$ for the representable simplicial set $\Hom_\Delta(-,[n])$ and its geometric realization, and similarly for $\partial\Delta^n$. To verify that this diagram is a pushout, it suffices to check in each level. Now, it is direct from the definitions that \[\sk_n(\op X)_m=\sk_{n-1}(\op X)_m\amalg\left(\coprod_{[m]\twoheadrightarrow[n]}N_n(\op X)\right),\] so $\sk_n(\op X)_m$ is the pushout of $\sk_{n-1}(\op X)_m$ and $N_n(\op X)\times \Delta^n_m$ over a coproduct of copies of $N_n(\op X)$ indexed by the set of maps $f:[m]\to [n]$ that fail to be surjective, which is exactly $\partial\Delta^n_m$.
\end{proof}

This fact will only be useful once we are assured that such pushouts are homotopically well-behaved. With regularity assumptions on the spaces involved, the following type of result is common knowledge, but in fact it holds in complete generality.

\begin{lemma}\label{lem:pushout invariant}
If $f:A\to A'$ and $g:B\to B'$ are weak homotopy equivalences, and if the front and back faces in the commuting diagram
\[\xymatrix{&A\times \partial\Delta^n\ar[rr]\ar[dd]_>>>>>>>{}|!{[d]}\hole\ar[dl]_-{f\times\id_{\partial\Delta^n}}&&B\ar[dd]\ar[dl]_-g\\
A'\times\partial\Delta^n\ar[rr]\ar[dd]&& B'\ar[dd]\\
&A\times\Delta^n\ar[dl]_{f\times\id_{\partial\Delta^n}}\ar[rr]^<<<<<<<<{}|!{[r]}\hole&&C\ar[dl]^-h
\\
A'\times\Delta^n\ar[rr]&&C'
}\] are pushout squares, then $h:C\to C'$ is a weak homotopy equivalence.
\end{lemma}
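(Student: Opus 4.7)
The plan is to reduce the statement to the classical gluing lemma by exploiting the fact that the horizontal maps in each pushout are closed Hurewicz cofibrations. First, I would observe that the inclusion $\partial\Delta^n \hookrightarrow \Delta^n$ is a closed cofibration (being a relative CW inclusion), and that this property is preserved under product with any space. Hence both $A \times \partial\Delta^n \to A \times \Delta^n$ and $A' \times \partial\Delta^n \to A' \times \Delta^n$ are closed Hurewicz cofibrations. Consequently, both the front and back faces of the cube are in fact homotopy pushout squares, not merely set-theoretic pushouts, so the information encoded by $C$ and $C'$ is homotopically well-behaved.

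Next, I would verify that each of the three ``corner'' vertical maps---$f\times\id_{\partial\Delta^n}$, $f\times\id_{\Delta^n}$, and $g$---is a weak homotopy equivalence. The last is a hypothesis. For the other two, the point is that taking the product with a space having the homotopy type of a CW complex (here, the finite CW complexes $\partial\Delta^n$ and $\Delta^n$) preserves weak homotopy equivalences; one can see this by reducing to the case of a single cell and invoking the long exact sequence of a fibration, or by taking CW approximations to $A$ and $A'$ and using that the product of CW complexes is a CW complex on which weak equivalence coincides with homotopy equivalence.

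With both pushouts identified as homotopy pushouts and the three outer corners related by weak equivalences, the conclusion follows from the gluing lemma for weak equivalences along closed cofibrations: a map between homotopy pushout squares that is a weak equivalence on the three non-pushout corners is a weak equivalence on the fourth. I would either cite this directly (it is classical, going back to Mather, and appears in, e.g., May's treatment of $h$-cofibrations) or, if a self-contained argument is desired, prove it by factoring $C \to C'$ through an intermediate pushout built from $A' \times \partial\Delta^n \leftarrow A \times \partial\Delta^n \to B$ and applying homotopy invariance of pushouts along closed cofibrations twice.

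The main obstacle is the last step: without regularity assumptions on $A$, $A'$, $B$, $B'$, the Quillen-model-theoretic statement of the gluing lemma is not immediately applicable, and one must rely on the Hurewicz-cofibration version. The key technical input---that pushing out along a closed cofibration preserves weak equivalences---is classical but worth isolating as a separate lemma; once this is in hand, the proof of Lemma \ref{lem:pushout invariant} is essentially formal.
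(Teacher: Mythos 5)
Your proof is correct, but it takes a genuinely different route from the paper's. The paper does not pass through the gluing lemma at all: it covers $C$ by two explicit open sets, $U_1 = A\times D$ (a product with a Euclidean ball around the barycenter of $\Delta^n$) and $U_2 = B\amalg_{A\times\partial\Delta^n}(A\times P)$ (the complement of the barycenter), with $U_1' , U_2'$ defined analogously in $C'$; then one checks directly that $h^{-1}(U_j')=U_j$ and that $h|_{U_1}$, $h|_{U_2}$, $h|_{U_1\cap U_2}$ are weak equivalences by projecting to $f$, deformation-retracting to $g$, and recognizing $f\times\id_{D\cap P}$, respectively; finally one applies the local-to-global Mayer--Vietoris principle (Proposition~\ref{prop:mayer-vietoris}) already established in this section. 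Your approach instead appeals to the gluing lemma for weak homotopy equivalences along closed Hurewicz cofibrations. That result is true, but it is worth flagging that it sits awkwardly between the two standard model structures on $\Top$: the Str{\o}m gluing lemma handles \emph{homotopy} equivalences, and the Quillen gluing lemma (left properness) handles weak equivalences but asks for \emph{Quillen} cofibrations, which $A\times\partial\Delta^n\to A\times\Delta^n$ need not be for arbitrary $A$. So the ``hybrid'' statement you need --- pushout along an $h$-cofibration preserves weak homotopy equivalences, in full generality --- is itself a nontrivial theorem, and the standard proofs of it proceed by exactly the kind of open-cover / Mayer--Vietoris argument the paper gives directly. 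In short: your route is valid and cleaner to state, but it defers the real content to a black box whose proof essentially replicates the paper's; the paper's hands-on covering argument has the virtue of being self-contained given the machinery already available at this point in the text, which is precisely why the authors stress that the statement ``holds in complete generality'' without regularity hypotheses.
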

\begin{proof}
We cover $C$ by two open sets, the first being $U_1\times A\times D$, where $D\subseteq\mathring{\Delta}^n$ is a Euclidean neighborhood of the barycenter, and the second $U_2=B\coprod_{A\times\partial\Delta^n}(A\times P)$, where $P\subseteq \Delta^n$ is the complement of the barycenter. Similarly, we cover $C'$ by $U_1'$ and $U_2'$. Clearly, $h^{-1}(U_j')=U_j$ for $j\in\{0,1\}$. Consider the commuting diagrams \[\xymatrix{
U_1\ar[d]_-{h|_{U_1}}\ar[r]&A\ar[d]^-f&& U_2\ar[d]_-{h|_{U_2}}&B\ar[l]\ar[d]^-g&& U_1\cap U_2\ar[d]_-{h|_{U_1\cap U_2}}&A\times(D\cap P)\ar@{=}[l]_-{\simeq}\ar[d]^-{f\times \id_{D\cap P}}\\
U_1'\ar[r]&A'&& U_2'&\ar[l]B'&&U_1'\cap U_2'\ar@{=}[r]^-{\simeq}&A'\times(D\cap P),
}\] where the horizontal arrows in the leftmost diagram are the projections onto the first factor, and the horizontal arrows in the middle idagram are induced by the inclusion $\partial\Delta^n\subseteq P$. Both horizontal arrows in the leftmost diagram are homotopy equivalences, and $f$ is a weak homotopy equivalence by assumption; both horizontal arrows in the middle diagram are inclusions of deformation retracts, and $g$ is a weak homotopy equivalence by assumption; and $f\times\id_{D\cap P}$ is a weak homotopy equivalence by assumption. Thus, by two-out-of-three, all three restrictions of $h$ are weak homotopy equivalences, so $h$ itself is a weak homotopy equivalence.
\end{proof}

In verifying that elements in the homotopy groups of $|\op X|$ are all captured by skeleta, we must be assured that the inclusions among skeleta are not too pathological. This assurance takes the form of a relative separation axiom.

\begin{definition}
A subspace $A\subseteq B$ is \emph{relatively $T_1$} if any open set $U\subseteq A$ may be separated from any point $b\in B\setminus U$ by an open set $U\subseteq V\subseteq B$. An inclusion map is \emph{relatively $T_1$} if its image is so.
\end{definition}

This terminology is motivated by the observation that a space is $T_1$ if and only if each of its points is relatively $T_1$. Since finite intersections of open sets are open, we have the following immediate consequence:

\begin{lemma}
If $A\subseteq B$ is relatively $T_1$, then any open set $U\subseteq A$ may be separated from any finite subset of $B\setminus U$ by an open set $U\subseteq V\subseteq B$. 
\end{lemma}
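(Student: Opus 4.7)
The plan is to reduce the finite case to the singleton case given in the definition, using only that a finite intersection of open sets is open. Given an open set $U\subseteq A$ and a finite subset $\{b_1,\ldots,b_k\}\subseteq B\setminus U$, I would for each index $i$ invoke the definition of relatively $T_1$ to produce an open set $U\subseteq V_i\subseteq B$ with $b_i\notin V_i$, and then set $V=\bigcap_{i=1}^k V_i$.

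The verification is then immediate: $V$ is open as a finite intersection of open subsets of $B$; $U\subseteq V_i$ for each $i$ and hence $U\subseteq V$; and $b_i\notin V_i\supseteq V$ for each $i$, so $V$ is disjoint from $\{b_1,\ldots,b_k\}$. Since the argument is a one-line induction (or, equivalently, a direct finite intersection), there is essentially no obstacle; the only thing to notice is that the relatively $T_1$ property as stated in the definition allows the separating open $V_i$ to depend on the point $b_i$, which is exactly what permits us to form the intersection afterward. Given the brevity, I would present it as a single short paragraph rather than a formal induction.
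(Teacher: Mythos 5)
Your proof is correct and is exactly the argument the paper has in mind: the paper does not write out a proof but prefaces the lemma with the remark ``Since finite intersections of open sets are open, we have the following immediate consequence,'' which is precisely the one-line reduction you carry out by intersecting the $V_i$.
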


The importance of this notion for our purposes is the following result.

\begin{proposition}\label{prop:factor through colimit}
Let $Y_i\subseteq Y_{i+1}$ be a relatively $T_1$ inclusion for $i\geq 1$. If $K$ is compact, then any map $f:K\to \colim_\mathbb{N} Y_i$ factors through the inclusion of some $Y_i$. 
\end{proposition}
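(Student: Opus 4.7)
The plan is to argue by contradiction: assuming $f$ does not factor through any $Y_i$, I will build an open cover of $\colim Y_i$ so structured that no finite subcollection can cover $f(K)$, contradicting compactness. First, since $f(K) \not\subseteq Y_i$ for every $i$, I recursively extract a sequence $x_1, x_2, \ldots \in f(K)$ together with a strictly increasing sequence of indices $i_1 < i_2 < \cdots$ such that $x_j \in Y_{i_j} \setminus Y_{i_j - 1}$ (with $Y_0 := \varnothing$). These $x_j$ are automatically distinct.

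The main step is, for each $j$, to construct an open set $W_j \subseteq \colim Y_i$ with $W_j \supseteq Y_{i_j}$ (so in particular $x_1, \ldots, x_j \in W_j$) and $x_k \notin W_j$ for every $k > j$. I build $W_j$ inductively up the tower. Start with $V_j^{(i_j)} := Y_{i_j}$, which is open in itself and contains $x_1, \ldots, x_j$ but no $x_k$ with $k > j$ (since those lie at higher levels). Given $V_j^{(n)} \subseteq Y_n$ open, avoiding the finite set $F_n := \{x_k : k > j,\ i_k \le n\}$, apply the finite form of relative $T_1$ (the lemma stated immediately after the definition in the excerpt) to the inclusion $Y_n \subseteq Y_{n+1}$: since $F_{n+1}$ is a finite subset of $Y_{n+1} \setminus V_j^{(n)}$ (any new point $x_k$ with $i_k = n+1$ lies outside $Y_n$ hence outside $V_j^{(n)}$), there is an open extension $V_j^{(n+1)} \subseteq Y_{n+1}$ with $V_j^{(n)} \subseteq V_j^{(n+1)}$ and $F_{n+1} \cap V_j^{(n+1)} = \varnothing$. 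Set $W_j := \bigcup_{n \ge i_j} V_j^{(n)}$. For openness in the colimit topology, I must check $W_j \cap Y_m$ is open in $Y_m$ for every $m$: using the nesting $V_j^{(n)} \subseteq V_j^{(n+1)}$, one sees that for $n < m$ the set $V_j^{(n)}$ is contained in $V_j^{(m)}$, while for $n \ge m$ the intersection $V_j^{(n)} \cap Y_m$ is open in $Y_m$ by continuity of $Y_m \hookrightarrow Y_n$; hence $W_j \cap Y_m$ is a union of opens in $Y_m$.

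To finish, observe that $\{W_j\}_{j \ge 1}$ is an open cover of $\colim Y_i$, since $W_j \supseteq Y_{i_j}$ and the $Y_{i_j}$ exhaust the colimit. Then $\{f^{-1}(W_j)\}$ is an open cover of $K$, so by compactness there is a finite subcover indexed by $j_1, \ldots, j_N$; letting $j^\ast := \max_i j_i$, the point $x_{j^\ast + 1} \in f(K)$ must lie in some $W_{j_i}$, but $j_i \le j^\ast < j^\ast + 1$ forces $x_{j^\ast + 1} \notin W_{j_i}$ by construction, a contradiction. The delicate part I expect to be the main obstacle is the inductive construction of $W_j$: in particular, verifying openness of $W_j$ in the colimit and correctly invoking the finite relative $T_1$ property at each level (which is essential because at stage $n$ one must simultaneously avoid the growing, but always finite, collection $F_n$, rather than merely one new point).
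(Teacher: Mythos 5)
Your proof is correct and follows essentially the same strategy as the paper's: extract a sequence of points $x_j$ escaping the filtration, build for each $j$ an open set $W_j \supseteq Y_{i_j}$ avoiding all $x_k$ with $k > j$ by iterating the finite relative-$T_1$ property up the tower, and use compactness of $f(K)$ to reach a contradiction with a finite subcover. The only differences are cosmetic: you explicitly track a subsequence of indices $i_1 < i_2 < \cdots$ where the paper reindexes ``without loss of generality,'' and your verification that $W_j$ is open in the colimit topology (as a union over $n \geq m$ of the opens $V_j^{(n)} \cap Y_m$) is actually a bit more scrupulous than the paper's terse assertion that $U_j \cap Y_i = U_{ij}$.
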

\begin{proof}
If $f$ does not factor as claimed, then, without loss of generality, we may assume the existence of $x_i\in \mathrm{im}(f)\cap Y_i$ for each $i\geq1$. Recall that a subset of the colimit is open precisely when its intersection with each stage is open; thus, for each $j\geq1$, we may define an open subset $U_j\subseteq\colim_\mathbb{N} Y_i$ by the following prescription:
\begin{enumerate}
\item for $1\leq i<j$, set $U_{ij}=\varnothing$;
\item for $i=j$, set $U_{ij}=Y_j$;
\item for $i>j$, take $U_{ij}$ to be an open subset of $Y_i$ separating $U_{i-1,j}$ from the set $\{x_{j+1},\ldots, x_i\}$;
\item finally, set $U_j=\colim_\mathbb{N}U_{ij}$.
\end{enumerate}
Then $U_j\cap Y_i=U_{ij}$, so $U_j$ is an open subset the colimit, and, since $Y_j\subseteq U_{j}$, the collection $\{U_j\}_{j\in\mathbb{N}}$ is an open cover of $\colim_\mathbb{N}Y_i$. Since $K$ is compact, $\mathrm{im}(f)$ is compact, so it is contained in some finite subcover $\{U_{j_r}\}_{r=1}^N$. But, by construction, $U_{j_r}$ does not contain $x_i$ for $i>j_r$, so $\bigcup_{r=1}^N U_{j_r}$ does not contain $x_i$ for $i>\max\{j_r:1\leq r\leq N\}$, a contradiction.
\end{proof}

This fact will only be useful once we are able to identify relatively $T_1$ maps, a task that is made easier by the following observation.

\begin{lemma}\label{lem:stable under pushout}
Relatively $T_1$ inclusions are stable under finite products and pushouts along arbitrary continuous maps.
\end{lemma}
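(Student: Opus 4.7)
The lemma splits into two statements, preservation under products and under pushouts. In each case the underlying subspace inclusion is preserved by standard general topology, so the content lies in the $T_1$-style separation condition.

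For finite products it suffices by iteration to treat the binary case. Given relatively $T_1$ inclusions $A\subseteq B$ and $A'\subseteq B'$, an open $W\subseteq A\times A'$, and a point $(b,b')\in B\times B'\setminus W$, the plan is to distinguish whether $(b,b')$ lies in $A\times A'$ or not. If it does, the subspace property alone produces an open $V\subseteq B\times B'$ with $V\cap(A\times A')=W$, and this $V$ automatically misses $(b,b')$. Otherwise I would write $W$ as a union of basic rectangles $U_\alpha\times U'_\alpha$. For each $\alpha$ at least one of $b\notin U_\alpha$ or $b'\notin U'_\alpha$ holds; applying relative $T_1$ of $A\subseteq B$ in the first case produces an open $V_\alpha\subseteq B$ containing $U_\alpha$ but not $b$, giving the rectangle $V_\alpha\times B'$, and dually in the second. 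The union of these rectangles is the desired separating open.

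For pushouts, write $\iota:A\hookrightarrow B$ for the relatively $T_1$ inclusion and $\phi:A\to C$ for the attaching map, and set $P=B\cup_A C$. I would first verify that $C\to P$ is injective (since $\iota$ is injective, any chain of identifications between two points of $C$ collapses) and a subspace inclusion (given open $U\subseteq C$, the subspace property of $\iota$ produces an open $V_B\subseteq B$ with $V_B\cap A=\phi^{-1}(U)$, and then the pair $(V_B,U)$ defines an open of $P$ restricting to $U$ on $C$). For separation, fix open $U\subseteq C$ and $p\in P\setminus U$. If $p$ lies in the image of $C$, the subspace extension just constructed restricts to $U$ on $C$ and so misses $p$. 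The essential case is $p=[b]$ for $b\in B\setminus\iota(A)$, which I would handle by intersecting two separate extensions: set $W=\phi^{-1}(U)$, use the subspace property to obtain an open $V_B^0\subseteq B$ with $V_B^0\cap A=W$, use relative $T_1$ of $\iota$ to obtain an open $V_B^1\subseteq B$ with $W\subseteq V_B^1$ and $b\notin V_B^1$, and set $V_B:=V_B^0\cap V_B^1$. Then $W\subseteq V_B$, $b\notin V_B$, and $V_B\cap A=W$, so the pair $(V_B,U)$ is a well-defined open of $P$ containing $U$ and missing $p$.

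The one delicate point is the intersection construction in the pushout case. A pair $(V_B,V_C)$ defines an open of $P$ only when $\iota^{-1}(V_B)=\phi^{-1}(V_C)$, so I need not merely a separating open in $B$ but one whose trace on $A$ is exactly $W$. Relative $T_1$ supplies the separation with no trace control, while the subspace property supplies exact trace control with no separation; the intersection forces both at once. Everything else reduces to unwinding the definition of the pushout topology.
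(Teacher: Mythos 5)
Your argument is correct and follows essentially the same route as the paper's proof --- basic rectangles for the product case, and for the pushout a case split on whether the excluded point lies in the image of $C$, with the subspace property controlling one case and the relative $T_1$ condition the other --- but you are in fact more careful than the paper on two points. In the product case you correctly observe that for each rectangle $U_\alpha\times U'_\alpha$ only one coordinate need be separated, namely a factor in which the excluded point lies outside that rectangle, the other factor being filled out to all of $B$ or $B'$; the paper's own argument asserts a choice of $W_{ij}$ with $x_j\notin W_{ij}$ simultaneously for both $j$, which cannot be arranged when $x_j\in U_{ij}$ in one factor. In the pushout case you put your finger on the genuine subtlety: the pair $(V_B,U)$ determines an open subset of $P$ only when $\iota^{-1}(V_B)=\phi^{-1}(U)$, and the open supplied by relative $T_1$ alone controls separation but not its trace on $A$. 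Your intersection $V_B=V_B^0\cap V_B^1$ is exactly the right move; the paper's second case applies relative $T_1$ and then writes down the pushout-open without noting that the trace condition must be arranged. One small simplification: the preliminary case split in the product argument on whether $(b,b')$ lies in $A\times A'$ is unnecessary, since your rectangle argument already covers it uniformly.
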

\begin{proof}
For the first claim, it suffices by induction to show that $A_1\times A_2\subseteq B_1\times B_2$ is relatively $T_1$ if each $A_j\subseteq B_j$ is so. Fix an open subset $U\subseteq A_1\times A_2$ and a point $(x_1,x_2)\in B_1\times B_2\setminus U$. By the definition of the product topology, we have $U=\bigcup_{i\in I}U_{i1}\times U_{i2}$ for open subsets $U_{ij}\subseteq A_j$. By our assumption on the inclusions of the $A_j$, we may find open subsets $U_{ij}\subseteq W_{ij}\subseteq B_j$ for each $i\in I$ such that $x_j\notin W_{ij}$. Then $U\subseteq W:=\bigcup_{i\in I}W_{i1}\times W_{i2}$ is open in $B_1\times B_2$, and $(x_1,x_2)\notin W$, as desired.

For the second claim, suppose that the diagram \[\xymatrix{
A\ar[r]^-f\ar[d]_-i&Y\ar[d]\\
B\ar[r]^-g&Z
}\] is a pushout square and that $i$ is a relatively $T_1$ inclusion. Fix an open subset $U\subseteq Y$ and a point $z\in Z\setminus U$ (here, in a small abuse, we identity $Y$ with its image in $Z$, since the pushout of an inclusion is an inclusion). There are two cases to consider. 

Assume first that $z\in Y$. Since $f^{-1}(U)$ is open in $A$ and $i$ is an inclusion, there is an open subset $W\subseteq B$ with $W\cap A=f^{-1}(U)$, and $W\amalg_{f^{-1}(U)} U\subseteq Z$ is open. To see that $z$ is not contained in this subset, it suffices to show that $z\notin g(W)$, since $z\notin U$ by assumption. But $z\in Y$, and $Y\cap g(B)=f(A)$, so $Y\cap g(W)=f(W\cap A)=U$, and the claim follows. 

On the other hand, suppose that $z\notin Y$; in particular, $z=g(b)$ for a unique $b\in B$. Then $b\notin i(f^{-1}(U))$ and $f^{-1}(U)\subseteq A$ is open, so, since $i$ is relatively $T_1$, there is an open subset $i(f^{-1}(U))\subseteq W\subseteq B$ with $b\notin W$. As before, $W\coprod_{f^{-1}(U)} U$ is open in $Z$ and clearly does not contain $z$. 
\end{proof}

\begin{corollary}\label{cor:pushout is T1}
For any pushout diagram of the form\[\xymatrix{
A\times \partial \Delta^n\ar[r]\ar[d]_-{\id_A\times(\partial\Delta^n\subseteq\Delta^n)}&Y\ar[d]^-i\\
A\times \Delta^n\ar[r]& Z
}\] the inclusion $Y\to Z$ is relatively $T_1$.
\end{corollary}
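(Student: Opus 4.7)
The plan is to chain together both halves of Lemma \ref{lem:stable under pushout} to reduce the statement to a single nontrivial point-set topological fact. The diagram in the statement displays $Z$ as the pushout of $Y$ along the inclusion $A\times\partial\Delta^n\hookrightarrow A\times\Delta^n$, so by the pushout half of Lemma \ref{lem:stable under pushout} it suffices to prove that this inclusion is itself relatively $T_1$. Applying next the product half of the same lemma with $A_1=B_1=A$ and $A_2=\partial\Delta^n$, $B_2=\Delta^n$, and noting that the identity inclusion $A\subseteq A$ is trivially relatively $T_1$ (an open set $U\subseteq A$ separates itself from any $b\in A\setminus U$), the corollary reduces to the claim that $\partial\Delta^n\subseteq\Delta^n$ is relatively $T_1$.

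To handle this residual claim, I will use that $\Delta^n$ is a metric space. Given an open $U\subseteq\partial\Delta^n$ and $b\in\Delta^n\setminus U$, the plan is first to extend $U$ to an open subset $W\subseteq\Delta^n$ using the definition of the subspace topology on $\partial\Delta^n$. If $b\notin W$, take $V=W$ and we are done. Otherwise $b\in W$, and since $b\notin U$ and $U=W\cap\partial\Delta^n$, we must have $b\in W\setminus\partial\Delta^n$; in particular $b$ lies in the topological interior of $\Delta^n$. Then there is some $\epsilon>0$ with $\overline{B_\epsilon(b)}\subseteq\Delta^n\setminus\partial\Delta^n$, and $V:=W\setminus\overline{B_\epsilon(b)}$ is open in $\Delta^n$, still contains $U$ (because $U\subseteq\partial\Delta^n$ is disjoint from $\overline{B_\epsilon(b)}$), and excludes $b$.

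The main obstacle, if any, is just this final metric argument; every other step is a formal invocation of the already-established Lemma \ref{lem:stable under pushout}. The subtlety is only in observing that the hypothesis $b\notin U$ forces $b$ into the interior of $\Delta^n$ as soon as a chosen open extension $W$ of $U$ happens to contain $b$, after which the metric of $\Delta^n$ finishes the job.
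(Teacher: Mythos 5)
Your proof is correct and follows exactly the route the paper leaves implicit: the corollary is stated without proof immediately after Lemma \ref{lem:stable under pushout}, with the intended deduction being precisely your chain — reduce to $\partial\Delta^n\subseteq\Delta^n$ via the product half of the lemma (together with the trivial case $A\subseteq A$), verify that boundary inclusion directly, and then apply the pushout half. Your metric argument that $\partial\Delta^n\subseteq\Delta^n$ is relatively $T_1$ is a sound way to discharge the one nonformal step.
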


Finally, we will need the following, essentially obvious, observation.

\begin{lemma}\label{lem:disjoint weak equivalence}
If $f:W\amalg X\to Y\amalg Z$ is a weak homotopy equivalence such that $f|_W$ factors through $Y$ as a weak homotopy equivalence, then $f|_X$ factors through $Z$ as a weak homotopy equivalence.
\end{lemma}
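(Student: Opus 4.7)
The proof is quite short and hinges on analyzing path components. My plan is as follows.

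First, I will show that $f(X) \subseteq Z$, i.e.\ that $f|_X$ factors through $Z$ at the point-set level. The map $\pi_0(f):\pi_0(W)\sqcup\pi_0(X)\to \pi_0(Y)\sqcup\pi_0(Z)$ is a bijection, since $f$ is a weak equivalence. By hypothesis, $\pi_0(f)$ restricted to $\pi_0(W)$ lands in $\pi_0(Y)$ and is a bijection onto it (this is the $n=0$ piece of the assertion that $f|_W:W\to Y$ is a weak equivalence). Consequently, the restriction of $\pi_0(f)$ to $\pi_0(X)$ must land in $\pi_0(Z)$ and be a bijection onto $\pi_0(Z)$. Since every path component of $X$ is mapped by $f$ into a single path component of $Y\sqcup Z$, and since those target path components all sit inside $Z$, it follows that $f(X)\subseteq Z$. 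Call the resulting map $g:=f|_X:X\to Z$.

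Next, I will verify that $g$ is a weak homotopy equivalence. The $\pi_0$ statement has just been established. For higher homotopy groups, fix $x_0\in X$ and $n\geq 1$. Since $X$ is an open and closed subspace of $W\sqcup X$, and similarly $Z\subseteq Y\sqcup Z$, the inclusions induce isomorphisms
\[
\pi_n(X,x_0)\xrightarrow{\cong}\pi_n(W\sqcup X, x_0),\qquad \pi_n(Z,g(x_0))\xrightarrow{\cong}\pi_n(Y\sqcup Z, f(x_0)),
\]
compatible with the maps induced by $g$ and $f$ respectively. Since $f$ is a weak equivalence, the right-hand map induced by $f$ is an isomorphism, hence so is $\pi_n(g)$. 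Varying $x_0$ over a set of representatives for $\pi_0(X)$ (which we have already matched bijectively with $\pi_0(Z)$) shows that $g$ induces isomorphisms on all homotopy groups at all basepoints, so $g$ is a weak equivalence.

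There is no real obstacle here; the only subtle point is the first step, where one must use that the hypothesis "$f|_W$ factors through $Y$ as a weak homotopy equivalence" gives not just a bijection $\pi_0(W)\cong \pi_0(Y)$ but the correct one to cancel in the $\pi_0$ bookkeeping. Once that is in hand, the higher $\pi_n$ statement is automatic because homotopy groups of a disjoint union are computed one component at a time.
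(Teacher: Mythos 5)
Your proof is correct and takes essentially the same approach as the paper, which disposes of the lemma in two sentences by saying the claims follow ``after applying $\pi_0$'' (using the analogous cancellation statement for bijections of sets) and ``after applying $\pi_n$.'' You have simply supplied the details that the paper leaves as obvious; the $\pi_0$ bookkeeping and the use of clopen inclusions to reduce $\pi_n$ of a disjoint union to $\pi_n$ of a summand are exactly the intended argument.
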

\begin{proof}
The claim that $f|_X$ factors through $Z$ is obvious after applying $\pi_0$ and considering the analogous claim for bijections of sets, since $\pi_0(f)$ is a bijection. The claim that this factorization is a weak homotopy equivalence follows in the same way after applying $\pi_n$.
\end{proof}

\begin{proof}[Proof of Proposition \ref{prop:split criterion}]
Fix $N_n(\op X)\subseteq \op X_n$ and $N_n(\op Y)\subseteq\op Y$ witnessing $\op X$ and $\op Y$ as split. We claim that the restriction of $f_n$ to $N_n(\op X)$ factors through $N_n(\op Y)$ as a weak homotopy equivalence for every $n\geq0$. Having established this, it will follow by induction and Lemmas \ref{lem:split pushout} and \ref{lem:pushout invariant} above that the induced map $|\sk_n(\op X)|\to |\sk_n(\op Y)|$ is a weak homotopy equivalence for every $n\geq0$. 

To establish the claim, we proceed by induction on $n$, the base case following from our assumption, since $N_0(\op X)=\op X_0$ and similarly for $\op Y$. For the induction step, we note that the inductive assumption implies that the dashed filler in the diagram \[\xymatrix{
N_m(\op X)\ar@{-->}[d]\ar[r]&\op X_m\ar[d]^-{f_m}\ar[r]^-{s}&\op X_n\ar[d]^-{f_n}\\
N_m(\op Y)\ar[r]&\op Y_m\ar[r]^{s}&\op Y_n
}\] exists and is a weak homotopy equivalence for every $m<n$ and every degeneracy $s$. Thus, the dashed filler in the diagram \[\xymatrix{
\displaystyle\coprod_{[n]\twoheadrightarrow[m]\neq[n]}N_m(\op X)\ar[r]\ar@{-->}[d]&\op X_n\ar[d]^-{f_n}\\
\displaystyle\coprod_{[n]\twoheadrightarrow[m]\neq[n]}N_m(\op Y)\ar[r]&\op Y_n
}\] exists and is a weak homotopy equivalence. Since the righthand map is also a weak homotopy equivalence, the claim follows from Lemma \ref{lem:disjoint weak equivalence}.

Now, from Lemma \ref{lem:split pushout} and Corollary \ref{cor:pushout is T1}, it follows by induction that each of the inclusions $|\sk_n(\op X)|\to |\sk_{n+1}(\op X)|$ is relatively $T_1$, and similarly for $\op Y$. Since $|\op X|=\colim_\mathbb{N}|\sk_n(\op X)|$, and likewise for $\op Y$, we conclude from Proposition \ref{prop:factor through colimit} that any map $(D^{m}, S^{m-1})\to (|\op Y|,|\op X|)$ factors as in the solid commuting diagram \[\xymatrix{
S^{m-1}\ar[d]\ar[r]&|\sk_n(\op X)|\ar[d]\ar[r]&|\op X|\ar[d]^-{|f|}\ar[d]\\
D^m\ar@{-->}[ur]\ar[r]&|\sk_n(\op Y)|\ar[r]&|\op Y|.
}\] We have already shown the middle arrow to be a weak homotopy equivalence, so the dashed filler exists making the upper triangle commute and the lower triangle commute up to homotopy relative to $S^{m-1}$. Thus, $\pi_m(f)=0$ for every $m\geq0$, and the claim follows.
\end{proof}

\subsection{Some examples}\label{section:loose ends}

In this section, we identify a few classes of split simplicial spaces used in the main text. We begin by noting two easy consequences of the simplicial identities. First, for any simplicial space $\op X$, each degeneracy $s_i:\op X_{n-1}\to \op X_n$ is injective. Second, the intersection $s_i(\op X_{n-1})\cap s_j(\op X_{n-1})$ is contained in the union of the images of the various iterated degeneracies $\op X_{n-2}\to \op X_n$.

\begin{lemma}\label{lem:covering map split}
Let $f:\op X\to \op Y$ be a degreewise covering map. If $\op Y$ is split, then $\op X$ is split.
\end{lemma}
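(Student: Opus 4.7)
The plan is to define $N_m(\op X) \subseteq \op X_m$ as the set of non-degenerate $m$-simplices of the underlying simplicial set of $\op X$, i.e., those not in the image of any degeneracy $s_\phi: \op X_k \to \op X_m$ with $k < m$, and to verify that these subspaces witness $\op X$ as split. The Eilenberg--Zilber lemma, applied to the underlying simplicial set of $\op X$, immediately gives a set-theoretic bijection
\[
\Phi_n: \coprod_{[n]\twoheadrightarrow[m]} N_m(\op X) \xrightarrow{\,\simeq\,} \op X_n
\]
for each $n$, and each restriction $s_\phi|_{N_m(\op X)}$ is a continuous injection with continuous one-sided inverse $d_\psi|_{s_\phi(N_m(\op X))}$ (for any right inverse $\psi$ of $\phi$). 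So the essential task is topological: upgrading $\Phi_n$ from a set-theoretic bijection to a homeomorphism.

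To do this, I would leverage the splitting of $\op Y$ together with the covering property of $f$ to decompose $\op X_n$ into clopen pieces. Since $\op Y$ is split, each $s_\phi(N_m(\op Y)) \subseteq \op Y_n$ is clopen, and pulling back along the covering map $f_n$ yields a clopen decomposition $\op X_n = \bigsqcup_\phi Z_n^\phi$ with $Z_n^\phi := f_n^{-1}(s_\phi(N_m(\op Y)))$. For each such $\phi$ with right inverse $\psi$, the naturality square
\[
\begin{array}{ccc}
Z_n^\phi & \xrightarrow{\,d_\psi\,} & f_m^{-1}(N_m(\op Y)) \\
{\scriptstyle f_n}\downarrow & & \downarrow{\scriptstyle f_m} \\
s_\phi(N_m(\op Y)) & \xrightarrow[\cong]{\,d_\psi\,} & N_m(\op Y)
\end{array}
\]
exhibits $d_\psi|_{Z_n^\phi}$ as a map between covering spaces of $N_m(\op Y)$ that admits the continuous section $s_\phi|_{f_m^{-1}(N_m(\op Y))}$. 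Hence $s_\phi(f_m^{-1}(N_m(\op Y)))$ is a union of sheets, and in particular a clopen subspace of $Z_n^\phi$.

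Iterating this observation over all factorizations of surjections, I would identify $N_m(\op X)$ and each image $s_\phi(N_m(\op X))$ as finite unions of clopen subspaces obtained by repeatedly subtracting off the clopen pieces arising from degeneracies of lower-dimensional non-degenerate simplices. Once each summand of $\Phi_n$ is known to map homeomorphically onto a clopen subspace of $\op X_n$, and the images are pairwise disjoint by Eilenberg--Zilber, bijectivity forces $\Phi_n$ to be a homeomorphism.

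The main obstacle is that $N_m(\op X)$ is \emph{not} simply $f_m^{-1}(N_m(\op Y))$: a non-degenerate simplex of $\op X$ can lie over a degenerate simplex of $\op Y$, corresponding to an ``extra sheet'' of the covering $f_n: Z_n^\phi \to s_\phi(N_m(\op Y))$ not hit by the section $s_\phi$. Carefully bookkeeping these extra sheets across all shapes $\phi$ and verifying that the inductive decomposition of each $\op X_m$ into clopens aligns with the set-theoretic definition of non-degeneracy is the crux of the argument; the covering hypothesis enters precisely here to ensure that section images are clopen rather than merely closed or open.
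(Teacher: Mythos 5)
Your plan shares the same essential insight as the paper's proof — the covering hypothesis on $f$ together with the split structure of $\op Y$ furnish clopen decompositions of each $\op X_n$ — but packages it differently and stops short at the iteration you flag as the crux. There is also a wrinkle worth noting: a given $s_\phi(N_m(\op X))$ is \emph{not} contained in a single $Z_n^\phi$; it disperses as $\bigsqcup_{\phi''}s_\phi\bigl(N_m(\op X)\cap Z_m^{\phi''}\bigr)$, with the $\phi''$-piece landing in $Z_n^{\phi''\circ\phi}$. So the bookkeeping you describe is a double induction over degree and over factorizations of surjections, and to show each fragment is open you would ultimately be proving that $s_\phi$ is an open map into $\op X_n$ — which is exactly what the paper proves directly, in one step.

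The paper's route is shorter. It defines $N_n(\op X)$ \emph{recursively} as the complement in $\op X_n$ of the image of the combined degeneracy map $s:\coprod_{\pi:[n]\twoheadrightarrow[m],\,\pi\neq\id}N_m(\op X)\to\op X_n$ (by Eilenberg--Zilber this agrees with your set of non-degenerate simplices), then proves that each $\pi^*:\op X_m\to\op X_n$ is a local homeomorphism by inspecting the naturality square
\[\xymatrix{
\op X_n\ar[d]_-{f_n}& \op X_m\ar[l]_-{\pi^*}\ar[d]^-{f_m}\\
\op Y_n& \op Y_m\ar[l]^-{\pi^*}
}\]
Since $\op Y$ is split, the bottom $\pi^*$ is an open embedding; since $f_n$ is a covering map, a sufficiently small neighborhood $U$ of a point in $\op X_m$ has $\pi^*(U)$ contained in a single sheet $V$ of $f_n$, so that $\pi^*|_U=(f_n|_V)^{-1}\circ\pi^*\circ f_m|_U$ is a composite of open embeddings. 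Combined with injectivity of $s$ (from the simplicial identities) and the fact that degeneracies are split injections, hence closed embeddings, $\im(s)$ is clopen and its complement is $N_n(\op X)$. This sidesteps the $Z_n^\phi$ partition and the ``extra sheets'' entirely: one never has to sort out which non-degenerate simplices of $\op X$ lie over degenerate simplices of $\op Y$.
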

\begin{proof}
Setting $N_0(\op X)=\op X_0$, assume for $m<n$ that $N_m(\op X)\subseteq \op X_m$ has been constructed with the desired property. The degeneracies of $\op X$ induce a map \[s:\coprod_{\pi:[n]\twoheadrightarrow[m]\neq [n]}N_m(\op X)\to \op X_n,\] and the observations above imply that $s$ is injective. We claim that $s$ is a local homeomorphism and hence the inclusion of a collection of connected components, which is enough to imply the claim, for in this case we may take $N_n(\op X)=\op X_n\setminus\mathrm{im}(s)$. To see that $s$ is a local homeomorphism, we may restrict our attention to the component indexed by $\pi$, in which case it suffices to show that the top map in the commuting diagram \[\xymatrix{
\op X_n\ar[d]_-{f_n}& \op X_m\ar[l]_-{\pi^*}\ar[d]^-{f_m}\\
\op Y_n& \op Y_m\ar[l]^-{\pi^*}
}\] is a local homeomorphism. Since $\op Y$ is split, the bottom map is a local homeomorphism, and the righthand map, as a covering map, is a local homeomorphism. Thus, by commutativity, given $x\in \op X_m$, there is a connected open neighborhood $x\in U$ such that $(f_n\circ\pi^*)|_{U}$ is a homeomorphism onto its image. Since $U$ is connected and $f_n$ is a covering map, there is a subspace $V\subseteq \op X_n$ such that $\pi^*(U)\subseteq V$ and $f_n|_V$ is a homeomorphism onto its image. It follows that $\pi^*|_{U}$ is a homeomorphism onto its image, as desired. 
\end{proof}

\begin{corollary}\label{cor:hypercovers are split}
If $\op X$ is a hypercover, then $\op X$ is split.
\end{corollary}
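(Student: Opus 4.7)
The plan is to realize $\op X$ as the limit of its coskeleta and apply Lemma \ref{lem:covering map split} inductively. The base of the tower is $\cosk_{-1}(\op X)$, the constant augmented simplicial space with value $\op X_{-1}$; this is trivially split, with $N_0 = \op X_{-1}$ and all higher non-degenerate parts empty.

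The key observation is that $\cosk_n(\op X) \to \cosk_{n-1}(\op X)$ is a degreewise covering map for every $n \geq 0$. By Lemma \ref{lem:coskeleta}(2), $\cosk_n(\op X)$ is a hypercover of height at most $n$, so applying Lemma \ref{lem:coskeleta}(3) to it yields the covering property of the canonical map $\cosk_n(\op X) \to \cosk_{n-1}(\cosk_n(\op X)) = \cosk_{n-1}(\op X)$. Invoking Lemma \ref{lem:covering map split} inductively then shows that $\cosk_n(\op X)$ is split for every $n \geq -1$.

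It remains to transfer the splittings of the coskeleta back to $\op X$. The inductive construction in the proof of Lemma \ref{lem:covering map split} is natural in the following sense: the non-degenerate part $N_m^{(n)} \subseteq \cosk_n(\op X)_m$ produced at stage $m$ depends only on the simplicial structure in degrees $\leq m$ together with the previously constructed $N_{m'}^{(n)}$ for $m' < m$, all of which agree between $\cosk_n(\op X)$ and $\cosk_{n-1}(\op X)$ whenever $m < n$. Hence $N_m^{(n)}$ stabilizes in $n$ once $n \geq m$, and we set
\[
N_m(\op X) := N_m^{(m)} \subseteq \cosk_m(\op X)_m = \op X_m.
\]
The splitting condition for $\op X$ in simplicial degree $k$ involves only these non-degenerate parts through degree $k$ and the simplicial structure of $\op X$ up to degree $k$, all of which coincide with the corresponding data for $\cosk_k(\op X)$; the splitness of $\cosk_k(\op X)$ therefore transfers directly to $\op X$.

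The main technical point is the compatibility step: confirming that the construction in Lemma \ref{lem:covering map split} is genuinely local in the simplicial direction, so that the splittings of successive coskeleta can be glued into a single splitting of $\op X$. This is transparent once one inspects the proof of that lemma—at stage $m$ the non-degenerate piece is produced as the complement of an image of degeneracy maps emanating from the $N_{m'}$ with $m' < m$—but it is the one point requiring genuine verification beyond mere citation of the previous results.
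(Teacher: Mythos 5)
Your proof is correct and is essentially the same argument as the paper's: both are inductions up the tower of coskeleta, driven by Lemma \ref{lem:covering map split} to transfer splittings across the degreewise covering maps $\cosk_n(\op X) \to \cosk_{n-1}(\op X)$ supplied by Lemma \ref{lem:coskeleta}, together with the observation that the splitting condition in degree $m$ depends only on the $m$-truncation. The one genuine improvement in your version is that starting the induction at $\cosk_{-1}(\op X)$ (trivially split) spares you the paper's explicit base case identifying the non-degenerate part of a \v{C}ech cover; the tradeoff is that the paper reduces to bounded hypercovers at the outset while you carry out the same reduction at the end, in the "transfer back" step, which you correctly identify as the one point of the argument requiring care rather than citation.
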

\begin{proof}
Since the condition of being split is a condition that, in each degree, involves only a truncation of the simplicial space in question, and since a hypercover $\op X$ coincides with some bounded hypercover through any simplicial degree, it suffices to assume that $\op X$ is bounded of height at most $N$. We proceed by induction on $N$, the base case being the observation that \v{C}ech covers are split, with degree $n$ non-degenerate part given by the union of the $(n+1)$-fold intersections of cover elements in which no two adjacent indices coincide. The inductive step follows from Lemma \ref{lem:covering map split}, since $\op X\to \cosk_{N-1}(\op X)$ is a degreewise covering map with split target by Lemma \ref{lem:coskeleta} and induction.
\end{proof}

We also have the following simple observation.

\begin{lemma}\label{lem:split levelwise}
If $f:\op X\to \op Y$ is a levelwise weak equivalence and $\op Y$ is split, then $\op X$ is split.
\end{lemma}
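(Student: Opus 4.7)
The plan is to take $N_m(\op X) := f_m^{-1}(N_m(\op Y))$ as the non-degenerate part in each degree. Since $\op Y$ is split, each $N_m(\op Y)$ is a clopen summand of $\op Y_m$ (the map $\coprod_\pi N_m(\op Y) \to \op Y_n$ is a homeomorphism, so in particular each component is clopen), and therefore $N_m(\op X)$ is clopen in $\op X_m$. The simplicial map $f$ commutes with degeneracies, so for each surjection $\pi:[n]\twoheadrightarrow[m]$ the restriction of $\pi^*:\op X_m \to \op X_n$ to $N_m(\op X)$ lands in $f_n^{-1}(\pi^*(N_m(\op Y)))$. Assembling over $\pi$ gives a commutative square
\[
\xymatrix{
\coprod_\pi N_m(\op X) \ar[r]^-{\phi_n} \ar[d] & \op X_n \ar[d]^-{f_n} \\
\coprod_\pi N_m(\op Y) \ar[r]^-\cong & \op Y_n
}
\]
whose bottom arrow is the homeomorphism exhibiting $\op Y$ as split. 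The plan is to show this square is Cartesian in $\Top$, so that $\phi_n$ is a pullback of a homeomorphism and therefore itself a homeomorphism.

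Injectivity of the comparison map from $\coprod_\pi N_m(\op X)$ to $\op X_n \times_{\op Y_n}(\coprod_\pi N_m(\op Y))$ rests on a section argument. Any surjection $\pi:[n]\twoheadrightarrow [m]$ admits a face-map section $\pi_*$ with $\pi\pi_* = \id_{[m]}$, giving $\pi_*^*\pi^* = \id$ on $\op X_m$; so each $\pi^*$ is a split monomorphism. Moreover, distinct $\pi$'s contribute disjoint pieces, because if $\pi_1^*(y_1) = \pi_2^*(y_2)$ with $y_i \in N_{m_i}(\op X)$ then applying $f_n$ gives $\pi_1^*(f_{m_1}(y_1)) = \pi_2^*(f_{m_2}(y_2))$ in $\op Y_n$, and the splitting of $\op Y$ forces $\pi_1 = \pi_2$ and $f_{m_1}(y_1) = f_{m_2}(y_2)$; the common $\pi^*$ being injective then yields $y_1 = y_2$.

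The main obstacle is surjectivity of the comparison map. Given $x \in \op X_n$, there is a unique $\pi$ with $f_n(x) \in \pi^*(N_m(\op Y))$, and setting $y := \pi_*^*(x)$ yields $f_m(y) = \pi_*^*\pi^*(\text{preimage}) \in N_m(\op Y)$, so $y \in N_m(\op X)$; what remains is to verify $\pi^*(y) = x$, i.e.\ that the idempotent $(\pi_*\pi)^* : \op X_n \to \op X_n$ fixes $x$. This identity manifestly holds on $\pi^*(N_m(\op Y)) \subseteq \op Y_n$ (this being a direct consequence of the splitting of $\op Y$), and the hard part of the proof is to transfer it to the clopen preimage $f_n^{-1}(\pi^*(N_m(\op Y))) \subseteq \op X_n$. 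This is where the levelwise weak-equivalence hypothesis must do essential work, going beyond the fact that $f$ merely respects degeneracies: the plan is to package $(\pi_*\pi)^*$ as a retraction of $f_n^{-1}(\pi^*(N_m(\op Y)))$ onto $\pi^*(N_m(\op X))$, observe using two-out-of-three for weak equivalences that the inclusion $\pi^*(N_m(\op X)) \hookrightarrow f_n^{-1}(\pi^*(N_m(\op Y)))$ is a weak equivalence, and conclude from the interplay of the retraction with $f_n$ that the inclusion is actually an equality of subspaces.
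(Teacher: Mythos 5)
You have correctly identified the crux: the surjectivity step requires the idempotent $(\pi_*\pi)^*$ to fix every point of $f_n^{-1}(\pi^*(N_m(\op Y)))$, not just its image under $f_n$, and this does not follow merely from $f$ respecting degeneracies. But the proposed resolution cannot work: an inclusion that is a weak equivalence and admits a retraction need not be surjective (consider $\{0\}\hookrightarrow\mathbb{R}$, which has both), so there is no ``interplay of the retraction with $f_n$'' that upgrades the weak equivalence to an equality of subspaces.

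In fact the lemma as stated is false, and the failure is exactly at the step you flagged. Take $\op Y$ to be the constant simplicial space at a point; it is split, with $N_0(\op Y)=\pt$ and $N_m(\op Y)=\varnothing$ for $m>0$. Take $\op X=\cosk_1$ of the $1$-truncated simplicial space with $\op X_0=\pt$, $\op X_1=[0,1]$, $s_0(*)=0$, and both face maps constant. Each $\op X_n$ is a cube $[0,1]^{\binom{n+1}{2}}$ and hence contractible, so the unique map $\op X\to\op Y$ is a levelwise weak equivalence. But $\op X$ is not split: being split forces $s_0:\op X_0\to\op X_1$ to be an open embedding, and $\{0\}$ is not open in $[0,1]$. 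With your choice $N_m(\op X)=f_m^{-1}(N_m(\op Y))$, at $n=1$ one finds $s_0(N_0(\op X))=\{0\}$ while $f_1^{-1}(s_0(N_0(\op Y)))=[0,1]$: the inclusion is a weak equivalence between contractible spaces but certainly not an equality.

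Note that the paper gives no proof of this lemma at all, and on the above evidence it appears to be a careless overstatement. Its only use is in Corollary \ref{cor:horizontal cech cover split}, where it is applied to $|\op W|_h\to\op Y$ with $\op W$ the horizontal \v{C}ech nerve of a degreewise covering map over the split $\op Y$; in that setting $|\op W|_h$ carries far more structure than a generic levelwise weak equivalence would provide, and its splitting should be built directly from the splitting of $\op Y$ and the \v{C}ech structure (with $N_m(|\op W|_h)$ the realization of the \v{C}ech nerve of $f_m^{-1}(N_m(\op Y))\to N_m(\op Y)$), rather than deduced from a general statement about levelwise weak equivalences.
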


\begin{corollary}\label{cor:horizontal cech cover split}
Let $f:\op X\to \op Y$ be a degreewise covering map and $\op W$ the bisimplicial space obtained by forming the degreewise \v{C}ech nerve. If $\op Y$ is split, then $|\op W|_h$ is split.
\end{corollary}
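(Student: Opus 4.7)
The plan is to reduce the corollary directly to Lemma \ref{lem:split levelwise} by exhibiting a degreewise weak equivalence $|\op W|_h \to \op Y$ whose target is split by hypothesis. First, I would note that since $f\colon \op X \to \op Y$ is a map of simplicial spaces, the construction of the degreewise \v{C}ech nerve is functorial in the vertical direction: each vertical structure map in $\op X$ and $\op Y$ induces a compatible map on horizontal \v{C}ech nerves, so $\op W$ maps as a bisimplicial space to the bisimplicial space $c(\op Y)$ that is horizontally constant with value $\op Y$. Realizing horizontally and identifying $|c(\op Y)|_h \cong \op Y$ yields a map of vertical simplicial spaces $\varepsilon\colon |\op W|_h \to \op Y$.

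Next, I would identify $\varepsilon$ in each vertical degree. By construction, the $n$th row of $\op W$ is the \v{C}ech nerve of the covering map $f_n\colon \op X_n \to \op Y_n$, so $(|\op W|_h)_n = |\check C(f_n)|$, and $\varepsilon_n$ is the canonical augmentation. Theorem \ref{thm:cech recovery} then gives that $\varepsilon_n$ is a weak homotopy equivalence for every $n \geq 0$, so $\varepsilon$ is a degreewise weak equivalence.

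Finally, since $\op Y$ is split by hypothesis, Lemma \ref{lem:split levelwise} applied to $\varepsilon$ concludes that $|\op W|_h$ is split. There is essentially no technical obstacle; the only point requiring a moment's thought is the bisimplicial functoriality used to upgrade the row-wise augmentations to a morphism of simplicial spaces, and this is purely formal once one observes that pullbacks commute with the vertical structure maps induced from $\op X$ and $\op Y$.
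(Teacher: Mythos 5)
Your argument is precisely the one the paper intends: realize horizontally, observe via Theorem \ref{thm:cech recovery} that the augmentation $|\op W|_h\to\op Y$ is a degreewise weak equivalence, and invoke Lemma \ref{lem:split levelwise}. The difficulty, which your proposal inherits from the paper, is that Lemma \ref{lem:split levelwise} is false as stated, and consequently so is this corollary. Splitness is a point-set rather than a homotopical condition: it requires each $\op X_n$ to be a literal topological coproduct of degeneracy images of the $N_m(\op X)$, and a degreewise weak equivalence to a split $\op Y$ imposes no such decomposition on $\op X$. For a counterexample to the lemma, let $\op X$ be the \v{C}ech nerve of $\mathbb{R}\to\pt$ (so $\op X_n=\mathbb{R}^{n+1}$), mapping by a degreewise weak equivalence to the constant simplicial space at a point, which is split. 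Any splitting would force $N_0(\op X)=\op X_0=\mathbb{R}$, and then $s_0(\op X_0)\subseteq\op X_1=\mathbb{R}^2$ is the diagonal, a closed but non-open subspace, so no splitting exists.

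The corollary itself fails in a directly relevant case. Take $\op X$ to be the simplicial set with one vertex $a$ and one non-degenerate $1$-simplex $b$ (the minimal model of $S^1$), $\op Y$ the constant simplicial space at a point, and $f$ the unique map, which is a degreewise covering map since each $\op X_n$ is a finite discrete set. Then $\op Y$ is split, $(|\op W|_h)_0\cong\pt$, and $(|\op W|_h)_1\cong|\check{C}(\{a,b\}\to\pt)|\cong S^\infty$, and the vertical degeneracy $\pt\to S^\infty$ is the inclusion of a point, which is not open; so $|\op W|_h$ is not split. The situation can be repaired by weakening ``split'' throughout to a degreewise-homotopy-invariant cofibrancy condition, requiring that the map from the union of degeneracy images into $\op X_n$ be a cofibration (the ``good'' or ``proper'' hypothesis of Segal and May), under which the analogue of Proposition \ref{prop:split criterion} still holds; note that $\pt\to S^\infty$ is a cofibration. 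But as written, your reduction to Lemma \ref{lem:split levelwise}, though faithful to the paper, does not yield a correct proof, because that lemma is not available.
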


Finally, we have the following observation, which is immediate from the fact that the singular functor and geometric realization each preserve coproducts.

\begin{lemma}\label{lem:realization split}
If $\op X$ is split, then the simplicial space $|\mathrm{Sing}(\op X)|$, obtained by applying the singular functor and geometric realization in each degree, is split.
\end{lemma}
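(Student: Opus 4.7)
The plan is to carry out the proof essentially by inspection, using the coproduct-preservation property flagged in the statement. The candidate for the non-degenerate part in simplicial degree $m$ is the obvious one: define
\[
N_m(|\mathrm{Sing}(\op X)|) := |\mathrm{Sing}(N_m(\op X))|,
\]
where $N_m(\op X) \subseteq \op X_m$ is the data of the splitting of $\op X$. I then need to verify that the map
\[
\coprod_{[n]\twoheadrightarrow [m]} |\mathrm{Sing}(N_m(\op X))| \longrightarrow |\mathrm{Sing}(\op X_n)|
\]
induced by the degeneracies of $|\mathrm{Sing}(\op X)|$ is a homeomorphism.

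The key observations are that both $\mathrm{Sing}$ and geometric realization preserve coproducts. For $\mathrm{Sing}$ this is not formal from being a right adjoint, but it follows from the fact that each standard simplex $\Delta^n$ is connected, so any continuous map $\Delta^n \to \coprod_i Y_i$ factors through exactly one summand; this gives a natural bijection $\mathrm{Sing}(\coprod_i Y_i)_n \cong \coprod_i \mathrm{Sing}(Y_i)_n$, which is manifestly simplicial. For geometric realization, coproduct preservation is automatic, since $|-|$ is a left adjoint. Consequently the composite $|\mathrm{Sing}(-)|$ preserves coproducts.

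Now apply $|\mathrm{Sing}(-)|$ to the defining splitting homeomorphism $\coprod_{[n]\twoheadrightarrow [m]} N_m(\op X) \xrightarrow{\simeq} \op X_n$. By coproduct preservation the result is a homeomorphism
\[
\coprod_{[n]\twoheadrightarrow [m]} |\mathrm{Sing}(N_m(\op X))| \xrightarrow{\simeq} |\mathrm{Sing}(\op X_n)|,
\]
and by the naturality of $|\mathrm{Sing}(-)|$ this homeomorphism is indeed induced by the degeneracies of the simplicial space $|\mathrm{Sing}(\op X)|$, which are by definition the images under $|\mathrm{Sing}(-)|$ of the degeneracies of $\op X$. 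There is no real obstacle here: the whole content is the (non-formal but elementary) verification that $\mathrm{Sing}$ commutes with coproducts via connectedness of $\Delta^n$, after which the result is a one-line consequence of functoriality.
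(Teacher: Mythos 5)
Your proof is correct and matches the paper's approach exactly: the paper states the lemma is ``immediate from the fact that the singular functor and geometric realization each preserve coproducts,'' and you have supplied precisely the verification of that fact (including the non-formal but elementary point that $\mathrm{Sing}$ preserves coproducts via connectedness of $\Delta^n$) and applied it to the splitting homeomorphism.
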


\section{Homotopy colimits}\label{section:homotopy colimits}

\subsection{Bar construction}
Recall that, if $M$ and $N$ are differential graded modules over a ring $R$, then the homology of the relative tensor product $M\otimes_R N$ may fail to be invariant under quasi-isomorphisms in the factors. This defect can be eliminated by resolving $M$, say, by nice $R$-modules before computing the tensor product, and a canonical choice of such a resolution with free entries is given by the \emph{bar complex} \[\cdots\to M\otimes R^{\otimes n+1}\to \cdots\to M\otimes R\to M,\] where the differentials are defined as alternating sums of maps defined in terms of the multiplication in $R$ and the module structure of $M$. 

In order to apply this intuition in more general contexts, we note that the bar complex arises from a simplicial $R$-module whose construction depended only on the existence of a free/forgetful adjunction relating differential graded $R$-modules to chain complexes. The corresponding simplicial \emph{monadic bar construction} is available whenever such adjunction data is present.

In our context, the role of the $R$-module $M$ is played by a functor $F:I\to \Top$. Since a ring is roughly like a category with one object, we guess that the corresponding forgetful functor should be the functor that only remembers the values of a functor on objects. In other words, writing $I_0$ for the discrete category with the same objects as $I$, we should consider the forgetful functor \[\iota^*:\Top^I\to \Top^{I_0}\] given by restriction along the inclusion $\iota:I_0\to I$. This functor does admit a left adjoint $\iota_!$, given by left Kan extension. The standard formula for the left Kan extension gives us \begin{align*}
\iota_!\iota^*F(i)=\colim\left((\iota\downarrow i)\to I_0\xrightarrow{\iota} I\xrightarrow{F} \Top\right)\cong \coprod_{i_1\to i}F(i_1),
\end{align*} and, more generally, \[(\iota_!\iota^*)^{n+1}F(i)\cong\coprod_{i_{n+1}\to \cdots \to i_1\to i}F(i_{n+1}).\] We obtain in this way a simplicial functor, whose geometric realization maps to $F$, which we wish to think of as a kind of resolution of $F$. Our guess, then, is that the homotopy colimit should be the colimit of this geometric realization. Since colimits commute with geometric realization, and since \[\colim_I(\iota_!\iota^*)^{n+1} F\cong \colim_{I_0}\iota^*(\iota_!\iota^*)^nF\cong \coprod_{i_0\in I}(\iota_!\iota^*)^nF(i_0)\cong \coprod_{i_n\to \cdots \to i_0}F(i_n),\] this guess may be rephrased in terms of the following object.

\begin{definition}
Let $F:I\to \Top$ be a functor. The \emph{(simplicial) bar construction} on $F$ is the simplicial space $\mathrm{Bar}_\bullet(F)$ with \[\mathrm{Bar}_n(F)=\coprod_{i_n\to \cdots \to i_0}F(i_n)\] and with face and degeneracy maps given by the composition and identities in $I$, respectively.
\end{definition}

\begin{hypothesis}
Let $F:I\to \Top$ be a functor. The homotopy colimit of $F$ is the space \[\hocolim_I F=|\mathrm{Bar}_\bullet(F)|.\]
\end{hypothesis}

The first check of this hypothesis is to verify the following.

\begin{proposition}
Let $F,G:I\to \Top$ be functors and $\varphi:F\to G$ a natural transformation. If $\varphi(i)$ is a weak homotopy equivalence for every $i\in I$, then $|\mathrm{Bar}_\bullet(\varphi)|$ is also a weak homotopy equivalence. 
\end{proposition}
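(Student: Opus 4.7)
The plan is to apply the splitting criterion of Proposition \ref{prop:split criterion}. Concretely, I would verify two things: first, that $\mathrm{Bar}_\bullet(\varphi)$ is a levelwise weak equivalence, and second, that both $\mathrm{Bar}_\bullet(F)$ and $\mathrm{Bar}_\bullet(G)$ are split simplicial spaces. The levelwise part is essentially immediate, since in simplicial degree $n$ the map is the coproduct
\[\coprod_{i_n\to \cdots\to i_0}\varphi(i_n):\coprod_{i_n\to \cdots\to i_0}F(i_n)\to\coprod_{i_n\to\cdots\to i_0}G(i_n),\]
and coproducts of weak equivalences are weak equivalences.

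For the splitting, I would define the non-degenerate part $N_n(\mathrm{Bar}_\bullet(F))\subseteq \mathrm{Bar}_n(F)$ to be the subspace indexed by those composable strings $i_n\to\cdots\to i_0$ in which no arrow is an identity (with the convention that for $n=0$ every constant string qualifies, so $N_0(\mathrm{Bar}_\bullet(F))=\coprod_{i\in I}F(i)=\mathrm{Bar}_0(F)$). The key combinatorial point is that any composable string $\tau=(i_n\to\cdots\to i_0)$ admits a unique factorization: the subset $S\subseteq\{1,\dots,n\}$ of positions at which the arrow is an identity determines a unique surjection $\pi_\tau:[n]\twoheadrightarrow[m]$ (collapsing each identity), together with a unique non-degenerate string $\tau'$ of length $m$, such that the corresponding degeneracy sends $\tau'$ to $\tau$. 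Since the $F$-fiber over $\tau$ coincides with the $F$-fiber over $\tau'$, this shows that the degeneracy-induced map
\[\coprod_{[n]\twoheadrightarrow[m]}N_m(\mathrm{Bar}_\bullet(F))\longrightarrow \mathrm{Bar}_n(F)\]
is a homeomorphism. The same construction applies verbatim to $G$, and $\mathrm{Bar}_\bullet(\varphi)$ clearly respects this decomposition.

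With both pieces in place, Proposition \ref{prop:split criterion} delivers the desired conclusion. The only step that requires any real care is the bookkeeping for the unique factorization of strings into a surjection followed by a non-degenerate string; this is the Eilenberg--Zilber style lemma in disguise, and it is the main (rather mild) obstacle. Everything else is formal: the degeneracy maps in $\mathrm{Bar}_\bullet(F)$ literally insert identity morphisms, so a string lies in the image of some nontrivial composition of degeneracies if and only if it has at least one identity arrow, which pins down $N_n$ uniquely and confirms the splitting.
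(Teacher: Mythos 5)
Your proposal is correct and is exactly the paper's argument, which simply states that $\mathrm{Bar}_\bullet(F)$ is split and invokes Proposition \ref{prop:split criterion}; you have filled in the ``easy to see'' step with the standard Eilenberg--Zilber decomposition of strings into a surjection followed by a non-identity string, which is the same non-degenerate part the paper itself exhibits later in the proof of Proposition \ref{prop:bar is cofibrant}.
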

\begin{proof}
It is easy to see that the simplicial space $\mathrm{Bar}_\bullet(F)$ is split, so the claim follows from Proposition \ref{prop:split criterion}.
\end{proof}

Since there is a natural map $|\mathrm{Bar}_\bullet(F)|\to \colim_I F$ supplied by the observation that \[\colim_IF\cong \mathrm{coeq}\left(\coprod_{i_1\to i_0}F(i_1)\rightrightarrows \coprod_{i_0}F(i_0)\right),\] we may summarize our progress so far as having exhibited one homotopy invariant approximation to the colimit. On the other hand, another homotopy invariant approximation to the colimit is the constant functor with value $\varnothing$! Why should we think that our proposed construction of the homotopy colimit is any better than this functor? 

\subsection{Derived functors} We now formulate precisely what it means to be the best approximation by a homotopy invariant functor.

\begin{definition}
A \emph{category with weak equivalences} is a pair $(\op C, \weq(\op C))$ of a category and a collection of morphisms that contains the isomorphisms and has the property that if, in the commuting diagram \[\xymatrix{
A\ar[dr]\ar[rr]&&B\ar[dl]\\
&B
}\] in $\op C$, any two arrows lie in $\weq(\op C)$, then so does the third.
\end{definition}

The arrows in $\weq(\op C)$ are called \emph{weak equivalences}, and the closure property is called \emph{two-out-of-three}.

\begin{example} 
Two cases of interest are the category of topological spaces with weak homotopy equivalences and the category of chain complexes with quasi-isomorphisms.
\end{example}

\begin{example}
If $(\op C, \weq(\op C))$ is a category with weak equivalences and $I$ is a category, then the functor category $\op C^I$ is again a category with weak equivalences when equipped with the \emph{pointwise} weak equivalences, i.e., a natural transformation is a weak equivalence if and only if each of its components is so.
\end{example}

\begin{definition}
Let $(\op C, \weq(\op C))$ be a category with weak equivalences. A \emph{homotopy category} for $\op C$ is a category $\Ho(\op C)$ equipped with a functor $\gamma=\gamma_{\op C}:\op C\to \Ho(\op C)$ with the following properties.
\begin{enumerate}
\item If $f\in\weq(\op C)$, then $\gamma(f)$ is an isomorphism.
\item Any functor $F:\op C\to \op D$ sending weak equivalences in $\op C$ to isomorphisms in $\op D$ factors uniquely through $\gamma$.
\end{enumerate}
\end{definition}

If $\op C$ has a homotopy category, then it is unique up to a unique equivalence of categories, so there is no harm in referring to \emph{the} homotopy category of $\op C$.

Often it will be the case, as with the colimit functor, that one is given a functor $F$ that does not send weak equivalences to isomorphisms. In this case, one can ask for the best approximation to $F$ by a functor having this property.

\begin{definition}
Let $(\op C, \weq(\op C))$ be a category with weak equivalences and $F:\op C\to \op D$ a functor. A \emph{left derived functor} of $F$ is a functor $\mathbb{L}F:\Ho(\op C)\to \op D$ equipped with a natural transformation $\mathbb{L}F\circ\gamma\to F$ that is final among functors $T:\Ho(\op C)\to \op D$ equpped with natural transformations $T\circ \gamma\to F$.
\end{definition}

Dually, we have the notion of a \emph{right} derived functor $\mathbb{R}F$. Note that, categorically speaking, the left derived functor $\mathbb{L}F$ is the right Kan extension of $F$ along $\gamma$.

\begin{definition}
Let $(\op C,\weq(\op C))$ be a category with weak equivalences, and assume that $\op C$ admits colimits indexed by $I$. The \emph{homotopy colimit} functor for $I$-shaped diagrams, if it exists, is the left derived functor of the composite $\gamma\circ\colim_I$, as depicted in the following digram: \[\xymatrix{
\op C^I\ar[d]_-\gamma\ar[rr]^-{\colim_I}&&\op C\ar[d]^-\gamma\\
\Ho(\op C^I)\ar[rr]^-{\hocolim_I}\ar@{=>}[urr]&&\Ho(\op C)
}\]
\end{definition}

One says that $\hocolim_I$ is the \emph{total left derived functor} of $\colim_I$.

Thus, the homotopy colimit is the closest approximation to the colimit by a homotopy invariant construction.

\subsection{Model structures} Our next task is to make this definition into something useable in practice. Our approach, following \cite{Quillen:HA}, will be to impose extra structure on our category, motivated by the structure observed ``in the wild'' in the homotopy theory of spaces, enabling us to make these abstract notions concrete. It should be emphasized, however, that this structure is scaffolding, and that the fundamental objects of interest are all at the level of the bare category with weak equivalences.

\begin{definition}
Let $(\op C, \weq(\op C))$ be a category with weak equivalences, and assume that $\op C$ has small limits and colimits. A \emph{model structure} on $\op C$ is a pair $(\cof(\op C),\fib(\op C))$ of classes of morphisms in $\op C$ satisfying the following axioms.
\begin{enumerate}
\item Both $\cof(\op C)$ and $\fib(\op C)$ are closed under retracts in the arrow category $\op C^{\Delta^1}$.
\item If $i\in\cof(\op C)$ and $p\in \fib(\op C)$, then the dashed filler exists in the commuting diagram \[\xymatrix{
A\ar[d]_-i\ar[r]&B\ar[d]^-p\\
C\ar@{-->}[ur]\ar[r]&D
}\] provided either $i$ or $p$ is a weak equivalence.
\item Any morphism $f:A\to B$ in $\op C$ may be factored as
\begin{enumerate}
\item $f=p\circ i$ with $i\in \cof(\op C)\cap \weq(\op C)$ and $p\in \fib(\op C)$ and as
\item $f=q\circ j$ with $j\in \cof(\op C)$ and $q\in \fib(\op C)\cap \weq(\op C)$.
\end{enumerate}
\end{enumerate}
A \emph{model category} is a category with weak equivalences equipped with a model structure. The morphisms in $\cof(\op C)$, $\fib(\op C)$, $\cof(\op C)\cap \weq(\op C)$, and $\fib(\op C)\cap \weq(\op C)$ are the \emph{cofibrations}, the \emph{fibrations}, the \emph{trivial cofibrations}, and the \emph{trivial fibrations}, respectively. An object is \emph{cofibrant} if the unique morphism from the initial object of $\op C$ is a cofibration (resp. \emph{fibrant}, final, fibration).
\end{definition}

In the situation of (2), we say that $i$ has the \emph{left lifting property} with respect to $p$, and that $p$ has the \emph{right lifting property} with respect to $i$.

\begin{exercise}[{See \cite[7]{Hirschhorn:MCL}}]
Derive the following consequences of the model category axioms.
\begin{enumerate}
\item Weak equivalences are closed under retracts in the arrow category.
\item (Trivial) cofibrations are closed under coproducts and pushouts along arbitrary morphisms.
\item (Trivial) fibrations are closed under products and pullbacks along arbitrary morphisms.
\item Cofibrations are exactly those morphisms with the left lifting property with respect to every trivial fibration, and similarly for the other classes of morphisms.
\end{enumerate}
\end{exercise}

From the point of view of our motivating questions, the benefit of the presence of a model structure on a category with weak equivalences is that it allows for explicit control over the homotopy category and derived functors. 

\begin{proposition}[{\cite[8.3.5, 8.4.4]{Hirschhorn:MCL}}]
Let $\op C$ be a model category and $F:\op C\to \op D$ a functor.
\begin{enumerate}
\item The homotopy category $\Ho(\op C)$ exists.
\item If $F$ sends weak equivalences between cofibrant objects to isomorphisms in $\op D$, then $\mathbb{L} F$ exists, and the natural map $\mathbb{L}F(\gamma(C))\to \gamma(F(C))$ is an isomorphism for $C$ cofibrant (resp. fibrant, $\mathbb{R}F$).
\end{enumerate}
\end{proposition}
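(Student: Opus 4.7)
The plan is to build both parts on the same foundational machinery, namely cofibrant (and fibrant) replacement functors produced by the factorization axiom together with the fact that weak equivalences between cofibrant-fibrant objects admit an elementary ``homotopy'' description via cylinder objects. For part (1), I would take as the objects of $\Ho(\op C)$ the objects of $\op C$, and as morphisms from $X$ to $Y$ the homotopy classes of morphisms $QRX \to QRY$, where $Q$ and $R$ are my chosen cofibrant and fibrant replacement functors, obtained by factoring $\varnothing \to X$ as a cofibration followed by a trivial fibration, and $X \to *$ as a trivial cofibration followed by a fibration, respectively. The functor $\gamma$ is then defined on morphisms by applying $QR$ and passing to a homotopy class.

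The meat of (1) is verifying that this construction has the desired universal property. First I would introduce a cylinder object $X \sqcup X \to \mathrm{Cyl}(X) \to X$ for each cofibrant $X$ by factoring the codiagonal, and use it to define left homotopy; dually a path object yields right homotopy. Using the lifting axiom twice, one shows that on maps between cofibrant-fibrant objects the two notions coincide and define a congruence (stable under pre- and post-composition); this is the central lemma, and the verification that the homotopy relation is transitive is where I would expect to spend the most care, because it requires gluing two cylinders along $X$ and extracting a new cylinder via a lifting problem. With the congruence in hand, the composition in $\Ho(\op C)$ is well-defined. To verify the universal property, note first that $\gamma$ sends weak equivalences to isomorphisms: between cofibrant-fibrant objects this is immediate because a weak equivalence there has a homotopy inverse by a standard lifting argument (``Whitehead's theorem'' in the model-categorical setting), and the general case reduces to this by naturality of $QR$. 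Conversely, any $F : \op C \to \op D$ inverting weak equivalences must identify homotopic maps (since the two cylinder inclusions are weak equivalences with common retraction), and the factorization $F = \bar F \circ \gamma$ through the homotopy-class category is then forced and unique on objects and morphisms.

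For part (2), I would define $\mathbb{L}F(X) := F(QX)$ on objects, and on a morphism $[f] : X \to Y$ in $\Ho(\op C)$ represented by $f : QRX \to QRY$, set $\mathbb{L}F([f]) := F(Qf)$, after observing that $Qf$ is well-defined up to homotopy between cofibrant objects. The hypothesis that $F$ inverts weak equivalences between cofibrant objects, together with the homotopy congruence, gives by Ken Brown's lemma that $F$ also sends homotopies between cofibrant objects to equalities in $\op D$, so $\mathbb{L}F$ is well-defined and functorial. The natural transformation $\mathbb{L}F \circ \gamma \Rightarrow F$ is induced by the weak equivalence $QX \to X$. For universality, given any $(T, T \circ \gamma \Rightarrow F)$, one produces the unique factoring transformation $T \Rightarrow \mathbb{L}F$ by evaluating at $QX$ and using that $T(\gamma(QX \to X))$ is an isomorphism. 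When $C$ is already cofibrant, $QC \to C$ is a weak equivalence between cofibrant objects, so $F(QC) \to F(C)$ is an isomorphism in $\op D$ (and hence in $\Ho(\op D)$ if we are passing there); this is the asserted compatibility. The dual statement for $\mathbb{R}F$ is formal, replacing $Q$ by $R$ and reversing the direction of the comparison natural transformation.

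The main obstacle I anticipate is the bookkeeping needed to establish that homotopy is an equivalence relation and a congruence on morphisms between cofibrant-fibrant objects; everything else is formal manipulation with universal properties once this linchpin is in place. A secondary technical point is the proof of Ken Brown's lemma, that a functor sending trivial cofibrations between cofibrant objects to isomorphisms automatically sends all weak equivalences between cofibrant objects to isomorphisms; this is handled by factoring such a weak equivalence $f : A \to B$ through the cofibration $A \sqcup B \to \mathrm{Cyl}(B) \cup_B A$ and applying the factorization and two-out-of-three axioms.
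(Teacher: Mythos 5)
The paper states this result with a citation to Hirschhorn and gives no proof, so the comparison is with the cited text: your sketch follows essentially the same path (build $\Ho(\op C)$ from homotopy classes of maps between cofibrant-fibrant replacements, establish that homotopy is a congruence via cylinder and path object lifting arguments, then define $\mathbb{L}F$ by precomposing with cofibrant replacement), and the overall structure is sound. One misattribution is worth correcting. Ken Brown's lemma is not what shows $F$ identifies homotopic maps between cofibrant objects; that follows directly from the hypothesis. If $X$ is cofibrant, then $X\sqcup X$ is cofibrant and $X\sqcup X\to \mathrm{Cyl}(X)$ is a cofibration, so the two cylinder inclusions $i_0,i_1\colon X\to \mathrm{Cyl}(X)$ are weak equivalences between cofibrant objects sharing the retraction $p\colon \mathrm{Cyl}(X)\to X$; the hypothesis forces $F(i_0)$ and $F(i_1)$ to be two isomorphisms each inverse to $F(p)$, hence equal, and any homotopy $H$ then yields $F(f)=F(H)F(i_0)=F(H)F(i_1)=F(g)$. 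Ken Brown's lemma is the tool for the \emph{weaker} situation where $F$ is only known to invert trivial cofibrations between cofibrant objects — which is exactly the relevant hypothesis for a left Quillen functor and for the subsequent theorem on Quillen adjunctions, but not for this proposition as stated. A smaller slip: in defining $\mathbb{L}F$ on morphisms you write $F(Qf)$ for a representative $f\colon QRX\to QRY$, but $QRX$ is already cofibrant, so one should apply $F$ directly to $f$ and then transport along the isomorphism $F(QRX)\cong F(QX)$ supplied by the hypothesis applied to the weak equivalence zig-zag relating them.
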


From what we have said so far, we draw the following strategy for working with homotopy colimits:
\begin{enumerate}
\item endow the functor category $\Top^I$ with a model structure;
\item verify that, in this model structure, the total left derived functor of $\colim_I:\Top^I\to \Top$ exists; and
\item understand cofibrant replacement in this model structure.
\end{enumerate} With these steps accomplished, we may compute the homotopy colimit by cofibrantly replacing and computing the ordinary colimit.

We will be aided in accomplishing the second step of this strategy by the following result---see \cite[8.5.3, 8.5.18]{Hirschhorn:MCL}.

\begin{theorem}[Quillen]
Let $\op C$ and $\op D$ be model categories and \[\adjunct{\op C}{\op D}{F}{G}\] an adjunction. The following two conditions are equivalent: \begin{enumerate}
\item $F(\cof(\op C))\subseteq \cof(\op D)$ and $F(\cof(\op C)\cap \weq(\op C))\subseteq \cof(\op D)\cap \weq(\op D)$; 
\item $G(\fib(\op D))\subseteq \fib(\op C)$ and $G(\fib(\op D)\cap \weq(\op D))\subseteq \fib(\op C)\cap \weq(\op C)$.
\end{enumerate} Moreover, if these conditions hold, there is an adjunction \[\adjunct{\Ho(\op C)}{\Ho(\op D).}{\mathbb{L}(\gamma_{\op D}\circ F)}{\mathbb{R}(\gamma_{\op C}\circ G)}\] In particular, both total derived functors exist.
\end{theorem}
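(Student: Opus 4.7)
The plan is to deduce both assertions from the lifting-property characterizations of the four classes of morphisms together with Ken Brown's lemma for derived functors.

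For the equivalence of (1) and (2): by the exercise immediately before the proposition, the cofibrations are exactly the morphisms with the left lifting property against all trivial fibrations, and the trivial fibrations are exactly the morphisms with the right lifting property against all cofibrations. The adjunction $F\dashv G$ furnishes a natural bijection between lifting problems
\[
\xymatrix{A\ar[d]_{i}\ar[r]&G(X)\ar[d]^{G(p)}\\ C\ar[r]\ar@{-->}[ur]&G(Y)}
\qquad\leftrightarrow\qquad
\xymatrix{F(A)\ar[d]_{F(i)}\ar[r]&X\ar[d]^{p}\\ F(C)\ar[r]\ar@{-->}[ur]&Y}
\]
so $F(i)$ has the LLP against $p$ iff $i$ has the LLP against $G(p)$. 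Holding $p$ a trivial fibration and letting $i$ range over all cofibrations of $\op C$ shows that $F$ preserves cofibrations iff $G(p)$ has the RLP against every cofibration, i.e. is a trivial fibration. Holding $i$ a trivial cofibration and letting $p$ range shows that $F$ preserves trivial cofibrations iff $G$ preserves fibrations. This proves $(1)\Leftrightarrow(2)$.

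For the derived adjunction, I would first prove Ken Brown's lemma: a functor $H:\op C\to \op E$ from a model category to a category with weak equivalences taking trivial cofibrations between cofibrant objects to weak equivalences preserves all weak equivalences between cofibrant objects. Given a weak equivalence $w:A\to B$ of cofibrant objects, factor the morphism $(w,\id_B):A\sqcup B\to B$ as a cofibration $j:A\sqcup B\to Z$ followed by a trivial fibration $q:Z\to B$. The summand inclusions $A\to A\sqcup B\to Z$ and $B\to A\sqcup B\to Z$ are cofibrations between cofibrant objects (coproducts of cofibrations between cofibrant objects are cofibrations), and by two-out-of-three they are weak equivalences. Applying $H$ to this factorization and using two-out-of-three again yields that $H(w)$ is a weak equivalence. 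Applied to $F$, which preserves trivial cofibrations by (1), this shows $F$ preserves weak equivalences between cofibrant objects; dually $G$ preserves weak equivalences between fibrant objects. The Hirschhorn proposition already cited then yields the existence of $\mathbb{L}F$ and $\mathbb{R}G$, with $\mathbb{L}F(\gamma_{\op C} C)\cong\gamma_{\op D}F(C)$ for $C$ cofibrant and $\mathbb{R}G(\gamma_{\op D}D)\cong\gamma_{\op C}G(D)$ for $D$ fibrant.

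To produce the adjunction on homotopy categories, I would fix functorial cofibrant and fibrant replacements $Q$ and $R$ (obtained from the factorization axiom) and, for $C\in\op C$ and $D\in\op D$, construct the chain of bijections
\[
\Ho(\op D)(\mathbb{L}F(C),D)\cong \Ho(\op D)(FQC,RD)\cong \op D(FQC,RD)/{\simeq}\cong \op C(QC,GRD)/{\simeq}\cong \Ho(\op C)(C,\mathbb{R}G(D)),
\]
where the middle bijection is the original adjunction and the equivalence relations are left/right homotopy. The passage from maps in $\op C$ or $\op D$ to maps in the homotopy category, through a cofibrant source and a fibrant target, is the standard description of $\Ho$ by left/right homotopy classes. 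Hence the main task is to check that $F$ sends cylinder objects on $QC$ to cylinder objects on $FQC$ (since $F$ preserves cofibrations, coproducts, and weak equivalences between cofibrant objects by Ken Brown) and dually that $G$ sends path objects on $RD$ to path objects on $GRD$; this guarantees the middle isomorphism descends to homotopy classes.

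The main obstacle I anticipate is the bookkeeping in this last step, namely verifying naturality and well-definedness of the derived adjunction independent of choices of $Q$, $R$, and representing homotopies. Ken Brown's lemma is what makes these verifications go through, since it ensures that different cofibrant replacements of the same object are linked by weak equivalences preserved by $F$, and dually for $G$; the triangle identities on homotopy categories then reduce to the original ones modulo weak equivalences, and two-out-of-three closes the argument.
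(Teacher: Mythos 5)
The paper does not actually prove this theorem: it states it and cites \cite[8.5.3, 8.5.18]{Hirschhorn:MCL}. Your proposal supplies a correct proof along the same lines as the standard argument found in those references and in Dwyer--Spalinski.

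Your proof of $(1)\Leftrightarrow(2)$ via the adjunction on lifting problems together with the lifting-property characterizations of (trivial) (co)fibrations is exactly right. Your proof of Ken Brown's lemma is correct but compressed at the key point: after factoring $(w,\id_B)\colon A\sqcup B\to Z\xrightarrow{q}B$, the step ``applying $H$ and using two-out-of-three again'' hides a small dance worth spelling out. One first notes $q\circ\iota_B=\id_B$, so that $H(q)\circ H(\iota_B)=\id_{H(B)}$; since $H(\iota_B)$ is a weak equivalence, two-out-of-three gives that $H(q)$ is one; then $H(w)=H(q)\circ H(\iota_A)$ is a composite of weak equivalences. (It is not that $H$ preserves the trivial fibration $q$ directly --- it a priori only preserves trivial cofibrations between cofibrant objects, so $H(q)$ must be extracted indirectly.) Your final step, descending the adjunction isomorphism $\op D(FQC,RD)\cong\op C(QC,GRD)$ to homotopy classes, is correctly identified: the substance is that $F$ carries cylinders on cofibrant objects to cylinders (since $F$ preserves coproducts, cofibrations, and, by Ken Brown, weak equivalences between cofibrant objects) and dually $G$ carries path objects on fibrant objects to path objects. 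The remaining well-definedness and naturality checks are indeed bookkeeping that goes through by Ken Brown and two-out-of-three, as you say. Nothing essential is missing.
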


Such an adjunction is commonly known as a \emph{Quillen adjunction} or \emph{Quillen pair}, and the functors $F$ and $G$ are left and right \emph{Quillen functors}, respectively.

Thus, if the putative model structure on $\Top^I$ is chosen to be compatible with some fixed model structure on $\Top$ in the sense that the diagonal functor $\Top\to \Top^I$ is a right Quillen functor, then the existence of the homotopy colimit will be assured.

We begin by specifying our choice of model structure on $\Top$.

\begin{theorem}[Quillen]\label{thm:quillen model structure}
The following specifications define a model structure on the category $\Top$ of topological spaces:
\begin{enumerate}
\item the weak equivalences are the weak homotopy equivalences;
\item the fibrations are the Serre fibrations;
\item the cofibrations are the retracts of relative cell complexes.
\end{enumerate}
\end{theorem}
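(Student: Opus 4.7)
My plan is to prove this by the standard cofibrantly generated route, taking the generating cofibrations to be $I = \{S^{n-1} \hookrightarrow D^n\}_{n \geq 0}$ (with $S^{-1} = \varnothing$) and the generating trivial cofibrations to be $J = \{D^n \times \{0\} \hookrightarrow D^n \times [0,1]\}_{n \geq 0}$. The two-out-of-three property for weak homotopy equivalences is standard, and the retract closure of cofibrations is built into the definition, while retract closure of Serre fibrations is immediate from their characterization by a right lifting property. So the real content is the factorization axiom and the lifting axiom.

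For the factorizations, I would invoke Quillen's small object argument in both cases. The argument applies because the domains $S^{n-1}$ and $D^n$ of the maps in $I$ and $J$ are compact, hence small relative to the transfinite sequential colimits of relative cell attachments that arise. This yields, for any $f : A \to B$, factorizations $f = p \circ i$ with $i$ a relative $I$-cell complex and $p$ having RLP against $I$, and $f = q \circ j$ with $j$ a relative $J$-cell complex and $q$ having RLP against $J$. By construction $i$ and $j$ lie in $\cof(\Top)$, and by definition a map has RLP against $J$ iff it is a Serre fibration, which is how $\fib(\Top)$ is defined. The lifting axiom for trivial cofibrations against fibrations is then essentially tautological via the small object argument.

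The remaining identifications require three checks, which I would carry out in order. First, every relative $J$-cell complex is a weak homotopy equivalence: each generator $D^n \times \{0\} \hookrightarrow D^n \times [0,1]$ is a deformation retract, pushouts of deformation retracts along cofibrations remain homotopy equivalences, and transfinite compositions of such preserve the property, so $j$ above is in $\cof(\Top) \cap \weq(\Top)$. Second, I need that RLP against $I$ characterizes \emph{trivial fibrations}, i.e., Serre fibrations that are also weak equivalences; this splits into (a) maps with RLP against $I$ are Serre fibrations (because each $D^n \times \{0\} \hookrightarrow D^n \times [0,1]$ is a retract of a relative $I$-cell complex, obtained by attaching a single cell to realize the mapping cylinder $D^n \hookrightarrow D^n \times [0,1]$ as part of an $(n{+}1)$-dimensional disk pair), and (b) such maps are weak equivalences (using the lifting property directly to show every relative homotopy group vanishes: any element of $\pi_n(B, A, a_0)$ is represented by $(D^n, S^{n-1}) \to (B, A)$, and the lift provides a nullhomotopy). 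Third, I need the converse: every trivial fibration has RLP against $I$. This is the standard argument using obstruction theory — given a trivial Serre fibration $p$ and a lifting problem against $S^{n-1} \hookrightarrow D^n$, one uses path lifting and the vanishing of the relative homotopy groups of the pair $(B, \text{fiber})$ to produce the filler up to homotopy, then rectifies using the homotopy lifting property to an honest lift.

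The main obstacle will be check (a) above, namely showing that maps with RLP against $I$ are Serre fibrations, equivalently that $D^n \times \{0\} \hookrightarrow D^n \times [0,1]$ lies in the saturation of $I$. The cleanest way is to observe that the pair $(D^n \times [0,1], D^n \times \{0\})$ is homeomorphic to the pair $(D^{n+1}, \Lambda)$ where $\Lambda \subseteq \partial D^{n+1}$ is a closed hemisphere, and that the inclusion $\Lambda \hookrightarrow D^{n+1}$ is a retract of $S^n \hookrightarrow D^{n+1}$ in the arrow category. Assembling these three checks with the small object argument completes the verification of all four model category axioms; the lifting axiom for cofibrations against trivial fibrations and the second factorization both follow from the identification of trivial fibrations with maps having RLP against $I$, together with the retract argument that shows $\cof(\Top)$ coincides with the maps having LLP against the trivial fibrations.
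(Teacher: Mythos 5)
Your proposal follows the same cofibrantly-generated strategy, via the small object argument, that the paper uses; the only cosmetic difference is your choice of generating sets $\{S^{n-1}\hookrightarrow D^n\}$ and $\{D^n\times\{0\}\hookrightarrow D^n\times[0,1]\}$ in place of the paper's simplicial pairs $\{\Lambda^n_k\subseteq\Delta^n\}$ and $\{\partial\Delta^n\subseteq\Delta^n\}$ (the underlying pairs are homeomorphic). In fact your outline is more complete than the paper's, which explicitly verifies only the factorization axiom and defers the rest to the literature.

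Two remarks on details. Your retract claim in check (a) --- that the hemisphere inclusion $\Lambda\hookrightarrow D^{n+1}$ is a retract of $S^n\hookrightarrow D^{n+1}$ in the arrow category --- does work, though the more common formulation is that $(D^{n+1},\Lambda)$ is a relative CW pair with one $n$-cell and one $(n+1)$-cell; to see the retract concretely, model the pair as $(D^n\times[0,1],\,D^n\times\{0\})$, take $\beta(x,t)=((1-t/2)x,\,t/2)$ as the middle vertical, and extend the retraction on $\operatorname{im}(\beta)\cup S^n$ to all of $D^n\times[0,1]$ using that the target is an absolute retract. Second, the claim that the lifting axiom for trivial cofibrations against fibrations ``is essentially tautological via the small object argument'' is an overstatement: the small object argument directly gives lifting only for relative $J$-cell complexes, and an arbitrary trivial cofibration $i$ requires the standard retract argument (factor $i=p\circ j$ with $j$ a relative $J$-cell complex and $p$ a Serre fibration, use two-out-of-three to conclude $p$ is a trivial fibration, and use the \emph{other} lifting axiom to exhibit $i$ as a retract of $j$). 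This is a real, if routine, step and relies on the identifications you do establish, so it deserves mention in the outline.
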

\begin{proof}
We highlight only one aspect of the proof, namely the verification of the factorization axiom via the so-called \emph{small object argument}. For a detailed proof of the full result, see \cite{Hirschhorn:QMCTS}.

Let $f:X\to Y$ be any map, which we wish to factor as $f=p\circ i$ with $p$ a fibration and $i$ a trivial cofibration. We define $X_1$ as the pushout in the diagram \[\xymatrix{
\displaystyle\coprod\Lambda_k^n\ar[d]\ar[r]& X\ar[d]^-{i_1}\\
\displaystyle\coprod\Delta^n\ar[r]&X_1,
}\] where the coproduct in each case is indexed by the set \[\coprod_{n\geq0}\coprod_{0\leq k\leq n}\Hom(\Delta^n,Y)\times_{\Hom(\Lambda_k^n, Y)}\Hom(\Lambda_k^n, X).\] The simplicial set $\Lambda_k^n$ is the $k$th \emph{horn} of $\Delta^n$---see \cite[I.1]{GoerssJardine:SHT}. Setting $X_0=X$ and assuming $i_r:X_{r-1}\to X_r$ to have been defined, we apply the same procedure to the induced map $X_r\to Y$ to obtain $i_{r+1}:X_r\to X_{r+1}$, and we set $X_f=\colim_{r\geq0} X_r$. We claim that, in the commuting diagram \[\xymatrix{
X\ar[dr]_-f\ar[rr]^-i&& X_f\ar[dl]^-p\\
&Y,
}\] $i$ is a trivial cofibration and $p$ a fibration. The former claim involves finding a lift in the diagram \[\xymatrix{
X\ar[d]_-{i_1}\ar[d]\ar[rrr]&&&E\ar[ddd]^-q\\
X_1\ar[d]_-{i_2}\\
\vdots\ar[d]\\
X_f\ar[rrr]&&&B
}\] with $q$ a fibration, which may be accomplished inductively using the fact that each $i_r$ is a trivial cofibration, since each inclusion $\Lambda_k^n\subseteq \Delta^n$ is so. For the latter claim, we note that any map of pairs $(\Delta^n,\Lambda_i^n)\to (Y, X_f)$ factors as in the diagram \[\xymatrix{
\Lambda_i^n\ar[dd]\ar[r]&X_r\ar[d]_-{i_{r+1}}\ar[r]& X_f\ar[dd]^-p\\
&X_{r+1}\ar[ur]\ar[dr]\\
\Delta^n\ar@{-->}[ur]\ar[rr]&&Y,
}\] and the dashed filler exists by our definition of $X_{r+1}$ as the pushout.

In order to factor $f$ as a cofibration followed by a trivial fibration, we apply the same argument with horn inclusions replaced by the inclusion $\partial \Delta^n\subseteq \Delta^n$.
\end{proof}

\begin{remark}
Note that the factorization produced by the small object argument is even functorial.
\end{remark}

\subsection{Cofibrant generation} The only essential features of the category $\Top$ used in the argument of Theorem \ref{thm:quillen model structure} was the existence of a collection $S=\{\Lambda_k^n\to \Delta^n\}$ of ``test maps'' for fibrations, and similarly for trivial fibrations, with the property that the domain of an element of $S$ is \emph{small} with respect to composites of iterated pushouts of members of $S$. These features can be axiomatized.

\begin{definition}
Let $\op C$ be a category and $S$ a set of morphisms in $\op C$. A morphism in $\op C$ is 
\begin{enumerate}
\item $S$-\emph{injective} if it has the right lifting property with respect to every element of $S$;
\item an $S$-\emph{cofibration} if it has the left lifting property with respect to every $S$-injective;
\item a \emph{relative $S$-cell complex} if it is a composite of pushouts of elements of $S$.
\end{enumerate} We say that $S$ \emph{permits the small object argument} if the domains of elements of $S$ are small with respect to the collection of relative $S$-cell complexes.
\end{definition}

\begin{example}
Let $\op C=\Top$. If $S$ is the collection of inclusions $\partial\Delta^n\subseteq\Delta^n$, then the $S$-injectives are the trivial fibrations, and the $S$-cofibrations are the cofibrations. On the other hand, if $S$ is the collection of inclusions $\Lambda_k^n\subseteq\Delta^n$, then the $S$-injectives are fibrations, and the $S$-cofibrations are the trivial cofibrations. Both of these collections permit the small object argument, since in both cases every element of $S$ is a relatively $T_1$ inclusion with compact domain.
\end{example}

Motivated by this example, we think of the elements of $S$ as ``generators'' for a set of cofibrations.

\begin{theorem}[Kan]
Let $(\op C, \weq(\op C))$ be a category with weak equivalences with $\weq(\op C)$ closed under retracts, and let $S_\cof$ and $S_\triv$ be sets of morphisms in $\op C$. Assume that \begin{enumerate}
\item $S_\cof$ and $S_\triv$ each permit the small object argument,
\item every $S_\triv$-cofibration is both an $S_\cof$-cofibration and a weak equivalence,
\item every $S_\cof$-injective is both an $S_\triv$-injective and a weak equivalence, and
\item one of the following conditions holds: 
\begin{enumerate}
\item any map that is an $S_\cof$-cofibration and a weak equivalence is also an $S_\triv$-cofibration, or
\item any map that is an $S_\triv$-injective and a weak equivalence is also an $S_\cof$-injective.
\end{enumerate}
\end{enumerate}
Then $(\op C,\weq(\op C))$ extends to a model structure with cofibrations the $S_\cof$-cofibrations and fibrations the $S_\triv$-injectives.
\end{theorem}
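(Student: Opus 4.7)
The plan is to take the definitions forced upon us by the desired conclusion, then verify the three model-category axioms in turn. Declare the cofibrations to be the $S_\cof$-cofibrations, the fibrations to be the $S_\triv$-injectives, the trivial cofibrations to be the $S_\triv$-cofibrations, and the trivial fibrations to be the $S_\cof$-injectives. Conditions (2) and (3) tell us that these ``trivial'' classes do consist of weak equivalences that are also cofibrations (resp. fibrations), so the naming is compatible with what the model-structure axioms will demand once everything else checks out.

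For the factorization axiom, the small object argument applied to $S_\cof$ factors any morphism as a relative $S_\cof$-cell complex followed by an $S_\cof$-injective, i.e., as a cofibration followed by a trivial fibration. Applied to $S_\triv$, it factors any morphism as a relative $S_\triv$-cell complex followed by an $S_\triv$-injective, i.e., as a trivial cofibration followed by a fibration; here we use condition (2) to ensure that the first factor is indeed a weak equivalence. Closure under retracts of cofibrations and fibrations is automatic, since morphisms defined by a lifting property are always closed under retracts in the arrow category, and closure under retracts of weak equivalences is assumed; this handles axiom (1). The lifting property of cofibrations against trivial fibrations is immediate from the definition (cofibrations $=$ $S_\cof$-cofibrations, trivial fibrations $=$ $S_\cof$-injectives, and LLP is built into the former against the latter); symmetrically for trivial cofibrations against fibrations.

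The main obstacle, and the only place where condition (4) enters, is to verify the \emph{other} two halves of the lifting axiom: that every trivial cofibration has the LLP against every fibration, and dually. This comes down to showing the equalities of classes
\[
\{\text{cof}\}\cap\weq(\op C)=\{S_\triv\text{-cofibrations}\}\quad\text{and}\quad\{\text{fib}\}\cap\weq(\op C)=\{S_\cof\text{-injectives}\}.
\]
The inclusions $\supseteq$ are exactly conditions (2) and (3). Suppose (4a) holds: then $\{\text{cof}\}\cap\weq(\op C)\subseteq\{S_\triv\text{-cofibrations}\}$ is assumed directly, and for the dual inclusion one uses the standard retract argument. Namely, given a fibration $p$ that is a weak equivalence, factor it via the small object argument as $p=q\circ j$ with $j$ an $S_\cof$-cofibration and $q$ an $S_\cof$-injective; by (3), $q$ is a weak equivalence, so by two-out-of-three $j$ is a weak equivalence and hence, by (4a), an $S_\triv$-cofibration. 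The lifting property of $j$ against the fibration $p$ then produces a retract of $p$ onto $q$ in the arrow category, so $p$ is a retract of an $S_\cof$-injective and hence itself an $S_\cof$-injective. If instead (4b) is assumed, the symmetric argument (factoring a trivial cofibration as $p\circ i$ with $p$ a trivial fibration) gives the missing inclusion on the cofibration side. This closes the loop and completes the verification of the axioms.
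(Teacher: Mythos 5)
Your proof is correct and follows essentially the same strategy as the paper's sketch (which defers to Hirschhorn 11.3.1 for the details you have spelled out): factorizations via the small object argument identified by conditions (2)/(3), retract closure as a formal consequence of lifting-property definitions, and the retract argument powered by condition (4) to close the loop on the lifting axiom. The only cosmetic point worth noting is that the small object argument produces relative $S_\cof$-cell complexes as the left factors, which are $S_\cof$-cofibrations but a priori a smaller class; the retract argument you give is precisely what shows the two classes of ``trivial'' maps coincide, which is the content that the paper elides by citation.
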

\begin{proof}
Given $f:X\to Y$, we apply the same procedure as above to factor $f$ as $p \circ i$, where $p$ is an $S_\triv$-injective and $i$ is a relative $S_\triv$-cell complex. The latter is in particular an $S_\triv$-cofibration, so point (2) applies to show that this factorization is of the desired form. The dual argument, using point (3), furnishes the other factorization. Point (4) is used to demonstrate the lifting axiom. For a detailed account, see \cite[11.3.1]{Hirschhorn:MCL}.
\end{proof}

One can show that, in the situation of this theorem, the cofibrations are exactly the retracts of relative $S_\cof$-cell complexes, and similarly for trivial cofibrations and $S_\triv$---see \cite[10.5.22]{Hirschhorn:MCL}.

For obvious reasons, it is standard to refer to such collections $S_\cof$ and $S_\triv$ as \emph{generating} cofibrations and trivial cofibrations, respectively, and to refer to the resulting model category as \emph{cofibrantly generated}. One of the excellent features of cofibrantly generated model structures is that they are very portable.

\begin{corollary}
Let $\op C$ be a cofibrantly generated model category with generators $S_\cof$ and $S_\triv$, $\op D$ a category with small (co)limits, and \[\adjunct{\op C}{\op D}{F}{G}\] an adjunction. If \begin{enumerate}
\item $F(S_\cof)$ and $F(S_\triv)$ permit the small object argument, and
\item $G$ sends relative $F(S_\triv)$-cell complexes to weak equivalences,
\end{enumerate} then the category with weak equivalences $(\op D, G^{-1}(\weq(\op C)))$ extends to model structure cofibrantly generated by $F(S_\cof)$ and $F(S_\triv)$. Moreover, $F$ and $G$ are a Quillen pair with respect to this model structure.
\end{corollary}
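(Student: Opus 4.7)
The plan is to deduce this corollary by verifying the hypotheses of Kan's theorem for $\op D$ with the proposed generating (trivial) cofibrations $F(S_\cof)$ and $F(S_\triv)$. The central tool throughout is the tautological \emph{adjunction bijection}: by the $F \dashv G$ adjunction, a morphism $f$ in $\op D$ is $F(S)$-injective if and only if $G(f)$ is $S$-injective in $\op C$. In particular, $f$ is $F(S_\cof)$-injective iff $G(f)$ is a trivial fibration, and $f$ is $F(S_\triv)$-injective iff $G(f)$ is a fibration. This single observation will convert every hypothesis of Kan's theorem into a statement that either holds in $\op C$ by assumption or follows from the two given hypotheses (1) and (2).

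First, $\weq(\op D) := G^{-1}(\weq(\op C))$ is closed under retracts because $G$ preserves retracts and $\weq(\op C)$ is. Hypothesis (1) directly supplies the small object argument. For Kan's axiom (3), if $p$ is $F(S_\cof)$-injective, then $G(p)$ is a trivial fibration in $\op C$, so in particular $G(p)$ is a fibration (hence $p$ is $F(S_\triv)$-injective by adjunction) and a weak equivalence (hence $p$ is a weak equivalence in $\op D$ by definition). For Kan's axiom (2), I first check that every element of $F(S_\triv)$ is an $F(S_\cof)$-cofibration: this amounts to lifting elements of $S_\triv$ against trivial fibrations in $\op C$, which holds because $S_\triv$ consists of trivial cofibrations. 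Since $F(S_\cof)$-cofibrations are closed under pushouts, transfinite composition, and retracts, every $F(S_\triv)$-cofibration is an $F(S_\cof)$-cofibration. The weak equivalence half of axiom (2) is exactly where hypothesis (2) is used: since $G$ carries relative $F(S_\triv)$-cell complexes to weak equivalences, and $\weq(\op D)$ is closed under retracts, every $F(S_\triv)$-cofibration (being a retract of such a cell complex) is a weak equivalence in $\op D$.

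Kan's axiom (4) is then clinched by choosing alternative (b): any $F(S_\triv)$-injective weak equivalence $p$ in $\op D$ has $G(p)$ both a fibration and a weak equivalence in $\op C$, hence a trivial fibration, hence $S_\cof$-injective, so by adjunction $p$ is $F(S_\cof)$-injective. This gives the model structure. For the Quillen pair claim, the adjunction bijection shows directly that $G$ carries fibrations to $S_\triv$-injectives (i.e.\ fibrations in $\op C$) and trivial fibrations to $S_\cof$-injectives (i.e.\ trivial fibrations in $\op C$), so $G$ is right Quillen and $F$ is left Quillen.

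The main obstacle, and essentially the only place where nontrivial input is needed beyond the adjunction bookkeeping, is verifying that $F(S_\triv)$-cofibrations are weak equivalences; this is where hypothesis (2) cannot be circumvented. In practice, when one applies this corollary, checking (2) is the genuine work — it often requires a path object argument or some other explicit construction in $\op D$ — while everything else is formal.
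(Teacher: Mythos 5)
Your proof is correct and follows essentially the same strategy as the paper: apply Kan's recognition theorem for cofibrantly generated model structures, using the adjunction bijection to translate every lifting condition in $\op D$ into one in $\op C$, with hypothesis (2) isolated as the one nontrivial input. You fill in more of the verifications (axioms (3), (4), and the Quillen pair claim) than the paper, which treats only axiom (2) explicitly and refers to Hirschhorn for the rest; the only thing you omit is the (trivial) observation that $G^{-1}(\weq(\op C))$ satisfies two-out-of-three, which is needed for the target to be a category with weak equivalences.
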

\begin{proof}
We apply the previous theorem. Prerequisitely, we note that $G^{-1}(\weq(\op C))$ satisfies two-out-of-three and is closed under retracts, and point (1) is true by assumption. For point (2), we note that $F(S_\triv)$-cofibrations are retracts of relative $F(S_\triv)$-cell complexes, which are sent to weak equivalences by assumption, so $F(S_\triv)$-cofibrations are weak equivalences. To see that they are also $F(S_\cof)$-cofibrations are also $F(S_\triv)$-cofibrations, it suffices by definition to show that $F(S_\cof)$-injectives are also $F(S_\triv)$-injectives. For this, we note that the two lifting problems \[\xymatrix{
F(A)\ar[d]_-{F(i)}\ar[r]&B\ar[d]^-p&&A\ar[d]_-i\ar[r]&G(B)\ar[d]^-{G(p)}\\
F(C)\ar@{-->}[ur]\ar[r]&D&&C\ar@{-->}[ur]\ar[r]&G(D)
}\] are equivalent, and that $S_\cof$-injectives are also $S_\triv$-injectives.

The remaining assumptions are verified in a similar manner---see \cite[11.3.2]{Hirschhorn:MCL} for a detailed account.
\end{proof}

The resulting model structure on $\op D$ is called the \emph{transferred} model structure.

\subsection{Projective model structure} In our example of interest, we obtain the \emph{projective} model structure on functors:

\begin{corollary}[{\cite[11.6.1]{Hirschhorn:MCL}}]
Let $\op C$ be a cofibrantly generated model category and $I$ any small category. The following specifications define a model structure on the functor category $\op C^I$:
\begin{enumerate}
\item the weak equivalences are the pointwise weak equivalences;
\item the fibrations are the pointwise fibrations;
\item the cofibrations are the natural transformations with the left lifting property with respect to every pointwise fibration.
\end{enumerate}
\end{corollary}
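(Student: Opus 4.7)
The plan is to apply the transferred model structure corollary to the adjunction $\iota_!:\op C^{I_0}\rightleftarrows\op C^I:\iota^*$ along the inclusion $\iota:I_0\hookrightarrow I$ from the discrete category of objects, equipping $\op C^{I_0}\cong\prod_{i\in\ob I}\op C$ with the obvious product model structure in which weak equivalences, fibrations, and cofibrations are all defined componentwise. This product model structure is cofibrantly generated: if $S_{\cof},S_{\triv}$ generate $\op C$, then for each object $i\in\ob I$, the functor $j_i:\op C\to\op C^{I_0}$ that inserts an object at $i$ and places the initial object at other coordinates gives generators $\bar S_{\cof}=\bigcup_i j_i(S_{\cof})$ and $\bar S_{\triv}=\bigcup_i j_i(S_{\triv})$. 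Note that $\iota^*$ is just restriction, so $\iota^*$ reflects pointwise weak equivalences exactly, i.e.\ $(\iota^*)^{-1}(\weq(\op C^{I_0}))$ is precisely the class of pointwise weak equivalences in $\op C^I$, which will be the weak equivalences of the transferred structure.

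Next, I would identify the proposed generators for $\op C^I$ as $\iota_!(\bar S_{\cof})$ and $\iota_!(\bar S_{\triv})$. Using the pointwise formula for left Kan extension along $\iota$, one has $\iota_!(j_i(A))(k)\cong\coprod_{\Hom_I(i,k)}A$, so a generator $\iota_!j_i(f):\iota_!j_i(A)\to\iota_!j_i(B)$ is in each level $k$ simply a coproduct of $\Hom_I(i,k)$-many copies of $f$. With the generators so described, the fibrations of the transferred structure will be the $\iota_!(\bar S_{\triv})$-injectives; but by adjunction a natural transformation $\eta$ has the right lifting property against every $\iota_!(j_i(f))$ iff $\eta(i)$ has the right lifting property against $f$ in $\op C$ for every $i$, which is precisely the condition that $\eta$ be a pointwise fibration. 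Thus once the transferred structure exists, it will have the desired classes.

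For the smallness hypothesis, I would argue that the domains of $\iota_!(\bar S_{\cof})$ and $\iota_!(\bar S_{\triv})$ are small with respect to their own cell complexes. This is straightforward: colimits in $\op C^I$ are computed pointwise, so a transfinite composite of pushouts of generators in $\op C^I$ is, at each level $k$, a transfinite composite of pushouts of coproducts of elements of the underlying generating sets in $\op C$. By adjunction, $\Hom_{\op C^I}(\iota_!j_i(A),-)\cong\Hom_{\op C}(A,(-)(i))$, so smallness transfers from $\op C$ level by level.

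The main obstacle, and the only nontrivial verification, is the second hypothesis of the transferred model structure corollary: that $\iota^*$ sends relative $\iota_!(\bar S_{\triv})$-cell complexes to pointwise weak equivalences. Evaluated at any $k\in\ob I$, such a cell complex is built, starting from an arbitrary object, by transfinite composition of pushouts of maps of the form $\coprod_{\Hom_I(i,k)}f$ with $f\in S_{\triv}$. Since trivial cofibrations in the cofibrantly generated model category $\op C$ are closed under coproducts, pushouts along arbitrary maps, and transfinite composition (a standard consequence of the small object argument and the retract characterization of (trivial) cofibrations, cf.\ the exercise following the definition of a model category), each such pointwise map is a trivial cofibration in $\op C$, hence in particular a weak equivalence. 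This yields the remaining hypothesis of the transfer theorem and produces the desired model structure, in which the weak equivalences are pointwise weak equivalences, the fibrations are pointwise fibrations, and the cofibrations are—necessarily—those morphisms with the left lifting property against pointwise trivial fibrations.
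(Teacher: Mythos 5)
Your proof is correct and takes the route the paper sets up: the preceding material develops the transfer corollary and the adjunction $\iota_!\dashv\iota^*$ along $\iota:I_0\to I$ exactly so that this corollary drops out, and your generating sets $\iota_!j_i(s)$, whose values are $\coprod_{\Hom_I(i,k)}s$, are precisely the ``free diagram'' generating (trivial) cofibrations used in the cited \cite[11.6.1]{Hirschhorn:MCL}. The verifications you give of the two transfer hypotheses (smallness via $\Hom_{\op C^I}(\iota_!j_i(A),-)\cong\Hom_{\op C}(A,(-)(i))$, and the weak-equivalence condition via closure of trivial cofibrations under coproducts, pushouts, and transfinite composition) are the right ones. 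One ancillary remark: item (3) of the statement as printed appears to contain a typo, since maps with the left lifting property against all pointwise fibrations are the \emph{trivial} cofibrations of the projective structure; your closing sentence, characterizing the cofibrations by the left lifting property against pointwise \emph{trivial} fibrations, is the correct version.
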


Turning now to the question of cofibrant replacement, we write $\mathrm{Bar}_\bullet(F,-)$ for the augmented simplicial functor given in degree $n$ by \[\mathrm{Bar}_\bullet(F,-)=(\iota_!\iota^*)^{n+1}F(i)\cong\coprod_{I_0^{n+1}}F(i_{n+1})\times \Hom(i_{n+1}, i_n)\times\cdots\times\Hom(i_2,i_1)\times \Hom(i_1,-)\] and with the face and degeneracy maps induced by composition and insertion of identities, respectively. As we saw in the previous lecture, we have $\colim_I\mathrm{Bar}_\bullet(F,-)=\mathrm{Bar}_\bullet(F)$.

\begin{proposition}\label{prop:bar is cofibrant}
The augmentation $|\mathrm{Bar}_\bullet(F,-)|\to F$ is a pointwise weak homotopy equivalence. Moreover, if $F$ is pointwise cofibrant, then $|\mathrm{Bar}_\bullet(F,-)|$ is projective cofibrant.
\end{proposition}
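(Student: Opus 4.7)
My plan is to address the two assertions separately.  For the pointwise weak equivalence, fix $i \in I$; I would exhibit an extra degeneracy on the augmented simplicial space $\mathrm{Bar}_\bullet(F,i) \to F(i)$.  At the augmentation level, $s_{-1}\colon F(i) \to \mathrm{Bar}_0(F,i)$ sends $x \in F(i)$ into the summand indexed by $\id_i$; at higher levels, $s_{-1}\colon \mathrm{Bar}_n(F,i) \to \mathrm{Bar}_{n+1}(F,i)$ takes the summand indexed by a chain $i_n \to \cdots \to i_0 \to i$ into the summand indexed by $i_n \to \cdots \to i_0 \to i \xrightarrow{\id_i} i$.  A routine check of the simplicial identities produces a simplicial contraction, which shows that $|\mathrm{Bar}_\bullet(F,i)| \to F(i)$ is a strong deformation retract.

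For projective cofibrancy, the backbone of the argument is the observation that for any cofibrant $X \in \Top$ and object $i \in I$, the functor $X \times \Hom_I(i,-)$ is projectively cofibrant: by Yoneda and adjunction, a lifting problem for it against a pointwise trivial fibration $G \to H$ reduces to the lifting problem for $X \to Y$ against the trivial fibration $G(i) \to H(i)$, which has a solution.  Pulling out the trailing arrow, we may rewrite
\[
\mathrm{Bar}_n(F,-) \cong \coprod_{i_n \to \cdots \to i_0} F(i_n) \times \Hom_I(i_0,-),
\]
exhibiting $\mathrm{Bar}_n(F,-)$ as a coproduct of functors of this elementary form under the assumption that each $F(i_n)$ is cofibrant.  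Restricting the coproduct to those chains in which none of the internal arrows $i_n \to \cdots \to i_0$ is an identity identifies the non-degenerate subfunctor $N_n\mathrm{Bar}_\bullet(F,-)$ as projectively cofibrant for the same reason.

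I would then invoke Lemma \ref{lem:split pushout}, applied pointwise in $i$, to exhibit the skeletal inclusion $|\sk_{n-1}\mathrm{Bar}_\bullet(F,-)| \hookrightarrow |\sk_n \mathrm{Bar}_\bullet(F,-)|$ as the pushout along $N_n\mathrm{Bar}_\bullet(F,-) \times (\partial \Delta^n \hookrightarrow \Delta^n)$.  Since colimits in $\Top^I$ are computed pointwise, the same diagram is a pushout of functors; and since $\partial \Delta^n \hookrightarrow \Delta^n$ is a cofibration of spaces and $N_n\mathrm{Bar}_\bullet(F,-)$ is projectively cofibrant, the attaching map is a projective cofibration.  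Writing $|\mathrm{Bar}_\bullet(F,-)|$ as the transfinite composite of its skeletal filtration starting from the empty functor displays it as a cell complex in the projective model structure, hence projectively cofibrant.

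The main technical point I expect to have to confirm carefully is that the split decomposition of $\mathrm{Bar}_\bullet(F,i)$ is functorial in $i$, so that $N_n\mathrm{Bar}_\bullet(F,-)$ is a genuine subfunctor of $\mathrm{Bar}_n(F,-)$ and Lemma \ref{lem:split pushout} produces a pushout square in $\Top^I$ rather than merely a pointwise pushout.  This should follow from the observation that the non-degeneracy condition involves only the internal arrows of a chain and is insensitive to the trailing arrow into $i$, which is precisely what isolates the clean representable factor $\Hom_I(i_0,-)$ in the decomposition above.
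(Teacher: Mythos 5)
Your argument is correct and matches the paper's in all essentials: your explicit extra degeneracy (appending $\id_i$ to the end of a chain) is exactly the unit of the $(\iota_!,\iota^*)$-adjunction that the paper invokes after restricting along $\iota^*$, and your observation that $X\times\Hom_I(j,-)$ is projectively cofibrant for cofibrant $X$ is precisely the content of the paper's decomposition of $N_n$ into coproducts of left Kan extensions $\iota_!(N_n^j)$ together with the fact that $\iota_!$ is left Quillen. One step you gloss over: the assertion that $N_n\mathrm{Bar}_\bullet(F,-)\times(\partial\Delta^n\hookrightarrow\Delta^n)$ is a projective cofibration does not follow formally from ``cofibrant object times cofibration of spaces'' without invoking a pushout-product axiom for $\Top^I$; it does, however, follow at once by rerunning your Yoneda adjunction against the cofibration of spaces $X\times\partial\Delta^n\hookrightarrow X\times\Delta^n$ in place of $\emptyset\to X$, which is evidently what you intend (and which is exactly what the paper does via $\iota_!$). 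Also, the phrase ``a lifting problem for $X\to Y$'' should read ``for $\emptyset\to X$.''
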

\begin{proof}
Since $\iota^*$ reflects weak equivalences and commutes with colimits, it suffices to prove the first claim instead for the geometric realization of the augmented simplicial object $\iota^*\mathrm{Bar}_\bullet(F, -)$ in $\Top^{I_0}$. But this augmented simplicial object has an extra degeneracy, which is given by the unit of the $(\iota_!,\iota^*)$-adjunction.

For the second claim, it will suffice to show that each of the maps $|\sk_{n-1}(\mathrm{Bar}_\bullet(F,-))|\to |\sk_n(\mathrm{Bar}_\bullet(F,-))|$ is a cofibration between cofibrant objects. We proceed by induction on $n$, the base case of the cofibrancy of the $0$-skeleton following from our assumption that $F$ is pointwise cofibrant. For the induction step, we write \[N_n(-)=\coprod_{I_0^{n+1}}F(i_{n+1})\times \Hom'(i_{n+1}, i_n)\times\cdots\times \Hom'(i_2,i_1)\times \Hom(i_1,-)\subseteq \mathrm{Bar}_\bullet(F,-),\] where \[\Hom'(i,j)=\begin{cases}
\Hom(i,j)&\quad i\neq j\\
\Hom(i,i)\setminus\{\id_i\}&\quad i=j.
\end{cases}
\] After evaluating at an object $i$, these spaces witness $\mathrm{Bar}_\bullet(F, i)$ as split, so we have the pushout square \[\xymatrix{
\partial\Delta^n\times N_n(i)\ar[r]\ar[d]&|\sk_{n-1}(\mathrm{Bar}_\bullet(F,i))|\ar[d]\\
\Delta^n \times N_n(i)\ar[r]&|\sk_{n}(\mathrm{Bar}_\bullet(F,i))|,
}\] which is moreover natural in $i$. Since the $(n-1)$-skeleton is known to be cofibrant by induction, it suffices to show that the map $\partial\Delta^n \times N_n(-)\to \Delta^n\times N_n(-)$ is a projective cofibration. For every $j\in I_0$, define a functor $N_n^{j}:I_0\to \Top$ by \[N_n^{j}(i):=\begin{cases}
\displaystyle\coprod_{I_0^{n}}F(i_{n+1})\times \Hom'(i_{n+1}, i_{n})\times\cdots\times \Hom'(i_2,j)&\quad i=j\\
\varnothing&\quad\text{otherwise.}
\end{cases}\] Then we evidently have the commuting diagram \[\xymatrix{
\partial\Delta^n\times N_n(-)\ar[r]&\Delta^n\times N_n(-)\\
\displaystyle\coprod_{I_0}\iota_!\left(\partial\Delta^n\times N_n^{j}\right)\ar@{=}[u]^-\wr\ar[r]&\displaystyle\coprod_{I_0}\iota_!\left(\Delta^n\times N_n^{j}\right)\ar@{=}[u]_-\wr
}\] of functors, where the components of the bottom arrow are each induced by the inclusion $\partial \Delta^n\subseteq \Delta^n$. Since this map is a cofibration in $\Top$, since products with cofibrant objects preserve cofibrations, since $\iota_!$ is a left Quillen functor, and since cofibrations are closed under coproducts, it follows that the bottom arrow, and hence the top arrow, in this diagram is a projective cofibration.
\end{proof}

\begin{corollary}
For any functor $F:I\to \Top$, the canonical map $|\mathrm{Bar}_\bullet(F)|\to \colim_I F$ exhibits a homotopy colimit of $F$.
\end{corollary}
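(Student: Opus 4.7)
The strategy is to identify $|\mathrm{Bar}_\bullet(F)|$ as the value on $F$ of the total left derived functor of $\colim_I: \Top^I \to \Top$, where the target carries the Quillen model structure of Theorem \ref{thm:quillen model structure} and the source the transferred projective model structure. First I would verify that $\colim_I$ is left Quillen: its right adjoint is the constant diagram functor $\Delta: \Top \to \Top^I$, which by definition of the projective model structure sends (trivial) fibrations to pointwise (trivial) fibrations. Hence $\hocolim_I := \mathbb{L}\colim_I$ exists and can be computed by projective cofibrant replacement followed by $\colim_I$.

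Next I would treat the case of a pointwise cofibrant diagram $G$. By Proposition \ref{prop:bar is cofibrant}, the augmentation $|\mathrm{Bar}_\bullet(G,-)| \to G$ is a pointwise weak equivalence with projective cofibrant source, hence a projective cofibrant replacement. Since $\colim_I$ commutes with geometric realization and with the iterated coproducts defining each simplicial degree of $\mathrm{Bar}_\bullet(G,-)$, applying it yields
\[ \hocolim_I G \simeq \colim_I |\mathrm{Bar}_\bullet(G,-)| \cong |\mathrm{Bar}_\bullet(G)|, \]
and the canonical map to $\colim_I G$ obtained by passing the augmentation through $\colim_I$ represents the structural natural transformation $\hocolim_I G \to \colim_I G$.

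For an arbitrary $F$, I would first produce a pointwise cofibrant replacement $q:QF \to F$, for instance by setting $QF(i) = |\mathrm{Sing}(F(i))|$ and noting that CW complexes are cofibrant in $\Top$. Since $q$ is a pointwise, hence projective, weak equivalence, it becomes an isomorphism under $\hocolim_I$, and the previous paragraph gives $\hocolim_I F \simeq \hocolim_I QF \simeq |\mathrm{Bar}_\bullet(QF)|$. Independently, the homotopy invariance of the bar construction (established prior to this corollary via Proposition \ref{prop:split criterion}) shows that the induced map $|\mathrm{Bar}_\bullet(QF)| \to |\mathrm{Bar}_\bullet(F)|$ is a weak equivalence. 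Assembling these identifications into the naturally commuting square
\[ \xymatrix{ |\mathrm{Bar}_\bullet(QF)| \ar[r] \ar[d]_-{\sim} & \colim_I QF \ar[d] \\ |\mathrm{Bar}_\bullet(F)| \ar[r] & \colim_I F, } \]
the bottom arrow is identified in $\Ho(\Top)$ with the composite $\hocolim_I F \simeq \hocolim_I QF \to \colim_I QF \to \colim_I F$, which is exactly the value at $F$ of the universal natural transformation from the derived functor.

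The main obstacle is precisely this last bookkeeping step: one must confirm that the composite equivalence $|\mathrm{Bar}_\bullet(F)| \simeq \hocolim_I F$ patched together from a pointwise cofibrant replacement and the homotopy invariance of $\mathrm{Bar}_\bullet$ really intertwines the canonical augmentation map with the abstractly defined structure map $\hocolim_I(-) \to \colim_I(-)$, rather than merely agreeing with it pointwise. Everything else—existence of the derived functor, the identification on cofibrant inputs, and the construction of $QF$—is formal once $\colim_I$ is recognized as left Quillen.
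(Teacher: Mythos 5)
Your proposal is essentially the paper's proof, with the same ingredients: $\colim_I$ is left Quillen, the bar resolution supplies a projective cofibrant replacement for pointwise cofibrant diagrams (Proposition \ref{prop:bar is cofibrant}), $|\mathrm{Bar}_\bullet(-)|$ is homotopy invariant via splitness, and $Q(X)=|\mathrm{Sing}(X)|$ gives the pointwise cofibrant replacement. The ``bookkeeping obstacle'' you flag at the end is exactly what the paper dissolves by invoking the universal property of the derived functor at the outset rather than at the end: because $|\mathrm{Bar}_\bullet(-)|$ preserves weak equivalences, it descends to a functor on $\Ho(\Top^I)$, and the augmentation to $\colim_I$ therefore factors canonically through $\hocolim_I$ by the defining universal property of the derived functor. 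This produces one natural transformation $\gamma\left(|\mathrm{Bar}_\bullet(-)|\right)\to\hocolim_I$, and your commuting square is then just a naturality square for that transformation applied to the arrow $Q(F)\to F$. There is no separate coherence to verify, because the comparison map at $F$ and the comparison map at $Q(F)$ are two components of the same natural transformation, so they automatically intertwine the augmentations with the structure maps. Your worry is well placed as a prompt for care, but it is resolved formally once the universal property is used to produce the comparison map uniformly rather than reconstructed object by object.
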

\begin{proof}
Since the values of the functor $\mathrm{Bar}_\bullet(-):\Top^I\to \Top^{\Delta^{op}}$ are all split, the functor $|\mathrm{Bar}_\bullet(-)|$ descends to a functor at the level of homotopy categories. Thus, from the existence of the canonical map to the colimit and the universal property of the derived functor, we obtain the commuting diagram 
\[\xymatrix{
\gamma\left(|\mathrm{Bar}_\bullet(Q(F))|\right)\ar[d]\ar[r]&\gamma\left(|\mathrm{Bar}_\bullet(F)|\right)\ar[d]\\
\hocolim_IQ(F)\ar[r]&\hocolim_IF
}\] in $\Ho(\Top)$, where $Q:\Top \to \Top$ is any cofibrant replacement functor (for example, we may take $Q(X)$ to be the geometric realization of the singular simplicial set of $X$). The bottom map is an isomorphism, since weak equivalences in $\Top^I$ are pointwise; the top map is a weak equivalence because both simplicial spaces are split; and the lefthand map is an isomorphism by Proposition \ref{prop:bar is cofibrant}. It follows that the righthand map is an isomorphism, as claimed.
\end{proof}

\begin{remark}
Most of what we have said carries over verbatim to the setting of a general cofibrantly generated simplicial model category, but the ability to forego pointwise cofibrant replacement seems to be an accident specific to $\Top$, which is connected to the existence of the \emph{Str{\o}m model structure}---see \cite[A]{DuggerIsaksen:THAR} for further discussion.
\end{remark}

\begin{recollection}
To a category $I$, we may associate its \emph{nerve}, which is the simplicial set $NI$ given in degree $n$ by $NI_n=\Fun([n], I)$, the set of composable $n$-tuples of morphisms in $I$. The \emph{classifying space} of $I$ is the space $BI:=|NI|$, and we say that $I$ is \emph{contractible} if its classifying space is weakly contractible. We say that a functor $T:I\to J$ is \emph{homotopy final} if the overcategory $(j\downarrow T)$ is contractible for every object $j\in J$ (resp. \emph{homotopy initial}, $(T\downarrow j)$). Homotopy final functors induce weak homotopy equivalences on classifying spaces, and, since $BI\simeq BI^{op}$, the same holds for homotopy initial functors.
\end{recollection}

\begin{example}
A category with an initial or final object is contractible, as is a (co)filtered category.
\end{example}

We record the following standard facts about homotopy colimits.

\begin{proposition}[{\cite[6.7,\,20.3]{Dugger:PHC}}]\label{prop:hocolim facts}
Let $F:J\to \Top$ be a functor.
\begin{enumerate}
\item If $T:I\to J$ is homotopy final, then the induced map \[\hocolim_I T^*F\to \hocolim_J F\] is a weak equivalence.
\item If $J=\Delta^{op}$ and $F$ is split, then \[\hocolim_{\Delta^{op}}F\simeq |F|.\]
\end{enumerate}
\end{proposition}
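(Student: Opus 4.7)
Both parts rely on the bar construction model $\hocolim_I F \simeq |\mathrm{Bar}_\bullet(F)|$ and proceed by showing that an appropriate comparison map of realizations is a weak equivalence.

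For (1), the functor $T$ induces a natural simplicial map $\mathrm{Bar}_\bullet(T^*F) \to \mathrm{Bar}_\bullet(F)$ sending the summand indexed by $i_n\to\cdots\to i_0$ identically onto the summand indexed by $Ti_n\to\cdots\to Ti_0$. The plan is to show the induced map on realizations is a weak equivalence via a Quillen Theorem B-style fiber analysis, in direct parallel with our proof of Theorem \ref{thm:Fadell--Neuwirth}. I would introduce an auxiliary bisimplicial space $W_{m,n}$ whose $(m,n)$-bidegree is the coproduct of $F(Ti_m)$ over pairs consisting of an $I$-chain $i_m\to\cdots\to i_0$ and a compatible $J$-chain $Ti_0\to j_1\to\cdots\to j_n$. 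Realizing first in the $n$-direction identifies $W$ with a thickening of $|\mathrm{Bar}_\bullet(T^*F)|$ whose fibers involve undercategories $Ti_0\downarrow J$, contractible because they have initial objects; realizing first in the $m$-direction identifies $W$ with a thickening of $|\mathrm{Bar}_\bullet(F)|$ whose fibers involve the classifying spaces $B(j\downarrow T)$, contractible by the homotopy-finality hypothesis. Agreement of both iterated realizations with the diagonal realization then closes the argument.

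For (2), the plan is to construct a natural comparison $\hocolim_{\Delta^{op}}F \to |F|$ and verify it is a weak equivalence using Proposition \ref{prop:split criterion}. The bar construction $\mathrm{Bar}_\bullet(F)$ can be organized into a bisimplicial space in which one direction is the bar indexing by chains $[k_0]\to\cdots\to[k_n]$ in $\Delta$ and the other direction records, via the composite morphism $[k_0]\to[k_n]$, the translation of $F([k_n])$ into $F([k_0])$. A bisimplicial thickening of $F$ itself provides a parallel model for $|F|$, and the canonical comparison map is induced by collapsing each chain to its initial object. By Proposition \ref{prop:split criterion}, it suffices that both bisimplicial models be split and that the comparison be a degreewise weak equivalence: splitness of the auxiliary models is exactly what the hypothesis that $F$ is split delivers, and the degreewise check reduces to the contractibility of the overcategory $\Delta/[k]$, which is formal since $\mathrm{id}_{[k]}$ is a final object.

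The main obstacle in each case is organizing the bisimplicial bookkeeping so that the hypothesis enters at the correct contractibility check: homotopy finality of $T$ in (1), and splitness of $F$ in (2). Both setups are structurally similar to arguments already executed in the text, but the precise identification of the auxiliary overcategories with the hypothesized contractible objects, and the verification that the resulting degreewise comparisons lift to equivalences of realizations, are the delicate steps.
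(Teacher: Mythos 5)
The paper does not prove this proposition; it is cited directly to Dugger's primer, so there is no internal proof to compare against, and the question is whether your sketch would succeed on its own.

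For part (1), the high-level strategy---a bisimplicial space whose two iterated realizations recover the two homotopy colimits, with homotopy finality controlling a fiber---is correct and structurally parallels the proof of Corollary \ref{cor:quillen b}. However, the specific bisimplicial space you propose has a genuine flaw: you evaluate $F$ at $Ti_m$ and attach the $J$-chain at $Ti_0$ with glue $Ti_0\to j_1$. With this choice, realizing in the $m$-direction does \emph{not} produce $\coprod_{j_\bullet}F(j)$ times a contractible classifying space, because $F(Ti_m)$ changes with $m$: for fixed $J$-data you obtain the bar construction of the non-constant functor $(i,Ti\to j)\mapsto F(Ti)$ over $(T\downarrow j)$, whose realization is $\hoLan_T(T^*F)(j)$, and identifying this with $F(j)$ is equivalent to the theorem one is trying to prove. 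Moreover, the comma category that actually arises from your glue is $(T\downarrow j)$, not the $(j\downarrow T)$ appearing in the definition of homotopy finality. The fix is to evaluate $F$ on the $J$-side and glue into the \emph{deep} end of the $I$-chain: take $W_{m,n}=\coprod F(j_n)$ over triples $(i_m\to\cdots\to i_0,\ j_n\to\cdots\to j_0,\ j_0\to Ti_m)$. Realizing in $m$ for fixed $J$-chain then gives $F(j_n)\times B(j_0\downarrow T)$, which the hypothesis collapses to $F(j_n)$; realizing in $n$ for fixed $I$-chain gives the bar construction of $F$ restricted along $(J\downarrow Ti_m)\to J$, which has an extra degeneracy supplied by the terminal object $\mathrm{id}_{Ti_m}$ and so collapses to $F(Ti_m)$, i.e.\ to $\mathrm{Bar}_m(T^*F)$ on the nose. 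Both auxiliary simplicial spaces are split, and Proposition \ref{prop:split criterion} finishes. For part (2), the construction is too vaguely specified to assess: it is not stated in which bidegree $F$ is evaluated, what the face maps in the second direction do to it, or how the ``bisimplicial thickening of $F$'' models $|F|$, and note that there is no degreewise simplicial map between $\mathrm{Bar}_\bullet(F)$ and $F$ itself, so some explicit intermediary is genuinely required. The essential point you must surface is that splitness is what makes the latching maps of $F$ cofibrations (Reedy cofibrancy), which is what lets $|F|$ agree with the homotopy colimit; this does not fall out of the contractibility of $\Delta\downarrow[k]$ alone, and it is the substance of the cited part of Dugger.
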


\subsection{Relative homotopy colimits} We will also have use for a relative version of the homotopy colimit. Note that, if $\lambda:I\to J$ is a functor, then the restriction $\lambda^*$ preserves fibrations and weak equivalences in the respective projective model structures, since both are pointwise. Thus, $(\lambda_!,\lambda^*)$ is a Quillen pair, and we may contemplate the \emph{homotopy left Kan extension} $\hoLan_\lambda:=\mathbb{L}\lambda_!$. In order to understand this functor, we recall that, from the commuting diagram of categories \[\xymatrix{
(\lambda\downarrow j)\ar[d]_-\pt\ar[r]^-{\pi_j} &I\ar[d]^-\lambda\\
\pt \ar[r]^-{\iota_j}&J,
}\] there is an induced \emph{base change isomorphism} \[\lambda_!F(j)=\iota_j^*\lambda_!F\xrightarrow{\simeq}\pt_!\pi_j^*F=\colim_{(\lambda\downarrow j)}\pi_j^*F.\] Our next results asserts that the analgous result holds for the homotopical versions of these functors. 

\begin{corollary}
For any $\lambda:I\to J$ and $F:I\to \Top$, there is a natural isomorphism \[\hoLan_\lambda F(j)\cong \hocolim_{(\lambda\downarrow j)}\pi_j^*F\] in $\Ho(\Top^J)$.
\end{corollary}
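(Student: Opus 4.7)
The plan is to compute both sides using the simplicial bar construction and exhibit a direct combinatorial identification at each simplicial level. The starting observation is that $|\mathrm{Bar}_\bullet(F,-)|$ is a projective cofibrant replacement of $F$ by Proposition \ref{prop:bar is cofibrant} (where pointwise cofibrancy of $F$, which in $\Top$ is automatic by the remark following that proposition, is assumed or arranged). As $\lambda_!$ is a left Quillen functor with respect to the projective model structures, we may represent $\hoLan_\lambda F$ in $\Ho(\Top^J)$ by the honest point-set functor $\lambda_!|\mathrm{Bar}_\bullet(F,-)|$, which is canonically isomorphic to $|\lambda_!\mathrm{Bar}_\bullet(F,-)|$ since $\lambda_!$ commutes with colimits.

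At simplicial level $n$ and at $j\in J$, the base-change formula for a left Kan extension gives
\[
(\lambda_!\mathrm{Bar}_n(F,-))(j)\;=\;\colim_{(i',\alpha)\in(\lambda\downarrow j)}\coprod_{i_n\to\cdots\to i_0\to i'\text{ in }I}F(i_n).
\]
Pulling the outer coproduct past the colimit and applying the co-Yoneda identity $\colim_{(i',\alpha)\in(\lambda\downarrow j)}\Hom_I(i_0,i')\cong\Hom_J(\lambda(i_0),j)$ (which is just the statement that $\lambda_!$ applied to the representable $\Hom_I(i_0,-)$ yields $\Hom_J(\lambda(i_0),-)$), this rewrites as $\coprod_{i_n\to\cdots\to i_0,\,\alpha_0:\lambda(i_0)\to j}F(i_n)$.

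On the other hand, $\mathrm{Bar}_n(\pi_j^*F)=\coprod_{\text{chains in }(\lambda\downarrow j)}F(i_n)$ by definition, and a composable chain $(i_n,\alpha_n)\to\cdots\to(i_0,\alpha_0)$ in $(\lambda\downarrow j)$ is uniquely specified by the underlying chain $i_n\to\cdots\to i_0$ in $I$ together with the terminal anchor $\alpha_0:\lambda(i_0)\to j$, since each $\alpha_k$ with $k>0$ is then forced by the compatibility condition $\alpha_{k-1}\circ\lambda(i_k\to i_{k-1})=\alpha_k$. Thus the two indexing sets for the coproducts agree, and a routine check shows that the bijection commutes with the face and degeneracy maps, so the identification is an isomorphism of simplicial spaces. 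Taking geometric realization and assembling over $j\in J$ yields the claimed isomorphism $\hoLan_\lambda F(j)\simeq|\mathrm{Bar}_\bullet(\pi_j^*F)|=\hocolim_{(\lambda\downarrow j)}\pi_j^*F$ in $\Ho(\Top^J)$; naturality in $j$ is automatic from the construction, since a morphism $j\to j'$ induces a functor $(\lambda\downarrow j)\to(\lambda\downarrow j')$ with respect to which the bar identifications are obviously compatible. The main obstacle is purely bookkeeping --- keeping the $(n+1)$-tuples of objects versus $n$-tuples of morphisms in the bar construction straight, and tracking the compatibility condition in $(\lambda\downarrow j)$ carefully --- with no further homotopy-theoretic input required beyond Proposition \ref{prop:bar is cofibrant} and the fact that left adjoints commute with colimits.
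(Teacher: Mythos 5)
Your argument is correct and is essentially the paper's own proof: both cofibrantly replace $F$ by $|\mathrm{Bar}_\bullet(F,-)|$ using Proposition \ref{prop:bar is cofibrant}, commute $\lambda_!$ past geometric realization, apply base change, and then observe the combinatorial fact that a chain in $(\lambda\downarrow j)$ is determined by the underlying chain in $I$ together with the terminal anchor. The only stylistic difference is that you perform the base-change/co-Yoneda computation one simplicial degree at a time and then reassemble, whereas the paper identifies the two simplicial functors $\mathrm{Bar}_\bullet(F,\pi_j(-))\cong\mathrm{Bar}_\bullet(\pi_j^*F,-)$ directly before taking the colimit over $(\lambda\downarrow j)$; these are the same calculation with the $\colim$ and the levelwise inspection interchanged.
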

\begin{proof}
We may assume that $F$ is objectwise cofibrant. In this case, by Proposition \ref{prop:bar is cofibrant}, we have the isomorphisms in $\Ho(\Top^J)$ \[\hoLan_\lambda F(j)\cong \lambda_!|\mathrm{Bar}_\bullet(F, -)|\cong \colim_{(\lambda\downarrow j)}|\mathrm{Bar}_\bullet(F,\pi_j(-))|,\] whereas \[\hocolim_{(\lambda\downarrow j)}\pi_j^*F\cong \colim_{(\lambda\downarrow j)}|\mathrm{Bar}_\bullet(\pi_j^*F, -)|.\] By inspection, the simplicial functors $\mathrm{Bar}_\bullet(F,\pi_j(-))$ and $\mathrm{Bar}_\bullet(\pi_j^*F,-)$ are isomorphic, and the claim follows.
\end{proof}

\subsection{Quillen's Theorem B}

For any functor $F:I\to \Top$, there is a natural map \[\hocolim_I F\to BI,\] which is induced on geometric realizations by the simplicial map $\mathrm{Bar}_\bullet(F)\to \mathrm{Bar}_\bullet(\underline \pt)$ arising from the unique natural transformation $F\to \underline \pt$ (note that we have used the isomorphism $BI\cong BI^{op}$, since the latter bar construction is the nerve $NI^{op}$). Since the bar construction is split, this map is a weak equivalence whenever $F$ is pointwise contractible. 

When $F$ is not pointwise contractible, it is often useful to be able to understand the homotopy fiber of this map. The result that will guide is in this task is due to Quillen and usually referred to as ``Theorem B.'' We follow the treatment of \cite[IV]{GoerssJardine:SHT}, beginning with the following preliminary result, which is interesting in its own right.

\begin{lemma}\label{lem:baby unstraightening}
If $F:I\to \mathrm{sSet}$ is a functor sending each morphism in $I$ to a weak equivalence, then the diagram \[\xymatrix{
F(i)\ar[r]\ar[d]&d^*\mathrm{Bar}_\bullet(F)\ar[d]\\
\Delta^0\ar[r]^-i&NI^{op}
}\] of simplicial sets is homotopy Cartesian for every object $i\in I$.
\end{lemma}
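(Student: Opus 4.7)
The plan is to analyze the map $p\colon d^{*}\mathrm{Bar}_\bullet(F)\to NI^{op}$ as the diagonal of a map of bisimplicial sets and apply a fiber-constancy criterion to conclude that the strict fiber over each vertex coincides with the homotopy fiber up to weak equivalence.

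First, I would unwind the definitions. An $n$-simplex of $d^{*}\mathrm{Bar}_\bullet(F)$ is a pair $(\sigma,x)$ with $\sigma=(i_n\to\cdots\to i_0)\in NI^{op}_n$ and $x\in F(i_n)_n$, and $p$ forgets $x$. The strict fiber $p^{-1}(i)$ over a vertex $i$ consists of pairs where $\sigma$ is the degenerate chain at $i$, so $p^{-1}(i)$ is canonically $F(i)$; in particular the left vertical arrow in the square is the inclusion of the strict fiber and factors canonically through the homotopy fiber.

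Next, I would view $p$ as the diagonal of the map of bisimplicial sets $B_{p,q}:=\coprod_{\sigma\in NI^{op}_p}F(i_p)_q\longrightarrow NI^{op}_p=:C_{p,q}$, with $C$ constant in the vertical direction and the map forgetting the $F$-coordinate. In each fixed horizontal degree $p$, this map is a coproduct of projections $F(i_p)\to\mathrm{pt}$ indexed by $\sigma\in NI^{op}_p$, hence a Kan fibration with discrete base. The key observation is a fiber-constancy statement: for each horizontal face map $d_k\colon C_{p,\bullet}\to C_{p-1,\bullet}$ and each $\sigma\in C_p$, the induced map between the vertical fibers over $\sigma$ and $d_k\sigma$ is either the identity on $F(i_p)$ (when $k<p$) or the map $F(f_p)\colon F(i_p)\to F(i_{p-1})$ (when $k=p$), and analogously for the degeneracies. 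By hypothesis, every such map is a weak equivalence.

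Given this fiber-constancy condition, I would invoke the bisimplicial quasi-fibration criterion (cf.\ Goerss--Jardine Ch.\ IV, whose machinery is being set up precisely in order to prove Theorem B in the next statement): a map of bisimplicial sets which is a Kan fibration in each horizontal degree, with all horizontal transition maps between vertical fibers being weak equivalences, has the property that for every vertex $v$ of the base, the natural map from the strict fiber of the diagonal over $v$ to its homotopy fiber is a weak equivalence. Applied at $v=i$, this yields that $F(i)\hookrightarrow\mathrm{hofib}_i(p)$ is a weak equivalence, which is exactly the homotopy Cartesian condition for the square.

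The main obstacle is the bisimplicial criterion itself. A direct proof proceeds by a Reedy-type skeletal filtration of the diagonal: one inductively builds $d^{*}B$ from $d^{*}\mathrm{sk}_n B$ by attaching cells over the non-degenerate horizontal $n$-simplices, and the fiber-constancy hypothesis ensures, via a Mather cube / total homotopy fiber argument, that the comparison from the strict to the homotopy fiber is preserved at each stage. Once this technical heart is in hand, the identification of the strict fiber as $F(i)$ and the verification that all transition maps are of the form $F(f)$ for $f$ a morphism in $I$ make the lemma an immediate application.
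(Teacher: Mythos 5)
Your proposal is correct in spirit but takes a genuinely different route from the paper. The paper does not invoke a general bisimplicial quasi-fibration criterion as a black box. Instead, it runs the small object argument to produce a trivial cofibration $\Delta^0 \to K$ followed by a Kan fibration $K \to NI^{op}$, observes that the skeletal pushouts defining $K$ are preserved by the functor $(-) \times_{NI^{op}} d^*\mathrm{Bar}_\bullet(F)$ (because that functor is a left adjoint on $(\mathrm{sSet}\downarrow NI^{op})$), and thereby reduces the problem to showing that each map
\[
\Lambda^n_k \times_{NI^{op}} d^*\mathrm{Bar}_\bullet(F) \longrightarrow \Delta^n \times_{NI^{op}} d^*\mathrm{Bar}_\bullet(F)
\]
is a weak equivalence. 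That last step is done by writing both sides as diagonals of bisimplicial sets and comparing, via the unique morphism to the final object $n \in [n]$, to the constant functor $F(\sigma(n))$; the hypothesis on $F$ and two-out-of-three finish it. Your version is more conceptual and modular: you correctly identify the criterion as the technical heart, and this is indeed the shape of the argument in Goerss--Jardine Chapter IV, so your approach is a legitimate alternative. The paper's route has the advantage of being self-contained within the toolkit already developed (small object argument, split simplicial spaces) and of not requiring the reader to first digest a Bousfield--Friedlander-style theorem.

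One small inaccuracy to flag: you assert that $\coprod_{\sigma} F(i_p) \to \coprod_\sigma \ast$ is ``a Kan fibration with discrete base.'' A coproduct of maps $F(i_p) \to \ast$ is a Kan fibration precisely when each $F(i_p)$ is a Kan complex, and objectwise fibrancy of $F$ is not among the hypotheses. This does not sink your argument, since the relevant Goerss--Jardine criterion (that a bisimplicial set all of whose horizontal structure maps are vertical weak equivalences has homotopy Cartesian inclusion of columns into the diagonal) requires only fiber-constancy, not pointwise fibrancy; alternatively one may replace $F$ by $\mathrm{Ex}^\infty\circ F$ at the outset. But if you intend to invoke the Bousfield--Friedlander theorem specifically, which does require a pointwise fibration, you would need this fibrant replacement step, and you should say so.
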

\begin{proof}
We will produce a diagram of the form
\[\xymatrix{
F(i)\ar[r]\ar[d]&K\times_{NI^{op}}d^*\mathrm{Bar}_\bullet(F)\ar[d]\ar[r]&\mathrm{Bar}_\bullet(F)\ar[d]\\
\Delta^0\ar[r]^-j& K\ar[r]^-p&NI^{op},
}\] in which $j$ is a trivial cofibration and $p$ a fibration, and we will show that the upper left map is a weak equivalence. 

We take $K=\colim K_r$ to be the output of the small argument applied to $\Delta^0\to NI^{op}$; that is, $K_r$ is defined as the pushout in the diagram \[\xymatrix{
\displaystyle \coprod \Lambda_k^n\ar[dd]\ar[rr]&&K_{r-1}\ar[dl]\ar[dd]\\
&K_r\ar@{-->}[dr]\\
\displaystyle\coprod \Delta^n\ar[ur]\ar[rr]&& NI^{op},
}\] where $K_0=\Delta^0$. It now follows that each of the diagrams \[\xymatrix{
\displaystyle \coprod \Lambda_k^n\times_{NI^{op}}d^*\mathrm{Bar}_\bullet(F)\ar[d]\ar[r]&K_{r-1}\times_{NI^{op}}d^*\mathrm{Bar}_\bullet(F)\ar[d]\\
\displaystyle\coprod \Delta^n\times_{NI^{op}}d^*\mathrm{Bar}_\bullet(F)\ar[r]&K_r\times_{NI^{op}}d^*\mathrm{Bar}_\bullet(F),
}\] is a pushout, since colimits in $(\mathrm{sSet}\downarrow NI^{op})$ are computed in simplicial sets, and since the fiber product with $d^*\mathrm{Bar}_\bullet(F)$ admits a right adjoint on this category. Thus, since $K_0\times_{NI^{op}}d^*\mathrm{Bar}_\bullet(F)=F(i),$ it will suffice to show that each of the maps \[\Lambda_k^n\times_{NI^{op}}d^*\mathrm{Bar}_\bullet(F)\to \Delta^n\times_{NI^{op}}d^*\mathrm{Bar}_\bullet(F)\] is a weak equivalence. This map is obtained by applying the diagonal to the bottom map in the diagram \[\xymatrix{
\displaystyle\coprod_{(k_0\leq\cdots\leq k_r)\in (\Lambda_k^n)_r}F(\sigma(n))\ar[r]\ar[d]&\displaystyle\coprod_{(k_0\leq\cdots\leq k_r)\in (\Delta^n)_r}F(\sigma(n))\ar[d]\\
\displaystyle\coprod_{(k_0\leq\cdots\leq k_r)\in (\Lambda_k^n)_r}F(\sigma(k_r))\ar[r]&\displaystyle\coprod_{(k_0\leq\cdots\leq k_r)\in (\Delta^n)_r}F(\sigma(k_r))
}\] of bisimplicial sets, where $\sigma:[n]\to I^{op}$ is the given simplex of $NI^{op}$. The vertical maps, which are weak equivalences by our assumption on $F$, are supplied by the unique morphism to the final object $n\in [n]$, and the diagonal of the top map is the weak equivalence $\Lambda_k^n\times F(\sigma(n))\to \Delta^n\times F(\sigma(n))$. It follows by two-out-of-three that the bottom map becomes a weak equivalence after applying $d^*$, which was to be shown.
\end{proof}

\begin{corollary}\label{cor:hocolim quasifibration}
If $F:I\to \mathrm{Top}$ is a functor sending each morphism in $I$ to a weak equivalence, then the diagram \[\xymatrix{
F(i)\ar[r]\ar[d]&\hocolim_IF\ar[d]\\
\pt\ar[r]^-i&BI
}\] of spaces is homotopy Cartesian for every object $i\in I$.
\end{corollary}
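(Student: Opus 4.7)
The plan is to reduce the topological statement to the already-established simplicial version in Lemma \ref{lem:baby unstraightening}. First I would apply the singular complex functor levelwise to form $G := \mathrm{Sing}\circ F : I \to \mathrm{sSet}$. Since $\mathrm{Sing}$ preserves weak equivalences, $G$ sends every arrow of $I$ to a weak equivalence, so Lemma \ref{lem:baby unstraightening} applies and produces a homotopy Cartesian square of simplicial sets with corners $G(i)$, $d^*\mathrm{Bar}_\bullet(G)$, $\Delta^0$, and $NI^{op}$. Inspecting the proof of that lemma, this square is in fact obtained as a strict pullback along a Kan fibration $p : K \to NI^{op}$ with $\Delta^0 \to K$ a trivial cofibration, together with a weak equivalence $G(i)\xrightarrow{\sim} K\times_{NI^{op}}d^*\mathrm{Bar}_\bullet(G)$.

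Next I would apply geometric realization to this explicit factorization. Realization preserves finite limits, and (by Quillen) sends Kan fibrations to Serre fibrations, so $|p|$ is a Serre fibration, and the pullback of $|p|$ along any map is automatically a homotopy pullback in $\mathrm{Top}$. Together with the weak equivalence from the simplicial lemma, this exhibits
\[
\xymatrix{
|G(i)|\ar[r]\ar[d]&|d^*\mathrm{Bar}_\bullet(G)|\ar[d]\\
|\Delta^0|\ar[r]&|NI^{op}|
}
\]
as a homotopy Cartesian square of spaces.

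It remains to identify the four corners with those of the target square. Three are immediate: $|\Delta^0|=\pt$, $|NI^{op}|=BI^{op}\simeq BI$, and the counit $|G(i)|=|\mathrm{Sing}\, F(i)|\xrightarrow{\sim} F(i)$ is a weak equivalence. The main obstacle, and the step requiring care, is the identification $|d^*\mathrm{Bar}_\bullet(G)|\simeq \hocolim_I F$. For this I would use the standard fact that for a bisimplicial set $X_{\bullet,\bullet}$ the realization of the diagonal agrees with the iterated realization $\bigl|[n]\mapsto |X_{n,\bullet}|\bigr|$. Applying this to $d^*\mathrm{Bar}_\bullet(G)$ and realizing in the internal (simplicial set) direction yields the simplicial \emph{space} $\mathrm{Bar}_\bullet(|G|)=\mathrm{Bar}_\bullet(|\mathrm{Sing}\, F|)$, whose levelwise counit map to $\mathrm{Bar}_\bullet(F)$ is a levelwise weak equivalence. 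Since both simplicial spaces are split (being bar constructions), Proposition \ref{prop:split criterion} promotes this to a weak equivalence on realizations, giving $|d^*\mathrm{Bar}_\bullet(G)|\simeq|\mathrm{Bar}_\bullet(F)|=\hocolim_I F$. Assembling these identifications completes the proof.
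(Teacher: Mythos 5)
Your argument is essentially identical to the paper's proof: apply $\mathrm{Sing}$ levelwise, invoke Lemma \ref{lem:baby unstraightening}, and then geometrically realize, identifying $|d^*\mathrm{Bar}_\bullet(\mathrm{Sing}\, F)|$ with $\hocolim_I F$ via the diagonal/iterated-realization fact and Proposition \ref{prop:split criterion}. The one substantive addition is that you unpack the paper's terser assertion that realization ``preserves homotopy pullbacks'' by extracting the explicit Kan fibration $p$ from the proof of the lemma and appealing to the facts that $|-|$ preserves finite limits and sends Kan fibrations to Serre fibrations---a worthwhile caution given that $|-|$ is a left, not right, Quillen functor.
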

\begin{proof}
Applying Lemma \ref{lem:baby unstraightening} to the functor $\mathrm{Sing}(F)$ yields the homotopy pullback square \[\xymatrix{
\mathrm{Sing}(F(i))\ar[d]\ar[r]&d^*\mathrm{Bar}_\bullet(\mathrm{Sing}(F))\ar[d]\\
\Delta^0\ar[r]^-i&NI^{op}.
}\] The claim now follows after applying geometric realization, since \[|d^*\mathrm{Bar}_\bullet(\mathrm{Sing}(F))|\cong \big||\mathrm{Bar}_\bullet(\mathrm{Sing}(F))|\big|\cong |\mathrm{Bar}_\bullet(|\mathrm{Sing}(F)|)|\simeq |\mathrm{Bar}_\bullet(F)|\simeq \hocolim_IF,\] and since geometric realization, as part of the Quillen equivalence between topological spaces and simplicial sets, preserves homotopy pullbacks. 
\end{proof}

\begin{remark}
In case $I$ is the category associated to a group $G$, the statement becomes the familiar fact that the homotopy colimit of a $G$-space $X$, thought of as a functor, is given by the Borel construction on $X$, which forms a bundle over $BG$ with fiber $X$.

In general, this corollary encourages us to think of a functor as defining a sort of bundle over the classifying space with total space the homotopy colimit, fibers given by the functor itself, and, as we will see, space of sections given by the homotopy limit. Taking this kind of idea to its logical conclusion leads to the \emph{unstraightening} construction---see \cite[3.2]{Lurie:HTT}.
\end{remark}

\begin{corollary}[Quillen's Theorem B]\label{cor:quillen b}
Let $T:I\to J$ be a functor such that, for every morphism $\alpha:j\to j'$ in $J$, the map $B(j'\downarrow T)\to B(j\downarrow T)$ is a weak equivalence. The diagram \[\xymatrix{
B(j\downarrow T)\ar[r]\ar[d]&BI\ar[d]^-{BT}\\
\pt\ar[r]&BJ
}\] is homotopy Cartesian.
\end{corollary}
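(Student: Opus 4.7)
The plan is to derive Corollary \ref{cor:quillen b} from Corollary \ref{cor:hocolim quasifibration} by applying the latter to the functor $F: J^{op} \to \mathrm{Top}$ defined by $F(j) = B(j \downarrow T)$, with contravariant functoriality given by precomposition (an arrow $\alpha: j \to j'$ in $J$ induces $\alpha^*: (j' \downarrow T) \to (j \downarrow T)$). The hypothesis of the theorem says precisely that $F$ carries every morphism of $J^{op}$ to a weak equivalence, so the corollary immediately produces a homotopy Cartesian square
\[
\xymatrix{
B(j \downarrow T) \ar[r] \ar[d] & \hocolim_{J^{op}} F \ar[d] \\
\pt \ar[r]^-j & B(J^{op}) = BJ.
}
\]
It therefore suffices to identify $\hocolim_{J^{op}} F$ with $BI$ over $BJ$ in such a way that the projection on the right becomes $BT$.

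To effect this identification, I would introduce the bisimplicial set $Z$ with
\[
Z_{m,n} = \bigl\{(j_0 \to \cdots \to j_m,\, i_0 \to \cdots \to i_n,\, \alpha: j_m \to T(i_0))\bigr\},
\]
equipped with the evident face and degeneracy maps (composing arrows, dropping endpoints, and adjusting $\alpha$ using $T$ or the outermost $J$-arrow when necessary). There are natural projection maps $|d^*Z| \to BJ$ and $|d^*Z| \to BI$. Reading $Z$ in the $n$-direction first gives $|Z_{m,\bullet}| \cong \coprod_{NJ_m} B(j_m \downarrow T) = \mathrm{Bar}_m(F)$, so that $|d^*Z|$ is naturally identified with $\hocolim_{J^{op}} F$ as a space over $BJ$. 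Reading $Z$ in the $m$-direction first gives $|Z_{\bullet,n}| \cong \coprod_{NI_n} B(J \downarrow T(i_0))$, and under this decomposition the second projection collapses each $B(J \downarrow T(i_0))$ factor to a point.

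The key geometric input is now that each overcategory $(J \downarrow T(i_0))$ has the terminal object $(T(i_0), \id_{T(i_0)})$, and hence contractible classifying space. Thus the map $|Z_{\bullet,n}| \to NI_n$ is a degreewise weak equivalence of simplicial spaces, and provided both sides are split, Proposition \ref{prop:split criterion} delivers $|d^*Z| \simeq BI$. Combining this with the identification $|d^*Z| \simeq \hocolim_{J^{op}} F$ from the previous step yields the desired comparison.

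The main obstacle will be the bookkeeping involved in verifying that the two readings of $|d^*Z|$ produce compatible triangles over $BJ$ and, on the other side, that the projection to $BI$ is genuinely identified with $BT$ pulled along the equivalence. The splitness needed to invoke Proposition \ref{prop:split criterion} should follow from the standard splitting of the nerve of a category with a terminal object, but tracking variance through the passage $J \leadsto J^{op}$ and through the direction convention for the structural arrow $\alpha: j_m \to T(i_0)$ is where care is required. Once these identifications are in place, the theorem is a formal consequence of Corollary \ref{cor:hocolim quasifibration}.
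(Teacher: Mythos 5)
Your argument is essentially the paper's proof: both deduce the statement from Corollary \ref{cor:hocolim quasifibration} (equivalently Lemma \ref{lem:baby unstraightening}) applied to $B(-\downarrow T):J^{op}\to\Top$, then identify $\hocolim_{J^{op}}B(-\downarrow T)$ with $BI$ by reading the same bisimplicial set of triples $(j_\bullet,\,i_\bullet,\,j_{\mathrm{last}}\to T(i_{\mathrm{first}}))$ the other way and using contractibility of $(J\downarrow T(i))$ together with the splitness of the bar construction. One small improvement over the paper's wording: you correctly identify $(T(i),\id_{T(i)})$ as a \emph{terminal} object of $(J\downarrow T(i))$, whereas the paper writes ``initial object $\id_i$,'' which is a slip (the comma category $(J\downarrow T(i))$ has a terminal object, not an initial one); both claims yield contractibility, but yours is the accurate one.
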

\begin{proof}
By Lemma \ref{lem:baby unstraightening}, it suffices to show that \[\hocolim_{J^{op}}B(-\downarrow T)\xrightarrow{\sim} BI.\] To see why this is so, we note that the lefthand side arises from the bisimplicial set given in bidegree $(m,n)$ by \begin{align*}
\coprod_{j_0\to\cdots \to j_n}N(j_n\downarrow T)_m&\cong \coprod_{j_0\to\cdots\to j_n\to T(i_0)\to \cdots \to T(i_m)}\pt\\
&\cong \coprod_{i_0\to \cdots \to i_m}N(J\downarrow T(i_0))_n. 
\end{align*} Thus, \[\hocolim_{J^{op}}B(-\downarrow T)\simeq\hocolim_{I^{op}}B(J\downarrow T(-))\simeq \hocolim_{I^{op}}\underline{\pt}\simeq BI,\] where for the second equivalence we have used that $(J\downarrow T(i))$ is contractible for every $i\in I$, having the initial object $\id_i$.
\end{proof}

\section{The Spanier--Whitehead category}\label{appendix:Spanier--Whitehead}

\subsection{Stable homotopy}

We begin by recalling the following classical fact, which asserts that homotopy behaves like homology in a certain ``stable'' range.

\begin{theorem}[Homotopy excision]
Let $(B,A)$ be a $q$-connected CW pair. If $A$ is $p$-connected, then the map induced on $\pi_i$ by the map $(B,A)\to (B/A, *)$ is an isomorphism for $i\leq p+q$ and a surjection for $i=p+q+1$.
\end{theorem}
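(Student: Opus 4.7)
The plan is to translate the theorem into a claim that a certain comparison map $A \to G$, for $G$ a homotopy fiber, is $(p+q)$-connected; this is a form of the Blakers--Massey theorem, which can then be proved by CW induction with Freudenthal as the base case.

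Specifically, rewriting $B/A \simeq B \cup_A CA$ exhibits
\[
\xymatrix{
A \ar[r] \ar[d] & B \ar[d] \\
CA \simeq \pt \ar[r] & B/A
}
\]
as a homotopy pushout. Set $G := \hofib(B \to B/A)$; the universal property of the pullback produces a natural comparison map $A \to G$. A four-and-five-lemma argument with the long exact sequences in homotopy for the pair $(B,A)$ and the fibration $G \to B \to B/A$ reduces the theorem to the claim that $A \to G$ is $(p+q)$-connected. The hypotheses translate cleanly: $A \to B$ is $q$-connected (since $(B,A)$ is $q$-connected) and $A \to \pt$ is $(p+1)$-connected (since $A$ is $p$-connected), and the target $q + (p+1) - 1 = p+q$ is precisely the Blakers--Massey prediction.

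I would then prove this Blakers--Massey statement by applying CW approximation to replace $(B,A)$ by a relative CW pair in which $A$ has cells only in dimensions $\geq p+1$ (beyond a basepoint) and the relative cells of $B$ over $A$ have dimensions $\geq q+1$. Induction on the relative cells reduces the problem to a single attachment of a cell $D^{q+1}$ along $S^q$; the inductive step uses that pushout along such an inclusion preserves the $(p+q)$-connected range of the comparison map.

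The main obstacle is the base case, which isolates the essential geometric content and reduces, after further analysis, to the Freudenthal suspension theorem: for $p$-connected $X$, the unit $X \to \Omega\Sigma X$ is $(2p+1)$-connected. I would prove Freudenthal via the James model $J(X) \simeq \Omega\Sigma X$ together with its filtration $\pt = J_0(X) \subset J_1(X) = X \subset J_2(X) \subset \cdots$; the successive quotients $J_k(X)/J_{k-1}(X) \simeq X^{\wedge k}$ are $(k(p+1)-1)$-connected, so in the range $i \leq 2p+1$ only the inclusion $J_1(X) = X \hookrightarrow J(X)$ contributes to $\pi_i$. Feeding this base case into the induction and unwinding through the long exact sequences yields isomorphism on $\pi_i$ for $i \leq p+q$ and surjection for $i = p+q+1$.
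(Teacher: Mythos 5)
The paper does not prove this theorem: it appears as a recalled classical fact (with no argument), which is then used to \emph{derive} the Freudenthal suspension theorem in the immediately following corollary. Your proposal runs in the opposite direction, proving Freudenthal independently via the James filtration and feeding it into a proof of homotopy excision. That reversal is coherent in principle, and your reformulation --- that $A \to G := \hofib(B \to B/A)$ should be $(p+q)$-connected, because $A\to B$ is $q$-connected and $A\to \pt$ is $(p+1)$-connected --- is the correct Blakers--Massey formulation.

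The gap is in the two steps you treat as routine. First, after reducing to a single cell attachment $B = A \cup_{S^q} D^{q+1}$, the remaining claim is not the Freudenthal suspension theorem: Freudenthal is the case $B = CA$, $B/A = \Sigma A$, which is a single cell attachment only when $A$ is itself a sphere. Standard treatments handle the single-cell case directly by a general-position or simplicial-approximation argument (deforming maps off the barycenter of the cell), not by reduction to Freudenthal, and ``after further analysis'' does not say how your reduction would actually go. Second, and more seriously, ``the inductive step uses that pushout along such an inclusion preserves the $(p+q)$-connected range of the comparison map'' asserts precisely the content of the theorem. Pushouts do not in general preserve the connectivity of comparison maps --- if they did, Blakers--Massey would need no connectivity hypotheses at all --- so this is the heart of the matter, not a formal colimit manipulation. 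The James-model argument for Freudenthal you outline is fine on its own terms, but it patches neither of these gaps.
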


Recall that the suspension functor $A\mapsto \Sigma A$ is homotopy invariant if $A$ is a CW complex. Thus, we obtain in this case a \emph{suspension homomorphism}
\[\xymatrix{
\pi_i(A)=[S^i, A]_*\xrightarrow{\Sigma} [\Sigma S^i, \Sigma A]_*\cong [S^{i+1}, \Sigma A]_*=\pi_{i+1}(\Sigma A).
}\]

\begin{corollary}[Freudenthal suspension theorem]
If $A$ is a $p$-connected CW complex, then the suspension homomorphism \[\pi_i(A)\to \pi_{i+1}(\Sigma A)\] is an isomorphism for $i\leq 2p$ and a surjection for $i=2p+1$.
\end{corollary}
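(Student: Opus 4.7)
The plan is to deduce the Freudenthal suspension theorem from homotopy excision by inserting the mapping cone of $A$ as an intermediary. Specifically, I would write $\Sigma A = CA \cup_A CA$ and work with the pair $(CA, A)$ together with the collapse $CA/A \cong \Sigma A$, so that homotopy excision can be applied directly to this pair.

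First I would assemble the input to excision. The long exact sequence of the pair $(CA, A)$, combined with the contractibility of the cone, yields canonical isomorphisms
\[
\partial \colon \pi_i(CA, A) \xrightarrow{\ \simeq\ } \pi_{i-1}(A)
\]
for all $i \geq 1$. In particular, since $A$ is $p$-connected, $\pi_i(CA, A) = 0$ for $i \leq p+1$, so $(CA, A)$ is a $(p+1)$-connected CW pair. Setting $q = p+1$ and applying the homotopy excision theorem to $(CA,A)$, the quotient map $(CA, A) \to (CA/A, \ast) \cong (\Sigma A, \ast)$ induces a map on relative homotopy
\[
\pi_i(CA, A) \longrightarrow \pi_i(\Sigma A)
\]
which is an isomorphism for $i \leq p + q = 2p+1$ and a surjection for $i = 2p+2$.

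The second step is to identify the composite
\[
\pi_{i-1}(A) \xrightarrow{\ \partial^{-1}\ } \pi_i(CA, A) \longrightarrow \pi_i(\Sigma A)
\]
with the suspension homomorphism. This should be essentially tautological: a based map $f \colon S^{i-1} \to A$ extends to a map of pairs $(D^i, S^{i-1}) \to (CA, A)$ by coning, and composing with the collapse $CA \to \Sigma A$ yields precisely $\Sigma f \colon S^i \to \Sigma A$. Reindexing $i \leadsto i+1$ then gives the stated ranges: isomorphism for $i \leq 2p$ and surjection for $i = 2p+1$.

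I do not anticipate any genuine obstacles here, since the theorem is essentially a direct translation of homotopy excision through the long exact sequence of $(CA, A)$; the only mildly fiddly point is the identification of the composite as the suspension map, which is a matter of unwinding definitions. If one wanted to be careful about the low-dimensional case when $i-1 = 0$ (where $\pi_0$ is only a pointed set), one would need to treat the surjectivity statement separately, but for $p \geq 0$ the identifications above still work at the level of pointed sets.
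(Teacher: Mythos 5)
Your proposal is correct and follows essentially the same route as the paper's proof: both apply homotopy excision to the pair $(CA, A)$, use the long exact sequence together with the contractibility of $CA$ to identify $\pi_{i}(CA, A)\cong \pi_{i-1}(A)$, and then check that the resulting composite to $\pi_i(CA/A)\cong\pi_i(\Sigma A)$ is the suspension homomorphism. The only difference is purely notational (you reindex at the end, whereas the paper works with $\pi_{i+1}$ throughout).
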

\begin{proof}
The suspension map coincides with the dashed map in the commuting diagram
\[\xymatrix{
\pi_{i+1}(CA)\ar[d]\ar[r]&\pi_{i+1}(CA,A)\ar[d]^-{(\star)}\ar[r]^-\simeq&\pi_i(A)\ar@{-->}[ddl]\\
\pi_{i+1}(SA)\ar@{=}[d]_-\wr\ar@{=}[r]&\pi_{i+1}(SA)\ar@{=}[d]^-\wr\\
\pi_{i+1}(\Sigma A)\ar@{=}[r]&\pi_{i+1}(\Sigma A)
}\] Since $A$ is $p$-connected by assumption, the pair $(CA,A)$ is $(p+1)$-connected, so the starred map is an isomorphism for $i+1\leq 2p+1$ and a surjection in the next degree, implying the claim.
\end{proof}

In light of this result, it is reasonable to make the following definition. 

\begin{definition}
Let $A$ be a pointed CW complex. The $i$th \emph{stable homotopy group} of $A$ is \[\pi_i^s(A):=\colim_{r} \pi_{i+r}^s(\Sigma^rA)\cong \pi_{2i+2}(\Sigma^{i+2}A).\] A \emph{stable map} from $A$ to $B$ is a map $f:\Sigma^rA\to \Sigma^rB$ for some $r$, regarded as an element of $\colim_r[\Sigma^rA,\Sigma^rB]_*$. 
\end{definition}

We may compose the stable maps $f:\Sigma^rA\to \Sigma^rB$ and $g:\Sigma^sB\to \Sigma^sC$ to obtain the stable map \[\Sigma^rg\circ\Sigma^s f:\Sigma^{r+s}A\to \Sigma^{r+s}C.\] Since an element of $\pi_i^s(A)$ is nothing other than a stable map from $S^i$ to $A$, it follows that stable maps induce morphisms at the level of stable homotopy groups.

\begin{definition}
A \emph{stable weak equivalence} is a stable map that induces an isomorphism on all stable homotopy groups. 
\end{definition}

We shall use the notation $f:A\xrightarrow{\sim_s} B$ to indicate a stable weak equivalence.

\begin{recollection}
If $f:A\to B$ is a continuous map, the \emph{mapping cylinder} of $f$ is the pushout in the diagram \[\xymatrix{
A\ar[d]_-{\{1\}}\ar[r]^-f&B\ar[d]\\
A\times[0,1]\ar[r]& \mathrm{Cyl}(f).
}\] The \emph{mapping cone} of $f$ is the quotient \[C(f):=\frac{\mathrm{Cyl}(f)}{A\times\{0\}}.\] The diagram \[\xymatrix{
A\ar[d]\ar[r]^-f&B\ar[d]\\
\pt\ar[r]&C
}\] is a \emph{cofiber sequence} if the induced map \[C(f)\to B/f(A)\to C\] is a weak equivalence.
\end{recollection}

\begin{example}
If $f$ is a Hurewicz cofibration, then $A\xrightarrow{f} B\to B/f(A)$ is a cofiber sequence.
\end{example}

For our purposes, the key fact about stable weak equivalences is the following.

\begin{lemma}\label{lem:five lemma}
In the commuting diagram \[\xymatrix{
A\ar[r]\ar[d]_-{f_1}&B\ar[d]_-{f_2}\ar[r] &C\ar[d]_-{f_3}\\
A'\ar[r]&B'\ar[r]&C',
}\] if both rows are cofiber sequences and $f_1$ and $f_3$ are stable weak equivalences, then $f_2$ is also a stable weak equivalence.
\end{lemma}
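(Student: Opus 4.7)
The plan is to deduce this from the classical five lemma applied to a ladder of long exact sequences in stable homotopy. The main input will be the long exact sequence associated to a cofiber sequence, which I would set up before invoking the five lemma.

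First, I would recall that any cofiber sequence $A\to B\to C$ extends to a Puppe sequence
\[
A\to B\to C\to \Sigma A\to \Sigma B\to \Sigma C\to \Sigma^2 A\to \cdots,
\]
obtained by iteratively taking mapping cones and using that the cofiber of $B\to C$ is homotopy equivalent to $\Sigma A$ (up to a sign of suspension that is harmless stably). Applying $[S^i,-]_*$ and then stabilizing by iterated suspension yields the long exact sequence
\[
\cdots \to \pi_{i}^s(A)\to \pi_{i}^s(B)\to \pi_{i}^s(C)\to \pi_{i-1}^s(A)\to \cdots
\]
of abelian groups. The standing assumption that the relevant maps are CW inclusions (or that one has replaced them as such) ensures that the constructions are homotopy invariant, so the Puppe construction is compatible with the vertical maps $f_1,f_2,f_3$ of the given diagram.

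Next, I would package the two cofiber sequences into a commuting ladder of such long exact sequences, with vertical arrows $(f_j)_*$ and $(\Sigma f_j)_*=(f_j)_*$ (the suspension isomorphism identifies these in stable homotopy). By hypothesis, $(f_1)_*$ and $(f_3)_*$ are isomorphisms in every degree. The classical five lemma, applied at each degree $i$ to the five-term window
\[
\xymatrix@C=0.7em{
\pi_{i+1}^s(C)\ar[r]\ar[d]^-{(f_3)_*}_-{\cong}& \pi_i^s(A)\ar[r]\ar[d]^-{(f_1)_*}_-{\cong}& \pi_i^s(B)\ar[r]\ar[d]^-{(f_2)_*}& \pi_i^s(C)\ar[r]\ar[d]^-{(f_3)_*}_-{\cong}& \pi_{i-1}^s(A)\ar[d]^-{(f_1)_*}_-{\cong}\\
\pi_{i+1}^s(C')\ar[r]& \pi_i^s(A')\ar[r]& \pi_i^s(B')\ar[r]& \pi_i^s(C')\ar[r]& \pi_{i-1}^s(A'),
}
\]
then forces $(f_2)_*$ to be an isomorphism on $\pi_i^s$ for every $i$. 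This is precisely the statement that $f_2$ is a stable weak equivalence.

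The only genuinely non-routine step is the verification that the Puppe construction yields an exact sequence after stabilizing and that it is natural with respect to maps of cofiber sequences; once that is in hand, the rest is a direct application of the five lemma. In particular, I expect the hardest point to be justifying that one may compare the Puppe sequences of the two rows in a way that is compatible with $f_1,f_2,f_3$, which requires the standard observation that a map of cofiber sequences induces, up to homotopy, a map of the associated mapping cones and their iterated suspensions.
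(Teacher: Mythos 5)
Your proof is correct, and the underlying content---a long exact sequence in stable homotopy arising from a cofiber sequence, followed by the classical five lemma---coincides with what the paper uses. The difference is in scaffolding. You construct the long exact sequence directly from the Puppe sequence and iterated suspension; the paper instead develops the Spanier--Whitehead category $\SW$ as a triangulated category (Puppe's theorem verifies the axioms, including the octahedral one) and then cites the general fact that hom functors $\SW\left((S^0,i),-\right)$ in a triangulated category carry distinguished triangles to long exact sequences of abelian groups. Your route is more elementary and self-contained; it never needs the triangulated formalism at all. The paper's route is heavier but buys reusability, since the same triangulated-category corollary is immediately redeployed to show that stable weak equivalences between finite complexes are isomorphisms in $\SW$ and that they induce homology isomorphisms. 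The subtlety you flag at the end---that a map of cofiber sequences extends compatibly, up to homotopy, to a map of the full Puppe sequences so that the ladder of long exact sequences actually commutes---is precisely the content the paper packages into the verification that cofiber sequences triangulate $\SW$; in both treatments that is where the genuine work sits, and your instinct to mark it as the one non-routine step is correct.
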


It will be useful to have a means of producing stable maps.

\begin{lemma}\label{lem:stable maps adjunction}
Let $A$ be a finite CW complex. The set of stable maps from $A$ to $B$ is a in natural bijection with $[A,\,\Omega^\infty\Sigma^\infty B]_*$, where $\Omega^\infty\Sigma^\infty B:=\colim_r \Omega^r\Sigma^rB$.
\end{lemma}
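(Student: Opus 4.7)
The plan is to obtain the bijection by combining two ingredients: the ordinary loop/suspension adjunction in each fixed suspension degree, and a compactness argument that allows the colimit to pass outside of $[A,-]_*$. Explicitly, the standard adjunction gives a natural bijection
\[
[\Sigma^r A,\, \Sigma^r B]_* \;\cong\; [A,\, \Omega^r \Sigma^r B]_*
\]
for each $r \geq 0$, and these bijections are compatible with the structure maps $[\Sigma^r A, \Sigma^r B]_* \to [\Sigma^{r+1}A, \Sigma^{r+1}B]_*$ (given by further suspension) on the left and $[A, \Omega^r\Sigma^r B]_* \to [A, \Omega^{r+1}\Sigma^{r+1}B]_*$ (given by post-composition with the unit $\Omega^r\Sigma^r B \to \Omega^{r+1}\Sigma^{r+1}B$) on the right. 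Passing to the colimit over $r$ thus produces a natural bijection
\[
\colim_r [\Sigma^r A,\, \Sigma^r B]_* \;\cong\; \colim_r [A,\, \Omega^r \Sigma^r B]_*.
\]
The left-hand side is the set of stable maps by definition, so the only thing left is to identify the right-hand side with $[A, \Omega^\infty\Sigma^\infty B]_* = [A, \colim_r \Omega^r \Sigma^r B]_*$.

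The substantive step, and the main obstacle, is therefore the commutation
\[
\colim_r [A,\, \Omega^r\Sigma^r B]_* \;\cong\; [A,\, \colim_r \Omega^r\Sigma^r B]_*,
\]
which is not formal: $[A,-]_*$ need not commute with sequential colimits in general. Here is where the hypothesis that $A$ is a finite CW complex enters. The maps $\Omega^r\Sigma^r B \to \Omega^{r+1}\Sigma^{r+1}B$ are given by looping the unit of the adjunction and, after replacing by the mapping cylinder if necessary, may be taken to be inclusions of closed subspaces that are relatively $T_1$ (in fact Hurewicz cofibrations). Since $A$ is compact, Proposition \ref{prop:factor through colimit} applies to show that every pointed map $A \to \Omega^\infty\Sigma^\infty B$ factors through some finite stage $\Omega^r\Sigma^r B$, giving surjectivity of the comparison map. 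Applying the same argument to $A \times [0,1]$, which is again compact, shows that any two such maps which become homotopic in the colimit are already homotopic at some finite stage, giving injectivity.

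Putting these two steps together yields the claimed natural bijection between stable maps $A \to B$ and $[A, \Omega^\infty\Sigma^\infty B]_*$. Naturality in $B$ is automatic from naturality of the adjunction and the colimit, and naturality in $A$ (for stable maps in the appropriate sense) follows likewise. The compactness step is the essential content; everything else is the standard adjunction combined with a colimit manipulation.
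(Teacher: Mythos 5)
Your argument is correct and is exactly what the paper's terse proof (``by finiteness and adjunction'') abbreviates: you apply the loop-suspension adjunction at each finite stage and then use compactness of the finite CW complex $A$ to commute the sequential colimit past $[A,-]_*$. The extra care you take to justify the colimit commutation---invoking Proposition~\ref{prop:factor through colimit} for surjectivity, and its application to $A\times[0,1]$ for injectivity---is precisely the content the paper's proof leaves implicit.
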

\begin{proof}
By finiteness and adjunction, we have \begin{align*}
[A,\Omega^\infty\Sigma^\infty B]_*&=[A,\colim_r\Omega^r\Sigma^r B]_*\\
&\cong \colim_r[A, \Omega^r\Sigma^rB]_*\\
&\cong \colim_r[\Sigma^r A, \Sigma^rB]_*.
\end{align*}
\end{proof}

\subsection{Spanier--Whitehead category} In order to see why this lemma should be true, we locate the notion of stable weak equivalence within a convenient categorical context, which is a slight variant of that introduced in \cite{SpanierWhitehead:FAHT}.

\begin{definition}
The \emph{Spanier--Whitehead category} is the category $\SW$ in which an object is a pair $(A,m)$ of a pointed CW complex and an integer, the set of morphisms are given by \[\SW\left((A,m), (B,n)\right)=\colim_r \left[\Sigma^{r+m}A, \Sigma^{r+n}B\right]_*,\] where the colimit is taken over the set of natural numbers $r$ such that $r+m$ and $r+n$ are both nonnegative, and composition is defined in the same manner as composition of stable maps.
\end{definition}

We begin with a few basic observations on this category.
\begin{enumerate}
\item The full subcategory of objects of the form $(A,0)$ is the category stable maps between pointed CW complexes from the previous lecture. Note, however, that this subcategory is not closed under isomorphism in $\SW$.
\item The assignment $A\mapsto (A,0)$ extends to a functor $\Ho(\Top_*)\to \SW$ fitting into the commuting diagram \[\xymatrix{
\Ho(\Top_*)\ar[d]_-\Sigma\ar[r]& \SW\ar[d]^-\Sigma&(A,m)\ar@{|->}[d]\\
\Ho(\Top_*)\ar[r]&\SW&(\Sigma A, m).
}\] 
\item The class of the isomorphism \[\{\Sigma^m\Sigma A\cong\Sigma^{m+1}A\}\in [\Sigma^m\Sigma A,\Sigma^{m+1}A]_*\to\SW\left((\Sigma A,m),(A,m+1)\right)\] defines a natural isomorphism $(\Sigma A, m)\cong (A,m+1)$, from which we conclude that $\Sigma:\SW\to \SW$ is an equivalence of categories with quasi-inverse $\Sigma^{-1}(A,m)=(A,m-1)$. In fact, the pair $(\SW,\Sigma)$ is universal with respect to this property in an appropriate sense---see \cite{DellAmbrogio:SWCAT}.
\item Any finite diagram in $\SW$, after finitely many applications of the functor $\Sigma$, may be realized by a homotopy commutative diagram of CW complexes.
\item Since $(A,m)\cong(\Sigma A,m-1)$, the functor $\SW((A,m),-)$ is naturally valued in groups; moreover, since $(A,m)\cong(\Sigma A,m-2)$, these groups are Abelian. This extra structure witnesses $\SW$ as a \emph{preadditive} category, which is to say a category enriched in Abelian groups.
\item Fix $(A,m)$ and $(B,n)$, and let $N\geq 0$ be large enough so that $m+N$ and $n+N$ are both nonnegative. There is a natural chain of isomorphisms \begin{align*}
\SW\left((\Sigma^{m+N}A\vee \Sigma^{n+N}B, -N), (C,p)\right)&= \colim_{r}\left[\Sigma^{r-N}(\Sigma^{m+N}A\vee\Sigma^{n+N}B), \Sigma^{p+r}C\right]_*\\
&\cong\colim_{r}\left[\Sigma^{m+r}A\vee \Sigma^{n+r}B, \Sigma^{p+r}C\right]_*\\
&\cong \colim_{r}\left(\left[\Sigma^{m+r}A, \Sigma^{p+r}C\right]_*\times  \left[\Sigma^{n+r}B, \Sigma^{p+r}C\right]_*\right)\\
&\cong \colim_{r}\left[\Sigma^{m+r}A, \Sigma^{p+r}C\right]_*\times  \colim_r\left[\Sigma^{n+r}B, \Sigma^{p+r}C\right]_*\\
&\cong \SW\left((A,m), (C,p)\right)\times\SW\left((B,n), (C,p)\right),
\end{align*} which exhibits a coproduct of $(A,m)$ and $(B,n)$. In the same way, we see that $\SW$ has finite coproducts, and, since $\SW$ is preadditive, finite biproducts \cite[VIII:2]{MacLane:CWM}; in other words, $\SW$ is an \emph{additive} category.
\end{enumerate}

\subsection{Triangulated structure}
In fact, there is more structure to be uncovered.

\begin{definition}
Let $\op C$ be an additive category with an additive self-equivalence $\Sigma:\op C\to \op C$. A \emph{triangulation} of $\op C$ is a class $\op T$ of triples $(f,g,h)$ of morphisms of the form \[A\xrightarrow{f} B\xrightarrow{g} C\xrightarrow{h} \Sigma A,\] which satisfy the following axioms.
\begin{enumerate}
\item For every $A\in\op C$, $(\id_A,0,0)\in \op T$.
\item For each morphism $f$, there exists $(f,g,h)\in \op T$.
\item The class $\op T$ is closed under isomorphism.
\item If $(f,g,h)\in \op T$, then $(g,h,-\Sigma f)\in\op T$.
\item Given the solid commuting diagram \[\xymatrix{
A\ar@{=}[d]\ar[r]& B\ar[d]\ar[r]&C\ar@{-->}[d]\ar[r]&\Sigma A\ar@{=}[d]\\
A\ar[r]& B'\ar[d]\ar[r]&C'\ar@{-->}[d]\ar[r]&\Sigma A\\
&D\ar[d]\ar@{=}[r]&D\ar@{-->}[d]\\
&\Sigma B\ar[r]&\Sigma C
}\] in which the rows and lefthand column lie in $\op T$, the dashed fillers exist making the entire diagram commute, and the righthand column also lies in $\op T$.
\end{enumerate}
\end{definition}

\begin{remark}
Various equivalent combinations of axioms are possible. We follow \cite{May:ATC}.
\end{remark}

\begin{remark}
The elements of $\op T$ are typically referred to as \emph{distinguished triangles}, and the crucial fifth axiom is known variously as the ``octahedral axiom,'' for one of its visual representations, and ``Verdier's axiom.'' One way to understand this axiom is as enforcing a kind of third isomorphism theorem in $\op C$, i.e., that the ``quotient'' of $B'/A$ by $B/A$ should coincide with $B'/B$.
\end{remark}

We now seek to apply this formalism to the Spanier--Whitehead category.

\begin{definition}
A sequence $(A,m)\to (B,n)\to (C,p)$ in $\SW$ is a \emph{cofiber sequence} if, after applying $\Sigma^N$ for some $N\in\mathbb{Z}$, it becomes isomorphic to the image of a cofiber sequence in $\Ho(\Top_*)$.
\end{definition}

Recall that, for any map $f:A\to B$, we obtain a canonical map $C(f)\to \Sigma A$ by collapsing $B\subseteq C(f)$. Thus, any cofiber sequence in $\Ho(\Top_*)$ extends canonically to a sequence of the form \[A\xrightarrow{f} B\xrightarrow{g} C\xrightarrow{h} \Sigma A.\]

We only sketch the proof of the following fundamental result in stable homotopy theory. For a detailed proof in an expanded context, see \cite[A.12]{Schwede:TTC}, for example.

\begin{theorem}[Puppe]
The collection of cofiber sequences is a triangulation of $\SW$.
\end{theorem}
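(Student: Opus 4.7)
The plan is to reduce each of the five axioms to verifications at the level of $\Ho(\Top_*)$, exploiting the fact that any finite diagram in $\SW$ becomes representable, after suitably many suspensions, by a homotopy commutative diagram of pointed CW complexes (observation (4) in the text). Throughout, we may replace any point-set representative $f \colon A \to B$ of a morphism in $\SW$ by the inclusion of the mapping cylinder $A \hookrightarrow \mathrm{Cyl}(f)$, which is a Hurewicz cofibration homotopy equivalent to $f$, so that $A \to B \to C(f)$ becomes a cofiber sequence in the sense of the recollection.

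First I would dispatch axioms (1)–(3). For (1), the mapping cone of $\id_A$ is $CA \simeq *$, so the sequence $A \xrightarrow{\id} A \to * \to \Sigma A$ is a cofiber sequence, which after passing to $\SW$ is exactly $(\id_A, 0, 0)$. Axiom (2) is immediate: represent an arbitrary morphism by a pointed map of CW complexes and form its mapping cone as above. Axiom (3) follows because cofiber sequences are defined up to isomorphism in $\SW$ by fiat.

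Next I would prove the rotation axiom (4). The key input is the classical Puppe sequence: if $A \xrightarrow{f} B \xrightarrow{g} C(f) \xrightarrow{h} \Sigma A$ is the standard cofiber sequence coming from a Hurewicz cofibration, then the natural map $C(g) \to \Sigma A$ collapsing $B$ is a homotopy equivalence, and under this identification the composite $C(f) \xrightarrow{h} \Sigma A \xrightarrow{\Sigma f} \Sigma B$ is homotopic to $-\Sigma f$; the sign is forced by the convention that $h$ collapses $B$ while the equivalence $C(g) \simeq \Sigma A$ collapses $B$ in the opposite direction of the cone coordinate. Hence $(g, h, -\Sigma f)$ is itself the cofiber sequence associated to $g$, up to an isomorphism in $\SW$, and by closure under isomorphism it lies in $\op T$.

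The hard step will be the octahedral axiom (5). Given distinguished triangles realizing $f \colon A \to B$ with cofiber $C$, and $gf \colon A \to B'$ with cofiber $C'$, I would model everything by a nested sequence of Hurewicz cofibrations $A \hookrightarrow B \hookrightarrow B'$ (replacing $B \to B'$ by its mapping cylinder, then replacing the composite by a cofibration using the standard small object or cylinder construction). With this pointset model, set $C = B/A$, $C' = B'/A$, and $D = B'/B$, all formed as honest quotients. The quotient maps fit into a strictly commuting diagram of the shape displayed in axiom (5), and the sequence $C \to C' \to D$ is identified with the cofiber sequence arising from the cofibration $B/A \hookrightarrow B'/A$, whose cofiber is $(B'/A)/(B/A) \cong B'/B = D$ by the ``third isomorphism theorem'' for pointed quotients of cofibrations. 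The connecting map $D \to \Sigma C$ is then given by the composite $B'/B \xrightarrow{\simeq} \Sigma(B/A) \xrightarrow{\Sigma(\mathrm{id})} \Sigma C$ provided by collapsing the cone coordinate, and a direct check on the pointset model shows that the two ways to go from $D$ to $\Sigma B$ around the lower right square agree up to the canonical homotopy between $B'/B \to \Sigma B$ and $B'/B \to \Sigma C \to \Sigma B$. Passing to $\SW$, this realizes all five maps and verifies both that the dashed fillers exist and that the right column is a distinguished triangle, completing the octahedral axiom. I expect the only real subtlety here to be bookkeeping the signs and the homotopies of the connecting maps; all of this is standard Puppe sequence manipulation, carried out once in $\Top_*$ and then transported to $\SW$.
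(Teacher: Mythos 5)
Your proposal is correct and takes essentially the same route as the paper's sketch: routine verification of (1) and (3), mapping cones for (2), the classical Puppe rotation for (4), and the ``third isomorphism theorem'' for nested cofibrations for (5), after suspending so that finite diagrams in $\SW$ are realized by genuine CW complexes. One small detail the paper flags that you glide past in axiom (2): the mapping cone of an arbitrary continuous map between CW complexes need not be a CW complex, so one should first invoke cellular approximation to replace the representative by a cellular map, ensuring the resulting cofiber is again an object of $\SW$.
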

\begin{proof}[Sketch proof]
The first and third axioms are obvious, and the second follows from cellular approximation and the observation that the mapping cone of a cellular map between CW complexes is again a CW complex. After suspending, the fourth axiom follows from the standard fact that the rotation \[B\xrightarrow{g} C\xrightarrow{h} \Sigma A\xrightarrow{-\Sigma f} \Sigma B\] of a cofiber sequence is again a cofiber sequence \cite[8.4]{May:CCAT}. Finally, again after suspending, the fifth axiom reduces to checking that, given maps $f:A\to B$ and $f':B\to B'$, the natural sequence \[C(f)\to C(f'\circ f)\to C(f')\] is a cofiber sequence. After replacing $f$ and $f'$ by cofibrations, this claim follows from the homeomorphism \[\frac{B'/A}{B/A}\cong \frac{B'}{B}.\]
\end{proof}

\subsection{Consequences}
An important consequence of this structure is the following result, which is valid in any triangulated category---see \cite[13.4.2]{Stacks}, for example.

\begin{corollary}
For any $(A,m)\in \SW$, the functors $\SW\left((A,m),-\right)$ and $\SW\left(-,(A,m)\right)$ each send cofiber sequences to long exact sequences of Abelian groups.
\end{corollary}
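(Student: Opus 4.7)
The plan is to exploit the fact that, in a triangulated category, a long exact sequence is just infinitely many short exactness statements glued together, and by the rotation axiom (axiom 4) all of these short exactness statements are instances of a single one. Concretely, suppose $(A,m)\xrightarrow{f}(B,n)\xrightarrow{g}(C,p)\xrightarrow{h}\Sigma(A,m)$ is a distinguished triangle. Iteratively applying axiom 4 shows that each of its rotations is again distinguished, producing the infinite sequence
\[
\cdots\to \Sigma^{-1}(C,p)\to (A,m)\xrightarrow{f}(B,n)\xrightarrow{g}(C,p)\xrightarrow{h}\Sigma(A,m)\xrightarrow{-\Sigma f}\Sigma(B,n)\to\cdots
\]
Thus for the covariant functor $\SW((X,q),-)$ it will suffice to prove exactness at the middle spot $\SW((X,q),(B,n))$ of the three-term sequence obtained from any distinguished triangle, and dually for the contravariant functor it suffices to prove exactness at $\SW((B,n),(X,q))$.

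I would carry out the covariant case first; the contravariant case is entirely parallel. The first step is to show $g\circ f=0$. By axiom 1, $(A,m)\xrightarrow{\id}(A,m)\to 0\to \Sigma(A,m)$ is distinguished. I plan to combine this with the given triangle in the pattern of axiom 5: placing $\id_A$ as the left column and $f$ above it, the octahedral axiom supplies dashed fillers, one of which is a map $0\to (C,p)$ whose existence forces the composite $g\circ f$ to factor through $0$ and hence vanish. Given this, $\mathrm{im}(f_*)\subseteq \ker(g_*)$ is automatic.

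The reverse inclusion $\ker(g_*)\subseteq \mathrm{im}(f_*)$ is the heart of the matter, and I expect it to be the main obstacle, since it requires producing an actual lift rather than verifying an equation. Given $\varphi\colon(X,q)\to (B,n)$ with $g\circ\varphi=0$, I would form the distinguished triangle $(X,q)\xrightarrow{\id}(X,q)\to 0\to\Sigma(X,q)$ provided by axiom 1, and then set up an octahedron (axiom 5) using $\varphi$ and the given triangle: the vanishing of $g\circ\varphi$ makes the relevant square commute, and the dashed filler produced by the octahedral axiom provides a morphism $(X,q)\to(A,m)$ whose post-composition with $f$ equals $\varphi$ up to the identifications in the diagram. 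This gives the required preimage, completing exactness at $\SW((X,q),(B,n))$.

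Finally, I would transport this argument along all rotations of the triangle (using axiom 4) to obtain the full long exact sequence, and observe that the groups involved are Abelian groups because $\SW$ is an additive category, with the boundary map $\SW((X,q),(C,p))\to \SW((X,q),\Sigma(A,m))$ given by post-composition with $h$. The contravariant case proceeds by the same template, replacing every appeal to axiom 5 by its evident dual instance. The main obstacle in both cases is the lifting step, which forces one to invoke the octahedral axiom rather than the more elementary axioms 1–4.
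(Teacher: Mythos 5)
The paper offers no proof of this corollary; it simply notes the statement holds in any triangulated category and cites \cite{Stacks}. So your proof is a genuine self-contained attempt, and its overall architecture---use rotation (axiom 4) to reduce to exactness at a single spot, then verify that spot---is exactly the standard argument. The reduction by rotation is correct (the sign $-\Sigma f$ is harmless since $\ker$ and $\mathrm{im}$ ignore signs), and the observation that additivity of $\SW$ makes the hom-sets Abelian groups is in order. Your argument that $g\circ f=0$ does go through via the octahedral axiom, though your description of who plays which role in the octahedron is garbled: the correct setup takes the top row to be the triangle on $\id_A$ (so its third object is $0$), the second row to be the given triangle on $f$, and the left column to be the triangle on $f$ again (since $\id_A$ followed by $f$ is $f$); the commutativity of the dashed square produced by the axiom then reads $g\circ f = (\text{dashed map})\circ 0 = 0$.

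The gap is in the lifting step, and it is a real one. You claim the octahedral axiom directly produces a map $(X,q)\to(A,m)$ lifting a given $\varphi$ with $g\circ\varphi=0$, but the dashed fillers in axiom 5 are maps between the \emph{third} objects of the three input triangles, and setting up an octahedron on $\varphi$ and $g$ yields maps involving the cone of $\varphi$ and the object $\Sigma X\oplus C$---nothing that obviously lands in $A$. What the lifting step actually requires is the ``fill-in'' or morphism axiom (TR3 in Verdier's numbering): given a commuting square between the first two entries of two triangles, a compatible map of third entries exists. Applying TR3 to the square comparing the rotated triangle on $\id_X$ with the rotated given triangle, using $g\circ\varphi=0$ to make the square commute, produces precisely the desired lift after desuspension. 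But TR3 is deliberately absent from the paper's axiom list (which follows \cite{May:ATC}, where it is shown to be \emph{derivable} from the octahedral axiom), and that derivation is itself a nontrivial lemma, not a one-line consequence. So your proof as written elides the substantial intermediate step ``octahedron $\Rightarrow$ fill-in lemma''; you should either prove that implication first or invoke it explicitly from May's paper before using it for the lift.
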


Since $\pi_i^s(A)=\SW\left((S^0, i), (A,0)\right)$, Lemma \ref{lem:five lemma} now follows by the five lemma.

We also have the following appealing interpretation of stable weak equivalences.

\begin{corollary}\label{cor:finite stable weak equivalence}
If $A$ and $B$ are finite CW complexes and $f:(A,m)\to (B,n)$ induces an isomorphism \[\SW\left((S^0,i),(A,m)\right)\xrightarrow{\simeq}\SW\left((S^0,i),(B,n)\right)\] for every $i\in \mathbb{Z}$, then $f$ is an isomorphism. In particular, two finite CW complexes are stably weakly equivalent if and only if they become isomorphic in $\SW$.
\end{corollary}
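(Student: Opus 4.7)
The plan is to extend $f$ to a distinguished triangle in $\SW$ and argue that the cofiber is a zero object. First, using Puppe's theorem, I would complete $f$ to a cofiber sequence
\[
(A,m) \xrightarrow{f} (B,n) \to (C,p) \to \Sigma(A,m).
\]
After sufficiently many suspensions and replacing by a cellular representative, this triangle becomes isomorphic to one arising from an honest mapping cone in $\Top_*$; since $A$ and $B$ are finite CW complexes, so is this cone, so I may take $C$ to be a finite pointed CW complex. Applying the long exact sequence coming from the previous corollary to the representable functors $\SW((S^0,i),-)$ and using the hypothesis on $f$ together with the five lemma, I conclude that $\SW((S^0,i),(C,p)) = 0$ for every $i \in \mathbb{Z}$, that is, all stable homotopy groups of $C$ vanish.

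The heart of the argument is to upgrade this vanishing of stable homotopy groups to the statement that $(C,p) \cong 0$ in $\SW$. Since $\Sigma$ is an equivalence on $\SW$ it suffices to treat $(C,0)$. By Lemma~\ref{lem:stable maps adjunction}, the endomorphism set $\SW((C,0),(C,0))$ is in natural bijection with $[C, \Omega^\infty\Sigma^\infty C]_*$; this is the step that uses finiteness of $C$ essentially and avoids any appeal to spectra. The vanishing $\pi_i^s(C) = 0$ for all $i\in\mathbb{Z}$ implies $\pi_i(\Omega^\infty\Sigma^\infty C) = 0$ for all $i \geq 0$, so $\Omega^\infty\Sigma^\infty C$ is weakly contractible. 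Since $C$ is a CW complex, $[C, \Omega^\infty\Sigma^\infty C]_*$ is then a singleton, and in particular $\id_{(C,0)} = 0$ in $\SW$. Because $\SW$ is additive, any object whose identity equals the zero morphism is isomorphic to the zero object, so $(C,p) \cong 0$.

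It then remains to observe that the first map in a distinguished triangle $X \xrightarrow{\varphi} Y \to 0 \to \Sigma X$ is automatically an isomorphism: applying $\SW(-,Z)$ for arbitrary $Z$ and invoking the long exact sequence, the vanishing $\SW(0,Z) = 0 = \SW(\Sigma 0, Z)$ shows that $\varphi^*$ is a bijection for all $Z$, whence $\varphi$ is an isomorphism by Yoneda. Applied to our situation this gives the first claim. The ``in particular'' then follows at once: a stable weak equivalence $A\to B$, viewed as a morphism $(A,0) \to (B,0)$ in $\SW$, satisfies the hypothesis of the first part and so is an isomorphism in $\SW$; conversely, an isomorphism in $\SW$ induces isomorphisms on every representable $\SW((S^0,i),-)$, which is precisely the assertion that the underlying stable map is a stable weak equivalence.

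The main obstacle I expect is the middle step, namely producing the isomorphism $(C,0)\cong 0$ from the vanishing of stable homotopy groups. The argument I outlined is very clean but crucially exploits the finiteness hypothesis through Lemma~\ref{lem:stable maps adjunction}: without finiteness one cannot commute $[-,\Omega^\infty\Sigma^\infty(-)]_*$ past the filtered colimit defining $\Omega^\infty\Sigma^\infty$, and the naive analog of the statement fails—indeed this is precisely the boundary beyond which one is forced to introduce the full machinery of spectra.
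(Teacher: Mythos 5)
Your proof is correct, and it takes a genuinely different route from the paper's. The paper argues by skeletal induction: using the (finite) cell decomposition of a finite CW complex $C$ and the long exact sequences arising from the skeletal cofiber sequences, one shows directly that $f_*:\SW((C,p),(A,m))\to\SW((C,p),(B,n))$ is an isomorphism for \emph{every} finite $(C,p)$, and then concludes by the Yoneda lemma for the full subcategory of $\SW$ spanned by finite CW complexes. You instead pass to the cofiber of $f$, observe that its $\SW((S^0,i),-)$-groups vanish, and then upgrade this to the statement that the cofiber is a zero object via Lemma~\ref{lem:stable maps adjunction}: the endomorphism set of $(C,0)$ is $[C,\Omega^\infty\Sigma^\infty C]_*$, which is a singleton once all stable homotopy groups of $C$ vanish, forcing $\id_{(C,0)}=0$. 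The paper's argument stays entirely within the triangulated structure of $\SW$ and leans on a Yoneda principle; yours exits to $\Ho(\Top_*)$ through the $\Sigma^\infty\dashv\Omega^\infty$ comparison. The two are comparably elementary, but your version has the virtue of pinpointing precisely where finiteness is needed — in the commutation of $[-,\Omega^\infty\Sigma^\infty(-)]_*$ past the filtered colimit — whereas in the paper that hypothesis is distributed between the finite skeletal induction and the restricted Yoneda step. Both arguments are standard moves in a triangulated category; the paper's is ``spheres detect isomorphisms by Yoneda,'' yours is ``a triangle with zero cofiber has its first map an isomorphism,'' which is the contrapositive way of saying the same thing.
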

\begin{proof}
By induction on the skeletal filtration of $C$, using the fact that cofiber sequences induce long exact sequences, one shows that \[\SW\left((C,p),(A,m)\right)\xrightarrow{\simeq}\SW\left((C,p),(B,n)\right)\] for any finite CW complex $C$ and $p\in\mathbb{Z}$. The claim now follows from the Yoneda lemma for the subcategory of $\SW$ on the finite CW complexes.
\end{proof}

\begin{corollary}\label{cor:stable weak equivalence homology}
Stable weak equivalences induce isomorphisms on homology.
\end{corollary}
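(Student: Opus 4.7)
The plan is to promote reduced singular homology to a functor $\widetilde H_* : \SW \to \mathrm{Ab}$ by setting $\widetilde H_i(A, m) := \widetilde H_{i+m}(A)$, with action on morphisms defined via suspension isomorphisms to compensate for the stabilization inherent in a stable map; the resulting functor sends the distinguished triangles of Puppe's theorem to long exact sequences, because such triangles are, after sufficient suspension, induced by genuine cofiber sequences of pointed CW complexes. Given a stable weak equivalence $f : (A, m) \to (B, n)$, I would extend it to a distinguished triangle $(A,m) \xrightarrow{f} (B,n) \to (C,p) \to \Sigma(A,m)$ via the second triangulation axiom. Applying $\SW((S^0, i), -)$ yields a long exact sequence which, combined with the hypothesis that $f$ induces isomorphisms on all $\pi_i^s$, forces $\pi_*^s(C,p) = 0$; the parallel long exact sequence in $\widetilde H_*$ then reduces the corollary to the following key lemma: \emph{if $X$ is a pointed CW complex with $\pi_i^s(X) = 0$ for every $i$, then $\widetilde H_*(X) = 0$.}

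To prove the lemma, I would first suspend $X$ once if necessary so that it is path-connected, which preserves both the hypothesis (via the shift $\pi_i^s(\Sigma X) \cong \pi_{i-1}^s(X)$) and the desired conclusion (via the suspension isomorphism on $\widetilde H_*$). Assuming $X$ connected, I then induct on $n$, showing $\widetilde H_n(X) = 0$; the base case $n = 0$ is connectedness. For the inductive step, assume $\widetilde H_i(X) = 0$ for all $i < n$ and pick $N$ much larger than $n$. The suspension isomorphism gives $\widetilde H_i(\Sigma^N X) = 0$ for $i < N + n$, and since $\Sigma^N X$ is automatically simply connected for $N \geq 1$, iterated application of the absolute Hurewicz theorem upgrades this homological vanishing to $(N + n - 1)$-connectedness and an isomorphism $\pi_{N+n}(\Sigma^N X) \cong \widetilde H_{N+n}(\Sigma^N X) \cong \widetilde H_n(X)$.

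At this point Freudenthal's suspension theorem takes over: since $N > n$, the group $\pi_{N+n}(\Sigma^N X)$ sits in the stable range and therefore coincides with $\pi_n^s(X)$, which vanishes by assumption; hence $\widetilde H_n(X) = 0$, completing the induction.

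The principal obstacle is the key lemma, whose proof requires a careful quantitative interplay between Hurewicz (needing $\Sigma^N X$ to be $(N+n-1)$-connected in order to produce the isomorphism on $\pi_{N+n}$) and Freudenthal (needing $N + n \leq 2N$, i.e.\ $N \geq n$, to put us in the stable range); both are arranged by taking $N$ sufficiently large compared to $n$. The remaining formal steps—extending $\widetilde H_*$ to a functor on $\SW$ and verifying that distinguished triangles yield long exact sequences in homology—are routine in light of the construction of $\SW$ and of its triangulation via cofiber sequences of pointed CW complexes.
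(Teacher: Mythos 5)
Your proof is correct and takes a genuinely different route from the paper's. The paper's argument is shorter: it reuses Corollary \ref{cor:finite stable weak equivalence} to see that a stable weak equivalence between \emph{finite} CW complexes is already an isomorphism in $\SW$, observes that homology descends to a functor on $\SW$ and so carries isomorphisms to isomorphisms, and then extends from finite to arbitrary CW complexes using that singular homology commutes with filtered colimits. You instead bypass Corollary \ref{cor:finite stable weak equivalence} and the finite/infinite distinction entirely by working directly with the triangulated structure: you extend the stable weak equivalence to a distinguished triangle, read off from the long exact sequence for $\SW((S^0,i),-)$ that the cofiber has vanishing stable homotopy, and then establish a stable Hurewicz principle---vanishing stable homotopy implies vanishing reduced homology---via the usual quantitative interplay of Hurewicz and Freudenthal. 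Your approach handles finite and infinite complexes uniformly, at the cost of re-deriving stable Hurewicz instead of leaning on the Yoneda-style argument already in place; the paper's approach is briefer but requires the extra filtered-colimit step. One small correction: with the paper's convention $(\Sigma A,m)\cong(A,m+1)$, the definition $\widetilde H_i(A,m):=\widetilde H_{i+m}(A)$ is not compatible with suspension; it should be $\widetilde H_i(A,m):=\widetilde H_{i-m}(A)$, so that applying $\Sigma$ shifts the homological degree up by one on both sides. This is a sign slip and does not affect the substance of the argument.
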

\begin{proof}
By the suspension isomorphism, homology descends to a functor on $\SW$. Since any functor sends isomorphisms to isomorphisms, the previous corollary implies the claim for stable weak equivalences between finite CW complexes. Since homology commutes with filtered colimits, the general case follows.
\end{proof}

\begin{remark}
It is a somewhat paradoxical fact that, although, as we have seen, the Spanier--Whitehead category effectively captures the stable phenomenon of homotopy behaving homologically, we in fact lose almost all Eilenberg-MacLane spaces upon passage to $\SW$, since suspending destroys the property of being a $K(G,n)$. On the other hand, the Freudenthal suspension theorem guarantees that the homotopy groups remain correct in a range, and the map \[\Sigma K(G,n)\to K(G,n+1)\] adjoint to the weak equivalence $K(G,n)\xrightarrow{\sim} \Omega K(G,n+1)$ exhibits a sequence of objects in $\SW$ that we might think should converge to the missing Eilenberg-MacLane object. This line of reasoning is one way to motivate the enlargement of the Spanier--Whitehead category to the full stable homotopy category or homotopy category of spectra---see \cite{Puppe:OSHC}, for example. Another motivation is taken up below.
\end{remark}

\subsection{Filtered stable weak equivalences} The purpose of this section is to address and partially remove the finiteness assumption in Corollary \ref{cor:finite stable weak equivalence}. We begin by noting that the assumption would have been unnecessary had we been assured that $(C,p)$ is the colimit in $\SW$ of the objects $(\sk_k(C),p)$. Unfortunately, we have the computation \begin{align*}
\SW\left((C,p), (A,m)\right)&=\textstyle{\colim_r}\left[\Sigma^{p+r}C, \Sigma^{m+r}A\right]_*\\
&\cong{\textstyle\colim_r}\,\pi_0\,{\textstyle\Map_*}\left(\Sigma^{p+r}C, \Sigma^{m+r}A\right)\\
&\cong{\textstyle\colim_r}\,\pi_0\left({\textstyle{\holim_k}}\,{\textstyle{\Map_*}}\left(\Sigma^{p+r}\sk_k(C),\Sigma^{m+r}A\right)\right),
\end{align*} and the formation of homotopy groups fails in general to commute with sequential homotopy limits; indeed, this failure is measured by the \emph{Milnor exact sequence} \[0\to \textstyle\lim^1_k\pi_{i+1}(X_k)\to \pi_i\left(\holim_k X_k\right)\to \lim_k\pi_i(X_k)\to0.\] To summarize the problem, the category $\SW$ lacks certain filtered colimits that we might naively expect it to have.

In these notes, we adopt a somewhat ad hoc solution to this problem, which is nevertheless sufficient for our needs (but see Remark \ref{rmk:spectra} below). We make the following (non-standard) definition.

\begin{definition}
Let $X=\bigcup_{k\geq1} X_k$ and $Y=\bigcup_{k\geq1} Y_k$ be filtered pointed spaces. A \emph{filtered stable weak equivalence} is a collection of stable weak equivalences $f_k:X_k\xrightarrow{\sim_s} Y_k$ fitting into a commuting diagram \[\xymatrix{
X_{k-1}\ar[d]\ar[r]^-{f_{k-1}}&Y_{k-1}\ar[d]\\
X_k\ar[r]^-{f_k}&Y_k
}\] of stable maps.
\end{definition}

Since homology commutes with filtered colimits, we have the following consequence of Corollary \ref{cor:stable weak equivalence homology}.

\begin{corollary}\label{cor:filtered stable weak equivalence homology}
Filtered stable weak equivalences induce isomorphisms on homology.
\end{corollary}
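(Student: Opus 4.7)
The plan is to combine Corollary \ref{cor:stable weak equivalence homology}, which handles a single stable weak equivalence, with the fact that singular homology commutes with suitable filtered colimits of spaces. First, I would observe that the data of a filtered stable weak equivalence $\{f_k : X_k \xrightarrow{\sim_s} Y_k\}$ assembles into a commuting ladder in which each rung is a stable weak equivalence and the vertical maps are the structure maps of the filtrations of $X$ and $Y$. Applying singular homology levelwise and invoking Corollary \ref{cor:stable weak equivalence homology} rung by rung, I obtain an induced ladder of graded abelian groups \[\xymatrix{H_*(X_{k-1})\ar[d]\ar[r]^-{\simeq}&H_*(Y_{k-1})\ar[d]\\ H_*(X_k)\ar[r]^-{\simeq}&H_*(Y_k),}\] all of whose horizontal arrows are isomorphisms.

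Next, I would pass to the colimit in the $k$-direction. The key fact I need is that singular homology commutes with sequential colimits along the kinds of filtrations that arise in our applications (where $X_{k-1} \subseteq X_k$ is a reasonable inclusion, e.g.\ relatively $T_1$ or a Hurewicz cofibration, so that the image of any singular simplex $\Delta^n \to X$ factors through some finite stage $X_k$ by compactness, just as in Proposition \ref{prop:factor through colimit}). This yields $H_*(X) \cong \colim_k H_*(X_k)$ and similarly for $Y$. Since a sequential colimit of isomorphisms of abelian groups is an isomorphism, the induced map $H_*(X) \to H_*(Y)$ is an isomorphism.

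The main obstacle, such as it is, is bookkeeping regarding the compatibility of the stable maps $f_k$ with the filtration. A stable map $f_k$ is represented by an honest based map $\Sigma^{r_k} X_k \to \Sigma^{r_k} Y_k$ only after some suspension, and the suspension degrees $r_k$ may depend on $k$. However, one can always replace the representatives by suspensions of a common large $r$ through any fixed finite range of $k$, so the induced diagram on homology is unambiguously defined (the suspension isomorphism identifies $H_*(X_k)$ with $\widetilde H_{*+r}(\Sigma^r X_k)$ naturally in $r$), and no further obstruction arises when passing to the colimit. Thus the result follows immediately from the combination of the single-map statement and the compactness argument identifying $H_*$ with the colimit of $H_*$ of the filtration stages.
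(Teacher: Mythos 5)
Your proposal is correct and takes essentially the same route as the paper, which simply observes that the result follows from Corollary \ref{cor:stable weak equivalence homology} together with the fact that homology commutes with (suitable) filtered colimits. Your additional remarks---that one should check compatibility of the stable-map representatives across filtration levels, and that the identification $H_*(X)\cong\colim_k H_*(X_k)$ relies on the filtration being by sufficiently nice inclusions so that compact sets factor through a finite stage---are sound and correctly address the minor points the paper leaves implicit.
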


\begin{remark}\label{rmk:spectra} A more principled solution to the problem of finiteness is provided by the category $\Sp$ of spectra, which may be constructed in many inequivalent ways, all of which yield the same homotopy category, typically called the \emph{stable homotopy category}. Using the subscript $\mathrm{fin}$ to indiate full subcategories spanned by finite spaces, the relationships among the various categories in question may be summarized in the following commuting diagram \[\xymatrix{
\Top_*\ar[r]\ar[dd]_-{\Sigma^\infty}&\Ho(\Top_*)\ar[d]&\Ho(\Top_{*,\mathrm{fin}})\ar[l]\ar[d]\\
&\SW\ar@{-->}[d]&\SW_\mathrm{fin}\ar[l]\ar@{-->}[dl]\\
\Sp\ar[r]&\Ho(\Sp).
}\] The vertical dashed functor is badly behaved, being neither fully faithful nor essentially surjective, but the diagonal dashed functor is the inclusion of the full subcategory of compact objects in $\Ho(\Sp)$; indeed, this fact may be used as one of a set of axioms characterizing the homotopy theory of spectra. 

The category of spectra has many wonderful properties, but only two are directly relevant to our discussion: first, $\Ho(\Sp)$ is again a triangulated category, so arguments in $\SW$ translate directly to this new context; and $\Sigma^\infty C\cong\colim_k\Sigma^\infty \sk_k(C)$, repairing the deficiency of the Spanier--Whitehead category that hindered us before. 
\end{remark}

\section{Tate cohomology and periodicity}\label{appendix:periodicity}

The goal of this appendix is to give an indication of the proof of the periodicity theorem used in the computation of the mod $p$ cohomology of $B_p(\mathbb{R}^n)$ for $p$ odd. The idea that we will pursue is that, if multiplication by the class in question is to be an isomorphism, then it should have an inverse, which we might imagine to be given by ``multiplication by an element of negative degree.'' Somewhat surprisingly, this idea is not nonsense, and the framework that gives it sense is the framework of \emph{Tate cohomology}, which we review briefly. For a more complete survey, see \cite[XII]{CartanEilenberg:HA}.

\subsection{Tate cohomology}

\begin{definition}
Let $G$ be a finite group and $V$ a $\mathbb{Z}[G]$-module. The \emph{norm map} for $V$ is the dashed filler in the commuting diagram \[\xymatrix{
V\ar[d]\ar[r]^-{\sum_{G} g}&V\\
H_0(G;V)\ar@{-->}[r]^-N& H^0(G;V).\ar[u]
}\]
\end{definition}

The norm map permits the definition of a cohomology theory combining ordinary group cohomology and group homology.

\begin{definition}
Let $G$ be a finite group and $V$ a $\mathbb{Z}[G]$-module. The \emph{Tate cohomology} of $G$ with coefficients in $V$ is \[
\hat H^n(G;V)=\begin{cases}
H^n(G;V)&\quad n>0\\
\mathrm{coker}\,N&\quad n=0\\
\ker N&\quad n=-1\\
H_{-n-1}(G;V)&\quad n<-1.
\end{cases}
\]
\end{definition}

We now summarize a few facts about Tate cohomology.

\begin{enumerate}
\item Tate cohomology is the cohomology associated to a certain type of resolution \cite[XII.3.2]{CartanEilenberg:HA}. By definition, a \emph{complete resolution} is a commuting diagram of free $\mathbb{Z}[G]$-modules \[\xymatrix{
\cdots\ar[r]&X_1\ar[r]^-{\delta_1}&X_0\ar[rr]^-{\delta_0}\ar[dr]_-{\epsilon}&&X_{-1}\ar[r]^-{\delta_{-1}}&X_{-2}\ar[r]&\cdots\\
&&&\mathbb{Z}\ar[ur]_-{\eta}
}\] in which the infinite row is exact, and Tate cohomology may be computed as \[\hat H^n(G;V)\cong H^n\left(\Hom_{\mathbb{Z}[G]}(X_\bullet, V)\right).\]
\item An injective group homomorphism induces a restriction on Tate cohomology \cite[XII.8]{CartanEilenberg:HA}. Tate cohomology is not functorial for general group homomorphisms.
\item Tate cohomology is multiplicative in the sense that there is a unique $C_2$-equivariant graded homomorphism \[\hat H(G;V_1)\otimes \hat H(G; V_2)\to \hat H(G;V_1\otimes V_2)\] extending the cup product in group cohomology and compatible with connecting homomorphisms \cite[XII.4]{CartanEilenberg:HA}. In particular, $\hat H(G;\mathbb{Z})$ is a graded commutative ring and $\hat H(G;V)$ is a module over this ring.
\item Tate cohomology enjoys a self-duality \cite[XII.6.6]{CartanEilenberg:HA} of the form \[\hat H^n(G;\mathbb{Z})\cong \Hom_\mathbb{Z}\left(\hat H^{-n}(G;\mathbb{Z}), C_{|G|}\right).\]
\end{enumerate}

\begin{remark}
According to (3), there are multiplication maps of the form \[H_m(BG)\otimes H_n(BG)\to H_{m+n+1}(BG),\] which may be interpreted from the point of view of singular chains in terms of the join of simplices \cite{Tene:PNTCFG}.
\end{remark}

\begin{example}
The periodic resolution for the cyclic group $C_k$ extends to a complete resolution \[\xymatrix{
\cdots\ar[r]&\mathbb{Z}[C_k]\ar[r]^-{\sigma-1}&\mathbb{Z}[C_k]\ar[rr]^-{\sum_{i=1}^k\sigma^i}\ar[dr]_-{\epsilon}&&\mathbb{Z}[C_k]\ar[r]^-{\sigma-1}&\mathbb{Z}[C_k]\ar[r]&\cdots\\
&&&\mathbb{Z}\ar[ur]_-{\sum_{i=1}^k\sigma^i}
}\] and it follows that \[
\hat H^n(C_k;\mathbb{Z})\cong\begin{cases}
C_k&\quad n \text{ even}\\
0&\quad n \text{ odd.}
\end{cases}
\] As for multiplicative structure, one can show that $\hat H(C_k;\mathbb{Z})\cong C_k[\beta,\beta^{-1}]$, where $\beta$ is the class represented by the 2-cocycle in $\Hom_{\mathbb{Z}[C_k]}\left(\mathbb{Z}[C_k], \mathbb{Z}\right)\cong \Hom_\mathbb{Z}\left(\mathbb{Z},\mathbb{Z}\right)$ corresponding to the identity \cite[XII.7]{CartanEilenberg:HA}. 
\end{example}

\subsection{Periods}

The periodicity evident in the previous calculation is a special case, and also the source, of a more general phenomenon. For a $\mathbb{Z}[G]$-module $V$, write $\hat H(G;V)_p$ for the $p$-primary component of $\hat H(G;V)$ and $|G|_p$ for the largest power of $p$ dividing $G$.

\begin{proposition}
Multiplication by $\alpha\in \hat H^n(G;\mathbb{Z})$ induces an isomorphism on $\hat H(G;V)_p$ for every $\mathbb{Z}[G]$-module $V$ if and only if $\alpha$ has exact order $|G|_p$.
\end{proposition}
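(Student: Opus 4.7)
The plan is to prove the two directions separately, with the forward direction being essentially formal and the converse requiring both a reduction to a Sylow $p$-subgroup and the main technical input of Swan's periodicity theorem.

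For the ``only if'' direction, take $V=\mathbb{Z}$. Then the hypothesis gives an isomorphism $\cdot\alpha:\hat{H}^{0}(G;\mathbb{Z})_{p}\xrightarrow{\simeq}\hat{H}^{n}(G;\mathbb{Z})_{p}$. Since $\hat{H}^{0}(G;\mathbb{Z})\cong\mathbb{Z}/|G|$ by the calculation of the norm cokernel, its $p$-primary component is cyclic of order $|G|_{p}$, generated by the class of $1$. The isomorphism carries $1\mapsto\alpha$, so $\alpha$ must have order exactly $|G|_{p}$.

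The converse begins with a reduction to the Sylow case. Let $P\leq G$ be a $p$-Sylow subgroup. The standard double-coset formula shows that the composite $\mathrm{tr}\circ\mathrm{res}:\hat{H}(G;V)\to\hat{H}(P;V)\to\hat{H}(G;V)$ is multiplication by $[G:P]$, which is invertible on the $p$-primary part; in particular $\mathrm{res}$ is split injective on $p$-primary Tate cohomology, and multiplication by $\alpha$ on $\hat{H}(G;V)_{p}$ is a retract of multiplication by $\mathrm{res}(\alpha)$ on $\hat{H}(P;V)$ (which is already entirely $p$-primary since $|P|=|G|_{p}$ annihilates it). The naturality of $\hat{H}^{0}(-;\mathbb{Z})\cong\mathbb{Z}/|\cdot|$ forces $\mathrm{res}(\alpha)$ to still have order $|P|$ in $\hat{H}^{n}(P;\mathbb{Z})$. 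Thus it suffices to prove: for a $p$-group $G$ and $\alpha\in\hat{H}^{n}(G;\mathbb{Z})$ of order $|G|$, multiplication by $\alpha$ is an isomorphism on $\hat{H}(G;V)$ for every $V$.

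The plan for this last step is to use $\alpha$ to construct a periodic complete resolution of $\mathbb{Z}$ over $\mathbb{Z}[G]$ of period $n$. Concretely, I would first lift $\alpha$ to a class in $\mathrm{Ext}^{n}_{\mathbb{Z}[G]}(\mathbb{Z},\mathbb{Z})$ and then realize this class by an exact sequence
\[
0\to\mathbb{Z}\to F_{n-1}\to\cdots\to F_{0}\to\mathbb{Z}\to 0
\]
with each $F_{i}$ a finitely generated free $\mathbb{Z}[G]$-module. Splicing countably many copies of this sequence to the left and right of any complete resolution produces a new complete resolution $X_{\bullet}$ with an obvious chain self-map $\widetilde{\alpha}:X_{\bullet}\to X_{\bullet-n}$, given by the identity shifted by $n$. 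Computing $\hat{H}^{*}(G;V)$ from $X_{\bullet}$ exhibits precomposition with $\widetilde{\alpha}$ as a manifest isomorphism $\hat{H}^{i+n}(G;V)\xrightarrow{\simeq}\hat{H}^{i}(G;V)$, and one identifies this with multiplication by $\alpha$ via the Yoneda description of the cup product.

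The main obstacle is the existence of the free $n$-fold extension representing $\alpha$. Any class in $\hat{H}^{n}=\mathrm{Ext}^{n}$ is represented by \emph{some} $n$-fold extension, and by projecting inductively from free modules one can assume all the $F_{i}$ with $i<n-1$ are free; however, the final kernel $K:=\ker(F_{n-2}\to F_{n-3})$ will only be stably free, and the obstruction to it being isomorphic to $\mathbb{Z}$ (so that the sequence closes up with $F_{n-1}$ free) lives in the projective class group of $\mathbb{Z}[G]$. The key content of Swan's theorem, which I would need to invoke, is that this obstruction vanishes precisely when $\alpha$ attains its maximal possible order $|G|$ in $\hat{H}^{n}(G;\mathbb{Z})$. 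This equivalence of maximal order with the existence of the free resolution is the genuinely difficult step, and it is what the rest of the appendix is presumably devoted to proving.
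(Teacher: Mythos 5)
Your argument for the ``only if'' direction is fine: taking $V=\mathbb{Z}$ and using $\hat H^{0}(G;\mathbb{Z})_{p}\cong\mathbb{Z}/|G|_{p}$ immediately forces the image $\alpha\cdot 1$ to have order $|G|_{p}$. The transfer--restriction reduction to a $p$-Sylow subgroup is also standard and correct. The problem is with the proposed proof of the ``if'' direction, which hinges on a claim you attribute to ``Swan's theorem'': that the obstruction to realizing $\alpha$ by a \emph{free} $n$-fold extension of $\mathbb{Z}$ by $\mathbb{Z}$ vanishes precisely when $\alpha$ has maximal order. That is not the statement proved in the rest of this appendix. The theorem of Swan proved there computes the $p$-period of $G$ in terms of $N_{G}(H)/C_{G}(H)$, and its very \emph{definition} of the $p$-period presupposes the present proposition, so invoking it here would be circular. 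The result you seem to have in mind is Swan's separate finiteness-obstruction theorem in $\widetilde K_{0}(\mathbb{Z}[G])$, which does not assert the equivalence you need and would be a substantially heavier tool than the problem calls for.

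There is also a more structural issue: the free-resolution detour is unnecessary, because Tate cohomology can be computed from a complete \emph{projective} resolution, not just a free one. The right reformulation of your plan would be to show $\Omega^{n}\mathbb{Z}\cong\mathbb{Z}$ in the stable module category, but proving that from ``$\alpha$ has order $|G|_{p}$'' is essentially what has to be shown, and the slick way to see it is the route the paper actually sketches. The duality isomorphism $\hat H^{n}(G;\mathbb{Z})\cong\Hom_{\mathbb{Z}}(\hat H^{-n}(G;\mathbb{Z}),C_{|G|})$ is induced by the cup-product pairing into $\hat H^{0}(G;\mathbb{Z})\cong\mathbb{Z}/|G|$; an $\alpha$ of order $|G|_{p}$ therefore corresponds to a surjection onto $C_{|G|_{p}}$, so there is some $\beta\in\hat H^{-n}(G;\mathbb{Z})$ with $\alpha\beta$ a unit modulo $|G|_{p}$. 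Multiplication by $\beta$ is then a two-sided inverse to multiplication by $\alpha$ on $p$-primary Tate cohomology, for any coefficient module $V$, by associativity of the cup product. This avoids both the circularity and the $K$-theoretic difficulties, and it is what the cited passage in Cartan--Eilenberg carries out in detail.
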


The idea of the argument is that such an element $\alpha$ determines a homomorphism $\hat H^n(G;\mathbb{Z})\to C_{|G|}$ and thus an element $\alpha^{-1}\in \hat H^{-n}(G;\mathbb{Z})$ by the duality referenced above, and this element will act as a multiplicative inverse to $\alpha$. For details, see \cite[XII.11.1, Exercise XII.11]{CartanEilenberg:HA}.

\begin{definition}
The $p$-\emph{period} of $G$, if it exists, is the least $n>0$ such that $\hat H^n(G;\mathbb{Z})$ contains an element of exact order $|G|_p$.
\end{definition}

\begin{remark}
Most groups do not have a $p$-period; indeed, if $p$ is odd, then $G$ has a $p$-period if and only if the $p$-Sylow subgroup of $G$ is cyclic \cite[Exercsie XII.11]{CartanEilenberg:HA}.
\end{remark}

\begin{example}
The cyclic group $C_{p^k}$ has $p$-period $2$.
\end{example}

The following theorem of \cite{Swan:PPFG} will allow us to determine the $p$-period of $\Sigma_p$. For a subgroup $H\leq G$, we write $C_G(H)$ and $N_G(H)$ for the centralizer and normalizer of $H$ in $G$, respectively.

\begin{theorem}[Swan]
Let $G$ be a finite group, $p$ an odd prime, and $H\leq G$ a $p$-Sylow subgroup. If $H$ is cyclic, then the $p$-period of $G$ is equal to $2\left|\frac{N_G(H)}{C_G(H)}\right|$.
\end{theorem}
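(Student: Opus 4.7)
The plan is to reduce the computation of the $p$-period from $G$ down to its $p$-Sylow $H$, where Tate cohomology is explicit, and then to extract the resulting $W$-invariants, where $W = N_G(H)/C_G(H)$. The first step is to invoke the standard transfer/restriction argument: since $[G:H]$ is prime to $p$, the composite $\mathrm{tr}\circ\mathrm{res}$ is multiplication by $[G:H]$, hence an isomorphism on the $p$-primary part, so that the restriction map
\[
\mathrm{res}^G_H\colon \hat H^*(G;\mathbb Z)_{(p)}\longrightarrow \hat H^*(H;\mathbb Z)
\]
is injective. Moreover, by the Cartan--Eilenberg stable elements theorem (\cite{CartanEilenberg:HA}, XII.10), its image is precisely the submodule of ``stable'' elements. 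For $H$ cyclic (in particular abelian, normal in $N_G(H)$, and such that $H\cap gHg^{-1}=H$ whenever the intersection is nontrivial, by Sylow's theorem inside $N_G(H\cap gHg^{-1})$), the stability conditions collapse to invariance under conjugation by $N_G(H)$, which is exactly invariance under $W$. Thus
\[
\hat H^*(G;\mathbb Z)_{(p)}\;\cong\;\hat H^*(H;\mathbb Z)^{W}.
\]

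Next, I would combine this with the explicit description recalled above: writing $|H|=p^k=|G|_p$, we have $\hat H^*(H;\mathbb Z)\cong (\mathbb Z/p^k)[\beta,\beta^{-1}]$ with $|\beta|=2$, and in particular $\hat H^n(H;\mathbb Z)=0$ for $n$ odd. So there is no chance of finding an element of exact order $|G|_p$ in odd degree, and the $p$-period must be even, say $2m$. In degree $2m$, $\hat H^{2m}(H;\mathbb Z)$ is cyclic of order $p^k$, generated by $\beta^m$, so the question reduces to: what is the smallest $m>0$ for which $\beta^m$ is $W$-invariant?

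The action of $W$ on $\hat H^2(H;\mathbb Z)\cong H$ is induced by conjugation, so it factors as an injection $\chi\colon W\hookrightarrow \mathrm{Aut}(H)\cong(\mathbb Z/p^k)^{\times}$ (the kernel of $N_G(H)\to \mathrm{Aut}(H)$ being exactly $C_G(H)$ by definition). By multiplicativity, $W$ acts on $\beta^m$ through the character $\chi^m$. Since $\beta^m$ generates a cyclic group of order $p^k$, it is fixed iff $\chi^m$ is trivial on $W$, and since $\chi$ is injective, the smallest such $m$ is the order $|W|$. Consequently, the $p$-period is $2|W|=2|N_G(H)/C_G(H)|$.

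The main obstacle I foresee is the first step, i.e., justifying that the $p$-localized restriction really identifies $\hat H^*(G;\mathbb Z)_{(p)}$ with $\hat H^*(H;\mathbb Z)^W$ rather than with a more complicated submodule of stable elements. For non-abelian Sylow this would require careful analysis of double cosets and fusion, but the hypothesis that $H$ is cyclic kills all the extra stability conditions (any nontrivial subgroup of $H$ is characteristic, so intersections $H\cap gHg^{-1}$ are either trivial or controlled inside $N_G(H)$), making the reduction clean. Once that reduction is in place, the remaining steps are bookkeeping with cyclic groups and their automorphisms.
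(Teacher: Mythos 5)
Your overall route mirrors the paper's: reduce to the $p$-Sylow subgroup $H$ via the Cartan--Eilenberg stable elements theorem, show that stability for an abelian Sylow collapses to $N_G(H)$-invariance, and identify the smallest $m>0$ for which $N_G(H)/C_G(H)$ fixes $\hat H^{2m}(H;\mathbb{Z})$ by observing that this quotient injects into the cyclic group $\mathrm{Aut}(H)\cong(\mathbb{Z}/p^k)^\times$. One genuine improvement is that where the paper cites a lemma of Swan for the statement that conjugation acts on $\hat H^{2m}(H;\mathbb{Z})$ by multiplication by $r^m$, you derive this directly from multiplicativity of Tate cohomology (the $W$-action on $\beta^m$ is through $\chi^m$), which is cleaner and more self-contained.

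However, the parenthetical justification for why stability collapses to $N_G(H)$-invariance is not correct as written. The assertion that $H\cap gHg^{-1}=H$ whenever the intersection is nontrivial does not follow from ``Sylow's theorem inside $N_G(H\cap gHg^{-1})$'': Sylow there only produces an element $t$ of that normalizer conjugating $H^g$ to $H$, not an equality, and the stability argument has to account for this $t$. The correct reduction, which the paper gives and which uses only that $H$ is abelian, runs instead through the \emph{centralizer}: both $H$ and $H^g$ lie in $C_G(H\cap H^g)$ (since $H\cap H^g$ is central in each), hence are $p$-Sylow there, so some $t\in C_G(H\cap H^g)$ conjugates $H^g$ to $H$. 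The point of working in the centralizer rather than the normalizer is precisely that this $t$ acts \emph{trivially} on $H\cap H^g$, so the two restriction maps $\hat H(H)\to\hat H(H\cap H^g)$ are intertwined by conjugation by $tg\in N_G(H)$, which fixes $\alpha$; the stability condition then follows. Since you correctly flagged this reduction as the potential obstacle and the fix is a standard argument requiring no new ideas, the rest of your plan goes through.
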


Before discussing the proof of this theorem, we use it to prove the Periodicity Theorem. First, we recall a few standard facts about cyclic groups.

\begin{lemma} Fix $k\geq0$.
\begin{enumerate}
\item $C_{\Sigma_k}(C_k)=C_k$
\item $N_{\Sigma_k}(C_k)\cong C_k\rtimes \mathrm{Aut}(C_k)$
\item $\mathrm{Aut}(C_k)\cong C_k^\times$
\end{enumerate}
\end{lemma}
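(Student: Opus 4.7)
The plan is to work with a fixed generator $\sigma = (1\,2\,\cdots\,k)$ of $C_k \leq \Sigma_k$ and analyze conjugation in $\Sigma_k$ directly, using the classical formula $\tau \sigma \tau^{-1} = (\tau(1)\,\tau(2)\,\cdots\,\tau(k))$ for cycle conjugation. For part (3), I would just recall the standard fact: an automorphism of $C_k$ is determined by the image of $\sigma$, which must itself be a generator of $C_k$, i.e.\ of the form $\sigma^a$ with $\gcd(a,k)=1$, and composition of automorphisms corresponds to multiplication in $(\mathbb{Z}/k)^\times$.

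For part (1), I would observe that $\tau$ centralizes $\sigma$ precisely when the cycle $(\tau(1)\,\tau(2)\,\cdots\,\tau(k))$ equals $(1\,2\,\cdots\,k)$ as a cyclic word. This forces the sequence $\tau(1),\ldots,\tau(k)$ to be a cyclic rotation of $1,\ldots,k$, so $\tau(i) \equiv i + c \pmod{k}$ for some fixed $c$; equivalently, $\tau = \sigma^c \in C_k$. The reverse containment $C_k \subseteq C_{\Sigma_k}(C_k)$ is obvious since $C_k$ is abelian.

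For part (2), the same cycle-conjugation formula shows that $\tau$ normalizes $C_k$ iff $\tau\sigma\tau^{-1} = \sigma^a$ for some $a \in (\mathbb{Z}/k)^\times$, and this assignment $\tau \mapsto a$ is a group homomorphism $\varphi : N_{\Sigma_k}(C_k) \to \mathrm{Aut}(C_k) \cong C_k^\times$ whose kernel is $C_{\Sigma_k}(C_k) = C_k$ by (1). To see that $\varphi$ is surjective and split, for each $a \in (\mathbb{Z}/k)^\times$ define $\tau_a \in \Sigma_k$ by $\tau_a(i) \equiv ai \pmod{k}$ (viewing $\{1,\ldots,k\}$ as $\mathbb{Z}/k$); this is a bijection since $a$ is a unit, and $\tau_a \sigma \tau_a^{-1} = \sigma^a$ by a direct computation. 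Since the assignment $a \mapsto \tau_a$ is multiplicative, it provides a splitting, and the resulting short exact sequence
\[
1 \to C_k \to N_{\Sigma_k}(C_k) \to \mathrm{Aut}(C_k) \to 1
\]
splits as the asserted semidirect product.

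No step looks like a genuine obstacle; the only place to be careful is verifying that $a \mapsto \tau_a$ really is a group homomorphism and really realizes the automorphism $\sigma \mapsto \sigma^a$, which is a straightforward index computation modulo $k$.
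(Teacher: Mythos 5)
Your proof is correct, and it is essentially the only sensible argument. The paper itself states the lemma without proof, treating the three parts as standard facts about cyclic subgroups of symmetric groups, so there is no alternate argument in the paper to compare against. Your use of the cycle-conjugation formula $\tau\sigma\tau^{-1}=(\tau(1)\,\tau(2)\,\cdots\,\tau(k))$ to pin down the centralizer, followed by the short exact sequence
\[
1\to C_k\to N_{\Sigma_k}(C_k)\to \mathrm{Aut}(C_k)\to 1
\]
split by the multiplicative section $a\mapsto\tau_a$ (multiplication by $a$ on $\mathbb{Z}/k$), is exactly the standard route. One minor caveat: the lemma as printed allows $k\geq 0$, and for $k\in\{0,1\}$ the cycle notation degenerates (there is no $k$-cycle $\sigma$ to speak of, and $C_0$ is not well-defined). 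Those cases are trivially true by convention, and in the paper the lemma is only ever invoked with $k=p$ an odd prime, so this does not affect anything; but if you want the proof to literally cover the stated hypotheses you should dispose of $k\leq 1$ separately before fixing the generator $\sigma=(1\,2\,\cdots\,k)$.
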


\begin{proof}[Proof of Periodicity Theorem]
Since $V$ is defined over $\mathbb{F}_p$, we have $\hat H(G;V)_p=\hat H(G;V)$. Thus, the first claim is implied by our previous calculation of the $p$-period of $C_p$, while the second claim is implied by Swan's theorem and the calculation \[2\left|\frac{N_{\Sigma_p}(C_p)}{C_{\Sigma_p}(C_p)}\right|=2\frac{|C_p|\cdot|C_p^\times|}{|C_p|}=2(p-1).\] 
\end{proof}

\subsection{Proof of Swan's theorem}

In order to deduce the Swan's theorem from our calculation in the case of a cyclic group, we will need to understand the relationship between the Tate cohomology of $G$ and that of its $p$-Sylow subgroup.

\begin{definition}
Let $H\leq G$ be a subgroup. An element $\alpha\in \hat H(H;V)$ is \emph{stable} if, for every $g\in G$, $\alpha$ equalizes the two homomorphisms \[\hat H(H; V)\to \hat H(H\cap H^g;V)\] induced by the inclusion $H\cap H^g\leq H$ and the injection $(-)^{g^{-1}}:H\cap H^g\to H$, respectively.
\end{definition}

A transfer argument, one proves the following \cite[XII.10.1]{CartanEilenberg:HA}.

\begin{proposition}
If $H\leq G$ is a $p$-Sylow subgroup, then, for every $\mathbb{Z}[G]$-module $V$, the natural map $\hat H(G;V)_p\to \hat H(H;V)$ is injective with image the stable classes.
\end{proposition}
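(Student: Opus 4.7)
The plan is to deduce both assertions from the interaction between restriction and the corestriction (transfer) map together with the Mackey double coset formula, using only the hypothesis that $H$ is $p$-Sylow in the form $[G:H]$ is coprime to $p$.

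First I would recall that for any subgroup $H\leq G$ the restriction admits a one-sided inverse up to an index: there is a corestriction map $\mathrm{cor}\colon \hat H(H;V)\to \hat H(G;V)$, defined by an averaging formula on complete resolutions, satisfying $\mathrm{cor}\circ\mathrm{res}=[G:H]\cdot\mathrm{id}$. Since $H$ is $p$-Sylow, $[G:H]$ is invertible on the $p$-primary component $\hat H(G;V)_p$, so $\mathrm{res}$ is injective on $\hat H(G;V)_p$. This handles injectivity.

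For the image, I would first verify that every class in the image of $\mathrm{res}$ is stable. If $\alpha=\mathrm{res}_H^G(\beta)$ and $g\in G$, then the two maps $\hat H(H;V)\to \hat H(H\cap H^g;V)$ from the definition of stability both factor through $\mathrm{res}_{H\cap H^g}^G(\beta)$: conjugation by $g$ acts trivially on $\hat H(G;V)$ (inner automorphisms induce the identity on cohomology of the ambient group), so the two composites agree on the nose. Thus $\mathrm{res}(\beta)$ is stable.

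The heart of the argument is the converse: a stable $\alpha\in\hat H(H;V)$ comes from the $p$-primary part of $\hat H(G;V)$. For this I would use the Mackey double coset formula, which for a $p$-Sylow (or any) subgroup reads
\[
\mathrm{res}_H^G\circ\mathrm{cor}_H^G(\alpha)=\sum_{g\in H\backslash G/H}\mathrm{cor}_{H\cap H^g}^H\circ c_g^{*}\circ\mathrm{res}_{H\cap H^g}^H(\alpha),
\]
where $c_g^{*}$ is induced by conjugation $(-)^{g^{-1}}\colon H\cap H^g\to H$. Stability of $\alpha$ means that $c_g^{*}\circ\mathrm{res}_{H\cap H^g}^H(\alpha)=\mathrm{res}_{H\cap H^g}^H(\alpha)$ for every $g$, so each summand collapses to $\mathrm{cor}_{H\cap H^g}^H\circ\mathrm{res}_{H\cap H^g}^H(\alpha)=[H:H\cap H^g]\cdot\alpha$. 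Summing over double cosets gives $\mathrm{res}\circ\mathrm{cor}(\alpha)=[G:H]\cdot\alpha$. Since $[G:H]$ is invertible on the $p$-primary part and $\alpha\in\hat H(H;V)=\hat H(H;V)_p$ (as $H$ is a $p$-group), we may set $\beta:=[G:H]^{-1}\mathrm{cor}(\alpha)\in\hat H(G;V)_p$ and conclude $\mathrm{res}(\beta)=\alpha$.

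The main obstacle is justifying the Mackey formula at the level of Tate cohomology, since it is most classically stated for ordinary group (co)homology; the standard route is to produce it on a complete resolution of $\mathbb{Z}$ over $\mathbb{Z}[G]$ and to check compatibility of $\mathrm{cor}$ with the Tate splicing across degree zero, which is a routine but careful diagram chase. Everything else is formal manipulation with $\mathrm{res}$, $\mathrm{cor}$, and the identity $\mathrm{cor}\circ\mathrm{res}=[G:H]$.
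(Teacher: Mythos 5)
Your proof is correct, and it is precisely the transfer argument the paper alludes to without proof (the cited reference \cite[XII.10.1]{CartanEilenberg:HA} runs exactly as you describe: $\mathrm{cor}\circ\mathrm{res}=[G:H]$ for injectivity, triviality of inner automorphisms for stability of restricted classes, and the Mackey double coset formula collapsed by stability for surjectivity onto the stable classes). One small point worth making explicit before you write $\beta:=[G:H]^{-1}\mathrm{cor}(\alpha)$: the class $\mathrm{cor}(\alpha)$ does land in the $p$-primary component $\hat H(G;V)_p$, because $|H|$, a power of $p$, annihilates $\hat H(H;V)$ and hence $|H|\cdot\mathrm{cor}(\alpha)=\mathrm{cor}(|H|\alpha)=0$; only then does it make sense to invert the unit $[G:H]$ on that component.
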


Thus, in order to deduce the period of $G$ from the period of $H$, we must understand the stable classes.

\begin{lemma}
If the $p$-Sylow subgroup $H\leq G$ is Abelian, then $\alpha\in \hat H^*(H;\mathbb{Z})$ is stable if and only if $\alpha$ is fixed by $N_G(H)$.
\end{lemma}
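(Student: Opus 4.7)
The plan is to reduce the lemma to Burnside's fusion theorem, which asserts that when $H$ is an abelian $p$-Sylow subgroup of $G$, every conjugation relation between subsets of $H$ that occurs in $G$ is already realized by an element of $N_G(H)$.

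First I would dispatch the easy direction. For $g\in N_G(H)$, one has $H^g=H$ and hence $H\cap H^g=H$, so the two maps $\hat H(H;V)\to\hat H(H\cap H^g;V)$ in the stability condition are the identity and the conjugation action of $g$ on $\hat H(H;V)$. Stability of $\alpha$ therefore forces $\alpha=g^*\alpha$, as desired. (Implicit here is that this conjugation action is well-defined and that inner automorphisms act trivially on Tate cohomology — the latter follows from the standard argument on complete resolutions, so the action of $N_G(H)$ in fact descends to the Weyl group $N_G(H)/H$, though we will not need this finer statement.)

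For the reverse implication, fix $g\in G$ and set $K:=H\cap H^g$. I must show that the two composites $\hat H^*(H;\mathbb{Z})\to\hat H^*(K;\mathbb{Z})$ — one via the inclusion $K\hookrightarrow H$ and the other via $c_g:K\to H$, $k\mapsto gkg^{-1}$ — agree when applied to $\alpha$. This is where Burnside's fusion theorem enters: because $H$ is an abelian $p$-Sylow subgroup and both $K$ and $c_g(K)$ are subsets of $H$ that are $G$-conjugate via $g$, there exists $n\in N_G(H)$ with $nkn^{-1}=gkg^{-1}$ for every $k\in K$. Thus $c_g|_K$ factors as $K\hookrightarrow H\xrightarrow{c_n}H$, and the second composite on cohomology becomes the ordinary restriction applied to $c_n^*\alpha$. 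Since $\alpha$ is fixed by $N_G(H)$ by hypothesis, $c_n^*\alpha=\alpha$, and the two composites agree.

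The only substantive obstacle is invoking Burnside's fusion theorem in the precise pointwise form needed — namely, that the conjugating element $g$ may be replaced by some $n\in N_G(H)$ whose conjugation action agrees with that of $g$ elementwise on $K$, not merely that $K$ and $c_g(K)$ are $N_G(H)$-conjugate as subgroups. This stronger pointwise statement is classical for abelian Sylow subgroups, and it is precisely here that the abelian hypothesis on $H$ is indispensable; without it, one would need to appeal to Alperin's fusion theorem and track more elaborate data.
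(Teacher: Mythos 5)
Your proof is correct, and the underlying mathematics is essentially identical to the paper's. The only real difference is one of packaging: where you invoke Burnside's fusion theorem as a black box, the paper proves the relevant instance of it inline. Concretely, unwinding the standard proof of the pointwise Burnside statement — namely, that since $H$ is abelian both $H$ and $H^g$ lie in $C_G(H\cap H^g)$, hence are Sylow $p$-subgroups of that centralizer, hence conjugate by some $t\in C_G(H\cap H^g)$, yielding $n\in N_G(H)$ of the required form — gives precisely the commuting square the paper constructs. So your reduction to fusion theory is legitimate and even clarifying, but it buys nothing in brevity over the self-contained argument; and you correctly flag the one point that needs care, namely that you need the strong pointwise version of Burnside (replace $g$ by some $n\in N_G(H)$ agreeing with $g$ elementwise on $K$, not merely sending $K$ to a conjugate), which is exactly where the abelian hypothesis is consumed.
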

\begin{proof}
The ``only if'' direction is obviously true without assumption on $H$, so assume that $\alpha$ is fixed by $N_G(H)$, and choose $g\in G$. Since $H$ is Abelian, both $H$ and $H^g$ are contained in $C_G(H\cap H^g)$, and, since $H$ is a $p$-Sylow subgroup of $G$, it follows that both are $p$-Sylow subgroups of this centralizer. Since all such are conjugate, it follows that $H=H^{tg}$ for some $t\in C_G(H\cap H^g)$, whence $tg\in N_G(H)$. We therefore have the commuting diagram \[\xymatrix{
H\cap H^g\ar@{=}[dd]_-{(-)^{t^{-1}}}\ar[rr]&&H\ar[dd]^-{(-)^{(tg)^{-1}}}\\\\
H\cap H^g\ar[rr]^-{(-)^{g^{-1}}}&&H,
}\] and the maps induced on Tate cohomology by the righthand map fixes $\alpha$ by assumption. It follows that $\alpha$ equalizes the two horizontal maps, as desired.
\end{proof}

\begin{lemma}
If the $p$-Sylow subgroup $H\leq G$ is cyclic, then $n$ is a multiple of the $p$-period of $G$ if and only if $n$ is even and $N_G(H)$ fixes $\hat H^n(H;\mathbb{Z})$ pointwise.
\end{lemma}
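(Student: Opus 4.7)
The plan is to combine the two previous results with the explicit cyclic computation $\hat H^*(H;\mathbb{Z}) \cong C_{p^k}[\beta,\beta^{-1}]$, where $p^k = |G|_p$ and $|\beta|=2$, to translate the defining condition of the $p$-period into a concrete divisibility statement. Since $H$ is cyclic (hence Abelian), the proposition on stable classes combined with the preceding lemma identifies $\hat H^n(G;\mathbb{Z})_p$ with the $N_G(H)$-fixed subgroup of $\hat H^n(H;\mathbb{Z})$. In particular, for odd $n$ the group $\hat H^n(H;\mathbb{Z})$ vanishes, so no nonzero class of order $p^k$ can exist; this already forces $n$ to be even whenever it is a multiple of the $p$-period.

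For even $n$, the group $\hat H^n(H;\mathbb{Z})$ is cyclic of order $p^k$, generated by $\beta^{n/2}$. Because conjugation by elements of $N_G(H)$ respects the cup product, the action on $\hat H^*(H;\mathbb{Z})$ is by graded ring automorphisms and is therefore determined by a homomorphism $\chi \colon N_G(H) \to (\mathbb{Z}/p^k)^\times$ given by $g \cdot \beta = \chi(g)\beta$, so that $g \cdot \beta^m = \chi(g)^m \beta^m$. An element of the cyclic group $\hat H^n(H;\mathbb{Z})$ has exact order $p^k$ iff it is a generator, and a generator of a cyclic group is fixed by an automorphism iff the automorphism is the identity. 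Hence the existence of an $N_G(H)$-fixed element of order $p^k$ in $\hat H^n(H;\mathbb{Z})$ is equivalent to $N_G(H)$ fixing $\hat H^n(H;\mathbb{Z})$ pointwise, which in turn is equivalent to $\chi(g)^{n/2}=1$ for every $g \in N_G(H)$.

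Let $e$ denote the exponent of the image of $\chi$; then the set of even integers $n$ on which $N_G(H)$ acts trivially on $\hat H^n(H;\mathbb{Z})$ is precisely $2e\,\mathbb{Z}$. By the preceding paragraph, this is also the set of $n$ for which $\hat H^n(G;\mathbb{Z})$ contains an element of exact order $p^k$, so the $p$-period of $G$ equals the minimal positive such $n$, namely $2e$, and the set of multiples of the $p$-period coincides with the set of even $n$ for which $N_G(H)$ fixes $\hat H^n(H;\mathbb{Z})$ pointwise. This gives both implications at once.

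There is no substantive obstacle: the proof is a bookkeeping exercise assembling the three named inputs. The only point requiring care is the observation that, because the $N_G(H)$-action is by ring automorphisms and $\beta$ is a unit, $g \cdot \beta^{-m} = (g \cdot \beta^m)^{-1}$, so the condition of fixing $\hat H^n$ pointwise is closed under addition and negation of the degree; this is what ensures the characterizing set of degrees is a subgroup of $2\mathbb{Z}$ and hence coincides with the multiples of its least positive element.
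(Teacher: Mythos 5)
Your proof is correct and takes essentially the same route as the paper's: both reduce to even degree, translate ``$n$ is a multiple of the $p$-period'' into the existence of a class of exact order $|G|_p$ in $\hat H^n(G;\mathbb{Z})$, transport this to $H$ via the transfer/restriction proposition, and then identify stable classes with $N_G(H)$-fixed classes using the preceding lemma for Abelian Sylow subgroups. You merely make explicit, via the character $\chi$ and the observation that the degrees carrying a class of exact order form a subgroup of $2\mathbb{Z}$, a step the paper leaves implicit and delegates in part to a citation of Cartan--Eilenberg.
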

\begin{proof}
By assumption, the $p$-period of $H$ is $2$, so we may assume that $n$ is even \cite[XII.11.3]{CartanEilenberg:HA}. Then $n$ is a multiple of the $p$-period of $G$ if and only if $\hat H^n(G;\mathbb{Z})$ contains an element of exact order $|G|_p$. By our computation in the case of a cyclic group and the previous proposition, this statement is equivalent to the statement that the map $\hat H^n(G;\mathbb{Z})_p\to \hat H^n(H;\mathbb{Z})$ is an isomorphism, which is to say that every class in the target is stable. Since $H$ is in particular Abelian, the previous lemma shows that this condition is equivalent to the condition that $N_G(H)$ act trivially in degree $n$.
\end{proof}

\begin{proof}[Proof of Swan's theorem]
Suppose that $n$ is a multiple of the $p$-period of $G$; then, by the lemma, $n$ is even and every class in degree $n$ Tate cohomology is fixed by $N_G(H)$. Now, $N_G(H)$ acts on $H$ via the composite \[N_G(H)\twoheadrightarrow \frac{N_G(H)}{C_G(H)}\hookrightarrow \mathrm{Aut}(H)\cong H^\times.\] By our assumption on $H$, the target is cyclic, so the intermediate quotient is also cyclic, and the action on $\hat H^n(H;\mathbb{Z})$ is by multiplication by $r^{n/2}$ with $(r,p)=1$ \cite[Lemma 3]{Swan:PPFG}. Since this action is trivial, $n/2$ is a multiple of the order of the cyclic group $N_G(H)/C_G(H)$.
\end{proof}

\end{appendix}

\bibliographystyle{amsalpha}
\bibliography{references}
\end{document}